% July 14, 2015
%
% Homological algebra related to surfaces with boundary
%
%%%%%%%%%%%%%%%%%%%%%%%%%%%%%%%%%%%%%%%%%%%%%%%%%%%%%%%%%%%%%%%%%%%%%%%%%%

\documentclass[11pt]{amsart}
\usepackage{enumerate,verbatim}
\usepackage{euscript}
\usepackage{amssymb}
\usepackage{amsmath}
\usepackage{amscd}
\usepackage{epic}

\usepackage{psfrag}
\usepackage{epsfig}
\usepackage{graphicx}
\usepackage{pinlabel}
\usepackage{color}

%%%%%%%%%%%%%%%%%%%%%%%%%%%%%%%%%%%%%%%%%%%%%%%%%%%%%%%%%%%%%%%%%%%%%%%%%%
\title{Homological algebra related to surfaces with boundary}
%\date{July 20, 2015}
% Use 
%\titlerunning{Involutive bi Lie infinity} %for an abbreviated version of
% your contribution title if the original one is too long
\author{Kai Cieliebak, Kenji Fukaya and Janko Latschev}

% Use \authorrunning{Short Title} for an abbreviated version of
% your contribution title if the original one is too long
%\institute{}
%
% Use the package "url.sty" to avoid
% problems with special characters
% used in your e-mail or web address
%%%%%%%%%%%%%%%%%%%%%%%%%%%%%%%%%%%%%%%%%%%%%%%%%%%%%%%%%%%%%%%%%%%%%%%%%%

\parindent0em
\parskip.5em
\numberwithin{equation}{section}

%---------------------------- Theorems etc  -------------------------------

\newtheoremstyle{my}{1.5em}{0.5em}{\em}{}{\sc}{.} 
{0.5em}{}
% #1 = name
% #2 = preskip
% #3 = postskip
% #4 = bodyfont
% #5 = noindent?
% #6 = headfont
% #7 = headpunct, e.g. "."
% #8 = labelsep (between label and statement}
% #9 = apparently overrides the whole header

\theoremstyle{my}

\theoremstyle{my}
\newtheorem{thm}{Theorem}[section]
\newtheorem{Theorem}[thm]{Theorem}
\newtheorem*{Theorem*}{Theorem}
\newtheorem{Corollary}[thm]{Corollary}
\newtheorem{corollary}[thm]{Corollary}
\newtheorem*{corollary*}{Corollary}
\newtheorem{Lemma}[thm]{Lemma}
\newtheorem{lem}[thm]{Lemma}

\newtheorem{prop}[thm]{Proposition}
\newtheorem{Proposition}[thm]{Proposition}
\newtheorem{Conjecture}[thm]{Conjecture}
\newtheorem*{conjecture*}{Conjecture}

\newtheorem*{question*}{Question}
\newtheorem{defn}[thm]{Definition}
\newtheorem{Definition}[thm]{Definition}

\newtheorem*{definitions*}{Definitions}
\newtheorem{rem}[thm]{Remark}
\newtheorem*{rem*}{Remark}
\newtheorem{Remark}[thm]{Remark}

\newtheorem*{remark*}{Remark}

\newtheorem*{remarks*}{Remarks}
\newtheorem*{example*}{Example}

\newtheorem*{examples*}{Examples}

\newtheorem*{convention*}{Convention}
\newtheorem*{conventions*}{Conventions}
\newtheorem*{Note*}{Note}
\newtheorem*{exercise*}{Exercise}
\newtheorem*{bibliographical-note*}{Bibliographical note}

%---------------------------- General commands  ---------------------------

\newcommand{\R}{\mathbb{R}}
\newcommand{\Z}{\mathbb{Z}}
\newcommand{\Q}{\mathbb{Q}}

\newcommand{\N}{\mathbb{N}}

           %isomorphism sign
          %homotopy sign

\newcommand{\id}{\mathrm{id}}

\newcommand{\im}{\mathrm{im\,}}
\renewcommand{\ker}{\mathrm{ker\,}}

\newcommand{\ext}{\mathrm{ext}}
\newcommand{\inn}{\mathrm{int}}
\newcommand{\IBL}{\mathrm{IBL}}
\newcommand{\sgn}{\mathrm{sgn}}

\newcommand{\Om}{\Omega}

\newcommand{\eps}{\varepsilon}

\newcommand{\p}{\partial}

\newcommand{\fC}{\frak C}
\newcommand{\fD}{\frak D}
\newcommand{\fP}{\frak P}
\newcommand{\fa}{\frak a}
\newcommand{\fb}{\frak b}
\newcommand{\fp}{\frak p}
\newcommand{\fq}{\frak q}

\newcommand{\ff}{\frak f}
\newcommand{\fg}{\frak g}
\newcommand{\fh}{\frak h}
\newcommand{\fm}{\frak m}
\newcommand{\fn}{\frak n}
\newcommand{\fF}{\frak F}

\newcommand{\fG}{\frak G}
\newcommand{\fH}{\frak H}
\newcommand{\fI}{\frak I}

\newcommand{\one}{\mathbf 1}
\newcommand{\wt}{\widetilde}
\newcommand{\wh}{\widehat}
\newcommand{\la}{\langle}
\newcommand{\ra}{\rangle}
\newcommand{\ol}{\overline}

%--------------------------- Specific commands  ---------------------------

\newcommand{\K}{\mathbb{K}}
\renewcommand{\H}{\mathbb{H}}
\newcommand{\HH}{\widehat{\mathbb{H}}}
\newcommand{\E}{\mathbb{E}}
\newcommand{\F}{\mathbb{F}}
\newcommand{\G}{\mathbb{G}}

\newcommand{\W}{\mathcal{W}}
\newcommand{\WW}{\widehat{\mathcal{W}}}

\newcommand{\J}{\mathbb{J}}
\newcommand{\D}{\mathcal{D}}

\newcommand{\BC}{\mathbf{C}}

\newcommand{\FF}{\mathcal{F}}
\newcommand{\MM}{\mathcal{M}}
\newcommand{\PP}{\mathcal{P}}

\newcommand{\dd}{\mathbf{d }}
\newcommand{\wGamma}{{\widehat\Gamma}}

\newcommand{\Aut}{\operatorname{Aut}}

%-------------------------------------------------------------------------

%%%%%%%%%%%%%%%%%%%%%%%%%%%%%%%%%%%%%%%%%%%%%%%%%%%%%%%%%%%%%%%%%%%%%

\begin{document}

\maketitle
%\centerline{bialg66}
%\centerline{last tex'ed: \today}
\begin{abstract}
In this article we describe an algebraic framework which
can be used in three related but different contexts: string topology,
symplectic field theory, and Lagrangian Floer theory of higher genus.
\end{abstract}
% 
%-----------------------------------------------------------------------
\tableofcontents
\section{Introduction}
\label{intro}
The purpose of this article is to describe an algebraic framework 
which can be used for three related but different purposes: 
(equivariant) string topology \cite{ChSu99,ChSu04,Sul07},
symplectic field theory \cite{EGH00}, and
Lagrangian Floer theory \cite{Flo88IV,Oh93} of higher genus.
It turns out that the relevant algebraic structure for all
three contexts is a homotopy version of involutive bi-Lie algebras,
which we call $\IBL_\infty$-algebras. This concept has its
roots in such diverse fields as 
string field theory \cite{Wit86, Zwi91, Sta92, MueSac11},
noncommutative geometry \cite{ASKZ97, Kon941},
homotopy theory \cite{TTsy00},
and others.
To avoid confusion, let us emphasize right away that the algebraic
structure is {\em not a topological field theory} in the sense of
Atiyah and Segal (\cite{Ati89,Seg90}, see also \cite{CoGo04,CoSc09}). 
 
The structure we discuss encodes the combinatorial
structure of certain  compactifications of moduli spaces of Riemann surfaces
with punctures and/or with boundary. Informally, this relation can be
described as follows. To each topological type of a compact connected
oriented 
surface, characterized by its genus $g \geq 0$, the number $k \geq 1$
of ``incoming'' boundary components and the number $\ell \geq 1$ of
``outgoing'' boundary components, one associates a linear map $C^{\otimes k}
\to C^{\otimes \ell}$ between tensor powers of a given vector space
(satisfying certain symmetry properties). Compositions of such maps
correspond to (partial) gluing of incoming boundaries of the second to
outgoing boundary components of the first map. However, in contrast to
a topological field theory, we do not require compositions
corresponding to the same end result of gluing to agree. 
Instead, the case $(k,\ell,g)=(1,1,0)$ gives rise to
a boundary map $\fp_{1,1,0}: C \to C$ squaring to zero, and in general
the operation associated to the surface of signature $(k,\ell,g)$ is a
chain homotopy (with respect to this boundary operator) from the zero
map to the sum of all possible compositions resulting in genus $g$ with
$k$ incoming and $\ell$ outgoing boundaries.

\par
{\bf $\IBL_\infty$-algebras. }
To be more precise, we first introduce some notations (full
details appear in \S \ref{sec:def}).
Let $R$ be a commutative ring which contains $\mathbb Q$
(think of $R = \mathbb Q$ or $\mathbb R$).
%We denote by $R\{\tau\}$ the ring of formal power series in $\tau$.
Let $C$ be a free graded module over $R$. Its degree shifted version $C[1]$
has graded pieces $C[1]^k = C^{k+1}$. We consider the symmetric product
$$
   E_k C :=
   \left(C[1] \,\,\otimes_R \cdots \,\,\otimes_R C[1]\right)/\sim,
$$
that is the quotient of the tensor product by the action of the
symmetric group permuting the factors with signs.
%Here and
%hereafter we use the norm
%$$
%\left\Vert\sum_{k=0}^{\infty} b_k 
%\tau^k\right\Vert
%= \sup e^{-k}\Vert b_k\Vert
%$$
%on $\Lambda^R_{0,nov}\{\tau\}$ for completion.
Let
$
EC
$
be the direct sum $\bigoplus_{k\ge 1} E_kC$.
\par
Next consider a series of $R$-module homomorphisms 
$
\frak p_{k,\ell,g} : E_k C \to E_{\ell} C
$
%of degree $-1$ 
indexed by triples of integers $k,\ell \ge 1$, $g \geq 0$. 
(They will also have specific degrees, which we ignore in this introduction).  
They canonically extend to $R$-linear maps
$
\hat{\frak p}_{k,\ell,g} :  EC \to  EC
$
(see \S \ref{sec:def}). We introduce the formal sum
$$
\hat \fp := \sum_{k, \ell \geq 1, g \geq 0} \hat\fp_{k,\ell,g}
\hbar^{k+g-1}\tau^{k+\ell+2g-2}: EC\{\hbar,\tau\} \to EC\{\hbar,\tau\}, 
$$
where $EC\{\hbar,\tau\}$ denotes the space of power series in
the formal variables $\hbar,\tau$ with coefficients in $EC$.  
\begin{defn}
We say that $(C,\{\frak p_{k,\ell,g}\}_ 
{k,\ell\geq 1, g \geq 0})$ is an {\em $\IBL_\infty$-algebra}
%(of degree $0$) 
if 
$$
   \hat \fp \circ \hat \fp = 0.
$$
\label{DefIBL}\end{defn}
Note that $EC$ has both an algebra and a coalgebra structure. 
However, due to the fact that both $k$ and $\ell$ are
allowed to be greater than 1, $\hat \fp$ is neither a derivation nor a
coderivation.   
In fact, one can show that $\hat{\frak p}_{k,\ell,g}$ is a
differential operator of degree $k$ in the sense of graded 
commutative rings (see \cite{CL2} and \S \ref{sec:Weyl}).
The symmetric bar complex $EC$ then inherits a structure
which is a special case of what was called a $BV_{\infty}$
structure in~\cite{CL2} (see also~\cite{ASKZ97,TTsy00}). This
structure can also be  described in terms of a Weyl 
algebra formalism (see \S \ref{sec:Weyl}). Such a formulation has its
origin in the physics literature (see for example \cite{MSS02}) and in
\cite{ASKZ97, Kon941}. The description of symplectic field theory in
\cite{EGH00} is of this form. 
The formulation in Definition \ref{DefIBL} is closer to one
whose origin is in algebraic topology (\cite{Laz55, Sta63} etc.).
It is also close to the algebraic formulation of
Lagrangian Floer theory in \cite{FOOO06}.
%\par
%If $\frak p_{k,\ell,g} = 0$ for $\ell\ge 2$ or $g>0$,
%the operation $\hat{\frak p}$ is a (formal) coderivation.
%So $(EC,\hat{\frak p})$ is a (formal) graded 
%cocomutative differential coalgebra, and the $\IBL_\infty$-algebra
%structure in this special case is just an $L_\infty$ structure (See
%\cite{MSS02} for the definition of $L_{\infty}$ algebra.)
%%, and \cite {FOOO06} \S $36$ for its filtered version.)

\par
In the applications in the context of
holomorphic curves that we have in mind, one is often faced with the
following situation. The principal object of interest is geometric, for
example a Lagrangian submanifold $L \subset (W,\omega)$ in a
symplectic manifold. To analyze it, one
{\em chooses} as auxilliary data a suitable almost complex structure
$J$, and studies $J$-holomorphic curves. Due to transversality issues,
there are often many additional choices that need to be made, but
eventually one writes down a chain complex with operations such as the
$\fp_{k,\ell,g}$ described above, where $k$, $\ell$ and $g$ have the expected
meaning in terms of the holomorphic curves. 

Then one is faced with the task of {\em proving independence of the algebraic
structure of all choices made}. One of the standard methods is to
again use holomorphic curves to define morphisms between the algebraic
structures for two different choices, as well as chain
homotopies between them. To organize such proofs, it is therefore
useful to have explicit algebraic descriptions of the structures, the
morphisms and the homotopies that arise. 

The first goal of this paper is to develop the homotopy theory of
$\IBL_\infty$ algebras from this point of view. We follow the
standard approach via obstruction theory, which leads to fairly
explicit formulas. 
In \S \ref{sec:def} we define $\IBL_\infty$-algebras
and their morphisms, and discuss the defining relations from various
points of view. In \S \ref{sec:tech}, we prepare for the homotopy
theory by identifying the obstructions to extending certain partial
structures or partial morphisms inductively. In \S \ref{sec:hom} we
introduce the notion of homotopy and show that it defines an
equivalence relation. We also prove that compositions of homotopic
morphisms are homotopic.

Note that it follows from the defining equation for an
$\IBL_\infty$-algebra that the map $\fp_{1,1,0}:C[1] \to
C[1]$ is a boundary operator, i.e. we have $\fp_{1,1,0} \circ
\fp_{1,1,0}=0$.  
Moreover, it turns out that part of any morphism $\ff : (C,\{\frak p_ 
{k,\ell,g}\}) \to (C',\{\frak p'_{k,\ell,g}\})$ of
$\IBL_\infty$-algebras is a chain map 
$
\frak f_{1,1,0} : (C, \fp_{1,1,0}) \to (C',\fp_{1,1,0}').
$
In \S \ref{sec:whi} we prove the following 
\begin{Theorem}
Let $\ff$ be a morphism of $\IBL_\infty$-algebras such that
$\ff_{1,1,0}$ induces an isomorphism on homology. If $R$ is a field
of characteristic 0,
then $\ff$ is a homotopy equivalence of $\IBL_\infty$-algebras. 
\label{Whi}
\end{Theorem}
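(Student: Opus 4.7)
The plan is to construct a homotopy inverse $\fg:(C',\{\fp'_{k,\ell,g}\})\to(C,\{\fp_{k,\ell,g}\})$ to $\ff$ by the obstruction theory developed in \S\ref{sec:tech}, and then to invoke the results of \S\ref{sec:hom} — that homotopy is an equivalence relation compatible with composition — to conclude that $\ff$ is a homotopy equivalence.

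For the base case $(k,\ell,g) = (1,1,0)$: since $R$ is a field of characteristic zero and $\ff_{1,1,0}$ induces an isomorphism on $\fp_{1,1,0}$-homology, standard chain-complex arguments furnish a chain map $\fg_{1,1,0}:(C',\fp'_{1,1,0})\to(C,\fp_{1,1,0})$ together with chain homotopies $H:C\to C$ and $H':C'\to C'$ realizing $\fg_{1,1,0}\circ\ff_{1,1,0}\simeq\id_C$ and $\ff_{1,1,0}\circ\fg_{1,1,0}\simeq\id_{C'}$. I would fix Hodge-type decompositions $C = H(C)\oplus\im\fp_{1,1,0}\oplus W$ (and analogously for $C'$) to obtain canonical formulas for $\fg_{1,1,0}$, $H$, $H'$ which feed naturally into the higher steps.

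Next, I would inductively define the higher components $\fg_{k,\ell,g}$, ordered by the natural weight $w=2g+k+\ell-2$ (the power of $\tau$ in $\hat\fp$), which is additive under composition. At each step, expanding the morphism equation $\hat\fg\circ\hat\fp' = \hat\fp\circ\hat\fg$ to weight $w$ and isolating the $\fp_{1,1,0}$, $\fp'_{1,1,0}$ contributions produces
\[
   \hat\fp_{1,1,0}\circ\fg_{k,\ell,g} \pm \fg_{k,\ell,g}\circ\hat\fp'_{1,1,0} = O_{k,\ell,g},
\]
where $O_{k,\ell,g}\in\Hom(E_kC',E_\ell C)$ is a universal polynomial in previously defined data. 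The obstruction lemmas of \S\ref{sec:tech}, together with $\hat\fp\circ\hat\fp=0$ and $\hat\fp'\circ\hat\fp'=0$, will guarantee that $O_{k,\ell,g}$ is a cocycle for the differential induced on $\Hom(E_kC',E_\ell C)$ by $\fp_{1,1,0}$ and $\fp'_{1,1,0}$. Over a characteristic-zero field the cohomology of this complex computes to $\Hom(E_kH(C'),E_\ell H(C))$, since in characteristic zero homology commutes with symmetric powers. Once the cohomology class of $O_{k,\ell,g}$ is shown to vanish, $\fg_{k,\ell,g}$ is defined by applying the homotopy operator built from $H,H'$.

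The main obstacle, and the heart of the argument, will be the verification that the cohomology class of $O_{k,\ell,g}$ indeed vanishes at each stage. Concretely, I expect to show that after composing with appropriate components of $\ff$ the obstruction becomes a coboundary of the structure maps of $C$ — a consequence of the master equation for $\ff$ at weight $<w$ — and then to transfer this exactness back through the quasi-isomorphism $\ff_{1,1,0}$ using the homotopies $H,H'$. Once $\fg$ has been assembled, a parallel induction — now for an $\IBL_\infty$-homotopy rather than a morphism — will produce a homotopy from $\fg\circ\ff$ to $\id_C$, with obstructions that vanish by the same mechanism. The symmetric argument delivers $\ff\circ\fg\simeq\id_{C'}$, and Section~\ref{sec:hom} then concludes that $\ff$ is a homotopy equivalence of $\IBL_\infty$-algebras.
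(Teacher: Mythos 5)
Your overall strategy (obstruction theory over the order on signatures, base case from a chain homotopy inverse, then deduce two-sided invertibility) is the paper's strategy, but there is a genuine gap exactly at the point you flag as "the main obstacle": the vanishing of the cohomology class of the obstruction $O_{k,\ell,g}=R_{k,\ell,g}(\fg,\fp',\fp)$. This does \emph{not} follow from the master equation for $\ff$ at lower weight plus the quasi-isomorphism property of $\ff_{1,1,0}$. What Proposition~\ref{prop:hom-main}(3) gives you is
$$
R_{K,L,G}(\fg\diamond\ff)=\tfrac{1}{L!}\fg_{1,1,0}^{\odot L}\circ R_{K,L,G}(\ff)+R_{K,L,G}(\fg)\circ\tfrac{1}{K!}\ff_{1,1,0}^{\odot K}+\delta C_{K,L,G}(\fg,\ff),
$$
and since $R_{K,L,G}(\ff)=\delta\ff_{K,L,G}$ is exact, composing with the invertible (up to homotopy) map $\tfrac{1}{K!}\ff_{1,1,0}^{\odot K}$ only reduces the problem to showing $[R_{K,L,G}(\fg\diamond\ff)]=0$. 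That class is not automatically zero: $\fg\diamond\ff$ is not the identity, only (if the lower stages were chosen well) homotopic to it, and the obstruction class genuinely depends on the choices made at lower stages. The paper's resolution is to run a \emph{simultaneous} induction, constructing at each stage both $\fg_{K,L,G}$ and a component $\fh_{K,L,G}$ of a homotopy $\fh:D\to\fD$ between $\fg\diamond\ff$ and $\id_D$ (with $\eps_0\diamond\fh=\id$, $\eps_1\diamond\fh=\fg\diamond\ff$). The partial homotopy supplies, via Lemma~\ref{lem:homotopy} applied to $\eps_0$, a chain $S$ with $\delta S=R_{K,L,G}(\fh,\fp^D,\fq^D)$, and then $\delta(\tfrac{1}{L!}\eps_1^{\odot L}\circ S)=R_{K,L,G}(\fg\diamond\ff)$ exhibits the needed exactness. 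Your plan of "first build all of $\fg$, then build the homotopy afterwards" cannot close this loop, because the data that kills the obstruction for $\fg$ at stage $(K,L,G)$ is precisely the homotopy at the earlier stages.

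Two smaller remarks. First, after the obstruction is known to be exact you must also choose $\fg_{K,L,G}$ and $\fh_{K,L,G}$ compatibly (in the paper, $\fg_{K,L,G}=T+G$ and $\fh_{K,L,G}=S+\delta H$ with $H$ a lift through the right inverse of $\eps_0\oplus\eps_1$), so that the inductive hypotheses (i)--(ii) relating $\eps_1^{\odot L}\circ\fh$ to $(\fg\diamond\ff)_{K,L,G}$ persist; this bookkeeping is not optional. Second, your final step "the symmetric argument delivers $\ff\circ\fg\simeq\id_{C'}$" is too quick: the one-sided statement you can prove is Proposition~\ref{prop:inverse2}, and the two-sided conclusion follows by applying it again to $\fg$ (whose $(1,1,0)$-part is also a quasi-isomorphism) to get $\ff'$ with $\ff'\diamond\fg\sim\id$, and then $\ff\sim\ff'\diamond\fg\diamond\ff\sim\ff'$ together with Proposition~\ref{prop:homotopy-equiv}(c).
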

In \S \ref{sec:can} we prove that every $\IBL_\infty$-structure can be
pushed onto its homology: 
\begin{Theorem}\label{thm:canonical}
If $R$ is a field of characteristic 0
and $(C,\{\frak p_{k,\ell,g}\})$ is an $\IBL_\infty$-algebra over $R$, then there exists an
$\IBL_\infty$-structure on its homology $H = H(C,\fp_{1,1,0})$ which is homotopy
equivalent to $(C,\{\frak p_{k,\ell,g}\})$.
\end{Theorem}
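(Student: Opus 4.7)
The plan is to construct the transferred $\IBL_\infty$-structure on $H$ together with an $\IBL_\infty$-morphism from $H$ to $C$ whose linear part induces the identity on homology, and then invoke Theorem \ref{Whi} to upgrade this to a homotopy equivalence.

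Since $R$ is a field of characteristic $0$, the linear differential $\fp_{1,1,0}$ admits a Hodge-type decomposition: choose an $R$-linear splitting
\[
C \;=\; \iota(H) \oplus \im\,\fp_{1,1,0} \oplus C',
\]
where $\fp_{1,1,0}$ restricted to $C'$ is an isomorphism onto $\im\,\fp_{1,1,0}$. This yields an inclusion $\iota:H\to C$, a projection $\pi:C\to H$ with $\pi\iota=\id_H$, and a chain homotopy $K:C\to C$ of degree $+1$ satisfying $\iota\pi-\id_C=\fp_{1,1,0}K+K\fp_{1,1,0}$, together with the side conditions $K\iota=0$, $\pi K=0$, $K^2=0$. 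We set $\fq_{1,1,0}:=0$ on $H$ and $\ff_{1,1,0}:=\iota$, which is a chain map from $(H,0)$ to $(C,\fp_{1,1,0})$.

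Next I would define the higher operations $\fq_{k,\ell,g}:E_kH\to E_\ell H$ and the higher components $\ff_{k,\ell,g}:E_kH\to E_\ell C$ of the morphism simultaneously, by induction on the weight $w=k+\ell+2g-2$ (so the base case $w=0$ is $(1,1,0)$ handled above). Assume inductively that $\fq_{k',\ell',g'}$ and $\ff_{k',\ell',g'}$ have been constructed for all $(k',\ell',g')$ with $w'<w$ and that they satisfy the $\IBL_\infty$-structure and morphism relations up to that order. The defining relation $\hat\fp\circ\hat\fp=0$ and the morphism relation, restricted to weight $w$ and projected to the relevant $E_\ell$-component, give equations of the schematic form
\begin{equation*}
\fp_{1,1,0}\,\ff_{k,\ell,g} + \ff_{k,\ell,g}\,\fp_{1,1,0}^{(k)} \;=\; X_{k,\ell,g},\qquad
\iota\,\fq_{k,\ell,g} \;=\; Y_{k,\ell,g} + \fp_{1,1,0}\,\ff_{k,\ell,g}+\cdots,
\end{equation*}
where $X_{k,\ell,g}$ and $Y_{k,\ell,g}$ are polynomial expressions in the already-constructed $\fp_{*,*,*}$, $\fq_{*,*,*}$ and $\ff_{*,*,*}$. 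By the obstruction theory of Section \ref{sec:tech} applied to the truncated structure, these obstruction expressions are cycles with respect to the induced linear differential on $\Hom(E_kH,E_\ell C)$. Using the homotopy $K$ (extended to act on $E_\ell C$ as a coderivation-like map built from the side conditions, as in the standard homological perturbation construction), one sets
\[
\ff_{k,\ell,g} \;:=\; K\circ\bigl(\text{obstruction term}\bigr),\qquad
\fq_{k,\ell,g} \;:=\; \pi\circ\bigl(\text{obstruction term}\bigr),
\]
which solves the relations modulo a piece that, by the cycle property, vanishes after projection by $\pi$. This closes the induction and produces both the $\IBL_\infty$-structure $\{\fq_{k,\ell,g}\}$ on $H$ and the morphism $\ff:(H,\{\fq_{k,\ell,g}\})\to(C,\{\fp_{k,\ell,g}\})$.

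By construction $\ff_{1,1,0}=\iota$ induces the identity isomorphism on homology, so Theorem \ref{Whi} implies that $\ff$ is a homotopy equivalence of $\IBL_\infty$-algebras, which is precisely the claim.

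The main obstacle is the inductive step: one must verify carefully that the obstruction expressions $X_{k,\ell,g}$ and $Y_{k,\ell,g}$ are indeed $\fp_{1,1,0}$-cycles (so that the homotopy $K$ can kill them), and that choosing the correct partial order on triples $(k,\ell,g)$ makes all contributions on the right-hand side depend only on previously constructed data. Both points reduce to the algebraic identities developed in Section \ref{sec:tech}, applied in the graded-symmetric bar setting where one has to keep track of the combinatorics of splittings of symmetric tensors into inputs for several operations simultaneously (both $k$ and $\ell$ greater than one), which is the feature distinguishing $\IBL_\infty$ from $L_\infty$.
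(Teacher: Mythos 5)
Your overall strategy --- a Hodge-type splitting of $(C,\fp_{1,1,0})$, simultaneous inductive construction of $\fq_{k,\ell,g}$ and $\ff_{k,\ell,g}$ via the obstruction theory of Section~\ref{sec:tech}, and an appeal to Theorem~\ref{Whi} at the end --- is the same as the paper's proof of Theorem~\ref{thm:can}, and your coarser induction on the weight $k+\ell+2g-2$ is legitimate (cf.~Remark~\ref{rem:ordering}: the quadratic terms $P_{k,\ell,g}$ and the lower-order terms in $R_{k,\ell,g}$ involve only strictly smaller weight, so all signatures of a fixed weight can be handled at once).

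There is, however, a genuine gap at the central step. Both of your schematic equations are forms of the \emph{morphism} relation \eqref{eq:mor4}; you never address the \emph{structure} relation \eqref{eq:BL4} for the transferred operations on $H$. Because $\fq_{1,1,0}=0$, that relation at signature $(K,L,G)$ reads $Q_{K,L,G}=0$, where $Q_{K,L,G}$ is the quadratic expression in the already constructed $\fq_{k',\ell',g'}$ with $(k',\ell',g')\prec(K,L,G)$: it does not involve $\fq_{K,L,G}$ at all, and the induced differential $\delta$ on $\Hom(E_KH,E_LH)$ is zero, so there is no homotopy freedom left. ``The obstruction is a cycle'' is vacuous here, and ``kill it with $K$'' does not apply; one must prove that the obstruction vanishes identically. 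The paper does this via the identity
$$
Q_{K,L,G}=\frac 1{L!}\,\pi^{\odot L}\bigl\langle e^{\ff'}\hat\fq'\hat\fq'\bigr\rangle_{K,L,G}
=\frac 1{L!}\,\pi^{\odot L}\bigl\langle \hat\fp'\hat\fp'e^{\ff'}\bigr\rangle_{K,L,G},
$$
using the inductive morphism relations to move $\hat\fq'$ past $e^{\ff'}$, and then observing that the right-hand side is annihilated by $\pi^{\odot L}$ because $\pi\,\fp_{1,1,0}=0=\fp_{1,1,0}\,\ff_{1,1,0}$. Without this (or an equivalent) argument your induction does not close: you would define $\fq_{K,L,G}$ and $\ff_{K,L,G}$ without knowing that $\{\fq_{k,\ell,g}\}$ is an $\IBL_\infty$-structure, which is needed both for the conclusion and in order to apply Proposition~\ref{prop:hom-main} at the next inductive stage.
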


{\bf Weyl algebras and symplectic field theory. }
In \S \ref{sec:Weyl} we discuss the relationship of
$\IBL_\infty$-structures with the Weyl algebra formalism which is used
in the formulation of symplectic field theory \cite{EGH00}. 
%JL new
In a nutshell, the relation is as follows.

In symplectic field theory, the information on moduli spaces of holomorphic curves in the
symplectization of a contact manifold is packaged in a certain
Hamiltonian function $\H$, which is a formal power series in
$p$-variables (corresponding to positive asymptotics, or ``inputs'')
and $\hbar$ (corresponding to genus) with polynomial coefficients in
the $q$-variables (corresponding to negative asymptotics, or
``outputs''). The exactness of the symplectic form on the
symplectization forces $\H|_{p=0}=0$. There is a notion of
augmentation in this context, which allows one to change the structure to
also achieve $\H|_{q=0}=0$. Geometrically, augmentations arise from
{\em symplectic fillings} of the given contact manifold. It is this
augmented part of symplectic field theory which is shown to be equivalent to the
$\IBL_\infty$ formalism described here. 

Let us also mention that in \cite{DTT08} the Weyl algebra formalism 
was shown to be equivalent to the structure of an algebra over a
certain properad, at least on the level of objects.
The emergence of $\IBL_\infty$-operations on $S^1$-equivariant
symplectic cohomology (which in view of~\cite{BouOan09} is essentially
equivalent to symplectic field theory) is outlined
in~\cite{Sei08-biased}. 

{\bf Filtrations and Maurer-Cartan elements. }
For various applications one needs the more general notion of a {\em
  filtered $\IBL_\infty$-algebra} which we introduce in \S
\ref{sec:filter}.  
As is common in homotopical algebra, there is a version of the
Maurer-Cartan equation for our structure. We discuss this in \S
\ref{sec:MC}, where we show that Maurer-Cartan elements have many of
the expected properties.

This concludes the basic part of the theory.
The remaining part of the paper gives some ideas how
$\IBL_\infty$-structures arise in algebraic and symplectic topology.
\par

%At present, due to various transversality issues, we are not able to 
%use them to study string topology, symplectic field theory, or
%Lagrangian Floer theory of higher genus, in the generality we would like. 
%Instead, we discuss how $\IBL_\infty$-structures are supposed to arise
%in the formulation of string topology using de Rham theory. 

{\bf The dual cyclic bar complex of a cyclic DGA. } 
The relation of string topology to Hochschild cohomology (of, say, the de 
Rham complex) and
to Chen's iterated integrals \cite{Chen73} has already been 
described by various authors, see e.g.
\cite{CoJo99, Kauf04, Mer04, CoVo, Sei06, Fuk05II, Sul07}.
In particular, our operator $\fp_{2,1,0}$ corresponds to the
Gerstenhaber bracket \cite{Ger79} in that case.
Our description below can be regarded as a `higher genus analogue' of 
it.
\par
With de Rham cohomology in mind, let us restrict to  $R=\mathbb R$.
%Note in our case $\frak p_{\ell,k} = 0$ unless $ 
%\ell\ge 1$ and $k \ge 1$
%by (2) of Definition \ref{DefIBL}.
%\par
Recall that an $A_{\infty}$-algebra structure on a graded $\mathbb R$-vector 
space $A$ consists of a series of $\mathbb R$-linear maps
$
\frak m_k : A[1]^{\otimes k} \to A[1]
$
for $k \ge 1$, which are 
% where $B_kA[1]$ is a tensor product of $k$ copies of $A[1]$.
assumed to satisfy the so called $A_{\infty}$-relations 
(see \S \ref{sec:Cyclic}).
A cyclic $A_{\infty}$-algebra in addition comes with a nondegenerate pairing
$
\langle \,,\, \rangle : A \otimes A 
 \to \mathbb R,
$
of degree $-n$ such that
$$
\langle \frak m_k(x_1,\cdots,x_k), x_0 \rangle
   = (-1)^*\langle \frak m_k(x_0,\cdots,x_{k-1}),x_k 
\rangle
$$
with suitable signs $(-1)^*$. 
The notion of an $A_{\infty}$-algebra was introduced by 
Stasheff \cite{Sta92} and cyclic
$A_{\infty}$-algebras were used in a related context by 
Kontsevich \cite{Kon94}.
Their relation to symplectic topology was
discussed in \cite{Kon94, Fuk93, Che02}.

We can construct a $\text{\rm dIBL}$-algebra (i.e., an
$\IBL_\infty$-algebra whose only nonvanishing terms are $\fp_{1,1,0}$,
$\fp_{1,2,0}$ and $\fp_{2,1,0}$) from a cyclic differential
graded algebra (DGA) as follows.
We let
$
B^{\text{\rm cyc}}_kA
$
be the quotient of $A[1]^{\otimes k}$ by the 
$\mathbb Z_k$-action which cyclically permutes the factors
with signs.
% (see \S \ref{sec:Cyclic}). 
In \S \ref{sec:Cyclic-cochain} and \S \ref{sec:subcomplex} we prove
the following two results.  

\begin{Proposition}\label{prop:structureexists-intro}
Let $(A, \langle \,, \,\rangle, d)$ be a finite dimensional cyclic
cochain complex whose pairing has degree $-n$. Then the shifted
dual cyclic bar complex $(B^{\text{\rm cyc}*}A)[2-n] =
\bigoplus_{k\geq 1}Hom(B^{\text 
{\rm cyc}}_kA,\mathbb R)[2-n]$ carries natural operations
$\fp_{1,1,0} = d$, $\fp_{2,1,0}$ and $\fp_{1,2,0}$ which make it a
$\rm dIBL$-algebra. 
\end{Proposition}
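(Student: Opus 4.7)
The plan is to define the three operations explicitly from the data $(A,\la\cdot,\cdot\ra,d)$ and to verify directly the quadratic identities constituting the dIBL axioms.

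First, exploiting the finite dimensionality of $A$, the nondegenerate pairing gives an isomorphism $A \cong A^*[-n]$ of graded modules, and therefore identifies $B^{\text{\rm cyc}*}_k A$ with a degree-shifted version of the cyclic bar complex $B^{\text{\rm cyc}}_k A$ itself. Under this identification, I would take $\fp_{1,1,0}$ to be the signed Leibniz extension of $d$ to cyclic words, so that $\fp_{1,1,0}^2 = 0$ is immediate from $d^2 = 0$.

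The operations $\fp_{2,1,0}$ and $\fp_{1,2,0}$ are given by gluing and splitting cyclic words via the pairing. Concretely, for cyclic words $\alpha = [a_1|\cdots|a_k]$ and $\beta = [b_1|\cdots|b_\ell]$,
\[
\fp_{2,1,0}(\alpha \otimes \beta) = \sum_{i,j} \pm \la a_i, b_j\ra \, [a_{i+1}|\cdots|a_{i-1}|b_{j+1}|\cdots|b_{j-1}],
\]
that is, contract one letter of $\alpha$ with one of $\beta$ and concatenate the remaining arcs into a single cyclic word of length $k+\ell-2$. Dually,
\[
\fp_{1,2,0}([a_1|\cdots|a_k]) = \sum_{i<j}\pm \la a_i,a_j\ra\,[a_{i+1}|\cdots|a_{j-1}]\otimes[a_{j+1}|\cdots|a_{i-1}].
\]
The shift $[2-n]$ is precisely what is needed so that all three operations have total degree $+1$ on $C=(B^{\text{\rm cyc}*}A)[2-n]$.

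The dIBL condition $\hat\fp\circ\hat\fp=0$ then decomposes according to the powers of $\hbar$ and $\tau$ into six identities among $\hat\fp_{1,1,0}$, $\hat\fp_{2,1,0}$ and $\hat\fp_{1,2,0}$: the already noted $\fp_{1,1,0}^2=0$; that $\fp_{1,1,0}$ acts as a derivation with respect to $\fp_{2,1,0}$ and as a coderivation with respect to $\fp_{1,2,0}$ (both following from the graded $d$-invariance $\la dx,y\ra + (-1)^{|x|}\la x,dy\ra = 0$ of the pairing); a Jacobi relation for $\fp_{2,1,0}$ and a co-Jacobi relation for $\fp_{1,2,0}$, which are combinatorial cancellations among the ways of composing two gluings, respectively two splittings, of cyclic words; and the mixed involutivity relation $[\hat\fp_{2,1,0},\hat\fp_{1,2,0}]=0$. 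For the last, the two compositions each decompose into a ``connected'' part (where both pairings involve the same cyclic word) and a ``disconnected'' part (coming from the Leibniz extension on separate tensor factors); these cancel in opposing pairs between split-then-glue and glue-then-split.

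The main obstacle will be the sign bookkeeping for the Jacobi, co-Jacobi and mixed relations, where cyclic symmetry forces each term to admit several equivalent representatives among which the Koszul signs must be tracked carefully. A cleaner route, which I would pursue in parallel, is via the Weyl algebra formalism of \S\ref{sec:Weyl}: encode all three operations in a single quadratic element $P$ in the Weyl algebra built from $(A[1],\la\cdot,\cdot\ra)$, and reduce the full set of dIBL relations to the single identity $[P,P]=0$, which in turn follows essentially from the graded symmetry of the pairing together with $d^2=0$.
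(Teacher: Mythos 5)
Your definitions of the three operations agree with the paper's: the paper writes $\fp_{2,1,0}$ and $\fp_{1,2,0}$ on the dual side, inserting a contracted pair of dual basis vectors $\sum_{a,b}g^{ab}e_a\otimes e_b$ into the cyclic words, which for finite dimensional $A$ is exactly your ``contract one letter with another and reconnect the arcs''. Your list of quadratic relations is also the paper's list, and the overall route --- explicit formulas plus case-by-case combinatorial cancellation, with the signs as the real content --- is the route the paper takes; the paper's actual work consists of a ribbon-graph calculus (labelled graphs on surfaces, with orderings and orientations of edges fixed by spanning trees) introduced precisely to control the signs you defer. Two of your specific claims, however, need correction, and one of them is substantive.

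The minor one: the shift $[2-n]$ does \emph{not} make all three operations of degree $+1$. With the paper's conventions $\fp_{1,1,0}$ and $\fp_{1,2,0}$ have degree $-1$ while $\fp_{2,1,0}$ has degree $-2(n-3)-1$; the shift is what makes the degrees fit the $\IBL_\infty$ pattern $|\fp_{k,\ell,g}|=-2d(k+g-1)-1$ with $d=n-3$, not what equalizes them. The substantive one: your cancellation mechanism for the mixed relation does not cover involutivity. The graded commutator of $\hat\fp_{2,1,0}$ and $\hat\fp_{1,2,0}$ encodes two different identities: the Drinfeld compatibility (the $E_2\BC\to E_2\BC$ component, where split-then-glue terms do largely cancel against glue-then-split terms as you describe, although even here one configuration --- the one producing a non-separating chord --- cancels internally rather than against the other order) and the involutivity relation $\fp_{2,1,0}\circ_2\fp_{1,2,0}=0$ (the $E_1\BC\to E_1\BC$, genus-one component). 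For the latter there is nothing on the glue-then-split side to cancel against, since $\hat\fp_{2,1,0}$ vanishes on $E_1\BC$; the vanishing is an internal cancellation within split-then-glue, pairing the two orders in which the same configuration of two crossing chords on a single cyclic word can arise, with the sign coming from simultaneously reordering the two contractions and reversing one of them. As written, your sketch would leave this case open. Relatedly, the Weyl-algebra reformulation is not a shortcut here: for the quadratic Hamiltonian encoding these operations, $\H\star\H=0$ is the same set of identities in different notation, so it repackages rather than removes the combinatorial verification.
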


\begin{Theorem}\label{thm:homotopyequiv-intro} 
The {\rm dIBL}-structure in
Proposition~$\ref{prop:structureexists-intro}$ is 
$\IBL_\infty$-homotopy equivalent to the analogous structure
on the dual cyclic bar complex of the cohomology $H(A,d)$. 
\end{Theorem}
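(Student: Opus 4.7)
The plan is to invoke the Whitehead-type Theorem~\ref{Whi}: since $R = \mathbb R$ has characteristic zero, it suffices to construct an $\IBL_\infty$-morphism between the two dIBL-algebras whose linear component $\ff_{1,1,0}$ induces an isomorphism on $\fp_{1,1,0}$-homology.

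First I would choose a Hodge-type splitting of the finite-dimensional cochain complex $(A,d)$: a subspace $\mathcal H \subset \ker d$ mapping isomorphically to $H(A,d)$, together with inclusion $i : \mathcal H \hookrightarrow A$, projection $\pi : A \to \mathcal H$, and a degree $-1$ operator $h : A \to A$ satisfying $dh + hd = \id_A - i\pi$. The chain map $\pi$ extends termwise to a quasi-isomorphism of cyclic bar complexes $B^{\mathrm{cyc}}_\bullet A \to B^{\mathrm{cyc}}_\bullet H(A,d)$, whose $R$-linear dual yields a quasi-isomorphism $\ff_{1,1,0} : (B^{\mathrm{cyc}*} H(A,d))[2-n] \to (B^{\mathrm{cyc}*} A)[2-n]$ of the dual cyclic bar complexes (with the appropriate degree shifts built in). This is the candidate for the linear part of our $\IBL_\infty$-morphism.

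Next I would build the higher components $\ff_{k,\ell,g}$ by induction, using the obstruction theory of \S~\ref{sec:tech}. At each stage an obstruction cocycle appears, and a primitive is produced by the homotopy $h$, extended to the cyclic bar complex by the standard sum-over-ribbon-trees formalism of homological perturbation and Kontsevich--Soibelman-style homotopy transfer for cyclic $A_\infty$-structures: internal edges of the tree carry $h$, vertices carry the product $m_2$ of $A$, incoming and outgoing leaves carry $i$ or $\pi$, and the sums are symmetrized to pass between tensor and symmetric powers. Once $\ff$ is constructed, Theorem~\ref{Whi} immediately delivers the desired $\IBL_\infty$-homotopy equivalence.

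The hard part is the inductive step: verifying that the obstructions actually vanish and that the tree-sum formulas genuinely solve the full system of $\IBL_\infty$-morphism equations. Although the target is only a dIBL-algebra (with just the three operations $\fp_{1,1,0},\fp_{2,1,0},\fp_{1,2,0}$), the morphism equations couple the components $\ff_{k,\ell,g}$ across all $(k,\ell,g)$ and involve both merging ($\fp_{2,1,0}$) and splitting ($\fp_{1,2,0}$) of cyclic words, so the bookkeeping of cyclic signs and combinatorial multiplicities is genuinely nontrivial. Finite-dimensionality of $A$ ensures all relevant tree-sums are finite at each bidegree.
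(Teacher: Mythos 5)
Your overall strategy---produce an $\IBL_\infty$-morphism between the two structures whose $(1,1,0)$-part is a quasi-isomorphism and then invoke Theorem~\ref{Whi}---is the right one, and your choice of linear part (the dual of a projection $\pi:A\to\mathcal H$ coming from a Hodge splitting) is exactly the map the paper uses in its short proof. But you miss the one observation that makes this work, and the inductive step you propose in its place does not close. The key point is that the splitting can be chosen \emph{compatible with the pairing} (Lemma~\ref{lem:B}): with $\pi$ self-adjoint for $\langle\,,\,\rangle$, the dual map $\pi^*:B^{\text{\rm cyc}*}H\to B^{\text{\rm cyc}*}A$ with \emph{all higher components set to zero} is already a linear $\IBL_\infty$-morphism, i.e.\ it satisfies $\frac{1}{\ell!}(\pi^*)^{\odot\ell}\circ\fq_{k,\ell,g}=\fp_{k,\ell,g}\circ\frac{1}{k!}(\pi^*)^{\odot k}$ on the nose, because $\fp_{2,1,0}$ and $\fp_{1,2,0}$ are built from the inverse pairing tensor $g^{ab}$, which a self-adjoint projection carries to the inverse of the restricted pairing. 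No higher terms and no obstruction theory are needed; Theorem~\ref{Whi} then finishes. (The same does \emph{not} work for the dual of the inclusion $i:\mathcal H\hookrightarrow A$, which is why the paper's second, explicit proof in that direction needs nontrivial $\ff_{k,\ell,g}$ for all signatures.)

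Your inductive construction, as written, has two genuine problems. First, Proposition~\ref{prop:hom-main}(2) only shows that the obstruction $R_{K,L,G}$ is $\delta$-\emph{closed}; to continue the induction you need it to be $\delta$-\emph{exact} in $Hom(E_K C,E_L D)$, and when the target dIBL-structure on $B^{\text{\rm cyc}*}H$ is fixed in advance (as it is here) this does not follow from the existence of the homotopy $h$ alone --- the paper circumvents this either by the linear-morphism trick above, by modifying the target structure (the canonical model, Theorem~\ref{thm:canonical}), or by writing down explicit closed formulas and verifying the morphism equations directly. Second, your proposed primitives --- sums over ribbon \emph{trees} with $\fm_2$ at the vertices and $h$ on internal edges --- belong to a different construction (homotopy transfer of the $A_\infty$-structure, resp.\ the push-forward of the Maurer--Cartan element in Theorem~\ref{thm:existoncyc2}); the present theorem does not involve the product $\fm_2$ at all. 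The correct explicit formulas (the paper's second proof) sum over ribbon \emph{graphs} of arbitrary genus and arbitrary numbers of boundary components, with the Green kernel $G^{ab}$ on interior edges and the inputs $\varphi^v\in B^{\text{\rm cyc}*}A$ sitting at the interior vertices.
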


So far, the algebra structure on $A$ was not used. In \S
\ref{sec:Cyclic} we take it into account and prove

\begin{Proposition}\label{prop:existoncyc}
Let $A$ be a finite dimensional cyclic DGA whose pairing has degree
$-n$. Then its product $\fm_2$ gives rise to a Maurer-Cartan element
$\fm_2^+$ for the $\text{\rm dIBL}$-structure on the dual cyclic bar
complex of $A$ (completed with respect to its canonical filtration). 
\end{Proposition}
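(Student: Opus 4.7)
The plan is to identify the element $\fm_2^+$ explicitly as an element of the dual cyclic bar complex, and then verify the Maurer--Cartan equation by reducing its (finitely many) components to the defining axioms of the cyclic DGA.

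First I would construct $\fm_2^+$. The product $\fm_2 : A[1]^{\otimes 2} \to A[1]$ together with the pairing $\langle\cdot,\cdot\rangle$ gives the trilinear form
\[
\fm_2^+(x_0,x_1,x_2) := \langle \fm_2(x_1,x_2),x_0\rangle
\]
on $A[1]^{\otimes 3}$. The cyclic symmetry axiom for $\fm_2$ guarantees that $\fm_2^+$ descends to an element of $\Hom(B^{\text{cyc}}_3 A, \mathbb{R})$, and after the global shift by $[2-n]$ it sits as a homogeneous element of word length $3$ in $C = (B^{\text{cyc}*}A)[2-n]$. In particular, $\fm_2^+$ has strictly positive filtration degree for the canonical word-length filtration, so the formal Maurer--Cartan sum converges in the completion.

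Next I would unfold the Maurer--Cartan equation from \S\ref{sec:MC} for the present structure. Since only $\fp_{1,1,0}=d$, $\fp_{2,1,0}$ and $\fp_{1,2,0}$ are nonzero, only finitely many summands contribute, and the equation decomposes into the $\ell=1$ relation
\[
\fp_{1,1,0}(\fm_2^+) + \tfrac{1}{2}\fp_{2,1,0}(\fm_2^+,\fm_2^+) = 0
\]
together with the $\ell=2$ relation $\fp_{1,2,0}(\fm_2^+) = 0$.

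The third step is to verify these component equations by direct computation, using the explicit formulas for $\fp_{2,1,0}$ and $\fp_{1,2,0}$ established in \S\ref{sec:Cyclic-cochain}. Informally, $\fp_{2,1,0}$ concatenates two dual cyclic words along one letter by insertion of the inverse pairing, while $\fp_{1,2,0}$ splits a single cyclic word at two distinct letters in the same way. Substituting $\fm_2^+$ into $\tfrac12\fp_{2,1,0}(\fm_2^+,\fm_2^+)$ produces, up to signs, the associator of $\fm_2$ paired against a fourth argument; $\fp_{1,1,0}(\fm_2^+)$ produces the Leibniz defect $d\circ\fm_2 - \fm_2\circ(d\otimes 1 + 1\otimes d)$, also paired against a fourth argument. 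Both vanish by the DGA axioms. The $\ell=2$ relation reduces to a combinatorial identity among the splittings of the length-$3$ cyclic word representing $\fm_2^+$; these splittings cancel in pairs thanks to the cyclic symmetry already encoded in $\fm_2^+$.

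The main obstacle is sign bookkeeping. One must combine the Koszul signs from the degree shift $[2-n]$, the cyclic symmetrizations in the various $B^{\text{cyc}}_k$, the order-sensitive placement of inputs at each gluing or splitting site, and the dualization through $\langle\cdot,\cdot\rangle$, all in a way consistent with the conventions fixed in Proposition \ref{prop:structureexists-intro}. Once the sign conventions are pinned down consistently, the remaining verification is a finite and algorithmic check that uses associativity, Leibniz, and cyclic symmetry and nothing more.
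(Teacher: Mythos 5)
Your overall strategy --- realize $\fm_2^+$ as a word-length-$3$ element of the dual cyclic bar complex, reduce the Maurer--Cartan equation via Lemma~\ref{lem:MC} to the two component equations, and verify the first one by translating $\fp_{1,1,0}\fm_2^+ + \tfrac12\fp_{2,1,0}(\fm_2^+,\fm_2^+)$, paired against a last argument, into the Leibniz rule and associativity --- is exactly the paper's route (Lemma~\ref{lem:MC} together with Proposition~\ref{MCAinfty}). But your justification of the second component equation $\fp_{1,2,0}(\fm_2^+)=0$ is wrong. You claim the splittings of the length-$3$ cyclic word ``cancel in pairs thanks to the cyclic symmetry already encoded in $\fm_2^+$.'' No such cancellation occurs, and cyclic symmetry cannot be the operative mechanism: the paper computes $\fp_{1,2,0}(\fm_k^+)$ for a general cyclic $A_\infty$-algebra and finds it is \emph{not} zero for $k\geq 3$, even though every $\fm_k^+$ is cyclically symmetric --- this failure is precisely why the ``traceless'' condition, resp.\ the genus zero master equation, must be imposed in Proposition~\ref{propIBLI3}. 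The correct reason is purely a word-length count: $\fm_2^+$ lies in $B_3^{\text{cyc}*}A$, and the cobracket sends $B_k^{\text{cyc}*}A$ into $\bigoplus_{k_1+k_2=k-2,\;k_1,k_2\geq1}B_{k_1}^{\text{cyc}*}A\otimes B_{k_2}^{\text{cyc}*}A$; for $k=3$ this target is zero, since $k_1+k_2=1$ is impossible with both $k_i\geq 1$ (the reduced cyclic bar complex has no constant term). So the fact you need is true, but trivially --- there are no admissible splittings to cancel.

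Two smaller points. The convergence condition is not merely ``strictly positive filtration degree'': condition~\eqref{eq:energycondforMC} with $\gamma=2$ demands $\|\fm_{1,0}\|>\gamma\chi_{0,1,0}=2$, which holds because $\fm_2^+$ has word length $3$; lengths $1$ or $2$ would not do. Also, the paper's normalization $\fm_2^+(x_0,x_1,x_2)=(-1)^{n-2}\langle\fm_2(x_0,x_1),x_2\rangle$ carries a global sign needed for the first component equation to reproduce the $A_\infty$-relations exactly, so the sign bookkeeping you defer is not entirely cosmetic.
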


We can twist the $\text{\rm dIBL}$-structure of
Proposition~\ref{prop:structureexists-intro} by the Maurer-Cartan
element from Proposition~\ref{prop:existoncyc} to obtain a 
{\em twisted} filtered $\text{\rm dIBL}$-structure on the dual cyclic
bar complex $(B^{\text{\rm cyc}*}A)[2-n]$. Using (the filtered version
of) Theorem~\ref{thm:canonical}, this structure can be pushed to its
homology with respect to the twisted differential. 
Moreover, using Theorem~\ref{thm:homotopyequiv-intro} and general
homotopy theory of $\IBL_\infty$-algebras we prove 

\begin{Theorem}\label{thm:existoncyc2}
Let $A$ be a finite dimensional cyclic DGA whose pairing has
degree $-n$, and let $H=H(A,d)$ be its 
cohomology. Then there exists a filtered  $\IBL_\infty$-structure on 
$(B^{\text{\rm cyc}*}H)[2-n]$ which is $\IBL_\infty$-homotopy 
equivalent to $(B^{\text{\rm cyc}*}A)[2-n]$ with its twisted filtered 
$\text{\rm dIBL}$-structure. 
In particular, its homology equals Connes' version of cyclic
cohomology of $A$ as defined e.g.~in~\cite{Lod}. 
\end{Theorem}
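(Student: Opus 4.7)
The plan is to combine Theorem~\ref{thm:homotopyequiv-intro} with the transfer behavior of Maurer--Cartan elements under $\IBL_\infty$-homotopy equivalences. Let $\ff \colon (B^{\text{\rm cyc}*}A)[2-n] \to (B^{\text{\rm cyc}*}H)[2-n]$ denote the $\IBL_\infty$-homotopy equivalence between the two dIBL-structures provided by Theorem~\ref{thm:homotopyequiv-intro}. The dIBL-operations have filtration degree zero with respect to the canonical (word-length) filtration, and the construction of $\ff$ in \S\ref{sec:subcomplex} is expected to come from homological perturbation applied to a deformation retract and hence to have the same property, so $\ff$ is in fact a filtered $\IBL_\infty$-homotopy equivalence.

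Next I would push the Maurer--Cartan element $\fm_2^+$ from Proposition~\ref{prop:existoncyc} forward along $\ff$. The general machinery in \S\ref{sec:MC} tells us that an $\IBL_\infty$-morphism sends MC elements to MC elements via a pushforward formula which is a sum of the components $\ff_{k,\ell,g}$ evaluated on tensor powers of the input. Because $\fm_2^+$ lies in positive filtration and $\ff$ is filtered, this sum converges in the completed cyclic bar complex of $H$ and defines a filtered MC element $\fm^H := \ff_*(\fm_2^+)$ on $(B^{\text{\rm cyc}*}H)[2-n]$.

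Now I would twist both sides by their respective MC elements. On the source this yields the twisted filtered dIBL-structure appearing in the statement; on the target it yields a filtered $\IBL_\infty$-structure in which higher operations $\fp_{k,\ell,g}$ typically appear, even though the untwisted dIBL-structure is concentrated in the three basic components. By naturality of twisting under $\IBL_\infty$-morphisms (\S\ref{sec:MC}), the twist of $\ff$ is an $\IBL_\infty$-morphism between the two twisted structures, and because $\ff$ was a homotopy equivalence so is its twist, which establishes the main claim.

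For the homology assertion, one identifies the twisted $\fp_{1,1,0}$ on $(B^{\text{\rm cyc}*}A)[2-n]$ with the Connes cyclic coboundary: it is the sum of the original differential $d$ with the bracket-with-$\fm_2^+$ contribution which, by Proposition~\ref{prop:existoncyc}, encodes exactly the Hochschild differential associated to the algebra product, so its homology is Connes' cyclic cohomology of $A$. The homotopy equivalence between the twisted structures induces an isomorphism on the homology of $\fp_{1,1,0}$, so the same holds on the $H$-side. The main technical obstacle I expect is controlling convergence and filtration compatibility throughout: ensuring that $\ff_*(\fm_2^+)$ is well defined in the completed setting, that the twisted components assemble into a genuine filtered $\IBL_\infty$-morphism, and that twisting commutes with $\ff$ in the precise sense needed to preserve homotopy equivalences between twisted structures.
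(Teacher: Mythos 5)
Your proposal is correct and follows essentially the same route as the paper: take the filtered $\IBL_\infty$-homotopy equivalence $\ff$ from Theorem~\ref{thm:homotopyequiv-intro} (made filtered via Corollary~\ref{cor:structureexists}), push forward $\fm_2^+$ using Lemma~\ref{lem:pushforward}, twist both sides and invoke Proposition~\ref{prop:MC-homotopy} to see that the twisted morphism remains a homotopy equivalence, and identify the twisted differential with the cyclic coboundary as in Remark~\ref{rem:cyc-hom}. The only imprecision is your claim that the dIBL-operations have filtration degree zero; in fact $\|\fp_{k,\ell,g}\|=2\chi_{k,\ell,g}$ (so $\fp_{2,1,0}$ and $\fp_{1,2,0}$ have degree $-2$), which is exactly why the paper works with filtered structures of bidegree $(n-3,\gamma=2)$ rather than degree-zero ones, but this does not affect the argument.
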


%\begin{rem}\label{rem:cyclic-hom-intro}
%The homology of the (completed) dual cyclic bar complex 
%$\wh{B^{\text{\rm cyc}*}A}$ with respect to its twisted differential
%is some version of ``cyclic cohomology'' of the DGA $A$ (see \S
%\ref{sec:Cyclic}). 
%The relation of this definition of cyclic homology to other versions
%in the literature such as~\cite{GJ90,KoSo02,Tra08} will be discussed
%elsewhere.  
%\end{rem}

\begin{rem}
Theorem~\ref{thm:existoncyc2} together with its idea of proof 
using summation over ribbon graphs was explained by the authors 
on several occasions, for example by the second named author 
at the conference `Higher Structures in Geometry and Physics' in Paris
$2007$. At the final stage of completing this paper, the authors 
found that results in~\cite{Bar2} and~\cite{Chen10} seem to be closely
related to Theorem~\ref{thm:existoncyc2}.
\end{rem}

%\marginpar{
%\tiny This remark is rather delicate, since priority issue is related.
%Please see [Bar2] and check whether the way I quote is fair.
%The slide of my talk at 2007 is on my home page and also 
%in the conference home page 
%http://www.math.psu.edu/ping/IHP07/slides/index.html
%where the main construction of this section is given.  
%Kenji}
%\marginpar{\tiny Speaking of delicate: What to say about Chen's
%  preprint arXiv:1002.2939?}

{\bf The de Rham complex and string topology. }
Now consider the de Rham complex $(\Omega(M),d)$ of a closed oriented
manifold $M$. The wedge product and the intersection pairing
$\int_Mu\wedge v$ give $\Omega(M)$ the structure of a cyclic
DGA. However, it is not finite dimensional, so we cannot directly
apply the theory in Sections \S \ref{sec:Cyclic-cochain} and \S
\ref{sec:subcomplex}. 
%Proposition \ref{prop:existoncyc}.
To remedy this, we introduce in \S \ref{de Rham} the subspaces 
$B^{\text{cyc}*}_k\Omega(M)_{\infty}\subset
B^{\text{cyc}*}_k\Omega(M)$ of operators with smooth kernel and prove 
the following analogues of
Proposition~\ref{prop:structureexists-intro} and
Theorem~\ref{thm:homotopyequiv-intro} 
(see \S \ref{de Rham} for the relevant definitions)

\begin{Proposition}\label{prop:cycIBLde-intro}
Let $M$ be a closed oriented manifold of dimension $n$. Then 
$B^{\text{\rm cyc}*}\Omega(M)_{\infty}[2-n]$ carries the structure of a 
Fr\'echet {\rm dIBL}-algebra. 
%whose homology equals the cyclic
%cohomology of the de Rham complex of $M$. 
\end{Proposition}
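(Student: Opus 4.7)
The plan is to transcribe the finite-dimensional construction of Proposition~\ref{prop:structureexists-intro} to the de Rham setting. Elements of $B^{\text{\rm cyc}*}_k\Omega(M)_\infty$ are to be realized as cyclically (anti)symmetric smooth forms on $M^k$ via the pairing $\langle\varphi,\alpha_1\otimes\cdots\otimes\alpha_k\rangle = \int_{M^k}\varphi\wedge(\alpha_1\boxtimes\cdots\boxtimes\alpha_k)$. Under this identification the proof splits into four steps: (i) writing down the three operations $\fp_{1,1,0}$, $\fp_{2,1,0}$, $\fp_{1,2,0}$ as explicit geometric operations on smooth forms on products of $M$; (ii) checking that they preserve the smooth-kernel subspace; (iii) verifying the dIBL relations $\hat\fp\circ\hat\fp=0$; (iv) establishing continuity with respect to the standard Fr\'echet topologies on the spaces $\Omega(M^k)$.

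The operator $\fp_{1,1,0}$ is induced by the de Rham differential $d$ acting in each variable, and manifestly preserves smoothness and is Fr\'echet-continuous. The operator $\fp_{2,1,0}$, which in the algebraic case is built from $\fm_2$ and the pairing, becomes a gluing operation: given smooth kernels $\varphi\in\Omega(M^k)$ and $\psi\in\Omega(M^\ell)$, one identifies a variable from each (summed over the cyclic positions), wedges, and applies fiber integration over the shared copy of $M$, producing a smooth form on $M^{k+\ell-2}$. Both the wedge product and fiber integration over the compact manifold $M$ are continuous between the relevant Fr\'echet spaces, so this step is routine.

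The genuinely new point is $\fp_{1,2,0}$. In the algebraic setting it is built from the Casimir element $\sum_\alpha e_\alpha\otimes e^\alpha\in A\otimes A$ dual to the pairing; in the de Rham setting this object is the Poincar\'e-dual current of the diagonal $\Delta_M\subset M\times M$, which is distributional and does \emph{not} lie in $\Omega(M)\otimes\Omega(M)$. Applied to a \emph{smooth} kernel $\varphi$ on $M^k$, however, the formula for $\fp_{1,2,0}(\varphi)$ amounts to restricting $\varphi$ to a diagonal $\{x_i=x_j\}\subset M^k$ (with the appropriate sum over cyclic positions) and regrouping the remaining arguments into two cyclic strings. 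Restriction of smooth forms to a smooth closed submanifold is continuous in the Fr\'echet topology, so the output is a smooth form on $M^{k-2}$, which one views as an element of the Fr\'echet-completed tensor product $\Omega(M^i)\,\hat\otimes\,\Omega(M^j)$ with $i,j$ determined by the combinatorics. This is precisely why the smooth-kernel subspace $B^{\text{\rm cyc}*}\Omega(M)_\infty$ is introduced.

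Finally, the identity $\hat\fp\circ\hat\fp=0$ follows from the same formal identities as in the proof of Proposition~\ref{prop:structureexists-intro} --- associativity and graded-commutativity of $\wedge$, the derivation and square-zero properties of $d$, and Stokes' theorem on $M$ --- none of which are affected by the infinite-dimensionality of $\Omega(M)$ once every operation has been well-defined on smooth kernels. Fr\'echet continuity of all three operations is immediate from the continuity of $d$, pullback along smooth maps, the wedge product, and fiber integration. The principal obstacle, and the only point at which analysis genuinely enters, is step (ii) for $\fp_{1,2,0}$: controlling the distributional Poincar\'e dual of the diagonal by exploiting smoothness of the kernel, and verifying that the output lies in the appropriate completed tensor product compatibly with the cyclic symmetry and the degree shift $[2-n]$.
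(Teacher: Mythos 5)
Your proposal matches the paper's proof in all essentials: the operations are defined on smooth kernels exactly as you describe --- $\fp_{2,1,0}$ by pulling back the product of kernels to a partial diagonal and fiber-integrating over the shared copy of $M$, and $\fp_{1,2,0}$ by pulling $\frak K_{\varphi}$ back to the partial diagonal identifying two slots and then fiber-integrating over the diagonal variable (note the restriction alone lands on $M^{k-1}$; the integration over the diagonal copy of $M$ is still needed to reach $M^{k-2}$), with the output placed in the completed tensor product for precisely the reason you give. The only device the paper adds is a finite-dimensional approximation by Laplacian eigenspaces $\Omega_E(M)$, used to fix the signs by demanding that the operations restrict on each truncation to the algebraic ones of Proposition~\ref{prop:structureexists-intro} and then extending by continuity.
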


\begin{Theorem}\label{homotopyequiv-dR-intro}
The Fr\'echet ${\rm dIBL}$-structure in Proposition~$\ref{prop:cycIBLde-intro}$ is 
$\IBL_\infty$-homotopy equivalent to the analogous structure
on the dual cyclic bar complex of the de Rham cohomology $H_{\rm
  dR}(M)$.  
\end{Theorem}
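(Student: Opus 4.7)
The plan is to follow the template of Theorem~\ref{thm:homotopyequiv-intro}, with the finite-dimensional Hodge-theoretic splitting there replaced by genuine Hodge theory on $\Omega(M)$. Fix a Riemannian metric on $M$, let $\mathcal{H}(M)\cong H_{\rm dR}(M)$ denote the finite-dimensional space of harmonic forms, and let $\pi\colon\Omega(M)\to\mathcal{H}(M)$ be orthogonal projection, $\iota\colon\mathcal{H}(M)\hookrightarrow\Omega(M)$ the inclusion, and $h:=d^*G$ (with $G$ the Green's operator for the Hodge Laplacian) the contracting homotopy, so that $\pi\iota=\id$ and $\id_{\Omega(M)}-\iota\pi=dh+hd$. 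By elliptic regularity, all three of $\pi$, $\iota$, $h$ have smooth Schwartz kernels on $M\times M$, so they fit into the smooth-kernel framework underlying Proposition~\ref{prop:cycIBLde-intro}.

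First I would check that, after dualization and cyclic symmetrization, $\pi$, $\iota$, and $h$ induce Fr\'echet-continuous maps between the chain complexes $(B^{\text{cyc}*}\Omega(M)_\infty[2-n],\fp_{1,1,0})$ and $(B^{\text{cyc}*}\mathcal{H}(M)[2-n],\fp_{1,1,0})$, yielding a chain homotopy equivalence that preserves the smooth-kernel subspace and whose linear part is a quasi-isomorphism.

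Next I would promote this chain-level equivalence to a full $\IBL_\infty$-morphism $\ff$ by inductively solving the obstruction equations of \S\ref{sec:tech}. Because both structures are dIBL (with only $\fp_{1,1,0},\fp_{1,2,0},\fp_{2,1,0}$ nonzero), at each inductive stage the obstruction is a finite sum of gluings of previously constructed smooth kernels along factors paired by fiber integration over copies of $M$; such gluings preserve smoothness. A primitive for the resulting $\fp_{1,1,0}$-closed obstruction can then be chosen by inserting the Hodge homotopy $h$ into an appropriate tensor slot, which again lands in $B^{\text{cyc}*}\Omega(M)_\infty$. In this way the entire sequence $\{\ff_{k,\ell,g}\}$ stays in the smooth-kernel category.

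Finally, since $\ff_{1,1,0}$ is a quasi-isomorphism and $R=\R$ is a field of characteristic zero, Theorem~\ref{Whi} yields that $\ff$ is an $\IBL_\infty$-homotopy equivalence. The main obstacle is analytic rather than combinatorial: one must verify that not only the extension in \S\ref{sec:tech} but also the auxiliary constructions hidden inside the proof of Theorem~\ref{Whi} (a homotopy inverse together with chain homotopies) can be carried out entirely within the Fr\'echet subcomplex of operators with smooth Schwartz kernels. This reduces to two stability properties of the smooth-kernel class, namely closure under fibered composition over copies of $M$ and under pre- or post-composition with the smoothing operator $h$, together with the standard continuity estimates needed to make sense of the inductive limits in the Fr\'echet topology.
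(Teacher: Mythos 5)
Your overall shape (Hodge theory on $\Omega(M)$, a morphism whose linear part is dual to the inclusion of harmonic forms, then the Fr\'echet version of Theorem~\ref{Whi}) matches the paper's intent, but the central step is asserted rather than proved, and it is not automatic. Proposition~\ref{prop:hom-main}(2) only shows that the obstruction $R_{K,L,G}(\ff,\fp,\fq)$ to extending a partial morphism is $\delta$-\emph{closed}; it does not show it is $\delta$-\emph{exact}, and extending a given chain map between two already-fixed $\IBL_\infty$-algebras to a full $\IBL_\infty$-morphism is in general obstructed. Your statement that ``a primitive \ldots can be chosen by inserting the Hodge homotopy $h$ into an appropriate tensor slot'' is exactly the nontrivial content: at the first stage it amounts to Lemma~\ref{lem:edge2} ($dG+Gd=\Pi-\id$ translated into kernels), but at higher stages the candidate primitive is a sum over all ribbon graphs of the given signature with the Green kernel on every interior edge, and verifying that this choice actually satisfies the morphism equations is the combinatorial heart of the argument (Claims 1--5 in the proof of Theorem~\ref{thm:homotopyequiv}). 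Without that verification, your induction does not close.

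The paper resolves both this and the analytic issue you flag at the end in one stroke, by a device absent from your proposal: spectral truncation. It writes down the explicit graph-sum morphism \eqref{fdefstateint} with the propagator $G(x,y)$ on interior edges, then restricts everything to the finite-dimensional subcomplexes $\Omega_E(M)$ spanned by Laplace eigenforms of eigenvalue $<E$, where the pairing is nondegenerate and the already-proved finite-dimensional Theorem~\ref{thm:homotopyequiv} applies verbatim (Lemmas~\ref{restpisp} and~\ref{GammacoincidenceE}); the morphism identities on $\mathbf{C}_\infty$ then follow by continuity since $\bigcup_E\mathbf{C}_E$ is dense. An alternative correct route, closer in spirit to your use of Theorem~\ref{Whi} and paralleling the paper's ``short proof'' of Theorem~\ref{thm:homotopyequiv}, is to go in the \emph{opposite} direction: the dual of the orthogonal projection $\Omega(M)\to\mathcal{H}(M)$ is a strict \emph{linear} $\IBL_\infty$-morphism $B^{\text{\rm cyc}*}H_{dR}(M)[2-n]\to B^{\text{\rm cyc}*}\Omega(M)_\infty[2-n]$ (no higher terms, no obstruction theory needed), and one inverts it by the Fr\'echet Whitehead theorem. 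Note the paper explicitly warns that this trick does not work with the inclusion $i$ in place of the projection, which is precisely the direction you chose.
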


The triple intersection product $\frak m^+_2(u,v,w) =  (-1)^{*} \int_M
u \wedge v\wedge w$ defines an element $\frak m^+_2 \in B^{\text{\rm
    cyc}*}_3\Omega(M)$ satisfying the equations of a Maurer-Cartan
element. However, $\frak m^+_2$ does {\it not} have a smooth kernel, so
we cannot use it directly to twist the Fr\'echet  {\rm
  dIBL}-structure. Nevertheless, we expect that by pushing the
structure onto the (finite dimensional!) de Rham cohomology
$H_{dR}(M)=H(\Omega(M),d)$ 
%and clarifying the relation of our completed dual cyclic bar complex 
%to cyclic cohomology, 
one can prove the following analogue of Theorem~\ref{thm:existoncyc2}. 

\begin{Conjecture}\label{deRhamconj-intro}
Let $M$ be a closed oriented manifold of dimension $n$ and
$H=H_{dR}(M)$ its de Rham cohomology. Then there exists a filtered 
$\IBL_\infty$-structure on $B^{\text{\rm cyc}*}H[2-n]$ whose homology
equals Connes' version of cyclic cohomology of the de Rham complex of $M$. 
%\marginpar{\tiny shouldn't we try to mod out the constants in some intelligent way?}
\end{Conjecture}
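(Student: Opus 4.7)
The plan is to combine the homotopy transfer machinery of Theorems~\ref{Whi} and~\ref{thm:canonical} with a Hodge-theoretic replacement for the singular Maurer-Cartan element $\fm_2^+$. Fix a Riemannian metric on $M$ and write $\iota:\mathcal{H}\hookrightarrow\Omega(M)$ for the inclusion of harmonic forms, $\pi:\Omega(M)\to\mathcal{H}$ for the harmonic projection, and $G$ for the associated Green operator, so that $\iota\pi-\id=dG+Gd$. Theorem~\ref{homotopyequiv-dR-intro} already provides an $\IBL_\infty$-homotopy equivalence between the Fr\'echet dIBL-algebra on $B^{\text{cyc}*}\Omega(M)_\infty[2-n]$ and the untwisted dIBL-algebra on $B^{\text{cyc}*}H[2-n]$ obtained from $H=H_{dR}(M)$ viewed as a cyclic cochain complex with the Poincar\'e pairing.

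The main step is to construct, on the finite-dimensional side, a Maurer-Cartan element $\widetilde{\fm}^+=\sum_{k,\ell\ge 1,\,g\ge 0}\widetilde{\fm}^+_{k,\ell,g}\,\hbar^{k+g-1}\tau^{k+\ell+2g-2}$ for this transferred dIBL-structure, whose lowest-order piece recovers the cup product on $H$. I would define $\widetilde{\fm}^+_{k,\ell,g}\in\Hom(B^{\text{cyc}}_kH,B^{\text{cyc}}_\ell H)$ as a sum over trivalent ribbon graphs of signature $(k,\ell,g)$, where each vertex is decorated by the triple intersection $(u,v,w)\mapsto\int_M u\wedge v\wedge w$, each external half-edge by $\iota$, and each internal edge by the integral kernel of the Green operator $G$ on $M\times M$. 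The Maurer-Cartan equation $\hat{\widetilde{\fm}^+}\circ\hat{\widetilde{\fm}^+}=0$ should then follow from a Stokes-type argument on the compactified moduli space of ribbon graphs: the codimension-one boundary strata arising from edge contractions match exactly the terms produced by the transferred operators $\fp^{\text{tr}}_{k,\ell,g}$ acting on $\widetilde{\fm}^+$, together with the composition/bracket terms on the right-hand side of the twisted $\IBL_\infty$-relations of Section~\ref{sec:MC}.

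The principal obstacle, and the reason this remains a conjecture, is the \emph{analytic convergence of the ribbon graph integrals}. The Green kernel has a diagonal singularity of Newtonian type, so each individual graph contribution is only well-defined after a careful regularization on a Fulton--MacPherson/Axelrod--Singer-type compactification of the configuration space of points on $M$ (in the spirit of Kontsevich and Bott--Cattaneo), and one must check that diagonal contributions can be treated uniformly across the entire sum so that the Stokes argument of the preceding paragraph is not obstructed by anomaly terms. Granting this, one twists the transferred $\IBL_\infty$-structure by $\widetilde{\fm}^+$ as in Section~\ref{sec:MC} to obtain the desired filtered $\IBL_\infty$-structure on $B^{\text{cyc}*}H[2-n]$. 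The identification of its homology with Connes' cyclic cohomology of $\Omega(M)$ then follows as in Theorem~\ref{thm:existoncyc2}: the leading part of the twisted differential $\fp_{1,1,0}$ is the cyclic/Hochschild differential on $B^{\text{cyc}*}H$ induced by the cup product, while the higher-order graph contributions encode the secondary Massey-type operations of $\Omega(M)$, and together these detect the full cyclic cohomology of the de Rham algebra.
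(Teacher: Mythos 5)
The statement you are addressing is a \emph{Conjecture} in the paper, and the paper does not prove it; it only outlines a strategy (end of Section~\ref{de Rham} and Remark~\ref{rem:MC}): formally push $e^{\fm_2^+}$ forward under the graph-sum homotopy equivalence of Theorem~\ref{homotopyequiv-dR-intro}, i.e.\ sum over trivalent ribbon graphs with the triple intersection form at interior vertices, harmonic forms at exterior vertices and the Green kernel on interior edges, then twist by the resulting Maurer--Cartan element. Your proposal is essentially identical to that strategy, and you correctly isolate the same unresolved analytic point --- the diagonal singularity of the propagator, to be handled by a Fulton--MacPherson/Axelrod--Singer compactification --- which is precisely why the statement remains a conjecture rather than a theorem; so your text is as much (and as little) of a proof as the paper itself offers. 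One small notational correction: a Maurer--Cartan element is indexed only by $(\ell,g)$, with the number $k$ of trivalent vertices summed over, so the object you want is $(\ff_*\fm_2^+)_{\ell,g}\in\wh E_\ell\bigl(B^{\text{\rm cyc}*}H\bigr)[2-n]$ as in Remark~\ref{rem:MC}; writing $\widetilde\fm^+_{k,\ell,g}\in\Hom(B^{\text{\rm cyc}}_kH,B^{\text{\rm cyc}}_\ell H)$ makes it look like an operation with $k$ inputs rather than an element, and the Maurer--Cartan equation is $\hat\fp(e^{\fm})=0$ for the (transferred) structure maps $\hat\fp$, not a self-composition of $\fm$.
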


Recall that if $M$ is simply connected, then by a theorem of Jones \cite{Jo}
(see also Chen~\cite{Chen73} and Goodwillie~\cite{Good}) 
various versions of cyclic cohomology
of $\Omega(M)$ are isomorphic to suitable versions of
$S^1$-equivariant homology of the free loop space of $M$.  
%\marginpar{\tiny Need to check whether relative cohomology here is 
%actually OK. Kenji Jan 28}
It is shown in~\cite{CV} that Connes' version of cyclic cohomology is
isomorphic to a ``reduced'' version of $S^1$-equivariant homology of
the free loop space, which differs from standard equivariant homology
(defined via the Borel construction) only by the equivariant homology
of a point.  
We conjecture that under this identification the involutive Lie
bialgebra structure induced by the 
$\IBL_\infty$-structure in Conjecture~\ref{deRhamconj-intro}
agrees with the string bracket and cobracket described by Chas and
Sullivan in~\cite{ChSu99, ChSu04}. This is supposed to be a special
case of such a structure on the equivariant loop space
homology of any closed oriented manifold $M$ \cite{Sul07,CL2}. 
\par

The strategy to prove Conjecture~\ref{deRhamconj-intro} is to mimic 
the proof of Theorem~\ref{thm:existoncyc2} in the de Rham case. Then 
finite sums over basis elements get replaced by multiple integrals
involving the Green kernel associated to a Riemannian metric on
$M$, in a way similar to \cite{Kad82, KoSo01,Fuk03II}
and to perturbative Chern-Simons gauge theory \cite{Wit95,BarNat95}.  
The difficulty in making this rigorous arises from possible divergences
at the diagonal where some integration variables become equal. We hope
to show in future work that this problem can be resolved in a similar
way as in perturbative Chern-Simons gauge theory \cite{AxSi91II}. 

%However, we can use Riemannian metric $g$ on $L$ to reduce $ 
%(\Omega(L),d)$ to a finite dimenion.
%Namely, we can use
%the Green kernel (or the propagator) assciated to
%$g$, in a way similar to \cite{Kad82, KoSo01,Fuk03II}
%(and to perturbative Chern-Simons gauge theory \cite 
%{AxSi91II,Wit95}) to reduce the $A_{\infty}$
%structure on $(\Omega(L),d)$ to $(H(L,g),\frak m^g_k)$.
%(See \S \ref{sec:Cyclic}.) Here
%$H(L)$ is the de Rham cohomology. $\frak m^g_2$ is the cup 
%product and
%is independent of $g$. But $\frak m^g_k$ ($k \ge 3$) is 
%related to Massey
%product and in general depends on $g$.
%Neverthless we can prove the following.
%
%\begin{Theorem}
%The unfiltered involutive bi-Lie infinity algebra
%$(B^{\text{\rm cyc *}}A)[2-n]$
%is independent of the choice of the Riemannian metric up to
%homotopy equivalence.
%\label{iteth}\end{Theorem}

{\bf Lagrangian Floer theory. }
Finally, consider an $n$-dimensional closed oriented Lagrangian
submanifold $L$ of a symplectic manifold $(W,\omega)$ (closed or
convex at infinity). 
Then holomorphic curves in $W$ with boundary on $L$ give
rise to a further deformation of the $\IBL_\infty$-structure in
Conjecture~\ref{deRhamconj-intro} associated to $L$. The structure arising from
holomorphic disks has been described in~\cite{Fuk05II}, and the
general structure (in slightly different language) in~\cite{CL2}. 
In the terminology of this paper, it can be described as follows. 

It is proved in~\cite{FOOO06, Fuk10} that moduli spaces of holomorphic disks
with boundary on $L$ give rise to a (filtered) cyclic
$A_\infty$-structure on its de Rham cohomology $H_{dR}(L)$. Moduli
spaces of holomorpic curves of genus zero with several boundary
components should give rise to a solution of an appropriate 
version of Batalin-Vilkovisky Master equation, see \S \ref{sec:Cyclic}.  
Moreover, this data can be further enhanced using holomorphic curves
of higher genus.  
We prove in \S \ref{sec:Cyclic} that Proposition~\ref{prop:existoncyc}
carries over to such $A_\infty$-algebras, so we arrive at the following 

\begin{Conjecture}\label{Lagconj-intro}
Let $L$ be an $n$-dimensional closed oriented Lagrangian
submanifold $L$ of a symplectic manifold $(W,\omega)$ (closed
or convex at infinity) and let 
$H=H_{dR}(M)$ be its de Rham cohomology. Then there exists a filtered
$\IBL_\infty$-structure on $B^{\text{\rm cyc}*}H[2-n]$ whose homology
equals the cyclic cohomology of the cyclic $A_\infty$-structure on $H$
constructed in~\cite{FOOO06, Fuk10}.  
\end{Conjecture}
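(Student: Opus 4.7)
The strategy mirrors the proofs of Theorem~\ref{thm:existoncyc2} and Conjecture~\ref{deRhamconj-intro}, with moduli spaces of holomorphic curves in $(W,L)$ replacing combinatorial input. I would proceed in three steps, the analysis of moduli spaces being the obstacle.

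First, take the filtered cyclic $A_\infty$-structure $\{\fm_k\}_{k\ge 0}$ on $H=H_{dR}(L)$ constructed in~\cite{FOOO06, Fuk10} from moduli spaces of $J$-holomorphic disks in $W$ with boundary on $L$. Since $H$ is finite dimensional, Proposition~\ref{prop:structureexists-intro} equips $B^{\text{\rm cyc}*}H[2-n]$ with the base $\text{dIBL}$-structure, and the extension of Proposition~\ref{prop:existoncyc} to filtered cyclic $A_\infty$-algebras proved in \S\ref{sec:Cyclic} encodes $\{\fm_k\}$ as a Maurer-Cartan element $\fm^+$ for that $\text{dIBL}$-structure.

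Second, enhance $\fm^+$ by incorporating curves with several boundary components and positive genus. For each triple $(k,\ell,g)$ the moduli space of stable $J$-holomorphic maps into $(W,L)$ from a bordered Riemann surface of genus $g$ with $\ell$ boundary circles carrying in total $k$ cyclically ordered boundary marked points, after virtual perturbation and integration of pulled-back differential forms on $L$ against the $k$ inputs, defines a contribution $\fq_{k,\ell,g} \in E_\ell(B^{\text{\rm cyc}*}H[2-n])$ with the correct symmetries. Combining them into
$$
\fq \;=\; \sum_{k,\ell\ge 1,\,g\ge 0} \fq_{k,\ell,g}\,\hbar^{k+g-1}\tau^{k+\ell+2g-2},
$$
the codimension-one boundary of each such moduli space decomposes in the standard way into strip breaking at a marked point, node formation inside a single boundary circle, and splitting of the domain along an interior or boundary node. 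These three types of degeneration match precisely the three types of composition appearing in $\hat\fp\circ\hat\fp$, so the compactness/gluing package identifies $\fq$ as a Maurer-Cartan element for the base $\text{dIBL}$-structure in the sense of \S\ref{sec:MC}.

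Third, twist the base $\text{dIBL}$-structure by $\fq$ via the formalism of \S\ref{sec:MC} to obtain a filtered $\IBL_\infty$-structure on $B^{\text{\rm cyc}*}H[2-n]$. By construction its $(1,1,0)$-operation is the cyclic Hochschild differential determined by the cyclic $A_\infty$-algebra $(H,\{\fm_k\})$, so its homology is Connes' cyclic cohomology of this algebra, as required.

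The main obstacle is analytic, not algebraic. Establishing the boundary identifications above rigorously requires Kuranishi structures or virtual perturbations on the compactified moduli spaces of higher-genus bordered maps that are simultaneously compatible with all three types of gluing, that respect the $\ell$-fold cyclic symmetry of the outputs and the filtration by symplectic area, and that produce operations smooth enough to be twisted in the filtered setting --- compare the ``smooth kernel'' difficulty already flagged in Conjecture~\ref{deRhamconj-intro}. The disk case ($\ell=1$, $g=0$) is handled by the FOOO machinery, but extending it uniformly through all $(k,\ell,g)$ while retaining algebraic coherence is precisely the reason the statement is posed as a conjecture rather than a theorem.
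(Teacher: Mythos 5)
The statement is a conjecture; the paper does not prove it but only sketches a strategy at the end of \S\ref{de Rham}, so what can be compared is your outline against theirs. Your first and third steps agree with the paper's intent, but your second step takes a genuinely different — and, as stated, problematic — route. The paper defines the Maurer--Cartan element at \emph{chain level}: the moduli spaces $\mathcal M_{g;(s_1,\dots,s_\ell)}(\beta)$ produce elements of $\wh E_\ell\bigl(B^{\text{\rm cyc}*}S(L)[2-n]\otimes\Lambda_0\bigr)$ with $S(L)=\Omega(L)$, and these are then \emph{pushed forward} to $B^{\text{\rm cyc}*}H_{dR}(L)[2-n]\otimes\Lambda_0$ via the ribbon-graph/Green-kernel homotopy equivalence whose existence is itself the content of Conjecture~\ref{deRhamconj-intro}. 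The resulting element is Maurer--Cartan for the \emph{pushed-forward} $\IBL_\infty$-structure on cohomology (the one built from trivalent-graph integrals with propagators on interior edges), not for the bare $\text{\rm dIBL}$-structure of Proposition~\ref{prop:structureexists-intro}.

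Your proposal instead defines $\fq_{k,\ell,g}$ directly on $H$ and asserts that the codimension-one boundary of the moduli spaces closes up the Maurer--Cartan equation for the base $\text{\rm dIBL}$-structure on $B^{\text{\rm cyc}*}H[2-n]$. This is where the argument would fail: splitting a holomorphic curve at a boundary node inserts the full diagonal class of $L$, whereas the operations $\fp_{2,1,0}$, $\fp_{1,2,0}$ on $B^{\text{\rm cyc}*}H$ insert only $\sum_a (-1)^{\eta_a}g^{ab}\,e_a\otimes e_b$ over a basis of $H$; the discrepancy is exactly $dG+Gd$ for a propagator $G$, and absorbing it is precisely what the Green-kernel pushforward (and its divergence problems at the diagonal) accomplishes. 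Omitting this step conflates the two conjectures and hides the reason Conjecture~\ref{deRhamconj-intro} is listed as a prerequisite. Two smaller points: a Maurer--Cartan element carries the weight $\hbar^{g-1}$ (it is a morphism from the trivial algebra, so $k=0$ in $\hbar^{k+g-1}$), not $\hbar^{k+g-1}\tau^{k+\ell+2g-2}$ as you wrote; and the filtration by symplectic area forces coefficients in the Novikov ring $\Lambda_0$, which your formulation of the target space drops.
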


\begin{Remark}
%\marginpar{\tiny Move this remark elsewhere?}
Suppose that $\pi:W\to D$ is an exact symplectic Lefschetz fibration
over the disk (e.g.~any Stein domain $W$ admits such a
fibration) and let $L_1,\dots,L_m$ be the vanishing cycles. 
A conjecture of Seidel (\cite{Sei09}, see~\cite{BEE09} for further
evidence for this conjecture) asserts that the symplectic
homology of $W$ equals the Hochschild homology of a certain
$A_\infty$-category with objects $L_1,\dots,L_m$. An equivariant
version of this conjecture would equate the $S^1$-equivariant
symplectic cohomology $SH_{S^1}^*(W)$ to the cyclic cohomology of this
$A_\infty$-category. Hence the $\IBL_\infty$-structure on
$SH_{S^1}^*(W)$ mentioned above 
%\marginpar{\tiny Where is this mentioned?\\ at the end of page 4}
would follow from this conjecture and
an extension of Proposition~\ref{prop:existoncyc} to suitable
$A_\infty$-categories.  
\end{Remark}
\bigskip

\centerline{\bf Acknowledgements}
We thank
A. Cattaneo,
E. Getzler, 
K. M\"unster,
I. Sachs, 
B. Vallette,
and E. Volkov
for stimulating discussions. 
We also thank S. Baranikov, G. Drummond-Cole
and A. Voronov for their comments about the previous version 
of this paper.

%%%%%%%%%%%%%%%%%%%%%%%%%%%%%%%%%%%%%%%%%%%%%%%%%%%%%%%%%%%%%%%%%%%%%%%
\section{Involutive Lie bialgebras up to infinite homotopy}\label{sec:def}
%%%%%%%%%%%%%%%%%%%%%%%%%%%%%%%%%%%%%%%%%%%%%%%%%%%%%%%%%%%%%%%%%%%%%%%

In this section we define involutive Lie bialgebras up to infinite
homotopy, or briefly $\IBL_\infty$-algebras, and morphisms among them.

Let $R$ be a commutative ring with unit that contains $\Q$. 
Let $C= \bigoplus_{k\in\Z} C^k$ be a free graded $R$-module.

It is convenient to introduce the degree shifted version $C[1]$ of $C$
by setting $C[1]^d:=C^{d+1}$. Thus the degrees $\deg c$ in $C$ and
$|c|$ in $C[1]$ are related by
$$
   |c| = \deg c -1. 
$$
%We denote the degrees in $C$ by $|c|$.  
We introduce the $k$-fold symmetric product 
$$
    E_k C := \left(C[1] \,\otimes_R \cdots
    \,\otimes_R C[1]\right)/\sim 
$$
as the quotient of the $k$-fold tensor product under the standard
action of the symmetric group $S_k$ permuting the factors with signs,
and the {\em reduced symmetric algebra}
$$
   EC := \bigoplus_{k\geq 1}E_kC. 
$$
Note that we do not include a constant term in $EC$. As usual,
we write the equivalence class of $c_1\otimes\dots\otimes c_k$ in
$E_kC$ as $c_1\cdots c_k$. 

\begin{Remark}\label{rem:tensor}
More precisely, we set $EC := \bigoplus_{k\geq 1}C[1]^{\otimes
  k}/\frak I$,
where $\frak I$ is the two-sided ideal generated by all elements $c\otimes
c'-(-1)^{|c|\,|c'|}c'\otimes c$. Since $R$ contains $\Q$, $EC$ is
canonically isomorphic as a graded $R$-module to the subspace
$$
   \Bigl(\bigoplus_{k\geq 1}C[1]^{\otimes k}\Bigr)^{\rm symm}\subset
   \bigoplus_{k\geq 1}C[1]^{\otimes k} 
$$
of invariant tensors under the action of the symmetric group: An
inverse of the quotient map $\Bigl(\bigoplus_{k\geq 1}C[1]^{\otimes
  k}\Bigr)^{\rm symm}\to EC$ is given by the 
symmetrization map 
$$
   I(c_1\cdots c_k):=
   \frac{1}{k!}\sum_{\rho\in S_k} \eps(\rho)c_{\rho(1)}\otimes\cdots\otimes
  c_{\rho(k)}. 
$$
Here the sign $\eps(\rho)$ (which depends on the $c_i$)
is defined by the equation
$$
    c_{\rho(1)}\cdots c_{\rho(k)}=\eps(\rho)c_1\cdots c_k. 
$$ 
\end{Remark}

We extend any linear map $\phi:E_kC\to E_\ell C$ to a
linear map $\hat \phi:EC \to EC$ by $\hat \phi:=0$ on $E_mC$ for $m<k$ and 
\begin{align}\label{eq:hat}
    \hat \phi(c_1\cdots c_{m})
    &:= \sum_{{\rho\in S_m} \atop {\rho(1) < \dots <
    \rho(k) \atop \rho(k+1) < \dots < \rho(m)}} \hspace{-.7cm}
    \eps(\rho) \phi(c_{\rho(1)} \cdots c_{\rho(k)})
     c_{\rho(k+1)}  \cdots  c_{\rho(m)} \cr
    &= \sum_{\rho\in S_m} \frac{\eps(\rho)}{k!(m-k)!} \phi(c_{\rho(1)}
    \cdots c_{\rho(k)}) c_{\rho(k+1)}  \cdots  c_{\rho(m)}.
\end{align}
for $m\geq k$. Note that $\hat \phi$ maps $E_{k+s}C$ to $E_{l+s}C$ for every $s\geq 0$. 

\begin{rem}
The map $\hat \phi$ is a differential operator of order $\leq k$ and a
``codifferential operator'' of order $\leq\ell$.
In particular, $\hat \phi$ is a derivation if $k=1$ and a coderivation if
$\ell=1$. 
\end{rem}

Now we consider a series of graded $R$-module homomorphisms 
$$
    \frak p_{k,\ell,g} : E_k C \to E_{\ell} C,\qquad k,\ell \ge 1,\
    g\geq 0
$$ 
of degree 
$$
    |\frak p_{k,\ell,g}| = -2d(k+g-1)-1
$$ 
for some fixed integer $d$.
Define the operator
$$
    \hat\fp := \sum_{k,\ell=1}^\infty\sum_{g=0}^\infty
    \hat\fp_{k,\ell,g} \hbar^{k+g-1}\tau^{k+\ell+2g-2}:
    EC\{\hbar,\tau\}\to EC\{\hbar,\tau\},
$$
where $\hbar$ and $\tau$ are formal variables of degree
$$
    |\hbar| := 2d, \quad |\tau|=0,
$$
and $EC\{\hbar,\tau\}$ denotes formal power series in these variables
with coefficients in $EC$. 

\begin{defn}\label{def:IBL}
We say that $(C,\{\frak p_{k,\ell,g}\}_{k,\ell \geq 1,g\geq 0})$ is an
{\em $\IBL_\infty$-algebra of degree $d$} if 
\begin{equation}\label{eq:BL2}
    \hat\fp\circ \hat\fp=0.
\end{equation}
\end{defn}

\begin{rem}
The algebra over the Frobenius properad appearing in~\cite{CMW}
%\cite{BouOan09} 
is closely related to the $\IBL_\infty$-algebra.
The Koszul--ness of the former in the sense of~\cite{Vallette} is
proved in~\cite{CMW}. 
%\cite{BouOan09}. 
It provides a purely algebraic reasoning for  
this structure being a `correct' infinity version of 
an involutive Lie bialgebra. We however do not use this fact in this paper.
\end{rem}

Let us explain this definition from various angles.

(1) One can write equation \eqref{eq:BL2} more explicitly as the
sequence of equations
\begin{equation}\label{eq:BL}
    \sum_{t=2-\min(k,\ell)}^{g+1}
    \sum_{k_1+k_2=k+t \atop {\ell_1+\ell_2=\ell+t \atop g_1+g_2=g+1-t}}
    (\hat \fp_{k_2,\ell_2,g_2}\circ \hat \fp_{k_1,\ell_1,g_1})|_{E_kC} = 0
\end{equation}
for each triple $(k,\ell,g)$ with $k,\ell \geq 1$ and $g\geq 1- \min(k,\ell)$. 
Indeed, equation~\eqref{eq:BL} is the part of the coefficient of
$\hbar^{k+g-1}\tau^{k+\ell+2g-2}$ in \eqref{eq:BL2} mapping $E_kC$ to
$E_\ell C$. 

More appropriately, one should view equation~\eqref{eq:BL} as the
definition of an $\IBL_\infty$-structure, and the formal variables
$\hbar,\tau$ are mere bookkeeping devices that allow us to write this
equation in the more concise form~\eqref{eq:BL2}. Alternatively, we
could also consider equation~\eqref{eq:BL2} on the space
$\prod_{k\geq 1}E_kC\{\hbar\}$. 

(2) It is instructive to think of $\fp_{k,\ell,g}$ as an operation
associated to a compact {\em connected} oriented surface
$S_{k,\ell,g}$ of {\em signature} 
$(k,\ell,g)$, i.e.~with $k$ incoming and $\ell$ outgoing boundary
components and of genus $g$. Then the coefficient $k+\ell+2g-2$ of the
formal variable $\tau$ is the negative Euler characteristic of
$S_{k,\ell,g}$. 
%
% FIG
%
\begin{figure}[h]
 \labellist
%  \small\hair 2pt
%  \pinlabel $\varphi^1$ [tl] at 39 48
%  \pinlabel $\varphi^2$ [tr] at  104 48
 \endlabellist
%  \vspace{.5cm}
  \centering
  \includegraphics[scale=.95]{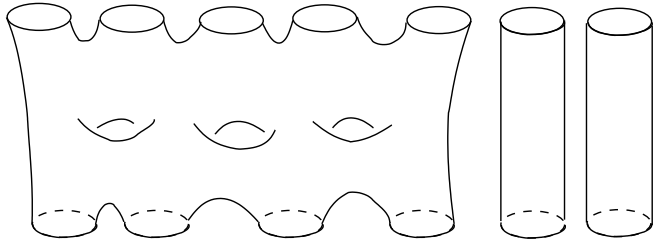}
 \caption{On the left is a pictorial representation of $\fp_{5,4,3}$ by a surface with signature $(5,4,3)$, with incoming boundaries drawn at the top and outgoing boundaries at the bottom. The whole picture would be our graphical representation of the extension $\hat\fp_{5,4,3}:E_{5+2}C \to E_{4+2}C$.}
 \label{fig:surface_klg_with_cylinders}
\end{figure}
It is also useful to think of the identity
$C\to C$ as the operation associated to a {\em trivial cylinder}. 
Then the extension $\hat\fp_{k,\ell,g}:E_{k+r}C\to E_{\ell+r}C$
corresponds to the disjoint union of $S_{k,\ell,g}$ with $r$ trivial
cylinders. Define the genus $g \in \Z$ of a possibly disconnected
surface with $k$ incoming and $\ell$ outgoing boundary components such
that its Euler characteristic equals $2-2g-k-\ell$, so e.g.~adding a
cylinder lowers the genus by one. Then the terms of the form $\hat
\fp_{k_2,\ell_2,g_2}\circ\hat\fp_{k_1,\ell_1,g_1}$ on the left hand
side of~\eqref{eq:BL} correspond to all possible gluings of two
connected surfaces of signatures $(k_1,\ell_1,g_1)$ and
$(k_2,\ell_2,g_2)$, plus the appropriate number of trivial cylinders,
to a {\em possibly disconnected} surface of signature $(k,\ell,g)$.

The following definition will be repeatedly used in inductive
arguments.

\begin{defn}\label{def:order}
We define a linear order on signatures by saying $(k',\ell',g') \prec
(k,\ell,g)$ if one of the following conditions holds:
\begin{enumerate}[\rm (i)]
\item $k'+\ell'+2g' < k+\ell+2g$,
\item $k'+\ell'+2g' = k+\ell+2g$ and $g'>g$, or
\item $k'+\ell'+2g' = k+\ell+2g$ and $g'=g$ and $k'<k$.
\end{enumerate}
\end{defn}
This choice of ordering is explained in Remark~\ref{rem:ordering} below. The sequence of ordered signatures starts with
$$
(1,1,0)\prec (1,2,0) \prec (2,1,0) \prec (1,1,1) \prec (1,3,0) \prec
(2,2,0) \prec (3,1,0) \prec \dots
$$

(3) The preceding discussion suggests that~\eqref{eq:BL} can be
reformulated in terms of gluing to {\em connected} surfaces. For this,
let us denote by $\hat \fp_{k_2,\ell_2,g_2}\circ_s \hat
\fp_{k_1,\ell_1,g_1}$ the part of the composition where exactly
$s$ of the inputs of $\fp_{k_2,\ell_2,g_2}$ are outputs of
$\fp_{k_1,\ell_1,g_1}$.

\begin{lem}\label{lem:order}
Equation~\eqref{eq:BL2} is equivalent to the sequence of equations
\begin{equation}\label{eq:BL4}
    \sum_{s=1}^{g+1}
    \sum_{k_1+k_2=k+s \atop {\ell_1+\ell_2=\ell+s \atop g_1+g_2=g+1-s}}
    (\hat \fp_{k_2,\ell_2,g_2}\circ_s \hat
    \fp_{k_1,\ell_1,g_1})|_{E_kC} = 0, \quad k,\ell \geq 1, g \geq 0.
\end{equation}
Moreover, for $(k,\ell,g)\succ (1,1,0)$ equation~\eqref{eq:BL4} has the form
\begin{equation}\label{eq:BL5}
\hat\fp_{1,1,0}\circ \fp_{k,\ell,g}
+\fp_{k,\ell,g}\circ \hat\fp_{1,1,0} +P_{k,\ell,g} =0,
\end{equation}
where $P_{k,\ell,g}:E_kC \to E_\ell C$ involves only compositions of
terms $\fp_{k',\ell',g'}$ whose signatures satisfy 
$(1,1,0)\prec (k',\ell',g')\prec (k,\ell,g)$. 
%\hfill $\qed$
\end{lem}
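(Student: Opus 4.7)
The plan is to view $\hat\fp\circ\hat\fp$ as a formal sum of compositions of extensions $\hat\fp_{k_i,\ell_i,g_i}$ and decompose each composition by the ``number of gluings''. Concretely, expanding the definition~\eqref{eq:hat} of $\hat\phi$ directly yields
$$
\hat\fp_{k_2,\ell_2,g_2}\circ\hat\fp_{k_1,\ell_1,g_1}=\sum_{s\geq 0}\hat\fp_{k_2,\ell_2,g_2}\circ_s\hat\fp_{k_1,\ell_1,g_1},
$$
the $s$-th summand being the part of the composition in which exactly $s$ of the $\ell_1$ outputs of $\fp_1$ feed into inputs of $\fp_2$. For $s\geq 1$ this is the extension of an operation $E_{k_1+k_2-s}C\to E_{\ell_1+\ell_2-s}C$ attached to a connected glued surface of signature $(k_1+k_2-s,\ell_1+\ell_2-s,g_1+g_2+s-1)$, while for $s=0$ the two surfaces are left disjoint.

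The next step is a direct degree check verifying that all contributions sharing a given connected signature $(k,\ell,g)$ also share the common coefficient $\hbar^{k+g-1}\tau^{k+\ell+2g-2}$. Regrouping accordingly, one obtains
$$
\hat\fp\circ\hat\fp=\sum_{k,\ell\geq 1,\,g\geq 0}\widehat{X_{k,\ell,g}}\,\hbar^{k+g-1}\tau^{k+\ell+2g-2},
$$
where $X_{k,\ell,g}\colon E_kC\to E_\ell C$ is the left-hand side of~\eqref{eq:BL4}. Vanishing of $\hat\fp\circ\hat\fp$ forces each $X_{k,\ell,g}=0$ by restriction to $E_jC$ and induction on $j$: since $\widehat{X_{k,\ell,g}}|_{E_jC}=0$ for $j<k$, the equation on $E_jC$ at the relevant $(\hbar,\tau)$-coefficient determines $X_{j,\dots}$ in terms of the strictly smaller $X_{k,\dots}$ with $k<j$, all of which vanish by the inductive hypothesis. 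The converse implication is immediate, which establishes the equivalence of~\eqref{eq:BL2} and~\eqref{eq:BL4}. I expect the degree bookkeeping in the regrouping step to be the most delicate piece of the argument.

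For the second assertion, I would isolate from~\eqref{eq:BL4} the terms in which one factor is $\fp_{1,1,0}$. If $(k_1,\ell_1,g_1)=(1,1,0)$, then the non-triviality constraint $s\leq \ell_1=1$ together with $s\geq 1$ forces $s=1$, and the remaining constraints give $(k_2,\ell_2,g_2)=(k,\ell,g)$; direct unpacking of $\circ_1$ yields $\hat\fp_{k,\ell,g}\circ_1\hat\fp_{1,1,0}|_{E_kC}=\fp_{k,\ell,g}\circ\hat\fp_{1,1,0}$. Symmetrically, $(k_2,\ell_2,g_2)=(1,1,0)$ forces $s=1$ and $(k_1,\ell_1,g_1)=(k,\ell,g)$, giving $\hat\fp_{1,1,0}\circ\fp_{k,\ell,g}$. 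In any remaining term both factors differ from $(1,1,0)$, hence each satisfies $k_i+\ell_i+2g_i\geq 3$. The identity $(k_1+\ell_1+2g_1)+(k_2+\ell_2+2g_2)=k+\ell+2g+2$ then gives $k_i+\ell_i+2g_i\leq k+\ell+2g-1$ for each $i$, so clause~(i) of Definition~\ref{def:order} yields $(k_i,\ell_i,g_i)\prec(k,\ell,g)$; since $(1,1,0)$ is the minimum of $\prec$, we also have $(1,1,0)\prec(k_i,\ell_i,g_i)$. Collecting these remaining terms into $P_{k,\ell,g}$ completes the plan.
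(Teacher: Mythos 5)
Your overall route is the paper's: decompose each composition $\hat\fp_{k_2,\ell_2,g_2}\circ\hat\fp_{k_1,\ell_1,g_1}$ by the number $s$ of gluings, check that every $\circ_s$-term with $s\geq 1$ carries the monomial $\hbar^{k+g-1}\tau^{k+\ell+2g-2}$ of its connected signature, and then extract $X_{k,\ell,g}=0$ by an induction that is in substance the paper's induction over $\prec$ (your induction on the first index works because the competing terms at a fixed coefficient and fixed source/target are exactly the $\widehat{X_{k-r,\ell-r,g+r}}$ with $r\geq 1$). The second half of your argument, isolating the two $(1,1,0)$-factors and bounding $k'+\ell'+2g'$ for the remaining ones, is likewise the paper's Euler characteristic argument in different notation.

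There is, however, one genuine gap: the $s=0$ terms. You note that they correspond to disjoint surfaces, but then write
$$\hat\fp\circ\hat\fp=\sum_{k,\ell\geq 1,\,g\geq 0}\widehat{X_{k,\ell,g}}\,\hbar^{k+g-1}\tau^{k+\ell+2g-2}$$
with $X_{k,\ell,g}$ containing only $s\geq 1$ terms, so the $s=0$ contributions have silently disappeared. They do not vanish individually, and no degree check can dispose of them: a disjoint union of signatures $(k_1,\ell_1,g_1)$ and $(k_2,\ell_2,g_2)$ contributes to the coefficient $\hbar^{k+g-1}\tau^{k+\ell+2g-2}$ with $k=k_1+k_2$, $\ell=\ell_1+\ell_2$, $g=g_1+g_2-1$, which collides with genuine connected contributions. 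What saves the identity is that all $\fp_{k,\ell,g}$ have odd degree, so that $\hat\fp_{k_2,\ell_2,g_2}\circ_0\hat\fp_{k_1,\ell_1,g_1}=-\hat\fp_{k_1,\ell_1,g_1}\circ_0\hat\fp_{k_2,\ell_2,g_2}$; hence the off-diagonal $s=0$ terms cancel in pairs and the diagonal ones vanish because $\Q\subset R$. This sign cancellation is the one nontrivial input in the equivalence of \eqref{eq:BL2} and \eqref{eq:BL4} --- the paper records it explicitly when passing between \eqref{eq:BL} and \eqref{eq:BL4} --- and it is precisely the step that your ``delicate degree bookkeeping'' does not cover. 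With that point supplied, your proof is complete.
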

%
% FIG
%
\begin{figure}[h]
 \labellist
%  \small\hair 2pt
%  \pinlabel $\varphi^1$ [tl] at 39 48
%  \pinlabel $\varphi^2$ [tr] at  104 48
 \endlabellist
%  \vspace{.5cm}
  \centering
  \includegraphics[scale=0.9]{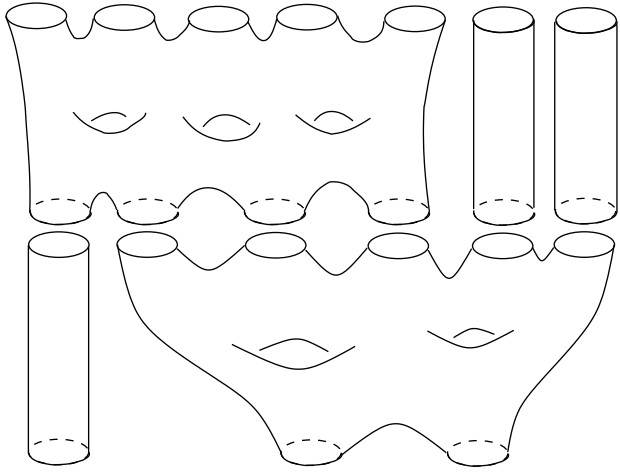}
 \caption{A typical term in $\hat\fp_{5,4,3}\circ_3 \hat\fp_{5,2,2}:E_7C \to E_3C$, appearing on the left hand side of equation \eqref{eq:BL4} for $(k,\ell,g)= (7,3,7)$. }
 \label{fig:surface_gluing}
\end{figure}

\begin{rem}
\label{rem:ordering}
The order $\prec$ on signatures was chosen so that \eqref{eq:BL} for all signatures $(k,l,g) \preceq (K,L,G)$ is equivalent to \eqref{eq:BL4} for the same range of signatures. Other choices are possible.
\end{rem}
\begin{proof}
Abbreviate the left-hand side of~\eqref{eq:BL4} by 
$$
    \fq_{k,\ell,g}:=\sum_{s=1}^{g+1}
    \sum_{k_1+k_2=k+s \atop {\ell_1+\ell_2=\ell+s \atop g_1+g_2=g+1-s}}
    (\hat \fp_{k_2,\ell_2,g_2}\circ_s \hat
    \fp_{k_1,\ell_1,g_1})|_{E_kC}:E_kC\to E_\ell C. 
$$
Note that the terms of the form $\hat
\fp_{k_2,\ell_2,g_2}\circ_s\hat\fp_{k_1,\ell_1,g_1}$ on the right hand
side of this definition correspond to all possible gluings of two
surfaces of signatures $(k_1,\ell_1,g_1)$ and $(k_2,\ell_2,g_2)$ along
$s$ boundary loops (outgoing for the first one and incoming for the
second one) to a connected surface of signature $(k,\ell,g)$. 
Denote by $\hat\fq_{k,\ell,g}$ the usual extension. Then 
\begin{align*}
    \sum_{r=0}^{\min(k,\ell)-1}\hat\fq_{k-r,\ell-r,g+r}
    &= \sum_{r=0}^{\min(k,\ell)-1}\sum_{s=1}^{g+1+r}
    \sum_{k_1+k_2=k+s-r \atop {\ell_1+\ell_2=\ell+s-r \atop g_1+g_2=g+1-s+r}}
    (\hat \fp_{k_2,\ell_2,g_2}\circ_s \hat
    \fp_{k_1,\ell_1,g_1})|_{E_kC} \cr
    &= \sum_{t=2-\min(k,\ell)}^{g+1}
    \sum_{k_1+k_2=k+t \atop {\ell_1+\ell_2=\ell+t \atop g_1+g_2=g+1-t}}
    (\hat \fp_{k_2,\ell_2,g_2}\circ \hat \fp_{k_1,\ell_1,g_1})|_{E_kC}. 
\end{align*}
Here the last equality follows by setting $t=s-r$ and observing that
$$
\hat \fp_{k_2,\ell_2,g_2}\circ_0 \hat \fp_{k_1,\ell_1,g_1}=-
\hat \fp_{k_1,\ell_1,g_1}\circ_0 \hat \fp_{k_2,\ell_2,g_2}.
$$
So the additional terms corresponding to $s=0$ (which were not present in
the line above) appear in cancelling pairs. Since the last
expression agrees with the one in~\eqref{eq:BL}, this shows
that~\eqref{eq:BL2} is equivalent to the sequence of equations
\begin{equation}\label{eq:BL3}
    \sum_{r=0}^{\min(k,\ell)-1}\hat\fq_{k-r,\ell-r,g+r}=0,\quad k,\ell
    \geq 1, g \geq 1-\min(k,\ell), 
\end{equation}
where summands with $g+r<0$ are interpreted as $0$.
Clearly~\eqref{eq:BL4} implies~\eqref{eq:BL3}. The converse
implication follows by induction over the order $\prec$
because~\eqref{eq:BL3} is of the form 
$$
    \fq_{k,\ell,g} + Q_{k,\ell,g} = 0, 
$$
where $Q_{k,\ell,g}$ is a sum of terms $\hat\fq_{k',\ell',g'}$ with
$(k',\ell',g')\prec (k,\ell,g)$. 
%The last statement of the lemma follows by inspection. 

For the last statement, consider a term $\hat
\fp_{k_2,\ell_2,g_2}\circ_s\hat\fp_{k_1,\ell_1,g_1}$ appearing in
equation~\eqref{eq:BL4}. Denote by $\chi_{k,\ell,g}:=2-2g-k-\ell$ the
Euler characteristic of a surface of signature $(k,\ell,g)$. Since the
Euler characteristic is additive and only nonpositive Euler
characteristics occur for the admissible triples, we get
$$
   \chi_{k_1,\ell_1,g_1} +\chi_{k_2,\ell_2,g_2} = \chi_{k,\ell,g}
$$
with all terms being nonpositive. If $\chi_{k_1,\ell_1,g_1}<0$ and
$\chi_{k_2,\ell_2,g_2}<0$, then
$(k_1,\ell_1,g_1),\;(k_2,\ell_2,g_2)\prec (k,\ell,g)$ by definition of
the ordering. If $\chi_{k_1,\ell_1,g_1}=0$, then
$(k_1,\ell_1,g_1)=(1,1,0)$ and it follows that
$(k_2,\ell_2,g_2)=(k,\ell,g)$, so we find the first term in
equation~\eqref{eq:BL5}. Similarly, in the case
$\chi_{k_2,\ell_2,g_2}=0$ we find the second term in
equation~\eqref{eq:BL5}. 
\end{proof}

\begin{rem}\label{rem:gen-IBL}
Note that the proof of Lemma~\ref{lem:order} only uses property (i) in
Definition~\ref{def:order}. Moreover, the proof still works if in
Definition~\ref{def:IBL} we allow all triples $(k,\ell,g)$ except the
following ones:
\begin{equation}\label{eq:triples}
   (0,0,0),\ (1,0,0),\ (0,1,0),\ (0,0,1),\ (2,0,0),\ (0,2,0).
\end{equation}
We call such a structure a {\em generalized
  $\IBL_\infty$-structure}. The notion below of an $\IBL_\infty$-morphism 
can be generalized in the same way, and all the theory in Sections 2-6
works for these generalized structures with the exception of the
following discussion in (4). 
\end{rem}

(4) $\IBL_\infty$-algebra stands for {\em involutive bi-Lie algebra up to
infinite homotopy}. To justify this terminology, 
%(see e.g.~\cite{CL} for the relevant definitions)
let us spell out equation~\eqref{eq:BL4} for the first few triples
$(k,\ell,g)$.  For $(k,\ell,g)=(1,1,0)$ we find that 
$$
    \fp_{1,1,0}:C\to C
$$ 
has square zero. For $(k,\ell,g)=(2,1,0)$ we find that
$\hat\fp_{2,1,0}$ is a chain map (with respect to $\hat\fp_{1,1,0}$)
whose square is chain homotopic to zero by the relation for
$(k,\ell,g)=(3,1,0)$. It follows that 
$$
    \mu(a,b) := (-1)^{|a|}\fp_{2,1,0}(a,b)
$$
defines a graded Lie bracket up to homotopy on $C$. Similarly, the relations
for $(k,\ell,g)=(1,2,0)$ and $(1,3,0)$ show that 
$$
    \delta(a) := (\iota \otimes \one)\fp_{1,2,0}(a)
$$
is a chain map which defines a graded Lie cobracket up to homotopy on
$C$. Here $\iota$ multiplies a homogeneous element $c\in C$ by
$(-1)^{|c|}$.

Now the relation with $(k,\ell,g)=(1,1,1)$ reads
$$
\fp_{1,1,0}\circ_1\fp_{1,1,1}+\fp_{1,1,1}\circ_1\fp_{1,1,0}+
\fp_{2,1,0}\circ_2\fp_{1,2,0}=0
$$
and yields involutivity of $(\mu,\delta)$ up to homotopy. 
Finally, the relation with $(k,\ell,g)=(2,2,0)$ has the form
$$
\hat\fp_{1,1,0}\circ_1\fp_{2,2,0}+\fp_{2,2,0}\circ_1\hat\fp_{1,1,0}+
\fp_{1,2,0}\circ_1 \fp_{2,1,0} +
%\hat\fp_{1,1,0}\hat\fp_{1,1,1}+\hat\fp_{1,1,1}\hat\fp_{1,1,0} + 
\hat\fp_{2,1,0}\circ_1\hat\fp_{1,2,0} = 0, 
$$
yielding compatibility of $\mu$ and $\delta$ up to
homotopy. In summary, $(\mu,\delta)$ induces the structure of an
involutive bi-Lie algebra on the homology $H(C,\p)$.  

(5) Let us consider some special cases of an $\IBL_\infty$-structure. 
If $\fp_{k,\ell,g}=0$ whenever $\ell\geq 2$ or $g>0$, then
\eqref{eq:BL4} is equivalent to the sequence of equations
$$
    \sum_{k_1+k_2=k+1} (\fp_{k_2,1,0}\circ_1 \hat \fp_{k_1,1,0})|_{E_kC}
    = 0, \quad    k=1,2,\dots 
$$
So we recover one of the standard definitions of an $L_\infty$-algebra
(cf. \cite{MSS02}). 
Similarly, if $\fp_{k,\ell,g}=0$ whenever
$k\geq 2$ or $g>0$ we recover the definition of a co-$L_\infty$ structure.

(6) Suppose the following finiteness condition is satisfied:
{\em Given $k\geq 1$, $g\geq 0$ and $a\in E_kC$, the term
$\fp_{k,\ell,g}(a)$ is nonzero for only finitely many $\ell\geq 1$.}
Then we can set $\tau=1$ above and consider $\hat\fp$ as a map
of $EC\{\hbar\}$. This is the case in our main examples
(exact symplectic field theory and string topology); see
e.g.~Section~\ref{sec:Weyl} below.  
\medskip

{\bf Morphisms. }
Next we turn to the definition of morphisms. So let
$(C^+,\{\fp^+_{k,\ell,g}\})$ and $(C^-,\{\fp^-_{k,\ell,g}\})$ be 
two $\IBL_\infty$-algebras of the same degree $d$. 
To a collection of linear maps $f_i:E_{k_i}C^+\to E_{\ell_i}C^-$,
$1\leq i\leq r$, we associate a linear map $f_1\odot\dots\odot
f_r:E_{k_1+\dots+k_r}C^+ \to E_{\ell_1+\dots+\ell_r}C^-$ by 
\begin{align}\label{eq:wedge}
    &f_1\odot\dots\odot f_r(c_1\cdots c_k)\\
    &:= \sum_{{\rho\in S_k}
    \atop { { {\rho(1) < \dots < \rho(k_1)}
    \atop {\dots} }
    \atop {\rho(k_1+\dots+k_{r-1}+1) < \dots < \rho(k)} }
    } \hspace{-1cm}
    \eps(\rho) f_1(c_{\rho(1)} \cdots c_{\rho(k_1)}) \cdots
     f_r(c_{\rho(k_1+\dots+k_{r-1}+1)}\cdots c_{\rho(k)}) \cr
    &= \sum_{\rho\in S_k}
    \frac{\eps(\rho)}{k_1!\cdots k_r!} f_1(c_{\rho(1)} \cdots
    c_{\rho(k_1)}) \cdots
     f_r(c_{\rho(k_1+\dots+k_{r-1}+1)}\cdots c_{\rho(k)}). 
\end{align}
Note that $f_1 \odot f_2 = (-1)^{|f_1||f_2|} f_2 \odot f_1$ if the
$f_i$ are homogeneous of degree $|f_i|$. Now we consider a series of
graded module homomorphisms 
$$
    \ff_{k,\ell,g} : E_k C^+ \to E_{\ell} C^-,\qquad k,\ell \ge 1,\
    g\geq 0
$$ 
of degree 
$$
    |\ff_{k,\ell,g}| = -2d(k+g-1).
$$ 
Define the operator
$$
    \ff := \sum_{k,\ell=1}^\infty\sum_{g=0}^\infty
    \ff_{k,\ell,g} \hbar^{k+g-1}\tau^{k+\ell+2g-2}:
     EC^+\{\hbar,\tau\}\to EC^-\{\hbar,\tau\},
$$
where each $\ff_{k,\ell,g}$ is viewed as a map $EC^+\to E_\ell
C^-\subset EC^-$ by setting it zero on $E_mC^+$ for $m\neq k$.
Furthermore, we introduce the exponential series $e^\ff$ with respect
to the symmetric product, i.e. 
\begin{equation}\label{expff}
\aligned
    e^\ff := \sum_{r=0}^\infty\sum_{{k_i,\ell_i,g_i}\atop {1\leq i\leq
    r}} \frac{1}{r!}&f_{k_1,\ell_1,g_1}\odot\cdots\odot\\
    &f_{k_r,\ell_r,g_r} \hbar^{\sum k_i+\sum g_i-r} \tau^{\sum k_i+\sum
    \ell_i + 2\sum g_i-2r}.
\endaligned
\end{equation}
\begin{defn}
We say that $\{\ff_{k,\ell,g}\}_{k,\ell \geq 1,g\geq 0}$ is an
{\em $\IBL_\infty$-morphism} if 
\begin{equation}\label{eq:mor2}
    e^\ff\hat\fp^+ - \hat\fp^- e^\ff=0.
\end{equation}
\end{defn}

Again, let us explain this definition from various angles.

(1) Equation~\eqref{eq:mor2} is equivalent to requiring that for each
triple $(k,\ell,g)$ with $k,\ell \geq 1$ and $g \geq 1 - \min(k,\ell)$
the equation 
\begin{align}\label{eq:mor}
    &\sum_{r=1}^\ell
    \sum_{ { {\ell_1+\dots+\ell_{r}=\ell}
    \atop {k_1+\dots+k_{r}+k^+=k+\ell^+} }
    \atop {g_1+\dots+g_{r}+g^+=g+r-\ell^+} } \hspace{-.7cm}
    \frac{1}{r!}(\ff_{k_1,\ell_1,g_1}\odot\cdots\odot
    \ff_{k_r,\ell_r,g_r})\circ\hat\fp^+_{k^+,\ell^+,g^+} \cr
    &- \sum_{r=1}^k
    \sum_{ { {k_1+\dots+k_r=k}
    \atop {\ell_1+\dots+\ell_r+\ell^-=\ell+k^-} }
    \atop {g_1+\dots+g_r+g^-=g+r-k^-}  } \hspace{-.7cm}
    \frac{1}{r!}\hat\fp^-_{k^-,\ell^-,g^-}\circ
    (\ff_{k_1,\ell_1,g_1}\odot\cdots\odot \ff_{k_r,\ell_r,g_r}) =0. 
\end{align}
holds as equality between maps $E_kC^+\to E_\ell C^-$.
Indeed, the left hand side of equation~\eqref{eq:mor} is the
corresponding part of the coefficient of $\hbar^{k+g-1}\tau^{k+\ell+2g-2}$ in
$e^\ff\hat\fp^+ - \hat\fp^- e^\ff$.

(2) As before, one thinks of $\ff_{k,\ell,g}$ as an operation
associated to a compact connected oriented surface of signature
$(k,\ell,g)$. Then the terms $(\ff_{k_1,\ell_1,g_1}\odot\cdots\odot 
\ff_{k_r,\ell_r,g_r})\hat\fp^+_{k^+,\ell^+,g^+}$ on the left hand side
of~\eqref{eq:mor} correspond to complete gluings of $r$ connected
surfaces of signatures $(k_i,\ell_i,g_i)$ at their incoming loops to
the outgoing loops of a surface of signature $(k^+,\ell^+,g^+)$, plus
an appropriate number of trivial cylinders, to a {\em possibly disconnected}
surface of signature $(k,\ell,g)$.

(3) Again, it is useful to reformulate~\eqref{eq:mor2} in terms of
gluing to {\em connected} surfaces. For this, let us denote by 
$$
    (\ff_{k_1,\ell_1,g_1}\odot\cdots\odot
    \ff_{k_r,\ell_r,g_r})\circ_{s_1,\dots,s_r}\hat\fp^+_{k^+,\ell^+,g^+}
$$
the part of the composition where precisely $s_i$ of the inputs of
$\ff_{k_i,\ell_i,g_i}$ are outputs of $\fp_{k^+,\ell^+,g^+}$, and
similarly for composition with $\hat\fp_{k^-,\ell^-,g^-}$.

\begin{lem}\label{lem:mor_order}
Equation~\eqref{eq:mor2} is equivalent to the sequence of equations
\begin{align}\label{eq:mor4}
    &\sum_{r=1}^\ell
    \sum_{ {{ {\ell_1+\dots+\ell_{r}=\ell}
    \atop {k_1+\dots+k_{r}+k^+=k+\ell^+} }
    \atop {g_1+\dots+g_{r}+g^+=g+r-\ell^+}}
    \atop {s_1+\dots +s_r=\ell^+ \atop s_i \geq 1}} \hspace{-.7cm}
    \frac{1}{r!}(\ff_{k_1,\ell_1,g_1}\odot\cdots\odot
    \ff_{k_r,\ell_r,g_r})\circ_{s_1,\dots,s_r} \hat\fp^+_{k^+,\ell^+,g^+} \cr
    &- \sum_{r=1}^k
    \sum_{ {{ {k_1+\dots+k_r=k}
    \atop {\ell_1+\dots+\ell_r+\ell^-=\ell+k^-} }
    \atop {g_1+\dots+g_r+g^-=g+r-k^-} }
    \atop {s_1+\dots +s_r=k^- \atop s_i \geq 1}} \hspace{-.7cm}
    \frac{1}{r!}\hat\fp^-_{k^-,\ell^-,g^-}\circ_{s_1,\dots,s_r}
    (\ff_{k_1,\ell_1,g_1}\odot\cdots\odot \ff_{k_r,\ell_r,g_r}) =0, 
\end{align}
for $k,\ell \geq 1$ and $g \geq 0$, where the left hand side is
viewed as a map from $E_kC^+$ to $E_\ell
C^-$. Equation~\eqref{eq:mor4} for a fixed triple $(k,\ell,g)$ has the
form 
$$
    \ff_{k,\ell,g}\circ
    \hat\fp^+_{1,1,0}-\hat\fp^-_{1,1,0}\circ\ff_{k,\ell,g} +
    R_{k,\ell,g}(\ff,\fp^+,\fp^-) = 0,
$$
where the expression $R_{k,\ell,g}(\ff,\fp^+,\fp^-)$ contains only
components $\ff_{k',\ell',g'}$ of $\ff$ with $(k',\ell',g')\prec
(k,\ell,g)$ and only components $\hat\fp^\pm_{k',\ell',g'}$ of
$\hat\fp^\pm$ with $(1,1,0) \prec (k',\ell',g') \preceq (k,\ell,g)$.
Moreover, we have
$$
R_{k,\ell,g}(\ff,\fp^+,\fp^-)= \frac 1 {\ell!} \ff_{1,1,0}^{\odot
   \ell} \circ \hat\fp^+_{k,\ell,g} - \frac 1 {k!} \hat\fp^-_{k,\ell,g}
\circ \ff_{1,1,0}^{\odot k} + \wt R_{k,\ell,g}(\ff,\fp^+,\fp^-),
$$
where the expression $\wt R_{k,\ell,g}(\ff,\fp^+,\fp^-)$ contains only
components $\hat\fp^\pm_{k',\ell',g'}$ with $(k',\ell',g')\prec (k,\ell,g)$.
%\hfill $\qed$
\end{lem}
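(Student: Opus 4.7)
The plan is to adapt the strategy of Lemma~\ref{lem:order} to the morphism setting. For the first equivalence, I decompose each composition in~\eqref{eq:mor} according to a multi-index $(s_1,\ldots,s_r)$ with $s_1+\cdots+s_r=\ell^+$ (or $k^-$ for the second sum), where $s_i$ counts the outputs of $\fp^+_{k^+,\ell^+,g^+}$ routed into $\ff_{k_i,\ell_i,g_i}$. The "connected" contributions in~\eqref{eq:mor4} are exactly those with $s_i\geq 1$ for all $i$; terms in~\eqref{eq:mor} with some $s_i=0$ describe a factor $\ff_{k_i,\ell_i,g_i}$ disconnected from $\fp^+$, whose inputs pass through $\hat\fp^+_{k^+,\ell^+,g^+}$ via trivial cylinders as in the extension~\eqref{eq:hat}.

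Passing from~\eqref{eq:mor} to~\eqref{eq:mor4} then proceeds by induction over $\prec$, exploiting the exponential structure of $e^\ff$: a term with some $s_i=0$ factors as a disconnected product of a genuinely connected composition (at a strictly smaller signature) with factors $\ff_{k_i,\ell_i,g_i}$ acting on inputs bypassing $\fp^+$. Grouping such terms by the connected component containing $\fp^+$ and applying the induction hypothesis~\eqref{eq:mor4} at smaller signatures shows the disconnected contributions assemble into a sum that vanishes by induction. Hence~\eqref{eq:mor} at signature $(k,\ell,g)$ reduces to its restriction~\eqref{eq:mor4} to connected compositions.

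For the second part I apply the Euler characteristic argument from Lemma~\ref{lem:order}. In any term of~\eqref{eq:mor4}, additivity of Euler characteristic under gluing gives
$$
   \chi_{k^+,\ell^+,g^+}+\sum_{i=1}^r\chi_{k_i,\ell_i,g_i}=\chi_{k,\ell,g},
$$
with all summands nonpositive, and similarly for the second sum. Hence every constituent signature is $\preceq(k,\ell,g)$, and the boundary case $\chi=0$ forces that signature to equal $(1,1,0)$. The four explicit terms in the decomposition of $R_{k,\ell,g}$ then arise as the extreme cases: $\ff_{k,\ell,g}\circ\hat\fp^+_{1,1,0}$ from $r=1$, $(k_1,\ell_1,g_1)=(k,\ell,g)$, $(k^+,\ell^+,g^+)=(1,1,0)$, $s_1=1$; $\hat\fp^-_{1,1,0}\circ\ff_{k,\ell,g}$ analogously from the second sum; $\frac{1}{\ell!}\ff_{1,1,0}^{\odot\ell}\circ\hat\fp^+_{k,\ell,g}$ from $r=\ell$ with all $(k_i,\ell_i,g_i)=(1,1,0)$, $(k^+,\ell^+,g^+)=(k,\ell,g)$, $s_i=1$, the prefactor $\frac{1}{\ell!}$ arising directly from $\frac{1}{r!}$ since all $\ff$-factors coincide; and $\frac{1}{k!}\hat\fp^-_{k,\ell,g}\circ\ff_{1,1,0}^{\odot k}$ symmetrically. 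All remaining terms must involve at least one $\ff$-component with $(k',\ell',g')\prec(k,\ell,g)$, and are collected into $\wt R_{k,\ell,g}$; the $\fp^\pm$-components always satisfy $\preceq(k,\ell,g)$ by the Euler characteristic bound.

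The main obstacle will be the combinatorial bookkeeping in the second step: one must verify that the absorption of $s_i=0$ configurations into lower-signature connected equations, via the exponential structure of $e^\ff$, matches the factorials $\frac{1}{r!}$ in~\eqref{eq:mor} exactly and with correct signs under the permutation rules of $\odot$ in~\eqref{eq:wedge}. The grading by $\hbar^{k+g-1}\tau^{k+\ell+2g-2}$ automatically enforces Euler characteristic additivity, and only property~(i) of Definition~\ref{def:order} is used for the induction, exactly as noted for the $\IBL_\infty$-equation in Remark~\ref{rem:gen-IBL}.
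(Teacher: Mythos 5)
Your proposal is correct and follows essentially the same route as the paper: the paper likewise splits each composition in~\eqref{eq:mor} according to the multi-indices $(s_1,\dots,s_r)$, factors off the $s_i=0$ pieces via the exponential structure to get the relation $G_{k,\ell,g}=H_{k,\ell,g}+\sum H_{k',\ell',g'}\odot\langle e^\ff\rangle_{k'',\ell'',g''}$ with $(k',\ell',g')\prec(k,\ell,g)$, concludes by induction over $\prec$, and derives the form of $R_{k,\ell,g}$ and $\wt R_{k,\ell,g}$ from Euler characteristic additivity exactly as you describe. The combinatorial bookkeeping you flag is handled in the paper by the identity $\frac{1}{r!}\sum_{r'}\binom{r}{r'}=\sum_{r'}\frac{1}{r'!(r-r')!}$ together with the observation that a factor composed via $\circ_0$ splits off as an $\odot$-factor.
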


Before giving the proof, let us introduce the following notation. For
a map $F:EC^+\{\tau,\hbar\} \to EC^-\{\tau,\hbar\}$, we will denote by
$\langle F\rangle_{k,\ell,g}$  
the part of the coefficient of $\hbar^{k+g-1}\tau^{k+\ell+2g-2}$ which 
corresponds to a map from $E_kC^+$ to $E_\ell C^-$. Then we have for example
\begin{equation}\label{eq:ef}
    \la e^\ff\ra_{k,\ell,g} =
    \sum_{r=1}^{\min(k.\ell)} \sum_{ { {k_1+\dots+k_r=k}
    \atop {\ell_1+\dots+\ell_r=\ell} }
    \atop {g_1+\dots+g_r=g+r-1}  } \hspace{-.7cm}
    \frac{1}{r!}
    \ff_{k_1,\ell_1,g_1}\odot\cdots\odot \ff_{k_r,\ell_r,g_r}.
\end{equation}
Note that this can be non-zero for $g  \geq 1 - \min(k,\ell)$, so in
general negative $g$ are allowed here.
\begin{proof}
We rewrite the term of
the first sum in~\eqref{eq:mor} for fixed $r\geq 1$ as 
\begin{eqnarray*}%\label{eq:mor3}
\lefteqn{\frac 1 {r!} \sum_{ {{ {\ell_1+\dots+\ell_{r}=\ell}
    \atop {k_1+\dots+k_{r}+k^+=k+\ell^+} }
    \atop {g_1+\dots+g_{r}+g^+=g+r-\ell^+}}\atop 
{s_1+\dots + s_r=\ell^+ \atop s_i \geq 0}}  \hspace{-.7cm} 
(\ff_{k_1,\ell_1,g_1}\odot\cdots\odot
    \ff_{k_r,\ell_r,g_r})\circ_{s_1,\dots,s_r}\hat\fp^+_{k^+,\ell^+,g^+}}\\
&=& \sum_{r'=1}^{r}\frac 1 {r'!(r-r')!}\hspace{-.7cm} \sum_{ {{
    {\ell_1+\dots+\ell_{r'}=\ell'}
    \atop {k_1+\dots+k_{r'}+k^+=k'+\ell^+} }
    \atop {g_1+\dots+g_{r'}+g^+=g'+r'-\ell^+}}\atop {s_1+\dots
    +s_{r'}=\ell^+ \atop s_i \geq 1}} \hspace{-.7cm}
    (\ff_{k_1,\ell_1,g_1}\odot\cdots\odot
    \ff_{k_{r'},\ell_{r'},g_{r'}})\circ_{s_1,\dots,s_{r'}}
    \hat\fp^+_{k^+,\ell^+,g^+} \\
& & \odot\sum_{ { {\ell_{r'+1}+\dots+\ell_r=\ell-\ell'}
    \atop {k_{r'+1}+\dots+k_r=k-k'} } \atop
    {g_{r'+1}+\dots+g_r=g-g'+r-r'}} \ff_{k_{r'+1},\ell_{r'+1},g_{r'+1}}
    \odot \cdots \odot \ff_{k_r,\ell_r,g_r}. 
\end{eqnarray*} 
Here 
%$\circ_{s_1,\dots,s_r}$ signifies the part of the composition
%where $s_i$ of the inputs of $\ff_{k_i,\ell_i,g_i}$ are outputs of
%$\fp_{k^+,\ell^+,g^+}$, and in the second formulation 
we have used the commutation properties of the product $\odot$ as well
as the identity 
\begin{align*}
    &(\ff_{k_1,\ell_1,g_1}\odot\cdots\odot
    \ff_{k_r,\ell_r,g_r})\circ_{s_1,\dots,s_{r-1},0}
    \hat\fp^+_{k^+,\ell^+,g^+} \cr
    &= \Bigl[(\ff_{k_1,\ell_1,g_1}\odot\cdots\odot
    \ff_{k_{r-1},\ell_{r-1},g_{r-1}})\circ_{s_1,\dots,s_{r-1}}
    \hat\fp^+_{k^+,\ell^+,g^+} \Bigr]\odot \ff_{k_r,\ell_r,g_r}. 
\end{align*}
A similar discussion applies to the second sum. 
Note that the left-hand side of
%~\eqref{eq:mor3} 
this equation corresponds to all
gluings to possibly disconnected surfaces of signature $(k,\ell,g)$,
while the terms $(\ff_{k_1,\ell_1,g_1}\odot\cdots\odot
\ff_{k_{r'},\ell_{r'},g_{r'}})\circ_{s_1,\dots,s_{r'}} 
\hat\fp^+_{k^+,\ell^+,g^+}$ on the right-hand side correspond to
connected surfaces.

Abbreviate the left-hand side of~\eqref{eq:mor} by $G_{k,\ell,g}$ and
the left-hand side of~\eqref{eq:mor4} by $H_{k,\ell,g}$. Then we have
the relation
$$
    G_{k,\ell,g} = H_{k,\ell,g} + \sum_{ { {k'+k''=k}\atop
    {\ell'+\ell''=\ell} } \atop {g'+g''=g+1}}H_{k',\ell',g'}\odot
    \langle e^\ff \rangle_{k'',\ell'',g''}.
$$
So clearly the sequence of equations~\eqref{eq:mor4} implies
$G_{k,\ell,g}=0$. The converse implication follows by induction over
the order $\prec$ because all terms in the sum on the right hand side
of the above equation involve $(k',\ell',g')\prec (k,\ell,g)$. 
The last statement of the lemma follows as in the proof of
Lemma~\ref{lem:order}.  
\end{proof}

(4) Let us spell out equation~\eqref{eq:mor} for the first few triples
$(k,\ell,g)$. For $(k,\ell,g)=(1,1,0)$ we find that 
$$
    \ff_{1,1,0}:(C^+,\fp^+_{1,1,0})\to (C^-,\fp^-_{1,1,0})
$$ 
is a chain map. The relations for
$(k,\ell,g)=(2,1,0)$ resp.~$(1,2,0)$ show that $\ff_{1,1,0}$ intertwines the
products $\mu^\pm$ resp.~the coproducts $\delta^\pm$ up to
homotopy. In particular, $\ff_{1,1,0}$ induces a morphism of involutive Lie
bialgebras on homology.

(5) In the special cases that $\fp_{k,\ell,g}=0$ and
$\ff_{k,\ell,g}=0$ whenever $\ell\geq 2$ (resp.~$k\geq 2$) or $g>0$ we
recover the definitions of $L_\infty$ (resp.~co-$L_\infty$) morphisms. 

(6) As before with $\hat \fp$, by setting $\tau=1$ the operators $\ff$ and
$e^\ff$ define maps $EC^+\{\hbar\}\to EC^-\{\hbar\}$ if the
following finiteness condition is satisfied: 
{\em Given $k\geq 1$, $g\geq 0$ and $a\in E_kC^+$, the term
   $\ff_{k,\ell,g}(a)$ is nonzero for only finitely many $\ell\geq 1$.}
Again, this condition holds in our main examples (exact symplectic
field theory and string topology).

(7) We say that an $\IBL_\infty$-morphism $\{\ff_{k,\ell,g}\}$ is {\em
linear} if $\ff_{k,\ell,g}=0$ unless $(k,\ell,g)=(1,1,0)$. In this
case its exponential is given by
$$
    e^\ff=\sum_{r=0}^\infty\frac{1}{r!}\ff_{1,1,0}^{\odot r}:c_1\cdots
    c_r\mapsto\ff_{1,1,0}(c_1)\cdots\ff_{1,1,0}(c_r)
$$
and equation~\eqref{eq:mor4} simplifies to
$$
    \frac{1}{\ell!}\ff_{1,1,0}^{\odot\ell}\circ \fp^+_{k,\ell,g} =
    \fp^-_{k,\ell,g}\circ\frac{1}{k!}\ff_{1,1,0}^{\odot k}. 
$$

{\bf Composition of morphisms. }
Consider now two $\IBL_\infty$-morphisms 
\begin{align*}
    \ff^+=\{\ff^+_{k,\ell,g}\}:(C^+,\{\fp^+_{k,\ell,g}\})\to
    (C,\{\fp_{k,\ell,g}\}),\\
    \ff^-=\{\ff^-_{k,\ell,g}\}:(C,\{\fp_{k,\ell,g}\})\to
    (C^-,\{\fp^-_{k,\ell,g}\}). 
\end{align*}
\begin{defn}\label{def:comp}
The {\em composition}
$\ff = \ff^-\diamond \ff^+$
of $\ff^+$ and $\ff^-$ is the unique
$\IBL_\infty$-morphism
$\ff=\{\ff_{k,\ell,g}\}:(C^+,\{\fp^+_{k,\ell,g}\})\to
(C^-,\{\fp^-_{k,\ell,g}\})$ satisfying 
\begin{equation}\label{eq:comp2}
e^\ff = e^{\ff^-}e^{\ff^+}.
\end{equation}
\end{defn}
To see existence and uniqueness of $\ff$, consider the signature
$(k,\ell,g)$ part of $e^{\ff^-}e^{\ff^+}$,
\begin{align*}
&  \la e^{\ff^-}e^{\ff^+}\ra_{k,\ell,g} = \sum_{r^+=1}^k \sum_{r^-=1}^\ell
   \sum_{s=\max(r^+,r^-)}^{r^++r^-+g-1} \cr
&
   \hspace{-1.7cm}
    \sum_{ { { {k_1^++\dots+k_{r^+}^+=k}
    \atop {\ell_1^-+\dots+\ell_{r^-}^-=\ell} }
    \atop {\ell_1^++\dots+\ell_{r^+}^+=k_1^-+\dots+k_{r^-}^-=s} }
    \atop {g_1^++\dots+g_{r^+}^++g_1^-+\dots+g_{r^-}^-=r^++r^-+g-1-s} }
    \hspace{-2.4cm}
    \frac{1}{r^+!r^-!}(\ff^-_{k_1^-,\ell_1^-,g_1^-}\odot\cdots\odot
    \ff^-_{k_{r^-}^-,\ell_{r^-}^-,g_{r^-}^-})\;\circ
    (\ff^+_{k_1^+,\ell_1^+,g_1^+}\odot\cdots\odot
    \ff^+_{k_{r^+}^+,\ell_{r^+}^+,g_{r^+}^+}). 
\end{align*}
Note also that equation~\eqref{eq:ef} has the form
$$
\la e^\ff\ra_{k,\ell,g} = \ff_{k,\ell,g} +
    \sum_{r=2}^{\min\{k,\ell\}}
    \sum_{ { {k_1+\dots+k_r=k}
    \atop {\ell_1+\dots+\ell_r=\ell} }
    \atop {g_1+\dots+g_r=g+r-1} }
    \frac{1}{r!}(\ff_{k_1,\ell_1,g_1}\odot\cdots\odot
    \ff_{k_r,\ell_r,g_r}), 
$$ 
where the conditions in the last summand add up to
$$
    \sum_{i=1}^r(k_i+\ell_i+2g_i-2)=k+\ell+2g-2.
$$
As $r\geq 2$, this easily implies $(k_i,\ell_i,g_i)\prec (k,\ell,g)$
for all $i$, so we can inductively solve the equation $\langle e^\ff
\rangle_{k,\ell,g}= \la e^{\ff^-}e^{\ff^+}\ra_{k,\ell,g}$ for
$\ff_{k,\ell,g}$ to find 
% of $\ff$ are inductively defined by the following
%equations for all $k,\ell \geq 1$ and $g \geq 0$: 
\begin{equation}\label{eq:comp}
  \ff_{k,\ell,g} = \langle e^{\ff^-} e^{\ff^+} \rangle_{k,\ell,g}
   -\sum_{r=2}^{\min\{k,\ell\}}
    \sum_{ { {k_1+\dots+k_r=k}
    \atop {\ell_1+\dots+\ell_r=\ell} }
    \atop {g_1+\dots+g_r=g+r-1} }
    \frac{1}{r!}(\ff_{k_1,\ell_1,g_1}\odot\cdots\odot
    \ff_{k_r,\ell_r,g_r}) .
\end{equation}

Here are some explanations to this definition.

(1) Recall that we think of $\ff_{k,\ell,g}$ as associated to
connected Riemann surfaces with signature $(k,\ell,g)$. The first term
on the right hand side of equation~\eqref{eq:comp} describes all
possible ways to obtain  a (possibly disconnected) Riemann surface of
Euler characteristic $2-2g-k-\ell$ as a complete gluing of pieces
corresponding to the $(k_i^\pm,\ell_i^\pm,g_i^\pm)$. The second term
then subtracts all disconnected configurations.

(2) For $(k,\ell,g)=(1,1,0)$ equation~\eqref{eq:comp} shows that
$\ff_{1,1,0}$ and the $\ff_{1,1,0}^\pm$ are related
by 
$$
    \ff_{1,1,0} = \ff_{1,1,0}^-\circ\ff_{1,1,0}^+. 
$$

(3) In the special cases that $\fp_{k,\ell,g}=0$ and $\ff_{k,\ell,g}=0$
whenever $\ell\geq 2$ (resp.~$k\geq 2$) or $g>0$ we recover the
definitions of composition of $L_\infty$ (resp.~co-$L_\infty$) morphisms.

(4) If $\ff^+$ is linear, then \eqref{eq:comp} simplifies to 
$$
\ff_{k,\ell,g} = \frac 1 {k!} \ff^-_{k,\ell,g} \circ
(\ff^+_{1,1,0})^{\odot k}.
$$
Indeed, if we define $\ff_{k,\ell,g}$ by this equation we find
$$
    \frac{1}{k!}(\ff^-_{k_1,\ell_1,g_1}\odot\cdots\odot
    \ff^-_{k_{r},\ell_{r},g_{r}})
    \circ (\ff^+_{1,1,0})^{\odot k}
    = \ff_{k_1,\ell_1,g_1}\odot\cdots\odot
    \ff_{k_{r},\ell_{r},g_{r}}
$$
and hence
\begin{align*}
    \la e^{\ff^-}e^{\ff^+}\ra_{k,\ell,g}
    &= \sum_{r=1}^\ell
    \sum_{ { k_1+\dots+k_{r}=k
    \atop \ell_1+\dots+\ell_{r}=\ell }
     \atop g_1+\dots+g_{r}=r+g-1 }
    %\hspace{-2.4cm}
    \frac{1}{r!k!}(\ff^-_{k_1,\ell_1,g_1}\odot\cdots\odot
    \ff^-_{k_{r},\ell_{r},g_{r}})\circ 
    (\ff^+_{1,1,0})^{\odot k} \cr
    &= \la e^\ff\ra_{k,\ell,g}. 
\end{align*}
A similar discussion applies if $\ff^-$ is linear. In particular, if
both $\ff^\pm$ are linear then so is their composition.

Finally, we again record a useful observation for later use.
\begin{lem}\label{lem:mor-comp}
For $(1,1,0)\prec (k,\ell,g)$, the component $\ff_{k,\ell,g}$ of the
composition $\ff$ of two $\IBL_\infty$-morphisms $\ff^+$ and $\ff^-$
has the form 
$$
\ff_{k,\ell,g} = \ff^-_{k,\ell,g} \circ \frac 1 {k!}
(\ff^+_{1,1,0})^{\odot k} + \frac 1 {\ell!} (\ff^-_{1,1,0})^{\odot \ell}
\circ \ff^+_{k,\ell,g} + C_{k,\ell,g}(\ff^-,\ff^+),
$$
where $C_{k,\ell,g}(\ff^-,\ff^+)$ contains only components
$\ff^\pm_{k',\ell',g'}$ with $(k',\ell',g')\prec (k,\ell,g)$.
Moreover, if either $\ff^-$ or $\ff^+$ is linear, then
$C_{k,\ell,g}(\ff^-,\ff^+)=0$. 
\hfill $\qed$
\end{lem}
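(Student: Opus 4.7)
The plan is to prove the lemma by induction on $(k,\ell,g)$ with respect to the order $\prec$, using equation~\eqref{eq:comp} as the defining recursion for $\ff_{k,\ell,g}$. For signatures with $\min(k,\ell)=1$ the second sum on the right of~\eqref{eq:comp} is empty, so those serve as base cases; in the inductive step the second sum is nonempty but will involve only components of strictly smaller signature. I will analyze the two terms on the right of~\eqref{eq:comp} separately, identify the contributions matching the two boundary terms of the claim, and absorb everything else into $C_{k,\ell,g}$.

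For the first term $\la e^{\ff^-}e^{\ff^+}\ra_{k,\ell,g}$, each summand is a composition of factors $\ff^\pm_{k'_*,\ell'_*,g'_*}$, and the constraints built into the expansion force the nonnegative quantities $-\chi_*:=k'_*+\ell'_*+2g'_*-2$ to sum to $-\chi(k,\ell,g)=k+\ell+2g-2$. Either every factor has $-\chi_*<-\chi(k,\ell,g)$, in which case rule~(i) of the order gives $(k'_*,\ell'_*,g'_*)\prec(k,\ell,g)$ uniformly and the summand contributes to $C_{k,\ell,g}$, or a single distinguished factor has $-\chi_*=-\chi(k,\ell,g)$ and all remaining factors are forced to equal $(1,1,0)$. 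In the latter situation, if the distinguished factor is an $\ff^+$-factor, then solving the index constraints forces $r^-=\ell$ with every $\ff^-$-factor equal to $\ff^-_{1,1,0}$, the remaining $r^+-1$ factors on the $\ff^+$-side equal to $\ff^+_{1,1,0}$, and the distinguished factor equal to $\ff^+_{k-r^++1,\,\ell-r^++1,\,g+r^+-1}$, yielding a summand with combinatorial coefficient $\frac{1}{r^+!\,\ell!}$. For $r^+=1$ this is exactly $\frac{1}{\ell!}(\ff^-_{1,1,0})^{\odot\ell}\circ\ff^+_{k,\ell,g}$, the second boundary term of the claim. For $r^+\geq 2$ the distinguished factor has the same $-\chi$ but strictly larger $g$ than $(k,\ell,g)$, so rule~(ii) places it strictly below $(k,\ell,g)$ in $\prec$ and its contribution enters $C_{k,\ell,g}$. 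The mirror-symmetric case, in which the distinguished factor is an $\ff^-$-factor, produces the first boundary term $\ff^-_{k,\ell,g}\circ\frac{1}{k!}(\ff^+_{1,1,0})^{\odot k}$ at $r^-=1$ and further $C$-contributions for $r^-\geq 2$.

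For the second term in~\eqref{eq:comp}, the constraints $\sum k_i=k$, $\sum \ell_i=\ell$ with $r\geq 2$ and $k_i,\ell_i\geq 1$ force $k_i<k$ and $\ell_i<\ell$, hence $(k_i,\ell_i,g_i)\neq(k,\ell,g)$; running the same Euler-characteristic and ordering argument yields $(k_i,\ell_i,g_i)\prec(k,\ell,g)$ for every $i$. By the induction hypothesis each $\ff_{k_i,\ell_i,g_i}$ expands as a combination of factors $\ff^\pm_{k',\ell',g'}$ with $(k',\ell',g')\preceq(k_i,\ell_i,g_i)\prec(k,\ell,g)$, so the entire second term contributes to $C_{k,\ell,g}$. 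Combining with the analysis of the first term establishes the decomposition stated in the lemma.

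For the final sentence, assume $\ff^+$ is linear. Then $\ff^+_{k,\ell,g}=0$ since $(k,\ell,g)\succ(1,1,0)$, so the second boundary term vanishes, and the simplified formula for linear $\ff^+$ displayed in item~(4) of the discussion preceding the lemma identifies $\ff_{k,\ell,g}=\frac{1}{k!}\ff^-_{k,\ell,g}\circ(\ff^+_{1,1,0})^{\odot k}$ with the first boundary term; hence $C_{k,\ell,g}=0$, and linear $\ff^-$ is handled symmetrically. The main subtle point of the argument is the direction of rule~(ii) of the order: collapsing a distinguished piece to $(k-r+1,\ell-r+1,g+r-1)$ \emph{decreases} rather than increases its signature under $\prec$, which is precisely what ensures that the off-diagonal ($r^\pm\geq 2$) contributions are absorbed into $C_{k,\ell,g}$ instead of surviving as unwanted top-order terms.
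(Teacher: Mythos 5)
Your argument is correct and is essentially the paper's own: the paper likewise observes that the index constraints in the first sum of \eqref{eq:comp} force the quantities $k'_*+\ell'_*+2g'_*-2$ of all factors to add up to $k+\ell+2g-2$, so that either all factors drop strictly in the order by rule (i) or exactly one factor carries the full Euler characteristic with the rest equal to $(1,1,0)$ (handled by rule (ii) when $r^\pm\geq 2$), while the second sum involves only smaller signatures; the linear case is reduced to item (4) exactly as you do. Your write-up just makes explicit the induction and the distinguished-factor bookkeeping that the paper leaves as "easily follows."
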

\begin{proof}
The conditions in the first sum in~\eqref{eq:comp} add up to
$$
    \sum_{i=1}^{r^+}(k_i^++\ell_i^++2g_i^+-2)
    + \sum_{i=1}^{r^-}(k_i^-+\ell_i^-+2g_i^--2)
    = k+\ell+2g-2,
$$
from which the first statement easily follows. The last statement
follows from (4) above. 
\end{proof}

\begin{rem}
We leave it to the reader to check that composition of morphisms is
associative. 
\end{rem}

%%%%%%%%%%%%%%%%%%%%%%%%%%%%%%%%%%%%%%%%%%%%%%%%%%%%%%%%%%%%%%%%%%%%%%%
\section{Obstructions}\label{sec:tech}
%%%%%%%%%%%%%%%%%%%%%%%%%%%%%%%%%%%%%%%%%%%%%%%%%%%%%%%%%%%%%%%%%%%%%%%

In this section we prove the main technical proposition underlying the
homotopy theory of $\IBL_\infty$-algebras
(Proposition~\ref{prop:hom-main}). All the results in the following
three sections will be formal consequences of this proposition.

Given free chain complexes $(C,d^C)$ and $(D,d^D)$ over $R$, define
a boundary operator $\delta$ on $Hom(E_kC, E_\ell D)$ by 
$$
\delta \varphi = \hat d^D \,\varphi + (-1)^{|\varphi|+1}\varphi\,
\hat d^C,
$$
where $\hat d^C$ and $\hat d^D$ are the usual extensions of $d^C$ and $d^D$.
This operation is a derivation of composition, in the sense that
for $\varphi \in Hom(E_n B, E_k C)$ and $\psi \in Hom(E_k C,E_\ell D)$
we have 
$$
\delta(\psi \circ \varphi) = (\delta \psi)\circ \varphi +
(-1)^{|\psi|}\psi \circ (\delta \varphi).
$$
Below, we always consider the case that $B$, $C$ and $D$ are (partial)
$\IBL_\infty$-algebras and the boundary operators used in the
definition of $\delta$ are the corresponding structure maps $\fp_{1,1,0}$.
The following proposition identifies the obstructions to inductive
extensions of partially defined $\IBL_\infty$-structures and their morphisms.
\begin{prop}\label{prop:hom-main}
\begin{enumerate}[\rm (1)]
\item Let $\{\fp_{k,\ell,g}:E_kC \to E_{\ell}C\}_{
(k,\ell,g)\prec (K,L,G)\}}$ be a collection of maps that
satisfy the defining relation \eqref{eq:BL4} for
$\IBL_\infty$-structures for all   $(k,\ell,g)\prec
(K,L,G)$. Then the expression $P_{K,L,G} \in Hom(E_KC,E_LC)$
implicitly defined in Lemma~$\ref{lem:order}$ satisfies 
$$
\delta P_{K,L,G}=0.
$$
\item Let $(C,\{\fp^C_{k,\ell,g}\})$ and $(D, \{\fp^D_{k,\ell,g}\})$
be $\IBL_\infty$-algebras, and suppose the collection of maps
$\{\ff_{k,\ell,g}:E_kC \to E_{\ell}D\}_{(k,\ell,g)\prec
(K,L,G)\}}$ satisfies the defining relation \eqref{eq:mor} for
$\IBL_\infty$-morphisms for all   $(k,\ell,g)\prec
(K,L,G)$. Then the expression $R_{K,L,G}(\ff,\fp^C,\fp^D) \in
Hom(E_KC,E_LD)$ implicitly defined in Lem\-ma~$\ref{lem:mor_order}$
satisfies 
$$
\delta R_{K,L,G}(\ff,\fp^C,\fp^D)=0.
$$
\item Assume further that $(B,\{\fp^B_{k,\ell,g}\})$ is another
   $\IBL_\infty$-algebra and the collection $\fg=\{\fg_{k,\ell,g}:E_kB \to
   E_{\ell}C\}_{(k,\ell,g)\prec (K,L,G)}$ also satisfies the
   defining relation \eqref{eq:mor} for morphisms for all
   $(k,\ell,g)\prec (K,L,G)$. Then 
\begin{align*}
R_{K,L,G}(\ff \circ \fg,\fp^B,\fp^D) =& 
\frac 1 {L!} \ff_{1,1,0}^{\odot L}\circ R_{K,L,G}(\fg,\fp^B,\fp^C) 
+ R_{K,L,G}(\ff,\fp^C,\fp^D) \circ\frac 1 {L!} \fg_{1,1,0}^{\odot K} \\
&+ \delta C_{K,L,G}(\ff,\fg),
\end{align*}
where $C_{K,L,G}(\ff,\fg)\in Hom(E_K B,E_L D)$ is the expression
implicitly defined in Lemma~$\ref{lem:mor-comp}$. 
\end{enumerate}
\end{prop}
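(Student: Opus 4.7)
The plan is to package the partial data into operators on $EC\{\hbar,\tau\}$ (or its morphism/composition analogues), invoke a tautological algebraic identity — associativity of composition, or $(\hat\fp)^2=0$ — and extract the signature $(K,L,G)$ component using the vanishing of lower-signature relations guaranteed by hypothesis. The central technical ingredient, used repeatedly, is an Euler-characteristic argument: since $(1,1,0)$ is the unique signature with $\chi=0$, any way of combining two signature components into signature $(K,L,G)$ forces either one of the two factors to lie at $(1,1,0)$ or the other to lie at some signature $\preceq (K,L,G)$ in the order of Definition~\ref{def:order}.

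For (1), I extend the partial structure by setting $\fp_{k,\ell,g}:=0$ for $(k,\ell,g)\succeq (K,L,G)$ and form $\hat\fp$. By Lemma~\ref{lem:order} the signature $(k,\ell,g)$ component of $\hat\fp\circ\hat\fp$ vanishes for $(k,\ell,g)\prec (K,L,G)$ and equals $P_{K,L,G}$ at $(K,L,G)$. Associativity of composition then gives $\hat\fp\circ(\hat\fp\circ\hat\fp)=(\hat\fp\circ\hat\fp)\circ\hat\fp$; extracting the signature $(K,L,G)$ component of both sides via the Euler-characteristic argument isolates precisely $\hat\fp_{1,1,0}\circ P_{K,L,G}$ and $P_{K,L,G}\circ\hat\fp_{1,1,0}$, whose equality is exactly $\delta P_{K,L,G}=0$. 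For (2), I extend $\ff$ by zero and set $\Phi_\ff:=e^\ff\hat\fp^C-\hat\fp^D e^\ff$. Since $(\hat\fp^C)^2=(\hat\fp^D)^2=0$, direct expansion yields $\hat\fp^D\circ\Phi_\ff+\Phi_\ff\circ\hat\fp^C=0$, and Lemma~\ref{lem:mor_order} together with the partial-morphism hypothesis identifies $[\Phi_\ff]_{(K,L,G)}$ with $R_{K,L,G}(\ff,\fp^C,\fp^D)$. The same Euler-characteristic extraction then yields $\delta R_{K,L,G}=0$.

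Part (3) starts from the key identity
\[
\Phi_{\ff\diamond\fg}\;=\;e^\ff\circ\Phi_\fg\,+\,\Phi_\ff\circ e^\fg,
\]
where $\Phi_\fg:=e^\fg\hat\fp^B-\hat\fp^C e^\fg$ and $\Phi_{\ff\diamond\fg}$ is defined analogously; it follows from $e^{\ff\diamond\fg}=e^\ff\,e^\fg$ by a short algebraic manipulation. Extracting the signature $(K,L,G)$ component of the right-hand side, the Euler-characteristic argument together with the vanishings $\ff_{K,L,G}=\fg_{K,L,G}=0$ and $[\Phi_\ff]_{(k,\ell,g)}=[\Phi_\fg]_{(k,\ell,g)}=0$ for $(k,\ell,g)\prec (K,L,G)$ shows that only the pure-$\ff_{1,1,0}$ component $\frac{1}{L!}\ff_{1,1,0}^{\odot L}$ of $e^\ff$ can be paired with $R_{K,L,G}(\fg)$, and symmetrically $R_{K,L,G}(\ff)$ pairs only with $\frac{1}{K!}\fg_{1,1,0}^{\odot K}$. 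On the left-hand side, Lemma~\ref{lem:mor-comp} gives $(\ff\diamond\fg)_{K,L,G}=C_{K,L,G}(\ff,\fg)$, whence $[\Phi_{\ff\diamond\fg}]_{(K,L,G)}=R_{K,L,G}(\ff\diamond\fg,\fp^B,\fp^D)-\delta C_{K,L,G}(\ff,\fg)$ (using that $|C_{K,L,G}|$ is even). Rearranging yields the claimed formula.

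The main obstacle throughout is the Euler-characteristic bookkeeping. In all three parts I must carefully rule out feedback from signature components $(K',L',G')\succ (K,L,G)$ of the $\Phi$- or $\hat\fp^2$-factor into the signature $(K,L,G)$ part of the composition. Since no operator component has $\chi>0$, such feedback is possible only via pairing with $(1,1,0)$-type factors, and in that case matching input/output counts further forces $(K',L',G')=(K,L,G)$ itself. Once this is in place, tracking the sign of $\delta$ acting on the even-degree object $C_{K,L,G}(\ff,\fg)$ against the convention implicit in Lemma~\ref{lem:mor_order} is the only remaining delicate step in part (3).
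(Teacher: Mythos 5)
Your proof is correct, and it is organized quite differently from the paper's. The paper works throughout at the level of \emph{connected} compositions: for (1) it applies the Leibniz rule for $\delta$ to the explicit sum defining $P_{K,L,G}$, substitutes $\delta\fp_{k,\ell,g}=-P_{k,\ell,g}$ for lower signatures, and then verifies the cancellation of the resulting triple compositions by hand via an associativity identity for the operations $\circ_{s_{23},s_{13}}$ (including a separate pairwise cancellation of the $s_{23}=0$ terms); parts (2) and (3) are proved by similarly explicit term-by-term matching. You instead work with the full generating-series operators, where the needed identities --- associativity of $\hat\fp\circ\hat\fp\circ\hat\fp$, the relation $\hat\fp^D\Phi_\ff+\Phi_\ff\hat\fp^C=0$, and $\Phi_{\ff\diamond\fg}=e^\ff\Phi_\fg+\Phi_\ff e^\fg$ --- are tautologies, and all combinatorial content is concentrated in the single extraction step. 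I checked the delicate points of that step: potential feedback from components of signature $(K',L',G')$ with $K'+L'+2G'=K+L+2G$ but $(K',L',G')\succ(K,L,G)$ is indeed killed either by the restriction to $E_K$ (when $K'>K$) or by the word-length constraint imposed by pairing with a $(1,1,0)$-factor (which forces $L'=L$ and hence $G'=G$), exactly as you assert; and the identification of $\langle\Phi_{\ff\diamond\fg}\rangle_{K,L,G}$ with $R_{K,L,G}(\ff\diamond\fg)-\delta C_{K,L,G}(\ff,\fg)$ requires the additional (but immediate) observation that the lower-signature relations for $\ff\diamond\fg$ follow from your key identity applied below $(K,L,G)$. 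What your route buys is a uniform and essentially sign-free treatment of all three parts, most notably part (3), whose term-matching in the paper is the most involved; what it costs is that the extraction lemma must be stated and verified once with full care, since it is the only place where the hypotheses and the specific order $\prec$ enter.
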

For the proof we will need some more notation. For three linear maps
$p_i:E_{k_i}C\to E_{\ell_i}C$ and integers $s_{12},s_{13},s_{23}\geq
0$ we denote by 
$$
    \hat p_3\circ_{s_{23},s_{13}}(\hat p_2\circ_{s_{12}}\hat p_1) =
    (\hat p_3\circ_{s_{23}}\hat p_2)\circ_{s_{13},s_{12}}\hat p_1
$$
the part of the composition $\hat p_3\circ\hat p_2\circ\hat p_1$ where
exactly $s_{ij}$ of the inputs of $p_j$ are outputs of $p_i$. The
following properties follow immediately from the definition:
\begin{align*}
    \hat p_3\circ_s(\hat p_2\circ_{s_{12}}\hat p_1)
    &= \sum_{s_{13}+s_{23}=s}
    \hat p_3\circ_{s_{23},s_{13}}(\hat p_2\circ_{s_{12}}\hat p_1), \cr
    (\hat p_3\circ_{s_{23}}\hat p_2)\circ_{s}\hat p_1
    &= \sum_{s_{12}+s_{13}=s}
    (\hat p_3\circ_{s_{23}}\hat p_2)\circ_{s_{13},s_{12}}\hat p_1.
\end{align*}
Note in particular that, since $p_i \circ_0 p_j= (-1)^{|p_i||p_j|}p_j
\circ_0 p_i$, we have
\begin{align*}
    \hat p_3\circ_{0,s_{13}}(\hat p_2\circ_{s_{12}}\hat p_1)
    &= (\hat p_3\circ_0\hat p_2) \circ_{s_{13},s_{12}}\hat p_1) \\
    &= (-1)^{|p_2|\,|p_3|}
    (\hat p_2\circ_0\hat p_3) \circ_{s_{12},s_{13}}\hat p_1) \\
    &= (-1)^{|p_2|\,|p_3|}
    \hat p_2\circ_{0,s_{12}}(\hat p_3\circ_{s_{13}}\hat p_1),
\end{align*}
and similarly
$$
(\hat p_3\circ_{s_{23}}\hat p_2)\circ_{s_{13},0}\hat p_1
    = (-1)^{|p_1|\,|p_2|}
    (\hat p_3\circ_{s_{13}}\hat p_1)\circ_{s_{23},0}\hat p_2. 
$$
Applying these properties in the case where one of the $p_i$ is the
boundary operator $\fp_{1,1,0}$, we obtain 
$$
    \delta(\hat p_2\circ_s\hat p_1) = (\delta\hat p_2)\circ_s\hat p_1 +
    (-1)^{|p_2|}\hat p_2\circ_s (\delta\hat p_1). 
$$

\begin{proof}[Proof of Proposition~\ref{prop:hom-main}]
For the proof of (1), recall from Lemma~\ref{lem:order} that
$P_{k,\ell,g}$ is defined by
$$
    P_{k,\ell,g} := \sum_{s=1}^{g+1}
    \sum_{ { k_1+k_2=k+s \atop {\ell_1+\ell_2=\ell+s \atop
    g_1+g_2=g+1-s} } \atop (k_i,\ell_i,g_i)\neq (1,1,0)}
    (\hat \fp_{k_2,\ell_2,g_2}\circ_s \hat
    \fp_{k_1,\ell_1,g_1})|_{E_kC}
$$
Moreover, in our current notation equation~\eqref{eq:BL4} can be
written as 
$$
\delta \fp_{k,\ell,g} = -  P_{k,\ell,g}.
$$
%$$
%P_{K,L,G} = \sum
%(\hat\fp_{k_2,\ell_2,g_2}\circ_d\hat\fp_{k_1,\ell_1,g_1})|_{E_KC},
%$$
Since all terms in the definition of $P_{K,L,G}$ satisfy
$(k_i,\ell_i,g_i)\prec (K,L,G)$, using the hypothesis 
we find 
\begin{align*}
    \delta P_{K,L,G}
    &= \sum_{s=1}^{G+1}
    \sum_{ { k_1+k_2=K+s \atop {\ell_1+\ell_2=L+s \atop
    g_1+g_2=G+1-s} } \atop (k_i,\ell_i,g_i)\neq (1,1,0)}
    (\delta\hat\fp_{k_2,\ell_2,g_2}\circ_s \hat\fp_{k_1,\ell_1,g_1} -
    \hat\fp_{k_2,\ell_2,g_2}\circ_s \delta\hat\fp_{k_1,\ell_1,g_1})|_{E_kC} \cr
    &= -\sum_{s=1}^{G+1}
    \sum_{ { k_1+k_2=K+s \atop {\ell_1+\ell_2=L+s \atop
    g_1+g_2=G+1-s} } \atop (k_i,\ell_i,g_i)\neq (1,1,0)}
    (\hat P_{k_2,\ell_2,g_2}\circ_s \hat\fp_{k_1,\ell_1,g_1})|_{E_kC} \cr
    &\ \ \ + \sum_{s=1}^{G+1}
    \sum_{ { k_1+k_2=K+s \atop {\ell_1+\ell_2=L+s \atop
    g_1+g_2=G+1-s} } \atop (k_i,\ell_i,g_i)\neq (1,1,0)}
    (\hat\fp_{k_2,\ell_2,g_2}\circ_s \hat P_{k_1,\ell_1,g_1})|_{E_kC} \cr
    &=: -A + B 
\end{align*}
%$$
%-\sum (P_{k_2,\ell_2,g_2}\circ_d\hat\fp_{k_1,\ell_1,g_1})|_{E_KC}
%+\sum \hat\fp_{k_2,\ell_2,g_2}\circ_d P_{k_1,\ell_1,g_1}|_{E_KC}.
%$$
Geometrically, the terms $A$ and $B$ both correspond to breaking up a
connected Riemann surface of signature $(k,\ell,g)$ in all possible
ways into three non-trivial pieces. To show algebraically that indeed $A=B$, 
in the sum $B$ we rename $(k_2,\ell_2,g_2)$ to $(k_3,\ell_3,g_3)$,
$(k_1,\ell_1,g_1)$ to $(k,\ell,g)$, and we insert the definition of
$P_{k,\ell,g}$ to rewrite it as
\begin{align*}
    B &= \sum_{s=1}^{G+1}
    \sum_{ { k+k_3=K+s \atop {\ell+\ell_3=L+s \atop
    g+g_3=G+1-s} } \atop (k_3,\ell_3,g_3), (k,\ell,g)\neq (1,1,0)}
    \sum_{t=1}^{g+1}
    \sum_{ { k_1+k_2=k+t \atop {\ell_1+\ell_2=\ell+t \atop
    g_1+g_2=g+1-t} } \atop (k_i,\ell_i,g_i)\neq (1,1,0)}
    \hat\fp_{k_3,\ell_3,g_3}\circ_s (\hat\fp_{k_2,\ell_2,g_2}\circ_t
    \hat\fp_{k_1,\ell_1,g_1})|_{E_kC} \cr
    &= \sum_{s,t\geq 1 \atop s+t\leq G+2}
    \sum_{ { k_1+k_2+k_3=K+s+t \atop {\ell_1+\ell_2+\ell_2=L+s+t \atop
    g_1+g_2+g_3=G+2-s-t} } \atop (k_i,\ell_i,g_i)\neq (1,1,0)}
    \hat\fp_{k_3,\ell_3,g_3}\circ_s (\hat\fp_{k_2,\ell_2,g_2}\circ_t
    \hat\fp_{k_1,\ell_1,g_1})|_{E_kC}. 
\end{align*}
We rewrite the term in $B$ for fixed $s,t$ as 
$$
    \sum_{s_{13}+s_{23}=s}
    \sum_{ { k_1+k_2+k_3=K+s+t \atop {\ell_1+\ell_2+\ell_2=L+s+t \atop
    g_1+g_2+g_3=G+2-s-t} } \atop (k_i,\ell_i,g_i)\neq (1,1,0)}
    \hat\fp_{k_3,\ell_3,g_3}\circ_{s_{23},s_{13}}
    (\hat\fp_{k_2,\ell_2,g_2}\circ_t \hat\fp_{k_1,\ell_1,g_1}). 
$$
By the above properties of triple compositions the terms with
$s_{23}=0$ cancel in pairs, and renaming $t=s_{12}$ we obtain
$$
    B = \sum_{s_{12},s_{23}\geq 1,s_{13}\geq 0\atop
    u:=s_{12}+s_{13}+s_{23}\leq G+2}
    \sum_{ { k_1+k_2+k_3=K+u \atop {\ell_1+\ell_2+\ell_2=L+u \atop
    g_1+g_2+g_3=G+2-u} } \atop (k_i,\ell_i,g_i)\neq (1,1,0)}
    \hat\fp_{k_3,\ell_3,g_3}\circ_{s_{23},s_{13}}
    (\hat\fp_{k_2,\ell_2,g_2}\circ_{s_{12}}\hat\fp_{k_1,\ell_1,g_1}). 
$$
By similar discussion and the above associativity of triple
decompositions we see that this expression agrees with $A$, which
proves $\delta P_{K,L,G}=0$.

To prove (2), recall that $R_{K,L,G}=R_{K,L,G}(\ff,\fp^C,\fp^D)$ has
the general form 
\begin{eqnarray*}
R_{K,L,G}&=&
     \sum \frac{1}{r!}(\ff_{k_1,\ell_1,g_1}\odot\cdots\odot
    \ff_{k_r,\ell_r,g_r})\circ_{s_1,\dots,s_r} \hat\fp^C_{k^+,\ell^+,g^+} \\
& & - \sum \frac{1}{r!}\hat\fp^D_{k^-,\ell^-,g^-}\circ_{s_1,\dots,s_r}
    (\ff_{k_1,\ell_1,g_1}\odot\cdots\odot \ff_{k_r,\ell_r,g_r}),
\end{eqnarray*}
where all the terms satisfy $s_i\geq 1$, $(k^\pm,\ell^\pm,g^\pm)\neq
(1,1,0)$ and $(k_i,\ell_i,g_i)\prec (K,L,G)$. 
Hence, using the properties of triple compositions, the hypothesis
$\delta\fp_{k,\ell,g}+P_{k,\ell,g}=0$ for all $(k,\ell,g)$, and the
induction hypothesis $R_{k,\ell,g}=\delta\ff_{k,\ell,g}$ for
$(k,\ell,g)\prec (K,L,G)$, we find that
\begin{align*}
\lefteqn{\delta R_{K,L,G} } \cr
&= \sum \frac{1}{(r-1)!}(\delta\ff_{k_1,\ell_1,g_1}\odot\ff_{k_2,\ell_2,g_2}
\odot \cdots\odot \ff_{k_r,\ell_r,g_r})\circ_{s_1,\dots,s_r}
\hat\fp^C_{k^+,\ell^+,g^+} \cr
&+ \sum \frac{1}{r!}(\ff_{k_1,\ell_1,g_1}\odot\cdots\odot
    \ff_{k_r,\ell_r,g_r})\circ_{s_1,\dots,s_r}
    \delta\hat\fp^C_{k^+,\ell^+,g^+} \cr 
&- \sum \frac{1}{r!}\delta\hat\fp^D_{k^-,\ell^-,g^-}\circ_{s_1,\dots,s_r}
    (\ff_{k_1,\ell_1,g_1}\odot\cdots\odot \ff_{k_r,\ell_r,g_r}) \cr 
&+ \sum \frac{1}{(r-1)!}\hat\fp^D_{k^-,\ell^-,g^-}\circ_{s_1,\dots,s_r}
    (\delta\ff_{k_1,\ell_1,g_1}\odot\ff_{k_2,\ell_2,g_2} \odot
    \cdots\odot \ff_{k_r,\ell_r,g_r}) \cr 
&= \sum \frac{1}{(r-1)!}(R_{k_1,\ell_1,g_1}\odot\ff_{k_2,\ell_2,g_2}
\odot \cdots\odot \ff_{k_r,\ell_r,g_r})\circ_{s_1,\dots,s_r}
\hat\fp^C_{k^+,\ell^+,g^+} \cr
&- \sum \frac{1}{r!}(\ff_{k_1,\ell_1,g_1}\odot\cdots\odot
    \ff_{k_r,\ell_r,g_r})\circ_{s_1,\dots,s_r} P^C_{k^+,\ell^+,g^+} \cr 
&+ \sum \frac{1}{r!}P^D_{k^-,\ell^-,g^-}\circ_{s_1,\dots,s_r}
    (\ff_{k_1,\ell_1,g_1}\odot\cdots\odot \ff_{k_r,\ell_r,g_r}) \cr 
&+ \sum \frac{1}{(r-1)!}\hat\fp^D_{k^-,\ell^-,g^-}\circ_{s_1,\dots,s_r}
    (R_{k_1,\ell_1,g_1}\odot\ff_{k_2,\ell_2,g_2} \odot
    \cdots\odot \ff_{k_r,\ell_r,g_r}).
\end{align*}
Now observe that the first summand contains three kinds of terms:
\begin{enumerate}[(i)]
\item terms of the type
$$
(\ff_{k_1,\ell_1,g_1} \odot \cdots
\odot \ff_{k_r,\ell_r,g_r}) \circ_{s_1,\dots,s_r}
(\hat \fp^C_{k',\ell',g'}\circ_s \hat \fp^C_{k'',\ell'',g''})
$$
with $s>0$ cancelling with the second sum;
\item terms of the type
$$
(\ff_{k_1,\ell_1,g_1} \odot \cdots
\odot \ff_{k_r,\ell_r,g_r}) \circ_{s_1,\dots,s_r}
(\hat \fp^C_{k',\ell',g'}\circ_0 \hat \fp^C_{k'',\ell'',g''})
$$
which appear in cancelling pairs;
\item terms of the type
$$
-\hat\fp^D_{k^-,\ell^-,g^-}\circ (\ff_{k_1,\ell_1,g_1} \odot \cdots
\odot \ff_{k_r,\ell_r,g_r}) \circ \hat \fp^C_{k^+,\ell^+,g^+} 
$$
which appear with opposite sign in the forth sum.
\end{enumerate}
A similar discussion applies to the forth sum, and so in total we find
that $\delta R_{K,L,G}=0$ as claimed.

To prove (3), recall that
\begin{eqnarray*}
C_{K,L,G}(\ff,\fg) &=& (\ff \circ \fg)_{K,L,G} - (\frac 1 {L!}
\ff_{1,1,0}^{\odot L}) \circ \fg_{K,L,G} - \ff_{K,L,G} \circ (\frac 1
    {K!} \fg_{1,1,0}^{\odot K}) \\
&=& \left \langle \sum_{(k_i,\ell_i,g_i) \prec (K,L,G) \atop
          (k'_j,\ell'_j,g'_j) \prec (K,L,G) } \frac 1 {r!r'!}
    (\ff_{k_1,\ell_1,g_1} \odot \dots \odot \ff_{k_r,\ell_r,g_r})
    \circ_{\rm conn} \right. \\
& &  \left. 
(\fg_{k'_1,\ell'_1,g'_1} \odot \dots \odot
    \fg_{k'_{r'},\ell'_{r'},g'_{r'}}) 
\right \rangle_{K,L,G},
\end{eqnarray*}
where $\circ_{\rm conn}$ signifies that we only keep those
compositions which in the geometric picture correspond to a connected
end result of gluing surfaces
\footnote{This process is parallel to the 
relation between a prop and a properad as in \cite[Proposition
  2.1]{Vallette}.}. 
Note also that in each factor
$\ff_{k_1,\ell_1,g_1} \odot \dots \odot \ff_{k_r,\ell_r,g_r}$
(respectively $\fg_{k'_1,\ell'_1,g'_1} \odot \dots \odot
\fg_{k'_{r'},\ell'_{r'},g'_{r'}}$) at least one of the signatures is
different from $(1,1,0)$.

Now since $\delta$ is a derivation of composition, and using the
hypothesis that $\delta \ff_{k,\ell,g} = R_{k,\ell,g}(\ff)$ and
$\delta \fg_{k,\ell,g} = R_{k,\ell,g}(\fg)$ for all $(k,\ell,g) \prec
(K,L,G)$,  we find that
\begin{eqnarray*}
\lefteqn{\delta C_{K,L,G} =}\\
& &\left \langle \sum_{(k_i,\ell_i,g_i) \prec (K,L,G) \atop
          (k'_j,\ell'_j,g'_j) \prec (K,L,G) } \frac 1 {(r-1)!r'!} \right.\\
& &   (R_{k_1,\ell_1,g_1}(\ff) \odot \dots \odot \ff_{k_r,\ell_r,g_r})
    \circ_{\rm conn}  (\fg_{k'_1,\ell'_1,g'_1} \odot \dots \odot
    \fg_{k'_{r'},\ell'_{r'},g'_{r'}}) \\
& & + \sum_{(k_i,\ell_i,g_i) \prec (K,L,G) \atop
          (k'_j,\ell'_j,g'_j) \prec (K,L,G) } \frac 1 {r!(r'-1)!}\\
& &\left. (\ff_{k_1,\ell_1,g_1} \odot \dots \odot \ff_{k_r,\ell_r,g_r})
    \circ_{\rm conn}  (R_{k'_1,\ell'_1,g'_1}(\fg) \odot \dots \odot
    \fg_{k'_{r'},\ell'_{r'},g'_{r'}}) 
\right \rangle_{K,L,G}\\
&=& A + B.
\end{eqnarray*}
Next substitute the corresponding expressions for the $R_{k,\ell,g}$ and
observe that most of the components in $A$ involving 
$\hat\fp^C$ have a corresponding term with opposite sign in $B$. The
only ones remaining are of the form 
$$
(\frac 1 {L!} \ff_{1,1,0}^{\odot L}) \circ \hat\fp_{r',\ell',g'}^C
\circ_{s_1,\dots,s_{r'}} (\fg_{k'_1,\ell'_1,g'_1} \odot \cdots \odot
   \fg_{k_{r'},\ell_{r'},g_{r'}})
$$
with all $s_i>0$. They appear in the summands of the form 
$$
\frac 1 {(r-1)!r'!}(R_{k_1,\ell_1,g_1}(\ff) \odot (\ff_{1,1,0})^{\odot r-1})
    \circ_{\rm conn}  (\fg_{k'_1,\ell'_1,g'_1} \odot \dots \odot
    \fg_{k'_{r'},\ell'_{r'},g'_{r'}})
$$
where $(k_1,\ell_1,g_1)$ is the only signature different from
$(1,1,0)$ in the first factor. In the claimed expression for
$R_{K,L,G}(\ff \circ \fg)$, these terms precisely cancel the terms in $\frac
1 {L!} \ff_{1,1,0}^{\odot L} \circ R_{K,L,G}(\ff)$ involving $\hat \fp^C$.

Similarly, the only terms from $B$ involving $\hat \fp^C$ that remain after
cancellation with corresponding terms in $A$ are those of the form
$$
(\ff_{k_1,\ell_1,g_1} \odot \cdots \odot
   \fg_{k_r,\ell_r,g_r}) \circ_{s_1,\dots,s_r}
   \hat \fp_{k',r,g'}^C \circ (\frac 1 {K!} \fg_{1,1,0}^{\odot K})
$$
with all $s_i>0$, and they precisely cancel the contributions from
$R_{K,L,G}(\ff) \circ \frac 1 {K!} \fg_{1,1,0}^{\odot K}$ involving
$\hat \fp^C$.

Finally note that all the terms in $A$ involving $\hat \fp^D$ and all
the terms in $B$ involving $\hat \fp^B$ also appear in $R_{K,L,G}(\ff
\circ \fg)$. Moreover, the missing pieces are again precisly those
which are supplied by the corresponding terms in
$R_{K,L,G}(\ff) \circ \frac 1 {K!} \fg_{1,1,0}^{\odot K}$ involving
$\hat \fp^D$ and the terms in $\frac 1 {L!} \ff_{1,1,0}^{\odot L}
\circ R_{K,L,G}(\ff)$ involving $\hat \fp^B$. 
\end{proof}

Besides these assertions, we will repeatedly make use of the following
well-known observations. 
\begin{lem}\label{lem:homotopy}
Let $e:(B,\p_B) \to (C,\p_C)$ be a homotopy equivalence of chain
complexes which has a chain homotopy inverse $i:(C,\p_C) \to (B,\p_B)$
such that $ei=id_C$. 
\begin{enumerate}[\rm (1)]
\item If $f:(A,\p_A) \to (B,\p_B)$ is a map of degree $D$
satisfying $\p_B f - (-1)^D f \p_A=0$ and  $ef=0$, then there exists a
map $H:(A,\p_A) \to (B,\p_B)$ of degree $D+1$ such that
$f + \p_B H +(-1)^D H \p_A=0$, and $e H =0$.
\item If $g:(B,\p_B) \to (A,\p_A)$ is a map of degree $D$
satisfying $\p_A g - (-1)^D g \p_B=0$ and $gi=0$, then there exists a
map $H:(B,\p_B) \to (A,\p_A)$ of degree $D+1$ such that
$g + \p_A H +(-1)^D H \p_B=0$ and $Hi =0$.
\end{enumerate}
\end{lem}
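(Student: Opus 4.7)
\textbf{Proof proposal for Lemma~\ref{lem:homotopy}.}

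The two statements are formal consequences of the standard trick for improving a chain homotopy, so the plan is to reduce both to a single preliminary construction and then to a one-line application of the Leibniz rule for $\delta$. First I would use the hypothesis that $e$ is a homotopy equivalence with homotopy inverse $i$ to produce some $K_0\colon B\to B$ of degree $+1$ satisfying
$$
   \p_B K_0 + K_0\p_B = \mathrm{id}_B - ie,
$$
i.e.~$\delta K_0 = \mathrm{id}_B - ie$. Since $ei=\mathrm{id}_C$, the endomorphism $ie$ is an honest idempotent chain map on $B$, and in particular $eie=e$ and $iei=i$. The key observation is then to \emph{replace} $K_0$ by
$$
   K := (\mathrm{id}_B - ie)\,K_0\,(\mathrm{id}_B - ie).
$$
Since $ie$ is a chain map and $(\mathrm{id}_B-ie)^2=\mathrm{id}_B-ie$, a direct computation shows that $K$ still satisfies $\delta K = \mathrm{id}_B - ie$, and in addition
$$
   eK = 0 \qquad\text{and}\qquad Ki = 0,
$$
because $e(\mathrm{id}_B-ie)=0$ and $(\mathrm{id}_B-ie)i=0$.

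For part (1), I would set $H := -Kf$. The identity $eH=-eKf=0$ is immediate from $eK=0$. For the boundary relation, the hypothesis $\p_B f-(-1)^Df\p_A=0$ is precisely $\delta f=0$, so by the Leibniz rule
$$
   \delta(Kf) = (\delta K)\,f + (-1)^{|K|}K\,(\delta f)
             = (\mathrm{id}_B-ie)f = f - ief = f,
$$
using $ef=0$ in the last equality. Hence $\delta H=-f$, which is exactly $f+\p_BH+(-1)^DH\p_A=0$.

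Part (2) is entirely symmetric. Taking $H := (-1)^{D+1}gK$, the identity $Hi=0$ is immediate from $Ki=0$, and the Leibniz rule together with $\delta g=0$ yields
$$
   \delta(gK) = (-1)^{|g|}\,g\,(\delta K) = (-1)^D g\,(\mathrm{id}_B-ie) = (-1)^D g,
$$
where the last equality uses $gi=0$ (and hence $gie=0$). Therefore $\delta H = (-1)^{D+1}\delta(gK)=-g$, which is the desired relation $g+\p_AH+(-1)^DH\p_B=0$. The only real obstacle is sign bookkeeping in the Leibniz rule and in the chain-homotopy equation, which is routine in the conventions fixed at the beginning of the section.
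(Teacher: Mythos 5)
Your proof is correct and follows essentially the same route as the paper: both arguments take an arbitrary chain homotopy between $ie$ and $\id_B$, conjugate it by $\id_B-ie$ to arrange $eK=Ki=0$, and then obtain $H$ by composing with $f$ (resp.\ $g$). The only differences are immaterial sign conventions, which you track carefully via the Leibniz rule for $\delta$.
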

\begin{proof}
Choose any homotopy $h$ satisfying
$$
\p_B h + h \p_B = ie -id_B
$$
and set $h'=(\id_B-ie)h(\id_B - ie)$. Then, since $ei=id_C$, 
one straightforwardly checks that $h'$ is also a homotopy between $ie$
and $id_B$, satisfying in addition that $eh'=h'i=0$. 
Now for assertion (1), define $H:=h'f$ and compute that
$$
\p_B H + (-1)^D H\p_A + f= (\p_B h'+ h' \p_B)f + f= (ief-f)+f =0
$$
and $eH=eh'f=0$.

Similarly, for assertion (2), define $H:=gh'$ and compute that
$$
\p_A H + (-1)^DH \p_B + g=g(\p_B h'+ h'\p_B) +g = gie-g + g =0
$$
and $Hi=gh'i=0$ as required.
\end{proof}

%

%%%%%%%%%%%%%%%%%%%%%%%%%%%%%%%%%%%%%%%%%%%%%%%%%%%%%%%%%%%%%%%%%%%%%%%
\section{Homotopy of morphisms}\label{sec:hom}
%%%%%%%%%%%%%%%%%%%%%%%%%%%%%%%%%%%%%%%%%%%%%%%%%%%%%%%%%%%%%%%%%%%%%%%

In this section we define homotopies between $\IBL_\infty$-morphisms and
prove some elementary properties of this relation. We follow the
approaches of \cite{Sul78} and \cite{FOOO06}.

\begin{defn}\label{def:path}
Let $(C,\{\fp_{k,\ell,g}\})$ and $(\fC, \{\fq_{k,\ell,g}\})$
be $\IBL_\infty$-algebras. We say that 
$(\fC, \{\fq_{k,\ell,g}\})$ together with $\IBL_\infty$-morphisms
$\iota:C \to \fC$ and $\eps_0,\eps_1:\fC \to C$ is a
{\em path object for  $C$} if the following hold:
\begin{enumerate}[\rm (a)]
\item $\iota$, $\eps_0$ and $\eps_1$ are linear morphisms (and we
   denote their $(1,1,0)$ parts by the same letters);
\item $\eps_0 \circ \iota = \eps_1 \circ \iota = id_C$;
\item $\iota:C[1] \to \fC[1]$ and $\eps_0,\eps_1:\fC[1] \to C[1]$ are
   chain homotopy equivalences (with   respect to $\fp_{1,1,0}$ and
   $\fq_{1,1,0}$);
\item the map $\eps_0 \oplus \eps_1:\fC[1] \to C[1] \oplus C[1]$ admits
a linear right inverse.
\end{enumerate}
\end{defn}

\begin{prop}\label{prop:model}
For any $\IBL_\infty$-algebra $(C,\{\fp_{k,\ell,g}\})$ there exists a
path object $\fC$.
\end{prop}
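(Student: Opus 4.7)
The plan is to build $\fC$ in two stages: first the underlying chain complex together with the required linear maps $\iota, \eps_0, \eps_1$, and then to extend to a full $\IBL_\infty$-structure by obstruction theory using Proposition~\ref{prop:hom-main}(1) and Lemma~\ref{lem:homotopy}.

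For the first stage I would take the standard mapping-cylinder model $\fC[1] := C[1] \oplus C[1] \oplus C[1][-1]$ with linear differential
\[
\fq_{1,1,0}(a_0, a_1, c) := \bigl(\fp_{1,1,0} a_0,\; \fp_{1,1,0} a_1,\; a_0 - a_1 - \fp_{1,1,0} c\bigr),
\]
together with $\iota(c) := (c, c, 0)$, $\eps_i(a_0, a_1, c) := a_i$, and the candidate section $\sigma(a_0, a_1) := (a_0, a_1, 0)$. One checks directly that $\iota$ and the $\eps_i$ are chain maps, that $\eps_i \iota = \id$, that all three are chain homotopy equivalences (the homotopy between $\iota \eps_i$ and $\id$ is built from projection onto the third summand), and that $\sigma$ is a linear right inverse to $\eps_0 \oplus \eps_1$, establishing (a)--(d).

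For the second stage I would construct the higher $\fq_{k,\ell,g}$ by induction on the order $\prec$ of Definition~\ref{def:order}. Suppose at step $(K,L,G)$ that $\fq_{k,\ell,g}$ has been constructed for all $(k,\ell,g) \prec (K,L,G)$ so that both the $\IBL_\infty$-relation \eqref{eq:BL4} and the linear-morphism conditions for $\iota, \eps_0, \eps_1$ (item~(7) of \S\ref{sec:def}) hold at every lower signature. The linear-morphism relations prescribe $\fq_{K,L,G} \circ \iota^{\odot K}$ and $\eps_i^{\odot L} \circ \fq_{K,L,G}$ directly in terms of $\fp_{K,L,G}$; using $\sigma$ to split $E_K\fC$ as the image of $\iota^{\odot K}$ plus a complement and $E_L\fC$ as $\ker \eps_0^{\odot L} \cap \ker \eps_1^{\odot L}$ plus a complement, I would build a first candidate $\fq^{(0)}_{K,L,G}$ which satisfies these prescriptions tautologically.

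The obstruction to satisfying also the $\IBL_\infty$-relation is then $O := P_{K,L,G} + \delta \fq^{(0)}_{K,L,G}$, which by Proposition~\ref{prop:hom-main}(1) is a $\delta$-cycle, and which (using the linear-morphism conditions at lower signatures) already vanishes on $\iota^{\odot K}(E_K C)$ and takes values in $\ker \eps_0^{\odot L} \cap \ker \eps_1^{\odot L}$. Since by~(c) both $\iota$ and the $\eps_i$ are chain homotopy equivalences, the triply-constrained subcomplex of $\Hom(E_K\fC, E_L\fC)$ in which $O$ lives is acyclic; two successive applications of Lemma~\ref{lem:homotopy} (first to $\eps_0$ with section $\iota$, then to $\eps_1$ with a suitably modified section) produce a correction $\Delta$ in this subcomplex with $\delta \Delta = -O$, and $\fq_{K,L,G} := \fq^{(0)}_{K,L,G} + \Delta$ closes the induction. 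The main obstacle is precisely this simultaneous compatibility with all three morphism conditions; condition~(d) is what makes it tractable, providing the splitting needed to reduce the multi-constraint problem to a single obstruction-theoretic problem where Lemma~\ref{lem:homotopy} applies.
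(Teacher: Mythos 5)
Your first stage and the overall inductive scheme coincide with the paper's: the same mapping-cylinder model $\fC = C\oplus C\oplus C[1]$ with the same $\fq_{1,1,0}$, $\iota$, $\eps_i$, induction over $\prec$, the cocycle property of the quadratic expression from Proposition~\ref{prop:hom-main}(1), and Lemma~\ref{lem:homotopy}. The gap is in how you enforce the three side conditions on $\fq_{K,L,G}$ simultaneously. You assert that the subcomplex of $\Hom(E_K\fC,E_L\fC)$ consisting of maps killing $\im\iota^{\odot K}$ and landing in $\ker\eps_0^{\odot L}\cap\ker\eps_1^{\odot L}$ is acyclic, and that ``two successive applications of Lemma~\ref{lem:homotopy}'' produce a primitive $\Delta$ of $-O$ inside it. Neither claim is justified as stated: the acyclicity argument would need the ground ring to be a field (the proposition is over an arbitrary $R\supset\Q$), and Lemma~\ref{lem:homotopy} controls exactly one constraint per application, with no guarantee that a second application preserves what the first achieved. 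Concretely, after producing $\Delta_1$ with $\delta\Delta_1=-O$ and $\Delta_1\iota^{\odot K}=0$, the error $\eps_0^{\odot L}\Delta_1$ is merely a $\delta$-cycle; removing it requires subtracting a $\delta$-\emph{cycle} lifting it through $\eps_0^{\odot L}$ while simultaneously respecting $\eps_1$ and $\iota$, which is not what the lemma supplies.

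The paper closes this by decoupling the constraints in a specific order. It first sets $\fq''=\frac{1}{L!}\iota^{\odot L}\fp_{K,L,G}\frac{1}{K!}\eps_0^{\odot K}$ and applies Lemma~\ref{lem:homotopy}(2) once to obtain $\fq'=\fq''+H$ satisfying the $\IBL_\infty$-relation and the $\iota$-compatibility. The remaining errors $\Gamma_i=\frac{1}{L!}\eps_i^{\odot L}\fq'-\fp_{K,L,G}\frac{1}{K!}\eps_i^{\odot K}$ are then removed by adding the \emph{exact} term $\hat\fq_{1,1,0}\chi-\chi\hat\fq_{1,1,0}$, where $\chi$ is assembled from homotopies $\chi_0,\chi_1$ via the linear right inverse of $\eps_0^{\odot L}\oplus\eps_1^{\odot L}$ (this is where condition (d) is actually used, rather than in building the initial candidate). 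Exactness of the correction guarantees that the already-established $\IBL_\infty$-relation is untouched, and $\chi\,\iota^{\odot K}=0$ guarantees the $\iota$-compatibility survives. Your argument needs this maneuver, or an equivalent one, to close the induction.
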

\begin{proof}
Define 
$$
\fC := C \oplus C \oplus C[1],
$$ 
with boundary operator $\fq_{1,1,0}: \frak C \to \frak C$ given by
$$
\fq_{1,1,0}(x_0,x_1,y)=(\fp_{1,1,0}(x_0), \fp_{1,1,0}(x_1),
x_1-x_0-\fp_{1,1,0}(y)). 
$$
We define $\iota:C \to \fC$ by $\iota(x)=(x,x,0)$, and
we define $\eps_i:\fC \to C$ and $\eps_i:\fC \to C$ by
$\eps_i(x_0,x_1,y))=x_i$.

With these definitions it is obvious that $\iota$, $\eps_0$ and
$\eps_1$ are chain maps, that $\eps_0\circ\iota=\eps_1\circ\iota=id_C$, and
that $\eps_0\oplus \eps_1$ admits a right inverse.

To prove that $\iota \circ \eps_0$ is homotopic to the identity of
$\frak C$, we define $H:\frak C \to \frak C$ by
$H(x_0,x_1,y)=(0,-y,0)$. Then one checks that 
\begin{align*}
(\fq_{1,1,0}H+ H\fq_{1,1,0})(x_0,x_1,y) &=(0,-\fp_{1,1,0}(y),-y) +
(0,x_0-x_1+\fp_{1,1,0}(y),0) \\
&= (\iota \circ \eps_0 -id_{\frak C})(x_0,x_1,y).
\end{align*}
A similar argument works for $\iota\circ \eps_1$.

To complete the proof, it remains to show that we can construct the
higher operations $\fq_{k,\ell,g}:E_k\fC \to E_\ell\fC$ in
such a way that $\iota$, $\eps_0$ and $\eps_1$ are
$\IBL_\infty$-morphisms.

We proceed by induction on our linear order of the signatures $(k,\ell,g)$.
So assume that $\fq_{k,\ell,g}$ has been constructed for all
$(k,\ell,g) \prec (K,L,G)$, in such a way that 
\begin{enumerate}[(1)]
\item $\frac {1}{\ell!}\iota^{\odot \ell}\fp_{k,\ell,g}=
   \fq_{k,\ell,g} \frac{1}{k!}\iota^{\odot k}:E_{k} C \to
   E_{\ell}\fC$ for all $(k,\ell,g)\prec (K,L,G)$,
\item $\frac {1}{\ell!}\eps_i^{\odot \ell} \fq_{k,\ell,g}=
   \fp_{k,\ell,g}\frac{1}{k!}\eps_i^{\odot k}:E_{k}\fC \to
   E_{\ell}C$ for all $(k,\ell,g)\prec (K,L,G)$ and   $i=0,1$, and 
\item the operations $\fq$ satisfy equation
   \eqref{eq:BL4} for all $(k,\ell,g)\prec (K,L,G)$.
\end{enumerate} 
Define $\fq_{K,L,G}'':=\frac {1}{L!}\iota^{\odot L} \fp_{K,L,G}\frac {1}{K!}
\eps_0^{\odot K}$, and consider
$$
\Gamma:= Q_{K,L,G} + \hat\fq_{1,1,0}\fq_{K,L,G}'' +
\fq_{K,L,G}''\hat\fq_{1,1,0}:E_K\fC \to E_L \fC,
$$
where $Q_{K,L,G}$ is the quadratic expression in the $\fq$'s
implicitly defined in Lemma~\ref{lem:order}. Note that $\Gamma$ is
homogeneous of even degree $-2d(K+G-1)-2$. Using the inductive
assumption (3) and part (1) of Proposition~\ref{prop:hom-main}, one
finds that 
$$
\hat\fq_{1,1,0} \Gamma - \Gamma \hat\fq_{1,1,0} = \hat\fq_{1,1,0} Q_{K,L,G} -
Q_{K,L,G} \hat\fq_{1,1,0} = 0, 
$$
i.e. $\Gamma$ is a chain map. Moreover, it follows from assumption 
(1) above that
$$
Q_{K,L,G} \frac {1}{K!}\iota^{\odot K}= 
\frac {1}{L!}\iota^{\odot L}P_{K,L,G}, 
$$
where $P_{K,L,G}$ is the expression of Lemma~\ref{lem:order} in
the $\fp$'s. We conclude that 
\begin{align*}
\Gamma \frac {1}{K!}\iota^{\odot K}  &= 
(Q_{K,L,G} + \hat\fq_{1,1,0}\fq_{K,L,G}'' +
\fq_{K,L,G}''\hat\fq_{1,1,0}) \frac {1}{K!}\iota^{\odot K}\\ 
&=\frac {1}{L!}\iota^{\odot L} P_{K,L,G} + \hat\fq_{1,1,0}
\fq_{K,L,G}'' \frac {1}{K!}\iota^{\odot K}  +
    \fq_{K,L,G}'' \frac {1}{K!} \iota^{\odot K}\hat\fp_{1,1,0}\\
&= \frac{1}{L!}\iota^{\odot L} (P_{K,L,G} + \hat\fp_{1,1,0}\fp_{K,L,G} +
\fp_{K,L,G}\hat\fp_{1,1,0}) \\
&= 0. 
\end{align*}

Applying part (2) of Lemma~\ref{lem:homotopy} with $e=\frac
{1}{K!}\eps_0^{\odot K}$, $i=\frac {1}{K!}\iota^{\odot K}$
and $g=\Gamma$, we get the existence of a map $H:E_K \frak C \to E_L
\frak C$ such that 
$$
\Gamma + \hat\fq_{1,1,0} H +H \hat \fq_{1,1,0}=0 \quad \text{\rm and} \quad
H \frac {1}{K!}\iota^{\odot K} =0. 
$$
We set 
$$
\fq_{K,L,G}':= \fq_{K,L,G}'' + H.
$$
Then the collection
$\{\fq'_{K,L,G},\{\fq_{k,\ell,g} \,|\, (k,\ell,g)\prec
     (K,L,G)\}\}$ satisfies the inductive assertions (3) and (1)
above for the triple $(K,L,G)$.

We now want to modify $\fq_{K,L,G}'$ in such a way that it will
also satisfy the inductive assertion (2). To proceed, we define
$$
\Gamma_i:=\frac {1}{L!}\eps_i^{\odot L}\fq_{K,L,G}' - \fp_{K,L,G} \frac
{1}{K!} \eps_i^{\odot K}:E_K \fC \to E_L C,
\quad i=0,1, 
$$
which are homogeneous of degree $-2d(K+g-1)-1$. These are the error
terms in (2). Now 
\begin{align*}
\hat\fp_{1,1,0}\Gamma_i+\Gamma_i\hat\fq_{1,1,0} 
=& \hat\fp_{1,1,0} \Gamma_i + (\frac {1}{L!}\eps_i^{\odot L}
\fq_{K,L,G}' - \fp_{K,L,G}\frac {1}{K!} \eps_i^{\odot K})\hat \fq_{1,1,0}\\ 
=& \hat\fp_{1,1,0} \Gamma_i + \frac {1}{L!}\eps_i^{\odot
   L}(-\hat \fq_{1,1,0}\fq'_{K,L,G} - Q_{K,L,G}) \\
  & + (P_{K,L,G}+ \hat\fp_{1,1,0}\fp_{K,L,G}) \frac
{1}{K!}\eps_i^{\odot K}\\ 
=& - \frac {1}{L!}\eps_i^{\odot L}Q_{K,L,G} 
  + P_{K,L,G} \frac{1}{K!}\eps_i^{\odot K}\\ 
=& 0
\end{align*}
by inductive assumption (2). Moreover, by (1) we have
$$
\Gamma_i \frac {1}{K!}\iota^{\odot K}= \frac {1}{L!}\eps_i^{\odot
   L}\fq_{K,L,G}' \frac {1}{K!}\iota_i^{\odot K} - \fp_{K,L,G} = 0.
$$
So again by part (2) of Lemma~\ref{lem:homotopy} above, there exist even maps
$\chi_i:E_K\fC \to E_L C$, $i=0,1$ of degree $-2d(K+g-1)$ such that 
$$
\Gamma_i+ \hat\fp_{1,1,0}\chi_i - \chi_i \hat \fq_{1,1,0}=0 \quad \text{\rm and } 
\chi_i \frac 1 {K!}\iota^{\odot K} =0.
$$
Choosing a right inverse $\rho:E_LC \oplus E_LC \to E_L\fC$ to
$\frac 1 {L!} \eps_0^{\odot L} \oplus \frac 1 {L!} \eps_1^{\odot L}$,
we find a linear lift $\chi=\rho\circ (\chi_0\oplus \chi_1): E_K \fC \to
E_L \fC$ such that $\chi_i= \frac{1}{L!} \eps_i^{\odot L} \circ
\chi$ and $\chi \frac {1}{K!}\iota^{\odot K} =0$. Now we
define 
$$
\fq_{K,L,G}:= \fq_{K,L,G}' + \hat\fq_{1,1,0}\chi - \chi \hat
\fq_{1,1,0},
$$
which is easily seen to satisfies all three inductive assumptions.
\end{proof}
\begin{rem}
Note that in the inductive construction of the $\IBL_\infty$-structure
in the above proof we did not make use of the specific form of the
chain complex $(\fC, \fq_{1,1,0})$ satisfying {\rm (a)-(d)}, so one could
have started equally well with any other chain model for $\fC$. 
\end{rem}
\begin{prop}\label{prop:homotopy-transfer}
Let $C$ and $D$ be $\IBL_\infty$-algebras, and let $\fC$ and $\fD$ be
path objects for $C$ and $D$, respectively. Let $\ff:C \to D$ be a
morphism. Then there exists a morphism $\fF:\fC \to \fD$ such that
the diagram
$$
\begin{CD}
C @>\iota^C>> \fC @>\eps_i^C>> C \\
@V{\frak f}VV       @V{\frak F}VV      @V{\frak f}VV\\
D @>>\iota^D> \fD @>>\eps_i^D> D 
\end{CD}
$$
commutes for both $i=0$ and $i=1$.
%\iota^D\circ \ff = \fF \circ \iota^C, \quad \eps_0^D \circ \fF=\ff \circ
%\eps_0^C, \quad \text{\rm and } \eps_1^D\circ \fF= \ff \circ \eps_1^C.
%$$
\end{prop}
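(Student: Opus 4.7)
I would follow the pattern of the proof of Proposition~\ref{prop:model}, constructing $\fF=\{\fF_{K,L,G}\}$ inductively with respect to the order $\prec$, ensuring at each stage the three conditions
\begin{enumerate}[\rm (i)]
\item the morphism relation \eqref{eq:mor4} holds at $(K,L,G)$ with $\fp^+=\fq^{\fC}$, $\fp^-=\fq^{\fD}$;
\item $\fF_{K,L,G}\circ \tfrac{1}{K!}(\iota^C)^{\odot K}=\tfrac{1}{L!}(\iota^D)^{\odot L}\circ \ff_{K,L,G}$;
\item $\tfrac{1}{L!}(\eps_i^D)^{\odot L}\circ \fF_{K,L,G}=\ff_{K,L,G}\circ \tfrac{1}{K!}(\eps_i^C)^{\odot K}$ for $i=0,1$.
\end{enumerate}
Since $\iota^C,\iota^D,\eps_i^C,\eps_i^D$ are linear morphisms, (ii) and (iii) are precisely the signature-$(K,L,G)$ parts of $\fF\diamond\iota^C=\iota^D\diamond\ff$ and $\eps_i^D\diamond\fF=\ff\diamond\eps_i^C$ in view of item~(4) after Definition~\ref{def:comp}, so establishing them at all signatures proves the proposition.

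For the base case $(1,1,0)$, I would start from the provisional map $\fF'_{1,1,0}:=\iota^D\,\ff_{1,1,0}\,\eps_0^C:\fC\to\fD$, which is a chain map satisfying (ii) and (iii) for $i=0$. The obstruction $\Gamma_1:=\ff_{1,1,0}\eps_1^C-\eps_1^D\fF'_{1,1,0}=\ff_{1,1,0}(\eps_1^C-\eps_0^C)$ is a chain map vanishing on $\iota^C(C)$ because $\eps_i^C\iota^C=\id_C$; Lemma~\ref{lem:homotopy}(2) applied with $e=\eps_0^C$, $i=\iota^C$ produces $\chi_1:\fC\to D$ with $\delta \chi_1=\Gamma_1$ and $\chi_1\iota^C=0$, and setting $\chi_0:=0$ and lifting $(\chi_0,\chi_1)$ through a right inverse $\rho^{\fD}$ of $\eps_0^D\oplus\eps_1^D$ gives $\chi:\fC\to\fD$ with $\eps_i^D\chi=\chi_i$ and $\chi\iota^C=0$. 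I then set $\fF_{1,1,0}:=\fF'_{1,1,0}\pm\delta\chi$; this preserves (ii) (because $\chi\iota^C=0$ and $\iota^C$ is a chain map) and cures the $i=1$ defect of (iii).

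For the inductive step at $(K,L,G)\succ (1,1,0)$ assume (i)--(iii) hold at all smaller signatures. Following the exact strategy of Proposition~\ref{prop:model}, I would start from
$$
\fF''_{K,L,G}:=\tfrac{1}{L!}(\iota^D)^{\odot L}\circ\ff_{K,L,G}\circ\tfrac{1}{K!}(\eps_0^C)^{\odot K},
$$
which automatically satisfies (ii) and the $i=0$ instance of (iii). The defect in the morphism relation is the expression $\Gamma:=R_{K,L,G}(\fF'',\fq^{\fC},\fq^{\fD})+$(trivial inductive correction from Lemma~\ref{lem:mor_order}), which, by Proposition~\ref{prop:hom-main}(2) and (3) applied to the morphisms $\iota^D\diamond\ff$ and $\ff\diamond\eps_0^C$, is $\delta$-closed, and which vanishes after precomposing with $\tfrac{1}{K!}(\iota^C)^{\odot K}$ by the inductive hypothesis together with part~(3) of Proposition~\ref{prop:hom-main}. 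Lemma~\ref{lem:homotopy}(2) then produces $H:E_K\fC\to E_L\fD$ with $\delta H=-\Gamma$ and $H\circ\tfrac{1}{K!}(\iota^C)^{\odot K}=0$; set $\fF'_{K,L,G}:=\fF''_{K,L,G}+H$, which now satisfies (i) and (ii). Finally, the two residual errors $\Gamma_i:=\ff_{K,L,G}\circ\tfrac{1}{K!}(\eps_i^C)^{\odot K}-\tfrac{1}{L!}(\eps_i^D)^{\odot L}\circ\fF'_{K,L,G}$ are $\delta$-closed (by the morphism relation on both sides) and vanish on $\tfrac{1}{K!}(\iota^C)^{\odot K}$, so Lemma~\ref{lem:homotopy}(2) again produces primitives $\chi_i$ annihilating $\iota^C$, and condition~(d) of Definition~\ref{def:path} lets us lift $(\chi_0,\chi_1)$ simultaneously through $\eps_0^D\oplus\eps_1^D$ to $\chi:E_K\fC\to E_L\fD$ with $\chi\circ\tfrac{1}{K!}(\iota^C)^{\odot K}=0$. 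Setting $\fF_{K,L,G}:=\fF'_{K,L,G}+\delta\chi$ preserves (i) and (ii) and restores (iii) for both $i=0,1$.

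The main obstacle is a bookkeeping one: verifying that each successive correction lies in the kernel of the earlier constraints so that none of (i)--(iii) is destroyed while the next is being imposed. This is exactly why we need conditions~(b) and (d) of Definition~\ref{def:path}, and why the two parts of Proposition~\ref{prop:hom-main} are decisive---they guarantee that the obstructions $\Gamma$ and $\Gamma_i$ not only are $\delta$-closed but also satisfy the correct ``vanishing on the image of $\iota^C$'' properties needed to invoke Lemma~\ref{lem:homotopy}(2).
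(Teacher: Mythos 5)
Your proposal is correct and follows essentially the same route as the paper's proof: the same provisional maps $\iota^D\,\ff\,\eps_0^C$ and $\tfrac{1}{L!}(\iota^D)^{\odot L}\ff_{K,L,G}\tfrac{1}{K!}(\eps_0^C)^{\odot K}$, the same induction over $\prec$ with the same three inductive conditions, and the same use of Proposition~\ref{prop:hom-main} and Lemma~\ref{lem:homotopy} together with the right inverse from condition~(d) to kill the obstructions without destroying the earlier constraints.
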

\begin{proof}
The proof is inductive and similar in structure to the proof of
Proposition~\ref{prop:model}.

{\em Step 1:} We construct a chain map $\fF_{1,1,0}:(\fC,\fq^C_{1,1,0})
\to (\fD,\fq^D_{1,1,0})$ satisfying the required relations.

Set $F':= \iota^D \ff_{1,1,0}\eps_0^C:\fC \to \fD$, and note that
$F'\iota^C-\iota^D\ff_{1,1,0}=0$ as required. Similarly, 
$\Gamma_0= \eps_0^D F' - \ff_{1,1,0}\eps_0^C=0$, but $\Gamma_1:=
\eps_1^D F'-\ff_{1,1,0}\eps_1^C\neq 0$. One checks that 
$$
\fp_{1,1,0}^D \Gamma_1-\Gamma_1 \fq_{1,1,0}^C=0, \quad \text{\rm and } \Gamma_1\iota^C=0.
$$
Hence by part (2) of Lemma~\ref{lem:homotopy}, there exists a chain
homotopy $E_1:\fC \to D$ such that
$$
\Gamma_1 + \fp_{1,1,0}^D E_1 + E_1 \fq_{1,1,0}^C = 0 \quad \text{\rm and
} E_1\iota^C=0.
$$
Choosing a right inverse to $\eps_0 \oplus \eps_1:\fD \to D \oplus D$,
we construct a lift $E: \fC \to \fD$ of $0 \oplus E_1$ such that
$\eps_0^D \circ E =0$, $\eps_1^D\circ E=E_1$ and $E \circ \iota^C=0$. Then 
$$
\fF_{1,1,0}:= F' + \fq_{1,1,0}^DE + E \fq_{1,1,0}^C:\fC \to \fD
$$
is the required chain map.

{\em Step 2:} We now proceed by induction on our linear order of
signatures $(k,\ell,g)$. So suppose we have already constructed maps
$\fF_{k,\ell,g}:E_{k}\fC \to E_{\ell} \fD$ for all
$(k,\ell,g)\prec (K,L,G)$ such that: 
\begin{enumerate}[(1)]
\item $\frac {1}{\ell!}(\iota^D)^{\odot \ell} \ff_{k,\ell,g} =
   \fF_{k,\ell,g}\frac {1}{k!}(\iota^C)^{\odot k}$ for all
   $(k,\ell,g)\prec (K,L,G)$, 
\item $\frac {1}{\ell!}(\eps_i^D)^{\odot \ell}\fF_{k,\ell,g} =
   \ff_{k,\ell,g}\frac {1}{k!}(\eps_i^C)^{\odot k}$  for $i=0,1$
   and for all $(k,\ell,g)\prec (K,L,G)$, and 
\item the defining equation \eqref{eq:mor} for morphisms holds for
   $\fF$, $\fq^C$ and $\fq^D$ for all $(k,\ell,g)\prec (K,L,G)$.
\end{enumerate}
Consider the expression $R_{K,L,G}(\fF,\fq^C,\fq^D):E_K\fC \to
E_L \fD$ as defined in the statement of Lemma~\ref{lem:mor_order}.
Using the inductive assumptions and part (2) of
Proposition~\ref{prop:hom-main}, one proves that
\begin{align*}
&\hat\fq^D_{1,1,0}R_{K,L,G}(\fF,\fq^C,\fq^D) +
   R_{K,L,G}(\fF,\fq^C,\fq^D) \hat\fq^C_{1,1,0} = 0\\ 
&\frac 1{L!}(\iota^D)^{\odot L} R_{K,L,G}(\ff,\fp^C,\fp^D) = 
R_{K,L,G}(\fF,\fq^C,\fq^D) \frac 1 {K!} (\iota^C)^{\odot k} \quad
\text{\rm and}\\ 
&\frac 1 {L!} (\eps_i^D)^{\odot L} R_{K,L,G}(\fF,\fq^C,\fq^D) = 
R_{K,L,G}(\ff,\fp^C,\fp^D) \frac 1 {K!} (\eps_i^C)^{\odot k} \quad
\text{\rm for }i=0,1. 
\end{align*}
Now, as in the proof of Proposition~\ref{prop:model}, define
$\fF''_{K,L,G}:= \frac {1}{L!}(\iota^D)^{\odot   L}\ff_{K,L,G}\frac 1
{K!}(\eps_0^C)^{\odot K}:E_K \fC \to E_L \fD$, and 
observe that
$$
F:= R_{K,L,G}(\fF,\fq^C,\fq^D) + \hat\fq_{1,1,0}^D\fF''_{K,L,G} -
\fF''_{K,L,G}\hat\fq_{1,1,0}^C:E_k\fC \to E_L \fD
$$
is of odd degree and satisfies $\hat\fq_{1,1,0}^D F+ F\hat\fq_{1,1,0}^C=0$ and
$F \frac{1}{K!}(\iota^C)^{\odot K} = 0$. Hence, by part (1) of
Lemma~\ref{lem:homotopy}, we find  $H: E_K \fC \to E_L \fD$ of even
degree such that 
$$
F + \fq_{1,1,0}^D H - H \fq_{1,1,0}^C = 0, \quad \text{\rm and }
H \frac 1 {K!}(\iota^C)^{\odot K} =0.
$$
Then the  map $\fF'_{K,L,G}:=\fF''_{K,L,G}+ H:E_K\fC \to E_L\fD$
satisfies the conditions (1) and (3) in the inductive assumption.

To achieve (2), consider the maps of even degree 
$$
\Gamma_i:=
\frac 1 {L!}(\eps_i^D)^{\odot L}\fF'_{K,L,G}- \ff_{K,L,G} \frac 1 {K!}(\eps_i^C)^{\odot K}:E_K \fC \to E_L D, \quad i=0,1
$$ 
and compute that 
$$
\fp_{1,1,0}^D \Gamma_i - \Gamma_i \fq_{1,1,0}^C = 0, \quad \text{\rm and }
\Gamma_i\frac 1 {K!}(\iota^C)^{\odot K} =0.
$$
Hence, by part (2) of Lemma~\ref{lem:homotopy}, there exist maps
$E_i:E_K\fC \to E_L D$ of odd degree such that
$$
\Gamma_i + \fp_{1,1,0}^D E_i + E_i \fq_{1,1,0}^C = 0 \quad \text{\rm and }
E_i\frac 1 {K!}(\iota^C)^{\odot K} =0.
$$
With a right inverse $\rho_L:E_LD \oplus E_LD \to E_L\fD$ to $(\eps_0^D)^{\odot
L} \oplus (\eps_1^D)^{\odot L}$, we define a linear extension $E=\rho_L\circ(E_0\oplus E_1): E_K\fC \to E_L\fD$ such that $\frac 1 {L!}(\eps_i^D)^{\odot L} \circ E=E_i$ and $E \circ \frac 1 {K!}(\iota^C)^{\odot K}=0$. Then 
$$
\fF_{K,L,G}:= \fF'_{K,L,G} + \fq_{1,1,0}^DE + E
\fq_{1,1,0}^C:E_K\fC \to E_L\fD 
$$
has the required properties. This completes the inductive step and
hence the proof of the proposition.
\end{proof}
We now come to the main definition of this section.
\begin{defn}\label{def:homotopy}
We say that two $\IBL_\infty$-morphisms $\ff_0:C \to D$ and $\ff_1:C \to D$ 
are {\em homotopic} if for some path object $\fD$ for $D$ there
exists a morphism $\fF:C \to \fD$ such that $\eps_0\diamond \fF=\ff_0$ and
$\eps_1\diamond \fF=\ff_1$. We call such an $\fF$  {\em a homotopy}
between $\ff_0$ and $\ff_1$. 
\end{defn}
\begin{prop}\label{prop:homotopy-equiv}
The notion of homotopy has the following properties:
\begin{enumerate}[\rm (a)]
\item A homotopy between $\ff_0$ and $\ff_1$ exists for some path
   object for $D$ if and only if it exists for all path objects for $D$. 
\item Homotopy of morphisms is an equivalence relation. 
\item If $\ff_0:B \to C$ and $\ff_1:B \to C$ are homotopic and 
$\fg_0:C \to D$ and $\fg_1:C \to D$ are homotopic, then
$\fg_0 \diamond \ff_0$ and $\fg_1 \diamond \ff_1$ are homotopic.
\end{enumerate}
\end{prop}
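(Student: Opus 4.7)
The plan is to prove the parts (a), (b), (c) in order. Part (a) allows us to move homotopies between different path objects freely in subsequent arguments. Throughout, $\diamond$ denotes composition of $\IBL_\infty$-morphisms.

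For part (a), given two path objects $\fD,\fD'$ for $D$, apply Proposition~\ref{prop:homotopy-transfer} with $\ff = \id_D$ (a linear morphism) to produce an $\IBL_\infty$-morphism $\Phi : \fD \to \fD'$ with $\eps_i^{D'} \diamond \Phi = \eps_i^D$ for $i=0,1$. Then $\Phi \diamond \fF : C \to \fD'$ realizes the same endpoints as a given homotopy $\fF : C \to \fD$. For reflexivity in part (b), the morphism $\iota^D \diamond \ff : C \to \fD$ is a homotopy from $\ff$ to itself, since $\eps_i^D\circ\iota^D = \id_D$. For symmetry, observe that $(\fD, \iota^D, \eps_1^D, \eps_0^D)$ with the evaluations interchanged is again a path object for $D$; a homotopy $\fF$ from $\ff_0$ to $\ff_1$ in $\fD$ is a homotopy from $\ff_1$ to $\ff_0$ in this reversed object, which part (a) transfers back to $\fD$.

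The main work is transitivity in part (b). Starting from the explicit model of Proposition~\ref{prop:model}, introduce the ``concatenation'' path object
\[
\fD^{(3)} := D \oplus D \oplus D \oplus D[1] \oplus D[1]
\]
with differential $\fq^{(3)}_{1,1,0}(x_0,x_1,x_2,y_0,y_1) = (\fp x_0, \fp x_1, \fp x_2, x_1-x_0-\fp y_0, x_2-x_1-\fp y_1)$, augmentation $\iota^{(3)}(x) = (x,x,x,0,0)$, endpoint evaluations $\eps_0^{(3)}, \eps_1^{(3)}$ projecting onto the first and third copies of $D$, and ``halving'' projections $\pi_0, \pi_1 : \fD^{(3)} \to \fD$ given by $\pi_0(x_0,x_1,x_2,y_0,y_1) = (x_0,x_1,y_0)$ and $\pi_1(x_0,x_1,x_2,y_0,y_1) = (x_1,x_2,y_1)$. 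These are chain maps satisfying $\eps_0^D\pi_0 = \eps_0^{(3)}$, $\eps_1^D\pi_0 = \eps_0^D\pi_1$, $\eps_1^D\pi_1 = \eps_1^{(3)}$ and $\pi_j\iota^{(3)} = \iota^D$. Running the inductive obstruction argument of Proposition~\ref{prop:model}, enlarged to accommodate the additional compatibilities for $\pi_0,\pi_1$ alongside those for $\iota^{(3)},\eps_0^{(3)},\eps_1^{(3)}$, equips $\fD^{(3)}$ with an $\IBL_\infty$-structure for which $\iota^{(3)}, \eps_0^{(3)}, \eps_1^{(3)}, \pi_0, \pi_1$ are all morphisms. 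Given homotopies $\fF, \fG : C \to \fD$ from $\ff_0$ to $\ff_1$ and from $\ff_1$ to $\ff_2$ (so that $\eps_1^D\diamond\fF = \eps_0^D\diamond\fG = \ff_1$), a parallel obstruction-theoretic induction modelled on Proposition~\ref{prop:homotopy-transfer} produces a morphism $\fH : C \to \fD^{(3)}$ with $\pi_0 \diamond \fH = \fF$ and $\pi_1 \diamond \fH = \fG$; then $\eps_0^{(3)} \diamond \fH = \ff_0$ and $\eps_1^{(3)} \diamond \fH = \ff_2$ give the desired homotopy $\ff_0 \sim \ff_2$.

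For part (c), by transitivity from part (b) it suffices to establish $\fg_0\diamond\ff_0 \sim \fg_1\diamond\ff_0$ and $\fg_1\diamond\ff_0 \sim \fg_1\diamond\ff_1$. If $\fG : C \to \fD$ is a homotopy from $\fg_0$ to $\fg_1$, then $\fG\diamond\ff_0 : B \to \fD$ gives the first relation since $\eps_i^D\diamond(\fG\diamond\ff_0) = \fg_i\diamond\ff_0$. For the second, let $\fF : B \to \fC$ be a homotopy from $\ff_0$ to $\ff_1$, and apply Proposition~\ref{prop:homotopy-transfer} to $\fg_1 : C \to D$ with path objects $\fC, \fD$ to obtain $\fG_1 : \fC \to \fD$ satisfying $\eps_i^D \diamond \fG_1 = \fg_1\diamond\eps_i^C$. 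Then $\eps_i^D\diamond(\fG_1\diamond\fF) = \fg_1\diamond\ff_i$, so $\fG_1\diamond\fF$ is the required homotopy. The main obstacle lies in the transitivity construction for part (b): one must run the inductive obstruction schemes of Propositions~\ref{prop:model} and~\ref{prop:homotopy-transfer} with several simultaneous compatibility constraints, arranging at each inductive step that the lifts supplied by Lemma~\ref{lem:homotopy} split all prescribed projections and evaluations simultaneously, using condition~(d) of Definition~\ref{def:path} to produce the necessary right inverses on the relevant symmetric tensor powers.
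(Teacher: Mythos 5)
Your parts (a), (c), and the reflexivity and symmetry of (b) coincide with the paper's proof. The only real divergence is transitivity in (b), and there your argument is workable but defers its two crucial steps to unspecified inductions. Note first that your concatenation object $\fD^{(3)}=D\oplus D\oplus D\oplus D[1]\oplus D[1]$ is exactly the fiber product $\fD\times_D\fD=\{(d_1,d_2)\in\fD\oplus\fD\mid \eps_1(d_1)=\eps_0(d_2)\}$ of two copies of the explicit model, with $\pi_0,\pi_1$ the two projections; the paper works with this fiber product for two \emph{arbitrary} path objects $\fD^1,\fD^2$, which lets it skip the transfer to the explicit model. More importantly, the paper does not re-run the obstruction-theoretic inductions of Propositions~\ref{prop:model} and~\ref{prop:homotopy-transfer} at all: it chooses a right inverse $\rho_\ell$ to the (surjective but not injective) map $\pi_\ell:E_\ell\fD^{(3)}\to\fP_\ell\subset E_\ell\fD^1\oplus E_\ell\fD^2$ and \emph{defines} $\fq^{(3)}_{k,\ell,g}:=\rho_\ell\circ(\fq^1_{k,\ell,g}\oplus\fq^2_{k,\ell,g})\circ\pi_k$ and $\fH_{k,\ell,g}:=\rho_\ell\circ(\fF_{k,\ell,g}\oplus\fG_{k,\ell,g})$ directly; the $\IBL_\infty$-relations and the morphism equation for $\fH$ then follow from those for the constituents. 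The point that makes the direct definition of $\fH$ legitimate, and which your writeup leaves implicit, is that since $\eps_0,\eps_1$ are \emph{linear} morphisms one has $(\eps_1\diamond\fF)_{k,\ell,g}=\frac{1}{\ell!}\eps_1^{\odot\ell}\circ\fF_{k,\ell,g}$, so the hypothesis $\eps_1\diamond\fF=\eps_0\diamond\fG$ guarantees componentwise that $(\fF_{k,\ell,g},\fG_{k,\ell,g})$ lands in $\fP_\ell$, i.e.\ in the image of $\pi_\ell$. If you insist on the inductive route, you must verify at each step that the lift supplied by Lemma~\ref{lem:homotopy} can be corrected to satisfy the two halving constraints $\pi_0\diamond\fH=\fF$ and $\pi_1\diamond\fH=\fG$ simultaneously (and the five morphism constraints in the structure induction); this is true but is precisely the content you have asserted rather than proved, and the direct construction makes it unnecessary.
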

\begin{proof}
To prove (a), suppose $\fF:C \to \fD$ is a homotopy between $\ff_0:C
\to D$ and $\ff_1:C \to D$, and let $\fD'$ be any other path object for
$D$. Applying Proposition~\ref{prop:homotopy-transfer} to the
identity of $D$ and the two path objects $\fD$ and $\fD'$, we obtain an
$\IBL_\infty$-morphism $\fI:\fD \to \fD'$. Setting $\fF':=\fI \diamond
\fF$, one verifies that 
$$
\eps_i' \diamond \fF' = \eps_i' \diamond \fI \diamond \fF = \eps_i \diamond \fF = \ff_i
$$
as required.

We next prove (b). To see that $\ff:C \to D$ is homotopic to itself,
consider any path object $\fD$ for $D$ and set $\fF:= \iota \circ \ff$.

To see that the relation is symmetric, note that if
$(\fD,\iota,\eps_0,\eps_1)$ is a path object for $D$, then
$(\fD,\iota,\eps_0',\eps_1')$ is also a path object, where $\eps_0'=\eps_1$
and $\eps_1'=\eps_0$.

To prove transitivity of the relation, suppose $\ff_0$ and $\ff_1$ are
homotopic via a homotopy $\fF^1: C \to \fD^1$ and $\ff_1$ and $\ff_2$
are homotopic via a homotopy $\fF^2:C \to \fD^2$.
% We may assume without loss of generality that $\fD^1$ and $\fD^2$
% are path objects of the form constructed in Proposition~\ref{prop:model}.

We define a new path object $\fD$ for $D$ as follows. As a
vector space, set
$$
    \fD := \{ (d_1,d_2) \in \fD^1 \oplus \fD^2 \,|\,
    \eps^1_1(d_1)=\eps^2_0(d_2) \}.
$$
We define the map $\iota:D \to \fD$ by $\iota(x)=(\iota^1(x),\iota^2(x))$ and
the maps $\eps_i:\fD \to D$ by $\eps_0(d_1,d_2):=\eps_0(d_1)$ and
$\eps_1(d_1,d_2)=\eps_1(d_2)$.  To construct the structure maps
$\fq_{k,\ell,g}$, first note that the two projections $\pi^i:\fD \to \fD^i$
determine a projection $\pi_k:E_k\fD \to E_k\fD^1 \oplus E_k\fD^2$
which surjects onto $\fP_k:= \{(g^1,g^2) \in E_k\fD^1 \oplus
E_k\fD^2\,|\, \frac 1 {k!}(\eps^1_1)^{\odot k}(g^1)=\frac 1
{k!}(\eps^2_0)^{\odot k}(g^2)\}$. In particular, $\pi_k$ admits a right
inverse $\rho_k:\fP_k \to E_k\fD$.

Now we define $\fq_{k,\ell,g}:E_k \fD \to E_\ell \fD$ by 
$$
\fq_{k,\ell,g}:= \rho_\ell \circ
(\fq_{k,\ell,g}^1\oplus\fq_{k,\ell,g}^2) \circ \pi_k.
$$
Observe that, by construction, the defining relations
\eqref{eq:BL} for $\fq_{k,\ell,g}$ follow from the defining relations of
$\fq_{k,\ell,g}^i$, and similarly the properties (a)-(d) of a path object
can be easily checked using the corresponding properties of the
$\fD^i$. So we have proven that $\fD$ is a path object for $D$.

We now define a homotopy between $\ff_0$ and $\ff_2$ to be the
morphism $\fF: C \to \fD$ whose component
$\fF_{k,\ell,g}:E_k C \to E_\ell \fD$ is defined as
$\rho_\ell \circ (\fF^1_{k,\ell,g} \oplus \fF^2_{k,\ell,g})$. One
straightforwardly checks that this is indeed a morphism with the
required properties, and this completes the proof of part (b).

We now prove part (c). First let $\fG:C \to \fD$ be a homotopy between
$\fg_0$ and $\fg_1$. Then $\fG\diamond \ff_0:B \to \fD$ is a homotopy
between $\fg_0 \diamond \ff_0$ and $\fg_1 \diamond \ff_0$. So by part (b),
it suffices to prove the claim for $\fg_0=\fg_1=:\fg$.

Applying Proposition~\ref{prop:homotopy-transfer} to $\fg:C \to D$, we
obtain a map $\fG: \fC \to \fD$ with $\eps^D_i \diamond \fG = \fg \diamond
\eps^C_i$. Now let $\fF:B \to \fC$ be a homotopy between $\ff_0$ and
$\ff_1$ and set $\fH:=\fG \diamond \fF: B \to \fD$. Then from the
definitions one checks that
$$
\eps_i^D \diamond \fH  = \eps_i^D \diamond \fG \diamond \fF = \fg \diamond
\eps^C_i \diamond \fF = \fg \diamond \ff_i,
$$
so that $\fH$ is the required homotopy between $\fg \diamond \ff_0$ and
$\fg \diamond \ff_1$.
\end{proof}
We say that an $\IBL_\infty$-morphism $\ff:C \to D$ is a {\em homotopy
equivalence} if there exists an $\IBL_\infty$-morphism $\fg:D \to C$
such that $\ff \diamond \fg$ and $\fg \diamond \ff$ are each homotopic to
the respective identity map. From the above discussion, we get the following
immediate consequence. 
\begin{corollary}\label{Cor:comp}
A composition of homotopy equivalences is a homotopy equivalence. In
particular, homotopy equivalence is an equivalence relation. $\qed$
\end{corollary}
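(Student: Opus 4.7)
The plan is to reduce the corollary entirely to the three properties (a)–(c) established in Proposition~\ref{prop:homotopy-equiv}, together with the (previously remarked) associativity of $\diamond$. So very little new work should be needed; the corollary is essentially a formal consequence.

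For the first statement, suppose $\ff:B \to C$ and $\fg:C \to D$ are homotopy equivalences with respective homotopy inverses $\ff':C \to B$ and $\fg':D \to C$. I claim that $\ff' \diamond \fg'$ is a homotopy inverse to $\fg \diamond \ff$. Using associativity of composition I would rewrite
\begin{align*}
(\fg \diamond \ff) \diamond (\ff' \diamond \fg') &= \fg \diamond (\ff \diamond \ff') \diamond \fg',\\
(\ff' \diamond \fg') \diamond (\fg \diamond \ff) &= \ff' \diamond (\fg' \diamond \fg) \diamond \ff.
\end{align*}
By hypothesis $\ff \diamond \ff'$ is homotopic to $\id_C$ and $\fg' \diamond \fg$ is homotopic to $\id_C$. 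Applying part (c) of Proposition~\ref{prop:homotopy-equiv} twice (once on each side) then shows
\[
\fg \diamond (\ff \diamond \ff') \diamond \fg' \text{ is homotopic to } \fg \diamond \id_C \diamond \fg' = \fg \diamond \fg',
\]
which is itself homotopic to $\id_D$, and analogously for the other composition. Transitivity of homotopy (part (b) of the same proposition) then delivers the required homotopies, so $\fg \diamond \ff$ is a homotopy equivalence.

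For the second statement, reflexivity follows because $\id_C$ is a homotopy inverse of itself; symmetry is immediate from the symmetric nature of the definition (if $\fg$ witnesses that $\ff$ is a homotopy equivalence, then $\ff$ witnesses the same for $\fg$); and transitivity is precisely the first statement just proved. The only subtlety worth flagging is the implicit use of the remark following Definition~\ref{def:comp} that $\diamond$ is associative, so that the regroupings above are legitimate; once this is granted, no further obstruction remains.
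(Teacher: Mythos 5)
Your proof is correct and is exactly the formal argument the paper has in mind: the corollary is stated as an immediate consequence of Proposition~\ref{prop:homotopy-equiv} (parts (b) and (c)) together with associativity of $\diamond$, and you have simply written out that verification. No gaps.
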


\par\medskip
%%%%%%%%%%%%%%%%%%%%%%%%%%%%%%%%%%%%%%%%%%%%%%%%%%%%%%%%%%%%%%%%%%%%%%%
\section{Homotopy inverse}\label{sec:whi}
%%%%%%%%%%%%%%%%%%%%%%%%%%%%%%%%%%%%%%%%%%%%%%%%%%%%%%%%%%%%%%%%%%%%%%%

In this section we prove Theorem \ref{Whi}, following the scheme of the
corresponding argument in \cite[\S4.5]{FOOO06} in the
$A_\infty$-case. 
\begin{lem}\label{lem:inverse1}
Consider $\IBL_\infty$-algebras $(C,\{\fp^C_{k,\ell,g}\})$ and
$(D,\{\fp^D_{k,\ell,g}\})$, and let $(\fD,\{\fq^D_{k,\ell,g}\})$ be a
path object for $D$. Suppose $\fh = \{\fh_{k,\ell,g}:E_kC \to E_\ell
\fD\}_{(k,\ell,g)\prec (K,L,G)}$ satisfies \eqref{eq:mor} for
all $(k,\ell,g)\prec (K,L,G)$. Then
$$
[R_{K,L,G}(\eps_0\diamond \fh,\fp^C,\fp^D)]=[R_{K,L,G}(\eps_1 \diamond
   \fh,\fp^C,\fp^D)] \in H_*(Hom(E_K C, E_L D),\delta). 
$$
\end{lem}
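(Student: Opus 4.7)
The strategy is to reduce both $R_{K,L,G}(\eps_i\diamond\fh,\fp^C,\fp^D)$ to $\frac{1}{L!}\,\eps_i^{\odot L}\circ R$ for a single $\delta$-cycle $R$, and then to use the path-object structure to show $\eps_0^{\odot L}$ and $\eps_1^{\odot L}$ are chain homotopic as maps $E_L\fD\to E_LD$.

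First I would apply Proposition~\ref{prop:hom-main}(3) with $\ff=\eps_i$ and $\fg=\fh$. Since $\eps_i$ is linear, the final statement of Lemma~\ref{lem:mor-comp} gives $C_{K,L,G}(\eps_i,\fh)=0$. Moreover $\eps_i$ is a complete $\IBL_\infty$-morphism with $\eps_{i,K,L,G}=0$, so its morphism relation at signature $(K,L,G)$ forces $R_{K,L,G}(\eps_i,\fq^D,\fp^D)=0$. The composition formula therefore collapses to
\begin{equation*}
R_{K,L,G}(\eps_i\diamond\fh,\fp^C,\fp^D)=\tfrac{1}{L!}\,\eps_i^{\odot L}\circ R,\qquad R:=R_{K,L,G}(\fh,\fp^C,\fq^D),
\end{equation*}
and by Proposition~\ref{prop:hom-main}(2) the element $R\in Hom(E_KC,E_L\fD)$ is a $\delta$-cycle.

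Next I would construct $\wt H_L:E_L\fD\to E_LD$ with $\delta\wt H_L=\eps_0^{\odot L}-\eps_1^{\odot L}$. From conditions (b) and (c) in Definition~\ref{def:path}, $\eps_0$ and $\eps_1$ are both chain-homotopy inverses of $\iota$, hence chain homotopic as maps $(\fD,\fq^D_{1,1,0})\to(D,\fp^D_{1,1,0})$; explicitly, choosing homotopies $h_i$ with $\iota\eps_i-\id_\fD=\delta h_i$ and using $\eps_0\iota=\id_D$ yields $\eps_0-\eps_1=\delta H$ with $H:=\eps_0\,(h_0-h_1)$. I then set
\begin{equation*}
\wt H_L:=\sum_{j=0}^{L-1}\eps_0^{\odot j}\odot H\odot\eps_1^{\odot(L-1-j)}
\end{equation*}
(with the appropriate Koszul signs). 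Since $\eps_0$ and $\eps_1$ are chain maps, $\delta$ passes through the outer factors of each summand and acts only on the central $H$, producing $\eps_0^{\odot j}\odot(\eps_0-\eps_1)\odot\eps_1^{\odot(L-1-j)}=\eps_0^{\odot(j+1)}\odot\eps_1^{\odot(L-1-j)}-\eps_0^{\odot j}\odot\eps_1^{\odot(L-j)}$, and the sum telescopes to $\eps_0^{\odot L}-\eps_1^{\odot L}$.

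Combining the two steps and using the derivation property of $\delta$ together with $\delta R=0$,
\begin{equation*}
R_{K,L,G}(\eps_0\diamond\fh,\fp^C,\fp^D)-R_{K,L,G}(\eps_1\diamond\fh,\fp^C,\fp^D)=\tfrac{1}{L!}(\delta\wt H_L)\circ R=\delta\bigl(\tfrac{1}{L!}\,\wt H_L\circ R\bigr),
\end{equation*}
which proves the asserted equality in $H_*(Hom(E_KC,E_LD),\delta)$. The main technical obstacle is the sign bookkeeping intrinsic to the symmetric products $\odot$ and to the extensions $\hat{(\,)}$, which must be tracked carefully so that $\delta$ really does commute cleanly through the factors $\eps_0^{\odot j}$ and $\eps_1^{\odot(L-1-j)}$ in $\wt H_L$ and so that the telescoping identity holds on the nose.
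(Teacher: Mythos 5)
Your proposal is correct, and its first (and decisive) step is exactly the paper's: reduce via Proposition~\ref{prop:hom-main}(3), linearity of $\eps_i$ (so $C_{K,L,G}(\eps_i,\fh)=0$ and $R_{K,L,G}(\eps_i,\fq^D,\fp^D)=0$), to $R_{K,L,G}(\eps_i\diamond\fh,\fp^C,\fp^D)=\frac{1}{L!}\eps_i^{\odot L}\circ R$ with $R=R_{K,L,G}(\fh,\fp^C,\fq^D)$ a $\delta$-cycle. Where you diverge is in the second half: the paper disposes of it abstractly by observing that post-composition with $\frac{1}{L!}\eps_0^{\odot L}$ and with $\frac{1}{L!}\eps_1^{\odot L}$ are both chain homotopy inverses of post-composition with $\frac{1}{L!}\iota^{\odot L}$ on $(Hom(E_KC,E_L\cdot),\delta)$, hence induce the same map on homology; you instead build an explicit chain homotopy between $\eps_0^{\odot L}$ and $\eps_1^{\odot L}$ by a telescoping sum $\sum_j\eps_0^{\odot j}\odot H\odot\eps_1^{\odot(L-1-j)}$, having first produced $H$ with $\delta H=\eps_0-\eps_1$ from the path-object axioms. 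Both are valid; the paper's version is shorter and sidesteps the Koszul-sign bookkeeping you flag (note $H$ is odd, so the signs in your telescoping identity do need the care you promise), while yours yields an explicit primitive $\frac{1}{L!}\wt H_L\circ R$ for the difference, which could be useful if one wanted to track the homotopy rather than just its existence.
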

\begin{proof}
Observe that, since the $\eps_i$ are linear $\IBL_\infty$-morphisms, by
part (3) of Proposition~\ref{prop:hom-main} we have 
$$
R_{K,L,G}(\eps_i \diamond \fh ,\fp^C,\fp^D) =
\frac 1 {L!}\eps_i^{\odot L}\circ R_{K,L,G}(\fh,\fp^C,\fq^D), \quad
i=0,1.
$$
Now the two maps $E_i:(Hom(E_K C,E_L \fD),\delta) \to
(Hom(E_K C, E_L D), \delta)$ given by $E_i(\varphi) = \frac 1 {L!}
\eps_i^{\odot L} \circ \varphi$ are both homotopy inverses to the same
map $I:(Hom(E_K C, E_L D), \delta) \to (Hom(E_K C, E_L \fD),
\delta)$, $I(\psi)= \frac 1 {L!}\iota^{\odot L} \circ \psi$, so they
induce the same map in homology. This proves the claim.
\end{proof}
The proof of Theorem~\ref{Whi} will be an easy consequence of the
following observation.
\begin{prop}\label{prop:inverse2}
Let $\ff:D \to C$ be an $\IBL_\infty$-morphism such that
$\ff_{1,1,0}:(D,\fp^D_{1,1,0}) \to (C,\fp^C_{1,1,0})$ is a
quasi-isomorphism of chain complexes. Then there exists an
$\IBL_\infty$-morphism $\fg:C \to D$ such that $\fg \diamond \ff$ is homotopic to
the identity of $D$.
\end{prop}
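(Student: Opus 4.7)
The plan, following the scheme of the $A_\infty$ case in~\cite[\S 4.5]{FOOO06}, is to construct the desired morphism $\fg$ simultaneously with an $\IBL_\infty$-morphism $\fH : D \to \fD$ into a path object $\fD$ for $D$, arranged so that $\eps_0 \diamond \fH = \id_D$ and $\eps_1 \diamond \fH = \fg \diamond \ff$. The components $\fg_{k,\ell,g}$ and $\fH_{k,\ell,g}$ are built in parallel by induction on the ordering $\prec$.

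For the base case $(1,1,0)$, the field of characteristic $0$ hypothesis ensures the quasi-isomorphism $\ff_{1,1,0}$ admits a chain homotopy inverse $\fg_{1,1,0} : C \to D$ together with a chain homotopy $h$ satisfying $\fp^D_{1,1,0} h + h \fp^D_{1,1,0} = \fg_{1,1,0}\ff_{1,1,0} - \id_D$. Using the explicit path object $\fD = D \oplus D \oplus D[1]$ from Proposition~\ref{prop:model}, set $\fH_{1,1,0}(x) := (x,\, \fg_{1,1,0}\ff_{1,1,0}(x),\, h(x))$; a direct computation shows this is a chain map meeting both boundary conditions.

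For the inductive step at $(K,L,G) \succ (1,1,0)$, first build $\fg_{K,L,G}$. Solving its defining relation amounts to writing $R_{K,L,G}(\fg,\fp^C,\fp^D) \in \Hom(E_K C, E_L D)$ as a $\delta$-boundary; it is already a $\delta$-cycle by Proposition~\ref{prop:hom-main}(2). Applying Lemma~\ref{lem:inverse1} to the partial morphism $\fH$ yields
$$
[R_{K,L,G}(\id_D,\fp^D,\fp^D)] \;=\; [R_{K,L,G}(\fg \diamond \ff,\, \fp^D,\, \fp^D)]
$$
in $H_*(\Hom(E_K D, E_L D),\delta)$. The left-hand side vanishes since $\id_D$ is a strict $\IBL_\infty$-morphism. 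By Proposition~\ref{prop:hom-main}(3) and the hypothesis $R_{K,L,G}(\ff,\fp^D,\fp^C)=0$, the right-hand side equals $[R_{K,L,G}(\fg,\fp^C,\fp^D) \circ \tfrac{1}{K!}\ff_{1,1,0}^{\odot K}]$. Since $R$ is a field of characteristic $0$, the K\"unneth formula combined with the identification of $E_K$ with $S_K$-invariant tensors (which needs $|S_K|$ invertible) shows that $\ff_{1,1,0}^{\odot K} : E_K D \to E_K C$ is itself a quasi-isomorphism, so pre-composition with it induces an isomorphism on $\delta$-cohomology. Therefore $[R_{K,L,G}(\fg,\fp^C,\fp^D)]=0$, and we may choose $\fg_{K,L,G}$ so that $\delta \fg_{K,L,G}$ equals (up to sign) this obstruction. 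Having extended $\fg$, we extend $\fH$ by the obstruction-theoretic technique of Propositions~\ref{prop:model} and~\ref{prop:homotopy-transfer}: pick a right inverse $\rho$ to $\tfrac{1}{L!}\eps_0^{\odot L} \oplus \tfrac{1}{L!}\eps_1^{\odot L}$ provided by property (d) of the path object, take an initial lift $\fH''_{K,L,G} := \rho\bigl(0,\, (\fg \diamond \ff)_{K,L,G}\bigr)$ that enforces the two boundary conditions on the nose, and then correct it by a term lying in $\ker(\eps_0 \oplus \eps_1)$ (supplied by Lemma~\ref{lem:homotopy}) so as to solve the morphism equation without spoiling those conditions.

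The main obstacle is precisely the vanishing of $[R_{K,L,G}(\fg,\fp^C,\fp^D)]$ at each inductive stage, which hinges on $\ff_{1,1,0}^{\odot K}$ being a quasi-isomorphism. This is where the assumption that $R$ is a field of characteristic $0$ is essential, both to extract a homotopy inverse to $\ff_{1,1,0}$ and to run the K\"unneth argument on symmetric powers. Once this is in hand the remaining bookkeeping for extending $\fH_{K,L,G}$ is routine, proceeding exactly as in the proofs of Propositions~\ref{prop:model} and~\ref{prop:homotopy-transfer}.
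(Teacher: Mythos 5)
Your overall strategy --- constructing $\fg$ together with a homotopy $\fH:D\to\fD$ by induction on $\prec$, and killing the obstruction $[R_{K,L,G}(\fg,\fp^C,\fp^D)]$ by combining Lemma~\ref{lem:inverse1}, Proposition~\ref{prop:hom-main}(3) and the fact that precomposition with $\frac 1{K!}\ff_{1,1,0}^{\odot K}$ is an isomorphism on $\delta$-homology --- is exactly the paper's, and that part of your argument is sound. (One small imprecision: $R_{K,L,G}(\ff,\fp^D,\fp^C)$ is not zero but $\delta$-exact, equal to $\pm\delta\ff_{K,L,G}$; this is all you need, but it is not a "hypothesis".)

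The genuine gap is in the last step, which you dismiss as routine bookkeeping. Having fixed some $\fg_{K,L,G}=T$ with $\delta T=R_{K,L,G}(\fg)$, you take the lift $\fH''_{K,L,G}=\rho\bigl(0,(\fg\diamond\ff)_{K,L,G}\bigr)$ and propose to repair the morphism equation by adding a correction lying in $\ker\bigl(\tfrac1{L!}\eps_0^{\odot L}\oplus\tfrac1{L!}\eps_1^{\odot L}\bigr)$ "supplied by Lemma~\ref{lem:homotopy}". But Lemma~\ref{lem:homotopy} produces a primitive $H$ annihilated by a \emph{single} map $e$ that is a homotopy equivalence with $ei=\id$; the map $\eps_0\oplus\eps_1:\fD\to D\oplus D$ is \emph{not} a homotopy equivalence (its kernel is a copy of $D[1]$, which is not acyclic), so there is no reason the $\delta$-exact defect can be written as $\delta H$ with \emph{both} $\eps_0^{\odot L}\circ H=0$ and $\eps_1^{\odot L}\circ H=0$. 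Concretely, there is a secondary obstruction: the class of $W:=(\fg\diamond\ff)_{K,L,G}-\frac1{L!}\eps_1^{\odot L}\circ S$ in $H_*(Hom(E_KD,E_LD),\delta)$, where $S$ is a primitive of $R_{K,L,G}(\fh,\fp^D,\fq^D)$ with $\frac1{L!}\eps_0^{\odot L}\circ S=0$; for your fixed $T$ this class need not vanish. The paper resolves this by \emph{not} fixing $\fg_{K,L,G}$ prematurely: it sets $\fg_{K,L,G}=T+G$ for a $\delta$-cycle $G$ chosen so that $[W+G\circ\frac1{K!}\ff_{1,1,0}^{\odot K}]=0$ --- possible precisely because precomposition with $\frac1{K!}\ff_{1,1,0}^{\odot K}$ is surjective on homology --- and then takes $\fh_{K,L,G}=S+\delta H$, where $H$ lifts $(0,H_1)$ with $\delta H_1=W+G\circ\frac1{K!}\ff_{1,1,0}^{\odot K}$. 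This second use of the quasi-isomorphism hypothesis is essential and is missing from your argument.
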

\begin{proof}
{\em Step 1:} One first constructs a chain map
$\fg_{1,1,0}:(C,\fp^C_{1,1,0}) \to (D,\fp^D_{1,1,0})$ which is a chain
homotopy inverse to $\ff_{1,1,0}$, together with a homotopy
$\fh_{1,1,0}: D \to \fD$ between $\fg_{1,1,0}\circ \ff_{1,1,0}$ and
the identity of $D$. This is completely standard.

{\em Step 2:} Now we proceed by induction on our linear order of
signatures $(k,\ell,g)$. Suppose we have constructed maps
$\fg_{k,\ell,g}:E_{k} C \to E_{\ell}D$ and
$\fh_{k,\ell,g}:E_{k}D \to E_{\ell} \fD$ for all
$(k,\ell,g)\prec (K,L,G)$ such that
\begin{enumerate}[(i)]
\item $\frac 1 {\ell!} \eps_0^{\odot \ell} \circ \fh_{k,\ell,g} = 0$ for
   all $(1,1,0) \prec (k,\ell,g) \prec (K,L,G)$ and $\eps_0 \circ \fh_{1,1,0}
   = \id_D$,
\item $\frac 1 {\ell!} \eps_1^{\odot \ell} \circ \fh_{k,\ell,g} =
   (\fg \diamond \ff)_{k,\ell,g}$ for all $(k,\ell,g) \prec
   (K,L,G)$, 
\item $\fh$ satisfies \eqref{eq:mor} for all $(k,\ell,g) \prec
   (K,L,G)$ and
\item $\fg$ satisfies \eqref{eq:mor} for all $(k,\ell,g) \prec
   (K,L,G)$.
\end{enumerate}
By inductive assumption (i), $\eps_0 \diamond \fh$ is the identity of
$D$, which is clearly an $\IBL_\infty$-morphism. So by
part (3) of Proposition~\ref{prop:hom-main} 
$$
\frac 1 {L!} (\eps_0^{\odot L})_* [R_{K,L,G}(\fh, \fp^D, \fq^D) ]
= [R_{K,L,G}(\eps_0 \diamond \fh ,
   \fp^D,\fp^D)]= 0 
$$
in $H_*(Hom(E_kD, E_L\fD),\delta)$.
Applying part (1) of Lemma~\ref{lem:homotopy} with $i= \frac 1 {L!}
\iota^{\odot L}:E_L D \to E_L \fD$, $e= \frac 1 {L!}\eps_0^{\odot L}:
E_L \fD \to E_L D$ and $f=R_{K,L,G}(\fh, \fp^D, \fq^D)$, we 
obtain  $S:E_K D \to E_L \fD$ such that
$$
R_{K,L,G}(\fh, \fp^D, \fq^D) = \delta S
$$
and $\frac 1 {L!} \eps_0^{\odot L}\circ S=0$. Note that 
\begin{align*}
\delta (\frac 1 {L!}\eps_1^{\odot L}\circ S) 
=& \frac 1 {L!}\eps_1^{\odot L} R_{K,L,G}(\fh, \fp^D, \fq^D)\\
=& R_{K,L,G}( \eps_1 \diamond \fh , \fp^D, \fp^D) \\
=& R_{K,L,G}( \fg \diamond \ff, \fp^D, \fp^D) \\
=& \frac 1 {L!} \fg_{1,1,0}^{\odot L} R_{K,L,G}(\ff, \fp^D,
\fp^C) + R_{K,L,G}( \fg , \fp^C, \fp^D) \circ \frac 1 {K!}
\ff_{1,1,0}^{\odot K} + \delta C_{K,L,G}(\fg, \ff),
\end{align*}
where we used part (3) of Proposition~\ref{prop:hom-main} in the last step.
Since $\ff$ is an $\IBL_\infty$-morphism, we conclude that
$$
[R_{K,L,G}( \fg , \fp^C, \fp^D) \circ \frac 1 {K!}
\ff_{1,1,0}^{\odot K}] = 0 \in H_*(Hom(E_K D, E_L D), \delta).
$$
Since $\frac 1 {K!} \ff_{1,1,0}^{\odot K}$ is a homotopy equivalence,
this implies that
$$
R_{K,L,G}( \fg , \fp^C, \fp^D) = \delta T
$$
for some $T:E_K C \to E_L D$. Now consider 
$$
F = T \circ \frac 1 {K!} \ff_{1,1,0}^{\odot K} + \frac 1 {L!}
\fg_{1,1,0}^{\odot L} \circ \ff_{K,L,G} + C_{K,L,G}(\fg,\ff )
- \frac 1 {L!} \eps_1^{\odot L}\circ S : E_K D \to E_L D,
$$
and note that by construction we have $\delta F=0$.

Observe also that precomposition with
$\frac 1 {K!} \ff_{1,1,0}^{\odot K}$ induces a homotopy equivalence
from $(Hom(E_K C,E_L D),\delta)$ to $(Hom(E_K D,E_L D),\delta)$.
In particular, there exists $G:E_K C \to E_L D$ with $\delta G=0$ and 
$$
[F + G \circ \frac 1 {K!} \ff_{1,1,0}^{\odot K}]=0 \in H_*(Hom(E_K
D,E_L D),\delta)).
$$
This in turn means that we can find $H_1:E_K D \to E_L D$ such that
$$
\delta H_1 = F + G \circ \frac 1 {K!} \ff_{1,1,0}^{\odot K}.
$$
Since $\eps_0 \oplus \eps_1:\fD \to D \oplus D $ admits a right inverse, we
find a lift $H:E_K D \to E_L \fD$ such that $\frac 1 {L!}
\eps_1^{\odot L} \circ H= H_1$ and $\frac 1 {L!} \eps_0^{\odot L}
\circ H=0$. Now set 
$$
\fg_{K,L,G}:= T + G \quad \text{\rm and} \quad
\fh_{K,L,G}:= S + \delta H.
$$
Let us check that they satisfy properties (i)-(iv) for
$(K,L,G)$. For (i), observe that by construction 
$$
\frac 1 {L!} \eps_0^{\odot L} \circ (S+\delta H)=
\frac 1 {L!} \eps_0^{\odot L} \circ S + \delta (\frac 1 {L!}
\eps_0^{\odot L} \circ H)= 0 
$$ 
as required. For (ii), we check that
\begin{align*} 
\frac 1 {L!} \eps_1^{\odot L} \circ (S+\delta H)=
& \frac 1 {L!} \eps_1^{\odot L} \circ S+ \delta H_1\\
=& (T+G) \circ \frac 1 {K!} \ff_{1,1,0}^{\odot K} + \frac 1 {L!}
\fg_{1,1,0}^{\odot L} \circ \ff_{K,L,G} + C_{K,L,G}(\fg,\ff )\\
=& (\fg \diamond \ff)_{K,L,G},
\end{align*}
where in the last step we used Lemma~\ref{lem:mor-comp}.
For (iii), we check that
$$
\delta \fh_{K,L,G}= \delta S = R_{K,L,G}(\fh,\fp^D,\fq^D)
$$ 
as required.
Similarly, for (iv) we observe that
$$
\delta \fg_{K,L,G}= \delta T = R_{K,L,G}(\fg,\fp^C,\fp^D).
$$
This completes the induction step and hence the proof of 
Proposition~\ref{prop:inverse2}. 
\end{proof}

We now conclude this section by proving Theorem~\ref{Whi}. Indeed, 
let $\ff:D \to C$ be an $\IBL_\infty$-morphism such that $\ff_{1,1,0}$
induces an isomorphism in homology. Then by
Proposition~\ref{prop:inverse2} there exists an $\IBL_\infty$-morphism
$\fg:C \to D$ such that $\fg \diamond \ff$ is homotopic to the identity
of $D$. As $\fg_{1,1,0}$ induces the inverse isomorphism in homology,
we can apply Proposition~\ref{prop:inverse2} again to construct
another $\IBL_\infty$-morphism $\ff':D \to C$ such that $\ff' \diamond \fg$
is homotopic to the identity of $C$. Now it follows that
$$
\ff \sim \ff' \diamond \fg \diamond \ff \sim \ff',
$$
so that by Proposition~\ref{prop:homotopy-equiv}(c) we conclude that
$\ff \diamond \fg$ is also homotopic to the identity of $C$. In other
words, $\ff$ and $\fg$ are homotopy inverses of each other. This
completes the proof of Theorem~\ref{Whi}.

\par\medskip
%%%%%%%%%%%%%%%%%%%%%%%%%%%%%%%%%%%%%%%%%%%%%%%%%%%%%%%%%%%%%%%%%%%%%%%
\section{Canonical model}\label{sec:can}
%%%%%%%%%%%%%%%%%%%%%%%%%%%%%%%%%%%%%%%%%%%%%%%%%%%%%%%%%%%%%%%%%%%%%%%

In this section we prove the following statement, which is
Theorem~\ref{thm:canonical} from the Introduction. We assume that the
ground ring $R$ is a field containing $\Q$. 

\begin{thm}
\label{thm:can}
Suppose $(C,\{\fp_{k,\ell,g}\})$ is an $\IBL_\infty$-algebra.
Then there exist operations $\{\fq_{k,\ell,g}\}$ on
its homology $H:=H_*(C,\fp_{1,1,0})$ giving it the structure of an
$\IBL_\infty$-algebra such that there exists a homotopy
equivalence $\ff: (H,\{\fq_{k,\ell,g}\}) \to (C,\{\fp_{k,\ell,g}\})$.
\end{thm}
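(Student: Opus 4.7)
The plan is to build the operations $\{\fq_{k,\ell,g}\}$ on $H$ and an $\IBL_\infty$-morphism $\ff : (H,\{\fq_{k,\ell,g}\}) \to (C,\{\fp_{k,\ell,g}\})$ simultaneously, by induction along the order $\prec$ of Definition~\ref{def:order}, and then invoke Theorem~\ref{Whi} to upgrade $\ff$ to an $\IBL_\infty$-homotopy equivalence.

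\textbf{Setup.} Since $R$ is a field of characteristic zero, the chain complex $(C,\fp_{1,1,0})$ is chain-homotopy equivalent to $H$ with the zero differential. I fix $R$-linear maps $\iota:H\hookrightarrow C$ and $\pi:C\to H$ with $\pi\iota=\id_H$, $\fp_{1,1,0}\iota=0$, $\pi\fp_{1,1,0}=0$, and a chain homotopy $h:C\to C$ satisfying $\id_C-\iota\pi=\fp_{1,1,0}h+h\fp_{1,1,0}$. Over $\mathbb Q$, the analogous equivalence holds on symmetric powers: $H_*(E_\ell C,\fp_{1,1,0})\iso E_\ell H$ by K\"unneth, and $h$ extends to a chain homotopy on each $E_\ell C$ between the identity and a projection onto $\iota^{\odot\ell}(E_\ell H)$. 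I start the induction with $\fq_{1,1,0}:=0$ and $\ff_{1,1,0}:=\iota$.

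\textbf{Induction.} Suppose $\fq_{k,\ell,g}$ and $\ff_{k,\ell,g}$ have been constructed for all $(k,\ell,g)\prec(K,L,G)$ so that \eqref{eq:BL4} and \eqref{eq:mor4} hold at every smaller signature. By Lemma~\ref{lem:mor_order}, using $\fq_{1,1,0}=0$, the morphism equation at $(K,L,G)$ takes the form
\begin{equation*}
\fp_{1,1,0}\ff_{K,L,G} \;=\; \tfrac{1}{L!}\ff_{1,1,0}^{\odot L}\fq_{K,L,G} + Y,
\end{equation*}
where $Y:=\widetilde R_{K,L,G}(\ff,\fq,\fp)-\tfrac{1}{K!}\hat\fp_{K,L,G}\ff_{1,1,0}^{\odot K}$ depends only on previously defined data. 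The heart of the argument is the cycle claim $\fp_{1,1,0}\circ Y=0$; it is verified by expanding $\fp_{1,1,0}\circ Y$ and using the inductive IBL and morphism relations, exactly in the spirit of the proof of Proposition~\ref{prop:hom-main}(2), together with the identity $\fp_{1,1,0}\circ\ff_{1,1,0}^{\odot L}=0$. Granting the claim, I define $\fq_{K,L,G}:=-\tfrac{1}{L!}\pi^{\odot L}\circ Y$; the cycle $Y+\tfrac{1}{L!}\ff_{1,1,0}^{\odot L}\fq_{K,L,G}$ then has vanishing class in $H_*(E_L C,\fp_{1,1,0})\iso E_L H$, hence is $\fp_{1,1,0}$-exact, and a primitive $\ff_{K,L,G}$ is produced using the chain homotopy on $E_L C$ induced by $h$.

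\textbf{IBL relation and conclusion.} To verify the IBL identity $P_{K,L,G}(\fq)=0$, apply $\hat\fp$ to the identity $\hat\fp\circ e^\ff=e^\ff\circ\hat\fq$ (now valid at all signatures $\preceq(K,L,G)$): using $\hat\fp\circ\hat\fp=0$ and the inductive IBL hypothesis $(\hat\fq\circ\hat\fq)_{k,\ell,g}=0$ for $(k,\ell,g)\prec(K,L,G)$, the composition $e^\ff\circ\hat\fq\circ\hat\fq$ vanishes at signature $(K,L,G)$. After discarding contributions that vanish by induction, what remains is $\tfrac{1}{L!}\ff_{1,1,0}^{\odot L}\circ P_{K,L,G}(\fq)$, and injectivity of $\ff_{1,1,0}^{\odot L}$ forces $P_{K,L,G}(\fq)=0$. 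This closes the induction, and since $\ff_{1,1,0}=\iota$ induces an isomorphism on homology, Theorem~\ref{Whi} promotes $\ff$ to an $\IBL_\infty$-homotopy equivalence. The main technical obstacle is the cycle claim $\fp_{1,1,0}\circ Y=0$, which requires careful combinatorial bookkeeping in the style of Proposition~\ref{prop:hom-main}(2), adapted to the partially-defined setting in which $\fq$ has not yet been extended to signature $(K,L,G)$.
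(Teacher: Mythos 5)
Your overall strategy is the paper's own: the same splitting data $(\iota,\pi,h)$, the same formula $\fq_{K,L,G}=\frac{1}{L!}\pi^{\odot L}\bigl(\frac{1}{K!}\fp_{K,L,G}\,\ff_{1,1,0}^{\odot K}-\wt R_{K,L,G}(\ff,\fq,\fp)\bigr)$, the same choice of $\ff_{K,L,G}$ as a primitive produced by the extended homotopy, and the same final appeal to Theorem~\ref{Whi}. However, the way you have organized the induction step is circular, and the circularity sits exactly at the point you yourself flag as the "main technical obstacle."

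Concretely: since $\fq_{1,1,0}=0$, one has $\fp_{1,1,0}\circ Y=\delta R_{K,L,G}(\ff,\fq,\fp)$, and if you expand this as in the proof of Proposition~\ref{prop:hom-main}(2) using only the inductive relations at signatures $\prec(K,L,G)$, all cancellations go through \emph{except} those that would require substituting $\delta\fq_{K,L,G}=-Q_{K,L,G}$; because $\fq_{1,1,0}=0$ forces $\delta\fq_{K,L,G}=0$, the expansion leaves the residual $\pm\frac{1}{L!}\ff_{1,1,0}^{\odot L}\circ Q_{K,L,G}$, where $Q_{K,L,G}$ is the quadratic expression of Lemma~\ref{lem:order} in the $\fq$'s. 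Since $\ff_{1,1,0}^{\odot L}$ is split injective, your cycle claim is therefore \emph{equivalent} to the IBL relation $Q_{K,L,G}=0$ — which you only verify at the end, and there you use the identity $\hat\fp\circ e^{\ff}=e^{\ff}\circ\hat\fq$ at signature $(K,L,G)$ itself, i.e.\ the morphism relation whose construction rested on the cycle claim. Each of your two key assertions is thus derived from the other. The repair is to reverse the order, as the paper does: $Q_{K,L,G}$ involves only $\fq_{k,\ell,g}$ with $(k,\ell,g)\prec(K,L,G)$, so it can and must be shown to vanish \emph{before} $\fq_{K,L,G}$ and $\ff_{K,L,G}$ are defined. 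Your generating-function argument does this once you replace $\hat\fp$, $\hat\fq$, $e^{\ff}$ by their truncations over signatures strictly below $(K,L,G)$: one gets $\langle\hat\fq'\hat\fq'\rangle_{K,L,G}=\frac{1}{L!}\pi^{\odot L}\langle e^{\ff'}\hat\fq'\hat\fq'\rangle_{K,L,G}=\frac{1}{L!}\pi^{\odot L}\langle\hat\fp'\hat\fp'e^{\ff'}\rangle_{K,L,G}$, and the last expression vanishes because $\hat\fp\circ\hat\fp=0$ reduces it to terms containing $\hat\fp_{1,1,0}$ on the outside or inside, which are killed by $\pi\fp_{1,1,0}=0$ and $\fp_{1,1,0}\ff_{1,1,0}=0$. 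With $Q_{K,L,G}=0$ established, Proposition~\ref{prop:hom-main}(2) gives the cycle claim directly, and the rest of your construction is correct.
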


\begin{proof}
Fix a cycle-choosing embedding $\ff_{1,1,0}:H \to C$ and a splitting
$C= H\oplus B \oplus A$, where we identify $H$ with its image under
$\ff_{1,1,0}$ and where $B=\im\, \fp_{1,1,0}$. Denote by
$\pi:C \to H$ the projection along $B \oplus A$, and by $h:C \to C$
the map which vanishes on $H \oplus A$ and is equal to the inverse of
$\fp_{1,1,0}:A \stackrel{\cong}{\to} B$ on $B$. Then we have $ \pi
\ff_{1,1,0}=\id_H$ 
and
$$
\fp_{1,1,0}h + h \fp_{1,1,0} = \id_C - \ff_{1,1,0} \pi,
$$
so that $\ff_{1,1,0}:(H,\fq_{1,1,0}=0) \to (C,\fp_{1,1,0})$ is a chain
homotopy equivalence. Now as usual we argue by induction on our linear
order of signatures $(k,\ell,g)$. So assume that we have defined
$\fq_{k,\ell,g}$ and $\ff_{k,\ell,g}$ for all
$(k,\ell,g)\prec (K,L,G)$ such that
\begin{enumerate}[(i)]
\item the $\fq$'s satisfy \eqref{eq:BL4} for all $(k,\ell,g)\prec
   (K,L,G)$, 
\item the $\fq$'s and the $\ff$'s satisfy \eqref{eq:mor4} for all
   $(k,\ell,g)\prec (K,L,G)$. 
\end{enumerate}
Consider the expression $\wt R_{K,L,G}(\ff,\fq,\fp):E_KH \to
E_L C$ appearing in the second statement of Lemma~\ref{lem:mor_order},
and define
$$
\fq_{K,L,G}:= \frac 1 {L!}\pi^{\odot L} \left(\fp_{K,L,G}\,\frac 1 {K!} 
\ff_{1,1,0}^{\odot K} -\wt R_{K,L,G}(\ff,\fq,\fp)\right). 
$$
We claim that with this (or any other) definition the
$\{\fq_{k,\ell,g}\}_{(k,\ell,g) \preceq (K,L,G)}$ satisfy equation
\eqref{eq:BL4} for $(K,L,G)$. In fact, since $\fq_{1,1,0}=0$, we need
only show that the quadratic expression $Q_{K,L,G}$ 
in the $\hat\fq_{k,\ell,g}$ defined in Lemma~\ref{lem:order} vanishes,
which does not involve $\fq_{K,L,G}$.

As in the proof of Proposition~\ref{prop:hom-main}, we use the notation
$\langle A\rangle_{k,\ell,g}$ to denote the part of the coefficient of
$\hbar^{k+g-1}\tau^{k+\ell+2g-2}$ in some map $A:EH \otimes
R\{\tau,\hbar\} \to EH \otimes R\{\tau,\hbar\}$ which
corresponds to the part mapping $E_kH$ to $E_\ell H$. 
Define
$$
\hat \fq':= \sum_{(k,\ell,g) \prec (K,L,G)} \hat
\fq_{k,\ell,g}\hbar^{k+g-1}\tau^{k+\ell+2g-2}: EH \to EH,
$$
and note that hypothesis (i) implies that $\langle \hat \fq' \hat
\fq'\rangle_{k,\ell,g} = 0$ for all $(k,\ell,g) \prec (K,L,G)$, so
that the claim $Q_{K,L,G}=0$ is equivalent to $\langle \hat\fq' \hat
\fq'\rangle_{K,L,G}=0$.

We also define
$$
\hat \fp':= \sum_{(k,\ell,g) \prec (K,L,G)} \hat
\fp_{k,\ell,g}\hbar^{k+g-1}\tau^{k+\ell+2g-2}: EC \to EC
$$
and 
$$
\ff' := \sum_{(k,\ell,g) \prec (K,L,G)} 
\ff_{k,\ell,g}\hbar^{k+g-1}\tau^{k+\ell+2g-2}: EH \to EC.
$$

Induction hypothesis (ii) implies that
$$
\langle e^{\ff'} \hat \fq' - \hat \fp' e^{\ff} \rangle_{k,\ell,g} = 0
$$
for all $(k,\ell,g) \prec (K,L,G)$. But this, together with $\langle
\hat \fq' \hat \fq'\rangle_{k,\ell,g} = 0$ for all $(k,\ell,g) \prec
(K,L,G)$, implies 
\begin{eqnarray*}
\langle \hat\fq' \hat \fq'\rangle_{K,L,G}
&=& \frac 1 {L!} \pi^{\odot L} \frac 1 {L!} \ff_{1,1,0}^{\odot L} 
\langle \hat\fq' \hat \fq'\rangle_{K,L,G}\\
&=& \frac 1 {L!} \pi^{\odot L} \langle e^{\ff'} \hat\fq' \hat
\fq'\rangle_{K,L,G} \\
&=& \frac 1 {L!} \pi^{\odot L} \langle \hat\fp' e^{\ff'} \hat
\fq'\rangle_{K,L,G} \\
&=& \frac 1 {L!} \pi^{\odot L} \langle \hat\fp' \hat \fp' e^{\ff'}
\rangle_{K,L,G} \\ 
&=& -\frac 1 {L!} \pi^{\odot L} (\hat\fp_{1,1,0} \fp_{K,L,G} +
\fp_{K,L,G} \hat \fp_{1,1,0}) \frac 1 {L!} \ff_{1,1,0}^{\odot L}\\
&=& 0
\end{eqnarray*}
since $\pi \fp_{1,1,0} = \fp_{1,1,0} \ff_{1,1,0} = 0$. This proves
that the $\fq$'s satisfy relation~\eqref{eq:BL4} for $(K,L,G)$.

Now we apply part (2) of Proposition~\ref{prop:hom-main}
to find that
$$
\hat \fp_{1,1,0} \left(\frac 1 {K!} \fp_{K,L,G}
\ff_{1,1,0}^{\odot K} -\wt R_{K,L,G}(\ff,\fq,\fp)\right)= \delta
R_{K,L,G}(\ff,\fq,\fp)=0. 
$$
Since $\frac 1 {L!}\ff_{1,1,0}^{\odot L}:E_L H \to E_L C$ 
is a chain homotopy inverse to $\pi_L$, we can choose a chain
homotopy between $\frac 1 {L!}\ff_{1,1,0}^{\odot L}
\fq_{K,L,G}$ and $\frac 1 {K!} \fp_{K,L,G} \ff_{1,1,0}^{\odot
K}- \wt R_{K,L,G}(\ff,\fp,\fq)$ and denote it by 
$\ff_{K,L,G}$. Then by construction we have 
$$
-\hat\fp_{1,1,0}\ff_{K,L,G} 
+\frac 1 {L!}\ff_{1,1,0}^{\odot L}\fq_{K,L,G} 
- \frac 1 {K!} \fp_{K,L,G}\ff_{1,1,0}^{\odot K}
+ \wt R_{K,L,G}(\ff,\fp,\fq)=0,
$$
which according to Lemma~\ref{lem:mor_order} proves property (ii) for
the induction. 
\end{proof}

Suppose $(H, \{\fq_{k,\ell,g}\})$ is an $\IBL_\infty$-algebra, where
$H=H_*(C, \p)$ is the homology of some chain complex $(C,\p)$, and so
inparticular $\fq_{1,1,0}=0$. Let $\ff=\ff_{1,1,0}:H \to C$ and $\pi:C
\to H$ be maps as in the above proof. Then we get the structure of an
$\IBL_\infty$-algebra on $C$ by setting $\fp_{1,1,0}:=\p$ and 
$$
\fp_{k,\ell,g} := \frac 1 {\ell!} \ff^{\odot \ell} \fq_{k,\ell,g}\frac
1 {k!} \pi^{\odot k}
$$
for $(1,1,0)\prec (k,\ell,g)$. Since $\pi \ff= \id_H$ and $\pi
\fp_{1,1,0} = \fp_{1,1,0} \ff = 0$, the identities for
$(C,\fp_{k,\ell,g})$ easily follow from those for $H$, and $\ff$ and
$\pi$ are linear homotopy equivalences inverse to each other.

Now suppose $f:(C, \p^C) \to (D,\p^D)$ is a chain homotopy
equivalence and suppose $C$ has the structure of an
$\IBL_\infty$-algebra $(C, \{\fp^C_{k,\ell,g}\})$ with
$\fp^C_{1,1,0}=\p^C$. By Theorem~\ref{thm:can}, this structure can be
projected to an $\IBL_\infty$-structure $(H, \{\fq_{k,\ell,g}\})$ on
the homology $H=H(C, \p^C)\stackrel{\ff}{\cong} H(D, \p^D)$ and then
lifted according to the above discussion. So we have
\begin{corollary}
Let $(C,\{\fp_{k,\ell,g}\})$ be an $\IBL_\infty$-algebra, $(D,\p^D)$ a
chain complex, and $f:(C, \p^C) \to (D,\p^D)$ a chain homotopy
equivalence. Then there exists an $\IBL_\infty$-structure
$\{\fq_{k,\ell,g}\}$ on $D$ with $\fq_{1,1,0}=\p^D$ and a chain
homotopy equivalence of $\IBL_\infty$-algebras
$\ff_{k,\ell,g}:(C,\{\fp_{k,\ell,g}\})\to (D,\{\fq_{k,\ell,g}\})$ with
$\ff_{1,1,0}=f$. $\hfill \qed$
\end{corollary}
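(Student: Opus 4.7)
The plan is to combine Theorem~\ref{thm:can} with the lifting recipe spelled out in the paragraph immediately preceding the corollary. First, I apply Theorem~\ref{thm:can} to $(C,\{\fp_{k,\ell,g}\})$ to obtain an $\IBL_\infty$-structure $\{\mathfrak{r}_{k,\ell,g}\}$ on $H := H_*(C,\fp_{1,1,0})$ together with a homotopy equivalence $\fj:(H,\mathfrak{r})\to(C,\fp)$ whose linear part $j = \fj_{1,1,0}:H \to C$ is a cycle-choosing embedding. Since $f$ is a chain homotopy equivalence, $f_*$ identifies $H$ with $H_*(D,\p^D)$, and the composition $j':= f\circ j:H\to D$ is itself a cycle-choosing embedding.

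Next, I apply the lifting formula from the paragraph preceding the corollary with $j'$ in the role of the cycle-choosing embedding. Choose a splitting $D = j'(H) \oplus \im \p^D \oplus A'$ with corresponding projection $\pi':D\to H$ and homotopy $h':\p^D h'+h'\p^D = \id_D - j'\pi'$. Set
\[
\fq_{k,\ell,g}\ :=\ \tfrac{1}{\ell!}(j')^{\odot\ell}\,\mathfrak{r}_{k,\ell,g}\,\tfrac{1}{k!}(\pi')^{\odot k}\qquad\text{for }(k,\ell,g)\succ(1,1,0),
\]
together with $\fq_{1,1,0}:=\p^D$. The discussion preceding the corollary shows that this is an $\IBL_\infty$-structure on $D$ and that $j'$ extends tautologically to a linear $\IBL_\infty$-morphism $\fk:(H,\mathfrak{r})\to(D,\fq)$ which is a linear homotopy equivalence with inverse $\pi'$.

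To obtain the desired $\IBL_\infty$-morphism with linear part exactly $f$, I first produce an auxiliary one: by Theorem~\ref{Whi}, $\fj$ admits a homotopy inverse $\fi:(C,\fp)\to(H,\mathfrak{r})$, and $\fF:=\fk\diamond\fi$ is a homotopy equivalence by Corollary~\ref{Cor:comp} with $\fF_{1,1,0}= f\circ j\circ\fi_{1,1,0}$. Since $j\circ\fi_{1,1,0}\sim\id_C$ via some chain homotopy $h_C$, $\fF_{1,1,0}$ is chain homotopic to $f$ through $f\circ h_C$, but need not equal $f$. To rectify this, I rerun the inductive scheme of Proposition~\ref{prop:inverse2} with initial datum $\ff_{1,1,0}:=f$: at each signature $(K,L,G)\succ(1,1,0)$ the obstruction $R_{K,L,G}(\ff,\fp,\fq)$ is a $\delta$-cycle by Proposition~\ref{prop:hom-main}(2), and its class in $H_*(\Hom(E_KC,E_LD),\delta)$ agrees with that of the corresponding obstruction for $\fF$, modulo the exact contribution coming from $f\circ h_C$. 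Since the class for $\fF$ vanishes (because $\fF$ is a bona fide $\IBL_\infty$-morphism), one may choose $\ff_{K,L,G}$ killing it. The resulting $\ff$ induces $f_*$ on homology and is therefore a homotopy equivalence by Theorem~\ref{Whi}.

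The main obstacle is the last step: verifying that insisting on $\ff_{1,1,0}=f$ (rather than $f$ up to chain homotopy) still admits a coherent inductive extension. This is precisely parallel to the bookkeeping in the proof of Proposition~\ref{prop:inverse2}, with $h_C$ playing the role of the homotopy $\fh_{1,1,0}$ appearing there; the essential input is part (3) of Proposition~\ref{prop:hom-main}, which relates the obstructions of $\fF$ and $\ff$ through a $\delta$-exact term built from $h_C$. The remaining ingredients (chain-homotopy inverses to $\fj$ and $\fk$, and the vanishing of obstructions in the homology of $\Hom$-complexes) are already available from the earlier sections.
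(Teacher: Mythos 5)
Your route is the paper's route: push the structure onto $H=H_*(C,\fp_{1,1,0})$ via Theorem~\ref{thm:can}, lift it to $D$ by the explicit formula $\fq_{k,\ell,g}=\tfrac{1}{\ell!}(j')^{\odot\ell}\,\mathfrak{r}_{k,\ell,g}\,\tfrac{1}{k!}(\pi')^{\odot k}$ with a cycle-choosing embedding (your $j'=f\circ j$ is indeed one, since $f_*$ is an isomorphism and $f(j(H))$ complements $\im\p^D$ in $\ker\p^D$), and compose the two homotopy equivalences. The only place you go beyond the paper's two-line argument is the normalization $\ff_{1,1,0}=f$: the composite $\fk\diamond\fi$ has linear part $f\circ j\circ \fi_{1,1,0}$, which is merely chain homotopic to $f$, and the paper passes over this point. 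Your instinct that an extra correction step is needed is sound.

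That correction step is, however, the weakest link as you have written it. The assertion that $[R_{K,L,G}(\ff,\fp,\fq)]$ agrees with $[R_{K,L,G}(\fF,\fp,\fq)]$ ``modulo the exact contribution coming from $f\circ h_C$'' is not yet an argument: $R_{K,L,G}$ depends on \emph{all} components $\ff_{k',\ell',g'}$ with $(k',\ell',g')\prec(K,L,G)$, not only on $\ff_{1,1,0}$, so changing the linear part propagates through every lower-order term and there is no direct comparison of the two obstruction cocycles. The way to make this precise is exactly the bookkeeping of Proposition~\ref{prop:inverse2}: by induction over signatures construct simultaneously $\ff_{K,L,G}$ and a morphism $\fH:C\to\fD$ into a path object for $D$ with $\eps_0\diamond\fH=\fF$ and $\eps_1\diamond\fH=\ff$ through order $(K,L,G)$, seeding the induction with $\fH_{1,1,0}=(\fF_{1,1,0},f,\pm f\circ h_C)$. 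Then Lemma~\ref{lem:inverse1} gives $[R_{K,L,G}(\ff)]=[R_{K,L,G}(\eps_1\diamond\fH)]=[R_{K,L,G}(\eps_0\diamond\fH)]=[R_{K,L,G}(\fF)]=0$, so the extension with prescribed linear part exists, and Theorem~\ref{Whi} makes it a homotopy equivalence. With that substitution your proof is complete and otherwise coincides with the paper's.
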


%%%%%%%%%%%%%%%%%%%%%%%%%%%%%%%%%%%%%%%%%%%%%%%%%%%%%%%%%%%%%%%%%%%%%%%
\section{Relation to differential Weyl algebras}\label{sec:Weyl}
%%%%%%%%%%%%%%%%%%%%%%%%%%%%%%%%%%%%%%%%%%%%%%%%%%%%%%%%%%%%%%%%%%%%%%%

In this section we will explain the relation between the
$\IBL_\infty$-formalism and the formalism of differential Weyl algebras
used to describe symplectic field theory (SFT) for contact manifolds
in~\cite{EGH00}.  

{\bf Objects.} 
Fix a ground ring $R$ containing $\Q$, and fix some index set
$\PP$ (which corresponds to the set of periodic orbits in SFT). 
Consider the Weyl algebra $\W$ of power series in
variables $\{p_{\gamma}\}_{\gamma \in \PP}$ and $\hbar$ with
coefficients polynomial over $R$ in variables
$\{q_{\gamma}\}_{\gamma\in \PP}$. Each variable comes with an
integer grading, and we assume that 
$$
|p_\gamma| + |q_\gamma|=|\hbar|=2d
$$
for some integer $d$ and all $\gamma\in \PP$. (In SFT, $d=n-3$ for a
contact manifold of dimension $2n-1$.) $\W$ comes equipped
with an associative product $\star$ in which all variables commute
according to their grading except for $p_\gamma$ and $q_\gamma$
corresponding to the same index $\gamma$, for which we have
$$
p_\gamma \star q_\gamma - (-1)^{|p_\gamma||q_\gamma|}q_\gamma \star
p_\gamma = \kappa_\gamma \hbar
$$
for some integers $\kappa_\gamma \geq 1$ (which correspond to
multiplicities or periodic orbits in SFT).

A homogeneous element $\H \in \frac 1 {\hbar} \W$ of degree $-1$
satisfying the {\em master equation}
\begin{equation}\label{eq:weyl1}
\H \star \H = 0
\end{equation}
is called a {\em Hamiltonian}, and the pair $(\W,\H)$ is called a {\em
   differential Weyl algebra of degree $d$}. Indeed, the commutator
with $\H$ is then a derivation of $(\W,\star)$ of square 0.

We will impose two further restrictions on our Hamiltonians $\H$,
namely
\begin{equation}\label{eq:H-exact}
\H|_{p=0}=0 \quad \text{\rm and} \quad \H|_{q=0}=0.
\end{equation}

\begin{rem}
In SFT, the first condition in~\eqref{eq:H-exact} is always
satisfied, and the second one can be arranged if the theory admits an
augmentation, which for example is always the case if the
corresponding contact manifold admits a symplectic filling.
\end{rem}

Note that, under our restrictions, $\H$ can be expanded as
\begin{equation}\label{formula:H}
   \H = \sum_{k,\ell \geq 1, g \geq 0} H_{k,\ell,g} \hbar^{g-1},
\end{equation}
where $H_{k,\ell,g}$ is the part of the coefficient of $\hbar^{g-1}$
which has degree $k$ in the $p$'s and degree $\ell$ in the $q$'s.

Consider now the free $R$-module $C$ generated by the elements
$q_\gamma$ for $\gamma\in\PP$, and graded by the
degrees $\deg(q_\gamma):=|q_\gamma|+1$. Then $EC=\bigoplus_{k\geq 1}E_kC$,
defined as in Section~\ref{sec:def}, is the non-unital commutative
algebra of polynomials in the variables $\{q_\gamma\}_{\gamma \in
  \PP}$ without constant terms. 
%We assume that the $q_\gamma$ have the same degrees as the
%$q'_\gamma$. Let $\hbar$ and $\tau$ be two new variables of degrees
%$2d$ and $0$, respectively.  
We can represent $\W$ as differential operators acting on the left on
$EC\{\hbar\}$ by the replacements
$$
   p_\gamma\longrightarrow\hbar\kappa_\gamma
\overrightarrow{\frac \p {\p q_{\gamma}}}.
$$
%each $q'_\gamma$ by $q_\gamma \tau$ and each $\hbar'$ by $\hbar
%\tau^2$. 
Then the Hamiltonian $\H$ determines operations
\begin{equation}\label{eq:H-p}
   \fp_{k,\ell,g} := \frac 1 {\hbar^k}
   \overrightarrow{H_{k,\ell,g}}: E_kC \to E_\ell C.
%\begin{aligned}
%   \hat \fp_{k,\ell,g} &:= \frac 1 {\hbar^k}
%   \overrightarrow{H_{k,\ell,g}}:EC\to EC, \cr
%   \fp_{k,\ell,g} &:= \hat\fp_{k,\ell,g}|_{E_kC}: E_kC \to E_\ell C. 
%\end{aligned}
\end{equation}
The fact that the coefficients of $\H$ are polynomial in the
$q_\gamma$'s translates into
\begin{equation}\label{eq:p-finite}
\begin{aligned}
   &\text{Given $k\geq 1$, $g\geq 0$ and $a\in E_kC$, the term
   $\fp_{k,\ell,g}(a)$} \cr 
   &\text{is nonzero for only finitely many $\ell\geq 1$.}
\end{aligned}
\end{equation}
%
%As $\A$ is a free commutative algebra, we have 
%$$
%\A \cong EC
%$$
%for some vector space $C$ which has a basis in 1-1 correspondence with
%the $q_\gamma$, but with degrees shifted by $1$.
Conversely, $\H$ can be recovered from the operations $\fp_{k,\ell,g}$
by 
\begin{equation}\label{eq:p-H}
   H_{k,\ell,g} = \sum_{\gamma_1,\dots,\gamma_k\in\PP}\frac{1}
   {\kappa_{\gamma_1}\cdots \kappa_{\gamma_k}}
    \fp_{k,\ell,g}(q_{\gamma_1}\cdots q_{\gamma_k})p_{\gamma_1}\cdots
    p_{\gamma_k}. 
\end{equation}

\begin{prop}
Equations~\eqref{eq:H-p} and~\eqref{eq:p-H} define a one-to-one
correspondence between differential Weyl algebras
satisfying~\eqref{eq:H-exact} and $\IBL_\infty$-algebras
satisfying~\eqref{eq:p-finite} (both of degree $d$).    
%With the conventions above, $(C, \{\fp_{k,\ell,g}\}_{k,\ell \geq 1, g
%\geq 0})$ is an $\IBL_\infty$-algebra of degree $d$.
\end{prop}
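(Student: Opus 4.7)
The plan is to verify that formulas \eqref{eq:H-p} and \eqref{eq:p-H} set up mutually inverse bijections between the two sets of data, and that under this bijection the master equation $\H\star\H=0$ translates into the defining relation $\hat\fp\circ\hat\fp=0$ of an $\IBL_\infty$-algebra. The bridge is the representation $\rho$ of $\W$ as differential operators on $EC\{\hbar\}$ induced by $p_\gamma\mapsto\hbar\kappa_\gamma\overrightarrow{\partial/\partial q_\gamma}$. Since this is a faithful algebra homomorphism (it realizes the standard Fock representation of the Weyl algebra), it converts the $\star$-product into ordinary composition of operators, so the correspondence becomes a purely operator-theoretic identity.

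For the set-theoretic bijection, I would first observe that any $\H\in\tfrac{1}{\hbar}\W$ expands uniquely as in \eqref{formula:H}, with $H_{k,\ell,g}$ homogeneous of bidegree $(k,\ell)$ in $(p,q)$ over $R[\hbar]$. The condition \eqref{eq:H-exact} is equivalent to restricting to $k,\ell\geq 1$, matching the index range of an $\IBL_\infty$-structure, while polynomiality of the coefficients in $q$ corresponds exactly to the finiteness hypothesis \eqref{eq:p-finite}. The degree condition $|\H|=-1$, together with $|\hbar|=2d$ and $|p_\gamma|+|q_\gamma|=2d$, forces $|H_{k,\ell,g}|=-1-2d(g-1)$, which after the substitution $p\to\hbar\kappa\partial/\partial q$ yields an operator of degree $-2d(k+g-1)-1$, matching the prescribed degree of $\fp_{k,\ell,g}$. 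That \eqref{eq:H-p} and \eqref{eq:p-H} are mutual inverses on the level of data is then a direct computation, substituting one formula into the other and verifying that the result applied to a basis element $q_{\gamma_1}\cdots q_{\gamma_k}\in E_kC$ recovers the input once combinatorial factors from iterated $q$-derivatives cancel the $1/\kappa_{\gamma_i}$'s and the symmetrization conventions of $E_kC$ from Remark \ref{rem:tensor}.

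The key step is to identify $\rho(\H)$ with $\hat\fp|_{\tau=1}$. Inspection of \eqref{eq:hat} shows that $\hat\fp_{k,\ell,g}$ is, by construction, the unique extension of $\fp_{k,\ell,g}$ to a differential operator of order $k$ on $EC$: it picks out $k$ factors from the input in all possible ways with Koszul signs, applies $\fp_{k,\ell,g}$ to them, and leaves the remaining factors as a new product. On the other hand, after the substitution $p_\gamma\to\hbar\kappa_\gamma\partial/\partial q_\gamma$, the operator $\hbar^{-k}\overrightarrow{H_{k,\ell,g}}$ acts on $EC\{\hbar\}$ as a differential operator of order exactly $k$ whose restriction to $E_kC$ recovers $\fp_{k,\ell,g}$ by \eqref{eq:p-H}; since any two differential operators of order $\leq k$ on $EC$ that agree on $E_kC$ agree everywhere, this forces $\hbar^{-k}\overrightarrow{H_{k,\ell,g}}=\hat\fp_{k,\ell,g}$ as maps on $EC\{\hbar\}$. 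Summing over $(k,\ell,g)$ gives $\overrightarrow{\H}=\hat\fp|_{\tau=1}$.

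To conclude, $\rho$ is an algebra homomorphism because $[\hbar\kappa_\gamma\partial/\partial q_\gamma,q_\gamma]=\hbar\kappa_\gamma$ precisely implements the Weyl relation, and it is faithful by the standard principal-symbol argument using the filtration by polynomial degree in $p$. Hence $\H\star\H=0$ if and only if $\overrightarrow{\H}\circ\overrightarrow{\H}=\hat\fp|_{\tau=1}\circ\hat\fp|_{\tau=1}=0$, and the latter is equivalent to the full relation $\hat\fp\circ\hat\fp=0$ of Definition \ref{def:IBL} because the $\tau$-exponent $k+\ell+2g-2$ of each summand is uniquely determined by the source $E_kC$, the target $E_\ell C$, and the $\hbar$-exponent $k+g-1$, so no information is lost upon setting $\tau=1$. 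The main technical obstacle is the combinatorial bookkeeping: tracking the $\kappa_\gamma$'s, the factorials arising from repeated $q$-derivatives on symmetric products, and the Koszul signs from permuting graded factors, all of which must conspire to match the conventions in \eqref{eq:hat} and \eqref{eq:p-H}. Once these are consistently handled, each of the steps above is routine.
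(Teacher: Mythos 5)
Your proposal is correct and follows essentially the same route as the paper: identify $\overrightarrow{\H}$ (after the substitution $p_\gamma\mapsto\hbar\kappa_\gamma\overrightarrow{\partial/\partial q_\gamma}$ and setting $\tau=1$) with the operator $\hat\fp$, and use the fact that the faithful representation turns the $\star$-product into composition, so $\H\star\H=0$ is equivalent to $\hat\fp\circ\hat\fp=0$. The paper states this tersely as "easily checked," whereas you spell out the supporting details (uniqueness of the order-$k$ differential-operator extension, degree bookkeeping, and recovery of the $\tau$-grading from the $\hbar$-grading and the source/target), all of which are accurate.
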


\begin{proof}
In the present context, the operator $\hat \fp$ appearing in 
Definition~\ref{def:IBL} can be written as
$$
   \hat \fp = \sum_{k,\ell,g} \overrightarrow{H_{k,\ell,g}}
   \hbar^{g-1}:EC\{\hbar\}\to EC\{\hbar\}. 
$$
(The condition~\eqref{eq:p-finite} allows us to set $\tau=1$ in
   $\hat\fp$. ) 
It is easily checked that $\H \star \H=0$ is
equivalent to $\hat \fp \circ \hat \fp=0$.
\end{proof}

%Conversely $\IBL_{\infty}$ structure induces differential Weyl algebra
%as follows. Let $\{\fp_{k,\ell,g}\}$ be  $\IBL_{\infty}$ structure 
%on $C$ whose basis is $q_{\gamma}$. We define 
%$\fp_{g;\gamma_1\dots\gamma_k}
%^{\mu_1 \dots \mu_{\ell}}$ by
%$$
%\fp_{k,\ell,g}(q_{\gamma_1}\dots q_{\gamma_k})
%=
%\sum_{\mu_1,\dots,\mu_{\ell}} \fp_{g;\gamma_1\dots\gamma_k}
%^{\mu_1 \dots \mu_{\ell}}q_{\mu_1} \dots q_{\mu_{\ell}}
%$$
%and put 
%$$
%H_{k,\ell,g} = \sum_{\gamma_1\dots\gamma_k}\sum_{\mu_1,\dots,\mu_{\ell}}
% \fp_{g;\gamma_1\dots\gamma_k}
%^{\mu_1 \dots \mu_{\ell}}
%q_{\mu_1} \dots q_{\mu_{\ell}}p_{\gamma_1}\dots p_{\gamma_k}.
%$$
%It defines $\H$ by (\ref{formula:H}). 
%If $\hat{\frak p}\circ \hat{\frak p} = 0$ then $\H \star \H = 0$.
%\marginpar{\tiny I add this paragraph. Kenji Dec. 25.}

{\bf Morphisms.} Next suppose $(\W^+,\H^+)$ and $(\W^-,\H^-)$ are
differential Weyl algebras of the same degree $d$ with indexing sets
$\PP^+$ and $\PP^-$. Let $\D$ denote the graded commutative associative
algebra of power series in the $p^+$ and $\hbar$ with coefficients
polynomial in the $q^-$. By definition, a {\em morphism between the
differential Weyl algebras} is an element $\F \in  \frac 1 {\hbar} \D$ 
satisfying 
\begin{equation}\label{eq:weyl2}
   e^{-\F} (\overrightarrow{\H^-}e^\F - e^\F \overleftarrow{\H^+}) = 0.
\end{equation}
Here $\H^+$ acts on $e^\F$ from the right by replacing each
$q^+_{\gamma}$ by $\hbar\kappa_\gamma \overleftarrow{\frac \p {\p
p^+_{\gamma}}}$, and the expression is to be viewed as an equality of
elements of $\frac 1 {\hbar} \D$. Again, we impose the additional
condition that 
\begin{equation}\label{eq:F-exact}
   \F|_{p^+=0}=0 \quad \text{\rm and} \quad \F|_{q^-=0}=0.
\end{equation}

\begin{rem}
In SFT, the first condition in~\eqref{eq:F-exact} is satisfied for potentials
coming from \emph{exact} cobordisms, and the second one can be
arranged in the augmented case. Moreover, the potential of a
general (augmented) symplectic cobordism can also be viewed as a morphism in the
above sense by splitting off the part $A=\F|_{p^+=0}$, which is treated
as a Maurer-Cartan element in the $\IBL_\infty$-algebra associated
to the negative end; cf.~\cite{CL2}. The remaining part $\F
-A$ then gives a morphism from $(\W^+,\H^+)$ to the twisted version
$(\W^-, \H_A^-)$.
\end{rem}

As with $\H$ above, we expand $\F$ as
\begin{equation}\label{formula:F}
    \F = \sum_{k,\ell,g} F_{k,\ell,g} \hbar^{g-1},
\end{equation}
and define operators $\overrightarrow{F_{k,\ell,g}}:E_kC^+ \to
E_\ell C^-$ by substituting $p_\gamma^+$ by $\hbar\kappa_\gamma
\overrightarrow{\frac \p {\p q_{\gamma}^+}}$. In this way, we get maps 
\begin{equation}\label{eq:F-f}
   \ff_{k,\ell,g}:=\frac 1
   {\hbar^k}\overrightarrow{F_{k,\ell,g}}:E_kC^+ \to E_\ell C^-. 
\end{equation}
satisfying the condition
\begin{equation}\label{eq:f-finite}
\begin{aligned}
   &\text{Given $k\geq 1$, $g\geq 0$ and $a\in E_kC^+$, the term
   $\ff_{k,\ell,g}(a)$} \cr 
   &\text{is nonzero for only finitely many $\ell\geq 1$.}
\end{aligned}
\end{equation}
%
%As $\A$ is a free commutative algebra, we have 
%$$
%\A \cong EC
%$$
%for some vector space $C$ which has a basis in 1-1 correspondence with
%the $q_\gamma$, but with degrees shifted by $1$.
Again, $\F$ can be recovered from the operations $\ff_{k,\ell,g}$
by 
\begin{equation}\label{eq:f-F}
   F_{k,\ell,g} = \sum_{\gamma_1,\dots,\gamma_k\in\PP^+}\frac{1}
   {\kappa_{\gamma_1}\cdots \kappa_{\gamma_k}}
    \ff_{k,\ell,g}(q_{\gamma_1}^+\cdots q_{\gamma_k}^+)p_{\gamma_1}^+\cdots
    p_{\gamma_k}^+. 
\end{equation}

\begin{prop}
Equations~\eqref{eq:F-f} and~\eqref{eq:f-F} define a one-to-one
correspondence between morphisms of differential Weyl algebras
satisfying~\eqref{eq:F-exact} and morphisms of $\IBL_\infty$-algebras
satisfying~\eqref{eq:f-finite}.    
%With the above conventions, $\{\ff_{k,\ell,g}\}$ is a morphism between
%the $\IBL_\infty$-algebras $(C^+,\{\fp^+_{k,\ell,g}\})$ and
%$(C^-,\{\fp^-_{k,\ell,g}\})$ associated to the differential Weyl
%algebras $(\W^\pm,\H^\pm)$.
\end{prop}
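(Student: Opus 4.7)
The plan is to parallel the argument of the previous proposition for objects, but in the setting of bimodules over the two Weyl algebras. The key point is that both $\mathcal{W}^\pm$ act faithfully on $EC^\pm\{\hbar\}$ by sending $p_\gamma \mapsto \hbar\kappa_\gamma\,\overrightarrow{\p/\p q_\gamma}$, and under this representation $\overrightarrow{\H^\pm}$ becomes the operator $\hat\fp^\pm$ established in the previous proposition (with $\tau=1$, which is allowed by \eqref{eq:p-finite}).

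First I would verify that \eqref{eq:F-f} and \eqref{eq:f-F} are mutually inverse at the level of data. Writing $F_{k,\ell,g}=\sum c_{\vec\gamma,\vec\delta}\,p^+_{\gamma_1}\cdots p^+_{\gamma_k}\,q^-_{\delta_1}\cdots q^-_{\delta_\ell}$ with coefficients graded-symmetric in both indices, a direct computation shows that $\hbar^{-k}\overrightarrow{F_{k,\ell,g}}$ applied to the $q^+$-monomial $q^+_{\beta_1}\cdots q^+_{\beta_k}$ recovers exactly those coefficients up to the factor $\kappa_{\beta_1}\cdots\kappa_{\beta_k}$ appearing in \eqref{eq:f-F}. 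The polynomial dependence of $\F$ on $q^-$ translates precisely into the finiteness condition \eqref{eq:f-finite}, and the normalisation \eqref{eq:F-exact} matches the range $k,\ell\geq 1$ in the definition of an $\IBL_\infty$-morphism.

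Next I would extend the representation to interpret $\D$ as a space of operators $EC^+\{\hbar\}\to EC^-\{\hbar\}$ by sending $p^+_\gamma \mapsto \hbar\kappa_\gamma\,\overrightarrow{\p/\p q^+_\gamma}$ and leaving $q^-_\gamma$ as multiplication. By construction, $\overrightarrow\F$ is then the operator $\sum_{k,\ell,g}\ff_{k,\ell,g}\,\hbar^{k+g-1}$ (with $\tau=1$). The central combinatorial claim is the identity $\overrightarrow{e^\F}=e^\ff$ as operators. This follows by expanding $e^\F=\sum_{r\geq 0}\F^r/r!$: the $r!$ in the denominator matches the one in \eqref{expff}, the graded-commutative product of $q^-$-monomials in $EC^-\{\hbar\}$ realises the symmetric product $\odot$ of \eqref{eq:wedge}, and the Leibniz rule for the $p^+$-differentials distributes the action of the $r$ factors across $q^+$-inputs, reproducing exactly the sum over $S_k$ in the definition of $f_1\odot\cdots\odot f_r$.

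With this identification in place, the Weyl-algebra equation \eqref{eq:weyl2} becomes the operator equation $\hat\fp^-\circ \overrightarrow{e^\F}-\overrightarrow{e^\F}\circ \hat\fp^+=0$, where the right action $\overleftarrow{\H^+}$ on $e^\F$ is dual, under the pairing between $q^+$-monomials and $p^+$-differentials, to the left representation of $\overrightarrow{\H^+}$ on the input $EC^+\{\hbar\}$. Under $\overrightarrow{e^\F}=e^\ff$ this is exactly \eqref{eq:mor2} (with $\tau=1$). The main obstacle will be the sign bookkeeping in this last step: one must check that when the $p^+$-differentials in $\overleftarrow{\H^+}$ pass across the $q^-$-factors in monomials of $e^\F$, the resulting Koszul signs cancel against those built into $\odot$ and the definition of $\hat\fp^+$. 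The consistency check uses $|p_\gamma|+|q_\gamma|=2d$, so that the grading of $\overleftarrow{\p/\p p^+_\gamma}$ matches that of multiplication by $q^+_\gamma$ modulo $2$, ensuring all the signs line up as required.
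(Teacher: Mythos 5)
Your proposal is correct and follows essentially the same route as the paper: represent the Weyl algebras as differential operators on $EC^\pm\{\hbar\}$, identify the exponential $e^\ff$ with the operator induced by $e^\F$, and translate \eqref{eq:weyl2} into \eqref{eq:mor2}. The one imprecision is your claim that $\overrightarrow{e^\F}=e^\ff$ as operators: this holds only after restricting the output to $q^+=0$, i.e.\ $e^\ff(Q)=\bigl(\overrightarrow{e^\F}\,Q\bigr)\big|_{q^+=0}$ for a monomial $Q$ in the $q^+$, since the terms of $\overrightarrow{e^\F}$ that do not consume all $q^+$-inputs leave residual $q^+$-factors which do not lie in $EC^-\{\hbar\}$.
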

\begin{proof}
Again one checks easily that equation~\eqref{eq:weyl2} translates into
equation~\eqref{eq:mor2} relating the exponential of 
$$
   \ff = \sum_{k,\ell,g} \ff_{k,\ell,g} \hbar^{k+g-1}.
$$
and the operators $\hat\fp^\pm$ (where again we have set
$\tau=1$). For the computation, it is useful to 
observe that for any monomial $Q$ in the
$q^+$  we have
$$
   e^\ff(Q) = \bigl(\overrightarrow{e^\F} Q\bigr)\Bigl|_{q^+=0}.
$$
Moreover, $\overrightarrow{e^{\F}\overleftarrow{\H^+}}=
\overrightarrow{e^{\F}}\circ \overrightarrow{\H^+}$, and similarly 
$\overrightarrow{\overrightarrow{\H^-}e^\F}=
\overrightarrow{\H^-}\circ \overrightarrow{e^\F}$, which follows
easily from the definitions. 
\end{proof}

%Conversely let $\{\ff_{k,\ell,g}\}$ be  $\IBL_{\infty}$-morphism 
%from $C^+$ to $C^-$. Here $C^+$, $C^-$ are $\IBL_{\infty}$-algebras
%whose basis are $q^{\pm}_{\gamma}$, respectively. We define 
%$\ff_{g;\gamma_1\dots\gamma_k}
%^{\mu_1 \dots \mu_{\ell}}$ by
%$$
%\ff_{k,\ell,g}(q^+_{\gamma_1}\dots q^+_{\gamma_k})
%=
%\sum_{\mu_1,\dots,\mu_{\ell}} \ff_{g;\gamma_1\dots\gamma_k}
%^{\mu_1 \dots \mu_{\ell}}q^-_{\mu_1} \dots q^-_{\mu_{\ell}}
%$$
%and put 
%$$
%F_{k,\ell,g} = \sum_{\gamma_1\dots\gamma_k}\sum_{\mu_1,\dots,\mu_{\ell}}
% \ff_{g;\gamma_1\dots\gamma_k}
%^{\mu_1 \dots \mu_{\ell}}
%q^-_{\mu_1} \dots q^-_{\mu_{\ell}}p^+_{\gamma_1}\dots p^+_{\gamma_k}.
%$$
%We define $\F$ by (\ref{formula:F}), that satisfies
%(\ref{eq:weyl2}).
%\marginpar{\tiny I add this paragraph. Kenji Dec. 25.}

The composition $\F^- \diamond \F^+$ of morphisms $\F^+$ from
$(\W^+,\H^+)$ to $(\W,\H)$ and $\F^-$ from $(\W,\H)$ to $(\W^-,\H^-)$
is the morphism $\F$ from $(\W^+,\H^+)$ to $(\W^-,\H^-)$ defined as
the unique solution of 
$$
e^\F = (e^{\F^-})\star(e^{\F^+})|_{q=p=0}.
$$
Here the star product is with respect to the middle variables $p$ and
$q$, and one checks that indeed $\F \in \frac 1 {\hbar} \D$ as required.
We leave it to the reader to check that this agrees with composition of
$\IBL_\infty$-morphisms.

{\bf Homotopies.} 
For the discussion of homotopies it is convenient to extend the
definitions of Weyl algebras and morphisms between them from ordinary
ground rings $R$ to differential graded ground rings $(\widehat
R,\dd)$. 

In general, a Weyl algebra $\WW$ over a differential graded ring
$(\widehat R,\dd)$ consists of power series in variables
$\{p_\gamma\}_{\gamma \in \PP}$ and $\hbar$ with coefficients
polynomial over $\widehat R$ in variables $\{q_\gamma\}_{\gamma \in
  \PP}$, with the same grading and commutation relations as before. A
{\em Hamiltonian} in this context is now a homogeneous element $\HH
\in \frac 1\hbar \WW$ of degree $-1$ satisfying the generalized master
equation  
\begin{equation}\label{eq:weyl1-graded}
   \dd \HH + \HH \star \HH =0, 
\end{equation}
where $\dd$ is the differential in the ring, as well as our standing
assumption 
$$
   \HH|_{p=0}=0, \quad \HH|_{q=0}=0.
$$
We let $\widehat C$ be the free graded $\widehat R$-module generated
by the elements $q_\gamma$ and $E\widehat C = \bigoplus _{k\ge 1}
E_k\widehat C$, where the tensor products are taken over the
differential graded ring $\widehat R$. Representing elements of $\WW$
as differential operators as before, the generalized master equation
\eqref{eq:weyl1-graded} ensures that the operations  
\begin{align*}
   \fp_{k,\ell,g} := \frac 1{\hbar^k}
   \overrightarrow{\wh{H}_{k,\ell,g}}&: E_k\wh C \to E_\ell \wh C , 
\end{align*}
together with the differential $\dd$ on the coefficients, determine a
differential $\dd+\wh\fp$ on $E\wh C\{\hbar\}$ which squares to zero.  

Given differential graded Weyl algebras $(\WW^+,\HH^+)$ over the
differential graded ring  $\widehat R^+$ and $(\WW^-,\HH^-)$ over the
differential graded ring $\widehat R^-$, we let $\wh\D$ denote the power
series in the $p^+$ with coefficients polynomial over $\widehat R^-$
in the $q^-$. A {\em morphism} between the differential graded Weyl
algebras now consists of a morphism of differential graded rings
$\rho:\widehat R^+ \to \widehat R^-$ and an element $\G \in \frac 1
\hbar \wh\D$ satisfying 
\begin{equation} 
   e^{-\G}\left( \dd e^\G + \overrightarrow{\HH^-}e^\G -
   e^\G\overleftarrow{\rho(\HH^+)}\right) =0, 
\end{equation}
as well as
$$
   \G|_{p^+=0}=0, \quad \G|_{q^-=0}=0.
$$
As above, this induces a morphism $\fg$ from $(E\wh C^+,\wh\fp^+)$ to
$(E\wh C^-,\wh\fp^-)$ satisfying
$$
   (\dd+\wh\fp^-)e^\fg - e^\fg(\dd+\wh\fp^+) = 0. 
$$

We will apply these generalizations as follows. Associated to a given
ground ring (without differential) $R$, we now introduce the {\em
  differential graded} ring $(R[s,ds],\dd)$ where  
$$
   |s|=0 , \quad |ds|=-1 , \quad \text{\rm and } \dd(s)=ds, \quad
   \dd(ds)=0. 
$$ 
Thinking of elements in $R[s,ds]$ as polynomial functions $f(s,ds)$
with values in $R$, we have morphisms 
$$
   R \stackrel{j}{\longrightarrow} (R[s,ds],\dd)
   \stackrel{e_i}{\longrightarrow} R, \quad i=0,1 
$$
defined by 
$$
   j(r)=r, \quad e_i(f(s,ds))=f(i,0).
$$
They satisfy $e_i \circ j = \id_R$ and $j \circ e_i \sim
\id_{R[s,ds]}$, where a chain homotopy $H:R[s,ds]\to R[s,ds]$ with
$\dd H+H\dd=\id-je_i$ is given by the integration map
$g(s)+h(s)ds\mapsto\int_i^sh(s)ds$. 

Now given any differential Weyl algebra $(\W,\H)$ over the ring
(without differential) $R$ generated by $\{p_\gamma\}_{\gamma \in\PP}$ and
$\{q_\gamma\}_{\gamma \in\PP}$, we consider the new differential Weyl
algebra $(\WW,\HH)$ over $R[s,ds]$, where $\WW= R[s,ds]\otimes
\W\equiv \W[s,ds]$, and we view $\HH=\H$ as an element independent of
$s$ and $ds$. The generalized master equation \eqref{eq:weyl1-graded}
for $\HH$ follows directly from the corresponding equation
\eqref{eq:weyl1} for $\H$. 

%The concept of an $\IBL_\infty$-algebra naturally extends to this
%situation of a differential graded ground ring.  
Note that $(\W[s,ds],\HH)$ corresponds to an $\IBL_\infty$-algebra
whose underlying $R[s,ds]$-module is $C[s,ds]=R[s,ds]\otimes C$.  
Since we take tensor products over $R[s,ds]$, we have
$$
   E(C[s,ds])=(EC)[s,ds],
$$
so notations are not too ambiguous. Graded commutativity inserts the
usual signs when $ds$ is moved past some $q_\gamma$ or $p_\gamma$. As
already mentioned before, the differential takes the form
$\dd+\wh\fp:EC[s,ds]\{\hbar\} \to EC[s,ds]\{\hbar\}$, where $\wh\fp$
is induced from $\HH=\H$ as above. 

%Morphisms between Weyl algebras or $\IBL_\infty$-algebras over
%different ground rings $R_1$ and $R_2$ depend on an underlying ring
%morphism $\rho: R_1 \to R_2$.  

Associated to the above ring morphisms $j:R \to (R[s,ds],\dd)$ and
$e_i:(R[s,ds],\dd) \to R$ there are morphisms $\J:(\W,\H) \to
(\W[s,ds],\HH)$ and $\E_i:(\W[s,ds],\HH) \to (\W,\H)$ which act as the
identity on $C$. Explicitly, the associated power series in all three
cases is 
$$
\frac 1\hbar \sum_{\gamma\in \PP} q_\gamma p_\gamma,
$$
and the nontrivial part comes from the action on the coefficients.
Note that they are linear morphisms of Weyl algebras, in the sense
that they equal their $(1,1,0)$-terms and satisfy 
$$
e^{-\J}(\overrightarrow{\HH}e^\J - e^\J \overleftarrow{\H}) = 0, \quad
e^{-\E_i}(\overrightarrow{\H}e^{\E_i} - e^{\E_i} \overleftarrow{\HH}) = 0.
$$
Moreover, $\E_i \diamond \J= \id_{\W}$ for $i=0,1$.

\begin{defn} \label{def:Weylhomotopy}
We define a {\em homotopy between two morphisms} $\F_0$ and $\F_1$
from $(\W^+,\H^+)$ to $(\W^-,\H^-)$ as a morphism $\G$ from
$(\W^+,\H^+)$ to $(\W^-[s,ds],\HH^-)$
which on coefficients corresponds to the inclusion $j:R \to
(R[s,ds],\dd)$, and such that 
$$
  \E_i^- \diamond \G = \F_i, \quad i=0,1.
$$ 
\end{defn}

According to our definitions, such a $\G$ is a power series
in the $p^+$ with coefficients polynomial in the $q^-$, $s$ and
$ds$. Therefore it can be written in the form
\begin{equation}\label{defbackslashG}
   \G = \F(q^-,s,p^+) + ds\, \K(q^-,s,p^+).
\end{equation}
It satisfies the equation
\begin{align*}
   0 =\, & e^{-\G}\left(\dd e^\G + \overrightarrow{\HH^-} e^{\G} -
   e^{\G}\overleftarrow{\H^+}\right) \\
   =\, & e^{-\F(s)}(1-ds\,\K(s))\left(ds
   \frac {\p}{\p s} e^{\F(s)} +
   \overrightarrow{\HH^-}(e^{\F(s)}(1+ds\, \K(s))) \right.\\ 
   &\left.\phantom{e^{-\F(s)}(1-ds\K(s))} - (e^{\F(s)}(1+ds\,
   \K(s)))\overleftarrow{\H^+})\right), 
\end{align*}
which splits into the two equations
\begin{eqnarray}
0 &=& e^{-\F(s)}\left(\overrightarrow{\HH^-} e^{\F(s)} -
e^{\F(s)}\overleftarrow{\H^+}\right) \quad \text{\rm and}\notag\\
0 &=& e^{-\F(s)}\left(\frac {\p}{\p s}e^{\F(s)}  -
\overrightarrow{\HH^-}(\K(s)e^{\F(s)}) -
(e^{\F(s)}\K(s))\overleftarrow{\H^+}\right) \notag\\ 
& & - e^{-\F(s)}\K(s)\left(\overrightarrow{\HH^-} e^{\F(s)} -
e^{\F(s)}\overleftarrow{\H^+}\right) \notag\\
&= &e^{-\F(s)}\left(\frac {\p}{\p s}e^{\F(s)}  - \overrightarrow{[\HH^-,\K(s)]}
e^{\F(s)} - e^{\F(s)}\overleftarrow{[\K(s),\H^+]}\right) .\notag
\end{eqnarray}

Note that the second equation and the fact that the
$\H^\pm$ are Hamiltonians imply that 
$$
\frac {\p}{\p s}\left(e^{-\F(s)} \left(\overrightarrow{\HH^-} e^{\F(s)} -
e^{\F(s)}\overleftarrow{\H^+}\right)\right) = 0.
$$
So together with the inital condition that $\F(0)$ is a morphism it
implies the first equation. 

%Based on the above discussion we define:
Summarizing the above discussion (and recalling $\HH^-=\H^-$), we see:

\begin{lem}\label{homotWeyl}
Two Weyl algebra morphisms $\F_0,\F_1:(\W^+,\H^+) \to (\W^-,\H^-)$ are
homotopic in the sense of Definition~\ref{def:Weylhomotopy} if 
and only if there exists 
$$
   \G = \F(q^-,s,p^+) + ds\, \K(q^-,s,p^+)
$$
such that
$$
\F(q^-,0,p^+) = \F_0(q^-,p^+), \qquad 
\F(q^-,1,p^+) = \F_1(q^-,p^+)
$$
and 
\begin{equation}\label{eq:BVhomot}
0 =
\frac {\p}{\p s}e^{\F(s)}  - \overrightarrow{[\H^-,\K(s)]}
e^{\F(s)} - e^{\F(s)}\overleftarrow{[\K(s),\H^+]}.
\end{equation}
\end{lem}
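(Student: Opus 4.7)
The plan is to unpack Definition~\ref{def:Weylhomotopy} and verify that it reduces to the two displayed conditions; the core calculation has in fact already been laid out in the paragraphs immediately preceding the lemma, so my task is mainly to organize it cleanly.

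Since the coefficient ring of the target graded Weyl algebra is $R[s,ds]$ and $ds^2=0$, every element $\G \in \tfrac{1}{\hbar}\wh\D$ decomposes uniquely as $\G = \F(q^-,s,p^+) + ds\,\K(q^-,s,p^+)$, with $\F$ and $\K$ polynomial in $s$ and free of $ds$. The endpoint condition $\E_i^- \diamond \G = \F_i$ unpacks directly to $\F(q^-,i,p^+) = \F_i$, since the linear morphism $\E_i^-$ acts as the identity on $C^-$ and on coefficients is evaluation at $(s,ds)=(i,0)$. I would then substitute this ansatz into the master equation $\dd e^\G + \overrightarrow{\HH^-}e^\G - e^\G\overleftarrow{\H^+}=0$ and use the identities $\HH^- = \H^-$, $\dd \G = ds\,\tfrac{\p \F}{\p s}$, and $e^\G = e^{\F(s)}(1+ds\,\K(s))$ (the last of which follows from $(ds\,\K(s))^2=0$) to split the equation into its $ds^0$-component
$$
e^{-\F(s)}\bigl(\overrightarrow{\H^-}e^{\F(s)} - e^{\F(s)}\overleftarrow{\H^+}\bigr) = 0
$$
and its $ds^1$-component, which is precisely equation~\eqref{eq:BVhomot}. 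This is exactly the two-equation decomposition already displayed in the excerpt.

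The remaining point is that the $ds^0$-component need not be imposed independently. Granted the initial condition $\F(0) = \F_0$ (which is itself a morphism by hypothesis, so the $ds^0$-equation holds at $s=0$), differentiating the $ds^0$-expression in $s$ and applying equation~\eqref{eq:BVhomot} together with the master equations $\H^\pm \star \H^\pm=0$ shows
$$
\frac{\p}{\p s}\Bigl(e^{-\F(s)}\bigl(\overrightarrow{\H^-}e^{\F(s)} - e^{\F(s)}\overleftarrow{\H^+}\bigr)\Bigr) = 0,
$$
so the $ds^0$-equation propagates from $s=0$ to all $s\in[0,1]$. Putting these pieces together gives the claimed biconditional. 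I expect no substantive obstacle: the argument is a bookkeeping expansion, and the only mildly delicate point is tracking signs when the odd element $ds$ is moved past $\F(s)$ or the operators $\overrightarrow{\H^-}$ and $\overleftarrow{\H^+}$, which is standard graded-commutative algebra over the Weyl algebra $\WW$.
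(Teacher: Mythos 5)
Your proposal is correct and follows essentially the same route as the paper: the text immediately preceding the lemma carries out exactly this decomposition $\G=\F(s)+ds\,\K(s)$, splits the master equation into its $ds^0$- and $ds^1$-components, and notes that the $ds^1$-equation together with $\H^\pm\star\H^\pm=0$ makes the $ds^0$-expression constant in $s$, so it is implied by the initial condition that $\F(0)$ is a morphism. The lemma is then stated as a summary of that discussion, so your write-up is a faithful organization of the paper's own argument.
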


\begin{rem}\label{SFThomot}
In SFT, one works in the slightly more general context of not
necessarily augmented morphisms, and equation \eqref{eq:BVhomot} is
taken as the definition of homotopy between morphisms of Weyl
algebras, cf. \cite[p.~629]{EGH00}.
\end{rem}

Now we have the following:

\begin{prop}\label{prop:IBLWeylehomot}
Consider two differential Weyl algebras $(\W^+,\H^+)$ and $(\W^-,\H^-)$, 
and denote by $(C^+,\{\fp_{k,\ell,g}^{+}\})$ and
$(C^-,\{\fp_{k,\ell,g}^{-}\})$ 
the corresponding $\IBL_\infty$-algebras, respectively.
Let $\F_0,\F_1:(\W^+,\H^+) \to (\W^-,\H^-)$ be 
Weyl algebra morphisms and denote by
$\frak f^{(0)} = \{\frak f^{(0)}_{k,\ell,g}\}$ and $\frak f^{(1)} =
\{\frak f^{(1)}_{k,\ell,g}\}$ the corresponding morphisms
$(C^+,\{\fp_{k,\ell,g}^{+}\}) \to (C^-,\{\fp_{k,\ell,g}^{-}\})$ of
$\IBL_\infty$-algebras, respectively. 
\par
Then $\F_0$ is homotopic to $\F_1$ in the sense of Definition \ref{def:Weylhomotopy} if and only if 
$\frak f^{(0)}$ is homotopic to $\frak f^{(1)}$  in the sense of Definition \ref{def:homotopy}.
\end{prop}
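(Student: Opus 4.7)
\emph{Plan of proof.} The strategy is to show that the Weyl algebra $(\W^-[s,ds],\HH^-)$, viewed through the dictionary already established between differential Weyl algebras and $\IBL_\infty$-algebras (extended to differential graded coefficient rings as in the text), produces a path object $\fC^-$ for $C^-$ in the sense of Definition~\ref{def:path}. Once this is established, a Weyl homotopy, which by definition is a morphism $\G:(\W^+,\H^+)\to (\W^-[s,ds],\HH^-)$ covering $j:R\to R[s,ds]$ with $\E_i^-\diamond \G=\F_i$, corresponds under the morphism dictionary to an $\IBL_\infty$-morphism $\fG:C^+\to \fC^-$ with $\eps_i\diamond \fG = \ff^{(i)}$, which is precisely an $\IBL_\infty$-homotopy between $\ff^{(0)}$ and $\ff^{(1)}$. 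The converse direction uses Proposition~\ref{prop:homotopy-equiv}(a), which guarantees that it suffices to exhibit an $\IBL_\infty$-homotopy for this one particular choice of path object.

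\emph{Identifying the path object.} As an $R$-module, $\fC^- := C^-[s,ds]$ is free and graded. The induced $\IBL_\infty$-structure has $\fq_{1,1,0}=\dd + \fp^-_{1,1,0}$ and $\fq_{k,\ell,g} = \fp^-_{k,\ell,g}$ for the remaining signatures, extended $R[s,ds]$-linearly. The Weyl algebra morphisms $\J$ and $\E_i$ are linear (their potentials are just $\tfrac{1}{\hbar}\sum q_\gamma p_\gamma$), so their $\IBL_\infty$-counterparts $\iota:=\J$ and $\eps_i := \E_i$ are linear morphisms in the sense of Definition~\ref{def:path}(a). Properties (b) and (d) are immediate: $\eps_i\circ \iota=\id_{C^-}$ follows from $e_i\circ j=\id_R$ via the compatibility of compositions, and a linear right inverse of $\eps_0\oplus\eps_1$ is given by $(a,b)\mapsto (1-s)a+sb$. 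For property (c) one observes that $(\fC^-,\fq_{1,1,0})$ is the tensor product of chain complexes $(R[s,ds],\dd)\otimes_R (C^-,\fp^-_{1,1,0})$; the integration map $g(s)+h(s)ds\mapsto \int_i^s h(t)\,dt$ provides an explicit chain homotopy between $j\circ e_i$ and the identity on $R[s,ds]$, and tensoring with $(C^-,\fp^-_{1,1,0})$ transports this to the desired homotopy for $\iota\circ \eps_i\simeq \id_{\fC^-}$.

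\emph{Matching the data of a homotopy.} Given the path object above, the extended Weyl/$\IBL_\infty$ morphism dictionary says that the solutions of
$$
e^{-\G}\bigl(\dd e^\G + \overrightarrow{\HH^-}e^\G - e^\G\overleftarrow{\H^+}\bigr)=0,\qquad \G|_{p^+=0}=\G|_{q^-=0}=0,
$$
are in one-to-one correspondence with $\IBL_\infty$-morphisms $\fG:C^+\to \fC^-$ (the operator $\dd$ on coefficients contributing precisely the $\dd$-part of $\fq_{1,1,0}$). The equalities $\E_i^-\diamond \G = \F_i$ translate term by term to $\eps_i\diamond \fG = \ff^{(i)}$ by the composition compatibility of the two dictionaries. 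Hence a Weyl homotopy in the sense of Definition~\ref{def:Weylhomotopy} and an $\IBL_\infty$-homotopy to this particular path object in the sense of Definition~\ref{def:homotopy} carry exactly the same data, and the equivalence follows.

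\emph{Main obstacle.} The only nontrivial point is verifying condition (c) of Definition~\ref{def:path} for $\fC^-$; this is essentially a bookkeeping exercise with the tensor-product complex and the explicit integration homotopy, but one must check that all signs and degree shifts match so that $\iota$ and $\eps_i$ are honest chain homotopy equivalences on $\fC^-[1]$. Everything else is a direct transcription through the dictionaries already set up in this section.
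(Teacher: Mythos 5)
Your argument has a genuine gap at its foundation: you assert that $C^-[s,ds]$, with the structure induced from $(\W^-[s,ds],\HH^-)$, ``produces a path object $\fC^-$ for $C^-$ in the sense of Definition~\ref{def:path}.'' It does not. That object is an $\IBL_\infty$-algebra over the \emph{differential graded} ring $(R[s,ds],\dd)$: its symmetric powers are formed over $R[s,ds]$ (so that $E(C^-[s,ds])=(EC^-)[s,ds]$), its ``$\fq_{1,1,0}$'' is $\dd+\fp^-_{1,1,0}$ with $\dd$ acting on coefficients, and $j$, $e_i$ are maps between modules over different rings. Definition~\ref{def:path} requires a path object to be an $\IBL_\infty$-algebra over $R$ itself with $\iota,\eps_0,\eps_1$ being $\IBL_\infty$-morphisms in the sense of \S\ref{sec:def}. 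Because $C^-[s,ds]$ is not a path object in that sense, Proposition~\ref{prop:homotopy-equiv}(a) — which quantifies only over genuine path objects — cannot be invoked to transfer the homotopy to or from an arbitrary path object, and your ``converse direction'' collapses.

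The paper's proof fills exactly this hole. One first fixes an honest path object $(\fC^-,\{\fq_{k,\ell,g}\},\iota,\eps_i)$ for $C^-$ and then constructs comparison morphisms $\fa:\fC^-\to C^-[s,ds]$ (over $j$) and $\fb:C^-[s,ds]\to\fC^-$ (over $e_0$) intertwining $\iota,\eps_i$ with $j,e_i$; this is Lemma~\ref{lem:IBLWeylehomot}, and its proof is an \emph{adaptation} of the obstruction-theoretic argument of Proposition~\ref{prop:homotopy-transfer} to the mixed-ring situation — precisely using the properties (a)--(d) you verified for $C^-[s,ds]$, but as input to a new induction rather than as a licence to quote existing results. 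Given $\fa$ and $\fb$, a Weyl homotopy $\G$ yields the $\IBL_\infty$-homotopy $\fH:=\fb\diamond\fG$, and conversely an $\IBL_\infty$-homotopy $\fH$ yields the Weyl homotopy corresponding to $\fG:=\fa\diamond\fH$. Your translation of the data of $\G$ into a morphism $C^+\to C^-[s,ds]$ and the identification of the equations is fine; what is missing is the bridge between $C^-[s,ds]$ and an actual path object, which is the real content of the proof.
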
 

The proof uses the following lemma.

\begin{lem}\label{lem:IBLWeylehomot}
Let $(C,\{\fp_{k,\ell,g}\})$ be an $\IBL_\infty$-algebra over $R$, and
let $(C[s,ds], \{\fp_{k,\ell,g}\})$ be the corresponding
$\IBL_\infty$-algebra over $(R[s,ds],\dd)$. 
Let $(\fC, \{\fq_{k,\ell,g}\},\iota,\eps_i)$ be a path object for
$(C,\{\fp_{k,\ell,g}\})$. 
Then there exists a morphism
$$
\fa=\{\fa_{k,\ell,g}\}: (\fC,\{\fq_{k,\ell,g}\})  \to (C[s,ds], \{\fp_{k,\ell,g}\}),
$$ 
corresponding to $j:R \to R[s,ds]$ on coefficients,  
which makes the following diagram commute:
$$
\begin{CD}
C@>\iota>> \fC @>\eps_i>> C \\
@V{\text{\rm id}}VV       @V{\fa}VV      @V{\text{\rm id}}VV\\
C@>>{j}>  C [s,ds] @>>{e_i}> C
\end{CD}
$$
Similarly, there is a morphism
$$
\fb=\{\fb_{k,\ell,g}\}: (C[s,ds], \{\fp_{k,\ell,g}\}) \to (\fC,\{\fq_{k,\ell,g}\}) ,
$$
corresponding to $e_0:R[s,ds] \to R$ on coefficients,
which makes the following diagram commute:
$$
\begin{CD}
C@>>{j}>  C [s,ds] @>>{e_i}> C\\
@V{\text{\rm id}}VV       @V{\fb}VV      @V{\text{\rm id}}VV\\
C@>\iota>> \fC @>\eps_i>> C 
\end{CD}
$$
\end{lem}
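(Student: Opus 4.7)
The proof proceeds by induction on the linear order $\prec$ of signatures, following the same pattern as Propositions~\ref{prop:model} and~\ref{prop:homotopy-transfer}, with Proposition~\ref{prop:hom-main} controlling the obstructions at each step. For the base case, using the explicit path object $\fC = C\oplus C\oplus C[1]$ from the proof of Proposition~\ref{prop:model}, I set
$$
\fa_{1,1,0}(x_0,x_1,y) := (1-s)x_0 + sx_1 + ds\cdot y, \qquad \fb_{1,1,0} := \iota\circ e_0.
$$
A direct calculation (with the sign rule forced by $|ds|=-1$) shows that $\fa_{1,1,0}$ is a chain map from $(\fC,\fq_{1,1,0})$ to $(C[s,ds],\dd+\fp_{1,1,0})$ satisfying $\fa_{1,1,0}\circ\iota = j$ and $e_i\circ\fa_{1,1,0}=\eps_i$ for both $i=0,1$, while $\fb_{1,1,0}$ is a chain map with $\fb_{1,1,0}\circ j = \iota$ and $\eps_0\circ\fb_{1,1,0}=e_0$. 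The inductive conditions to be preserved at each higher signature $(k,\ell,g)\neq (1,1,0)$ are then
$$
\fa_{k,\ell,g}\circ\iota^{\odot k}=0, \qquad \tfrac{1}{\ell!}e_i^{\odot\ell}\circ\fa_{k,\ell,g}=0 \ \ (i=0,1),
$$
and analogously for $\fb$ with only $i=0$.

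Suppose inductively that $\fa_{k,\ell,g}$ has been constructed for all $(k,\ell,g)\prec(K,L,G)$ satisfying the morphism equation \eqref{eq:mor4} together with the compatibilities above. The obstruction $R_{K,L,G}(\fa,\fq,\fp) \in \Hom(E_K\fC, E_LC[s,ds])$ is a $\delta$-cocycle by Proposition~\ref{prop:hom-main}(2), and it vanishes on $\iota^{\odot K}$ because all the lower $\fa_{k,\ell,g}$ do. Since $\iota$ is a chain homotopy equivalence with inverse $\eps_0$, Lemma~\ref{lem:homotopy}(2) produces $H:E_K\fC \to E_L C[s,ds]$ with $H\circ\iota^{\odot K}=0$ so that $\fa'_{K,L,G}:=H$ solves the morphism equation at $(K,L,G)$ while maintaining the $\iota$-compatibility. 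The residual errors $\Gamma_i := \tfrac{1}{L!}e_i^{\odot L}\circ \fa'_{K,L,G}$ are then chain maps $E_K\fC\to E_LC$ vanishing on $\iota^{\odot K}$, so two further applications of Lemma~\ref{lem:homotopy}(2) yield corrections $\chi_0,\chi_1:E_K\fC\to E_L C$ with $\chi_i\circ\iota^{\odot K}=0$. The linear right inverse $(a,b)\mapsto (1-s)a+sb$ to $e_0\oplus e_1:C[s,ds]\to C\oplus C$ extends symmetrically to $E_L$ and lifts $(\chi_0,\chi_1)$ to a single $\chi:E_K\fC\to E_L C[s,ds]$ with $\tfrac{1}{L!}e_i^{\odot L}\circ\chi=\chi_i$ and $\chi\circ\iota^{\odot K}=0$; then
$$
\fa_{K,L,G}:=\fa'_{K,L,G} + (\dd+\fp_{1,1,0})\chi - \chi\,\fq_{1,1,0}
$$
satisfies all three required conditions at $(K,L,G)$. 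The construction of $\fb$ is strictly parallel but simpler, since only compatibility with the single evaluation $e_0$ is required, so no simultaneous lift against the pair $(e_0,e_1)$ is necessary.

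The main conceptual point, and the crux of the argument, is the observation that $(C[s,ds],j,e_0,e_1)$ satisfies all four axioms of a path object for $C$ in the sense of Definition~\ref{def:path}, now over the enlarged differential graded ring $R[s,ds]$: $j$ is a chain homotopy equivalence with inverse $e_0$ (an explicit homotopy being integration in the parameter, $(g(s)+h(s)ds)\mapsto \int_0^s h(t)\,dt$), and $e_0\oplus e_1$ admits the linear right inverse noted above, which accounts for the coefficient morphism mentioned in the statement. Once this is in place, the inductive machinery of Section~\ref{sec:hom} carries over mutatis mutandis and delivers both $\fa$ and $\fb$; no genuinely new arguments beyond those of Section~\ref{sec:hom} are required, and the principal bookkeeping challenge is simply to ensure that all three inductive desiderata (the morphism equation together with the two commutativities) can be arranged simultaneously, which is what the successive applications of Lemma~\ref{lem:homotopy}(2) accomplish.
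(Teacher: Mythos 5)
Your overall strategy coincides with the paper's: the paper also observes that $(C[s,ds],j,e_0,e_1)$ satisfies the path-object axioms (a)--(d) over the enlarged ring (with the same integration homotopy and the same right inverse $(c_0,c_1)\mapsto c_0+(c_1-c_0)s$), and then simply states that the inductive proof of Proposition~\ref{prop:homotopy-transfer} adapts; you carry out that adaptation explicitly, which is fine in spirit, and your formula $\fa_{1,1,0}(x_0,x_1,y)=(1-s)x_0+sx_1+ds\,y$ is a correct chain map with the required compatibilities for the model path object $C\oplus C\oplus C[1]$. (Note, though, that the lemma is stated for an \emph{arbitrary} path object $\fC$, so for the base case of $\fa$ in general you should fall back on the abstract Step~1 of Proposition~\ref{prop:homotopy-transfer} rather than this explicit formula.)

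There is, however, a genuine error in your treatment of $\fb$. You assert that "only compatibility with the single evaluation $e_0$ is required," and accordingly set $\fb_{1,1,0}:=\iota\circ e_0$. But the second diagram must commute for \emph{both} $i=0$ and $i=1$: in the proof of Proposition~\ref{prop:IBLWeylehomot} one needs $\eps_i\diamond(\fb\diamond\fG)=\ff^{(i)}$ for $i=0,1$, which forces $\eps_i\diamond\fb=e_i$ for both values of $i$. Your candidate satisfies $\eps_1\circ\fb_{1,1,0}=e_0\neq e_1$, so the $i=1$ square fails. The correct construction is the same two-step correction as in Step~1 of Proposition~\ref{prop:homotopy-transfer}: starting from $F'=\iota\circ e_0$, the error $\Gamma_1=\eps_1\circ F'-e_1=e_0-e_1$ is a chain map vanishing on $j$, so Lemma~\ref{lem:homotopy}(2) produces $E_1$ with $E_1\circ j=0$, and one then lifts $0\oplus E_1$ through the right inverse of $\eps_0\oplus\eps_1:\fC\to C\oplus C$ (axiom (d) of the path object) to correct $F'$. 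So the "simultaneous lift against a pair of evaluations" that you declare unnecessary for $\fb$ is in fact indispensable there too --- it is just performed against $(\eps_0,\eps_1)$ on the target $\fC$ rather than against $(e_0,e_1)$; the same applies at every stage of the induction for $\fb_{K,L,G}$.
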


\begin{proof}
$C[s,ds]$ is {\it not} a path object for $C$ in the sense of
Definition~\ref{def:path} (they are even defined over different
rings), but still it is true that  
\begin{enumerate}[\rm (a)]
\item $j$, $e_0$ and $e_1$ are linear morphisms (and we
   denote their $(1,1,0)$ parts by the same letters);
\item $e_i \circ j = \id_C$ and $j\circ e_i\sim \id_{C[s,ds]}$;
\item $j:C \to C[s,ds]$ and $e_i:C[s,ds] \to C$ are
   homotopy equivalences (of chain complexes over $R$ with
   differentials $\fp_{1,1,0}$ and $\dd+\fp_{1,1,0}$, respectively);
\item the map $e_0 \oplus e_1:C[s,ds] \to C \oplus C$ admits
the linear right inverse $(c_0,c_1)\mapsto c_0+(c_1-c_0)s$.
\end{enumerate}
%\begin{itemize}
%\item the maps $j_{1,1,0}:C \to C[s,ds]$ and $(e_i)_{1,1,0}:C[s,ds]
%\to C$ are homotopy equivalences (of chain complexes over $R$),  
%\item $(e_i)_{1,1,0} \circ j_{1,1,0} = \id_C$ and $j_{1,1,0}\circ
%(e_i)_{1,1,0} \simeq \id_{C[s,ds]}$, and 
%\item the map $e_0\oplus e_1:C[s,ds] \to C \oplus C$ admits a right
%inverse.  
%\end{itemize}
So while the lemma is {\it not} a particular case of  Proposition
\ref{prop:homotopy-transfer}, the proof there can be adapted to the
present situation. 
\end{proof}

\begin{proof}[Proof of Proposition~\ref{prop:IBLWeylehomot}]
If $\G = \F(q^-,s,p^+) + ds\, \K(q^-,s,p^+)$ is a homotopy 
between $\F_0$ and $\F_1$ in the sense of
Definition~\ref{def:Weylhomotopy}, and $\mathfrak G$ is the
corresponding $\IBL_\infty$-morphism from $C^+$ to $C^-[s,ds]$, then
the composition $\mathfrak H:=\fb \diamond \mathfrak G$ is the
required morphism from $C^+$ to $\fC^-$ with $\eps_i\diamond \mathfrak
H=\ff^{(i)}$. 

Similarly, if $\mathfrak H$ is a homotopy in the sense of
Definition~\ref{def:homotopy}, then $\mathfrak G:=\fa \diamond
\mathfrak H$ is an $\IBL_\infty$-morphism 
from $C^+$ to $C^-[s,ds]$ whose Weyl algebra translation satisfies
Definition~\ref{def:Weylhomotopy}. 
\end{proof}
\par\bigskip
\par\medskip

%%%%%%%%%%%%%%%%%%%%%%%%%%%%%%%%%%%%%%%%%%%%%%%%%%%%%%%%%%%%%%%%%%%%%%%
\section{Filtered $\IBL_\infty$-structures}\label{sec:filter}
%%%%%%%%%%%%%%%%%%%%%%%%%%%%%%%%%%%%%%%%%%%%%%%%%%%%%%%%%%%%%%%%%%%%%%%

For many applications the notion of an $\IBL_\infty$-structure needs to be
generalized to that of a filtered $\IBL_\infty$-structure. In this
section we define this generalization and extend our previous results
to this case. This refinement is also necessary for the discussion of
Maurer-Cartan elements in Section~\ref{sec:MC}. 

{\bf Filtrations. }
A {\em filtration} on a commutative ring $R$ is a family of
subrings
$$
   R \supset R_\lambda \supset R_\mu ,\quad
   \lambda\leq\mu, \qquad \bigcup_{\lambda\in\R}R_\lambda = R,\qquad
   R_\lambda\cdot R_\mu\subset R_{\lambda+\mu}. 
$$
Such a filtration is equivalent to a {\em valuation}\footnote{
Commonly, a valuation is required to satisfy $\|r\|\geq 0$ and
$\|rr'\|=\|r\|+\|r'\|$, but we will not need these stronger conditions.} 
$\|\ \|:R\to\R\cup\{\infty\}$ satisfying $\|0\|=\infty$ and 
$$
   \|r+r'\| \geq \min\{\|r\|,\|r'\|\},\qquad \|rr'\|\geq \|r\|+\|r'\|
$$
for $r,r'\in R$. The two notions are related by
$$
   \|r\| = \sup\{\lambda\mid r\in R_\lambda \},\qquad R_\lambda
   =\{r\;\bigl|\; \|r\|\geq\lambda\}.
$$
The {\em trivial filtration} on a ring $R$ is defined by 
%$R_\lambda=R$ for all $\lambda$
the trivial valuation $\|r\|=0$ for all $r\neq 0$. 
%corresponds the trivial valuation $\|r\|=\infty$ for all $r\in R$. 
%
Another example of a filtered ring is the universal Novikov ring
considered later in this section. 

A {\em filtration} on an $R$-module $C$ is a family of $R$-linear
subspaces  
$$
   C \supset \FF^\lambda C\supset \FF^\mu C,\quad
   \lambda\leq\mu, \qquad \bigcup_{\lambda\in\R}\FF^\lambda = C,\qquad
   R_\lambda\cdot\FF^\mu C\subset\FF^{\lambda+\mu}C.  
$$
Again, this is equivalent to a {\em valuation} $\|\
\|:C\to\R\cup\{\infty\}$\footnote{
%Although $C$ may carry an additional grading, in this
%  section we will use $\|\ \|$ exclusively for the degree with respect
%  to the filtration.
The filtration degree $\|\ \|$ should not be confused with the grading
$|\ \ |$ on $C$, which plays no role in this section.}  
satisfying 
$$
   \|c+c'\| \geq \min\{\|c\|,\|c'\|\},\qquad \|rc\|\geq\|r\|+\|c\|
$$
for $c,c'\in C$ and $r\in R$, where the two notions are related by
$$
   \|c\| = \sup\{\lambda\mid c\in\FF^\lambda C\},\qquad \FF^\lambda
   C=\{c\;\bigl|\; \|c\|\geq\lambda\}.
$$
If $C$ is an $R$-algebra, we require in addition that $\FF^\lambda
C\cdot\FF^\mu C\subset\FF^{\lambda+\mu}C$, or equivalently, $\|c\cdot
c'\|\geq\|c\|+\|c'\|$. 
%(If $C$ is a field and we have equality $\|c\cdot
%c'\|=\|c\|+\|c'\|$ this is known as a ``valuation''). 

The {\em completion} of $C$ with respect to $\FF$ is the completion
with respect to the metric $d(c,c'):=e^{-\|c-c'\|}$, i.e., the $R$-module 
$$\aligned
  \wh C 
   &:= \left\{\sum_{i=1}^\infty c_i\,\left|\, c_i\in C,\ \lim_{i\to\infty}\|c_i\|=\infty\right\} \right.\cr
   &= \left\{\sum_{i=1}^\infty c_i\,\left|\, c_i\in C,\ \#\{i\mid
   c_i\notin\FF^\lambda C\}<\infty \text{ for all }\lambda\in\R\right\}\right.. 
\endaligned$$
Note that $\wh C$ inherits a filtration from $C$. 

For example, the completion of a direct sum 
$
   C = \bigoplus_{k\geq 0} C^k
$
with respect to the filtration 
$
   \FF^\lambda C :=
   \bigoplus_{k\geq\lambda}C^k
$
is the direct product 
$
   \wh C = \prod_{k\geq 0} C^k
$
with the induced filtration 
$$
   \FF^\lambda\wh C = \{(c_k) \in \wh C \;\bigl|\; c_k=0 \text{ 
    for all }k<\lambda\}.  
$$

A linear map $f:C\to C'$ between filtered $R$-modules is called {\em
  filtered} if it satisfies
%$$
%   f(\FF^\lambda C)\subset \FF^{\phi(\lambda)}C',\qquad
%   \lim_{\lambda\to\infty}\phi(\lambda)=\infty. 
%$$
%Then $f$ induces a map 
%$$
%   \wh f:\wh C\to\wh C',\qquad 
%   \wh f\left(\sum_i c_i\right) := \sum f(c_i). 
%$$
%In particular, $f$ is filtered if it satisfies
$$
   f(\FF^\lambda C)\subset \FF^{\lambda+K}C'
$$
for some constant $K\in\R$. In this case we call the largest such
constant the {\em (filtration) degree} of $f$ and denote it by
$\|f\|$. 

Filtrations on $C$ and $C'$ induce filtrations on the direct sum and
tensor product by
\begin{align*}
   \FF^\lambda(C\oplus C') &:= \FF^\lambda C\oplus\FF^\lambda C', \cr
   \FF^\lambda(C\otimes C') &:=
   \bigoplus_{\lambda_1+\lambda_2=\lambda}\FF^{\lambda_1}C
   \otimes\FF^{\lambda_2}C'.  
\end{align*}
Given several filtrations $\FF^\lambda_jC$ on $C$, we denote by $\wh C$
the completion with respect to the filtration
$\FF^\lambda C:=\bigcup_j\FF^\lambda_jC$. 

A filtration on $C$ induces filtrations on the symmetric products
$$
   \FF^\lambda(C[1]\otimes_R\cdots \otimes_R C[1])/\sim \ :=
   \bigoplus_{\lambda_1+\dots+\lambda_k=\lambda} (\FF^{\lambda_1}C[1]
   \otimes_R \cdots\otimes_R \FF^{\lambda_k}C[1])/\sim
$$
%In this section, 
We denote by $\wh E_kC$ the completion of the
$k$-fold symmetric product $(C[1]\otimes_R\cdots\otimes_R C[1])/\sim$
with respect to this filtration. 
%(Warning: this differs from the notation in the previous sections!) 
We now also include the case $\wh E_0C:= R$ (where $R$ is assumed
complete). 

Note that the symmetric algebra $\bigoplus_{k\geq 0}\wh E_kC$ has two
filtrations: the one induced by $\FF$, and the filtration by the sets
$\bigoplus_{k\geq\lambda}\wh E_kC$. We denote by $\wh EC$ the completion of
$\bigoplus_{k\geq 0}\wh E_kC$ with respect to these two filtrations. Thus
elements in $\wh EC$ are infinite sums $\sum_{i=1}^\infty c_i$ such that
$c_i\in \FF^{\lambda_i}E_{k_i}C$ with
$$
   \lim_{i\to\infty}\max\{k_i,\lambda_i\}=\infty.
$$ 
Given a map $p:\wh E_kC\to \wh E_\ell
C$ of finite filtration degree $\|p\|$, formula~\eqref{eq:hat} extends to
the completion to define a map $\wh p:\wh EC\to \wh EC$. 

For the remainder of this section, $R$ will denote a complete filtered
commutative ring, and $C$ a filtered $R$-module. 

{\bf Filtered $\IBL_\infty$-algebras. }
Consider now a collection of maps 
$$
  \fp_{k,\ell,g}: \wh E_kC\to \wh E_\ell C,\qquad k,\ell,g\geq 0
$$
of finite filtration degrees satisfying
\begin{equation}\label{eq:p-deg}
   \|\fp_{k,\ell,g}\|\geq \gamma\,\chi_{k,\ell,g} \text{ for all
   }k,\ell,g.
\end{equation}
Here $\gamma\geq 0$ is a fixed constant and 
$$
   \chi_{k,\ell,g} := 2-2g-k-\ell
$$
is the Euler characteristic of a Riemann surface of genus $g$ with $k$
positive and $\ell$ negative boundary components. 
Note that, in contrast to the unfiltered case, we allow $k=0$ and
$\ell=0$. 
%In the application to symplectic field theory we will have
%$\gamma=0$, and 
In Section~\ref{sec:Cyclic} we will use $\gamma=2$. 

Define
$$
    \wh\fp := \sum_{k,\ell,g=0}^\infty
    \wh\fp_{k,\ell,g} \hbar^{k+g-1}:
    \wh EC\{\hbar\}\to \frac{1}{\hbar}\wh EC\{\hbar\},
$$
where $\wh EC\{\hbar\}$ denotes the space of power series in $\hbar$ with
coefficients in $\wh EC$. To see that $\wh\fp$ is well-defined, note that
elements of $\wh EC\{\hbar\}$ are given by $c=\sum_{\ell',g'\geq
  0}c_{\ell',g'}\hbar^{g'}$ with $c_{\ell',g'}\in \wh E_{\ell'}C$. Then
$$
   \wh\fp(c) = \sum_{\ell''\geq 0,\;g''\geq -1}c_{\ell'',g''}\hbar^{g''}
$$
with
$$
   c_{\ell'',g''} = \sum_{{k+g+g'-1=g''} \atop
     {\ell+\ell'-k=\ell''}}\wh\fp_{k,\ell,g}(c_{\ell',g'}) \in
     \wh E_{\ell''}C. 
$$
We need to show that for $\ell''$, $g''$ and $\|c_{\ell'',g''}\|$ bounded
from above only finitely many terms can appear on the right hand
side. Since $k,g,g'\geq 0$, the relation $k+g+g'-1=g''$ bounds $k,g,g'$
in terms of $g''$. Then the relation $\ell+\ell'-k=\ell''$ bounds
$\ell,\ell'\geq 0$ in terms of $g'',\ell''$. In particular,
$\chi_{k,\ell,g}$ is bounded. So
$$
   \|\wh\fp_{k,\ell,g}(c_{\ell',g'})\| \geq
   \|\fp_{k,\ell,g}\|+\|c_{\ell',g'}\| \geq \gamma\chi_{k,\ell,g} +
   \|c_{\ell',g'}\|
$$ 
bounds $\|c_{\ell',g'}\|$ from above in terms of $\ell''$, $g''$ and
$\|c_{\ell'',g''}\|$, so by convergence of $c$ only finitely many such
terms appear. 

\begin{defn}\label{def:IBL-fil}
A {\em filtered $\IBL_\infty$-structure of bidegree
$(d,\gamma)$} on a filtered graded $R$-module $C$ is a collection of
maps  
$$
  \fp_{k,\ell,g}: \wh E_kC\to \wh E_\ell C,\qquad k,\ell,g\geq 0
$$
of grading degrees $-2d(k+g-1)-1$ and filtration degrees
satisfying~\eqref{eq:p-deg},
where the inequality is strict for the following triples $(k,\ell,g)$:
\begin{equation}\label{eq:f-deg2}
   (0,0,0),\ (1,0,0),\ (0,1,0),\ (0,0,1),\ (2,0,0),\ (0,2,0),
\end{equation}
such that 
\begin{equation*}%\label{eq:IBL3}
   \wh{\frak p}\circ  \wh{\frak p} = 0. 
\end{equation*}
A filtered $\IBL_\infty$-structure is called {\em strict} if
$\fp_{k,\ell,g}=0$ unless $k,\ell\geq 1$. 
\end{defn}
We remark that the coefficient of $\hbar^{-2}$ of the
$\wh E_0C \to \wh E_0C$ component of $\wh{\frak p}\circ  \wh{\frak p}$ is 
$\fp_{0,0,0}^2$. So $\fp_{0,0,0}= 0$ automatically.
Inductively, it follows that $\fp_{0,0,g}=0$ for all $g\ge 0$.

\begin{Remark}
We could absorb the constant $\gamma$ by giving the variable $\hbar$
filtration degree 
$$
   \|\hbar\|_{\rm new} := 2\gamma
$$
and shifting the filtration on $C$ by 
$$
   \|c\|_{\rm new} := \|c\|+\gamma. 
$$
Then we obtain
\begin{align*}
   \|\fp_{k,\ell,g}(x_1\cdots x_k)\hbar^{k+g-1}\|_{\rm new} 
   &= \|\fp_{k,\ell,g}(x_1\cdots x_k)\| + \gamma(2k+2g-2+\ell) \cr 
   &\geq \|x_1\cdots x_k\| + \gamma(2-2g-k-\ell) + \gamma(2k+2g-2+\ell)
   \cr 
   &= \|x_1\cdots x_k\|_{\rm new},
\end{align*}
hence $\wh\fp:\wh EC\{\hbar\}\to \frac{1}{\hbar}\wh EC\{\hbar\}$ has
filtration degree $\|\wh\fp\|_{\rm new}\geq 0$. In this paper we will not use
this filtration, but rather carry out inductional proofs explicitly by
considering connected surfaces as in the discussion preceding
Definition~\ref{def:IBL-fil}.  
\end{Remark}

{\bf Filtered $\IBL_\infty$-morphisms. }
%For integers $k,\ell,g\geq 0$ we set
%\begin{equation*}
%   \wt\chi_{k,\ell,g} := \begin{cases}
%   1: & (k,\ell,g)\in\{(0,0,0), (1,0,0), (0,1,0), (0,0,1),
%   (2,0,0), (0,2,0)\} \\ 
%   0: & \text{otherwise}.
%   \end{cases}
%\end{equation*}
Consider two filtered $R$-modules $(C^\pm,\fp^\pm)$ and a collection
of maps  
$$
   \ff_{k,\ell,g}:\wh E_kC^+\to
   \wh E_\ell C^-, \qquad k,\ell,g\geq 0
$$
of finite filtration degrees satisfying
\begin{equation}\label{eq:f-deg1}
   \|\ff_{k,\ell,g}\|\geq \gamma\,\chi_{k,\ell,g} \text{ for all }k,\ell,g, 
\end{equation}
where the inequality is strict for the triples in~\eqref{eq:f-deg2}.
Here $\gamma\geq 0$ is the fixed constant from above, and again
we allow $k=0$ and $\ell=0$. 

\begin{lem}\label{lem:filt1}
Given $(C^\pm,\fp^\pm)$ and $\ff_{k,\ell,g}$ as above, there exist
unique collections of maps $\fq^\pm_{k,\ell,g}:\wh E_kC^+\to \wh E_\ell C^-$
satisfying~\eqref{eq:p-deg} such that
$$
   e^\ff\wh\fp^+=\wh\fq^+,\
   \wh\fp^-e^\ff=\wh\fq^-:\wh EC^+\{\hbar\}\to \wh
   EC^-\{\hbar,\hbar^{-1}\}.   
$$
where $\wh\fq^{\pm}$ is defined by
\begin{equation*}%\label{expqf}
\aligned
    \wh\fq^{\pm} := \sum_{r=1}^\infty\sum_{{k_i,\ell_i,g_i}\atop {1\leq i\leq
    r}} \frac{1}{(r-1)!}&f_{k_1,\ell_1,g_1}\odot\cdots\odot f_{k_{r-1},\ell_{r-1},g_{r-1}}\odot
    \frak q^{\pm}_{k_r,\ell_r,g_r} \hbar^{\sum k_i+\sum g_i-r}. 
%   \tau^{\sum k_i+\sum\ell_i + 2\sum g_i-2r}.
\endaligned
\end{equation*}
\end{lem}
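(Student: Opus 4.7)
The first step is to verify that $e^\ff \wh\fp^+$ and $\wh\fp^- e^\ff$ are well-defined maps $\wh EC^+\{\hbar\} \to \wh EC^-\{\hbar,\hbar^{-1}\}$. For each signature $(k,\ell,g)$, the coefficient $\la e^\ff \wh\fp^+\ra_{k,\ell,g}$ is a sum over decompositions; using the filtration bounds \eqref{eq:p-deg} for $\fp^+$ and \eqref{eq:f-deg1} for $\ff$ together with Euler characteristic additivity $\chi_{k,\ell,g} = \sum_i\chi_{k_i,\ell_i,g_i}$, each summand has filtration degree at least $\gamma\chi_{k,\ell,g}$, and only finitely many summands can lie below any fixed filtration level once the strict inequality for the exceptional triples \eqref{eq:f-deg2} is taken into account.

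Next I would establish existence and uniqueness of $\fq^+_{k,\ell,g}$ by induction. Inspecting the defining formula for $\wh\fq^+$, its $(k,\ell,g)$-component decomposes as
\[
    \la\wh\fq^+\ra_{k,\ell,g} = \fq^+_{k,\ell,g} \;+\; R^+_{k,\ell,g},
\]
where the $r=1$ term gives the isolated $\fq^+_{k,\ell,g}$ and $R^+_{k,\ell,g}$ collects the terms with $r\ge 2$, i.e.~those involving at least one symmetric factor $\ff_{k_i,\ell_i,g_i}$ wedged against some $\fq^+_{k_r,\ell_r,g_r}$. The equation to be solved therefore reads
\[
    \fq^+_{k,\ell,g} = \la e^\ff\wh\fp^+\ra_{k,\ell,g} - R^+_{k,\ell,g},
\]
and this defines $\fq^+_{k,\ell,g}$ uniquely once all $\fq^+_{k',\ell',g'}$ appearing inside $R^+_{k,\ell,g}$ are known. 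An identical discussion handles $\wh\fq^-$.

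The main obstacle is setting up a well-founded order in which this induction can be carried out. By Euler characteristic additivity, any $\fq^+_{k_r,\ell_r,g_r}$ appearing in $R^+_{k,\ell,g}$ satisfies $\chi_{k_r,\ell_r,g_r} = \chi_{k,\ell,g} - \sum_{i<r}\chi_{k_i,\ell_i,g_i}$. When all the wedged $\ff_{k_i,\ell_i,g_i}$ have stable (i.e.~strictly negative) Euler characteristic, this gives $\chi_{k_r,\ell_r,g_r} > \chi_{k,\ell,g}$, and an order on signatures analogous to Definition~\ref{def:order} works directly. The subtle case is when some $\ff_{k_i,\ell_i,g_i}$ has one of the exceptional signatures \eqref{eq:f-deg2}; then Euler characteristic does not strictly increase. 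This is where the requirement of strict inequality in \eqref{eq:f-deg1} at these triples is essential: such a factor contributes an extra positive amount to the filtration degree of the corresponding summand of $R^+_{k,\ell,g}$. I would therefore perform a double induction -- outer on signature in the sense of $\chi$, inner on filtration level -- so that in the inner induction only finitely many exceptional factors can occur below a given filtration bound, each such occurrence moving us strictly deeper into the filtration.

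Finally, I would verify that the $\fq^\pm_{k,\ell,g}$ produced satisfy \eqref{eq:p-deg}. Using the bound $\|\la e^\ff\wh\fp^+\ra_{k,\ell,g}\| \ge \gamma\chi_{k,\ell,g}$ (again from Euler characteristic additivity and the bounds on $\ff$ and $\fp^+$) and the inductively established bound on all $\fq^+_{k',\ell',g'}$ entering $R^+_{k,\ell,g}$, the defining equation immediately gives $\|\fq^+_{k,\ell,g}\| \ge \gamma\chi_{k,\ell,g}$. The same argument applies to $\fq^-_{k,\ell,g}$, completing the construction.
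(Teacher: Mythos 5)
Your key quantitative observation -- that the strict inequality in \eqref{eq:f-deg1} at the triples \eqref{eq:f-deg2} forces each exceptional factor to raise the filtration degree by a fixed $\delta>0$, so that only finitely many such factors can occur below any filtration level -- is exactly the engine of the paper's proof (there it appears as the estimate $\|\fq^-_{k,\ell,g}\|-\gamma\chi_{k,\ell,g}\geq\delta|I|$). However, the inductive scheme you build around it has a well-foundedness problem. First, the outer induction "on signature in the sense of $\chi$" runs the wrong way: a factor $\ff_{k_i,\ell_i,g_i}$ with an exceptional signature has $\chi_{k_i,\ell_i,g_i}>0$, so the leftover term $\fq^+_{k_r,\ell_r,g_r}$ inside $R^+_{k,\ell,g}$ satisfies $\chi_{k_r,\ell_r,g_r}<\chi_{k,\ell,g}$ (indeed, with $r-1$ factors of $\ff_{0,0,1}$ one even finds $\fq^+_{k,\ell,g}$ itself reappearing inside $R^+_{k,\ell,g}$), i.e.\ these terms lie at \emph{later} stages of your outer induction. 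Second, you treat only the cases $\chi_{k_i,\ell_i,g_i}<0$ and "exceptional", omitting $(k_i,\ell_i,g_i)=(1,1,0)$: a term $\ff_{1,1,0}^{\odot(r-1)}\odot\fq^+_{k-r+1,\ell-r+1,g+r-1}$ gives neither a change in $\chi$ nor a filtration gain (no strict inequality is imposed at $(1,1,0)$), so both of your induction parameters stall. This case is handled by the finer order of Definition~\ref{def:order} (same $k+\ell+2g$, strictly larger genus), which you invoke only in the case where it is not needed. Finally, "induction on filtration level" is not literally available, since the filtration degrees need not be discrete; one must instead determine $\fq^\pm_{k,\ell,g}$ modulo $\FF^\lambda$ for every $\lambda$ and appeal to completeness.

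The paper avoids all of this by not setting up a recursion at all: using the combinatorics of Lemma~\ref{lem:mor_order}, it writes down the explicit closed formula for $\fq^\pm_{k,\ell,g}$ as the sum over \emph{connected} gluings of one $\fp^\pm$-piece with $r$ pieces of $\ff$ (each glued along $s_i\geq 1$ boundary loops), and then proves this infinite sum converges by partitioning the $\ff$-factors into the exceptional ones $I$ (bounded via $\delta|I|$, your mechanism), those with $\chi<0$ ($J$, bounded by Euler characteristic additivity), and the cylinders $K$ (bounded because each contributes $1$ to $\sum k_i=k$). If you want to salvage the recursive route, you should make the filtration truncation the outer loop and use the order $\prec$ of Definition~\ref{def:order} on the finitely many signatures that can then occur; but the direct convergence argument is cleaner.
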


\begin{proof}
Let us consider the composition $\wh\fp^-e^\ff=\wh\fq^-$, the other
one being analogous. According to Lemma~\ref{lem:mor_order}, the map
$\fq^-_{k,\ell,g}$ is given by the sum 
\begin{equation}\label{eq:filt1}
    \sum_{r\geq 0}
    \sum_{ {{ {k_1+\dots+k_r=k}
    \atop {\ell_1+\dots+\ell_r+\ell^--k^-=\ell} }
    \atop {g_1+\dots+g_r+g^-+k^--r=g} }
    \atop {s_1+\dots +s_r=k^- \atop s_i \geq 1}} \hspace{-.7cm}
    \frac{1}{r!}\hat\fp^-_{k^-,\ell^-,g^-}\circ_{s_1,\dots,s_r}
    (\ff_{k_1,\ell_1,g_1}\odot\cdots\odot \ff_{k_r,\ell_r,g_r})
\end{equation}
corresponding to complete gluings of $r$ connected surfaces of
signatures $(k_i,\ell_i,g_i)$ at their outgoing ends to the ingoing
ends of a connected surface of signature $(k^-,\ell^-,g^-)$, plus an
appropriate number of trivial cylinders, to obtain a {\em connected}
surface of signature $(k,\ell,g)$. In particular, for each term in
this sum the Euler characteristics satisfy 
$$
   \chi_{k,\ell,g} = \chi_{k^-,\ell^-,g^-} +
   \sum_{i=1}^r\chi_{k_i,\ell_i,g_i}.  
$$
Let us write $\{1,\dots,r\}$ as the disjoint union $I\cup J\cup K$,
where $i\in I$ iff $(k_i,\ell_i,g_i)$ is one of the triples
in~\eqref{eq:f-deg2}, $i\in J$ iff $\chi_{k_i,\ell_i,g_i}<0$, and
$i\in K$ iff $(k_i,\ell_i,g_i)=(1,1,0)$. Let $\delta>0$ be such that
$\|\ff_{k_i,\ell_i,g_i}\|-\gamma\,\chi_{k_i,\ell_i,g_i}\geq\delta$ for
all $i\in I$. Then the filtration
conditions on $\fp^-$ and $\ff$ imply
\begin{align}\label{eq:filt2}
    &\|\fq^-_{k,\ell,g}\| - \gamma\,\chi_{k,\ell,g} \cr
   &\geq (\|\fp^-_{k^-,\ell^-,g^-}\| - \gamma\,\chi_{k^-,\ell^-,g^-}) +
   \sum_{i=1}^r (\|\ff_{k_i,\ell_i,g_i}\| - \gamma\,\chi_{k_i,\ell_i,g_i}) \cr
   &\geq \sum_{i\in I} (\|\ff_{k_i,\ell_i,g_i}\| - \gamma\,\chi_{k_i,\ell_i,g_i}) \cr
   &\geq \delta|I| \geq 0.
\end{align} 
This shows that $|I|$ is uniformly bounded, where we say that a
quantity is {\em uniformly bounded} if it is bounded from above 
in terms of $k,\ell,g$ and $\|\fq^-_{k,\ell,g}\|$.  It follows that  
$$
   \sum_{j\in J}-\chi_{k_j,\ell_j,g_j} = -\chi_{k,\ell,g} +
   \chi_{k^-,\ell^-,g^-} + \sum_{i\in I}\chi_{k_i,\ell_i,g_i}
$$
is uniformly bounded. Since each term $2g_j+k_j+\ell_j-2$ on the
left-hand side is $\geq 1$, this provides uniform bounds on $|J|$ as 
well as all the $g_j,k_j,\ell_j$ for $j\in J$. Finally, the fact that
each $i\in K$ contributes $1$ to the sum $k_1+\dots+k_r=k$ yields a
uniform bound on $|K|$. Hence the number of terms in the sum
in~\eqref{eq:filt1} is uniformly bounded, which proves convergence of  
$\fq^-_{k,\ell,g}$ with respect to the
filtration. Inequality~\eqref{eq:filt2} shows that $\fq^-_{k,\ell,g}$
satisfies~\eqref{eq:p-deg}.  
\end{proof}

In view of the preceding lemma, the following definition makes sense.  

\begin{defn}\label{def:morphifil}
A {\em filtered $\IBL_\infty$-morphism} between filtered
$\IBL_\infty$-algebras $(C^\pm,\{\fp^\pm_{k,\ell,g}\})$ is a collection
of maps  
$$
   \ff_{k,\ell,g}:\wh E_kC^+\to
   \wh E_\ell C^-, \qquad k,\ell,g\geq 0
$$
of grading degrees $-2d(k+g-1)$ and filtration degrees
satisfying~\eqref{eq:f-deg1} and~\eqref{eq:f-deg2} such that   
\begin{equation}\label{eq:mor3}
   e^\ff\wh p^+-\wh p^- e^\ff = 0.
\end{equation}
A filtered $\IBL_\infty$-morphism $\ff$ is called {\em strict} if
$\ff_{k,\ell,g}=0$ unless $k,\ell\geq 1$.  
\end{defn}

Note that for a strict filtered $\IBL_\infty$-morphism or
structure, condition~\eqref{eq:f-deg2} is vacuous. 
\medskip

{\bf Composition of filtered $\IBL_\infty$-morphisms. }
Consider now two filtered $\IBL_\infty$-morphisms 
\begin{align*}
    \ff^+=\{\ff^+_{k,\ell,g}\}:(C^+,\{\fp^+_{k,\ell,g}\})\to
    (C,\{\fp_{k,\ell,g}\}),\\
    \ff^-=\{\ff^-_{k,\ell,g}\}:(C,\{\fp_{k,\ell,g}\})\to
    (C^-,\{\fp^-_{k,\ell,g}\}). 
\end{align*}

\begin{lem}\label{lem:filtered-comp}
There exists a unique filtered $\IBL_\infty$-morphism
$\ff=\{\ff_{k,\ell,g}\}:(C^+,\{\fp^+_{k,\ell,g}\})\to
(C^-,\{\fp^-_{k,\ell,g}\})$ satisfying 
\begin{equation*}
e^\ff = e^{\ff^-}e^{\ff^+}.
\end{equation*}
\end{lem}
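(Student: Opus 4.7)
The plan is to mimic the construction in Definition~\ref{def:comp} and to supply the convergence estimates needed in the filtered setting. Arguing by induction on signatures in the order $\prec$ of Definition~\ref{def:order} (extended to all triples $(k,\ell,g)$ with $k,\ell,g\geq 0$ outside the exceptional list~\eqref{eq:f-deg2}, as in Remark~\ref{rem:gen-IBL}), the identity $\la e^\ff\ra_{k,\ell,g}=\la e^{\ff^-}e^{\ff^+}\ra_{k,\ell,g}$ forces the recursion
$$
  \ff_{k,\ell,g} = \la e^{\ff^-}e^{\ff^+}\ra_{k,\ell,g}
  - \sum_{r\geq 2}\sum_{k_1+\dots+k_r=k \atop {\ell_1+\dots+\ell_r=\ell \atop g_1+\dots+g_r=g+r-1}}
  \frac{1}{r!}\,\ff_{k_1,\ell_1,g_1}\odot\cdots\odot\ff_{k_r,\ell_r,g_r},
$$
where all signatures appearing in the subtracted sum are $\prec (k,\ell,g)$ by the same argument as in the unfiltered case and hence have been defined at earlier stages of the induction. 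This already yields uniqueness.

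The heart of the matter is showing convergence of $\la e^{\ff^-}e^{\ff^+}\ra_{k,\ell,g}$ in the filtration topology. Expanding this term produces an a priori infinite sum over pairs of tuples
$$
  \tfrac{1}{r^-!\,r^+!}\bigl(\ff^-_{k_1^-,\ell_1^-,g_1^-}\odot\cdots\odot\ff^-_{k_{r^-}^-,\ell_{r^-}^-,g_{r^-}^-}\bigr)
  \circ \bigl(\ff^+_{k_1^+,\ell_1^+,g_1^+}\odot\cdots\odot\ff^+_{k_{r^+}^+,\ell_{r^+}^+,g_{r^+}^+}\bigr),
$$
indexed by gluings into possibly disconnected surfaces of signature $(k,\ell,g)$. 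By additivity of the Euler characteristic, $\chi_{k,\ell,g}$ equals the sum of the Euler characteristics of all the factors. Exactly as in the proof of Lemma~\ref{lem:filt1}, I would partition these factors into three classes: those with signature in the exceptional list~\eqref{eq:f-deg2} (each of which contributes at least a uniform $\delta>0$ to $\|\ff_{k,\ell,g}\|-\gamma\,\chi_{k,\ell,g}$ since~\eqref{eq:f-deg1} holds strictly); those with $\chi<0$ (whose number and individual signatures are uniformly bounded in terms of $k,\ell,g$ and any prescribed upper bound on $\|\ff_{k,\ell,g}\|$); and those of signature $(1,1,0)$ (whose number is bounded by $k+\ell$ since each such factor contributes $1$ to the incoming or outgoing count). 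This simultaneously bounds the number of terms contributing below any fixed filtration threshold and yields the estimate $\|\ff_{k,\ell,g}\|\geq\gamma\,\chi_{k,\ell,g}$, strict for triples in~\eqref{eq:f-deg2}.

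Once $\ff$ is constructed, in particular satisfying the filtration hypothesis of Lemma~\ref{lem:filt1}, the compositions $\wh\fp^-e^\ff$ and $e^\ff\wh\fp^+$ are well-defined by that lemma, and the morphism equation~\eqref{eq:mor3} reduces to the chain of identities
$$
  \wh\fp^-\,e^\ff \;=\; \wh\fp^-\,e^{\ff^-}\,e^{\ff^+} \;=\; e^{\ff^-}\,\wh\fp\,e^{\ff^+}
  \;=\; e^{\ff^-}\,e^{\ff^+}\,\wh\fp^+ \;=\; e^\ff\,\wh\fp^+,
$$
where the second and third equalities are the morphism equations for $\ff^-$ and $\ff^+$, respectively. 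The main obstacle is the convergence count in the middle step; I expect it to follow the pattern of Lemma~\ref{lem:filt1} with only superficial modifications to accommodate both $\ff^-$ and $\ff^+$ factors simultaneously.
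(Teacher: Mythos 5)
Your proposal is correct and follows essentially the same route as the paper: the paper likewise takes the (connected part of the) expansion of $e^{\ff^-}e^{\ff^+}$ as the definition of $\ff_{k,\ell,g}$ and establishes convergence and the estimate $\|\ff_{k,\ell,g}\|\geq\gamma\,\chi_{k,\ell,g}$ (strict for the exceptional triples) by exactly your partition of the factors into exceptional signatures, negative Euler characteristic, and trivial cylinders, bounding each class in turn. The only cosmetic difference is that you phrase the construction as the inductive recursion subtracting disconnected terms, whereas the paper writes down the connected-gluing formula directly; these are equivalent by the discussion following Definition~\ref{def:comp}.
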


We call $\ff$ the {\em composition} of $\ff^+$ and $\ff^-$. 

\begin{proof}
According to the discussion following Definition~\ref{def:comp}, 
the map $\ff_{k,\ell,g}$ is given by the sum 
\begin{align}\label{eq:filt3}
   \hspace{-1.7cm}
    \sum_{ { { {k_1^++\dots+k_{r^+}^+=k}
    \atop {\ell_1^-+\dots+\ell_{r^-}^-=\ell} }
    \atop {\ell_1^++\dots+\ell_{r^+}^+=k_1^-+\dots+k_{r^-}^-} }
    \atop {\sum g_i^+ + \sum g_i^- + \sum \ell_i^++ - r^+-r^-+1 = g} }
    \hspace{-1.8cm}
    \frac{1}{r^+!r^-!}
    (\ff^-_{k_1^-,\ell_1^-,g_1^-}\odot\cdots\odot
    \ff^-_{k_{r^-}^-,\ell_{r^-}^-,g_{r^-}^-})\circ
    (\ff^+_{k_1^+,\ell_1^+,g_1^+}\odot\cdots\odot
    \ff^+_{k_{r^+}^+,\ell_{r^+}^+,g_{r^+}^+}). 
\end{align}
corresponding to complete gluings of $r^+$ connected surfaces of
signatures $(k_i^+,\ell_i^+,g_i^+)$ at their outgoing ends to the
ingoing ends of $r^-$ connected surfaces of signatures
$(k_i^-,\ell_i^-,g_i^-)$ to obtain a {\em connected} surface of
signature $(k,\ell,g)$. In particular, for each term in this sum the
Euler characteristics satisfy 
$$
   \chi_{k,\ell,g} = \sum_{i=1}^{r^+}\chi_{k_i^+,\ell_i^+,g_i^+} +
   \sum_{i=1}^{r^-}\chi_{k_i^-,\ell_i^-,g_i^-}.   
$$
Let us write $\{1,\dots,r^\pm\}$ as the disjoint union $I^\pm\cup
J^\pm\cup K^\pm$, where $i\in I^\pm$ iff
$(k_i^\pm,\ell_i^\pm,g_i^\pm)$ is one of the triples 
in~\eqref{eq:f-deg2}, $i\in J^\pm$ iff
$\chi_{k_i^\pm,\ell_i^\pm,g_i^\pm}<0$, and $i\in K^\pm$ iff
$(k_i^\pm,\ell_i^\pm,g_i^\pm)=(1,1,0)$. Let $\delta>0$ be such that 
$\|\ff^\pm_{k_i^\pm,\ell_i^\pm,g_i^\pm}\|-\gamma\,\chi_{k_i^\pm,\ell_i^\pm,g_i^\pm}\geq\delta$
for all $i\in I^\pm$. Then the filtration
conditions on $\ff^\pm$ imply
\begin{align}\label{eq:filt4}
    &\|\ff_{k,\ell,g}\| - \gamma\,\chi_{k,\ell,g} \cr
   &\geq  \sum_{i=1}^{r^+} (\|\ff^+_{k_i^+,\ell_i^+,g_i^+}\| -
   \gamma\,\chi_{k_i^+,\ell_i^+,g_i^+}) +
   \sum_{i=1}^{r^-} (\|\ff^-_{k_i^-,\ell_i^-,g_i^-}\| -
   \gamma\,\chi_{k_i^-,\ell_i^-,g_i^-}) \cr 
   &\geq  \sum_{i\in I^+} (\|\ff^+_{k_i^+,\ell_i^+,g_i^+}\| -
   \gamma\,\chi_{k_i^+,\ell_i^+,g_i^+}) +
   \sum_{i\in I^-} (\|\ff^-_{k_i^-,\ell_i^-,g_i^-}\| -
   \gamma\,\chi_{k_i^-,\ell_i^-,g_i^-}) \cr 
   &\geq \delta (|I^+| + |I^-|) \geq 0.
\end{align} 
This shows that $|I^+|$ and $|I^-|$ are uniformly bounded,
i.e.~bounded from above  
in terms of $k,\ell,g$ and $\|\ff_{k,\ell,g}\|$. It follows that  
$$
   \sum_{j\in J^+}-\chi_{k_j^+,\ell_j^+,g_j^+} + \sum_{j\in
     J^-}-\chi_{k_j^-,\ell_j^-,g_j^-} = -\chi_{k,\ell,g} +
   \sum_{i\in I^+}\chi_{k_i^+,\ell_i^+,g_i^+} + \sum_{i\in
     I^-}\chi_{k_i^-,\ell_i^-,g_i^-}  
$$
is uniformly bounded. Since each term $2g_j^\pm+k_j^\pm+\ell_j^\pm-2$
on the left-hand side is $\geq 1$, this provides uniform bounds on
$|J^+|$ and $|J^-|$ as well as all the $g_j^\pm,k_j^\pm,\ell_j^\pm$
for $j\in J^\pm$. Finally, the fact that 
each $i\in K^+$ contributes $1$ to the sum $k_1^++\dots+k_{r^+}^+=k$ and
each $i\in K^-$ contributes $1$ to the sum
$\ell_1^-+\dots+\ell_{r^-}^-=\ell$ yields uniform bounds on $|K^+|$
and $|K^-|$. Hence the number of terms in the sum
in~\eqref{eq:filt3} is uniformly bounded, which proves convergence of  
$\fq^-_{k,\ell,g}$ with respect to the
filtration. Inequality~\eqref{eq:filt4} shows that $\ff_{k,\ell,g}$
satisfies~\eqref{eq:f-deg1}, where the inequality is strict if $I^+$
or $I^-$ is nonempty. If $I^+$ and $I^-$ are both empty, then either
$\chi_{k,\ell,g}<0$ (if $J^+$ or $J^-$ are nonempty), or (if $J^+$ and
$J^-$ are both empty) $(k_i^\pm,\ell_i^\pm,g_i^\pm)=(1,1,0)$ for all
$i$ and hence $(k,\ell,g)=(1,1,0)$ (the corresponding connected
surface is a gluing of cylinders and hence a cylinder). This shows
that $\ff_{k,\ell,g}$ also satisfies~\eqref{eq:f-deg2}. 
\end{proof}

{\bf Homotopies of filtered $\IBL_\infty$-algebras. }
Consider a subset $G$ of $\R_{\ge 0}$ such that
\begin{enumerate}
\item $g_1,g_2 \in G$ implies $g_1+g_2\in G$;
\item $0 \in G$;
\item $G$ is a discrete subset of $\R$.
\end{enumerate}
We call such $G$ a {\em discrete submonoid}, and we will write it as 
\begin{equation}\label{Gname}
G = \{\lambda_0,\lambda_1,\dots\},
\end{equation}
where $\lambda_j < \lambda_{j+1}$ and $\lambda_0 =0$.

\begin{defn}\label{def:ggapped}
%An $R$-linear map
%$$
%   F : C_1 \to C_2
%$$
%between filtered $R$-modules is called $G$-{\em gapped} if there exist
%$R$-linear maps $F^j : C_1\to C_2$ of filtration degrees $\|F^j\|\geq
%\lambda_j \in G$ such that
%$$
%   F = \sum_{j=0}^\infty F^j.
%$$
We say that filtered $\IBL_\infty$-algebra of bidegree
$(d,\gamma)$ is $G$-{\em gapped} if the operations $\fp_{k,\ell,g}$
can be written as 
$$
   \fp_{k,\ell,g} = \sum_{j=0}^\infty \fp_{k,\ell,g}^j:\wh E_kC\to\wh
   E_\ell C,
$$
where the filtration degrees of $\fp_{k,\ell,g}^j$ satisfy
$$
   \|\fp_{k,\ell,g}^j\|-\gamma\chi_{k,\ell,g}\geq\lambda_j\in G,
$$
and $\fp_{k,\ell,g}^0=0$ for the triples $(k,\ell,g)$
in~\eqref{eq:f-deg2}. We call an $\IBL_\infty$ algebra {\em gapped} if it 
is $G$-gapped for some discrete submonoid $G \subset \R_{\ge 0}$.
\end{defn}

We define a linear ordering on {\em extended signatures}
$(j,k,\ell,g)\in\N_0^4$ by
saying $(j',k',\ell',g') \prec (j,k,\ell,g)$ if either $j'<j$, or
$j'=j$ and $(k',\ell',g') \prec (k,\ell,g)$ in the sense of
Definition~\ref{def:order}. 
\begin{rem}
As with the original ordering in Definition~\ref{def:order}, this is only 
one of several possible choices. 
\end{rem}
Now we have the following analogue of Lemma~\ref{lem:order}. 
\begin{lem}\label{lem:orderfil}
For a gapped filtered $\IBL_\infty$-algebra $(C,\fp_{k,\ell,g})$ the 
condition $\hat\fp \circ \hat\fp = 0$ is equivalent to 
$\fp_{1,1,0}^0\circ\fp_{1,1,0}^0=0$, together with the sequence of relations
$$
\hat\fp_{1,1,0}^0\circ \fp_{k,\ell,g}^j
   +\fp_{k,\ell,g}^j\circ \hat\fp_{1,1,0}^0 + P_{k,\ell,g}^j 
   + R_{k,\ell,g}^j=0
$$
as maps from $\wh E_kC$ to $\wh E_\ell C$ for all extended signatures 
$(j,k,\ell,g)\succ (0,1,1,0)$, 
where $P_{k,\ell,g}^j:\wh E_kC \to \wh E_\ell C$ involves only
compositions of terms $\fp_{k',\ell',g'}^{j'}$ whose extended signatures
satisfy $(0,1,1,0)\prec (j',k',\ell',g')\prec (j,k,\ell,g)$, and
$\|R_{k,\ell,g}^j\| > \lambda_j+\gamma\chi_{k,\ell,g}$.  

%maps $\fp_{k,\ell,g}^j$ as in Definition~\ref{def:ggapped} we have
%\begin{equation*}
%   (\hat\fp\circ\hat\fp)_{k,\ell,g} =
%   \hat\fp_{1,1,0}^0\circ \fp_{k,\ell,g}^j
%   +\fp_{k,\ell,g}^j\circ \hat\fp_{1,1,0}^0 + P_{k,\ell,g}^j 
%   + Q_{k,\ell,g}^j,
%\end{equation*}
%where $P_{k,\ell,g}^j:\wh E_kC \to \wh E_\ell C$ involves only
%compositions of terms $\fp_{k',\ell',g'}^{j'}$ whose extended signatures
%satisfy $(0,1,1,0)\prec (j',k',\ell',g')\prec (j,k,\ell,g)$, and
%$\|Q_{k,\ell,g}^j\| > \lambda_j+\gamma\chi_{k,\ell,g}$.  
%\hfill $\qed$
\end{lem}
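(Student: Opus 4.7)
The plan is to adapt the proof of Lemma~\ref{lem:order} by combining its ``connected surface'' reduction with a refinement by the gap filtration. First I would observe that the proof of Lemma~\ref{lem:order} goes through verbatim in the filtered setting (allowing all triples $(k,\ell,g)\geq 0$, as in Remark~\ref{rem:gen-IBL}), since the sums involved converge in the filtration topology by the same type of counting as in the proof of Lemma~\ref{lem:filt1}. This shows that $\wh\fp\circ\wh\fp=0$ is equivalent to the sequence of identities
$$
\fq_{k,\ell,g} := \sum_{s\geq 1}\sum_{\substack{k_1+k_2=k+s\\ \ell_1+\ell_2=\ell+s\\ g_1+g_2=g+1-s}}\hat\fp_{k_2,\ell_2,g_2}\circ_s\hat\fp_{k_1,\ell_1,g_1}\Big|_{\wh E_kC}=0,\qquad k,\ell,g\geq 0.
$$

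Next I would expand each $\fp_{k,\ell,g}$ via its gap decomposition $\fp_{k,\ell,g}=\sum_j\fp^j_{k,\ell,g}$. A composition $\hat\fp^{j_2}_{k_2,\ell_2,g_2}\circ_s\hat\fp^{j_1}_{k_1,\ell_1,g_1}$ has filtration degree at least $\lambda_{j_1}+\lambda_{j_2}+\gamma\chi_{k,\ell,g}$, since Euler characteristic is additive under connected gluing. Because $G$ is a discrete submonoid, for each fixed $(k,\ell,g)$ and each $j$, only finitely many pairs $(j_1,j_2)$ contribute at filtration level $\leq\lambda_j+\gamma\chi_{k,\ell,g}$, so we may separate these ``leading'' contributions unambiguously from the ``higher filtration'' remainder.

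The key step is then to identify which compositions actually involve $\fp^j_{k,\ell,g}$ itself as a factor at the leading filtration level. Such a factor forces the other factor $\fp^{j'}_{k',\ell',g'}$ to satisfy $\lambda_{j'}=0$ and $\chi_{k',\ell',g'}=0$. The Euler-characteristic-zero signatures are $(1,1,0),(0,0,1),(2,0,0),(0,2,0)$, but for the last three the gap-$0$ component vanishes by Definition~\ref{def:ggapped}. Hence the only remaining possibility is $(j',k',\ell',g')=(0,1,1,0)$, producing exactly the two linear terms $\hat\fp^0_{1,1,0}\circ\fp^j_{k,\ell,g}+\fp^j_{k,\ell,g}\circ\hat\fp^0_{1,1,0}$. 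The other contributions at level $\lambda_j+\gamma\chi_{k,\ell,g}$ involve two factors $\fp^{j_i}_{k_i,\ell_i,g_i}$ with extended signatures strictly less than $(j,k,\ell,g)$ in the order $\prec$, and these I would collect into $P^j_{k,\ell,g}$. Everything else, necessarily of filtration degree strictly greater than $\lambda_j+\gamma\chi_{k,\ell,g}$, is absorbed into $R^j_{k,\ell,g}$. The equivalence between $\wh\fp\circ\wh\fp=0$ and the sequence of stated identities then follows by induction on the extended signature, as in the proof of Lemma~\ref{lem:order}: at each step the inductive hypothesis ensures that the lower-signature, lower-filtration portion of $\fq_{k,\ell,g}$ already vanishes.

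The main obstacle will be the careful bookkeeping needed to verify that the strict inequalities required in \eqref{eq:f-deg2} and in Definition~\ref{def:ggapped} really do force every ``unwanted'' appearance of a factor with Euler characteristic zero (other than $(0,1,1,0)$) to contribute only to $R^j_{k,\ell,g}$. This is a bit delicate because several such signatures arise naturally from gluing exceptional pieces, and the argument must handle all of them uniformly using the additivity of $\chi$ together with the hypothesis $\fp^0_{k,\ell,g}=0$ on the list~\eqref{eq:f-deg2}. Once this is pinned down, the remaining accounting of filtration degrees is routine.
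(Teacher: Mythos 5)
Your proposal is correct and follows essentially the same route as the paper: starting from the connected-gluing form of the relations, using additivity of the Euler characteristic to deduce $\lambda_{j_1}+\lambda_{j_2}\le\lambda_j$ for the terms at leading filtration level, invoking the vanishing of the gap-zero components on the exceptional triples to see that the only way $\fp^j_{k,\ell,g}$ itself can occur is against $\fp^0_{1,1,0}$, and concluding by induction over extended signatures. The ``bookkeeping'' you flag at the end is exactly the short case analysis the paper carries out ($j=0$ reduces to Lemma~\ref{lem:order}; for $j>0$ either both $j_i<j$, or one index equals $j$ and the other factor is forced to be $\fp^0_{1,1,0}$), so no new idea is needed to close it.
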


\begin{proof}
Recall that the left hand side of relation \eqref{eq:BL4} is a sum of terms
$\hat\fp_{k_2,\ell_2,g_2}\circ_s\hat\fp_{k_1,\ell_1,g_1}$ which correspond to
gluings of two connected surfaces of signatures
$\sigma_i=(k_i,\ell_i,g_i)$ along $s\geq 1$ boundary components to a
connected surface of signature $\sigma=(k,\ell,g)$. We fix $j\ge 0$ and 
combine all terms in this sum of filtration degree $>\lambda_j+\gamma\chi_\sigma$ into one summand, which we denote by $R^j_{k,\ell,g}$. 
Next consider a term with
$$
  \| \hat\fp_{k_2,\ell_2,g_2}^{j_2}\circ_s\hat\fp_{k_1,\ell_1,g_1}^{j_1} \| \le 
  \lambda_j+\gamma\chi_{\sigma}
$$
Then
$$
   \lambda_{j_2}+\gamma\chi_{\sigma_2} +
   \lambda_{j_1}+\gamma\chi_{\sigma_1} \leq \|\hat\fp_{\sigma_2}^{j_2}\| +
   \|\hat\fp_{\sigma_1}^{j_1}\| \leq
   \|\hat\fp_{\sigma_2}^{j_2}\circ_s\fp_{\sigma_1}^{j_1}\| \leq
   \lambda_{j}+\gamma\chi_{\sigma}. 
$$
Since $\chi_{\sigma_2}+\chi_{\sigma_1}=\chi_{\sigma}$, this implies 
$$
   \lambda_{j_2}+\lambda_{j_1}\leq\lambda_j.
$$
% this next claim is not really used
%
% and thus
%$$
%   j_2+j_1\leq j.
%$$
If $j=0$, then $j_1=j_2=0$ and, by the last condition in
Definition~\eqref{def:ggapped}, $\sigma_1$ and $\sigma_2$ are none of
the triples in~\eqref{eq:f-deg2}. Thus Lemma~\ref{lem:order} implies
that either $\sigma_1=(1,1,0)$ and $\sigma_2=\sigma$, or
$\sigma_2=(1,1,0)$ and $\sigma_1=\sigma$, or
$\sigma_1,\sigma_2\prec\sigma$.  

If $j>0$, then either $j_1,j_2<j$, or $j_1=j$ and $j_2=0$, or $j_2=j$
and $j_1=0$. In the first case we are done, so consider the second
case (the third case is analogous). Then
$(j_2,\sigma_2)\prec(j,\sigma)$ and $\sigma_2$ is none of
the triples in~\eqref{eq:f-deg2}. In particular, $\chi_{\sigma_2}\leq
0$, and thus  
$$
   -\chi_{\sigma_1}\leq -\chi_{\sigma_1}-\chi_{\sigma_2} = -\chi_{\sigma}. 
$$
If $\chi_{\sigma_2}<0$, this yields
%$(j_1,\sigma_1)\prec(j_2,\sigma_2)$ and we are done. If
$(j_1,\sigma_1)\prec(j,\sigma)$ and we are done. If
$\chi_{\sigma_2}=0$, then $\sigma_2=(1,1,0)$ and it follows that
$(j_1,\sigma_1)=(j,\sigma)$. 
\end{proof}

For a filtered $\IBL_\infty$-algebra $(C,\{\fp_{k,\ell,g}\})$, the
composition ${\fp}_{1,1,0}\circ {\fp}_{1,1,0}$ may be nonzero due to
the presence of $\fp_{0,1,0}$ or $\fp_{1,0,0}$. If the
$\IBL_\infty$-algebra is {\em strict}, then these terms are not present
and we get a chain complex $(C,\fp_{1,1,0})$. 
We define path objects in the category of (gapped) filtered
$\IBL_{\infty}$-structures in the same way as in Definition 
\ref{def:path}, except that we require the morphisms $\iota$, $\eps_0$ 
and $\eps_1$ to have filtration degree $0$, and in condition (c) the maps 
$\fp^0_{1,1,0}$ and $\fq^0_{1,1,0}$ replace $\fp_{1,1,0}$ and $\fq_{1,1,0}$.
The proofs of the following two propositions are now
completely analogous to those of Proposition~\ref{prop:model} and
Proposion~\ref{prop:homotopy-transfer}, using 
induction over the linear order on extended signatures and
Lemma~\ref{lem:orderfil}. 

\begin{prop}\label{prop:modelfil}
For any gapped filtered $\IBL_\infty$-algebra
$(C,\{\fp_{k,\ell,g}\})$ there exists a path object $\fC$ that is
gapped. \qed
\end{prop}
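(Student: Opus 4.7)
The plan is to follow the proof of Proposition~\ref{prop:model} with filtration bookkeeping at each step. Take $\fC := C \oplus C \oplus C[1]$ with the direct-sum filtration $\|(x_0,x_1,y)\| = \min\{\|x_0\|,\|x_1\|,\|y\|\}$, and define $\fq^0_{1,1,0}$, $\iota$ and $\eps_i$ by the same formulas as in the unfiltered case. All these maps involve only $\fp^0_{1,1,0}$ and identities, hence have filtration degree $0$. The explicit chain homotopy $h(x_0,x_1,y) = (0,-y,0)$ witnessing $\iota\eps_0 \simeq \id_\fC$ (and the corresponding one for $\eps_1$) likewise has filtration degree $0$, as does the modification $h' = (\id - \iota\eps_0)h(\id - \iota\eps_0)$ used in the proof of Lemma~\ref{lem:homotopy}. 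This settles conditions (a)-(d) of the filtered path object.

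Next I would construct the higher operations $\fq^j_{k,\ell,g}$ by induction on the linear order on extended signatures $(j,k,\ell,g)$, carrying the $j$-graded analogues of hypotheses (1)-(3) from the proof of Proposition~\ref{prop:model} together with the filtration bound $\|\fq^j_{k,\ell,g}\| \geq \lambda_j + \gamma\chi_{k,\ell,g}$. At step $(j,K,L,G) \succ (0,1,1,0)$, form the candidate
$$
Q := \frac{1}{L!}\iota^{\odot L}\, \fp^j_{K,L,G}\, \frac{1}{K!}\eps_0^{\odot K} : \wh E_K\fC \to \wh E_L\fC,
$$
and the expression $\Gamma := P^j_{K,L,G} + \wh\fq^0_{1,1,0}\, Q + Q\, \wh\fq^0_{1,1,0}$, where $P^j_{K,L,G}$ is the quadratic piece supplied by Lemma~\ref{lem:orderfil}. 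The $j$-graded analogue of Proposition~\ref{prop:hom-main}(1) (a purely algebraic consequence of the inductive hypothesis that refines level by level with respect to the gap monoid $G$) shows that $\Gamma$ is a $\wh\fq^0_{1,1,0}$-cycle, and one checks as in the unfiltered case that $\Gamma \circ \frac{1}{K!}\iota^{\odot K} = 0$. Lemma~\ref{lem:homotopy}(2) then produces a primitive $H$ with $H \circ \frac{1}{K!}\iota^{\odot K} = 0$, after which the usual lift through a right inverse of $\eps_0 \oplus \eps_1$ of the residual error terms $\Gamma_i := \frac{1}{L!}\eps_i^{\odot L}(Q + H) - \fp^j_{K,L,G}\frac{1}{K!}\eps_i^{\odot K}$ simultaneously corrects $Q + H$ to an $\fq^j_{K,L,G}$ satisfying inductive hypotheses (i), (ii) and (iii) at the new extended signature.

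The hard part is tracking filtration degrees through each application of Lemma~\ref{lem:homotopy}: the primitive $H$ produced from an input $\Gamma$ must inherit the bound $\|\Gamma\| \geq \lambda_j + \gamma\chi_{K,L,G}$. Inspecting the proof of that lemma, this reduces to producing filtration-preserving chain homotopies between $\iota\eps_i$ and $\id_\fC$ on each $\wh E_k\fC$. The homotopy $h$ exhibited above does this on $\wh E_1\fC = \wh\fC$; the standard symmetrization and tensor extensions then yield filtration-degree-$0$ homotopies on every $\wh E_k\fC$. With these in hand, the output of Lemma~\ref{lem:homotopy} has filtration degree at least that of its input, and $\fq^j_{K,L,G}$ automatically satisfies the required bound. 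Discreteness of $G$ finally guarantees that $\fq_{k,\ell,g} := \sum_{j \geq 0}\fq^j_{k,\ell,g}$ converges in the completion, producing a gapped filtered $\IBL_\infty$-structure on $\fC$ that realizes it as a path object for $C$.
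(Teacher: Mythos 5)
Your proposal is correct and follows exactly the route the paper intends: it adapts the proof of Proposition~\ref{prop:model} to the chain model $C\oplus C\oplus C[1]$, replacing the induction over signatures by the induction over extended signatures $(j,k,\ell,g)$ via Lemma~\ref{lem:orderfil}, and checking that all maps and homotopies (in particular the explicit $h$ and the primitives from Lemma~\ref{lem:homotopy}) respect the filtration, with discreteness of $G$ ensuring convergence. The paper gives no further details, asserting precisely this analogy, so your write-up matches its argument.
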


\begin{prop}\label{prop:homotopy-transfer2}
Let $C$ and $D$ be gapped filtered $\IBL_\infty$-algebras, and let $\fC$ 
and $\fD$ be gapped path objects for $C$ and $D$, respectively. Let 
$\ff:C \to D$ be a gapped filtered $\IBL_\infty$-morphism. Then there exists 
a gapped  filtered $\IBL_\infty$-morphism $\fF:\fC \to \fD$ such that
the diagram
$$
\begin{CD}
C @>\iota^C>> \fC @>\eps_i^C>> C \\
@V{\frak f}VV       @V{\frak F}VV      @V{\frak f}VV\\
D @>>\iota^D> \fD @>>\eps_i^D> D 
\end{CD}
$$
commutes for both $i=0$ and $i=1$. \qed
%\iota^D\circ \ff = \fF \circ \iota^C, \quad \eps_0^D \circ \fF=\ff \circ
%\eps_0^C, \quad \text{\rm and } \eps_1^D\circ \fF= \ff \circ \eps_1^C.
%$$
\end{prop}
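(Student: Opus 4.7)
The plan is to mimic the proof of Proposition~\ref{prop:homotopy-transfer}, replacing the induction on signatures $(k,\ell,g)$ by the induction on \emph{extended} signatures $(j,k,\ell,g)$ defined immediately before Lemma~\ref{lem:orderfil}, and replacing the role of Lemma~\ref{lem:mor_order} by its filtered/gapped counterpart encoded in Lemma~\ref{lem:orderfil}. So I would set up a double bookkeeping: at each stage, construct the $j$-th filtration piece $\fF_{k,\ell,g}^{j}:\wh E_k\fC\to\wh E_\ell\fD$ of the desired morphism, with $\|\fF_{k,\ell,g}^{j}\|-\gamma\chi_{k,\ell,g}\geq\lambda_j$, satisfying strict inequality whenever $(k,\ell,g)$ is one of the exceptional triples in~\eqref{eq:f-deg2} and $j=0$.

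\textbf{Base step.} For the smallest extended signature $(0,1,1,0)$, I would first build $\fF_{1,1,0}^{0}:(\fC,\fq_{1,1,0}^{C,0})\to(\fD,\fq_{1,1,0}^{D,0})$ exactly as in Step~1 of the proof of Proposition~\ref{prop:homotopy-transfer}: start with $F':=\iota^D\ff_{1,1,0}^0\eps_0^C$, which matches the required identity with $\iota^C$ and with $\eps_0^C$ on the nose; then the discrepancy $\Gamma_1:=\eps_1^D F'-\ff_{1,1,0}^0\eps_1^C$ is a chain map annihilated by precomposition with $\iota^C$, so Lemma~\ref{lem:homotopy}(2) (applied to the chain homotopy equivalence $\iota^C,\eps_1^C$ with $\eps_1^C\iota^C=\id_C$) produces a homotopy $E_1$ with $E_1\iota^C=0$; lifting $0\oplus E_1$ using property~(d) of the path object and correcting $F'$ by a $\fq_{1,1,0}^{D,0}$-coboundary yields $\fF_{1,1,0}^0$ with all the desired properties. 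Since these operations preserve filtration degree $\geq 0$, the resulting map lies in the correct filtration stratum.

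\textbf{Inductive step.} For an extended signature $(J,K,L,G)\succ(0,1,1,0)$, assume $\fF_{k,\ell,g}^{j}$ has been constructed for all $(j,k,\ell,g)\prec(J,K,L,G)$ satisfying (i)~compatibility with $\iota^{C,D}$, (ii)~compatibility with $\eps_i^{C,D}$ for $i=0,1$, (iii)~the morphism equation \eqref{eq:mor3} truncated to extended signatures preceding $(J,K,L,G)$, and (iv)~the filtration bound $\|\fF_{k,\ell,g}^{j}\|-\gamma\chi_{k,\ell,g}\geq\lambda_j$. Following the template of the proof of Proposition~\ref{prop:homotopy-transfer}, I would set $\fF''_{K,L,G}{}^J:=\frac1{L!}(\iota^D)^{\odot L}\ff_{K,L,G}^J\frac1{K!}(\eps_0^C)^{\odot K}$ and define
$$
   F:=R_{K,L,G}^J(\fF,\fq^C,\fq^D)+\hat\fq_{1,1,0}^{D,0}\fF''_{K,L,G}{}^J-\fF''_{K,L,G}{}^J\hat\fq_{1,1,0}^{C,0},
$$
where $R_{K,L,G}^J$ is the obstruction appearing in Lemma~\ref{lem:orderfil}. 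Using the induction hypothesis together with the filtered analogue of Proposition~\ref{prop:hom-main}(2)—which follows by the very same calculation, tracking only which extended signature each factor contributes—one shows $\delta F=0$ with respect to the chain-level differential $\hat\fq_{1,1,0}^{D,0}-\hat\fq_{1,1,0}^{C,0}$, and that $F\circ\frac1{K!}(\iota^C)^{\odot K}=0$. Applying Lemma~\ref{lem:homotopy}(1) yields a correction $H$, and then repeating the argument of Step~2 of Proposition~\ref{prop:homotopy-transfer} with the maps $\Gamma_i:=\frac1{L!}(\eps_i^D)^{\odot L}\fF'_{K,L,G}{}^J-\ff_{K,L,G}^J\frac1{K!}(\eps_i^C)^{\odot K}$ produces further corrections $E_i$ which, lifted via the right inverse of $\eps_0^D\oplus\eps_1^D$ from property~(d) of $\fD$, give $\fF_{K,L,G}^J$ satisfying (i)--(iii).

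\textbf{Main obstacle.} The technical heart of the argument is verifying (iv), i.e.\ that all the maps used at each stage—$R_{K,L,G}^J$, its primitive $H$, the chain homotopies $E_i$, and the lifts constructed via the right inverse of $\eps_0^D\oplus\eps_1^D$—live in the filtration stratum $\geq\lambda_J+\gamma\chi_{K,L,G}$. For $R_{K,L,G}^J$ this is the content of Lemma~\ref{lem:orderfil}, rephrased for morphisms along the lines of Lemma~\ref{lem:filt1}: every triple composition of $\ff$, $\fp^C$ and $\fp^D$ appearing in $R_{K,L,G}^J$ satisfies, by additivity of Euler characteristic under gluing and of filtration under composition, exactly the bound $\geq\lambda_J+\gamma\chi_{K,L,G}$, with strict inequality forcing the summand into the remainder. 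The corrections $H$, $E_i$ and their lifts then inherit the same bound because the chain homotopies supplied by Lemma~\ref{lem:homotopy} act by composition with $\fq_{1,1,0}^{D,0}$-primitives of filtration degree $\geq 0$, and the right inverse of $\eps_0^D\oplus\eps_1^D$ preserves filtration. Once this bookkeeping is in place, the induction closes and the resulting $\fF=\{\fF_{k,\ell,g}\}$ with $\fF_{k,\ell,g}:=\sum_j\fF_{k,\ell,g}^j$ is the desired $G$-gapped filtered $\IBL_\infty$-morphism.
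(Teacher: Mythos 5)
Your proposal is correct and follows exactly the route the paper takes: the paper itself only states that the proof is "completely analogous to that of Proposition~\ref{prop:homotopy-transfer}, using induction over the linear order on extended signatures and Lemma~\ref{lem:orderfil}," which is precisely the scheme you carry out, including the filtration bookkeeping for the obstruction terms and the corrections supplied by Lemma~\ref{lem:homotopy}. No substantive differences.
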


We define the notion of a homotopy between gapped filtered
$\IBL_\infty$-algebras in the same way as in
Definition~\ref{def:homotopy}. Then Proposition
\ref{prop:homotopy-equiv} and Corollary \ref{Cor:comp} can be
generalized to the gapped filtered case in the same way. 

Now
Proposition~\ref{prop:inverse2} and Theorem~\ref{thm:can} have the
following analogues in the strict filtered case. (More generally,
they hold for gapped filtered $\IBL_\infty$-algebras with
$\fp_{k,\ell,g}=0$ for the triples in~\eqref{eq:f-deg2}.)

\begin{prop}\label{prop:filt-inverse}
%\marginpar{\tiny should be true for nonstrict case if $\ff^0_{1,1,0}$ is a homotopy equivalence wrt $\fp^0_{1,1,0}$ and $\fq^0_{1,1,0}$ (Janko, July 6)}
Let $\ff:(C,\fp)\to (D,\fq)$ be a strict gapped filtered
$\IBL_\infty$-morphism such that $\ff_{1,1,0}:(C,\fp_{1,1,0}) \to
(D,\fq_{1,1,0})$ is a chain homotopy equivalence. Then $\ff$
is a filtered $\IBL_\infty$-homotopy equivalence. \qed
\end{prop}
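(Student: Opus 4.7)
The plan is to mimic the proof of Theorem~\ref{Whi} in the filtered setting, replacing the linear order on signatures by the linear order on extended signatures $(j,k,\ell,g)$ introduced before Lemma~\ref{lem:orderfil}, and using that lemma as the inductive scaffold in place of Lemma~\ref{lem:order}. Since $\ff$ is strict, no triple from~\eqref{eq:f-deg2} ever appears, so we may work throughout with the clean chain complex $(C,\fp_{1,1,0}^0)$ (and likewise for $D$) as boundary operator for a $\delta$-complex of homomorphisms $\Hom(\wh E_KB,\wh E_LA)$ between the relevant symmetric powers.

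The first major step is to establish the filtered analogue of Proposition~\ref{prop:inverse2}: given a strict gapped filtered $\IBL_\infty$-morphism $\ff:D\to C$ whose $(1,1,0)$-part is a chain homotopy equivalence, there exists a strict gapped filtered $\IBL_\infty$-morphism $\fg:C\to D$ together with a homotopy $\fh$ from $\fg\diamond\ff$ to $\id_D$. To build $\fg_{1,1,0}^0$ and the order-zero piece $\fh_{1,1,0}^0$ one invokes standard homological algebra over $R_0$, noting that the gapped structure forces $\ff_{1,1,0}^0$ itself to be a chain homotopy equivalence of $R_0$-chain complexes. Then one proceeds by induction on extended signatures, exactly as in Steps~1--2 of the proof of Proposition~\ref{prop:inverse2}: at stage $(J,K,L,G)$, Lemma~\ref{lem:orderfil} identifies the obstruction $P_{K,L,G}^J+R_{K,L,G}^J$ as a $\delta$-closed element of $\Hom(\wh E_KC,\wh E_LD)$ constructed from pieces of strictly smaller extended signature, and Proposition~\ref{prop:hom-main} (applied at each gap level) together with the chain homotopies $\ff_{1,1,0}^0\circ\fg_{1,1,0}^0\sim\id_D$ produce the required $\fg_{K,L,G}^J$ and $\fh_{K,L,G}^J$ as preimages under $\delta$. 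The gapped condition ensures that only finitely many $\fg_{k,\ell,g}^j$ and $\fh_{k,\ell,g}^j$ contribute to any given filtration level, so that the resulting total operations $\fg_{k,\ell,g}=\sum_j\fg_{k,\ell,g}^j$ converge and satisfy $\|\fg_{k,\ell,g}^j\|-\gamma\chi_{k,\ell,g}\geq\lambda_j$.

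Once this analogue of Proposition~\ref{prop:inverse2} is in place, the proof of Theorem~\ref{Whi} goes through verbatim in the filtered gapped strict category. Namely, apply it first to $\ff$ to obtain a left-homotopy-inverse $\fg:D\to C$ with $\fg\diamond\ff\sim\id_C$; then observe that $\fg_{1,1,0}^0$ inherits the property of being a chain homotopy equivalence (since $\ff_{1,1,0}^0\circ\fg_{1,1,0}^0\sim\id_D$) and apply it again to $\fg$ to obtain $\ff':C\to D$ with $\ff'\diamond\fg\sim\id_D$. Using the gapped filtered analogues of Proposition~\ref{prop:homotopy-equiv}(b)(c) (which follow from Proposition~\ref{prop:modelfil} and Proposition~\ref{prop:homotopy-transfer2}, whose proofs are completely parallel to Propositions~\ref{prop:model} and~\ref{prop:homotopy-transfer} under the same induction on extended signatures), the formal manipulation
\[
\ff\;\sim\;\ff'\diamond\fg\diamond\ff\;\sim\;\ff'
\]
yields $\ff\diamond\fg\sim\ff'\diamond\fg\sim\id_D$, so $\ff$ is a filtered $\IBL_\infty$-homotopy equivalence with inverse $\fg$.

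The main obstacle in carrying this out is bookkeeping the filtration and gap data through the inductive solving of obstructions. One must verify that whenever we invoke Lemma~\ref{lem:homotopy} to solve $\delta X = Y$ with a prescribed image condition, the resulting $X$ can be chosen with filtration degree at least the bound dictated by~\eqref{eq:p-deg}. This is handled by fixing once and for all an $R_0$-splitting for the projection onto $\ker\delta$ in each $\Hom(\wh E_KD,\wh E_LC)$, so that the homotopy operator $h'$ of Lemma~\ref{lem:homotopy} becomes a filtered endomorphism of degree $0$; the computation of filtration degrees then reduces to the same estimates as in the proofs of Lemma~\ref{lem:filt1} and Lemma~\ref{lem:filtered-comp}, together with the inequality $\lambda_{j_1}+\lambda_{j_2}\leq\lambda_j$ extracted in the proof of Lemma~\ref{lem:orderfil}.
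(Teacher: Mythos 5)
Your proposal is correct and follows exactly the route the paper intends: the paper itself gives no detailed argument for this proposition, stating only that it is proved by repeating the arguments of Proposition~\ref{prop:inverse2} and Theorem~\ref{Whi} with induction over extended signatures via Lemma~\ref{lem:orderfil}, which is precisely what you carry out. Your additional attention to the filtration-degree bookkeeping when solving the $\delta$-obstruction equations is a reasonable filling-in of details the paper leaves implicit.
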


\begin{thm}\label{thm:canfil}
Suppose $(C,\{\fp_{k,\ell,g}\})$ is a strict gapped filtered
$\IBL_\infty$-algebra. 
Then there exist operations $\{\fq_{k,\ell,g}\}$ on its homology
$H:=H_*(C,\fp_{1,1,0})$ giving it the structure of a strict gapped
filtered $\IBL_\infty$-algebra such that there exists a
gapped homotopy equivalence $\ff: (H,\{\fq_{k,\ell,g}\}) \to
(C,\{\fp_{k,\ell,g}\})$. \qed
\end{thm}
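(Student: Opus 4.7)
The plan is to mirror the proof of Theorem~\ref{thm:can}, replacing the induction on signatures $(k,\ell,g)$ by induction on extended signatures $(j,k,\ell,g)$ with respect to the linear order introduced before Lemma~\ref{lem:orderfil}. Since our structure is strict, $(C,\fp^0_{1,1,0})$ is an honest chain complex with homology $H$, so we can fix the usual auxiliary data: a cycle-choosing embedding $\ff^{(0)}_{1,1,0}:H\to C$, a splitting $C=H\oplus B\oplus A$ with $B=\im\fp^0_{1,1,0}$, the projection $\pi:C\to H$ along $B\oplus A$, and a chain homotopy $h:C\to C$ with $\fp^0_{1,1,0}h+h\fp^0_{1,1,0}=\id_C-\ff^{(0)}_{1,1,0}\pi$. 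We declare $\fq^0_{1,1,0}:=0$ as the seed, with the goal that $\ff_{1,1,0}=\ff^{(0)}_{1,1,0}$.

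The inductive step proceeds by $j$-layers. The $j=0$ layer is exactly Theorem~\ref{thm:can} applied to the filtration-zero $\IBL_\infty$-structure $\{\fp^0_{k,\ell,g}\}$ on $C$. For $(J,K,L,G)\succ(0,1,1,0)$ with $J\geq 1$, we imitate the formula used there and set
$$
\fq^J_{K,L,G} := \frac{1}{L!}\pi^{\odot L}\Bigl(\fp^J_{K,L,G}\,\tfrac{1}{K!}(\ff^{(0)}_{1,1,0})^{\odot K} - \wt R^J_{K,L,G}(\ff,\fq,\fp)\Bigr),
$$
where $\wt R^J_{K,L,G}$ is the $j$-refinement of the expression in Lemma~\ref{lem:mor_order} arising from Lemma~\ref{lem:orderfil}, built from components $\fq^{j'}_{k',\ell',g'}$ and $\ff^{j'}_{k',\ell',g'}$ at strictly earlier extended signatures together with $\fp^{j'}_{k',\ell',g'}$ with $j'\leq J$. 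We then define $\ff^J_{K,L,G}$ as a chain homotopy (constructed via $h$) from the cycle $\frac{1}{K!}\fp^J_{K,L,G}(\ff^{(0)}_{1,1,0})^{\odot K}-\wt R^J_{K,L,G}$ to its projection $\frac{1}{L!}(\ff^{(0)}_{1,1,0})^{\odot L}\fq^J_{K,L,G}$; this is possible because the difference is annihilated by $\pi^{\odot L}$ by construction, and is a cycle by the filtered analog of Proposition~\ref{prop:hom-main}, which is verified by the exact same formal manipulation used in the unfiltered case. The verification that the resulting operations satisfy the defining relation at extended signature $(J,K,L,G)$ follows the model of Theorem~\ref{thm:can}: the crucial quadratic expression $Q^J_{K,L,G}$ vanishes because $\fq^0_{1,1,0}=0$ on $H$, so $\pi^{\odot L}\langle\hat\fq\,\hat\fq\rangle$ telescopes against $\langle\hat\fp\,\hat\fp e^{\ff}\rangle=0$ modulo boundary terms that kill $\pi$.

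The gapping estimates $\|\fq^J_{K,L,G}\|\geq\gamma\chi_{K,L,G}+\lambda_J$ and $\|\ff^J_{K,L,G}\|\geq\gamma\chi_{K,L,G}+\lambda_J$ are automatic from the construction: $\pi$ and $\ff^{(0)}_{1,1,0}$ have filtration degree zero, $\fp^J_{K,L,G}$ and the inductively built components satisfy the required bounds, and every gluing inside $\wt R^J_{K,L,G}$ adds Euler characteristics and gap indices additively---precisely the bookkeeping already performed in Lemmas~\ref{lem:filt1} and~\ref{lem:filtered-comp}. Convergence in the filtration, at each step and in the final assembled $\fq$ and $\ff$, follows by the very same uniform-boundedness argument. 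Once the construction is complete, we have a strict gapped filtered $\IBL_\infty$-morphism $\ff:(H,\{\fq_{k,\ell,g}\})\to(C,\{\fp_{k,\ell,g}\})$ whose $(1,1,0)$-component is $\ff^{(0)}_{1,1,0}$, a chain homotopy equivalence between $(H,0)$ and $(C,\fp^0_{1,1,0})$; Proposition~\ref{prop:filt-inverse} then upgrades $\ff$ to a gapped filtered $\IBL_\infty$-homotopy equivalence, completing the proof. The only delicate point in this argument is the simultaneous tracking of the Euler-characteristic gapping and the linear-order bookkeeping, but the choice of extended signature ordering was designed precisely so that the recursion defining $\fq^J_{K,L,G}$ and $\ff^J_{K,L,G}$ involves only previously constructed data, and no genuinely new obstacle arises beyond what is already resolved in Theorem~\ref{thm:can}.
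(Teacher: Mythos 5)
Your proposal is correct and follows essentially the same route as the paper: the paper itself only records that Theorem~\ref{thm:canfil} is proved by repeating the argument of Theorem~\ref{thm:can} with the induction over signatures replaced by induction over extended signatures via Lemma~\ref{lem:orderfil}, together with the Euler-characteristic filtration bookkeeping already carried out in Lemmas~\ref{lem:filt1} and~\ref{lem:filtered-comp} — which is exactly what you spell out. Your reading of $H$ as the homology of the leading differential $\fp^0_{1,1,0}$ is also the intended one, consistent with the modified path-object condition (c) in the filtered setting and with the explicit convention used in Theorem~\ref{thm:canfil2}.
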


\begin{rem}
Proposition~\ref{prop:filt-inverse} continues to hold in the nonstrict
gapped case provided that $\ff^0_{1,1,0}$ is a chain homotopy
equivalence with respect to $\fp^0_{1,1,0}$ and $\fq^0_{1,1,0}$, and
similarly for Theorem~\ref{thm:canfil}. 
\end{rem}

{\bf Filtered $\IBL_\infty$-algebras over the universal
  Novikov ring.} \\
In applications to symplectic geometry (both to SFT and Lagrangian
Floer theory), the $\IBL_\infty$-algebra that is expected to appear has 
coefficients in a Novikov ring and has a filtration by energy
(that is, the symplectic area of pseudo-holomorphic curves).
Here we explain the algebraic part of this story and 
show that various results in the previous sections have analogues in this setting.
Let $\K$ be a field of characteristic $0$ (for example $\Q$).

\begin{defn}
The {\em universal Novikov ring} $\Lambda_0$ consists of formal sums
\begin{equation}\label{novikovelement}
   a= \sum_{i=0}^{\infty} a_i T^{\lambda_i},
\end{equation}
where $a_i \in \K$, $\lambda_i \in \R_{\ge 0}$ such that $\lambda_{i+1}
> \lambda_i$ and $\lim_{i\to \infty} \lambda_i = +\infty$.
It is a commutative ring with the obvious sum and product. 
The $T$-adic valuation 
$$
   \|a\|_T := \inf \{\lambda_i \mid a_i \ne 0\}\in\R_{\geq 0}\cup\{+\infty\}
$$
turns $\Lambda_0$ into a complete filtered ring. 
%It defines a metric $d(a,a') = e^{-\frak v_T(a-a')}$ on $\Lambda_0$ by which $\Lambda_0$ is complete.
$\Lambda_0$ is a local ring whose unique maximal ideal $\Lambda_+$ is
the subset of elements (\ref{novikovelement}) such that $\lambda_i >
0$ for all $i$ with $a_i \ne 0$. Note that $\Lambda_0/\Lambda_+ = \K$.
\end{defn}
\par
Let now $\overline C$ be a $\K$-vector space. 
Recall that the tensor product $\overline C \otimes_\K \Lambda_0$
consists of finite sums $\sum_{i=1}^Nx_i\otimes a_i$, where
$x_i\in\overline C$ and $a_i\in\Lambda_0$. In particular, it contains
{\it finite} sums 
$$
   \sum_{i=1}^N x_i T^{\lambda_i},
$$
where $x_i \in \overline C$ and $\lambda_i \in \R_{\ge 0}$. We denote by 
$$
   C := \overline C \,\widehat{\otimes}_{\K}\, \Lambda_0
$$
the space of possibly infinite sums
\begin{equation}\label{novikovelement2}
   x = \sum_{i=1}^{\infty} x_i T^{\lambda_i}
\end{equation}
such that $x_i \in \overline C$, $\lambda_i \in \R_{\ge 0}$, and
$\lim_{i\to \infty} \lambda_i = +\infty$. 

Hereafter in this section we shall only consider $\Lambda_0$-modules
that are obtained as $C=\overline C \,\widehat{\otimes}_\K\, \Lambda_0$
for some $\overline C$. Then $C$ has a valuation defined by
$$
   \|x\|_T := \inf \{\lambda_i \mid a_i \ne 0\},
$$
%where $x$ is as in (\ref{novikovelement2}).
which turns $C$ into a complete filtered $\Lambda_0$-module.  
On such $C$, the notion of a {\em filtered $\IBL_\infty$-structure over
$\Lambda_0$} is now defined as above, with the ring $R$ replaced by
$\Lambda_0$ and the constant $\gamma=0$. 
Note that the operations $\fp_{k,\ell,g}$ descend to the quotient
$\overline{C}\cong C/(\Lambda_+\cdot C)$ to give 
$(\ol C,\ol\fp_{k,\ell,g})$ the structure of a generalized
$\IBL_\infty$-algebra over $\Lambda_0/\Lambda_+=\K$, which we call the
{\em reduction} of $(C,\fp_{k,\ell,g})$.

As above, consider a discrete sub-monoid $G =
\{\lambda_0,\lambda_1,\dots\}$, 
where $\lambda_j < \lambda_{j+1}$ and $\lambda_0 =0$. 
Let $\overline C_i$ ($i=1,2$) be two $\K$-vector spaces and 
$C_i =  \overline C_i\widehat{\otimes} \Lambda_0$. A
$\Lambda_0$-linear map
$$
F : C_1 \to C_2
$$
is said to be $G$-{\em gapped} if there exist $\K$-linear maps 
$F_j : \overline C_1 \to \overline C_2$ for each $\lambda_j \in G$ 
such that
$$
   F = \sum T^{\lambda_j} F_j,
$$
where we extend $F_j$ to $C_1 \to C_2$ by $\Lambda_0$-linearity.
Note that the $F_j$ are uniquely determined by $F$, and 
a filtered $\IBL_\infty$-algebra over $\Lambda_0$ is $G$-gapped
in the sense of Definition~\ref{def:ggapped} if all the operations
$\hat\fp_{k,\ell,g}$ are $G$-gapped. 

We define path objects in the category of filtered
$\IBL_{\infty}$-structures over $\Lambda_0$ in the same way as in 
Definition~\ref{def:path}. Then Propositions~\ref{prop:model}
and~\ref{prop:homotopy-transfer} have analogues in this category. 
Thus we obtain a notion of homotopy between gapped filtered
$\IBL_\infty$-algebras over $\Lambda_0$ with the same properties as in
Section \ref{sec:hom}. 
Now Proposition~\ref{prop:inverse2} and Theorem~\ref{thm:can} have the
following analogues, which improve Proposition~\ref{prop:filt-inverse}
and Theorem~\ref{thm:canfil} in this setting and are proved
analogously. 

\begin{prop}\label{prop:filt-inverse2}
Let $\ff:(C,\fp)\to (D,\fq)$ be a gapped filtered
$\IBL_\infty$-morphism between gapped filtered
$\IBL_\infty$-algebras over the universal Novikov ring
$\Lambda_0$. Suppose that its reduction $\ol\ff_{1,1,0}:(\ol
C,\ol\fp_{1,1,0}) \to (\ol D,\ol\fq_{1,1,0})$ is a chain homotopy
equivalence. Then $\ff$ is a filtered $\IBL_\infty$-homotopy
equivalence. \qed
\end{prop}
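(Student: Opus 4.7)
The plan is to parallel the proof of Theorem~\ref{Whi} in the gapped filtered setting over $\Lambda_0$, using the linear order on extended signatures $(j,k,\ell,g)$ from Lemma~\ref{lem:orderfil} as the induction parameter. The key observation is that reduction modulo $\Lambda_+$ translates the problem into a question about the reduced $\IBL_\infty$-algebras $(\ol C,\ol\fp)$ and $(\ol D,\ol\fq)$ over the field $\K=\Lambda_0/\Lambda_+$, where the hypothesis directly supplies a chain homotopy equivalence $\ol\ff_{1,1,0}$ on which to build.

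First, I would establish a Novikov analog of Proposition~\ref{prop:inverse2}: for a gapped filtered $\IBL_\infty$-morphism $\ff:(D,\fp^D)\to(C,\fp^C)$ whose reduction $\ol\ff_{1,1,0}$ is a chain homotopy equivalence, I construct a gapped filtered $\IBL_\infty$-morphism $\fg:(C,\fp^C)\to(D,\fp^D)$ together with a homotopy $\fh:D\to\fD$ to a gapped path object $\fD$ of $D$, such that $\eps_0\diamond\fh=\id_D$ and $\eps_1\diamond\fh=\fg\diamond\ff$. Existence of a gapped path object is the Novikov variant of Proposition~\ref{prop:modelfil}, obtained by the same inductive construction ordered by extended signatures. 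The induction for $\fg$ and $\fh$ starts at $(0,1,1,0)$ with the chain homotopy inverse and initial chain homotopy provided by the hypothesis on the reduction, extended to $\Lambda_0$ by $\Lambda_0$-linearity. At each subsequent extended signature $(j,K,L,G)\succ(0,1,1,0)$, the obstructions to extending $\fg$ and $\fh$ are identified by Proposition~\ref{prop:hom-main}(2) as $\delta$-cycles in the Hom complexes over $\K$. The condition $\eps_0\diamond\fh\equiv\id_D$ up to the current induction stage shows the $\fh$-obstruction is $\delta$-exact; combined with the assumption that $\ff$ is an $\IBL_\infty$-morphism and that $\ol\ff_{1,1,0}^{\odot K}$ is a chain homotopy equivalence, Proposition~\ref{prop:hom-main}(3) then forces the $\fg$-obstruction to be $\delta$-exact as well. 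Choosing trivializations at the $\K$-level and lifting them back produces the required $\fg^{(j)}_{K,L,G}$ and $\fh^{(j)}_{K,L,G}$ with automatically correct gap degrees, since these lifts are carried out one $T^{\lambda_j}$-coefficient at a time.

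Once this filtered analog of Proposition~\ref{prop:inverse2} is available, the argument concludes exactly as in Theorem~\ref{Whi}: applying it to $\ff$ produces $\fg:C\to D$ with $\fg\diamond\ff$ homotopic to $\id_D$. Since $\ol\fg_{1,1,0}$ is then a chain homotopy inverse of $\ol\ff_{1,1,0}$, a second application of the filtered analog yields $\ff':D\to C$ with $\ff'\diamond\fg$ homotopic to $\id_C$; the standard chain $\ff\sim\ff'\diamond\fg\diamond\ff\sim\ff'$, combined with the gapped filtered version of Proposition~\ref{prop:homotopy-equiv}(c), gives $\ff\diamond\fg\sim\id_C$ as well, so that $\fg$ is a homotopy inverse of $\ff$.

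The main obstacle is the careful bookkeeping of gaps through the double induction on $j$ and on ordinary signatures: one must verify that every obstruction expression lives at the correct $T^{\lambda_j}$-level, and that each newly chosen component properly extends the previously constructed gapped data. The filtration estimates provided by Lemmas~\ref{lem:filt1}, \ref{lem:filtered-comp}, and \ref{lem:orderfil} ensure that all such obstructions arise at the expected energy step, so that trivializations chosen over $\K$ indeed lift to gapped extensions over $\Lambda_0$. Beyond this bookkeeping, no essentially new algebraic input is required; the mechanism is a direct transcription of the unfiltered argument underlying Theorem~\ref{Whi}.
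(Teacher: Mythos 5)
Your proposal is correct and follows essentially the same route as the paper, which only asserts that Proposition~\ref{prop:filt-inverse2} is ``proved analogously'' to Proposition~\ref{prop:inverse2} and Theorem~\ref{Whi}, using the induction over extended signatures from Lemma~\ref{lem:orderfil} and working one $T^{\lambda_j}$-coefficient at a time so that the hypothesis on the reduction $\ol\ff_{1,1,0}$ over $\K=\Lambda_0/\Lambda_+$ supplies the base of the induction. Your expansion of that sketch — gapped path object, obstruction analysis via Proposition~\ref{prop:hom-main}, and the concluding two-sided inverse argument — is exactly the intended argument.
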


\begin{thm}
\label{thm:canfil2}
Suppose $(C,\{\fp_{k,\ell,g}\})$ is a gapped filtered
$\IBL_\infty$-algebra over the universal Novikov ring
$\Lambda_0$. Set $\overline H := H(\overline C,\overline{\fp}_{1,1,0})$ 
and $H := \overline H \widehat{\otimes}\Lambda_0$.
Then there exist operations $\{\fq_{k,\ell,g}\}$ on
$H$ giving it the structure of a gapped filtered 
$\IBL_\infty$-algebra over $\Lambda_0$ such that there exists a
gapped homotopy equivalence $\ff: (H,\{\fq_{k,\ell,g}\}) \to
(C,\{\fp_{k,\ell,g}\})$. \qed 
\end{thm}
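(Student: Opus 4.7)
The plan is to adapt the proof of Theorem~\ref{thm:can} by running the inductive construction over the linear order on extended signatures $(j,k,\ell,g)$ introduced just before Lemma~\ref{lem:orderfil}. First, I would apply Theorem~\ref{thm:can} (or rather its slight extension to the generalized $\IBL_\infty$-setting of Remark~\ref{rem:gen-IBL}) to the reduction $(\overline C, \{\overline\fp_{k,\ell,g}\})$ over the field $\K = \Lambda_0/\Lambda_+$. This furnishes operations $\{\overline\fq_{k,\ell,g}\}$ on $\overline H = H(\overline C, \overline\fp_{1,1,0})$, a cycle-choosing embedding $\overline\ff_{1,1,0}:\overline H \to \overline C$, a splitting $\overline C = \overline H \oplus \overline B \oplus \overline A$ with $\overline B = \im\overline\fp_{1,1,0}$, a projection $\overline\pi:\overline C\to\overline H$, and a contracting homotopy $\overline h$ satisfying $\overline\fp_{1,1,0}\overline h + \overline h\overline\fp_{1,1,0}= \id_{\overline C} - \overline\ff_{1,1,0}\overline\pi$, together with the higher components $\{\overline\ff_{k,\ell,g}\}$ of a homotopy equivalence. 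All of this extends by $\Lambda_0$-linearity, providing the base case $\fq_{k,\ell,g}^0 = \overline\fq_{k,\ell,g}$ and $\ff_{k,\ell,g}^0 = \overline\ff_{k,\ell,g}$ (in particular $\fq_{1,1,0}^0 = 0$) of the inductive construction.

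Next I would run the induction on extended signatures $(J,K,L,G)$. Suppose $\fq_{k,\ell,g}^{j'}$ and $\ff_{k,\ell,g}^{j'}$ have been constructed for all $(j',k',\ell',g') \prec (J,K,L,G)$ so that relations \eqref{eq:BL4} and \eqref{eq:mor4} hold modulo terms of filtration degree $> \lambda_J$ at the signature $(K,L,G)$. By Lemma~\ref{lem:orderfil} and the morphism analogue implicit in part~(2) of Proposition~\ref{prop:hom-main}, the obstruction to extending at level $(J,K,L,G)$ is controlled by an expression built from strictly previously-constructed data; moreover this obstruction is a $\delta$-cycle (for the structure) and a $\delta$-boundary up to the deficiency between $\overline\pi^{\odot L}\fp_{K,L,G}^J\overline\ff_{1,1,0}^{\odot K}$ and $\fq_{K,L,G}^J$ (for the morphism). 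I would therefore \emph{define}
\begin{equation*}
\fq_{K,L,G}^J := \frac{1}{L!}\overline\pi^{\odot L}\Bigl(\fp_{K,L,G}^J \circ \tfrac{1}{K!}\overline\ff_{1,1,0}^{\odot K} - \widetilde R_{K,L,G}^J(\ff,\fq,\fp)\Bigr),
\end{equation*}
where $\widetilde R_{K,L,G}^J$ is the $\lambda_J$-part of the expression from Lemma~\ref{lem:mor_order}, and then construct $\ff_{K,L,G}^J$ by applying the contracting homotopy on $E_L \overline C$ induced by $\overline h$ (in the form of Lemma~\ref{lem:homotopy}) to cancel the remaining error. The argument that $\fq^J_{K,L,G}$ satisfies the $\IBL_\infty$-relation at this level is formally identical to the computation in the proof of Theorem~\ref{thm:can}, using $\overline\pi\overline\fp_{1,1,0} = 0 = \overline\fp_{1,1,0}\overline\ff_{1,1,0}$ together with the induction hypothesis.

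The main obstacle is the bookkeeping needed to show that each inductively defined $\fq_{K,L,G}^J$ and $\ff_{K,L,G}^J$ respects the gapped filtration, i.e.\ that it really contributes only at energy $\lambda_J$ and that after assembling $\fq_{k,\ell,g} = \sum_j T^{\lambda_j}\fq_{k,\ell,g}^j$ the total sum converges and lies in the correct space. This is where the gapped hypothesis on $(C,\{\fp_{k,\ell,g}\})$ is crucial: because the $\lambda_j$ form a discrete submonoid and the $\fp^{j'}$ with $j' < J$ are already of pure energy $\lambda_{j'}$, the obstructions assemble into expressions that lie entirely in energy $\lambda_J$, so the construction proceeds. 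Once the induction terminates, $\ff = \{\ff_{k,\ell,g}\}$ is a gapped filtered $\IBL_\infty$-morphism whose reduction $\overline\ff_{1,1,0}$ is by construction a chain homotopy equivalence; Proposition~\ref{prop:filt-inverse2} then upgrades $\ff$ to a filtered $\IBL_\infty$-homotopy equivalence, completing the proof.
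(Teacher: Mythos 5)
Your proposal is correct and follows exactly the route the paper intends: the paper proves Theorem~\ref{thm:canfil2} by declaring it "proved analogously" to Theorem~\ref{thm:can}, i.e.\ by running the canonical-model induction over the extended signatures $(j,k,\ell,g)$ of Lemma~\ref{lem:orderfil}, with the $j=0$ layer supplied by the unfiltered result applied to the reduction $(\overline C,\overline\fp_{k,\ell,g})$ over $\K$ and convergence guaranteed by the gapped hypothesis. Your write-up supplies precisely the details the paper omits, including the correct analogue of the defining formula for $\fq_{K,L,G}$ and the appeal to Proposition~\ref{prop:filt-inverse2} at the end.
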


The main example in this paper is the dual cyclic bar complex of a
cyclic DGA, which is discussed in
Sections~\ref{sec:Cyclic-cochain} and~\ref{sec:Cyclic}.

%%%%%%%%%%%%%%%%%%%%%%%%%%%%%%%%%%%%%%%%%%%%%%%%%%%%%%%%%%%%%%%%%%%%%%%
\section{Maurer-Cartan elements}\label{sec:MC}
%%%%%%%%%%%%%%%%%%%%%%%%%%%%%%%%%%%%%%%%%%%%%%%%%%%%%%%%%%%%%%%%%%%%%%%

In this section we discuss Maurer-Cartan elements and the resulting
twisted $\IBL_\infty$-structures. With the applications in the
following sections in mind, we formulate the discussion for strict
filtered $\IBL_\infty$-algebras. 
However, most statements in this section
remains true if we drop the word ``strict'' throughout. 

Let $(C,\fp=\{\fp_{k,\ell,g}\})$ be a strict filtered
$\IBL_\infty$-algebra of bidegree $(d,\gamma)$. 
%\marginpar{\tiny I think we do not need to assume $C$ is strict.
%MC equation becomes inhomogeneous but still is well defined.
%Kenji Dec. 25.}
Consider a collection of elements 
$$
    \fm_{\ell,g}\in \wh E_\ell C,\qquad \ell\geq 1,g\geq 0
$$
of grading degrees 
$$
   |\fm_{\ell,g}|_{grading} = -2d(g-1)
$$ 
and filtration degrees $\|\fm_{\ell,g}\|$ satisfying
\begin{equation}\label{eq:energycondforMC}
   \|\fm_{\ell,g}\|\geq\gamma\,\chi_{0,\ell,g} \text{ for all }\ell,g,
\end{equation}
where the inequality is strict for the pairs $(\ell,g)=(1,0)$ 
and
$(2,0)$. Define the grading degree zero element
$$
    \fm := \sum_{\ell\geq 1,g\geq
      0}\fm_{\ell,g}\hbar^{g-1}\in\frac{1}{\hbar}\wh EC\{\hbar\}.
$$

\begin{Definition}
$\{\fm_{\ell,g}\}_{\ell\geq 1,g\geq 0}$ is called a {\em
     Maurer-Cartan element} in $(C,\{\fp_{k,\ell,g}\})$ if
\begin{equation}\label{eq:MC}
    \hat\fp(e^{\fm}) = 0. 
\end{equation}
\end{Definition}

Here we view $\fm$ as a filtered $\IBL_\infty$-morphism from the
trivial $\IBL_\infty$-algebra ${\bf 0}$ to $(C,\fp)$ whose
$(0,\ell,g)$ term sends $1\in 
R=\wh E_0{\bf 0}$ to $\fm_{\ell,g}\in\wh E_\ell C$. Then the Maurer-Cartan 
equation~\eqref{eq:MC} is just equation~\eqref{eq:mor3} for the
corresponding filtered $\IBL_\infty$-morphism. 
In view of this observation and Lemma~\ref{lem:filt1}, the left hand
side of (\ref{eq:MC}) converges with respect to the metric induced by
the filtration. 

%\begin{rem}
%Without assumption (A), the left-hand side in equation~\eqref{eq:MC}
%need not converge. For example, the coefficient corresponding to $g=0$
%and $\ell=1$ is given by the possibly infinite sum
%$\sum_{k=1}^\infty\fp_{k,1,0}(\fm_{1,0},\dots,\fm_{1,0})$. Such
%situations can be dealt with in the filtered case if $\fm_{1,0}$ is
%positive with respect to the filtration. 
%\end{rem}
%
For later reference, we record the following observation.
\begin{lem}\label{lem:MC}
Suppose that $(C,\{\fp_{k,\ell,g}\})$ is a filtered $\text{\rm dIBL}$-algebra,
i.e.~its only nonvanishing terms are $d=\fp_{1,1,0}$, $\fp_{2,1,0}$
and $\fp_{1,2,0}$, and the only nonvanishing term in $\fm$ is
$\fm_{1,0}$. Then the Maurer-Cartan equation~\eqref{eq:MC} is
equivalent to 
\begin{equation}\label{eq:MC2}
    d\fm_{1,0} + \frac{1}{2}\fp_{2,1,0}(\fm_{1,0}, \fm_{1,0}) = 0,\qquad
    \fp_{1,2,0}(\fm_{1,0})=0. 
\end{equation}
\end{lem}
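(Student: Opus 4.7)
\textbf{Proof plan for Lemma~\ref{lem:MC}.}

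The plan is to expand $\hat\fp(e^\fm) = 0$ explicitly and compare terms according to which symmetric power $\wh E_n C$ they land in. Since the only nonzero Maurer–Cartan component is $\fm_{1,0}$ (which has $g=0$ contributing a factor $\hbar^{-1}$), the exponential simplifies to
\[
   e^\fm = \sum_{n=0}^{\infty}\frac{1}{n!}\,\fm_{1,0}^{\odot n}\,\hbar^{-n},
\]
where $\fm_{1,0}^{\odot n}\in \wh E_n C$. Similarly, because only $\fp_{1,1,0}$, $\fp_{2,1,0}$, $\fp_{1,2,0}$ are nonzero, the operator $\hat\fp$ reduces to the three terms
\[
   \hat\fp = \hat\fp_{1,1,0} + \hat\fp_{2,1,0}\,\hbar + \hat\fp_{1,2,0},
\]
after setting $\tau=1$.

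Next, I would evaluate each piece on $\fm_{1,0}^{\odot n}$ using Remark after~\eqref{eq:hat}: since $\hat\fp_{1,1,0}$ and $\hat\fp_{1,2,0}$ are derivations ($k=1$) and $\hat\fp_{2,1,0}$ is a differential operator of order $2$ ($k=2$), one obtains
\begin{align*}
   \hat\fp_{1,1,0}(\fm_{1,0}^{\odot n}) &= n\,(d\fm_{1,0})\odot \fm_{1,0}^{\odot (n-1)},\\
   \hat\fp_{1,2,0}(\fm_{1,0}^{\odot n}) &= n\,\fp_{1,2,0}(\fm_{1,0})\odot \fm_{1,0}^{\odot (n-1)},\\
   \hat\fp_{2,1,0}(\fm_{1,0}^{\odot n}) &= \tfrac{n(n-1)}{2}\,\fp_{2,1,0}(\fm_{1,0},\fm_{1,0})\odot \fm_{1,0}^{\odot (n-2)}.
\end{align*}
Substituting into $\hat\fp(e^\fm)$, the combinatorial factors combine neatly after the reindexing $m=n-1$ (resp.\ $m=n-2$), yielding
\[
   \hat\fp(e^\fm) = \sum_{m=0}^{\infty}\frac{\hbar^{-m-1}}{m!}\Bigl(d\fm_{1,0}+\tfrac{1}{2}\fp_{2,1,0}(\fm_{1,0},\fm_{1,0})\Bigr)\odot \fm_{1,0}^{\odot m} + \sum_{m=0}^{\infty}\frac{\hbar^{-m-1}}{m!}\,\fp_{1,2,0}(\fm_{1,0})\odot \fm_{1,0}^{\odot m}.
\]

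Finally, I would observe that the two sums take values in disjoint symmetric powers: the $m$-th term of the first sum lies in $\wh E_{m+1}C$, while the $m$-th term of the second lies in $\wh E_{m+2}C$ (since $\fp_{1,2,0}(\fm_{1,0})\in \wh E_2 C$). Thus $\hat\fp(e^\fm)=0$ forces each summand to vanish separately, and the $m=0$ instances are exactly the two equations in~\eqref{eq:MC2}. Conversely, if these hold then multiplying by $\fm_{1,0}^{\odot m}$ shows every higher $m$ vanishes as well. The main (mild) obstacle I anticipate is bookkeeping: checking that the action of $\hat\fp_{k,\ell,g}$ on the graded symmetric power $\fm_{1,0}^{\odot n}$ produces exactly the combinatorial factors above, keeping track of the Koszul signs (trivial here because $\fm_{1,0}$ has even shifted degree, as $|\fm_{1,0}|_{\text{grading}}=2d$ and hence $|\fm_{1,0}|=2d-1$, but still the sign analysis must be done carefully via Remark~\ref{rem:tensor}).
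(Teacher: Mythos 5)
Your proposal is correct and is essentially the paper's own argument: the paper computes $\hat\fp(e^\fm) = \bigl(\fp_{1,1,0}(\fm_{1,0}) + \tfrac12\fp_{2,1,0}(\fm_{1,0},\fm_{1,0}) + \fp_{1,2,0}(\fm_{1,0})\bigr)\,(\hbar^{-1}e^\fm)$ using exactly the derivation/second-order-operator properties you invoke, and reads off the equivalence by separating the $\wh E_1C$- and $\wh E_2C$-valued parts as you do. One small correction to your parenthetical on signs: $\fm_{1,0}$ has degree $2d$ already as an element of $E_1C=C[1]$ (not $2d-1$), which is precisely why its shifted degree is even and the Koszul signs are trivial as you claim.
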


\begin{proof}
We compute 
\begin{align*}
   \hat\fp(e^\fm) 
   &= (\hat\fp_{1,1,0} + \hat\fp_{2,1,0}\hbar +
   \hat\fp_{1,2,0})(e^{\hbar^{-1}\fm_{1,0}})\\
   &= \left(\fp_{1,1,0}(\fm_{1,0}) + \frac 1 2
   \fp_{2,1,0}(\fm_{1,0}, \fm_{1,0})  
   + \fp_{1,2,0}(\fm_{1,0})\right) (\hbar^{-1} e^\fm),
\end{align*}
implying the equivalence.
\end{proof}

{\bf Twisted $\IBL_\infty$-structures. }
The next proposition shows that a Maurer-Cartan element gives
rise to a ``twisted'' $\IBL_\infty$-structure.  

\begin{prop}\label{prop:MC}
Let $\{\fm_{\ell,g}\}_{\ell\geq 1,g\geq 0}$ be a Maurer-Cartan element
in the strict filtered $\IBL_\infty$-algebra
$(C,\{\fp_{k,\ell,g}\})$. Then there exists a unique strict filtered
$\IBL_\infty$-structure $\{\fp^\fm_{k,\ell,g}\}_{k,\ell\geq 1,g\geq 0}$
on $C$ satisfying
\begin{equation}\label{eq:twist}
    \wh{\fp^\fm} = e^{-\fm}\hat\fp(e^{\fm}\cdot):\wh EC\{\hbar\} \to
    \wh EC\{\hbar\}. 
\end{equation}
\end{prop}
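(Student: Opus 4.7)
The strategy is to extract the candidate operations $\fp^\fm_{k,\ell,g}$ explicitly from the defining formula, verify they have the required filtration and grading properties, and then deduce the master equation formally.

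First, unraveling the formula $\wh{\fp^\fm}=e^{-\fm}\hat\fp(e^\fm\cdot)$ applied to $c_1\cdots c_k\in\wh E_kC$, one distributes $\hat\fp$ over $e^\fm=\sum_{s\geq 0}\fm^{s}/s!$. Since any factors of $\fm$ not used by $\hat\fp$ are canceled by the outer $e^{-\fm}$, one obtains the explicit formula
\begin{equation*}
   \fp^\fm_{k,\ell,g}(c_1\cdots c_k) = \sum_{s\geq 0}\sum_{ {g'+\sum g_i=g} \atop {\ell_i\geq 1,\,g_i,g'\geq 0} } \frac{1}{s!}\fp_{k+s,\ell,g'}\bigl(c_1\cdots c_k\cdot \fm_{\ell_1,g_1}\cdots \fm_{\ell_s,g_s}\bigr).
\end{equation*}
These formulas uniquely determine the $\fp^\fm_{k,\ell,g}$, giving uniqueness. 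Convergence of the sum and the filtration bound $\|\fp^\fm_{k,\ell,g}\|\geq\gamma\,\chi_{k,\ell,g}$ (strict on the triples in~\eqref{eq:f-deg2}) follow by an argument parallel to Lemma~\ref{lem:filt1}: viewing $\fm$ as a morphism from the trivial $\IBL_\infty$-algebra to $C$, the strict inequality in~\eqref{eq:energycondforMC} bounds the number of factors $\fm_{\ell_i,g_i}$ of the critical types $(1,0)$ and $(2,0)$ in any term of bounded filtration, while the fact that $\chi_{0,\ell_i,g_i}\leq 0$ for the remaining types bounds their count and sizes. The correct grading degree $-2d(k+g-1)-1$ is verified by a direct Euler-characteristic computation using $\chi_{k+s,\ell,g'}+\sum_i\chi_{0,\ell_i,g_i}=\chi_{k,\ell,g}+(s-\sum_i\ell_i)$.

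Strictness of $\fp^\fm$ splits into two parts. The vanishing $\fp^\fm_{k,0,g}=0$ is inherited directly from strictness of $\fp$, since every $\fp_{k+s,0,g'}$ appearing in the formula is zero. The vanishing $\fp^\fm_{0,\ell,g}=0$, applied to $1\in\wh E_0C$, is precisely the $E_\ell C$ component of $e^{-\fm}\hat\fp(e^\fm)=0$, i.e., the Maurer-Cartan equation~\eqref{eq:MC}.

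Finally, the master equation follows from the formal identity
\begin{equation*}
   \wh{\fp^\fm}\circ\wh{\fp^\fm} = e^{-\fm}\,\hat\fp\bigl(e^\fm\cdot e^{-\fm}\,\hat\fp(e^\fm\cdot)\bigr) = e^{-\fm}(\hat\fp\circ\hat\fp)(e^\fm\cdot) = 0,
\end{equation*}
once one observes that multiplication by $e^{\pm\fm}$ defines mutually inverse filtration-preserving operators on $\wh EC\{\hbar\}$ (with respect to the symmetric product). The main technical obstacle will be verifying that the operator $e^{-\fm}\hat\fp(e^\fm\cdot)$ constructed from the $\fp^\fm_{k,\ell,g}$ is indeed their extension in the sense of~\eqref{eq:hat}; this is a combinatorial bookkeeping exercise showing that multiplication by $e^\fm$ intertwines correctly with the ``pick $k$ of $k+s$ inputs'' decomposition used to define extensions, after which everything else reduces to the arguments sketched above.
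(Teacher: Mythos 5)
Your overall strategy coincides with the paper's: write down the candidate operations explicitly, establish convergence and the filtration bound by the index-splitting argument of Lemma~\ref{lem:filt1}, read off strictness from strictness of $\fp$ together with the Maurer--Cartan equation, and obtain $\wh{\fp^\fm}\circ\wh{\fp^\fm}=0$ from the formal conjugation identity. However, your explicit formula for $\fp^\fm_{k,\ell,g}$ is not correct, and this is where the substance of the proof lies. As written, $\fp_{k+s,\ell,g'}\bigl(c_1\cdots c_k\cdot\fm_{\ell_1,g_1}\cdots\fm_{\ell_s,g_s}\bigr)$ does not even typecheck: $\fm_{\ell_i,g_i}$ lives in $\wh E_{\ell_i}C$, so the argument lies in $\wh E_{k+\sum\ell_i}C$ while $\fp_{k+s,\ell,g'}$ expects $k+s$ inputs; equality forces all $\ell_i=1$. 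More importantly, the claim that ``any factors of $\fm$ not used by $\hat\fp$ are canceled by the outer $e^{-\fm}$'' is false: $e^{-\fm}$ cancels only those terms in which entire factors $\fm_{\ell_i,g_i}$ are untouched. A factor $\fm_{\ell_i,g_i}$ with $\ell_i\geq 2$ can be \emph{partially} consumed by a component of $\hat\fp$, its remaining outputs surviving as extra outputs of $\fp^\fm_{k,\ell,g}$, and it can also be consumed along \emph{several} slots of the same component, which raises the genus. Your formula omits all such terms; already $\fp^\fm_{1,2,0}$ receives a contribution from $\hat\fp_{2,1,0}(\fm_{2,0}\,\cdot)$ in which one output of $\fm_{2,0}$ is glued and the other passes through, which your sum cannot produce. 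The correct expression is the sum~\eqref{eq:filt5} over connected gluings, involving the extensions $\hat\fp_{k^-,\ell^-,g^-}$ and the constraints $\ell_1+\dots+\ell_r+\ell^--k^-=\ell-k$ and $g_1+\dots+g_r+g^-+k^--k-r=g$.

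As a consequence, the verifications you run against your formula (the Euler-characteristic identity for the grading and the filtration estimate) are performed on the wrong sum. The saving grace is that your filtration strategy --- isolate the critical cap types $(\ell_i,g_i)\in\{(1,0),(2,0)\}$, use the strict inequality in~\eqref{eq:energycondforMC} to bound their number, and control the rest via additivity of Euler characteristics --- is exactly the argument that works for the correct sum, and your treatment of uniqueness, of strictness (the $(0,\ell,g)$ components vanishing by the Maurer--Cartan equation, the $(k,0,g)$ ones by strictness of $\fp$), and of the master equation is sound. The gap is thus localized precisely in the step you defer as a ``combinatorial bookkeeping exercise'': carrying it out would reveal that your candidate operations do not reproduce the operator $e^{-\fm}\hat\fp(e^\fm\cdot)$.
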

%\marginpar{\tiny I think we do not need to assume that $C$ is strict in this 
%proposition either. Kenji Dec 25.}

\begin{proof}
The map $\fp^\fm_{k,\ell,g}:\wh E_kC\to \wh E_\ell C$ is given by the sum 
\begin{equation}\label{eq:filt5}
    \sum_{r\geq 0}
    \sum_{ { {k^-\geq k,\;\ell^-\leq\ell}
    \atop {\ell_1+\dots+\ell_r+\ell^--k^-=\ell-k} }
    \atop {g_1+\dots+g_r+g^-+k^--k-r=g} } \hspace{-.7cm}
    \frac{1}{r!}\hat\fp_{k^-,\ell^-,g^-}
    (\fm_{\ell_1,g_1}\cdots\fm_{\ell_r,g_r}\cdot)_{conn}
\end{equation}
corresponding to complete gluings of $r$ connected surfaces of
signatures $(0,\ell_i,g_i)$, plus $k$ trivial cylinders, 
at their outgoing ends to the ingoing
ends of a connected surface of signature $(k^-,\ell^-,g^-)$, plus an
appropriate number of trivial cylinders, to obtain a {\em connected}
surface of signature $(k,\ell,g)$. In particular, for each term in
this sum the Euler characteristics satisfy 
$$
   \chi_{k,\ell,g} = \chi_{k^-,\ell^-,g^-} +
   \sum_{i=1}^r\chi_{k_i,\ell_i,g_i}.  
$$
Let us write $\{1,\dots,r\}$ as the disjoint union $I\cup J$,
where $i\in I$ iff $(\ell_i,g_i)$ equals $(1,0)$ or $(2,0)$, and $i\in
J$ iff $\chi_{0,\ell_i,g_i}<0$. Let $\delta>0$ be such that
$\|\fm_{\ell_i,g_i}\|-\gamma\,\chi_{0,\ell_i,g_i}\geq\delta$ for
all $i\in I$. As in the proof of Lemma~\ref{lem:filt1}, the filtration 
conditions on $\fp$ and $\fm$ imply
\begin{align}\label{eq:filt6}
   \|\fp^\fm_{k,\ell,g}\| - \gamma\,\chi_{k,\ell,g} 
   &\geq \delta|I| \geq 0.
\end{align} 
This shows that $|I|$ is uniformly bounded, i.e.~bounded from above 
in terms of $k,\ell,g$ and $\|\fp^\fm_{k,\ell,g}\|$. Then the equation
for the Euler characteristics provides uniform bounds on $|J|$ as 
well as all the $\ell_j,g_j$ for $j\in J$. Finally, the equation
$\ell_1+\dots+\ell_r+\ell^--k^-=\ell-k$ provides a uniform bound on
$k^-$, which proves convergence of  
$\fp^\fm_{k,\ell,g}$ with respect to the
filtration. Inequality~\eqref{eq:filt6} shows that $\fp^\fm_{k,\ell,g}$
satisfies~\eqref{eq:p-deg}. The equation
$\wh{\fp^\fm}\circ\wh{\fp^\fm} = 0$ follows immediately from
$\wh{\fp^\fm} = e^{-\fm}\hat\fp(e^{\fm}\cdot)$ and
$\hat\fp\circ\hat\fp=0$. 
\end{proof}

\begin{Remark}
(1) Note that, although $\frak m$ contains negative powers of $\hbar$,
  the map $\wh{\fp^\fm}$ does not contain negative powers of $\hbar$.

(2) In the case of filtered $\IBL_{\infty}$-algebras over the
universal Novikov ring $\Lambda_0$, condition
(\ref{eq:energycondforMC}) just says that $\frak m_{1,0},\frak
m_{2,0} \equiv 0 \mod \Lambda_+$. 
\end{Remark}

{\bf Push-forward of Maurer-Cartan elements. }
Next consider a strict filtered $\IBL_\infty$-morphism
$\ff=\{\ff_{k,\ell,g}\}:(C,\{\fp_{k,\ell,g}\})\to
(D,\{\fq_{k,\ell,g}\})$ between strict filtered $\IBL_\infty$-algebras  
and a Maurer-Cartan element $\{\fm_{\ell,g}\}$ in
$(C,\{\fp_{k,\ell,g}\})$. The interpretation of Maurer-Cartan elements
as filtered $\IBL_\infty$-morphisms from the trivial
$\IBL_\infty$-algebra and Lemma~\ref{lem:filtered-comp} immediately
imply  

\begin{lem}\label{lem:pushforward}
There exists a unique Maurer-Cartan element
$\{\ff_*\fm_{\ell,g}\}$ in $(D,\{\fq_{k,\ell,g}\})$ satisfying
$$
    e^\ff(e^\fm) = e^{\ff_*\fm}.
$$ \qed
\end{lem}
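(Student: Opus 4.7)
The plan is to unpack the hint preceding the lemma: interpret Maurer--Cartan elements as filtered $\IBL_\infty$-morphisms out of the trivial algebra, and then invoke Lemma~\ref{lem:filtered-comp} on composition.

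First, I would verify in detail the identification of Maurer--Cartan elements with morphisms from the trivial $\IBL_\infty$-algebra $\mathbf{0}$ (whose underlying module is $0$ and whose structure maps vanish). A collection $\{\fm_{\ell,g}\}_{\ell \geq 1, g \geq 0}$ with $\fm_{\ell,g}\in \wh E_\ell C$ corresponds to a collection of maps $\fm_{0,\ell,g}\colon \wh E_0 \mathbf{0} = R \to \wh E_\ell C$, sending $1\mapsto \fm_{\ell,g}$. The grading and filtration conditions on a Maurer--Cartan element, namely $|\fm_{\ell,g}|_{grading}=-2d(g-1)$ and $\|\fm_{\ell,g}\|\geq \gamma\chi_{0,\ell,g}$ (strict for $(\ell,g)=(1,0),(2,0)$), match exactly the conditions \eqref{eq:f-deg1}--\eqref{eq:f-deg2} imposed on the $(0,\ell,g)$-components of a filtered $\IBL_\infty$-morphism, since $(0,1,0)$ and $(0,2,0)$ lie in the list \eqref{eq:f-deg2}, and the grading degree of a morphism's $(k,\ell,g)$-component at $k=0$ reads $-2d(g-1)$. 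Under this identification, equation \eqref{eq:mor3} for $\fm$ reduces to $\wh\fp(e^\fm)=0$, the Maurer--Cartan equation \eqref{eq:MC}, because $\wh{\mathbf{0}}\equiv 0$.

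Second, given the strict filtered $\IBL_\infty$-morphism $\ff\colon C\to D$ and the morphism $\fm\colon \mathbf{0}\to C$, Lemma~\ref{lem:filtered-comp} yields a unique filtered $\IBL_\infty$-morphism $\ff\diamond \fm\colon \mathbf{0}\to D$ characterized by
\[
   e^{\ff\diamond \fm}=e^\ff e^\fm.
\]
The only possibly nonzero components of $\ff\diamond\fm$ are those of the form $(0,\ell,g)$, and I would set
\[
   \ff_*\fm_{\ell,g}:=(\ff\diamond\fm)_{0,\ell,g}(1)\in \wh E_\ell D.
\]
By the first step, reversing the identification, $\{\ff_*\fm_{\ell,g}\}$ is a Maurer--Cartan element of $(D,\fq)$; the required filtration and grading bounds are precisely those already furnished by the fact that $\ff\diamond\fm$ is a filtered $\IBL_\infty$-morphism, and the Maurer--Cartan equation $\wh\fq(e^{\ff_*\fm})=0$ is the specialization of \eqref{eq:mor3} for $\ff\diamond\fm$, using again that the structure maps on $\mathbf{0}$ vanish.

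Uniqueness would follow from the uniqueness part of Lemma~\ref{lem:filtered-comp}: any other Maurer--Cartan element $\tilde{\fm}$ with $e^{\tilde\fm}=e^\ff e^\fm$ defines, through the same identification, a filtered $\IBL_\infty$-morphism $\mathbf{0}\to D$ satisfying the composition equation, hence must equal $\ff\diamond\fm$. The only substantive thing to check is bookkeeping: that each individual bound on the components of $\ff\diamond\fm$ produced by Lemma~\ref{lem:filtered-comp} translates exactly to the bounds required on $\ff_*\fm_{\ell,g}$; this is the step that could mildly obscure an otherwise formal argument, and I would present it by matching Euler characteristics $\chi_{0,\ell,g}$ and the distinguished-triple list \eqref{eq:f-deg2} term by term. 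Everything else is formal and contained in the preceding sections.
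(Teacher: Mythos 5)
Your proposal is correct and follows exactly the paper's own argument: the lemma is stated in the text as an immediate consequence of viewing Maurer--Cartan elements as filtered $\IBL_\infty$-morphisms from the trivial algebra and applying Lemma~\ref{lem:filtered-comp} to the composition with $\ff$. The extra bookkeeping you supply on matching the grading and filtration conditions is a faithful elaboration of that same route.
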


We call $\{\ff_*\fm_{\ell,g}\}$ the {\em push-forward} of the
Maurer-Cartan element $\{\fm_{\ell,g}\}$ under the morphism
$\{\ff_{k,\ell,g}\}$.

\begin{prop}\label{prop:MC-homotopy}
In the situation of Lemma~$\ref{lem:pushforward}$, 
there exists a unique strict filtered $\IBL_\infty$-morphism 
$\{\ff^\fm_{k,\ell,g}\}$ from $(C,\{\fp^\fm_{k,\ell,g}\})$ to
$(D,\{\fq^{\ff_*\fm}_{k,\ell,g}\})$ satisfying
$$
    e^{\ff^\fm} = e^{-\ff_*\fm}e^\ff(e^\fm\cdot):EC\{\hbar\}\to
    ED\{\hbar\},
$$
Moreover, if $\{\ff_{k,\ell,g}\}$ is a strict gapped filtered
$\IBL_\infty$-homotopy equivalence, then so is
$\{\ff^\fm_{k,\ell,g}\}$.  
\end{prop}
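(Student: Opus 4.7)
For the existence and uniqueness in the first assertion, I would proceed as with the construction of the composition of morphisms in Lemma~\ref{lem:filtered-comp}: expanding $e^{\ff^\fm} = e^{-\ff_*\fm}e^\ff(e^\fm\cdot)$ inductively determines the components $\ff^\fm_{k,\ell,g}$, and convergence together with the filtration conditions \eqref{eq:f-deg1}--\eqref{eq:f-deg2} follows from estimates of the same flavor as in the proof of Proposition~\ref{prop:MC} (bounding the number of ``critical-signature'' contributions that can arise in each component). That $\ff^\fm$ is indeed an $\IBL_\infty$-morphism from $(C,\fp^\fm)$ to $(D,\fq^{\ff_*\fm})$ is then the direct algebraic calculation
\[
e^{\ff^\fm}\wh{\fp^\fm} = e^{-\ff_*\fm}e^\ff\bigl(\hat\fp(e^\fm\cdot)\bigr) = e^{-\ff_*\fm}\hat\fq\bigl(e^\ff(e^\fm\cdot)\bigr) = e^{-\ff_*\fm}\hat\fq(e^{\ff_*\fm}\cdot e^{\ff^\fm}) = \wh{\fq^{\ff_*\fm}}\,e^{\ff^\fm},
\]
where the three substitutions use, respectively, the definition of $\fp^\fm$, the morphism property of $\ff$, and the defining equation of $\ff_*\fm$ from Lemma~\ref{lem:pushforward}.

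For the homotopy-equivalence assertion, the key algebraic tool is the \emph{functoriality} of twisting. Directly from the defining formula one checks
\[
(\id_C)^\fm = \id_{C^\fm}, \qquad (\fg\diamond\ff)^\fm = \fg^{\ff_*\fm}\diamond\ff^\fm,
\]
and for a morphism $\fF$ into a path object with projections $\eps_i$, the identity $(\eps_i\diamond\fF)^\fm = \eps_i^{\fF_*\fm}\diamond\fF^\fm$. Let $\fg: D \to C$ be a homotopy inverse of $\ff$, realized by $\fF: C \to \fC$ with $\eps_0^C\diamond\fF = \fg\diamond\ff$ and $\eps_1^C\diamond\fF = \id_C$, and similarly $\fG: D \to \fD$ for $\ff\diamond\fg \sim \id_D$. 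The natural candidate for a homotopy inverse to $\ff^\fm$ is $\fg^{\ff_*\fm}$; functoriality gives $\fg^{\ff_*\fm}\diamond\ff^\fm = (\fg\diamond\ff)^\fm$ as a morphism $C^\fm \to C^{\fg_*\ff_*\fm}$.

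The main obstacle is the mismatch of targets: $C^\fm$ and $C^{\fg_*\ff_*\fm}$ are twisted by distinct, though gauge-equivalent, Maurer-Cartan elements. To bridge this I would use the fact that $\fF_*\fm$ is itself a Maurer-Cartan element in $\fC$ satisfying $(\eps_0^C)_*\fF_*\fm = \fg_*\ff_*\fm$ and $(\eps_1^C)_*\fF_*\fm = \fm$, and promote $\fC^{\fF_*\fm}$ (together with the twisted morphisms $(\eps_i^C)^{\fF_*\fm}$ and the twisted homotopy $\fF^\fm: C^\fm \to \fC^{\fF_*\fm}$) to a ``generalized path object'' interpolating between $C^\fm$ and $C^{\fg_*\ff_*\fm}$. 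Twisting property~(d) of the original path object yields a right inverse to $(\eps_0^C)^{\fF_*\fm}\oplus(\eps_1^C)^{\fF_*\fm}$, and the inductive obstruction-theoretic scheme of Proposition~\ref{prop:hom-main} then produces a canonical $\IBL_\infty$-homotopy equivalence $\psi: C^{\fg_*\ff_*\fm} \to C^\fm$ with $\psi\diamond(\fg\diamond\ff)^\fm \sim \id_{C^\fm}$. Thus $\psi\diamond\fg^{\ff_*\fm}$ is a homotopy inverse to $\ff^\fm$ on the $C$-side, and the symmetric construction using $\fG$ completes the verification. The hardest step is precisely this construction of $\psi$, which amounts to an $\IBL_\infty$-refinement of the classical principle that gauge-equivalent Maurer-Cartan elements produce isomorphic twisted structures; it requires carefully iterating the obstruction machinery in parallel with the proofs of Propositions~\ref{prop:modelfil} and~\ref{prop:homotopy-transfer2} in the presence of the interpolating Maurer-Cartan element $\fF_*\fm$.
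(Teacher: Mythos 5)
Your treatment of existence, uniqueness, convergence and the morphism property coincides with the paper's: the same expansion of $e^{-\ff_*\fm}e^\ff(e^\fm\cdot)$ into connected gluings with the Euler-characteristic bookkeeping of Lemma~\ref{lem:filtered-comp} and Proposition~\ref{prop:MC}, followed by the same chain of substitutions $e^{\ff^\fm}\wh{\fp^\fm}=e^{-\ff_*\fm}e^\ff\hat\fp(e^\fm\cdot)=e^{-\ff_*\fm}\hat\fq\, e^\ff(e^\fm\cdot)=\wh{\fq^{\ff_*\fm}}e^{\ff^\fm}$.

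For the homotopy-equivalence assertion your route diverges from the paper's and contains a genuine gap. The paper does not construct a homotopy inverse at all: it reads off from the explicit formula that
$$
\ff^\fm_{1,1,0} = \ff_{1,1,0} + \sum_{k\geq 1}\tfrac{1}{k!}\,\ff_{k+1,1,0}(\fm_{1,0}\cdots\fm_{1,0}\,\cdot\,),
$$
where the $k$-th correction has filtration degree at least $k\delta>0$; a standard spectral sequence comparison then shows that $\ff^\fm_{1,1,0}$ is a chain homotopy equivalence because $\ff_{1,1,0}$ is, and Proposition~\ref{prop:filt-inverse} (the filtered Whitehead theorem) immediately upgrades this to a filtered $\IBL_\infty$-homotopy equivalence. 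Your plan instead requires comparing $C^{\fm}$ with $C^{\fg_*\ff_*\fm}$ via an equivalence $\psi$ built from a ``generalized path object'' $\fC^{\fF_*\fm}$. But the statement that gauge-equivalent Maurer-Cartan elements yield homotopy-equivalent twists is precisely the proposition the paper derives \emph{from} Proposition~\ref{prop:MC-homotopy}, so you cannot invoke it, and your sketch does not actually construct $\psi$: you would have to verify that $\fC^{\fF_*\fm}$ with the twisted projections $(\eps_i)^{\fF_*\fm}$ satisfies the path-object axioms --- in particular condition (c), whose verification again reduces to showing that twisting by a Maurer-Cartan element preserves the property of being a chain homotopy equivalence on $(1,1,0)$-parts, i.e.\ exactly the spectral-sequence step your argument is trying to avoid --- and you would have to identify the two ends of $\fF^\fm$ with $(\fg\diamond\ff)^\fm$ and a morphism into $C^\fm$. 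Acknowledging that this is ``the hardest step'' does not discharge it. The repair is to abandon the explicit-inverse strategy and simply check the linear part as above.
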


\begin{proof}
As usual, we translate the equation $e^{\ff^\fm} =
e^{-\ff_*\fm}e^\ff(e^\fm\cdot)$ for disconnected surfaces into one for
connected surfaces. This shows that the map $\ff^\fm_{k,\ell,g}:E_kC\to
E_\ell D$ is given by the sum  
\begin{align}\label{eq:filt7}
   \hspace{-1.7cm}
    \sum_{ { { 
    \ell_1^-+\dots+\ell_{r^-}^-=\ell }
    \atop {\sum k_i^- - \sum\ell_i^+ = k} }
    \atop {\sum g_i^+ + \sum g_i^- + \sum \ell_i^++ - r^+-r^-+1 = g} }
    \hspace{-1.8cm}
    \frac{1}{r^+!r^-!}
    (\ff_{k_1^-,\ell_1^-,g_1^-}\odot\cdots\odot
    \ff_{k_{r^-}^-,\ell_{r^-}^-,g_{r^-}^-})
    (\fm_{\ell_1^+,g_1^+}\cdots
    \fm_{\ell_{r^+}^+,g_{r^+}^+}\cdot)_{conn} 
\end{align}
corresponding to complete gluings of $r^+$ connected surfaces of
signatures $(0,\ell_i^+,g_i^+)$, plus $k$ trivial cylinders, at their
outgoing ends to the ingoing ends of $r^-$ connected surfaces of
signatures $(k_i^-,\ell_i^-,g_i^-)$ to obtain a {\em connected}
surface of signature $(k,\ell,g)$. In particular, for each term in
this sum the Euler characteristics satisfy 
$$
   \chi_{k,\ell,g} = \sum_{i=1}^{r^+}\chi_{0,\ell_i^+,g_i^+} +
   \sum_{i=1}^{r^-}\chi_{k_i^-,\ell_i^-,g_i^-}.   
$$
Let us set $k_i^+:=0$ and write $\{1,\dots,r^\pm\}$ as the disjoint
union $I^\pm\cup J^\pm\cup K^\pm$, where $i\in I^\pm$ iff
$(k_i^\pm,\ell_i^\pm,g_i^\pm)$ is one of the triples 
in~\eqref{eq:f-deg2}, $i\in J^\pm$ iff
$\chi_{k_i^\pm,\ell_i^\pm,g_i^\pm}<0$, and $i\in K^\pm$ iff
$(k_i^\pm,\ell_i^\pm,g_i^\pm)=(1,1,0)$. Note that $K^+=\emptyset$. 
Let $\delta>0$ be such that
$\|\ff_{k_i^-,\ell_i^-,g_i^-}\|-\gamma\,\chi_{k_i^-,\ell_i^-,g_i^-}\geq\delta$
for all $i\in I^-$ and
$\|\fm_{\ell_i^+,g_i^+}\|-\gamma\,\chi_{0,\ell_i^+,g_i^+}\geq\delta$
for all $i\in I^+$. As in the proof of Lemma~\ref{lem:filtered-comp},
the filtration conditions on $\ff$ and $\fm$ imply
\begin{align}\label{eq:filt8}
   \|\ff^\fm_{k,\ell,g}\| - \gamma\,\chi_{k,\ell,g} 
   &\geq \delta (|I^+| + |I^-|) \geq 0.
\end{align} 
This shows that $|I^+|$ and $|I^-|$ are uniformly bounded,
i.e.~bounded from above in terms of $k,\ell,g$ and
$\|\ff^\fm_{k,\ell,g}\|$. Then the equation for the Euler characteristics
provides uniform bounds on
$|J^+|$ and $|J^-|$ as well as all the $k_j^-,\ell_j^\pm,g_j^\pm$
for $j\in J^\pm$. Finally, the fact that 
each $i\in K^-$ contributes $1$ to the sum
$\ell_1^-+\dots+\ell_{r^-}^-=\ell$ yields a uniform bound on
$|K^-|$. Hence the number of terms in the sum 
in~\eqref{eq:filt7} is uniformly bounded, which proves convergence of  
$\ff^\fm_{k,\ell,g}$ with respect to the
filtration. Inequality~\eqref{eq:filt8} shows that $\ff^\fm_{k,\ell,g}$
satisfies~\eqref{eq:f-deg1}, where the inequality is strict if $I^+$
or $I^-$ is nonempty. If $I^+$ and $I^-$ are both empty then either
$\chi_{k,\ell,g}<0$ (if $J^+$ or $J^-$ are nonempty), or (if $J^+$ and
$J^-$ are both empty) $(k_i^-,\ell_i^-,g_i^-)=(1,1,0)$ for all
$i$ and hence $(k,\ell,g)=(1,1,0)$. This shows
that $\ff^\fm_{k,\ell,g}$ also satisfies~\eqref{eq:f-deg2}. 
That $\ff^\fm$ defines an $\IBL_\infty$-morphism now follows from
\begin{align*}
    \wh{\fq^{\ff_*\fm}}e^{\ff^\fm}
    &= e^{-\ff_*\fm}\hat\fq e^{\ff_*\fm}e^{\ff^\fm}
    = e^{-\ff_*\fm}\hat\fq e^\ff (e^\fm\cdot) \cr
    &= e^{-\ff_*\fm}e^\ff \hat\fp (e^\fm\cdot)
    = e^{-\ff_*\fm}e^\ff (e^\fm \wh{\fp^\fm}\cdot) \cr
    &= e^{\ff^\fm}\wh{\fp^\fm}.
\end{align*}
Note that
$$
   \ff^\fm_{1,1,0} = \ff_{1,1,0} +
   \underbrace{\sum_{k=1}^\infty\frac{1}{k!}\ff_{k+1,1,0}(\fm_{1,0}\cdots\fm_{1,0}\cdot)}_{=:F^+},  
$$
where the $k$-th term in the sum has filtration degree at least
$k\delta>0$. 
%In particular, the $R$-reductions satisfy
%$$
%   \ol\fg_{1,1,0}=\ol\ff_{1,1,0}:(\ol
%   C,\ol\fp_{1,1,0}=\ol\fp^\fm_{1,1,0})\to (\ol
%   D,\ol\fq_{1,1,0}=\ol\fq^{\ff_*\fm}_{1,1,0}). 
%$$ 

Finally, suppose that $\{\ff_{k,\ell,g}\}$ is a strict gapped filtered
$\IBL_\infty$-homotopy equivalence. Then $\ff_{1,1,0}$ is a
chain homotopy equivalence. 
%An inductive argument using the
%filtration, similar to the proof of Proposition~\ref{prop:inverse2},
A standard spectral sequence argument using the filtration 
(cf. \cite[chapter 3]{McC01}) now
shows that $\ff^\fm_{1,1,0}$ is also a chain homotopy equivalence. 
By Proposition~\ref{prop:filt-inverse}, this implies that
$\{\ff^\fm_{k,\ell,g}\}$ is a filtered $\IBL_\infty$-homotopy equivalence.  
\end{proof}

{\bf Gauge equivalence of Maurer-Cartan elements.} 
We conclude this section with a brief discussion of gauge equivalence,
which will be important for geometric applications in SFT and
Lagrangian Floer theory. 

\begin{defn}\label{def;Gequiv}
Let $\fm_0$, $\fm_1$ be Maurer-Cartan elements of a strict $G$-gapped
filtered $\IBL_{\infty}$-algebra $C$, and let $\frak C$ be a path
object for $C$. We say $\fm_0$ is {\em gauge equivalent} to $\fm_1$ 
if there exists a Maurer-Cartan element $\frak M$
of $\frak C$ such that
$$
(\eps_0)_* \frak M = \fm_0, 
\qquad 
(\eps_1)_* \frak M = \fm_1.
$$
\end{defn}

\begin{prop}\label{prop:gaugeMC}
\begin{enumerate}[\rm (1)]
\item The notion of gauge equivalence is independent of 
the choice of the path object $\frak C$.
\item
Gauge equivalence is an equivalence relation.
\item
Let $\frak f$, $\frak g$ be strict $G$-gapped filtered
$\IBL_{\infty}$-morphisms from $C$ to $D$, and $\fm_0$, $\fm_1$ be
Maurer-Cartan elements of $C$. If $\frak f$ is homotopic to $\frak
g$ and $\fm_0$ is gauge equivalent to $\fm_1$, then $\frak f_*\fm_0$
is gauge equivalent to $\frak g_*{\fm_1}$.
\end{enumerate}
\end{prop}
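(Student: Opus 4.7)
The cleanest route is to reinterpret gauge equivalence as a special case of $\IBL_\infty$-homotopy of morphisms, and then invoke the filtered gapped version of Proposition~\ref{prop:homotopy-equiv} asserted in Section~\ref{sec:filter}.

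Recall from the paragraph following~\eqref{eq:MC} that a Maurer--Cartan element $\mathfrak{m}=\{\mathfrak{m}_{\ell,g}\}$ in $C$ is the same data as a strict gapped filtered $\IBL_\infty$-morphism $\mathfrak{m}\colon\mathbf{0}\to C$ whose only nonvanishing components are $\mathfrak{m}_{0,\ell,g}:=\mathfrak{m}_{\ell,g}$. Under this identification, Lemma~\ref{lem:pushforward} reads $\mathfrak{f}_*\mathfrak{m}=\mathfrak{f}\diamond\mathfrak{m}$ for any $\IBL_\infty$-morphism $\mathfrak{f}$, while the endpoint condition $(\eps_i)_*\mathfrak{M}=\mathfrak{m}_i$ of Definition~\ref{def;Gequiv} becomes $\eps_i\diamond\mathfrak{M}=\mathfrak{m}_i$, matching Definition~\ref{def:homotopy} verbatim. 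Moreover the energy bound~\eqref{eq:energycondforMC} on $\mathfrak{m}$ is precisely the morphism-filtration condition \eqref{eq:f-deg1}--\eqref{eq:f-deg2} for the triples $(0,\ell,g)$, so the dictionary preserves the strict gapped filtered structure. In particular, a gauge equivalence between $\mathfrak{m}_0$ and $\mathfrak{m}_1$ is nothing but a homotopy between them as morphisms $\mathbf{0}\to C$.

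With this dictionary in hand, parts~(1) and~(2) follow immediately from parts~(a) and~(b) of the (filtered, gapped) Proposition~\ref{prop:homotopy-equiv} applied to morphisms out of $\mathbf{0}$. For part~(3) I would combine the hypotheses $\mathfrak{f}\sim\mathfrak{g}\colon C\to D$ and $\mathfrak{m}_0\sim\mathfrak{m}_1\colon\mathbf{0}\to C$ in Proposition~\ref{prop:homotopy-equiv}(c) to obtain $\mathfrak{f}\diamond\mathfrak{m}_0\sim\mathfrak{g}\diamond\mathfrak{m}_1\colon\mathbf{0}\to D$, which under the dictionary translates exactly to the desired gauge equivalence $\mathfrak{f}_*\mathfrak{m}_0\sim_{\rm gauge}\mathfrak{g}_*\mathfrak{m}_1$.

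The only substantive thing to verify is that the arguments of Section~\ref{sec:hom} really do go through in the strict gapped filtered setting (as asserted but not spelled out after Proposition~\ref{prop:homotopy-transfer2}). The hardest step is transitivity in~(2): given Maurer--Cartan elements $\mathfrak{M}^1\in\mathfrak{C}^1$ and $\mathfrak{M}^2\in\mathfrak{C}^2$ with matching endpoints at $\mathfrak{m}_1$, one must assemble them into a Maurer--Cartan element on the fibered-product path object built in the proof of Proposition~\ref{prop:homotopy-equiv}(b). The identity $(\eps_1^1)_*\mathfrak{M}^1=(\eps_0^2)_*\mathfrak{M}^2$ is exactly what ensures that the pair $(\mathfrak{M}^1_{\ell,g},\mathfrak{M}^2_{\ell,g})$ lies in the fiber product $\mathfrak{P}_\ell$ and hence admits a simultaneous lift through the right inverse $\rho_\ell$ used there; the Maurer--Cartan equation for the lift then reduces componentwise to the two given ones, directly from the definition $\mathfrak{q}_{k,\ell,g}=\rho_\ell\circ(\mathfrak{q}^1_{k,\ell,g}\oplus\mathfrak{q}^2_{k,\ell,g})\circ\pi_k$ of the structure maps of the fibered product. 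All filtration and gap conditions are preserved at each step, since every construction used (lifting morphisms along path objects, fibered products, composition with linear $\eps_i$, push-forward of Maurer--Cartan elements) is controlled by the filtration estimates established in Lemmas~\ref{lem:filt1} and~\ref{lem:filtered-comp}.
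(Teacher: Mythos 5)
Your proof is correct and follows exactly the paper's own argument: the paper disposes of this proposition in one line by identifying Maurer--Cartan elements with $\IBL_\infty$-morphisms from $\mathbf{0}$ and invoking the $G$-gapped filtered version of Proposition~\ref{prop:homotopy-equiv}. Your extra verification of transitivity via the fibered-product path object and of the filtration bounds merely spells out details the paper leaves implicit.
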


Using the fact that a Maurer-Cartan element is identified with a
morphism from $\text{\bf 0}$, Proposition~\ref{prop:gaugeMC}
immediately follows from the $G$-gapped filtered version of
Proposition~\ref{prop:homotopy-equiv}. 
% and \ref{prop:inverse2}.

\begin{prop}
Let $\fm_0$, $\fm_1$ be gauge equivalent Maurer-Cartan elements of a
strict $G$-gapped filtered $\IBL_{\infty}$-algebra
$(C,\{\fp_{k,\ell,g}\})$. Then $(C,\{\fp^{\fm_0}_{k,\ell,g}\})$ is
homotopy equivalent to $(C,\{\fp^{\fm_1}_{k,\ell,g}\})$.
\end{prop}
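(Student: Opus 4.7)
The strategy is to realize each twisted structure $(C, \fp^{\fm_i})$ as the target of a filtered $\IBL_\infty$-homotopy equivalence emanating from a single twisted structure on the path object, and then invoke transitivity of homotopy equivalence.

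First, by the definition of gauge equivalence, there exists a path object $(\frak C,\{\fq_{k,\ell,g}\},\iota,\eps_0,\eps_1)$ for $C$ together with a Maurer-Cartan element $\frak M$ in $\frak C$ satisfying $(\eps_i)_*\frak M = \fm_i$ for $i=0,1$. By Proposition~\ref{prop:modelfil} this path object may be chosen to be $G$-gapped, and a glance at its inductive construction shows that strictness is preserved, so $\frak C$ is a strict $G$-gapped filtered $\IBL_\infty$-algebra.

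Next I would verify that the linear morphisms $\eps_0,\eps_1\colon\frak C\to C$ are themselves strict $G$-gapped filtered $\IBL_\infty$-homotopy equivalences, not merely chain homotopy equivalences on the underlying complexes. Condition~(c) in Definition~\ref{def:path} (for the filtered context, with $\fq^0_{1,1,0}$ and $\fp^0_{1,1,0}$ in place of $\fq_{1,1,0}$ and $\fp_{1,1,0}$) guarantees that each $\eps_i$ induces a chain homotopy equivalence on the underlying complexes. Proposition~\ref{prop:filt-inverse} (in the gapped form noted in the remark following Theorem~\ref{thm:canfil}) then upgrades this to an $\IBL_\infty$-homotopy equivalence.

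With these preparations, I would apply Proposition~\ref{prop:MC-homotopy} to the pairs $(\eps_i,\frak M)$. Since $(\eps_i)_*\frak M = \fm_i$, this yields, for each $i=0,1$, a strict filtered $\IBL_\infty$-morphism
$$
\eps_i^{\frak M}\colon (\frak C,\{\fq^{\frak M}_{k,\ell,g}\})\longrightarrow (C,\{\fp^{\fm_i}_{k,\ell,g}\}),
$$
which by the second assertion of that proposition is a filtered $\IBL_\infty$-homotopy equivalence. Finally, invoking the $G$-gapped filtered analogue of Corollary~\ref{Cor:comp} (stated after Proposition~\ref{prop:homotopy-transfer2}), homotopy equivalence of filtered $\IBL_\infty$-algebras is an equivalence relation, so the composition of $\eps_1^{\frak M}$ with a homotopy inverse of $\eps_0^{\frak M}$ delivers the desired homotopy equivalence between $(C,\fp^{\fm_0})$ and $(C,\fp^{\fm_1})$.

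The only nontrivial step is the second one, where we must promote the chain-level homotopy inverse of $\eps_i$ built into the definition of a path object to a genuine $\IBL_\infty$-homotopy inverse; but this is exactly the content of the filtered Whitehead theorem (Proposition~\ref{prop:filt-inverse}), so no new argument is required.
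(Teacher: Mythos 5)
Your proposal is correct and follows essentially the same route as the paper: take the path object and Maurer--Cartan element $\frak M$ from the definition of gauge equivalence, apply Proposition~\ref{prop:MC-homotopy} to the linear morphisms $\eps_i$ to obtain homotopy equivalences $(\frak C,\{\fq^{\frak M}_{k,\ell,g}\})\to (C,\{\fp^{\fm_i}_{k,\ell,g}\})$, and conclude by the gapped filtered version of Corollary~\ref{Cor:comp}. The only difference is that you spell out, via Proposition~\ref{prop:filt-inverse}, why the $\eps_i$ are $\IBL_\infty$-homotopy equivalences, a point the paper leaves implicit.
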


\begin{proof}
Let $(\frak C,\{\fq_{k,\ell,g}\})$ be a path object and 
$\frak M$ be as in Definition \ref{def;Gequiv}.
By Proposition~\ref{prop:MC-homotopy}, the $\eps_i$ induce morphisms 
$\eps_i^{\fm_i}(\frak C,\{\fq^{\frak M}_{k,\ell,g}\})
\to (C,\{\fp^{\fm_i}_{k,\ell,g}\})$, which are homotopy equivalences
since the $\eps_i$ are. The proposition now follows from the
$G$-gapped filtered version of Corollary \ref{Cor:comp}.
\end{proof}

%%%%%%%%%%%%%%%%%%%%%%%%%%%%%%%%%%%%%%%%%%%%%%%%%%%%%%%%%%%%%%%%%%%%%%%
\section{The dual cyclic bar complex of a cyclic cochain
   complex}\label{sec:Cyclic-cochain} 
%%%%%%%%%%%%%%%%%%%%%%%%%%%%%%%%%%%%%%%%%%%%%%%%%%%%%%%%%%%%%%%%%%%%%%%

In this section we show that the dual cyclic bar complex of a cyclic
cochain complex carries a natural $\text{\rm dIBL}$-structure, i.e., a
$\IBL_{\infty}$-structure such that $\mathfrak p_{k,\ell,g}
= 0$ unless $(k,\ell,g) \in\{ (1,1,0), (2,1,0), (1,2,0)\}$.
\par

{\bf The ${\rm dIBL}$ structure on the dual cyclic bar complex. } 
Let $(A=\bigoplus_kA^k,d)$ be a $\Z$-graded cochain complex over
$\R$. We assume that $\dim A$ is finite. Let $n$ be a positive integer
and  
$$
   \langle \cdot,\cdot \rangle : \bigoplus_kA^k \otimes A^{n-k} \to \R
$$
a nondegenerate bilinear form, which we extend by zero to the rest of
$A\otimes A$. 

\begin{Definition}\label{cyccochaincmx}
$(A,\la\,,\, \ra,d)$ is called a {\em cyclic cochain complex} if 
\begin{gather*}
    \langle dx,y \rangle + (-1)^{\deg x-1}\langle x,dy \rangle = 0, \cr
    \langle x,y \rangle + (-1)^{(\deg x-1)(\deg y-1)}\langle y,x
    \rangle = 0.
\end{gather*}
\end{Definition}
We define the {\em cyclic bar complex} 
$$
   B_k^{\text{\rm cyc}} A := \underbrace{A[1] \otimes \cdots \otimes
   A[1]}_{\text{$k$ times}} / \sim 
$$
%where 
%$$
%x_1 \otimes \cdots \otimes x_k 
%\sim (-1)^{(\deg x_k+1)(\deg x_1 + \cdots \deg x_{k-1}+k-1)}
%x_k \otimes x_1 \otimes \cdots \otimes x_{k-1}.
%$$
as the quotient of the $k$-fold tensor product under the action of
$\Z_k$ by cyclic permutations with signs. As explained in
Remark~\ref{rem:tensor}, $B_k^{\text{\rm cyc}} A$ is isomorphic to the
subspace of invariant tensors under the cyclic group action. 
We introduce the {\em dual cyclic bar complex} 
$$\aligned
B_k^{\text{\rm cyc} *}A& := Hom(B_k^{\text{\rm cyc}} A,\R), \\
B^{\text{\rm cyc} *}A &:= \bigoplus_{k=1}^{\infty} B_k^{\text{\rm
    cyc} *}A. 
\endaligned$$
To avoid confusion, we will denote the degree in $A$ by $\deg x$ and
the degree in $A[1]$ by
$$
    |x| = \deg x-1.
$$
An element $\varphi \in B_k^{\text{\rm cyc} *}A$ is homogeneous of degree $D$ if
$\varphi(x_1 \otimes \cdots \otimes x_k) = 0$ whenever $\sum |x_i|
\neq D$. 
The coboundary operator $d$ induces a boundary operator on
$B^{\text{\rm cyc} *}A$ in the obvious way, which we denote by $\dd$:
$$
   (\dd\varphi)(x_1,\dots,x_k):=\sum_{j=1}^k(-1)^{|x_1|+\dots+|x_{j-1}|} 
   \varphi(x_1,\dots,x_{j-1},dx_j,x_{j+1},\dots x_k). 
$$
Note that the coboundary operator on $A$ has degree $+1$, so the induced 
boundary operator $\dd$ on $B^{\text{\rm cyc} *}A$ has degree $-1$. 

We will now construct two operations
\begin{gather*}
    \mu:B^{\text{\rm cyc} *}A \otimes B^{\text{\rm cyc} *}A \to B^{\text{\rm cyc} *}A, \cr
    \delta:B^{\text{\rm cyc} *}A \to B^{\text{\rm cyc} *}A \otimes B^{\text{\rm cyc} *}A
\end{gather*}
of degree $|\mu|=|\delta|=2-n$, which together with the differential $\dd$ will give rise to a $\text{\rm dIBL}$-structure. It 
%will be convenient to define these
%operation in terms of a basis, although they will turn out to be
%independent of the choice of basis. Moreover, it 
suffices to define these operations on homogeneous elements in $B^{\text{\rm cyc} *}A =
\bigoplus_{k\geq 1}B_k^{\text{\rm cyc} *}A$, respectively
$$
   B^{\text{\rm cyc} *}A\otimes B^{\text{\rm cyc} *}A 
   = \bigoplus_{k_1,k_2\geq 1}B_{k_1}^{\text{\rm cyc} *}A\otimes
   B_{k_2}^{\text{\rm cyc} *}A 
   = \bigoplus_{k_1,k_2\geq 1}Hom(B_{k_1}^{\text{\rm cyc}}A\otimes
   B_{k_2}^{\text{\rm cyc}}A,\R). 
$$
%at the end
%of this section we explain how these definitions extend canonically
%to the direct product $B^{\text{\rm cyc} *}A = \prod_{k=1}^{\infty}
%B_k^{\text{\rm cyc}*}A$. 
 
Let $e_i$ be a homogeneous basis of $A$ and set
$$
\eta_i:= |e_i| = \deg e_i - 1.
$$
Let $e^i$ be the dual basis of $A$ with
respect to the pairing $\la\ ,\ \ra$, i.e. 
$$
    \la e_i,e^j\ra = \delta_i^j. 
$$
We set 
$$
    g_{ij} := \la e_i,e_j\ra,\quad g^{ij} := \la e^i,e^j\ra. 
$$
Note that
$$
     g_{ij} = (-1)^{\eta_i \eta_j+1} g_{ji}.
$$
In the following we use Einstein's sum convention. Then
$g_{ik}g^{jk}=\delta_i^j$, i.e.~$(g^{ij})$ is the transpose of the
inverse matrix of $(g_{ij})$.
Note that $\deg e_i + \deg e^i=n$, so $g^{ij} \neq 0$ implies that
$\eta_i + \eta_j = n-2$. If $\tilde e_j = \tau^i_je_i$ is another
basis, then its dual basis is $\tilde e^i = \sigma^i_je^j$ with
$(\sigma^i_j)$ the inverse matrix of $(\tau^i_j)$ and the pairing in
the new basis is given by
\begin{gather}\label{eq:transf}
    \tilde g_{\alpha\beta} = \la\tilde e_\alpha,\tilde e_\beta\ra =
    \tau^a_\alpha g_{ab}\tau^b_\beta, \qquad
    g_{ab} = \sigma^\alpha_a \tilde g_{\alpha\beta}\sigma^\beta_b, \cr
    \tilde g^{\alpha\beta} = \la\tilde e^\alpha,\tilde e^\beta\ra =
    \sigma^\alpha_a g^{ab}\sigma^\beta_b, \qquad
    g^{ab} = \tau^a_\alpha \tilde g^{\alpha\beta}\tau^b_\beta. 
\end{gather}
Finally, we introduce the notation
$$
d e_i = \sum_j d^j_i e_j,
$$
for the coboundary operator.
\begin{lem}\label{lem:edge1} The following identities hold:
\begin{eqnarray}
d e^a =& (-1)^{\eta_a}d_c^a e^c\\
(-1)^{\eta_{a}}d_a^{a'} g^{ab} + g^{a'b'}d_{b'}^b=&0. \label{eq:littleg}
\end{eqnarray}
\end{lem}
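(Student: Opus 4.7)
The plan is to derive both identities from the basic graded compatibility of $d$ with the pairing,
$$
   \langle dx,y\rangle + (-1)^{\deg x - 1}\langle x,dy\rangle = 0,
$$
combined with the crucial bookkeeping observation that the structure constant $d_i^j$ is nonzero only when $\eta_j = \eta_i + 1$, and the analogous constraint $\eta_a + \eta_c = n-2$ on the support of $g^{ac}$.

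For the first identity, I would expand $de^a = \alpha_c^a e^c$ in the dual basis and solve for the coefficients. Pairing against $e_i$ and using $\langle e_i, e^c\rangle = \delta_i^c$ gives $\langle e_i, de^a\rangle = \alpha_i^a$. On the other hand, the compatibility identity applied with $x = e_i$, $y = e^a$ yields $\langle e_i, de^a\rangle = -(-1)^{\eta_i}\langle de_i, e^a\rangle = -(-1)^{\eta_i}d_i^a$. Hence $\alpha_i^a = -(-1)^{\eta_i}d_i^a$, and since $d_i^a$ is supported on $\eta_a = \eta_i + 1$, the sign $-(-1)^{\eta_i}$ coincides with $(-1)^{\eta_a}$ on that support. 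Renaming $i \mapsto c$ produces the claimed formula.

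For the second identity, I would apply the compatibility equation directly to $x = e^{a'}$, $y = e^b$ and substitute the just-proved expansion of $d$ on the dual basis into both pairings. This produces
$$
  (-1)^{\eta_{a'}}\,d_c^{a'}\,g^{cb}
  \;+\; (-1)^{\deg e^{a'}-1}(-1)^{\eta_b}\,g^{a'c}\,d_c^b \;=\; 0.
$$
Then I would collapse the sign $(-1)^{\deg e^{a'}-1+\eta_b}$ in the second term by substituting $\deg e^{a'} = n - \eta_{a'} - 1$ and invoking the support constraints $\eta_{a'} + \eta_c = n-2$ (from $g^{a'c} \neq 0$) and $\eta_b = \eta_c + 1$ (from $d_c^b \neq 0$); the exponent becomes odd, so the sign is $-1$. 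In the first term, on the support of $d_c^{a'}$ we have $(-1)^{\eta_{a'}} = -(-1)^{\eta_c}$. After renaming the dummy index $c$ to the summation letter $a$ used in the statement, both signs align and the identity takes the required form.

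The main obstacle is purely the sign bookkeeping: each individual step is formal, but because the factors $(-1)^{\eta_a}$ in the statement only behave correctly after one invokes the degree restrictions forced by the supports of $d$ and $g$, it is easy to miscount parities. No deep structural idea is needed beyond the systematic exploitation of these degree constraints together with the pairing compatibility.
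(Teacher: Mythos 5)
Your proof is correct and follows essentially the same route as the paper: expand $de^a$ in the dual basis, compute the coefficients via the graded compatibility $\langle dx,y\rangle+(-1)^{\deg x-1}\langle x,dy\rangle=0$, and then feed the first identity back into that same compatibility for $\langle de^{a'},e^b\rangle$, using the support constraints $\eta_a=\eta_c+1$ on $d^a_c$ and $\eta_a+\eta_b=n-2$ on $g^{ab}$ to resolve the signs. The sign bookkeeping in both parts checks out.
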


\begin{proof}
To prove the first equation, we compute the coefficient of $e^c$ in $d\,e^a$ as
\begin{align*}
\langle e_c, de^a \rangle &= (-1)^{\eta_c+1} \langle de_c,e^a \rangle\\
&= (-1)^{\eta_c+1} d_c^{c'}\langle e_{c'},e^a \rangle\\
&= (-1)^{\eta_c+1} d_c^{a}.
\end{align*}
Since the degrees of $e^c$ and $e^a$, and hence also the degrees of $e_c$ and $e_a$ differ by one, the first claim follows.

To prove the second claim, we again use the cyclic relation
$$
\langle de^{a'}, e^b  \rangle = (-1)^{|e^{a'}|-1} \langle e^{a'} , de^b\rangle 
= (-1)^{\eta_{a'}+n-3} \langle e^{a'} , de^b\rangle.
$$
Using the first equation, we find
$$
\langle de^{a'}, e^b  \rangle = (-1)^{\eta_{a'}} d_a^{a'} g^{ab}
$$ 
and
$$
\langle e^{a'} , de^b\rangle = (-1)^{\eta_b} d_{b'}^b g^{a'b'}.
$$
Putting things together and noting that $\eta_a+\eta_b\equiv n-2$, we obtain the result.
\end{proof}

An element $\varphi \in B_k^{\text{\rm cyc} *}A$ is determined by its coefficients
$$
   \varphi_{i_1\cdots i_k} =
   (-1)^{\eta_{i_k} \cdot \sum_{j=1}^{k-1}\eta_{i_j}} \varphi_{i_k
     i_1\cdots i_{k-1}}. 
$$
The boundary operator on $B^{\text{\rm cyc} *}A$ acts on these
coefficients by
$$
(\dd\varphi)_{i_1,\dots,i_k} = \sum_{j=1}^k\sum_a (-1)^{\eta_{i_1}+ \dots +
  \eta_{i_{j-1}}} d^a_{i_j} \varphi_{i_1,\dots,i_{j-1},a,i_{j+1},\dots,i_k}.
$$
Now we are ready to define a bracket and a cobracket on $B^{\text{\rm cyc} *}A$. 
For the bracket, 
let $\varphi^1 \in B_{k_1+1}^{\text{\rm cyc} *}A$, $\varphi^2 \in B_{k_2+1}^{\text{\rm cyc} *}A$, 
$k_1,k_2 \ge 0$, $k_1+k_2\ge 1$. We define $\mu(\varphi^2,\varphi^2)\in
B_{k_1+k_2}^{\text{\rm cyc} *}A$ by
\begin{equation}
    \mu(\varphi^1,\varphi^2)_{i_1\cdots i_{k_1+k_2}} :=
    \sum_{a,b}\sum_{c=1}^{k_1+k_2}(-1)^{\eta+\eta_a}
    g^{ab} \varphi^1_{a i_c \cdots i_{c+k_1-1}}\varphi^2_{b i_{c+k_1} \cdots i_{c-1}},
\end{equation}
where 
\begin{equation*}
    \eta =\eta(i_1,\dots,i_{k_1+k_2};a;b;c)= 
    \sum_{r=1}^{c-1}\sum_{s=c}^{k_1 
    + k_2} \eta_{i_r} \eta_{i_s} \,+\,
    \eta_b \cdot \sum_{t=c}^{c+k_1-1} \eta_{i_t}
\end{equation*}
is the sign needed to move 
$(e_a,e_{i_c},\dots,e_{i_{c+k_1-1}},e_b,e_{i_{c+k_1}},\dots,e_{i_{c-1}})$
to the order $(e_a,e_b,e_{i_1},\dots,e_{i_{k_1+k_2}})$. 
Here and hereafter we put $i_{k_1+k_2+m} = i_m$ etc. Using
$\sum_{t=c}^{c+k_1-1} \eta_{i_t}=|\varphi^1|-\eta_a$ one verifies 
$$
    \eta(i_2,\dots,i_{k_1+k_2},i_1;a;b;c) -
    \eta(i_1,\dots,i_{k_1+k_2};a;b;c+1) = \eta_{i_1} \cdot \sum_{s=2}^{k_1+k_2}
    \eta_{i_s}, 
$$
so $\mu(\varphi^1,\varphi^2)$ picks up the correct signs under cyclic
permutation to define an element in $B_{k_1+k_2}^{\text{\rm cyc} *}A$. Note that 
the operation $\mu:B^{\text{\rm cyc} *}A \otimes B^{\text{\rm cyc} *}A \to B^{\text{\rm cyc} *}A$ has
degree $2-n$. The independence of the basis $e_i$ follows from the
transformation law~\eqref{eq:transf}, using the equivalent definition
for $x_i\in A$ %(see Figure~\ref{fig:mu}),
%
% FIG
%
\begin{figure}[h]
 \labellist
%  \small\hair 2pt
  \pinlabel $e_a$ [bl] at 45 51
  \pinlabel $e_b$ [br] at 100 51
  \pinlabel $x_c$ [b] at 27 93
  \pinlabel $x_{c+k_1-1}$ [tr] at 27 10
  \pinlabel $x_{c+k_1}$ [tl] at 110 8
  \pinlabel $x_{c-1}$ [b] at 101 98
%  \pinlabel $\varphi^1$ [tl] at 39 48
%  \pinlabel $\varphi^2$ [tr] at  104 48
 \endlabellist
  \vspace{.5cm}
  \centering
  \includegraphics[scale=1.2]{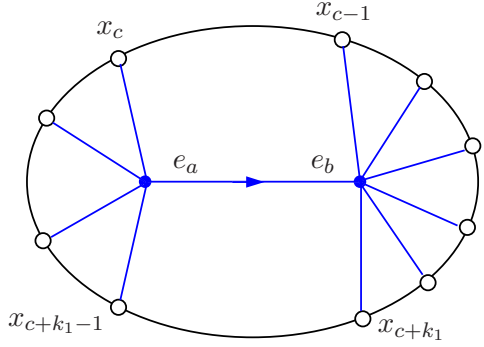}
 \caption{An illustration of formula \eqref{eq:mu} for $\mu$ which also fits into a more general graphical approach taken below. The vertex on the left is first, the one on the right is second.}
 \label{fig:p210}
\end{figure}
\begin{align}\label{eq:mu}
    & \mu(\varphi^1,\varphi^2)(x_1,\dots,x_{k_1+k_2}) \cr
    &= \sum_{a,b}\sum_{c=1}^{k_1+k_2} (-1)^{\eta_a+\eta}
    g^{ab} \varphi^1(e_a,x_c,\dots,x_{c+k_1-1})\varphi^2(e_b,x_{c+k_1},
    \dots,x_{c-1}). 
\end{align}

\par
For the cobracket, note that an element $\psi \in B_{k_1}^{\text{\rm
    cyc} *}A \otimes B_{k_2}^{\text{\rm cyc} *}A  
\cong Hom(B_{k_1}^{\text{\rm cyc}}A\otimes B_{k_2}^{\text{\rm
    cyc}}A,\R)$ is determined by the coefficients
$$
    \psi_{i_1\cdots i_{k_1};j_1\cdots j_{k_2}}
    = \psi\bigl((e_{i_1}\cdots e_{i_{k_1}}) \otimes (e_{j_1}\cdots e_{j_{k_2}})\bigr).
$$
Now let $\varphi \in B_k^{\text{\rm cyc}*}A$, $k\ge 4$. We define
$$
    \delta(\varphi) \in \bigoplus_{k_1+k_2= k-2}
    B_{k_1}^{\text{\rm cyc}*}A \otimes B_{k_2}^{\text{\rm cyc}*}A
$$
by
\begin{equation}
    (\delta\varphi)_{i_1\cdots i_{k_1};j_1\cdots j_{k_2}} :=
    \sum_{a,b}\sum_{c=1}^{k_1}\sum_{c'=1}^{k_2} (-1)^{\eta+\eta_a} g^{ab}\varphi_{ai_c\cdots
    i_{c-1}bj_{c'}\cdots j_{c'-1}}, 
\end{equation}
where
\begin{align}\label{eq:eta}
    \eta &=
    \eta(i_1,\dots,i_{k_1};j_1,\dots,j_{k_2};a;b;c,c') \cr
    &= \sum_{r=1}^{c-1}\sum_{s=c}^{k_1} \eta_{i_r} \eta_{i_s} \,+\,
    \sum_{r'=1}^{c'-1}\sum_{s'=c'}^{k_2} \eta_{j_{r'}} \eta_{j_{s'}} \,+\,
    \eta_b \cdot \sum_{t=1}^{k_1} \eta_{i_t}
\end{align}
is the sign needed to move $(e_a,e_{i_{c+1}},\dots,e_{i_{c+k_1}},e_b,e_{j_{c'+1}},\dots,e_{j_{c'+k_2}})$
to the order $(e_a,e_b,e_{i_1},\dots,e_{i_{k_1}},e_{j_1},\dots,e_{j_{k_2}})$. 

%
% FIG
%
\begin{figure}[h]
 \labellist
%  \small\hair 2pt
  \pinlabel $e_a$ [br] at 42 100
  \pinlabel $e_b$ [bl] at 88 100
  \pinlabel $x_c$ [r] at 52 80
  \pinlabel $x_{c-1}$ [l] at 76 80
  \pinlabel $y_{c'}$ [bl] at 99 123
  \pinlabel $y_{c'-1}$ [br] at 35 123
 \endlabellist
  \vspace{.2cm}
  \centering
  \includegraphics[scale=1.2]{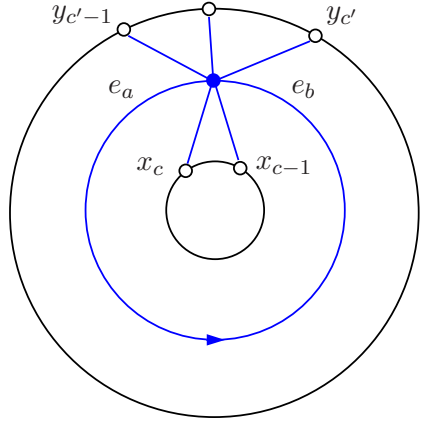}
 \caption{A graphical illustration of formula \eqref{eq:delta} for $\delta$. The inner boundary component is the first and is oriented clockwise, whereas the outer one is second and oriented counterclockwise.}
 \label{fig:p120}
\end{figure}
The operation $\delta: B^{\text{\rm cyc} *}A \to B^{\text{\rm cyc} *}A \otimes B^{\text{\rm cyc} *}A$ also has degree $2-n$, and independence of the basis follows from the
equivalent definition in terms of $x_i\in A$ %(see Figure~\ref{fig:delta}),
\begin{align}\label{eq:delta}
    & (\delta\varphi)(x_1\cdots x_{k_1}\otimes y_1\cdots y_{k_2}) \cr
    &= \sum_{a,b}\sum_{c=1}^{k_1}\sum_{c'=1}^{k_2} (-1)^{\eta_a+\eta}
    g^{ab}\varphi(e_a,x_c,\dots,x_{c-1},e_b,y_{c'},\dots,y_{c'-1}). 
\end{align}
Denote by $\tau:B^{\text{\rm cyc} *}A \otimes B^{\text{\rm cyc} *}A \to B^{\text{\rm cyc} *}A \otimes
B^{\text{\rm cyc} *}A$ the map permuting the factors with sign, i.e. 
$$
\tau(\varphi \otimes \psi) = (-1)^{|\varphi||\psi|} \psi \otimes \varphi
$$
whenever $\varphi$ and $\psi$ are homogeneous elements of $B^{\text{\rm cyc} *}A$.

\begin{lem}\label{lem:mu-delta}
The operations $\delta$ and $\mu$ introduced above satisfy
\begin{enumerate}  [\rm (1)]
\item $\mu \tau = (-1)^{n-3}\mu$,
\item $\tau \delta = (-1)^{n-3}\delta$.
%\item $ \dd \mu = (-1)^{n-2}\mu \dd$,
%\item $\delta \dd = (-1)^{n-2} \dd \delta$.
\end{enumerate}
\end{lem}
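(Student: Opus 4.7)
The plan is to reduce both identities to a single symmetry of the copairing
$$
  G := \sum_{a,b}(-1)^{\eta_a} g^{ab}\, e_a\otimes e_b \;\in\; A[1]\otimes A[1].
$$
First I would establish the basic symmetry $\tau G = (-1)^{n-3}G$, where $\tau$ denotes the Koszul swap $\tau(e_a\otimes e_b)=(-1)^{\eta_a\eta_b}e_b\otimes e_a$. To this end, the cyclic symmetry of $\langle\,,\,\rangle$ combined with the fact that $g^{ab}\neq 0$ forces $\eta_a+\eta_b=n-2$ yields the identity $g^{ba}=-(-1)^{\eta_a\eta_b}g^{ab}$. Relabeling $a\leftrightarrow b$ in the expression for $\tau G$ and substituting this identity collapses all Koszul factors to produce $\tau G=(-1)^{n+1}G=(-1)^{n-3}G$.

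Next I would reinterpret the definitions \eqref{eq:mu} and \eqref{eq:delta} through the copairing. In both cases, the summand $(-1)^{\eta_a+\eta}g^{ab}(\,\cdots)$ corresponds precisely to contracting one leg of $G$ against a chosen slot in the first cyclic word and the other leg against a chosen slot in the second (for $\mu$) or in the same cyclic word (for $\delta$); the sign $(-1)^{\eta_a}$ absorbs the sign from $G$, and the sign $(-1)^{\eta}$ (cf. \eqref{eq:eta}) is exactly the Koszul rearrangement needed to move $e_a,e_b$ to the leading positions of the cyclic arguments of $\varphi^1,\varphi^2$ (resp.~$\varphi$).

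For (1), I would then match $\mu(\varphi^2,\varphi^1)$ with $\mu(\varphi^1,\varphi^2)$ via the bijection $a\leftrightarrow b$, $c\to c+k_1$ (mod $k_1+k_2$) on the summation indices. The resulting sign change splits as the product of three contributions: the factor $g^{ba}=-(-1)^{\eta_a\eta_b}g^{ab}$; the difference of Koszul cross-terms $\sum_{r<c,s\geq c}\eta_{x_r}\eta_{x_s}$ when shifting $c\mapsto c+k_1$; and the difference of the leading $\eta_a(|\varphi^2|-\eta_b)$ vs.~$\eta_b(|\varphi^1|-\eta_a)$ terms. Collecting these and using $\eta_a+\eta_b\equiv n\pmod 2$ together with the parity identity $|\varphi^1|(|\varphi^1|-1)\equiv 0 \pmod 2$ and $\eta_a\eta_b\equiv(n-1)\eta_a\pmod 2$, the total sign comes out to be exactly $(-1)^{n-3+|\varphi^1||\varphi^2|}$. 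Since $\mu\tau(\varphi^1\otimes\varphi^2)=(-1)^{|\varphi^1||\varphi^2|}\mu(\varphi^2,\varphi^1)$, this is equivalent to $\mu\tau=(-1)^{n-3}\mu$.

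For (2), the analogous argument applies: the swap of the two tensor factors of $\delta(\varphi)$ under $\tau$ amounts to exchanging the two legs of $G$ inserted into the single cyclic word, which by Step 1 produces precisely $(-1)^{n-3}$; the cross-term Koszul sign $(-1)^{(\sum\eta_i)(\sum\eta_j)}$ from $\tau$ is absorbed exactly by the rearrangement of the cyclic word after the substitution $a\leftrightarrow b$, $c\leftrightarrow c'$, using the cyclic relations for $\varphi$. The main obstacle throughout is not conceptual but technical: all the content is already present in the copairing identity $\tau G=(-1)^{n-3}G$, and the principal difficulty is unpacking the compact cyclic-word notation and the Koszul cross-terms carefully enough to confirm that the leftover signs vanish after using the mod-$2$ identity $\eta_a\eta_b\equiv (n-1)\eta_a$.
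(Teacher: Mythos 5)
Your proposal is correct, and its essential mechanism coincides with the paper's: the identity $\tau G=(-1)^{n-3}G$ for the copairing $G=\sum(-1)^{\eta_a}g^{ab}e_a\otimes e_b$ is exactly the symmetry \eqref{eq:tensorsymmetry} of the tensor $T^{ab}=(-1)^{\eta_a}g^{ab}$ that the paper attaches to interior edges, i.e.\ property (1) of the graphical calculus (``reversing the orientation of an interior edge yields the total sign $(-1)^{n-3}$''). The difference is one of organization. The paper declines to carry out the computation at the point where the lemma is stated and instead validates the symmetries later, in Remark~\ref{rem:proof-mu-delta}, by redefining $\fp_{2,1,0}$ and $\fp_{1,2,0}$ as sums over ribbon graphs in $RG_{2,1,0}$ and $RG_{1,2,0}$ and combining the edge-reversal sign with properties (5)/(6) (interchanging adjacent vertices or boundary components contributes $|\varphi^1||\varphi^2|$ to $\eta_1$ and $(n-3)(|\varphi^1|+|\varphi^2|)$ to $\eta_2$), which is precisely the $S_2$-equivariance on $\BC[1]^{\otimes 2}$. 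You instead unpack the explicit formulas \eqref{eq:mu}, \eqref{eq:delta} and track the Koszul cross-terms by hand; your intermediate identities ($g^{ba}=-(-1)^{\eta_a\eta_b}g^{ab}$ from $|e^a|=\eta_b$, $|e^b|=\eta_a$; $\eta_a\eta_b\equiv(n-1)\eta_a$; the shift $c\mapsto c+k_1$) are all correct, and the residual bookkeeping you sketch is exactly the ``tedious but straightforward computation'' the paper alludes to. What the graphical route buys is that the same sign conventions are reused verbatim for all the higher operations and for the $\IBL_\infty$-morphism $\ff$ in Section~11; what your route buys is a self-contained verification that does not require setting up the edge-order/vertex-order formalism. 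Either is acceptable; if you write yours out in full, the only place to be careful is the $\eta_2$-type contribution coming from the degree shift $[3-n]$ (the conjugation by $P_k$ in \eqref{eq:Pk}), which in your notation is hidden in the passage from $\mu(\varphi^2,\varphi^1)$ to $\mu\tau(\varphi^1\otimes\varphi^2)$ via the $\BC[1]$-degrees $|\varphi^i|+(n-3)$ rather than the $B^{\rm cyc*}A$-degrees $|\varphi^i|$.
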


\begin{proof}
The assertions can be proven by tedious but straightforward
computations. We will later see equivalent assertions within a larger
``graphical calculus'' (cf. Remark~\ref{rem:proof-mu-delta}), so we do
not give details here.  
\end{proof}
\par

We will use these operations to define a $\text{\rm dIBL}$-structure
on $B^{\text{\rm cyc} *}A$. 
To fit with the conventions used in \S~\ref{sec:def}, we
consider the degree shifted version 
$$
    \BC:=(B^{\text{\rm cyc}*}A)[2-n]. 
$$
For the shifted degrees, $\mu$ has degree $2(2-n)$ and $\delta$ has
degree $0$. We define the boundary operator 
$$
    \fp_{1,1,0}:=\dd:E_1\BC \to E_1\BC 
$$
so that
\begin{equation}
(\fp_{1,1,0} \varphi)_{i_1\dots i_k} = \sum_{j=1}^k \sum_a
(-1)^{\eta_{i_1} + \dots + \eta_{i_{j-1}}} d^a_{i_j} \varphi_{i_1\dots
   i_{j-1} a i_{j+1} \dots i_k}.
\end{equation}
Next, we define maps
$$
P_k: (B^{\rm cyc*}A)^{\otimes k} \to (B^{\rm cyc*}A)[3-n]^{\otimes k}=\BC[1]^{\otimes k}
$$
by
\begin{equation}\label{eq:Pk}
P_k(c_1 \otimes \cdots \otimes c_k) := (-1)^{(n-3)\sum_{i=1}^k (k-i)|c_i|} c_1 \otimes \cdots \otimes c_k.
\end{equation}
The sign can be interpreted in terms of formal variables
$s_1\dots,s_k$ of degree $n-3$ as the sign for the change of order
$$
   s_1c_1\cdots s_kc_k \longrightarrow s_1\cdots s_k\, c_1\cdots c_k. 
$$
The maps $P_k$ conjugate the $(n-3)$-twisted action of the permutation
group $S_k$ on $(B^{\rm cyc*}A)^{\otimes k}$, given for $\sigma \in
S_k$ by 
$$
\sigma \cdot (\varphi^1 \otimes \cdots \otimes \varphi^k) :=
(-1)^{\eta+(n-3)\sgn(\sigma)} \varphi^{\sigma(1)} \otimes \cdots
\otimes \varphi^{\sigma(k)}, 
$$
with the usual action by signed permutations on $\BC[1]$, i.e. the
following diagram commutes: 
\begin{equation*}
\begin{CD}
   (B^{\rm cyc*}A)^{\otimes k} @>\sigma>> (B^{\rm cyc*}A)^{\otimes k} \\
   @V{P_k}VV @V{P_k}VV \\
   \BC[1]^{\otimes k} @>\sigma>> \BC[1]^{\otimes k},
\end{CD}
\end{equation*}
where the action of $\sigma$ is $(n-3)$-twisted on the first line
(with respect to degrees in $B^{\rm cyc*}A$) and standard on the
second line (with respect to degrees in $\BC[1]=B^{\rm cyc*}A[3-n]$).

According to Lemma~\ref{lem:mu-delta}, $\mu$ and $\delta$ are
symmetric operations on $BC^{\rm cyc*}A$ with respect to the
$(n-3)$-twisted action of the permutation group, so defining \footnote{The combinatorial factor $\frac 12$ in the definition of $\fp_{1,2,0}$, which seems rather unmotivated here, will be explained in a more general context below.}
\begin{align*}
   \fp_{2,1,0} &:= P_1 \circ \mu \circ P_2^{-1}: 
   \BC[1] \otimes \BC[1] \to \BC[1], \cr
   \fp_{1,2,0} &:= \frac 12 P_2 \circ \delta \circ P_1^{-1}:
   \BC[1] \to \BC[1] \otimes \BC[1],
\end{align*}
makes these operations symmetric with respect to the usual action of permutations on $\BC[1]^{\otimes k}$.
Hence $\fp_{2,1,0}$ descends to a map $\fp_{2,1,0}:E_2\BC \to
E_1\BC$ of degree $-2(n-3)-1$ and $\fp_{1,2,0}$ 
descends to a map $\fp_{1,2,0}:E_1\BC \to E_2\BC$ of degree $-1$.
Explicitly, the operations are given by
\begin{align}\label{eq:p-mu}
   \fp_{2,1,0}(\phi,\psi) &= (-1)^{(n-3)|\phi|}\mu(\phi,\psi), \cr
   \fp_{1,2,0}(\phi)(x,y) 
   &= \frac 12 (-1)^{(n-3)|x|} \delta(\phi)(x,y). 
\end{align}

%\begin{rem}\label{rem:IBL-signs}
%We will see below that the signs in the definition of $\fp_{2,1,0}$
%and $\fp_{1,2,0}$ fit into a more general framework of associating
%signs to ribbon graphs. 
%\end{rem}

The following result corresponds to
Proposition~\ref{prop:structureexists-intro} from the Introduction. 

\begin{Proposition}\label{prop:structureexists}
$\bigl(\BC=(B^{\text{\rm cyc}*}A)[2-n],\fp_{1,1,0},\fp_{1,2,0},\fp_{2,1,0}\bigr)$
is a \text{\rm dIBL}-algebra of degree $d=n-3$.
\end{Proposition}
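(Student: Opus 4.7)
The plan is to verify the two requirements separately: the correct grading degrees of the structure maps, and the relation $\hat{\frak p}\circ\hat{\frak p}=0$. The degrees follow immediately from \eqref{eq:p-mu}: since $\mu$ and $\delta$ both have degree $2-n$ on the dual cyclic bar complex before the shift $[2-n]$, and since $\fp_{1,1,0}=\dd$ has degree $-1$, the three structure maps carry degrees $-1$, $-2(n-3)-1$, and $-1$ respectively, which matches the required $-2d(k+g-1)-1$ with $d=n-3$. That $\fp_{2,1,0}$ and $\fp_{1,2,0}$ descend from (respectively, to) genuine symmetric products is ensured by Lemma~\ref{lem:mu-delta}, because the sign-twist built into $P_k$ in \eqref{eq:Pk} conjugates the $(n-3)$-twisted action of $S_k$ on $(B^{\text{\rm cyc}*}A)^{\otimes k}$ to the standard one on $\BC[1]^{\otimes k}$.

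For the master equation, I would use the reformulation~\eqref{eq:BL4} from Lemma~\ref{lem:order}. Since $\fp_{k,\ell,g}$ vanishes outside $\{(1,1,0),(2,1,0),(1,2,0)\}$ and Euler characteristic is additive under compositions, every summand of~\eqref{eq:BL4} has $\chi\ge -2$, and so only signatures with $\chi\in\{0,-1,-2\}$ can give nontrivial relations; namely, $(1,1,0)$, $(2,1,0)$, $(1,2,0)$, $(3,1,0)$, $(1,3,0)$, $(1,1,1)$, and $(2,2,0)$. The first three reduce to $d^2=0$ and the statement that $d$ is a derivation (respectively, coderivation) for $\fp_{2,1,0}$ and $\fp_{1,2,0}$; these follow directly from the graded Leibniz rule in Definition~\ref{cyccochaincmx} combined with \eqref{eq:littleg} of Lemma~\ref{lem:edge1}, which governs how $d$ moves through the inverse pairing $g^{ij}$.

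The remaining four identities---Jacobi for $\fp_{2,1,0}$, co-Jacobi for $\fp_{1,2,0}$, involutivity $\hat\fp_{2,1,0}\circ_2\hat\fp_{1,2,0}=0$, and the bialgebra compatibility between $\fp_{2,1,0}$ and $\fp_{1,2,0}$---are the core content. For each, I would expand both compositions in components via \eqref{eq:mu} and \eqref{eq:delta}: every summand carries two pairings $g^{a_1b_1}g^{a_2b_2}$ placed at various cyclic positions of the input tuples. I would then exhibit an involution pairing terms with canceling signs, constructed from (i) the graded symmetry $g^{ab}=(-1)^{\eta_a\eta_b+1}g^{ba}$ of the inverse pairing, (ii) the cyclic $\Z_k$-symmetry of elements of $B^{\text{\rm cyc}*}A$, and (iii) the symmetrizations built into the hat extension by \eqref{eq:hat}. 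The main obstacle will be sign bookkeeping, which is intricate because of the interplay of these three symmetries with the Koszul signs introduced by the shift $[2-n]$ through $P_k$. To keep this tractable I would adopt the graphical convention suggested by Figures~\ref{fig:p210} and~\ref{fig:p120}: each summand becomes a ribbon graph with two edges joining inputs, and each pairing becomes a manifest graph symmetry (swap of the two edges, or cyclic rotation of a vertex). This is the point of view the paper develops systematically later (cf.\ Remark~\ref{rem:proof-mu-delta}), and it is essentially unavoidable for organizing the signs.
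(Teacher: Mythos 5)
Your plan is correct and follows essentially the same route as the paper: the same reduction (via additivity of Euler characteristic) to the seven signatures $(1,1,0),(2,1,0),(1,2,0),(3,1,0),(1,3,0),(1,1,1),(2,2,0)$, the same use of Lemma~\ref{lem:edge1} for the $d$-compatibilities, and the same ribbon-graph organization of the Jacobi, co-Jacobi, Drinfeld and involutivity cancellations via edge swaps and orientation reversals. The only thing you defer is what the paper actually spends its effort on, namely the explicit $\eta_1/\eta_2$ sign bookkeeping showing each cancelling pair differs by an odd exponent, but the cancellation mechanisms you name are exactly the ones that make this work.
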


\begin{Remark}
Proposition~\ref{prop:structureexists} was 
known among researchers in string topology, see e.g.~\cite{Chas,Gonz}.
\end{Remark}

We will prove this proposition below, after introducing an alternative point of view on the construction of the operations. This will allow us to largely avoid messy computations and instead concentrate on describing the underlying organizing principles.  We hope that will make the proofs in this section more transparent.

%%%
{\bf Defining maps using ribbon graphs. }
As promised, we now develop the graphical calculus underlying the
constructions in this section. It generalizes similar constructions of
$A_{\infty}$ or 
$L_{\infty}$ structures and homotopy equivalences among them, which 
are used to construct canonical models and are based on summation over
ribbon {\it trees} (see \cite{KoSo01}, \cite[Subsection 5.4.2]{FOOO06}).

By a {\em ribbon graph} we mean a finite connected graph $\Gamma$ with a cyclic
ordering of the (half-)edges incident to each vertex. We denote by $d(v)$
the {\em degree} of a vertex $v$, i.e.~the number of (half-)edges incident to
$v$. Let the set of vertices be decomposed as 
$$
    C_0(\Gamma) = C_0^{\text{\rm int}}(\Gamma) \cup C_0^{\text{\rm
    ext}}(\Gamma) 
$$
into {\em interior} and {\em exterior} vertices, where all exterior
vertices have degree 1 (interior vertices can have degree $1$ or
higher). We assume that all our ribbon graphs have at least one interior vertex.

Such a graph $\Gamma$ can be thickened 
to a compact oriented surface $\Sigma_\Gamma$ with boundary in a unique way 
(up to orientation preserving diffeomorphism) such that 
$\Gamma\cap\p\Sigma_\Gamma=C_0^\ext(\Gamma)$.  
See Figure~\ref{fig:Gamma}, where interior vertices are drawn as
$\bullet$ and exterior vertices as $\circ$. We denote by $s(b)$ the
number of exterior vertices on the boundary component $b$.  
%
% FIG
%
\begin{figure}[h]
 \centering
  \includegraphics[scale=.83]{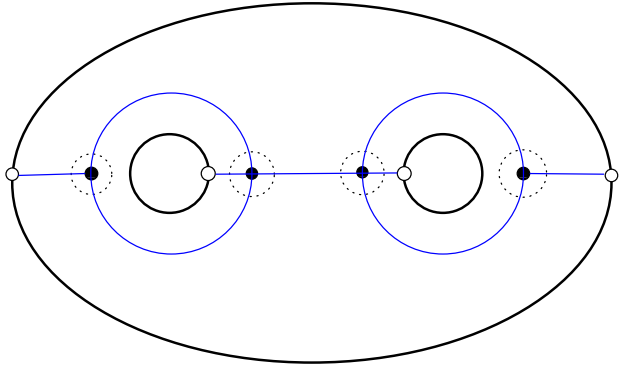}
 \caption{The surface $\Sigma_\Gamma$ associated to a very simple
   ribbon graph $\Gamma$ with four interior and four exterior
   vertices.} 
 \label{fig:Gamma}
\end{figure}

We assume that each graph has at least one interior vertex. Moreover, we
%We 
impose the following condition:

{\em Each boundary component of $\Sigma_\Gamma$ contains at least one
exterior vertex of $\Gamma$}.
%\marginpar{\tiny how seriously do we need this? (Janko)}

The set of edges is decomposed as 
$$
    C_1(\Gamma) = C_1^{\text{\rm int}}(\Gamma) \cup C_1^{\text{\rm
    ext}}(\Gamma) 
$$
into {\em interior} and {\em exterior} edges, where 
an edge is called {\em exterior} if and only if it contains an exterior
vertex.

The {\em signature} of $\Gamma$ is $(k,\ell,g)$, where
$k=|C_0^\inn(\Gamma)|$ is the number of interior vertices, $\ell$ is the 
number of boundary components of $\Sigma_\Gamma$, and $g$ is its genus. 
For example, the graph in Figure~\ref{fig:Gamma} has signature $(4,3,0)$. 

An {\em automorphism} of a ribbon graph $\Gamma$ is an automorphism of the underlying graph which is required to preserve the cyclic ordering around vertices, but can permute vertices and edges (and hence also boundary components). For example, the ribbon graph in Figure~\ref{fig:Gamma} has a unique nontrivial automorphism (in the picture it is given by rotation around the center by the angle $\pi$). We denote the group of automorphisms of a ribbon graph $\Gamma$ by $\Aut (\Gamma)$.

%
% FIG
%
\begin{figure}[h]
  \centering
  \includegraphics[scale=.83]{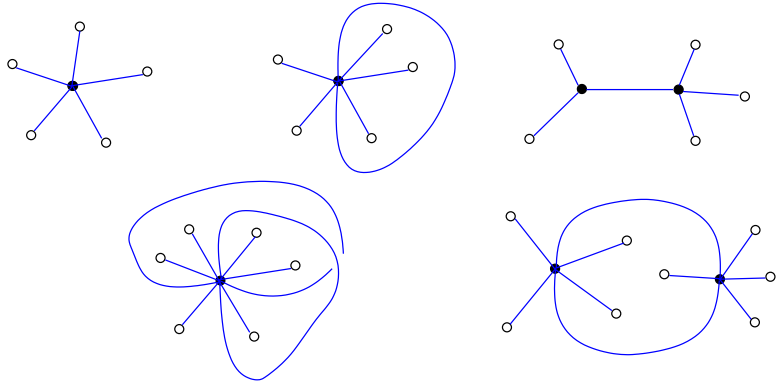}
 \caption{Examples of ribbon graphs from $RG_{1,1,0}$, $RG_{1,2,0}$, $RG_{2,1,0}$, $RG_{1,1,1}$ and $RG_{2,2,0}$.}
 \label{fig:ribbon}
\end{figure}

We denote by $RG_{k,\ell,g}$ the %(finite) 
%\marginpar{\tiny this set is NOT finite. It becomes finite once we fix the number of exterior vertices}
set of isomorphism classes of ribbon graphs of
signature $(k,\ell,g)$. Figure~\ref{fig:ribbon} gives some examples of 
such graphs, where we use the same convention of labelling interior vertices by $\bullet$ and exterior vertices by $\circ$.

Let us make two comments on this definition.

(1) Since all surfaces of the same signature are diffeomorphic, the set $RG_{k,\ell,g}$ can alternatively
be described as follows. Fix a connected compact oriented surface
$\Sigma$ of genus $g$ with $\ell$ boundary components. 
Then $RG_{k,\ell,g}$ is the set of isomorphism classes of (connected) graphs $\Gamma$ embedded in $\Sigma$ that satisfy the following conditions:
\begin{itemize}
\item $\Gamma\cap \partial \Sigma$ consists of degree 1 vertices. 
%(Note degree of a vertex is by definition the number of edges 
%containing it.) 
We call it the set of exterior vertices and write $C_0^{\text{\rm
ext}}(\Gamma)$. All the other vertices are called interior and make up a set 
$C_0^{\text{\rm int}}(\Gamma)$.
\item Each connected component $D_i$ of
   $\Sigma \setminus \Gamma$ is an open disk such that $\ol{D}_i \cap
   \partial \Sigma$ is a (nonempty) arc.
\item $\Gamma \cap \partial_b \Sigma$ is nonempty for each boundary
   component $\partial_b \Sigma$.
\end{itemize}
In this description, two graphs are isomorphic if there is an
orientation preserving diffeomorphism of $\Sigma$ mapping one to the
other.

(2) 
Given a ribbon graph $\Gamma$ of signature $(k,\ell,g)$, let
$\Sigma_\Gamma^-$ be the surface obtained from $\Sigma_\Gamma$ by
removing a small disk around each interior vertex. (In Figure~\ref{fig:Gamma}, 
these small disks are indicated by dotted circles). 
Viewing the
$\ell$ original boundary components as outgoing and the $k$ new
boundary components as incoming, $\Sigma_\Gamma^-$ is a surface of
signature $(k,\ell,g)$ as considered in
Section~\ref{sec:def}. $\Gamma\cap\Sigma_\Gamma^-$ is a collection of
disjoint arcs on $\Sigma_\Gamma^-$ starting and ending on the boundary
such that every boundary component meets some arc. Identifying
$\Sigma_\Gamma^-$ with a model surface $\Sigma^-$, each ribbon graph
of signature $(k,\ell,g)$ thus induces such a collection of arcs on
$\Sigma^-$.

To each interior edge of our ribbon graphs we will associate a
decomposable 2-tensor $T= T^{ab}e_a \otimes e_b$ (using Einstein's sum
convention) which has the symmetry property 
\begin{equation}\label{eq:tensorsymmetry}
T^{ab}=(-1)^{\eta_a\eta_b + (n-3)}T^{ba}.
\end{equation}

\begin{rem}
In this section we will only use the tensor
$T^{ab}=(-1)^{\eta_a}g^{ab}$. In the following section we will 
introduce another such tensor. 
\end{rem}

Given a ribbon graph $\Gamma \in R_{k,\ell,g}$ with such tensors
associated to its interior edges, we want to define a map
$$
   F_\Gamma: (\BC[1])^{\otimes k} \to (\BC[1])^{\otimes \ell}. 
$$
In order to do that, we will make additional choices.

\begin{Definition}\label{def:labelling}
A {\em labelling} of a ribbon graph $\Gamma$ consists of
\begin{itemize}
\item a numbering of the interior vertices by $1,\dots,k$; 
\item a numbering of the boundary components of $\Sigma_\Gamma$ by
   $1,\dots,\ell$; 
\item a numbering of the (half)-edges incident to a given vertex $v$
  by $1,\dots,d(v)$, which is compatible with the previously given
  cyclic ordering; 
\item a numbering of the exterior vertices on the $b$-th boundary
  component by $1,\dots,s(b)$, which is compatible with the cyclic
  order induced from the orientation of the surface $\Sigma_\Gamma$.  
\end{itemize}
%A ribbon graph with a chosen labelling will be called a {\em labelled
%  ribbon graph}. 
\end{Definition}

The first two of these choices induce a (suitable equivalence class of
a) choice of ordering and orientation of the interior edges. One
procedure to make a consistent such choice will be described in
Definition~\ref{def:edge-ordering} below. 
Here we only state that for
every labelled tree with two vertices and one interior edge the
resulting orientation of that edge points from the first to the second
vertex, and for every labelled graph with one interior vertex and
one interior edge (which is necessarily a loop at that vertex) the
resulting orientation of the edge is such that the first boundary
component is to the left of the edge. These conventions were already 
illustrated in Figures~\ref{fig:p210} and \ref{fig:p120}, where examples 
of contributing graphs are drawn on their ribbon surfaces.

Now imagine basis elements $e_{\beta(b,1)},\dots,e_{\beta(b,s(b))}$ feed into the exterior edges incident to each of the boundary components $b$. For an oriented interior edge labelled by $T^{ab}e_a \otimes e_b$, label the half-edge of the starting point with $e_a$ and the half-edge of the endpoint with $e_b$.
Then around each interior vertex $v$, the $j$-th incident half-edge comes labelled with a basis vector $e_{\alpha(v,j)}$, and we can define
\begin{align}\label{Fdefstatesum}
    &\ \ \ (F_\Gamma(\varphi^1 \otimes \cdots \otimes \varphi^k))_{\beta(1,1)\cdots \beta(1,s(1)); \cdots ; \beta(\ell,1)\cdots \beta(\ell,s(\ell))}\\
    &:= \frac 1 {\ell!\,|\Aut(\Gamma)|\,\prod_v d(v)}\sum(-1)^\eta \left(\prod_{l\in
    C^1_\inn(\Gamma)} T^{a_lb_l}
    \prod_{v \in C^0_{\text{int}}(\Gamma)} \varphi^v_{\alpha(v,1) \cdots \alpha(v,d(v))}\right),
\notag
\end{align}
where the sum is over all possible ways of making the choices mentioned above, 
and for each interior edge $l\in C_1^{\rm int}(\Gamma)$ we also sum over all $a_l,b_l$ ranging in the index set of the chosen basis of $A$.
The sign exponent $$\eta=\eta_1+\eta_2$$ is determined as follows. With
all the choices that we have made, we can write all the involved basis
elements $e_i$ in two different orders: 

{\em Edge order. }
$$
   \prod_{t\in C_1^\inn(\Gamma)} e_{a_t}e_{b_t} 
   \prod_{b=1}^\ell e_{\beta(b,1)}\cdots e_{\beta(b,s(b))}. 
$$
Note that this depends on the ordering of the interior edges, the
orientation of the interior edges, the ordering of the boundary
components, and the ordering of the vertices on each boundary
component. 

{\em Vertex order. }
$$
    \prod_{v \in C_0^{\text{int}}(\Gamma)} e_{\alpha(v,1)}
    \cdots e_{\alpha(v,d(v))}.
$$
Note that this depends on the ordering of the interior vertices, and
the ordering of the half-edges incident to each interior vertex. Now $(-1)^{\eta_1}$
is defined as the sign needed to move the edge order to the vertex
order, according to the $A[1]$-degrees $\eta_i=|e_i|=\deg e_i-1$. 
The other part of the sign exponent is determined as above by viewing
the map as a composition  
$$
\BC[1]^{\otimes k} \stackrel{P_k^{-1}}{\longrightarrow} (B^{\rm cyc*}A)^{\otimes k}
\stackrel{\widetilde{F}_\Gamma}{\longrightarrow} (B^{\rm cyc*}A)^{\otimes \ell}
\stackrel{P_\ell}{\longrightarrow} \BC[1]^{\otimes \ell}.
$$
Now $\eta_2$ is the part of the sign exponent coming from the
conjugation with $P_k$ and $P_\ell$ as in~\eqref{eq:Pk}, i.e.
$$
\eta_2= (n-3)\left(\sum_{v=1}^k (k-v)|\varphi^v| + \sum_{b=1}^\ell (\ell-b)|x^b|\right),
$$
where $x^b=e_{\beta(b,1)}\cdots e_{\beta(b,s(b))}$ is the word
associated to the $b$-th boundary component. 

We now discuss some consequences of this definition, where the second one
depends on additional properties of a specific choice for $T$: 
\begin{enumerate}[(1)]
\label{symmetrylist}
\item Reversing the orientation of an interior edge yields a change in $\eta_1$ of
   ${\eta_a\eta_b+n-3}$ from replacing $T^{ab}$ by
   $T^{ba}$ (cf. \eqref{eq:tensorsymmetry}), and another change in $\eta_1$ of $\eta_a\eta_b$ from interchanging
  $e_a$ and $e_b$. Since $\eta_2$ is unchanged, reversing the
  orientation of an interior edge yields the total
  sign $(-1)^{n-3}$. %It also changes the orientation of $C(\widehat{\Sigma})$.
\item[($2_g$)] With the specific choice of
  $T^{ab}=(-1)^{\eta_a}g^{ab}$, interchanging the order of two
  adjacent interior edges leads to a sign $(-1)^{n-2}$ due to the change of
  $\eta_1$ from interchanging the corresponding pairs of basis
  vectors in the edge order, because $\eta_a+\eta_b=n-2$ whenever
  $g^{ab}\neq 0$. %It also changes the orientation of
                  %$C(\widehat{\Sigma})$. 
\setcounter{enumi}{2}
\item Changing the ordering of the half-edges at an interior vertex by a
  cyclic permutation yields the same sign twice, once from the change
  of the coefficient $\varphi^v_{\alpha(v,1)
     \cdots \alpha(v,d(v))}$ in~\eqref{Fdefstatesum} (because
  $\varphi^v\in B^{\text{\rm cyc}*}_{d(v)}A$), and once from the
  cyclic permutation of the corresponding basis vectors $e_{\alpha(v,1)}\cdots e_{\alpha(v,d(v))}$. So
     definition~\eqref{Fdefstatesum} does not depend on the ordering
     (compatible with the cyclic order) of the half-edges at an interior
     vertex.  
\item Changing the ordering of the vertices on the $b$-th boundary
  component by a cyclic permutation yields the sign obtained from the
  cyclic permutation of the corresponding basis vectors
  $e_{\beta(b,1)}\cdots e_{\beta(b,s(b))}$. As the sum
  in~\eqref{Fdefstatesum} extends over all orderings 
  (compatible with the boundary orientations) of
  the exterior vertices on each boundary component, it defines a map 
$$
    B^{\text{\rm cyc}*}_{d(1)}A \otimes \cdots \otimes B^{\text{\rm cyc}*}_{d(k)}A
    \to B^{\text{\rm cyc}*}_{s(1)}A \otimes \cdots \otimes B^{\text{\rm cyc}*}_{s(\ell)}A. 
$$
\item Interchanging the order of two adjacent boundary components
  of total $B^{\text{\rm cyc}*}A$-degrees $t_1,t_2$ yields a change of  $t_1t_2$ in $\eta_1$ from permuting the corresponding basis vectors in the edge order. It also yields a change of $(n-3)(t_1+t_2)$ in $\eta_2$. %It also changes the orientation on $C(\widehat\Sigma)$. 
%Since $n$ is odd, this agrees with the sign
%  $(-1)^{(t_1+n-3)(t_2+n-3)}$ for permuting the corresponding two factors in
%  $B^{\text{\rm cyc}*}[3-n]A \otimes \cdots \otimes B^{\text{\rm
%       cyc}*}A[3-n]$.   
%  As the sum in~\eqref{fdef} extends over all orderings
%  of the boundary components, it defines a map 
%$$
%    B^{\text{\rm cyc}*}_{d(1)}A \otimes \cdots \otimes B^{\text{\rm cyc}*}_{d(k)}A
%    \to E_\ell B^{\text{\rm cyc}*}B[2-n]. 
%$$
\item Interchanging the order of two adjacent interior vertices
  $v,w$ yields a change of $|\varphi^v|\,|\varphi^w|$ in $\eta_1$ from swapping the corresponding basis vectors in the vertex order. It also yields a change of $(n-3)(|\varphi^v|+|\varphi^w|)$ in $\eta_2$.% and it also changes the orientation of $C(\widehat\Sigma)$.
%
%  $(-1)^{|\varphi^v|\,|\varphi^w|+n-3}$ for permuting the corresponding two factors in
%  $B^{\text{\rm cyc}*}[3-n]A \otimes \cdots \otimes B^{\text{\rm cyc}*}A[3-n]$. 
%  and a sign $(-1)^{n-3}$ from changing
%  the orientation of some edge to get back the correct orientation of
%  $H_1(\Sigma)$ (which changes when the numbering of two adjacent
%  vertices is interchanged). 
%  As the sum in~\eqref{fdef} extends over all orderings of the
%   interior vertices, the map descends to a map 
%$$
%   E_kB^{\text{\rm cyc}*}A[2-n] \to E_\ell B^{\text{\rm cyc}*}B[2-n]. 
%$$ 
\end{enumerate}
Now we {\em define} $p_{2,1,0}:\BC[1] \otimes \BC[1] \to \BC[1]$ by summation over all graphs $\Gamma\in RG_{2,1,0}$, that is graphs with two interior vertices and exactly one interior edge connecting them.
According to (1) and (6) above, reversing the order of the interior vertices and the orientation of the interior edge simultaneously has the effect of changing the sign exponent by $(|\varphi^1|+n-3) (|\varphi^2|+n-3)$, which gives exactly the correct sign for the standard action of $S_2$ on $\BC[1] \otimes \BC[1]$. So $p_{2,1,0}$ is symmetric in its inputs, and descends to a map 
$$
\fp_{2,1,0}:E_2\BC \to E_1\BC.
$$
Similarly, we {\em define} $p_{1,2,0}:\BC[1] \to \BC[1] \otimes \BC[1]$ by summation over all graphs $\Gamma\in RG_{1,2,0}$, i.e. graphs with one interior vertex and one interior edge (which then necessarily is a loop at that vertex) Then according to (1) and (5) above, the map takes values in the invariant part of $\BC[1] \otimes \BC[1]$, which we identify with $E_2\BC$ to get a map
$$
\fp_{1,2,0}:E_1\BC \to E_2\BC.
$$
\begin{rem}\label{rem:proof-mu-delta}
Explicitly, the maps just defined are given by
\begin{align*}
%   \fp_{1,1,0}(x_1\cdots x_k) 
%   &= \sum_{i=1}^k(-1)^{|x_1|+\cdots+|x_{i-1}|} x_1\cdots
%   x_{i-1}(dx_i)x_{i+1}\cdots x_k, \cr 
   \fp_{2,1,0}(\varphi^1,\varphi^2)(x) 
   &= \sum_{ab}(-1)^{\eta_b|x_{(1)}|+(n-3)|\phi|+\eta_a}
   g^{ab}\varphi^1(e_a,x_{(1)})\varphi^2(e_b,x_{(2)}) + \text{cyclic}, \cr
   \fp_{1,2,0}(\varphi)(x,y) 
   &= \frac 12 \sum_{ab}(-1)^{\eta_b|x|+(n-3)|\varphi_{(1)}|+\eta_a}
   g^{ab}\varphi(e_a,x,e_b,y) + \text{cyclic}. 
\end{align*}
One sees that these definitions agree with the ones previously given. In particular, our sign considerations have validated the symmetry properties of $p_{2,1,0}$ and $p_{1,2,0}$, which are equivalent to the assertions of Lemma~\ref{lem:mu-delta}.
\end{rem}

We are now ready to prove Proposition~\ref{prop:structureexists}.

\begin{proof}[Proof of Proposition~\ref{prop:structureexists}]
The maps $\fp_{1,1,0}$, $\fp_{1,2,0}$ and $\fp_{2,1,0}$ extend
uniquely to maps $\hat\fp_{1,1,0}$ (both a derivation and a
coderivation), $\hat\fp_{1,2,0}$ (a derivation) and  $\hat\fp_{2,1,0}$
(a coderivation), all defined on $E\BC$, respectively.
It remains to prove that the following maps vanish (with $\circ_s$
defined as in Section~\ref{sec:def}):
$$
\aligned
\fp_{1,1,0} \circ_1 \fp_{2,1,0} + \fp_{2,1,0} \circ_1
\hat\fp_{1,1,0}&:E_2 \BC \to E_1\BC\\
\hat\fp_{1,1,0} \circ_1 \fp_{1,2,0} + \fp_{1,2,0} \circ_1
\fp_{1,1,0}&:E_1\BC \to E_2\BC\\
\fp_{2,1,0} \circ_1 \hat\fp_{2,1,0}&: E_3\BC \to
E_1\BC\qquad\text{(Jacobi)}\\
\hat\fp_{1,2,0} \circ_1 \fp_{1,2,0}&: E_1\BC \to
E_3\BC\qquad\text{(co-Jacobi)}\\
\fp_{1,2,0} \circ_1 \fp_{2,1,0} + 
\hat\fp_{2,1,0} \circ_1 \hat\fp_{1,2,0}&: E_2\BC \to E_2\BC\qquad\text{(Drinfeld)}\\
\fp_{2,1,0} \circ_2 \fp_{1,2,0}&: E_1\BC \to E_1\BC\qquad\text{(involutivity)}.
\endaligned
$$
\par
To discuss the first equation, i.e. compatibility of the bracket with the boundary map, we write out the equation more explicitly:
\begin{align*}
   \fp_{1,1,0}(\fp_{2,1,0}(\phi,\psi))
   + \fp_{2,1,0}(\fp_{1,1,0}\phi,\psi)
   + (-1)^{(|\phi|+(n-3))}\fp_{2,1,0}(\phi,\fp_{1,1,0}\psi)=0.
\end{align*}
Let us first look at corresponding terms in the first and second summands:
The $\eta_1(\fp_{2,1,0})$ for the second term differs from $\eta_1(\fp_{2,1,0})$ for the first term by $\eta_b$ because of the difference of degrees of the first argument of $\fp_{2,1,0}$. For the same reason,  $\eta_2$ of these two terms differs by $n-3$. Finally, the sign exponent in the application of $\fp_{1,1,0}$ in these two terms differs by $\eta_a$ because in the second term the differential has to be moved past the additional argument $e_a$ in the first slot. This gives a total difference in sign exponents of
$$
\eta_a+\eta_b+(n-3)=1,
$$
so that corresponding terms cancel. 

One can similarly compare corresponding terms in the first and third summands:
Here $\eta_2$ is the same for both summands, but for the application of $d$ there is an additional contribution of $|\phi|-\eta_a$ in the first term (from moving $d$ past the part of the inputs which feeds into $\phi$) and an additional contribution of $\eta_b$ in the third term from moving $d$ past the $e_b$ in the first slot. Together with the external exponent, we again get exactly the total difference of $1$ as above, meaning that corresponding terms again cancel.

Finally, there are two more terms, one each in the second and third summands, coming from applying the differential $d$ to $e_a$ and $e_b$ respectively. Lumping the arguments that feed into $\phi$ and $\psi$ together into words $x$ and $y$ respectively, and ignoring the identical part of the sign coming from cyclic permutation of the inputs, we see that the two terms are of the form
\begin{align*}
     \lefteqn{(-1)^{\eta_b|x|+(n-3)(|\phi|+1)}\bar g^{ab}
   \phi(de_a,x)\psi(e_b,y)=}&\cr
   &(-1)^{\eta_b|x|+(n-3)(|\phi|+1)}\bar g^{ab}d_a^{a'}
   \phi(e_{a'},x)\psi(e_b,y), \qquad\text{and}\cr
   \lefteqn{(-1)^{\eta_{b'}|x|+(n-3)(|\phi|+1) + |\phi|}\bar g^{a'b'}
   \phi(e_{a'},x)\psi(de_{b'},y)=}\cr
   &(-1)^{\eta_{b'}|x|+(n-3)(|\phi|+1) + |\phi|}\bar g^{a'b'}d_{b'}^b
   \phi(e_{a'},x)\psi(e_b,y).
\end{align*}
Using $\eta_b=\eta_{b'}+1$, $\eta_{a'}=\eta_a+1$, and
$|\phi|=|x|+\eta_a+1$, we see that these terms cancel using equation \eqref{eq:littleg} of Lemma~\ref{lem:edge1}.

A similar discussion proves the compatibility of $\fp_{1,1,0}$ and $\fp_{1,2,0}$.

The remaining four equations are the Jacobi, co-Jacobi, Drinfeld and involutivity relations, respectively.
To prove them, we will argue by cut-and-paste techniques on suitable ribbon graphs and their associated surfaces.

{\bf Jacobi and co-Jacobi. }
Writing out the Jacobi identity $\fp_{2,1,0} \circ_1 \hat{\fp}_{2,1,0}=0$ yields the equation
\begin{align}
\fp_{2,1,0} (\fp_{2,1,0}(\phi,\psi),\theta)+
(-1)^{(|\phi|+n-3)(|\psi|+|\theta|)}\fp_{2,1,0}(\fp_{2,1,0}(\psi,\theta),\phi)+ &\notag\\
(-1)^{(|\theta|+n-3)(|\phi|+|\psi|)}\fp_{2,1,0}(\fp_{2,1,0}(\theta,\phi),\psi)&=0.
\label{eq:jacobi_explicit}
\end{align}
The possible configurations of interior edges for these compositions are depicted in Figure~\ref{fig:jacobi_3terms}, where we have left off the exterior edges for clarity.
%
% FIG
%
\begin{figure}[ht!]
 \labellist
%  \small\hair 2pt
  \pinlabel $\phi$ [b] at 5 48
  \pinlabel $\psi$ [b] at 55 12
  \pinlabel $\theta$ [b] at 107 48 
  \pinlabel $\phi$ [b] at 128 48
  \pinlabel $\theta$ [b] at 179 12
  \pinlabel $\psi$ [b] at 230 48 
  \pinlabel $\psi$ [b] at 249 48
  \pinlabel $\phi$ [b] at 299 12
  \pinlabel $\theta$ [b] at 351 48 
%  \pinlabel $\sigma$ [b] at 197 66
%  \pinlabel {$-\frac{a+\gve}2$} [t] at 266 60
%  \pinlabel {$\frac{a+\gve}2$} [t] at 415 60
%  \pinlabel $1$ [bl] at 343 113
 \endlabellist
  \centering
  \includegraphics[scale=.83]{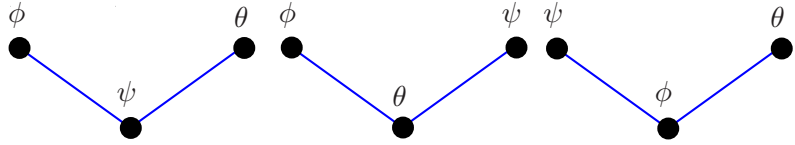}
% \end{center}
 \caption{Possible interior parts of the graph in the compositions for the Jacobi identity.}
 \label{fig:jacobi_3terms}
\end{figure}
The first term in the Jacobi identity contains contributions from the first and third graphs, the second term from the first and second graphs, and the last term from the second and third graphs. 
To compare the signs of the contributions to the first and second summand in the Jacobi identity corresponding to the first graph in Figure~\ref{fig:jacobi_3terms}, we (use the symmetry properties of $\fp_{2,1,0}$ to) rewrite the second summand as 
$$
(-1)^{(|\phi|+n-3)}\fp_{2,1,0}(\phi,\fp_{2,1,0}(\psi,\theta)).
$$
The two possible compositions are depicted schematically in Figure~\ref{fig:jacobi_cutting}.
%
% FIG
%
\begin{figure}[ht!]
 \labellist
%  \small\hair 2pt
  \pinlabel $\phi$ [bl] at 22 115
  \pinlabel $\psi$ [bl] at 102 115
  \pinlabel $\theta$ [bl] at 186 115 
%  \pinlabel $\sigma$ [b] at 197 66
%  \pinlabel {$-\frac{a+\gve}2$} [t] at 266 60
%  \pinlabel {$\frac{a+\gve}2$} [t] at 415 60
%  \pinlabel $1$ [bl] at 343 113
 \endlabellist
  \centering
  \includegraphics[scale=.83]{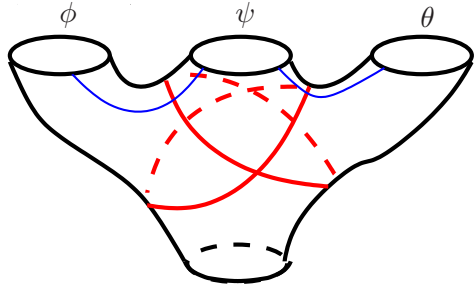}
% \end{center}
 \caption{Two of the composition in the Jacobi identity that yield the same overall operation up to sign (which corresponds to the first graph in Figure~\ref{fig:jacobi_3terms}).}
 \label{fig:jacobi_cutting}
\end{figure}
Consider first the composition corresponding to the cut separating $\phi$ and $\psi$ from $\theta$, which contributes to the first summand in \eqref{eq:jacobi_explicit}. Reading the figure from top to bottom, one sees that successively applying the two operations the sum of the $\eta_1$-parts of the sign yield the correct sign to switch from the vertex order
$$
\prod_{v=1}^3 e_{\alpha(v,1)} \cdots e_{\alpha(v,d(v))}
$$
to the edge order
$$
e_{a_1}e_{b_1}e_{a_2}e_{b_2}e_{i_1}\cdots e_{i_{k_1+k_2+k_3}},
$$
where $e_{a_1}$ and $e_{b_1}$ are the labels of the left edge (connecting $\phi$ and $\psi$) and $e_{a_2}$ and $e_{b_2}$ are attached to the ends of the right edge (connecting $\psi$ and $\theta$).

On the other hand, consider the other possible cut. The $\eta_1$-part of the sign for the first operation now corresponds to moving from the above vertex order to
$$
e_{\alpha(1,1)} \cdots e_{\alpha(1,k_1+1)}e_{a_2}e_{b_2}e_{\gamma(1)}\cdots e_{\gamma(k_1+k_2+1)},
$$
where $e_{\gamma(i)}$ are the labels of the intermediate exterior vertices created in the cutting process. Moving $e_{a_2}e_{b_2}$ to the front yields an extra sign of $|\phi|(n-2)$, and then the $\eta_1$-part of the sign for applying the second operation yields the reordering into
$$
e_{a_2}e_{b_2}e_{a_1}e_{b_1}e_{i_1}\cdots e_{i_{k_1+k_2+k_3}}.
$$
Comparing this with the previous outcome, we need an additional exponent of $n-2$ to exchange $e_{a_1}e_{b_1}$ with $e_{a_2}e_{b_2}$. So the difference in the $\eta_1$-component of the sign is $|\phi|(n-2)+(n-2)$.
The sum of the $\eta_2$-terms for the two operations in the first case is 
$$
(n-3)|\phi|+(n-3)(|\phi|+|\psi|+2-n)=(n-3)|\psi|,
$$
and in the second case it is
$$
(n-3)(|\phi|+|\psi|).
$$
In total, the difference in sign exponent of the two ways of producing this output is (also taking into account the ``external sign'' of the second term)
$$
\underbrace{|\phi|(n-2)+n-2}_{\text{difference in }\eta_1} +\underbrace{(n-3)|\phi|}_{\text{difference in }\eta_2}+|\phi|+n-3 = 1,
$$
so these terms cancel. Similar discussions apply to the other pairs of terms, and also to the co-Jacobi identity.
%
% FIG
%
\begin{figure}[ht!]
  \centering
  \includegraphics[scale=.83]{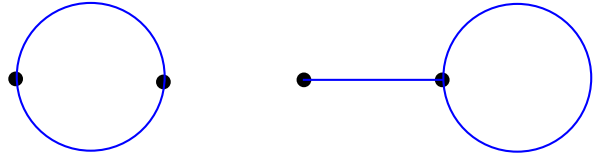}
% \end{center}
 \caption{Possible interior parts of the graph in the compositions for the Drinfeld identity. The right hand graph comes in four flavours, depending on the orientation of the edges, yielding the four terms in $\hat{\fp}_{2,1,0} \circ_1 \hat{\fp}_{1,2,0}$, which also appear in $\fp_{1,2,0} \circ \fp_{2,1,0}$. The left hand term represents the self-cancelling part of $\fp_{1,2,0} \circ \fp_{2,1,0}$.}
 \label{fig:drinfeld_graphs}
\end{figure}

{\bf Drinfeld. }
We next prove the Drinfeld compatibility between bracket and cobracket. 
Using the common short hand notation $\fp_{1,2,0}\phi= \phi_{(1)} \otimes \phi_{(2)}$ etc, it takes the explicit form
\begin{align*}
0= &\,\fp_{1,2,0} \circ \fp_{2,1,0}(\phi,\psi) \\
&+(-1)^{|\phi_{(1)}|+n-3} \phi_{(1)} \otimes \fp_{2,1,0}(\phi_{(2)},\psi) \\
&+(-1)^{(|\phi_{(2)}|+n-3)(|\psi|+n-3)} \fp_{2,1,0}(\phi_{(1)},\psi) \otimes \phi_{(2)}\\
&+(-1)^{(|\phi|+n-3)(|\psi|+n-3)+|\psi_{(1)}|+n-3} \psi_{(1)} \otimes \fp_{2,1,0}(\psi_{(2)},\phi) \\
&+(-1)^{|\phi|+n-3} \fp_{2,1,0}(\phi,\psi_{(1)})\otimes \psi_{(2)}.
\end{align*}
This time, the possible interior parts of the underlying graphs are shown in Figure~\ref{fig:drinfeld_graphs}.
Let us consider the configuration in Figure~\ref{fig:drinfeld}, which contributes to both $\fp_{1,2,0} \circ \fp_{2,1,0}(\phi,\psi)$ and 
$\phi_{(1)} \otimes \fp_{2,1,0}(\phi_{(2)},\psi) $.
%
% FIG
%
\begin{figure}[ht!]
% \labellist
%  \small\hair 2pt
%  \pinlabel $\phi$ [b] at 5 48
%  \pinlabel $\psi$ [b] at 55 12
%  \pinlabel $\theta$ [b] at 107 48 
%  \pinlabel $\phi$ [b] at 128 48
%  \pinlabel $\theta$ [b] at 179 12
%  \pinlabel $\psi$ [b] at 230 48 
%  \pinlabel $\psi$ [b] at 249 48
%  \pinlabel $\phi$ [b] at 299 12
%  \pinlabel $\theta$ [b] at 351 48 
%  \pinlabel $\sigma$ [b] at 197 66
%  \pinlabel {$-\frac{a+\gve}2$} [t] at 266 60
%  \pinlabel {$\frac{a+\gve}2$} [t] at 415 60
%  \pinlabel $1$ [bl] at 343 113
% \endlabellist
  \centering
  \includegraphics[scale=.83]{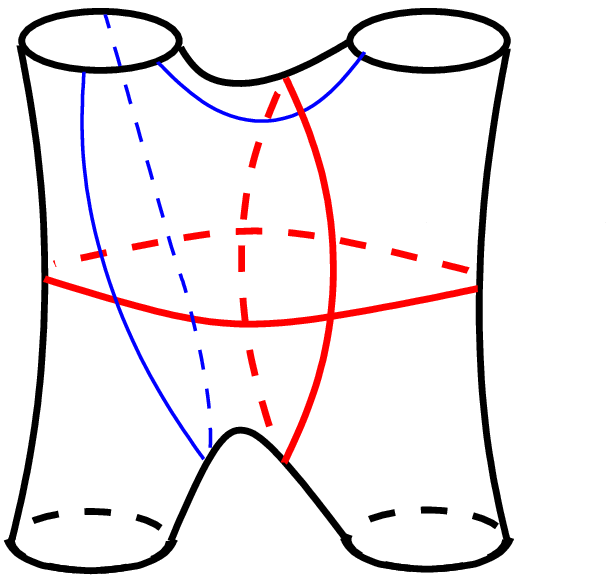}
% \end{center}
 \caption{A possible configuration of interior edges appearing in the Drinfeld compatibility equation and the two gluing that give rise to it.}
 \label{fig:drinfeld}
\end{figure}
The $\eta_1$-part of the sign of the composition $\fp_{1,2,0}\circ \fp_{2,1,0}(\phi,\psi)$ (the horizontal cut) corresponds to moving from the vertex order
$$
\prod_{v=1}^2 e_{\alpha(v,1)} \cdots e_{\alpha(v,d(v))}
$$
to the edge order
$$
e_{a_1}e_{b_1}e_{a_2}e_{b_2}\prod_{b=1}^2 e_{\beta(b,1)} \cdots e_{\beta(b,s(b))},
$$
and the $\eta_2$-part of the sign exponent is simply $(n-3)(|\phi|+t_1)$, where $t_1$ is the total $B^{\rm cyc*}A$-degree of the first output (which turns out to be $|\phi_{(1)}|$ from the second point of view).

Let us now consider the vertical cut, which corresponds to $(-1)^{|\phi_{(1)}|+n-3} \phi_{(1)} \otimes \fp_{2,1,0}(\phi_{(2)},\psi)$. The $\eta_1$ part of the sign for $\fp_{1,2,0}(\phi)$ corresponds to moving from the above vertex order to
$$
e_{a_2}e_{b_2} e_{\beta(1,1)} \cdots e_{\beta(1,s(1))} e_{\gamma(1)} \cdots e_{\gamma(r)} e_{\alpha(2,1)} \cdots e_{\alpha(2,d(2))},
$$
and the $\eta_1$ part of the sign of the $\fp_{2,1,0}$-part of the operation allows one to move this to
$$
e_{a_2}e_{b_2} e_{\beta(1,1)} \cdots e_{\beta(1,s(1))} e_{a_1}e_{b_1} e_{\beta(2,1)} \cdots e_{\beta(2,s(2))}.
$$
To get to the same order as in the first case, we need to move $e_{a_1}e_{b_1}$ to the front, yielding an extra contribution to the sign exponent of $(n-2)(|\phi_{(1)}|+n-2)$. This time, the $\eta_2$-part of the sign exponent equals $(n-3)(|\phi_{(1)}|+ |\phi_{(2)}|)$. Together with the ``external sign'', the total difference in sign exponents is
$$
\underbrace{n-2 +|\phi_{(1)}|(n-2)}_{\text{difference in }\eta_1}
+\underbrace{(n-3)(|\phi|+|\phi_{(2)}|)}_{\text{difference in }\eta_2}+|\phi_{(1)}|+n-3 = 1,
$$
because $|\phi|=|\phi_{(1)}|+|\phi_{(2)}|+2-n$. Hence the two contributions cancel. Similar discussions apply to the other three terms involving the second graph in Figure~\ref{fig:drinfeld_graphs}.

To complete the proof of Drinfeld compatibility, it remains to discuss the contributions of the first graph in Figure~\ref{fig:drinfeld_graphs}. 
In Figure~\ref{fig:drinfeld_cancelling}, we show the two possible gluings which yield this configuration, this time with the numbering of vertices and the orientations of the edges included. The latter are chosen so that the outer boundary component is the first output in both cases. Notice that here the difference in sign comes solely from changing the order of the edges and changing the orientation of one of them, which according to points ($2_g$) and (1) in the discussion of signs above yield sign exponents of $n-2$ and $n-3$, respectively, for a total difference of $1$ as needed for cancellation.
%
% FIG
%
\begin{figure}[ht!]
 \labellist
  \small\hair 2pt
  \pinlabel $1$ [b] at 35 109
  \pinlabel $2$ [b] at 154 109
  \pinlabel $1$ [b] at 268 90
  \pinlabel $2$ [b] at 392 90
 \endlabellist
  \centering
  \includegraphics[scale=.73]{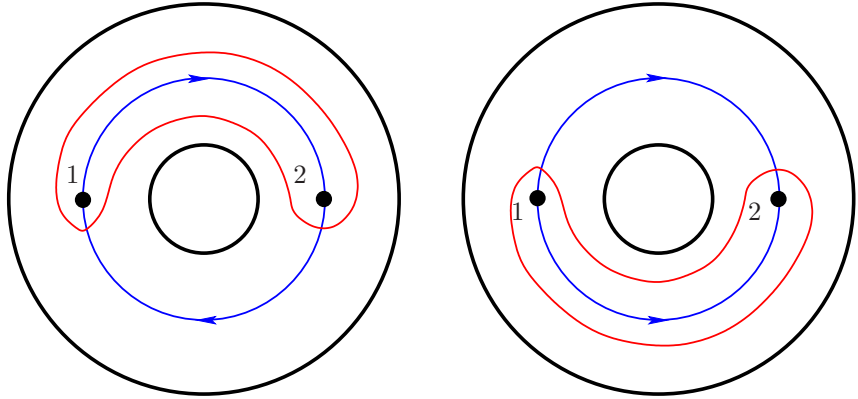}
% \end{center}
 \caption{The two ways of obtaining the same output from the first graph in Figure~\ref{fig:drinfeld}, with orientations of edges given.}
 \label{fig:drinfeld_cancelling}
\end{figure}

{\bf Involutivity. }
The involutivity relation follows from an analogous argument, but this time applied to the underlying graph depicted in Figure~\ref{fig:involutive}, which gives rise to a genus one surface. 
%\marginpar{\tiny Draw this graph on surface.}
%
% FIG
%
\begin{figure}[ht!]
  \centering
  \includegraphics[scale=.83]{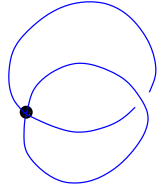}
% \end{center}
 \caption{The graph corresponding to the composition in the involutivity relation.}
 \label{fig:involutive}
\end{figure}
Here the orientation of the first edge determines the order of the outputs of the first operation, which by our conventions forces the orientation of the second edge. Again, switching the order of the edges forces the reversal of one of the edge orientations for consistency.

This concludes our proof of Proposition~\ref{prop:structureexists}.
\end{proof}

In the above proof we used only rather elementary graphs from our
``graphical calculus''. General ribbon graphs will make their
appearance in the following section. 
\medskip

%%%%%%%%%%%%%%%%%%%%%%%%%%%%%%%%%%%%%%%%%%%%%%%%%%%%%%%%%%%%%%%%%%%%%%%%%%%
\section{The \text{\rm dIBL} structure associated to a subcomplex. } 
\label{sec:subcomplex}
%%%%%%%%%%%%%%%%%%%%%%%%%%%%%%%%%%%%%%%%%%%%%%%%%%%%%%%%%%%%%%%%%%%%%%%%%%%

In this section we relate the \text{\rm dIBL} structure associated to 
$(A,\la\ ,\ \ra,d)$ to that of suitable subcomplexes $B \subset A$
having the same homology. Our main result of this section states that
the $\text{\rm dIBL}$-structure on such a subcomplex is $\text{\rm
  IBL}_\infty$-homotopy equivalent to the original one. We closely
follow~\cite[Section 6.4]{KoSo01}. 

Let $(A,\la\ ,\ \ra,d)$ be as above, i.e.~
\begin{equation}\label{eq:A}
    \langle dx,y \rangle + (-1)^{|x|}\langle x,dy \rangle = 0, \qquad
    \langle x,y \rangle = -(-1)^{|x| |y|}\langle y,x \rangle.
\end{equation}
Let $B\subset A$ be a subcomplex
such that the restriction of $\la\ ,\ra$ to $B$ is nondegenerate. This
means that $B$ is the image of a projection $\Pi:A\to A$ satisfying
$\Pi^2=\Pi$ and
\begin{equation}\label{eq:Pi}
    \Pi d=d\Pi,\qquad \langle\Pi x, y\rangle = \langle
    x, \Pi y\rangle. 
\end{equation}
We assume in addition the existence of a chain homotopy $G : A^* \to
A^{*-1}$ such that 
\begin{equation}\label{eq:G}
    dG + Gd = \Pi - \id, \qquad
    \langle G x, y\rangle = (-1)^{|x|}\langle x,  G y\rangle.
\end{equation}
Note that conditions~\eqref{eq:G} imply conditions~\eqref{eq:Pi}. 
It follows that the inclusion $i:B\to A$ and the projection $p:A\to B$
are chain homotopy inverses of each other, in particular they induce
isomorphisms on cohomology. We will be mostly interested in the case
that $B$ is isomorphic to the cohomology of $(A,d)$, which is possible
due to the following

\begin{lem}\label{lem:B}
There exists a subcomplex $B\subset \ker d\subset A$ satisfying
conditions~\eqref{eq:Pi} and~\eqref{eq:G} such that 
$$
    \ker d = \im d\oplus B. 
$$ 
\end{lem}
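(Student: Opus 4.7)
\textbf{Proof plan for Lemma~\ref{lem:B}.}
The first step is to identify the orthogonal complement of $\ker d$. The skew-adjointness of $d$ in~\eqref{eq:A} shows that for any $u\in A$ and $y\in\ker d$,
$\langle du,y\rangle = -(-1)^{|u|}\langle u,dy\rangle=0$, so $\im d\subset(\ker d)^\perp$; by nondegeneracy and dimension count, $\im d=(\ker d)^\perp$. Consequently the pairing on $\ker d$ has radical exactly $\im d$ and descends to a nondegenerate pairing on $H=\ker d/\im d$.

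I would then pick any graded complement $B\subset \ker d$ of $\im d$. Automatically $d|_B=0$, so $B$ is a subcomplex, and the map $B\hookrightarrow\ker d\twoheadrightarrow H$ is a graded isomorphism that identifies $\langle\,,\,\rangle|_B$ with the nondegenerate pairing on $H$. Hence $B$ is nondegenerate in $A$ and we get the orthogonal splitting $A=B\oplus B^\perp$. The restriction of the pairing to $B^\perp$ is nondegenerate, and the identity $B^\perp\cap\ker d=\im d$ (a quick check using $B\oplus\im d=\ker d$ and nondegeneracy on $B$) combined with a dimension count shows that $\im d$ is a Lagrangian subspace of $(B^\perp,\langle\,,\,\rangle)$.

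The key step is now to choose a graded \emph{isotropic} complement $A'$ of $\im d$ inside $B^\perp$; then $A=B\oplus\im d\oplus A'$ with $d|_{A'}\colon A'\to\im d$ an isomorphism. Existence of such a Lagrangian complement is the one point that uses $\tfrac{1}{2}\in R$: start with any graded complement $C$ of $\im d$ in $B^\perp$, use the iso $\alpha\colon C\to(\im d)^*$, $\alpha(c)(l)=\langle c,l\rangle$, to define $\phi\colon C\to\im d$ by the requirement $\langle c,\phi c'\rangle = -\tfrac{1}{2}\langle c,c'\rangle$ for all $c\in C$, and set $A':=\{c+\phi c:c\in C\}$. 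Using graded antisymmetry one verifies $\langle c+\phi c,c'+\phi c'\rangle=0$, so $A'$ is isotropic. I expect this step to be the main ``obstacle'' in the sense that it is the only nontrivial piece of linear algebra; the rest is bookkeeping.

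Finally I would define $\Pi\colon A\to A$ as the projection onto $B$ along $\im d\oplus A'$, and $G\colon A\to A$ by $G|_B=0$, $G|_{A'}=0$, and $G(du):=-u$ for $u\in A'$. The properties $\Pi^2=\Pi$, $\Pi d=d\Pi$, and $dG+Gd=\Pi-\id$ follow at once by evaluating on each summand of $B\oplus\im d\oplus A'$. The self-adjointness $\langle\Pi x,y\rangle=\langle x,\Pi y\rangle$ holds because $\ker\Pi=\im d\oplus A'\subset B^\perp$. The twisted self-adjointness $\langle Gx,y\rangle=(-1)^{|x|}\langle x,Gy\rangle$ is checked case by case: on $\im d\times\im d$ it reduces to~\eqref{eq:A} together with $|du|=|u|+1$, the mixed case $\im d\times A'$ uses precisely $\langle A',A'\rangle=0$ (hence the need for $A'$ to be isotropic), and all other cases are trivial since one of $Gx,Gy$ vanishes and $B\perp(\im d\oplus A')$.
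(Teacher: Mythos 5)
Your argument is correct and arrives at the same decomposition $A=\im d\oplus B\oplus C$ with $C\perp(B\oplus C)$ and the same formulas for $\Pi$ and $G$ as the paper, but you produce the isotropic complement $C=A'$ by a genuinely different and more elementary device. The paper first constructs a ``Hodge star'' $*\colon A^k\to A^{n-k}$ making $(\cdot,\cdot)=\la\cdot,*\cdot\ra$ positive definite, and then takes $B=\ker\Delta$ and $C=\im d^*$ for the associated Laplacian $\Delta=dd^*+d^*d$; this is ``more conceptual'' (their words) and prefigures the de Rham section, where $B$ becomes the space of harmonic forms and $G$ the Green operator of a Riemannian metric, but it really uses that the ground field is $\R$ (positive definiteness, diagonalization of the middle-degree form). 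Your graph construction $A'=\{c+\phi c\}$ with $\la c,\phi c'\ra=-\tfrac{1}{2}\la c,c'\ra$ uses only $\tfrac{1}{2}\in R$ and graded antisymmetry, so it is shorter and works over any field of characteristic zero, and it isolates exactly where invertibility of $2$ enters. The only points worth spelling out are that $\phi$ is automatically degree-preserving (forced by the grading of the pairing, and needed for the sign cancellation $\la c,\phi c'\ra+\la\phi c,c'\ra=-\la c,c'\ra$ in the isotropy check) and that the pairing between $C$ and $\im d$ is perfect because $\im d$ is Lagrangian in $B^\perp$. Your case-by-case verification of \eqref{eq:Pi} and \eqref{eq:G}, including the observation that the mixed case $\im d\times A'$ is precisely what forces $A'$ to be isotropic, is sound and agrees with the paper's computation for $G(dz,b,c)=(0,0,-z)$.
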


\begin{proof}
The proof is a straightforward exercise in linear algebra. 
Note first that the orthogonal complements with respect to $\la\ ,\
\ra$ satisfy
$$
    (\im d)^\perp = \ker d,\qquad (\ker d)^\perp = \im d. 
$$
We will construct subspaces $B\subset\ker d$ and $C\subset A$ with
the following properties: 
\begin{equation}\label{eq:BC}
    A = \ker d\oplus C,\qquad \ker d = \im d\oplus B, \qquad C
    \perp B\oplus C. 
\end{equation}
Given such subspaces, it follows from $\ker d\perp\im d$ that $B \perp
\im d\oplus C$. Let $\Pi:A\to A$ be the orthogonal projection 
onto $B$ and define $G:A\to A$ with respect to the decomposition
$A=\im d\oplus B\oplus C$ by 
$$
    G(dz,b,c) := (0,0,-z),\qquad c,z\in C,\ b\in B. 
$$
Then it is easy to verify that $\Pi$ and $G$ satisfy
conditions~\eqref{eq:Pi} and~\eqref{eq:G}.

Subspaces $B,C$ satisfying~\eqref{eq:BC} can be constructed
directly. A more conceptual argument is based on the following

{\em Claim. }There exist linear operators 
$$
    *: A^k\to A^{n-k},\qquad k\in\Z,
$$
such that $(\cdot,\cdot) := \la\cdot,*\cdot\ra$
is a positive definite inner product on $A$ and
$$
    *^2 = (-1)^{k(n-k)+n}\id:A^k\to A^k. 
$$ 
To construct $*$, suppose first $k<n/2$. Pick any metric $(\ ,\ )$ on
$A^k$. It induces an isomorphism
$$
    I:A^k\to(A^k)^*,\qquad y\mapsto(\cdot,y). 
$$ 
Similarly, $\la\ ,\ \ra$ induces an isomorphism
$$
    J:A^{n-k}\to(A^k)^*,\qquad y\mapsto\la\cdot,y\ra. 
$$
Denote by $(\ ,\ )$ the induced metric on $A^{n-k}$ via the
isomorphism $I^{-1}J:A^{n-k}\to A^k$, i.e.
$$
    (x,y) := (I^{-1}Jx,I^{-1}Jy),\qquad x,y\in A^{n-k}. 
$$
Define $*$ on $A^k\oplus A^{n-k}$ by
$$
    *:=J^{-1}I:A^k\to A^{n-k},\qquad *:=(-1)^{k(n-k)+n-3}I^{-1}J:A^{n-k}\to A^k. 
$$
Then $*^2=(-1)^{k(n-k)+n}\id$ on $A^k$. For $x,y\in A^k$ we compute
\begin{gather*}
   \la x,*y\ra = \la x,J^{-1}Iy\ra = (JJ^{-1}Iy)(x) = (Iy)(x) =
   (x,y), \cr
   \la *x,**y\ra = (-1)^{k(n-k)+n}\la *x,y\ra = \la y,*x\ra = (y,x) =
   (x,y) = (*x,*y). 
\end{gather*}
Thus $(\cdot,\cdot) = \la\cdot,*\cdot\ra$,
so $*$ has the desired properties. 
%(The last property can be verified
%for a basis $(e_i)$ of $A^k$ with $(e_i,e_j)=\delta_{ij}$; then the
%$f_i:=J^{-1}Ie_i$ satisfy $\la e_i,f_j\ra = \delta_{ij} =
%(f_i,f_j)$). 

For $n$ even and $k=n/2$ we distinguish two cases. If $k$ is even,
then $\la\ ,\ \ra$ is symmetric on $A^k$ and there exists a basis $(e_i)$ of
$A^k$ with $\la e_i,e_j\ra=\pm\delta_{ij}$; then $*e_i:=\pm e_i$ has
the desired properties. If $k$ is odd, then $\la\ ,\ \ra$ is
symplectic on $A^k$ and there exists a symplectic basis $(e_i,f_i)$ of
$A^k$ with $\la e_i,e_j\ra=0=\la f_i,f_j\ra$ and $\la
e_i,f_j\ra=\pm\delta_{ij}$; then $*e_i:=f_i$, $*f_i:=-e_i$ has 
the desired properties. This proves the claim. Note that so far we
have not used the operator $d$.

From the claim the lemma follows by standard Hodge theory
arguments: Define the adjoint operator $d^*:A^{k+1}\to A^k$ of $d$ by 
$$
    (d^*x,y) = (x,dy),\qquad x\in A^{k+1},y\in A^k. 
$$
It follows that
$$
    d^* = \pm*d*:A^{k+1}\to A^k
$$
and 
$$
    \la d^*x,y\ra = \pm\la x,d^*y\ra,\qquad x\in A^{k+1},y\in
    A^{n-k}. 
$$
Define the Laplace operator $\Delta:=dd^*+d^*d$ and 
$$
    B := \ker\Delta = \ker d\cap \ker d^*,\qquad
    C := \im d^*.
$$
Then the decomposition $A = \im d\oplus B \oplus C$
is orthogonal with respect to $(\ ,\ )$ and
satisfies~\eqref{eq:BC}. Note that the operator $G$ is explicitly
given by
%\marginpar{\tiny shouldn't there also be a minus sign here? (JL)}
$$
    G = -d^*\Delta^{-1} = -\Delta^{-1}d^*,
$$
where $\Delta^{-1}$ is zero on $B$ and the inverse of $\Delta$ on $\im
d\oplus C$. This concludes the proof of Lemma~\ref{lem:B}. 
\end{proof}

Given a choice of basis $e_i$ of $A$ and the dual basis $e^i$, we set
$$
G^{ab}:=\langle Ge^a,e^b\rangle.
$$
Then we have $G^{ab} \neq 0$ only if $\eta_a+\eta_b=n-3$. Moreover,
from \eqref{eq:G} and Definition \ref{cyccochaincmx} one deduces the
symmetry properties 
\begin{equation}\label{eq:symG}
G^{ba}= (-1)^{\eta_a\eta_b+n-3}G^{ab}.
\end{equation}

\begin{lem}\label{lem:edge2}
The equation $dG+Gd=\Pi-\id$ translates into the identity
\begin{equation}\label{edge2}
d^a_{a'} G^{a'b} + (-1)^{\eta_a} d^b_{b'} G^{ab'} = (-1)^{\eta_a} g^{\bar a \bar b}-(-1)^{\eta_a} g^{ab},
\end{equation}
where in the first term on the right hand side the bar signifies that we take the inner product of the images in the subcomplex, i.e.
$$
   g^{\bar a\bar b} := \langle \Pi e^a, \Pi e^b\rangle.
$$
\end{lem}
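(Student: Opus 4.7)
The strategy is to pair the operator identity $dG + Gd = \Pi - \id$ evaluated on $e^a$ against the dual basis element $e^b$, compute each of the resulting inner products in coordinates using Lemma~\ref{lem:edge1}, and then match the signs.

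First I would handle the right-hand side. Since $\Pi$ is an idempotent and self-adjoint with respect to $\langle\,,\,\rangle$ (as recorded in \eqref{eq:Pi}), we have
\[
   \langle \Pi e^a, e^b\rangle = \langle \Pi^2 e^a, e^b\rangle = \langle \Pi e^a, \Pi e^b\rangle = g^{\bar a\bar b},
\]
while $\langle e^a, e^b\rangle = g^{ab}$ by definition. Hence
\[
   \langle (\Pi-\id)e^a, e^b\rangle = g^{\bar a\bar b} - g^{ab}.
\]

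For the left-hand side I would treat the two terms $\langle Gde^a,e^b\rangle$ and $\langle dGe^a,e^b\rangle$ separately. Using the identity $de^a = (-1)^{\eta_a} d^a_{a'} e^{a'}$ from Lemma~\ref{lem:edge1} together with $G^{a'b} = \langle Ge^{a'}, e^b\rangle$, the first is immediate:
\[
   \langle Gde^a, e^b\rangle = (-1)^{\eta_a} d^a_{a'}\, G^{a'b}.
\]
For the second, I would apply the cyclic property of the pairing (Definition \ref{cyccochaincmx}) to move $d$ from the left factor to the right. Since $\deg Ge^a = \eta_a$, this gives
\[
   \langle dGe^a, e^b\rangle = (-1)^{\eta_a}\langle Ge^a, de^b\rangle = (-1)^{\eta_a}(-1)^{\eta_b} d^b_{b'}\,G^{ab'},
\]
after expanding $de^b$ again via Lemma~\ref{lem:edge1}.

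The remaining step is to simplify the sign $(-1)^{\eta_a+\eta_b}$. The pairing $\langle (dG+Gd)e^a,e^b\rangle$ only contributes when $\eta_a+\eta_b = n-2$ (this is the non-vanishing support of $g^{ab}$, which is also the support of the two coordinate terms, as one verifies from the degree conditions $\eta_{a'}=\eta_a-1$, $\eta_{b'}=\eta_b-1$ and $G^{\alpha\beta}\neq 0 \Rightarrow \eta_\alpha+\eta_\beta=n-3$). On this support $(-1)^{\eta_a+\eta_b} = (-1)^{n-2}$, which reduces to the sign claimed in the statement once one collects the two terms and compares with $(-1)^{\eta_a}(g^{\bar a\bar b}-g^{ab})$.

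The only place I expect a minor complication is precisely the bookkeeping of these signs: keeping consistent track of whether a given factor $(-1)^{\eta_{\bullet}}$ refers to the index before or after applying $d$, and making sure one uses the $A[1]$-degree conventions of the paper rather than the cochain degree. Once the support conditions are exploited to eliminate one of the exponents, the identity \eqref{edge2} follows directly.
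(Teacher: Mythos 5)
Your overall strategy is exactly the paper's: pair $dG+Gd=\Pi-\id$ against $e^a$ and $e^b$, push $d$ across the pairing with the cyclic relation, expand via Lemma~\ref{lem:edge1}, and use the support condition to clean up signs. The treatment of the right-hand side and of $\langle Gde^a,e^b\rangle=(-1)^{\eta_a}d^a_{a'}G^{a'b}$ is correct. However, there is a concrete sign error in the remaining term: you assert $\deg Ge^a=\eta_a$, which is false. Since the pairing has degree $-n$, the dual basis satisfies $\deg e^a=n-\deg e_a$, so $\deg Ge^a=n-\deg e_a-1=n-\eta_a-2$, and the cyclic relation gives
$$
\langle dGe^a,e^b\rangle=(-1)^{n+\eta_a}\langle Ge^a,de^b\rangle=(-1)^{n+\eta_a+\eta_b}\,d^b_{b'}G^{ab'}.
$$
On the support $\eta_a+\eta_b=n-2$ this total exponent is even, so $\langle dGe^a,e^b\rangle=d^b_{b'}G^{ab'}$ with \emph{no} sign, which after multiplying the whole identity by $(-1)^{\eta_a}$ yields exactly the coefficient $(-1)^{\eta_a}$ of the statement. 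Your computation instead produces $(-1)^{\eta_a+\eta_b}=(-1)^n$ at this stage, leaving a stray factor $(-1)^n$ on the $d^b_{b'}G^{ab'}$ term, so the argument as written only recovers \eqref{edge2} when $n$ is even. In a lemma whose entire content is this sign identity, that is the one step you must repair; once $\deg Ge^a$ is computed correctly the rest of your plan goes through and coincides with the paper's proof.
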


\begin{proof}
We do a straightforward computation:
\begin{align*}
\langle dG e^a,e^b \rangle &= (-1)^{|e^a|}\langle Ge^a,de^b\rangle \\
&= (-1)^{|e^a|+\eta_b}d_{b'}^b\langle Ge^a,e^{b'}\rangle = d_{b'}^b G^{ab'}
\end{align*}
(since we need $|e^a|=|e_b|$ for the term to be nonzero) and 
\begin{align*}
\langle Gd e^a,e^b \rangle &= (-1)^{\eta_a}d_{a'}^a\langle Ge^{a'},e^b\rangle \\
&= (-1)^{\eta_a}d_{a'}^aG^{a'b}. 
\end{align*}
Now multiplying the equation
$$
\langle (dG+Gd)e^a,e^b \rangle = \langle\Pi e^a,e^b\rangle - \langle e^a,e^b\rangle
$$
by $(-1)^{\eta_a}$ gives the claim.
\end{proof}

Now we return to $(A,\la\ ,\ \ra,d)$ and a subcomplex $B\subset A$
satisfying conditions~\eqref{eq:Pi} and~\eqref{eq:G}. We denote the
induced structures on $B$ by $d^B$ and $\ \la\ ,\ \ra$. They again
satisfy condition~\eqref{eq:A}. Therefore,
Proposition~\ref{prop:structureexists} equips $(B^{\text{\rm cyc}*}B)[2-n]$ with a
$\text{\rm dIBL}$ algebra structure.

The next theorem, which corresponds to
Theorem~\ref{thm:homotopyequiv-intro} from the Introduction, is the
main result of this section. 

\begin{Theorem}\label{thm:homotopyequiv}
There exists an $\IBL_{\infty}$-homotopy equivalence 
$$
    \frak f : (B^{\text{\rm cyc}*}A)[2-n] \to (B^{\text{\rm cyc}*}B)[2-n]
$$
such that $\frak f_{1,1,0} : (B^{\text{\rm cyc}*}A)[2-n] \to (B^{\text{\rm cyc}*}B)[2-n]$ 
is the map induced by the dual of the inclusion $i:B \to A$.
\end{Theorem}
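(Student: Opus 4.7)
The plan is to construct $\ff$ explicitly via a summation over ribbon graphs, using the Green operator $G$ as an edge propagator, and then deduce the statement from the Whitehead-type result Theorem~\ref{Whi}. For each signature $(k,\ell,g)$ I would define
$$
    \ff_{k,\ell,g} := \sum_{\Gamma \in RG_{k,\ell,g}} F_\Gamma : E_k (B^{\text{cyc}*}A)[2-n] \to E_\ell (B^{\text{cyc}*}B)[2-n],
$$
where $F_\Gamma$ is given by the state-sum formula \eqref{Fdefstatesum} with two modifications: (i) the tensor assigned to each interior edge is (up to a uniform sign) $G^{ab}$ rather than the metric, which by \eqref{eq:symG} satisfies the symmetry \eqref{eq:tensorsymmetry}; (ii) the basis labels $\beta(b,j)$ at exterior half-edges range only over a basis of the subcomplex $B$, so the output is automatically a cochain on $B$. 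The trivial graph with one interior vertex, $s(1)$ exterior edges and no interior edges contributes $\varphi \mapsto \varphi|_{B \otimes \dots \otimes B}$, so $\ff_{1,1,0}$ is the dual of the inclusion $i:B \to A$ as required in the statement.

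The main task is to verify the $\IBL_\infty$-morphism equation \eqref{eq:mor4} for each $(k,\ell,g)$, by a cut-and-paste analysis on ribbon graphs, parallel to the proof of Proposition~\ref{prop:structureexists} but now carried out with the propagator $G$ rather than $g$. Concretely, the combination $\ff_{k,\ell,g} \circ \hat\fp^A_{1,1,0} - \hat\fp^B_{1,1,0} \circ \ff_{k,\ell,g}$ becomes, after distributing the coboundary using the cocycle property of each vertex-cochain, a sum indexed by pairs $(\Gamma, t)$ with $\Gamma \in RG_{k,\ell,g}$ and $t$ an interior edge, in which $d$ acts on the propagator $G^{ab}$ attached to $t$. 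By Lemma~\ref{lem:edge2} this action splits as $g^{\bar a \bar b} - g^{ab}$. The $g^{ab}$-piece reinterprets edge $t$ as a ``dIBL-contraction on $A$'': summing over all such $(\Gamma,t)$ where $t$ is a bridge reassembles $\ff_{k',\ell,g} \circ_1 \hat\fp^A_{2,1,0}$, and the remaining cases (non-bridge edges, or bridges separating boundary components) assemble $(\ff \odot \ff) \circ_2 \hat\fp^A_{1,2,0}$ and the other compositions with $\fp^+ = \fp^A$ in \eqref{eq:mor4}. Dually, the $g^{\bar a \bar b}$-piece restricts the propagator on $t$ to $B$, so the resulting object decomposes as a gluing of smaller $\ff$-pieces along a dIBL operation of $B$ on that edge, assembling the compositions $\hat\fp^B_{k^-,\ell^-,g^-} \circ_{s_1,\dots,s_r}(\ff \odot \cdots \odot \ff)$ with $\fp^- = \fp^B$.

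The hard part will be the combinatorial and sign bookkeeping: for each pair $(\Gamma,t)$ the multiplicity and sign of its contribution must match a specific term of \eqref{eq:mor4}, and conversely every term of \eqref{eq:mor4} must arise exactly once. The multiplicity count comes down to relating $|\Aut(\Gamma)|$ to the automorphism groups of the two (possibly coincident) smaller ribbon graphs obtained by cutting $t$, together with the numberings of vertices on boundary components, and this must absorb the factors $1/r!$ and $1/\ell!$ appearing in \eqref{eq:mor4}. The signs are handled by the same ``edge order versus vertex order'' device already used in Section~\ref{sec:Cyclic-cochain}, supplemented by the sign information in Lemma~\ref{lem:edge2}. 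Once \eqref{eq:mor4} is established, the conclusion is immediate: $\ff_{1,1,0}$ is the dual of the chain homotopy equivalence $i:B \to A$ with homotopy $G$, hence is itself a chain homotopy equivalence of the cochain complexes $B^{\text{cyc}*}A \to B^{\text{cyc}*}B$ (termwise tensor powers of a homotopy equivalence remain a homotopy equivalence over $\R$), so it induces an isomorphism on cohomology, and Theorem~\ref{Whi} implies that $\ff$ is an $\IBL_\infty$-homotopy equivalence.
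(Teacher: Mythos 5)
Your proposal is correct and follows essentially the same route as the paper's detailed proof: a sum over ribbon graphs with the Green kernel $G^{ab}$ on interior edges, the splitting of $d$ acting on a propagator into $g^{\bar a\bar b}-g^{ab}$ via Lemma~\ref{lem:edge2}, reassembly of the two pieces into the target- and source-side compositions of~\eqref{eq:mor4}, and finally Theorem~\ref{Whi}. The paper additionally records a much shorter alternative (take the linear morphism induced by an orthogonal left inverse $j:A\to B$ of the inclusion and invert it with Theorem~\ref{Whi}), but your explicit construction is exactly its second proof, with the remaining work being the sign and automorphism bookkeeping you correctly identify as the hard part.
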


%%%%%%%%%%%%%%%%%%%%% begin proof %%%%%%%%%%

\begin{proof}
We provide two proofs of Theorem~\ref{thm:homotopyequiv}.
We first give a short proof.
Take a chain map $j:A \to B$ which is orthogonal with respect to
the inner product and is a left inverse to $i:B \to A$.
Set $\frak g_{1,1,0} := j^*:(B^{\text{\rm cyc}*}B)[2-n] \to
(B^{\text{\rm cyc}*}A)[2-n]$ and $\frak g_{k,\ell,g} := 0$ for
$(k,\ell,g) \ne (1,1,0)$. 
It is easy to see from the definition that this defines an 
$\IBL_{\infty}$-morphism $(B^{\text{\rm cyc}*}B)[2-n] \to
(B^{\text{\rm cyc}*}A)[2-n]$ such that $\frak g_{1,1,0}$ induces an
isomorphism on homology. Therefore, by Theorem~\ref{Whi}, 
it has a homotopy inverse $\frak f$. We can arrange $\frak
f_{1,1,0} = i^*$ by choosing $\frak f_{1,1,0}$ this way in the first
step of the proof of Proposition~\ref{prop:inverse2}. 
Let us emphasize that the same proof does not work in the opposite
direction, with the inclusion $i$ in place of $j$. This explains the
appearance of the nontrivial terms $\frak f_{k,\ell,g}$ in the second
proof. 
\par\medskip

We next discuss another proof of Theorem~\ref{thm:homotopyequiv},
which will occupy most of the remainder of this section.
This proof provides an explicit description of the map $\frak f$.  
We think this explicit description is interesting 
because of its relation to perturbative Chern-Simons theory,
as we explain in Section~\ref{de Rham} during the discussion of 
Conjecture~\ref{deRhamconj-intro}. Also, it is likely to be useful for
the generalization of Theorem ~\ref{thm:homotopyequiv} to the 
case when $A$ has infinite dimension.
\par
We will construct $\frak f$ by summation over general ribbon graphs. 
Similar constructions using ribbon trees are well-known, see
e.g.~\cite{KoSo01} and~\cite[Subsection 5.4.2]{FOOO06}. 

Since the $G^{ab} $ satisfy the symmetry relation
\eqref{eq:tensorsymmetry}, we can apply the procedure described in the previous section to associate a map  
$$
f_\Gamma: (B^{\rm cyc*}A[2-n])^{\otimes k} \to (B^{\rm cyc*}B[2-n])^{\otimes
  \ell}
$$ 
to any ribbon graph $\Gamma\in RG_{k,\ell,g}$ via the formula
\eqref{Fdefstatesum}, i.e. 
\begin{align}\label{def:fGamma}
    &\ \ \ (f_\Gamma(\varphi^1 \otimes \cdots \otimes
  \varphi^k))_{\overrightarrow{\beta(1)};\cdots;
    \overrightarrow{\beta(\ell)}}:=\\ 
    & \frac 1 {\ell !\,|\Aut(\Gamma)|\,\prod_v d(v)}\sum(-1)^\eta \left(\prod_{t\in
    C_1^\inn(\Gamma)} G^{a_tb_t}
    \prod_{v \in C_0^{\text{int}}(\Gamma)} \varphi^v_{\alpha(v,1) \cdots \alpha(v,d(v))}\right),
\notag
\end{align}
with the conventions as before. Recall that in this definition we sum
over all labellings of $\Gamma$ in the sense of
Definition~\ref{def:labelling}. 

The signs also depend on choices of an ordering and orientations for
the interior edges. We will now specify these in such a way that
expression~\eqref{def:fGamma} becomes independent of these additional
choices.  
%\label{orientation.convention}

Given a ribbon graph $\Gamma\in RG_{k,\ell,g}$, we have its associated ribbon surface $\Sigma_\Gamma$, which is already determined by the subgraph $\Gamma_{\rm int} \subset \Gamma$ of interior edges. Collapsing the boundary components of $\Sigma_\Gamma$ to points results in a closed oriented surface $\widehat{\Sigma}_\Gamma$ of genus $g$, which comes with a cell decomposition whose vertices and edges correspond to the vertices and edges of $\Gamma_{\rm int}$, and whose 2-cells correspond bijectively to the boundary components of $\Sigma_\Gamma$. Denote by $\Gamma_{\rm int}^*$ the dual graph, whose vertices correspond to the boundary components of $\Sigma_\Gamma$ and whose edges are transverse to those of $\Gamma_{\rm int}$. 

Choose a maximal tree $T \subset \Gamma_{\rm int}$, which will have $k-1$
edges, and a maximal tree $T^*\subset \Gamma_{\rm int}^*$ disjoint from $T$, 
which will have $\ell-1$ edges. Denote by $\Gamma' \subset \Gamma_{\rm int}$ 
the subgraph of edges from $T$ and of edges dual to those of $T^*$.
The graph $\Gamma_{\rm int}$ has exactly $2g$ further edges. When added to 
$\Gamma'$, each one of them determines a unique cycle in 
$\Gamma_{\rm int} \subset \Sigma_\Gamma$, and these cycles 
form a basis for $H_1(\widehat{\Sigma}_\Gamma)$. 

\begin{Definition}[ordering and orientation of edges]\label{def:edge-ordering}
In formula \eqref{def:fGamma}, we allow any ordering and orientation of 
interior edges which arises from choices of $T$ and $T^*$ according to the following rules:
\begin{itemize}
\item The oriented edges $e_1,\dots,e_{k-1}$ are the edges of $T$, oriented 
away from vertex 1 and numbered such that $e_i$ ends at 
vertex $k+1-i$. In other words, they are numbered in {\em decreasing order} 
of the vertex they point to. 
\item We orient the edges of $T^*$ away from the first boundary component 
and label them in {\em increasing order} of the boundary component they 
point to, so that $e^*_{k+s-2}$ points to the boundary component $s$. The 
oriented edges $e_{k},\dots,e_{k+\ell-2}$ are obtained as the dual edges to 
the $e^*_i$, oriented so that the pair $\{e^*_i,e_i\}$ defines the orientation 
of the surface $\Sigma_\Gamma$. 
\item Finally we choose the order and orientation of the remaining edges 
$e_{k+\ell-1},\dots,e_{k+\ell+2g-2}$ compatible with the symplectic structure 
on $H_1(\widehat{\Sigma}_\Gamma)$ corresponding to the intersection pairing. 
\end{itemize}
\end{Definition}

%\begin{Definition}[ordering and orientation of edges]\label{def:edge-ordering}
%Choose a maximal tree $T \subset \Gamma_{\rm int}$. It will have $k-1$
%edges, which we orient away from vertex 1 and label such that the
%$i^{\rm th}$ edge ends at vertex $k+1-i$. In other words, they are
%labelled in {\em decreasing order} of the vertex they point to. 
%Next choose a maximal tree $T^*\subset \Gamma_{\rm int}^*$ disjoint from $T$ 
%(it will have $\ell-1$ edges), and orient its edges away from the first 
%boundary component. This time, we label them in {\em increasing order} of 
%the boundary component they point to, so the edge $e^*_{k+s-2}$ points to 
%the boundary component $s$. The oriented edges $e_{k},\dots,e_{k+\ell-2}$ are 
%obtained as the dual edges to the $e^*_i$, oriented so that the pair 
%$\{e^*_i,e_i\}$ defines the orientation of the surface $\Sigma_\Gamma$. 
%\marginpar{\tiny Reference to reversed orientation in SFT?}

%The graph $\Gamma_{\rm int}$ has exactly $2g$ further edges. When added to 
%the subgraph $\Gamma' \subset \Gamma_{\rm int}$ obtained from the edges 
%$e_1,\dots,e_{k+\ell+2}$, each one of them determines a unique cycle in 
%$\Gamma_{\rm int} \subset \Sigma_\Gamma$, and these cycles form a basis 
%for $H_1(\widehat{\Sigma}_\Gamma)$. We choose their order and orientation 
%compatible with the symplectic structure on $H_1(\widehat{\Sigma}_\Gamma)$ 
%corresponding to the intersection pairing. 
%\end{Definition}

%
% FIG
%
\begin{figure}[ht!]
 \labellist
%  \small\hair 2pt
  \pinlabel $1$ [r] at 106 88
  \pinlabel $2$ [l] at 162 88
  \pinlabel $1$ [t] at 73 101
  \pinlabel $2$ [t] at 198 101
  \pinlabel $3$ [t] at 140 161
  \pinlabel $e_1$ [b] at 131 88
  \pinlabel $e_2$ [b] at 195 124
  \pinlabel $e_3$ [b] at 72 124
  \pinlabel $e_2^*$ [t] at 162 137
  \pinlabel $e_3^*$ [t] at 110 137
%  \pinlabel $T$ [t] at 133 80
%  \pinlabel $T^*$ [b] at 116 128
 \endlabellist
  \centering
  \includegraphics[scale=.83]{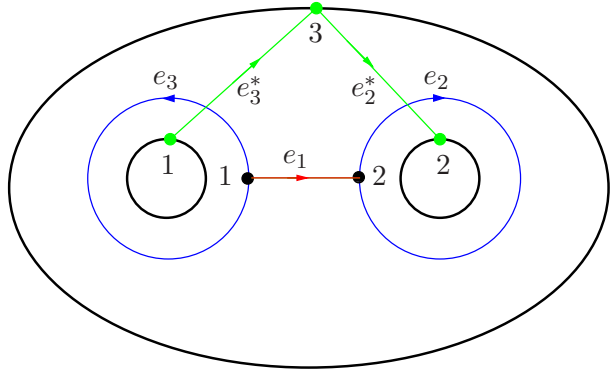}
% \end{center}
 \caption{For this particular graph $\Gamma\in RG_{2,3,0}$, the choices of trees $T$ (in red) and $T^*$ (in green) are unique (exterior edges were omitted for clarity). We also show the numbering and orientation of the interior edges resulting from the given numbering of interior vertices and boundary components.}
 \label{fig:trees}
\end{figure}

Of course, the orientations and order of the edges obtained in this way depend on the choices of the trees $T$ and $T^*$. Note that for $g=0$ the tree $T^*$ is uniquely determined by the choice of $T$, and that the conventions here agree with those used for the graphs in $RG_{2,1,0}$ and $RG_{1,2,0}$ in the definition of $\fp_{2,1,0}$ and $\fp_{1,2,0}$.

\begin{lem}\label{lem:edge-convention}
Let $\Gamma,\wt\Gamma\in RG_{k,\ell,g}$ correspond to the same graph,
with the numberings of the interior vertices and boundary components
differing by permutations $\sigma\in S_k$ and $\tau\in S_\ell$,
respectively. Consider 
pairs of maximal trees $(T,T^*)$ and $(\wt T,\wt T^*)$ as above
corresponding to $\Gamma$ and $\wt\Gamma$, respectively, and their
induced orderings and orientations of edges. Let $r$ be the number of
edges whose orientations differ in the two conventions, and let
$\rho \in S_{k+\ell+2g-2}$ denote the permutation realizing the
relabelling of the edges. Then 
$$
   \sgn(\sigma)\sgn(\tau)\sgn(\rho) = (-1)^r.
$$
\end{lem}

This lemma is proved in Appendix~\ref{sec:or}, where our convention
for the orientation and ordering of the interior edges is
reinterpreted in terms of orientations on the singular chain complex of
a surface. 

The symmetry properties (1)-(6) described on
page~\pageref{symmetrylist} above also apply to $f_\Gamma$, with
the exception of (2$_g$), which is replaced by 
\begin{enumerate}
\item[(2$_G$)] With the specific choice of $T^{ab}=G^{ab}$,
  interchanging the order of two adjacent edges leads to a sign
  $(-1)^{n-3}$ from interchanging the corresponding pairs of basis
  vectors in the edge order, because $\eta_a+\eta_b=n-3$ whenever
  $G^{ab}\neq 0$. 
\end{enumerate}

It follows from Lemma~\ref{lem:edge-convention} (with
$\wt\Gamma=\Gamma$ but different pairs of trees) and the symmetry
properties (1) and (2$_G$) that the expression for $f_\Gamma$ 
is independent of the choice of maximal trees $(T,T^*)$ used to write
down \eqref{def:fGamma}.  
We define $f_{k,\ell,g}:(B^{\rm cyc*}A[3-n])^{\otimes k} \to (B^{\rm cyc*}B[3-n])^{\otimes \ell}$ as
\begin{equation}\label{eq:f_klg}
   f_{k,\ell,g} := (-1)^{n-3}\!\!\!\!\!\sum_{\Gamma \in RG_{k,\ell,g}} f_\Gamma\;.
\end{equation}
The symmetry property (4) ensures that $f_{k,\ell,g}$ indeed lands in
$(B^{\rm cyc*}B[3-n])^{\otimes \ell}$, i.e., each tensor factor in the
output is cyclically symmetric. 
It follows from Lemma~\ref{lem:edge-convention} (with $\sigma=\id$
and $\tau$ a transposition) and the symmetry properties (5) and
(2$_G$) that $f_{k,\ell,g}$ actually lands in the
invariant subspace (under the action of the symmetric group $S_\ell$) of
$(B^{\rm cyc*}B[3-n])^{\otimes \ell}$.  
Similarly, it follows from Lemma~\ref{lem:edge-convention} (with 
$\sigma$ a transposition and $\tau=\id$) and the symmetry properties
(6) and (2$_G$) that $\ff_{k,\ell,g}$ descends to the quotient
$E_kB^{\rm cyc*}A$ of 
$(B^{\rm cyc*}A[3-n])^{\otimes k}$ under the action of $S_k$. We now define
$$
   \ff_{k,\ell,g}:=\pi \circ f_{k,\ell,g}\circ I: E_kB^{\rm cyc*}A \to
   E_\ell B^{\rm cyc*}B,
$$
where as in Remark~\ref{rem:tensor} the map 
$I:E_kB^{\rm cyc*}A \to (B^{\rm cyc*}A[3-n])^{\otimes k}$ is the inverse of the projection
$\pi:(B^{\rm cyc*}A[3-n])^{\otimes k} \to E_kB^{\rm cyc*}A$, given by 
$$
I(c_1\cdots c_k) = \frac 1{k!} \sum_{\rho\in S_k} \eps(\rho)c_{\rho(1)} \otimes \cdots \otimes c_{\rho(k)},
$$
and similarly $\pi:(B^{\rm cyc*}B[3-n])^{\otimes \ell} \to E_\ell
B^{\rm cyc*}B$ is the projection to the quotient. 
Note that, since $f_{k,\ell,g}$ is symmetric in the inputs, the
symmetrization and the factor $1/k!$ in $I$ are actually unneccesary
and will not appear in formulae below. 
Note also that we try to distinguish in the notation between $f$ and
$\frak f$.  

\begin{Remark}
The global sign $(-1)^{n-3}$ in definition~\eqref{eq:f_klg} will be
needed for the signs to work out at the end of the proof of Claim 4
below (and similarly for Claim 5). We do not have a conceptual
explanation for this sign. 
\end{Remark}

We claim that $\ff= \{\ff_{k,\ell,g}\}$ is the required IBL$_\infty$-homotopy equivalence. To understand this, we first note that $RG_{1,1,0}$ consists of trees $T_r$ with only one interior vertex and $r\ge 1$ exterior vertices, and that each such tree induces the map
$$
f_{T_r}:B_r^{\rm cyc*}A[3-n] \to B_r^{\rm cyc*}B[3-n]
$$
which is dual to the inclusion. It follows that the map 
$$
\ff_{1,1,0}= \sum_r f_{T_r}: B^{\rm cyc*}A[3-n] \to B^{\rm cyc*}B[3-n]
$$
is induced by the dual of the inclusion $B \to A$ and hence a chain homotopy equivalence. It remains to prove that $\ff= \{\ff_{k,\ell,g}\}$ is an IBL$_\infty$ morphism, since then it follows from Theorem~\ref{Whi} that $\ff$ is a homotopy equivalence.

To prove that assertion, we start by considering the difference
$$
\ff_\Gamma \circ \fp_{1,1,0} - \fq_{1,1,0} \circ \ff_\Gamma
$$
for a fixed graph $\Gamma\in RG_{k,\ell,g}$, where for clarity we denote the restriction of the boundary operator $\fp_{1,1,0}$ to the subcomplex by $\fq_{1,1,0}$.
 
\emph{{\bf Claim 1. }All the terms of $\fq_{1,1,0} \circ \ff_\Gamma$
  appear in $\ff_\Gamma \circ \fp_{1,1,0}$ with the same sign.} 

Here and below, we use the notation
$\alpha(v)=(\alpha(v,1),\dots,\alpha(v,d(v))$ and
$\beta(b)=(\beta(b,1),\dots,\beta(b,s(b))$ for the indices associated
to an interior vertex $v$ or a boundary component $b$, respectively.  

\begin{proof}[Proof of Claim 1]
To prove the claim, consider an exterior edge in $\Gamma\in RG_{k,\ell,g}$ going from vertex $i$ to boundary component $j$. In $(\ff_\Gamma \circ \hat\fp_{1,1,0})(\varphi^1,\dots,\varphi^k)_{\beta(1);\dots;\beta(\ell)}$ this contributes
\begin{align*}
(-1)^\epsilon\sum (-1)^\eta \prod G^{a_t,b_t} \prod_{v<i} \varphi^v_{\alpha(v)} d^a_{\alpha(i,r)}\varphi^i_{\alpha'(i)a\alpha''(i)}\prod_{v>i} \varphi^v_{\alpha(v)},
\end{align*}
where $\alpha(i)=(\alpha'(i),\alpha(i,r)\alpha''(i))$, and the sign exponents are as follows:
\begin{itemize}
\item the external sign exponent, coming from the application of $\hat\fp_{1,1,0}$, is $\eps=\sum_{v<i}(|\varphi^v|+(n-3)) + \eta_{\alpha'(i)}$.
\item $\eta=\eta_1+\eta_2$, where $\eta_1$ is the sign exponent corresponding to the permutation
\begin{align*}
\prod_t e_{a_t}e_{b_t} \prod_b e_{\beta(b)} \quad \longrightarrow \quad
\prod_v e_{\alpha(v)}
\end{align*}
and $\eta_2$ is given by
$$
(n-3) \left( \sum_v (k-v)|\varphi^v| +(k-i) + \sum_b (\ell-b) |x^b|\right).
$$
\end{itemize} 
In the corresponding term in $(\hat\fq_{1,1,0} \circ\ff_\Gamma)(\phi^1,\dots,\phi^k)_{\beta(1);\dots;\beta(\ell)}$, we assume that the considered edge corresponds to the point numbered $s$ on the $j$th boundary component and write $\beta(j)=(\beta'(j)\beta(j,s)\beta''(j))$. The the sign exponents are as follows:
\begin{itemize}
\item the external sign exponent, coming from the application of $\hat\fq_{1,1,0}$, is $\eps=\sum_{b<j}(|\varphi^v|+(n-3)) + \eta_{\beta'(j)}$.
\item $\eta=\eta_1+\eta_2$, where $\eta_1$ is the sign exponent corresponding to the permutation
\begin{align*}
\prod_t e_{a_t}e_{b_t} \prod_{b<j} e_{\beta(b)}\,e_{\beta'(j)}e_ae_{\beta''(j)}\,\prod_{b>j}e_{\beta(b)} \quad \longrightarrow \quad
\prod_{v<i} e_{\alpha(v)}\,e_{\alpha'(i)}e_ae_{\alpha''(i)}\,\prod_{v>i} e_{\alpha(v)}
\end{align*}
and $\eta_2$ is given by
$$
(n-3) \left( \sum_v (k-v)|\varphi^v| + \sum_b (\ell-b) |x^b|+(\ell-j)\right).
$$
\end{itemize}
To compare the $\eta_1$-part to the previous one, imagine bringing $e_a$ to the front, replacing it by $e_{\alpha(i,r)}=e_{\beta(j,s)}$, and moving it back to its place. Doing this on both sides relates the second permutation to the first permutation, so we have
$$
\eta_1(\hat\fq_{1,1,0}\circ \ff_\Gamma)- \eta_1(\ff_\Gamma\circ\hat\fp_{1,1,0}) \equiv
(n-3)(k+\ell-2) + \sum_{b<j}|x^b|+\eta_{\beta'(j)} + \sum_{v<i}|\phi^v| + \eta_{\alpha'(i)},
$$
where the first summand reflects the fact that the number of edges of $\Gamma$ is $(k+\ell-2) \mod 2$. Combining this with
$$
\eta_2(\ff_\Gamma\circ\hat\fp_{1,1,0})-\eta_2(\hat\fq_{1,1,0}\circ \ff_\Gamma)
=(n-3)\left((k-i)-(\ell-j)\right)
$$
and
\begin{align*}
\lefteqn{\eps(\ff_\Gamma\circ\hat\fp_{1,1,0}) -\eps(\hat\fq_{1,1,0}\circ \ff_\Gamma)}\\
=&(i-1)(n-3) + \sum_{v<i}|\phi^v|+ \eta_{\alpha'(i)} - \left((j-1)(n-3)+\sum_{b<j}|x^b| + \eta_{\beta'(j)}\right),
\end{align*}
we conclude that the total sign exponents are congruent modulo $2$,
proving Claim 1.
\end{proof}

In the remaining terms in $\ff_\Gamma \circ \fp_{1,1,0}$ the
differential $d$ is applied to one of the labels at an interior vertex
coming from an interior edge. 

\emph{{\bf Claim 2. }Given a graph $\Gamma$ and an interior edge $e_{t_0}$ of $\Gamma$, the contributions coming from the differential acting on the two ends of the edge $e_{t_0}$ have the correct relative signs to combine to yield the left hand side of \eqref{edge2}.}

\begin{proof}
In the proof of this claim, one needs to consider two cases: either the edge $e_{t_0}$ connects two different vertices, or it is a loop. We will treat the second case in detail, the first case is handled the same way.

So assume $e_{t_0}\in C_1^{\rm int}(\Gamma)$ is a loop at the $i$th vertex leaving the vertex as the half-edge numbered $r_1$ and coming back as the half-edge $r_2>r_1$ (the other case $r_2<r_1$ could be handled similarly). Then the relevant terms come from 
\begin{align*}
\lefteqn{(-1)^\eps \ff_\Gamma(\varphi^1,\dots,\fp_{1,1,0}\varphi^i,\dots,\varphi^k)_{\beta(1);\dots;\beta(\ell)}}\\
=& (-1)^\eps \sum (-1)^\eta \prod_t G^{a_tb_t} \prod_{v<i}\varphi^v_{\alpha(v)} (\fp_{1,1,0}\varphi^i)_{\alpha(i)} \,\prod_{v>i}\varphi^v_{\alpha(v)},
\end{align*}
where the sign exponents are as follows:
\begin{itemize}
\item the external exponent is $\epsilon= \sum_{v<i} (|\varphi^v|+(n-3))$.
\item $\eta=\eta_1+\eta_2$, where $\eta_1$  is the sign of the permutation moving
\begin{align}\label{eq:perm1}
\prod_t e_{a_t}e_{b_t} \prod_b e_{\beta(b)} \quad \longrightarrow \quad
\prod_v e_{\alpha(v)},
\end{align}
and $\eta_2$ is given by
$$
(n-3) \left( \sum_v (k-v)|\varphi^v| +(k-i) + \sum_b (\ell-b) |x^b|\right).
$$
\end{itemize}
Freezing all other coefficients, the two relevant terms here are
\begin{align*}
\lefteqn{\sum_{a_{t_0}}(-1)^\eta\prod_t G^{a_tb_t} \prod_{v<i}\varphi^v_{\alpha(v)} \,(-1)^{\sum_{r<r_1} \eta_{\alpha(i,r)}} d^{a'}_{a_{t_0}}\varphi^i_{\alpha'(i)a'\alpha''(i)\alpha(i,r_2)\alpha'''(i)} \,\prod_{v>i}\varphi^v_{\alpha(v)}}\\
&= \sum_{a'}(-1)^{\eta+\sum_{r<r_1} \eta_{\alpha(i,r)}} d^{a}_{a'}G^{a'b}\prod_{t\ne t_0} G^{a_tb_t} \prod_{v<i}\varphi^v_{\alpha(v)} \, \varphi^i_{\alpha'(i)a\alpha''(i)b\alpha'''(i)} \,\prod_{v>i}\varphi^v_{\alpha(v)}
\end{align*}
and 
\begin{align*}
\lefteqn{\sum_{b_{t_0}}(-1)^\eta\prod_t G^{a_tb_t} \prod_{v<i}\varphi^v_{\alpha(v)} \,(-1)^{\sum_{r<r_2} \eta_{\alpha(i,r)}} d^{b'}_{b_{t_0}}\varphi^i_{\alpha'(i)\alpha(i,r_1)\alpha''(i)b'\alpha'''(i)} \,\prod_{v>i}\varphi^v_{\alpha(v)}}\\
&= \sum_{b'}(-1)^{\eta+\sum_{r<r_2} \eta_{\alpha(i,r)}} d^b_{b'}G^{ab'}\prod_{t\ne t_0} G^{a_tb_t} \prod_{v<i}\varphi^v_{\alpha(v)} \, \varphi^i_{\alpha'(i)a\alpha''(i)b\alpha'''(i)} \,\prod_{v>i}\varphi^v_{\alpha(v)}
\end{align*}
where we renamed the variables for better comparison. For the purposes of computing $\eta_1$, the degrees $\eta_{\alpha(i,r_2)}$ differ by one in the two expressions, because in the first setting $\alpha(i,r_2)=b$ but in the second setting $\alpha(i,r_2)=b'$. Imagine doing the permutation \eqref{eq:perm1} in stages,
$$
\prod_t e_{a_t}e_{b_t} \prod_b e_{\beta(b)} \longrightarrow 
\prod_{v<i}e_{\alpha(v)} \, e_{\alpha'(i)} e_{a_{t_0}}e_{b_{t_0}}e_{\alpha''(i)}e_{\alpha'''(i)}\,\prod_{v>i} e_{\alpha(v)} \longrightarrow 
\prod_v e_{\alpha(v)}.
$$
The first stage will give the same sign in both cases, because $\eta_{a_{t_0}}+\eta_{b_{t_0}}=n-3$ for both. The difference in the second stage will be 
$$
\sum_{r_1<r<r_2}\eta_{\alpha(i,r)}.
$$
In total, we get a difference in sign exponent of $\eta_{a}$ (because
$\alpha(i,r_1)=a$ in the second case), which is exactly what is needed
to produce the right hand side of \eqref{edge2}. This finishes the
proof of the Claim 2 when $e_{t_0}$ is a loop, the other case being similar.
\end{proof}

Claim 2 motivates the following definition. 

\begin{Definition}\label{def:fGamma-e}
Given an edge $e \in
C_1^{\rm int}(\Gamma)$, we define maps $\ff_{\Gamma,e}^\id$ and
$\ff_{\Gamma,e}^\Pi$ by a formula analogous to \eqref{def:fGamma}, 
with the following modifications:
\begin{itemize}
\item to the edge $e$ we associate $(-1)^{\eta_a}g^{ab}$ (for
  $f_{\Gamma,e}^\id$) resp.~$(-1)^{\eta_a} g^{\bar a\bar b}$ (for
  $f_{\Gamma,e}^\Pi$) in place of $G^{ab}$, with $g^{\bar a\bar b}$ defined
  in Lemma~\ref{lem:edge2};
\item the sign $\eta$ gets replaced by $\eta+(n-3)(t_0-k)$, where $e=e_{t_0}$ in the chosen ordering of the edges.
\end{itemize}
\end{Definition}

This choice of sign makes the definition independent of the ordering
of the edges: Interchanging $e$ with an adjacent edge does not change
$\eta_1$ (because the basis vectors assigned to the edge $e$ have total degree
$n-2$, while those associated to the other edges have total degree
$n-3$), but it yields sign $(-1)^{n-3}$ from replacing $t_0$ by
$t_0\pm 1$.  So symmetry property ($2_G$) still holds and
Lemma~\ref{lem:edge-convention} yields independence of the pair of
trees $(T,T^*)$ defining the edge ordering. 

Summing over all interior edges $e \in C^1_{\rm int}(\Gamma)$, we get maps 
$$
f_{\Gamma}^\id:= \sum_{e\in C_1^{\rm int}(\Gamma)}f_{\Gamma,e}^\id
\quad \text{ \rm and } \quad
f_{\Gamma}^{\Pi}:= \sum_{e\in C_1^{\rm int}(\Gamma)}f_{\Gamma,e}^\Pi,
$$ 
respectively. In analogy to~\eqref{eq:f_klg}, we sum over graphs
$\Gamma \in RG_{k,\ell,g}$ to define maps  
\begin{equation}\label{eq:f_klg_e}
\ff^\id_{k,\ell,g}:= (-1)^{n-3}\!\!\!\!\!\sum_{\Gamma\in RG_{k,\ell,g}} f_\Gamma^\id
\quad \text{ \rm and } \quad
\ff^\Pi_{k,\ell,g}:= (-1)^{n-3}\!\!\!\!\!\sum_{\Gamma\in RG_{k,\ell,g}} f_\Gamma^\Pi,
\end{equation}
respectively.

To prove Theorem~\ref{thm:homotopyequiv}, it remains to prove
equations \eqref{eq:mor4} for each triple $(k,\ell,g)$. This is the
content of the following sequence of claims. 

\emph{{\bf Claim 3. }With the definitions above, for each $\Gamma \in RG_{k,\ell,g}$ we have
$$
   \ff^\Pi_{\Gamma} - \ff^\id_{\Gamma}
   = \ff_\Gamma\circ\hat\fp_{1,1,0} - \hat\fq_{1,1,0}\circ\ff_\Gamma,
$$
and so in particular
\begin{equation}\label{eq:homotopyequiv1}
   \ff^\Pi_{k,\ell,g} - \ff^\id_{k,\ell,g}
   = \ff_{k,\ell,g}\circ\hat\fp_{1,1,0} - \hat\fq_{1,1,0}\circ\ff_{k,\ell,g}
\end{equation}
for all $(k,\ell,g) \succeq (1,1,0)$.
}

\emph{{\bf Claim 4. }We have
\begin{align}
\ff^\Pi_{k,\ell,g} =& \;\hat{\fq}_{1,2,0} \circ \ff_{k,\ell-1,g} 
   + \hat\fq_{2,1,0}\circ_2\ff_{k,\ell+1,g-1}\notag\\
   &+ \frac 12 \sum_{{k_1+k_2=k \atop \ell_1+\ell_2=\ell+1 }\atop  g_1+g_2=g}\hat\fq_{2,1,0}\circ_{1,1}
   (\ff_{k_1,\ell_1,g_1}\odot\ff_{k_2,\ell_2,g_2}). \label{eq:homotopyequiv2}
\end{align}
}
\emph{{\bf Claim 5. }We have
\begin{align}
   \ff^\id_{k,\ell,g}
   =& \;\ff_{k-1,\ell,g} \circ \hat\fp_{2,1,0} 
+ \ff_{k+1,\ell,g-1} \circ_2 \hat\fp_{1,2,0} \notag\\
&+ \frac 12 \sum_{k_1+k_2=k+1\atop {\ell_1+\ell2=\ell\atop g_1+g_2=g}} (\ff_{k_1,\ell_1,g_1} \odot \ff_{k_2,\ell_2,g_2}) \circ_{1,1} \hat\fp_{1,2,0}.\label{eq:homotopyequiv3}
\end{align}
}

\begin{proof}[Proof of Claim 3]
Claim 3 essentially follows from Claims 1 and 2: By Claim 1, the right hand side
is the sum of terms where the differential $d$ is applied to both ends
of each interior edge. In view of Claim 2 we can apply
Lemma~\ref{lem:edge2} to convert it instead into the sum of terms
where a particular interior edge is labelled with either $(-1)^{\eta_a}g^{\bar a\bar b}$ of
$(-1)^{\eta_a}g^{ab}$, which correspond to the terms on the left hand side. 
It remains to check that the signs match. 

So let $\Gamma \in RG_{k,\ell,g}$ be 
given and suppose the edge $e$ in $\Gamma$ runs from vertex $i$ to vertex $j$ (the case of a loop is treated similarly). 
For definiteness we assume $i<j$, and for simplicity we also assume that the corresponding half-edges come first in the respective orders at these vertices. 
Since by Claim 2 the relative signs of the two terms corresponding to these two half-edges are correct, we just consider the term coming from the half-edge at vertex $i$.
The relevant term in $f_\Gamma \circ \hat\fp_{1,1,0}$ is then of the form
\begin{align*}
\lefteqn{(-1)^\eps f_\Gamma(\varphi^1,\dots,\fp_{1,1,0}\varphi^i,\dots,\varphi^k)_{\beta(1);\dots,\beta(\ell)} }\\
&= (-1)^\eps \sum (-1)^{\eta_1+\eta_2} \prod_t G^{a_tb_t}\Bigl(\prod_{v<i}\varphi^v_{\alpha(v)}\Bigr) (\fp_{1,1,0}\varphi^i)_{\alpha(i)} \prod_{v>i} \varphi^v_{\alpha(v)}
\end{align*}
with $\eps = \sum_{v<i} (|\varphi^v|+(n-3))$, $\eta_1$ the sign for permuting
\begin{align*}
\prod_t e_{a_t}e_{b_t} \prod_b e_{\beta(b)}  \quad \longrightarrow \quad \prod_v e_{\alpha(v)},
\end{align*}
and
$$
\eta_2 = (n-3) \left(\sum_v (k-v)|\varphi^v|+(k-i) + \sum_b (\ell-b)|x^b|\right).
$$
In $f_\Gamma^\id$ (and $f_\Gamma^\Pi$) we have to replace $e_{\alpha(i,1)}$ 
by a basis element whose degree is smaller by 1. One easily checks that this changes $\eta_1$ by $(t_0-1)(n-3) + \sum_{v<i}|\varphi^v|$.
At the same time, $\eta_2$ changes by $(k-i)(n-3)$, since for $f_\Gamma^\id$ we compute $\eta_2$ with the arguments $\varphi^1,\dots,\varphi^k$. Together with the external sign $\eps$, the total sign difference is $(t_0+k)(n-3)$, which exactly fits the extra sign added in the definition of $f_\Gamma^\id$. This proves of Claim 3.
\end{proof}

\begin{proof}[Proof of Claim 4] 
We start with an explanation of the combinatorial factors. 
Throughout this discussion, we use the notation $\wGamma$ for the graphs with a marked edge $e$ appearing in $\ff_{k,\ell,g}^\id$ and $\ff_{k,\ell,g}^\Pi$, and $\Gamma$ or $\Gamma_1$ and $\Gamma_2$ for the graphs appearing in the expressions on the right hand side of \eqref{eq:homotopyequiv2}.
%Denoting by $\Aut(\wGamma,e)$ the subgroup of $\Aut(\wGamma)$ mapping $e$ to 
%itself and by $\OO(e)$ the orbit of $e$, we clearly have
%$$
%|\Aut(\wGamma)| = |\Aut(\wGamma,e)|\cdot |\OO(e)|.
%$$

\begin{rem}{\em (Automorphisms) }
A term associated to an edge $e$ of a ribbon graph $\wGamma \in RG_{k,\ell,g}$
appears in $\ff^\Pi_{k,\ell,g}$ with the combinatorial coefficient  
$$
\frac 1 {\ell !\,|\Aut(\wGamma)|\,\prod_v d(v)}.
$$ 
In order to avoid considerations of how automorphisms of graphs behave 
under gluings, it is convenient to consider {\em labelled ribbon graphs}, i.e.,
ribbon graphs together with a labelling in the sense of
Definition~\ref{def:labelling}. Since a labelled graph has no
automorphisms preserving the labelling, the automorphism group of an
unlabelled graph $\wGamma$ acts freely on its labellings. So in the sum
over all labellings of a graph $\wGamma$ each isomorphism class of
labelled graphs appears $|\Aut(\wGamma)|$ times, and we can replace
the sum 
$$
   \sum_{\wGamma\in RG_{k,\ell,g}}\frac{1}{|\Aut(\wGamma)|}\sum_{\text{\rm labellings of }\wGamma}
$$
in the definition of $f_{k,\ell,g}$ by the sum over all isomorphism
classes of labelled ribbon graphs of signature $(k,\ell,g)$ without
the factor $1/|\Aut(\wGamma)|$. This is what we will do in the following 
discussion.
\end{rem}

For a graph $\wGamma\in RG_{k,\ell,g}$ with distinguished edge $e$ as above 
we now consider the new ribbon graph $\Gamma$ obtained by cutting $e$ into two halfs and viewing their endpoints as new exterior vertices $v',v''\in \Gamma$. We have three distinct cases:
\begin{enumerate}[(i)]
\item The dual edge $e^*$ connects distinct boundary components ($e$
  itself could be a loop, or it could connect distinct vertices). In
  this case, $ \Gamma$ is necessarily still connected and $\Gamma \in
  RG_{k,\ell-1,g}$, and the contribution to $\ff_{\wGamma}^\Pi$ will
  correspond to a term in $\hat\fq_{1,2,0} \circ
  \ff_{k,\ell-1,g}$. See Figure~\ref{fig:case_i}.
%
% FIG
%
\begin{figure}[h]
\labellist
 \pinlabel $e$ [b] at 38 76
\endlabellist
\centering
\includegraphics{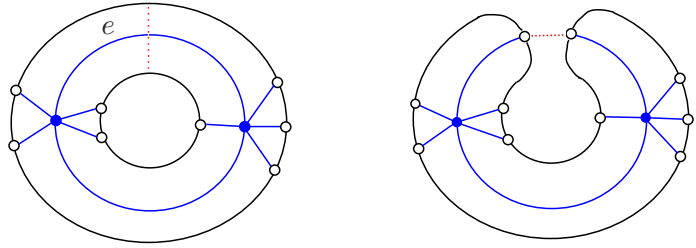}
\caption{On the left we have an example of a graph $\wh \Gamma \in RG_{2,2,0}$ with a marked edge $e$ and its dual edge $e^*$ (dotted) as in case (i), both drawn on the ribbon surface $\Sigma_{\wh \Gamma}$. On the right one sees the graph $\Gamma\in RG_{2,1,0}$ obtained from cutting open $e$, with its ribbon surface. The dotted line connects the new exterior vertices which will be reconnected by an edge in the corresponding term in $\fq_{1,2,0} \circ \ff_{2,1,0}$.}
\label{fig:case_i}
\end{figure}
\item The dual edge $e^*$ is a loop connecting some boundary component
  to itself, and $\Gamma$ is  still connected. Then $\Gamma \in
  RG_{k,\ell+1,g-1}$ and the contribution to $\ff_{\wGamma}^\Pi$ will
  correspond to a term in $\hat\fq_{2,1,0} \circ
  \ff_{k,\ell+1,g-1}$. See Figure~\ref{fig:case_ii}. 
%
% FIG
%
\begin{figure}[h]
\labellist
 \pinlabel $e$ [b] at 38 76
\endlabellist
\centering
\includegraphics{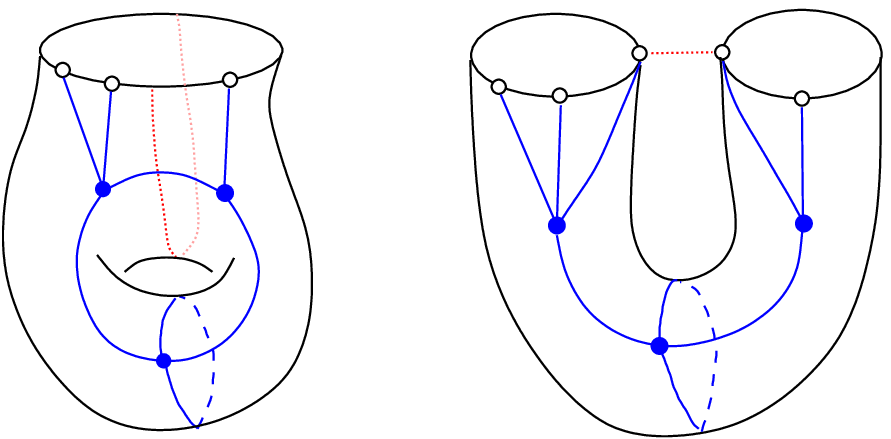}
\caption{On the left we have an example of a graph $\wh \Gamma \in RG_{3,1,1}$ with a marked edge $e$ and its dual edge $e^*$ (dotted) as in case (ii), both drawn on the ribbon surface $\Sigma_{\wh \Gamma}$. On the right one sees the graph $\Gamma\in RG_{3,2,0}$ obtained from cutting open $e$, with its ribbon surface. The dotted line connects the new exterior vertices which will be reconnected by an edge in the corresponding term in $\fq_{2,1,0} \circ \ff_{3,2,0}$.}
\label{fig:case_ii}
\end{figure}
\item The dual edge $e^*$ is a loop connecting some boundary component
  to itself, and $\Gamma =\Gamma_1 \coprod \Gamma_2$ is
  disconnected. This time the contribution to $\ff_{\wGamma}^\Pi$ will
  correspond to a term in $\hat\fq_{2,1,0} \circ (\ff_{k_1,\ell_1,g_1}
  \odot \ff_{k_2,\ell_2,g_2})$ for suitable $(k_i,\ell_i,g_i)$
  corresponding to our two graphs $\Gamma_i$. See Figure~\ref{fig:case_iii}.
%
% FIG
%
\begin{figure}[h]
\labellist
 \pinlabel $e$ [b] at 101 196
\endlabellist
\centering
\includegraphics{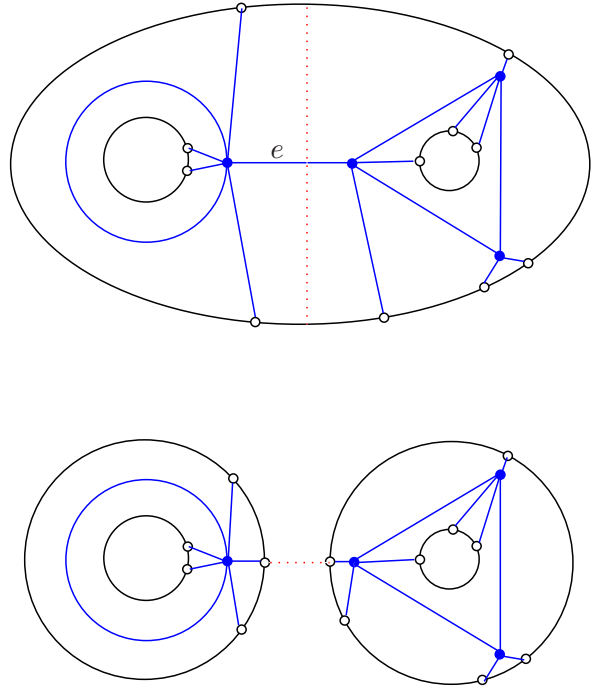}
\caption{On the top we have an example of a graph $\wh \Gamma \in RG_{4,3,0}$ with a marked edge $e$ and its dual edge $e^*$ (dotted) as in case (iii), both drawn on the ribbon surface $\Sigma_{\wh \Gamma}$. On the bottom one sees the graphs $\Gamma_1\in RG_{1,2,0}$ and $\Gamma_2\in RG_{3,2,0}$ obtained from cutting open $e$, with their ribbon surfaces. The dotted line connects the new exterior vertices which will be reconnected by an edge in the corresponding term in $\fq_{2,1,0} \circ (\ff_{1,2,0} \odot \ff_{3,2,0})$.}
\label{fig:case_iii}
\end{figure}
\end{enumerate}

%{\em Combinatorial factors. }
We now discuss the combinatorial factors $\frac 1 {\ell !\prod_v
  d(v)}$. Consider first a composition $f_{\Gamma'}\circ 
f_\Gamma$ corresponding to a complete gluing of the interior vertices
of $\Gamma'$ to the boundary components of $\Gamma$. Recall that in
the definition of $f_\Gamma$ we sum over all labellings of $\Gamma$,
and similarly for $f_{\Gamma'}$. Thus each term in the map 
$f_{\wh\Gamma}$ corresponding to a graph $\wh\Gamma$ obtained by
gluing $\Gamma$ and $\Gamma'$ appears $\ell !\prod_{w=1}^{k'}d'(w)$
times in $f_{\Gamma'}\circ f_\Gamma$. Combining this with the
combinatorial factors of $f_\Gamma$ and $f_{\Gamma'}$, we see that
each term in $f_{\wh\Gamma}$ appears with the correct combinatorial factor
$$
   \Bigl(\frac{1}{\ell !\prod_{v=1}^{k}d(v)}\Bigr) \Bigl(\frac{1}{\ell'
     !\prod_{w=1}^{k'}d'(w)}\Bigr) \Bigl(\ell !\prod_{w=1}^{k'}d'(w)\Bigr) =
   \frac{1}{\ell' !\prod_{v=1}^{k}d(v)}. 
$$
The same discussion also applies to an incomplete gluing where one of
the maps, say $f_\Gamma$, is replaced by an extended map $\wh
f_\Gamma$. 
%This is immediate once we show that extended maps come equipped with
%the same combinatorial factors $\frac 1 {\ell !\prod_v d(v)}$. 
To see this, let us again abbreviate $\BC:=B^{\rm  cyc*}A[2-n]$ and
consider a map $f:\BC[1]^{\otimes k_1}\to \BC[1]^{\otimes\ell_1}$. 
Its extension to a map $\wh f:\BC[1]^{\otimes k}\to
\BC[1]^{\otimes\ell}$, with $k=k_1+r$ and $\ell=\ell_1+r$ for some
$r\geq 1$, is defined by
\begin{align}\label{eq:hat2}
   \wh f(c_1,\dots,c_k) := \frac{\ell_1!}{\ell!}
   \sum_{\sigma\in S_{k_1,r}}\sum_{\tau\in S_{\ell_1,r}}\sum_{\rho\in S_r}
   \tau\Bigl(
   &f(c_{\sigma(1)}\otimes\cdots\otimes c_{\sigma(k_1)}) \cr
   &\otimes c_{\sigma(k_1+\rho(1))}\otimes\cdots \cdots\otimes c_{\sigma(k_1+\rho(r))}\Bigr). 
\end{align}
Here $S_{p,r}$ denotes the set of $(p,r)$-shuffles, i.e., permutations
$\sigma\in S_{p+r}$ with $\sigma(1)<\cdots<\sigma(p)$ and
$\sigma(p+1)<\cdots<\sigma(p+r)$. Since the number of $(p,r)$-shuffles
is $|S_{p,r}|={p+r\choose r}$, the combinatorial factor can be written
as 
$$
   \frac{\ell_1!}{\ell!} = \frac{1}{r!{\ell\choose r}} =
   \frac{1}{|S_r|\,|S_{\ell_1,r}|}. 
$$
When considered as a map into the quotient under permutations
$E_\ell\BC=\BC[1]^{\otimes\ell}/\sim$, the averaging over $\tau\in
S_{\ell_1,r}$ and $\rho\in S_r$ in the definition of $\wh f$ can be
dropped and we recover our earlier definition~\eqref{eq:hat}. 
Now we apply the extension~\eqref{eq:hat2} to the map $f_\Gamma$
associated to a ribbon graph $\Gamma\in RG_{k_1,\ell_1,g}$. Then the 
resulting map $\wh f_\Gamma:\BC[1]^{\otimes k}\to
\BC[1]^{\otimes\ell}$ descends to the quotient $E_k\BC$ and lands in
the invariant part of $\BC[1]^{\otimes\ell}$, a property it shares
with the map $f_\Gamma$ itself. Moreover, the combinatorial factors in
the definition of $f_\Gamma$ and in~\eqref{eq:hat2} combine to the
combinatorial factor for $\wh f_\Gamma$:
$$
   \Bigl(\frac{1}{\ell_1!\prod_{v=1}^{k_1}d(v)}\Bigr)
   \Bigl(\frac{\ell_1!}{\ell!}\Bigr) =
   \frac{1}{\ell!\prod_{v=1}^{k_1}d(v)}.  
$$
Using this, we see that in the composition $f_{\Gamma'}\circ\wh f_\Gamma$
corresponding to an incomplete gluing each term in $f_{\wh\Gamma}$ 
appears with the correct combinatorial factor
$$
   \Bigl(\frac{1}{\ell !\prod_{v=1}^{k_1}d(v)}\Bigr) \Bigl(\frac{1}{\ell'
     !\prod_{w=1}^{k'}d'(w)}\Bigr) \Bigl(\ell !\prod_{b=1}^{\ell_1}s(b)\Bigr) =
   \frac{1}{\ell' !\prod_{v=1}^{k}d(v)}. 
$$
Here $s(b)$ is the number of vertices on the $b$-th boundary component
of $\Sigma_\Gamma$ and we have used the fact that $\ell-\ell_1=k-k_1$
factors in $\prod_{w=1}^{k'}d'(w)$ combine with
$\prod_{v=1}^{k_1}d(v)$ to give $\prod_{v=1}^{k}d(v)$ in the
denominator, while the remaining $\ell_1$ terms cancel
$\prod_{b=1}^{\ell_1}s(b)$. A similar (in fact, easier) argument
applies in the case of an incomplete gluing of the type $\wh
f_{\Gamma'}\circ f_\Gamma$. 

Consider now a gluing of two labelled graphs $\Gamma,\Gamma'$. Suppose
that each interior vertex of $\Gamma'$ is glued to a boundary component of
$\Gamma$, but some boundary components of $\Gamma$ may remain
free. Such a gluing is described uniquely in terms of the following
{\em gluing data}:
\begin{itemize}
\item an injective map $\lambda:\{1,\dots,k'\}\to\{1,\dots,\ell\}$
  such that $d'(v)=s(\lambda(v))$ for all $v=1,\dots,k'$, where $d'(v)$
  is the degree of the vertex $v$ of $\Gamma'$ and $s(b)$ is the number
  of vertices on the $b$-th boundary component of $\Gamma$;
\item for each $v=1,\dots,k'$ a bijection
  $c_v:\{1,\dots,d'(v)\}\to\{1,\dots,s(\lambda(v))\}$ preserving the
  cyclic orders. 
\end{itemize}
Note that the resulting ribbon graph $\Gamma\#\Gamma'$ naturally
inherits a labelling from those of $\Gamma$ and $\Gamma'$. Moreover,
different gluing data give rise to different isomorphism classes of
labelled graphs $\Gamma\#\Gamma'$. This shows that the sums over
isomorphism classes of labelled graphs on both sides in Claim 4 agree
without further combinatorial factors due to automorphisms. 
%Note that, in all three cases, given a complete labelling of the
%graph $\Gamma$ there is a unique isomorphism class of labelling the
%disconnected graph corresponding to the appropriate composition with
%$\hat\fq$ such that the gluing with these labels gives back
%$\wGamma$. 

The preceding considerations show that in all three cases the
combinatorial factors of the corresponding terms on both sides
of~\eqref{eq:homotopyequiv2} match. Here the additional factor
$1/2$ in case (iii) is due to the fact that for each split graph
$\Gamma=\Gamma_1\amalg\Gamma_2$ the two terms
$\ff_{\Gamma_1}\odot\ff_{\Gamma_2}$ and
$\ff_{\Gamma_2}\odot\ff_{\Gamma_1}$ of the right-hand side
of~\eqref{eq:homotopyequiv2} correspond to the same term on the
left-hand side. 
This finishes the discussion of combinatorial factors. 

{\bf Signs. }
We now discuss the signs involved in formula
\eqref{eq:homotopyequiv2}, starting with case (i) above. So let
$\wGamma \in RG_{k,\ell,g}$ be given and assume for simplicity that
the special edge $e=e_{t_0}$ separates the boundary components
labelled $1$ and $2$ and is dual to the first edge of $T^*$. 
%
% FIG
%
\begin{figure}[h]
\labellist
 \pinlabel $e$ [l] at 30 71
 \pinlabel $e^*$ [l] at 48 36
% \pinlabel $j$ [br] at 33 142
% \pinlabel $i$ [tl] at 85  137 
 \pinlabel $1$ [t] at 14 2
 \pinlabel $2$ [t] at 58 2
 \pinlabel $3$ [t] at 76 103
 \pinlabel $\ell$ [t] at 184 103
\endlabellist
\centering
\includegraphics{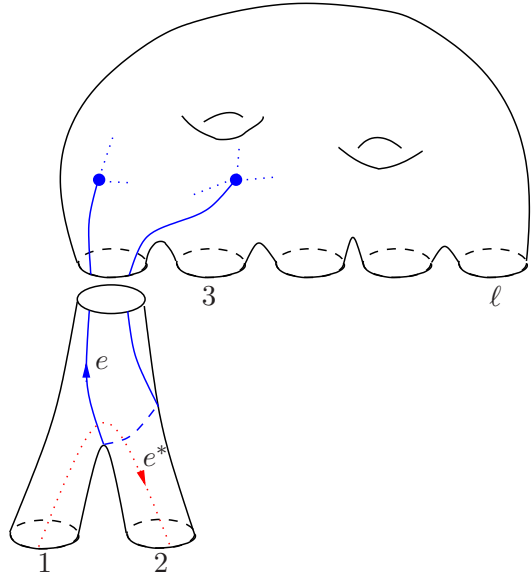}
\vspace{.3cm}
\caption{The relevant situation for the term of $\hat\fq_{1,2,0} \circ f_\Gamma$ in case (i) for which the signs discussed. We have drawn the edge $e=e_{t_0}$ in blue and the dual edge $e^*$ dotted in red. We only show the endpoints of $e$ (which could also coincide), since the remaining part of the graph $\Gamma$ is irrelevant to the discussion.}
\label{fig:claim4_i}
\end{figure}
According to our edge ordering conventions in Definition~\ref{def:edge-ordering}
%(specified at the beginning of this proof on
%page~\pageref{orientation.convention}) 
we then have $t_0=k$. As a consequence, the sign of the corresponding
term in $\ff_{k,\ell,g}^\Pi$ according to
Definition~\ref{def:fGamma-e} is just the ``usual'' sign
$(-1)^{\eta_1(\ff^\Pi)+\eta_2(\ff^\Pi)}$, where $\eta_1(\ff^\Pi)$ is
the sign of the permutation
$$
   \prod_t e_{a_t}e_{b_t} \prod_b e_{\beta(b)} \quad \longrightarrow \quad \prod_v e_{\alpha(v)}
$$
and 
$$
   \eta_2(\ff^\Pi)= (n-3) \left(\sum_{v=1}^k (k-v)|\varphi^v| + \sum_{b=1}^\ell (\ell-b)|x^b|) \right).
$$
To understand the signs in the corresponding term in $\hat\fq_{1,2,0} \circ \ff_{k,\ell-1,g}$, let $\Gamma$ as before be the graph obtained from $\wGamma$ by cutting $e_{t_0}$ in half and adding exterior vertices at the new end points. We can assume that the resulting new boundary component is labelled $1$, and all other boundary components have their labelling decreased by $1$, and that the trees $T(\Gamma)$ and $T^*(\Gamma)$ agree with $T(\wGamma)$ and $T^*(\wGamma)\setminus e^*_{t_0}$. Then $\eta_1(f_\Gamma)$ is the sign of the permutation
$$
   \Bigl(\prod_{t\neq t_0}e_{a_t}e_{b_t}\Bigr) e_{\beta'(1)}\prod_{b\ge 3} e_{\beta(b)} \quad \longrightarrow \quad \prod_v e_{\alpha(v)},
$$
where $\beta'(1)$ is the vector of labels at the new first boundary component, and $\eta_1(\fq)$ is the sign of the permutation
$$
e_{a_{t_0}}e_{b_{t_0}}e_{\beta(1)}e_{\beta(2)} \quad \longrightarrow \quad  e_{\beta'(1)}.
$$
Since permuting $e_{a_{t_0}}e_{b_{t_0}}$ past the other pairs of edge elements does not introduce signs (because $\eta_{a_{t_0}}+\eta_{b_{t_0}}=n-2$ and the sum equals $n-3$ for all other edges), we see that the sum of these terms matches $\eta_1(\ff^\Pi)$.

Denoting by $\bar x \in B^{\rm cyc*}B$ the element associated to the new first boundary component of $\Gamma$, we have
$$
\eta_2(f_\Gamma)= (n-3) \left(\sum_{v=1}^k (k-v)|\varphi^v| + (\ell-2)|\bar x| +\sum_{b\ge 3} (\ell-b)|x^b|) \right)
$$
and
$$
\eta_2(\fq)= (n-3)|x^1|.
$$
Since $|\bar x|=|x^1|+|x^2|+n-2$, we see that their sum also matches $\eta_2(\ff^\Pi)$, so the two terms appear on both sides with the same sign.

%
% FIG
%
\begin{figure}[ht]
\labellist
 \pinlabel $e$ [t] at 110 41
 \pinlabel $k_1+1$ [bl] at 147 129
% \pinlabel $i$ [tl] at 85  137 
 \pinlabel $1$ [t] at 12 94
 \pinlabel $\ell_1-1$ [t] at 49 94
 \pinlabel $\ell_1$ [t] at 108 2
 \pinlabel $\ell_1+1$ [t] at 166 94
 \pinlabel $\ell=\ell_1+\ell_2-1$ [tl] at 235 94
\endlabellist
\centering
\includegraphics[scale=1]{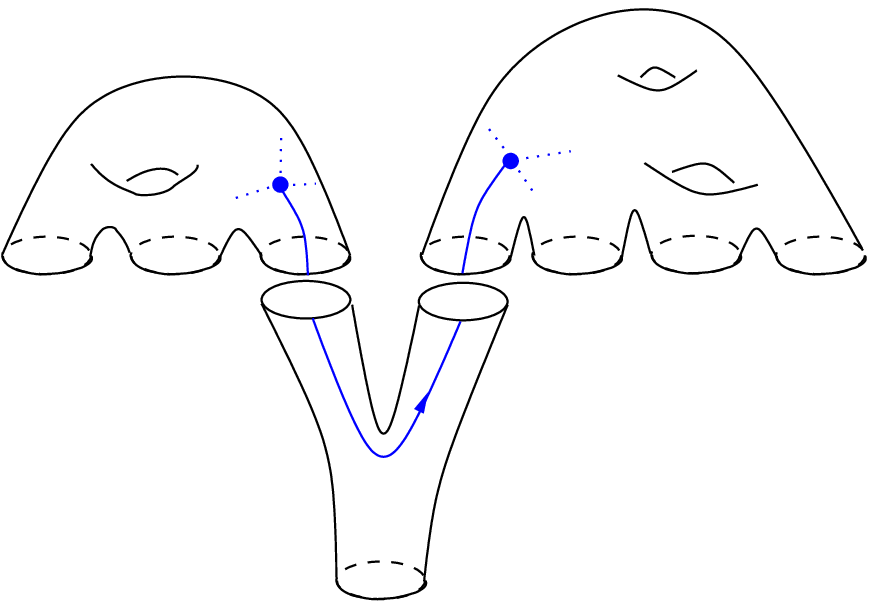}
\vspace{.3cm}
\caption{The assumptions for the term of $\hat\fq_{2,1,0} \circ (f_{\Gamma_1} \otimes F_{\Gamma_1})$ in case (iii). Again we have drawn the edge $e=e_{t_0}$, but not the remaining parts of the graphs $\Gamma_1$  and $\Gamma_2$.}
\label{fig:claim4_iii}
\end{figure}
Finally we discuss case (iii), leaving the slightly easier case (ii)
to the reader. Here we start again with $\wGamma\in RG_{k,\ell,g}$,
and assume that cutting $e=e_{t_0}$ results in graphs $\Gamma_1\in
RG_{k_1,\ell_1,g_1}$ and $\Gamma_2\in RG_{k_2,\ell_2,g_2}$. For
simplicity, we also assume that the vertices of $\Gamma_1$ are
labelled $1,\dots,k_1$ and the boundary components it inherits from
$\wGamma$ have labels $1,\dots,\ell_1-1$, with the new one being the
last component (labelled $\ell_1$), whereas the new boundary component of $\Gamma_2$ is
its first (again labelled $\ell_1$), followed by the inherited
boundary components labelled $\ell_1+1,\dots,\ell=\ell_1+\ell_2-1$. 
Note that $e_{t_0}$ will necessarily belong to $T$. For convenience we
assume that $e_{t_0}$ ends at vertex $k_1+1\in \Gamma_2\subset
\wGamma$, so that according to our conventions in
Definition~\ref{def:edge-ordering} we have $t_0=k_2$. In
particular, the sign of this term in $\ff_{k,\ell,g}^\Pi$ according to
Definition~\ref{def:fGamma-e} is $\eta_1+\eta_2+(n-3)(k+k_2)$, where
$\eta_1$ and $\eta_2$ are standard as before. 

The corresponding term in $\hat\fq_{2,1,0} \circ( f_{\Gamma_1} \otimes f_{\Gamma_2})$  involves the following signs. Denoting by $\beta'(\ell_1)$ and $\beta''(\ell_1)$ the vectors of labels occuring at the new boundary components of $\Gamma_1$ and $\Gamma_2$, respectively, the $\eta_1$-parts of the signs of $\fq_{2,1,0}$, $f_{\Gamma_1}$ and $f_{\Gamma_2}$ are respectively the signs of the permutations
$$
e_{a_{t_0}}e_{b_{t_0}}e_{\beta(\ell_1)} \quad \longrightarrow \quad
e_{\beta'(\ell_1)}e_{\beta''(\ell_1)}, 
$$
\mbox{}
$$
\prod_{t\in C_1^{\rm int}(\Gamma_1)} e_{a_t}e_{b_t}
\Bigl(\prod_{b<\ell_1} e_{\beta(b)}\Bigr) e_{\beta'(\ell_1)} \quad
\longrightarrow \quad \prod_{v\le k_1} e_{\alpha(v)} 
$$
and
$$
\Bigl(\prod_{t\in C_1^{\rm int}(\Gamma_2)} e_{a_t}e_{b_t}\Bigr)
e_{\beta''(\ell_1)}\prod_{b>\ell_1} e_{\beta(b)} \quad \longrightarrow
\quad \prod_{v\ge k_1+1} e_{\alpha(v)}. 
$$
To understand the difference in the $\eta_1$-part of the sign on both
sides, we write the permutation corresponding to
$\eta_1(\ff_{\wh\Gamma}^\Pi)$ in stages, as 
\begin{align*}
\lefteqn{\prod_t e_{a_t}e_{b_t} \prod_b e_{\beta(b)}}\\ 
&\quad \stackrel{(1)}\longrightarrow \quad \prod_{t\in C_1^{\rm int}(\Gamma_2)} e_{a_t}e_{b_t} \Bigl(\prod_{t\in C_1^{\rm int}(\Gamma_1)} e_{a_t}e_{b_t}\Bigr) e_{a_{t_0}}e_{b_{t_0}} \prod_b e_{\beta(b)}\\
&\quad \stackrel{(2)}\longrightarrow \quad \prod_{t\in C_1^{\rm int}(\Gamma_2)} e_{a_t}e_{b_t} \prod_{t\in C_1^{\rm int}(\Gamma_1)} e_{a_t}e_{b_t} \Bigl(\prod_{b<\ell_1} e_{\beta(b)}\Bigr) e_{a_{t_0}}e_{b_{t_0}} \prod_{b\ge \ell_1} e_{\beta(b)}\\
&\quad \stackrel{(3)}\longrightarrow \quad \prod_{t\in C_1^{\rm int}(\Gamma_2)} e_{a_t}e_{b_t} \prod_{t\in C_1^{\rm int}(\Gamma_1)} e_{a_t}e_{b_t} \Bigl(\prod_{b<\ell_1} e_{\beta(b)}\Bigr) e_{\beta'(\ell_1)}e_{\beta''(\ell_1)} \prod_{b> \ell_1} e_{\beta(b)}\\
&\quad \stackrel{(4)}\longrightarrow \quad \prod_{t\in C_1^{\rm int}(\Gamma_2)} e_{a_t}e_{b_t} \Bigl(\prod_{v\le k_1} e_{\alpha(v)}\Bigr) e_{\beta''(\ell_1)} \prod_{b> \ell_1} e_{\beta(b)}\\
&\quad \stackrel{(5)}\longrightarrow \quad \prod_{v\le k_1} e_{\alpha(v)} \Bigl(\prod_{t\in C_1^{\rm int}(\Gamma_2)} e_{a_t}e_{b_t}\Bigr) e_{\beta''(\ell_1)} \prod_{b> \ell_1} e_{\beta(b)}\\
&\quad \stackrel{(6)}\longrightarrow \quad \prod_v e_{\alpha(v)}.
\end{align*}
The sign exponents are as follows:
\begin{enumerate}[{in} (1)]
\item it is $(n-3)(\ell_2-1)(k_1+\ell_1)$ for moving the edges of $T^*(\Gamma_2)$ and $H_1(\Gamma_2)$ past the edges of $\Gamma_1$,
\item it is $(n-2)\sum_{b<\ell_1} |x^b|$ for moving the edge $e_{t_0}$ past these boundary components,
\item it is simply $\eta_1(\fq)$,
\item it is simply $\eta_1(f_{\Gamma_1})$,
\item it is $(n-3)(k_2+\ell_2)\sum_{v\le k_1}|\varphi^v|$ for moving the edges of $\Gamma_2$ past the inputs of $f_{\Gamma_1}$, and
\item it is simply $\eta_1(f_{\Gamma_2})$.
\end{enumerate}
Here the sign exponent in (1) may require some explanation. Recall that
according to Definition~\ref{def:edge-ordering} the interior edges are
ordered as follows: edges in $T_2$ (in reverse order),
$e_{t_0}=e_{k_2}$, edges in $T_1$ (in reverse order), edges dual to
$T_1^*$, edges dual to $T_2^*$, and finally the remaining edges
generating $H_1(\Sigma_{\wh\Gamma})$. Since the sign exponent of the variables
on $e_{t_0}$ is $n-2$ and the sign exponents on all other edges are $n-3$, and
the edges generating $H_1(\Sigma_{\wh\Gamma})$ appear in pairs and
thus do not contribute to the sign, the sign exponent for moving the
edges in $\Gamma_2$ to the first position (in the correct order) comes
from moving $T_2^*$ past $T_1$ and $T_1^*$, hence equals
$(n-3)(\ell_2-1)(k_1+\ell_1)$. Note that for $i=1,2$ the orientations
and orderings of the edges in $\Gamma_i$ induced by the trees $T_i,T_i^*$ 
according to Definition~\ref{def:edge-ordering} (oriented away from the first
vertex resp.~boundary component) agree with those induced by the
trees $T,T^*$ in $\Gamma$ because $e_{t_0}$ was assumed to end at the
first vertex $k_1+1$ of $\Gamma_2$. Had $e_{t_0}$ ended at a different
vertex $k_1+s$, then the total change in sign comparing $T(\Gamma)$ 
with $T(\Gamma_2)$ would contribute a sign exponent $(n-3)(s-1)$, which 
would cancel with the same change in the extra sign in 
Definition~\ref{def:fGamma-e}. 

In total, the difference in sign exponents for $\eta_1$ is
$$
(n-3)\left( (\ell_2-1)(k_1+\ell_1) + \sum_{b<\ell_1}|x^b| + (k_2+\ell_2)\sum_{v\le k_1}|\varphi^v|\right)+\sum_{b<\ell_1}|x^b|.
$$
Let us denote by $x_1^{\ell_1}$ and $x_2^{\ell_1}$ the terms at the
new boundary components of $f_{\Gamma_1}$ and $f_{\Gamma_2}$ (or
equivalently, the inputs of $\fq_{2,1,0}$). Replacing $\sum_{v\le
  k_1}|\varphi^v|$ using the relation  
$$
\sum_{b<\ell_1}|x^b|+|x_1^{\ell_1}|+(k_1+\ell_1)(n-3) \equiv \sum_{v\le k_1}|\varphi^v| \mod 2,
$$
the preceding sign exponent can be rewritten as
\begin{align}
\label{eq:diff_eta_1}
(n-3) \left(  k_2\sum_{v\le k_1}|\varphi^v| + (\ell_2+1)\sum_{b<\ell_1}|x^b| + \ell_2 |x_1^{\ell_1}| -(k_1+\ell_1)\right)+\sum_{b<\ell_1}|x^b|.
\end{align}
Next we note that
\begin{align*}
\eta_2(\fq)&= (n-3)|x_1^{\ell_1}|,\\
\eta_2(f_{\Gamma_1})&= (n-3)\left(\sum_{v\le k_1} (k_1-v)|\varphi^v| + \sum_{b<\ell_1} (\ell_1-b)|x^b|\right) \quad \text{\rm and}\\
\eta_2(f_{\Gamma_2})&= (n-3)\left(\sum_{v> k_1} (k-v)|\varphi^v| + (\ell-\ell_1)|x_2^{\ell_1}| + \sum_{b>\ell_1} (\ell-b)|x^b|\right).
\end{align*}
\bigskip
It follows (using $|x^{\ell_1}|=|x_1^{\ell_1}|+|x_2^{\ell_1}|+n-2$)  that the total sign difference in $\eta_2$ can be written as
\begin{align}
\label{eq:diff_eta_2}
(n-3) \left(  k_2\sum_{v\le k_1}|\varphi^v| + (\ell_2-1)\sum_{b<\ell_1}|x^b| + 
\ell_2 |x_1^{\ell_1}|\right).
\end{align}
Comparing with \eqref{eq:diff_eta_1}, we see that this cancels the first three terms there, so we get a total sign difference in $\eta_1+\eta_2$ of
$$
(n-3) (k_1+\ell_1)+\sum_{b<\ell_1}|x^b|.
$$
Finally, note that the external sign of $\wh\fq_{2,1,0}$ (from moving
$\fq_{2,1,0}$ past the outputs $x^1,\dots,x^{\ell_1-1}$) contributes
$(n-3)(\ell_1-1)+ \sum_{b<\ell_1}|x^b|$ and the extra sign of
$\ff^\Pi$ contributes $(n-3)(k+k_2)$ to exponents. We conclude that
the total difference in sign exponents in the two terms is 
$$
(n-3)(\ell_1-1+k+k_2+k_1+\ell_1)=(n-3).
$$
Now recall that the definition~\eqref{eq:f_klg} of $f_{k,\ell,g}$ in
terms of the $f_\Gamma$ involves a global sign $(-1)^{n-3}$, and
similarly for the definition~\eqref{eq:f_klg_e} of
$f_{k,\ell,g}^\Pi$. Since in~\eqref{eq:homotopyequiv2} the last term
is quadratic in $f$ and all other terms are linear, this cancels the
sign difference that we just computed.  
This concludes the proof of Claim 4.
\end{proof}

\begin{proof}[Proof of Claim 5]
The proof of Claim 5 is analogous to that of Claim 4. This time, given a graph $\wh \Gamma \in RG_{k,\ell,g}$ with a marked edge $e$, the graph $\Gamma$ (or more precisely its ribbon surface $\Sigma_\Gamma$) is obtained by cutting out a neighborhood of the edge $e$ in $\Sigma_{\wh \Gamma}$ and collapsing each resulting new ``boundary'' component to a new vertex. Again there are three cases to consider:

\begin{enumerate}[(i)]
\item If the edge $e$ connects different vertices, then the resulting
  graph $\Gamma \in RG_{k-1,\ell,g}$ is obtained simply by collapsing
  the edge $e$ in $\wh \Gamma$. Here the contribution to $\ff_{\wh
    \Gamma}^\id$ corresponds to a term in $\ff_{k-1,\ell,g} \circ
  \wh\fp_{2,1,0}$. See Figure~\ref{fig:case_iv}. 
%
% FIG
%
\begin{figure}[h]
\labellist
 \pinlabel $e$ [b] at 109 196
\endlabellist
\centering
\includegraphics{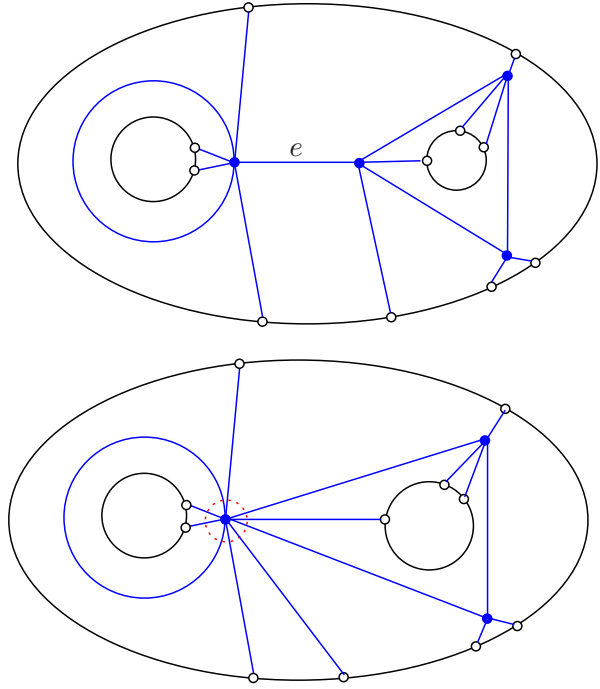}
\caption{On the top we have the graph $\wh \Gamma \in RG_{4,3,0}$ from Figure~\ref{fig:case_iii} with the same marked edge $e$, which now corresponds to case (i) in Claim 5, both drawn on the ribbon surface $\Sigma_{\wh \Gamma}$. On the bottom one sees the graph $\Gamma\in RG_{3,3,0}$ obtained by contracting $e$, drawn on its ribbon surface. The dotted circle marks the new vertex which receives the output from $\fp_{2,1,0}$ in the corresponding term in $\ff_{3,3,0} \circ \hat \fp_{2,1,0}$.}
\label{fig:case_iv}
\end{figure}
\item If the edge $e$ is a loop at a vertex $v$ in $\wh \Gamma$ such
  that $\wh \Gamma \setminus \{v,e\}$ is connected, then the graph
  $\Gamma \in RG_{k+1,\ell,g-1}$ is obtained by deleting $e$ and
  splitting the vertex $v$ into two new vertices $v',v''\in \Gamma$
  whose incident half-edges correspond to the two (ordered)
  collections of half-edges of $\wh \Gamma$ incident to $v$ which form
  the complement of $e$. The contribution to $\ff_{\wh \Gamma}^\id$
  will correspond to a term in $\ff_{k+1,\ell,g-1} \circ \hat
  \fp_{1,2,0}$. See Figure~\ref{fig:case_v}. 
%
% FIG
%
\begin{figure}[h]
\labellist
 \pinlabel $e$ [r] at 50 11
\endlabellist
\centering
\includegraphics{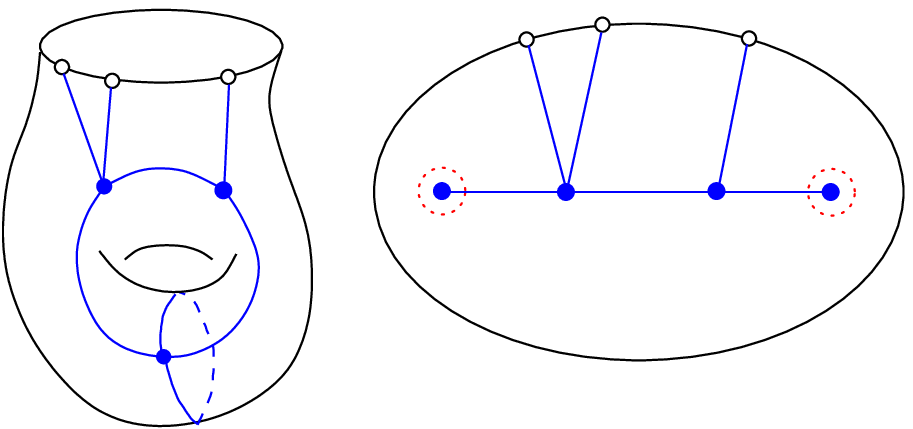}
\caption{On the left we have the graph $\wh \Gamma \in RG_{3,1,1}$
  with a marked edge $e$ as in Figure~\ref{fig:case_ii}, which now
  corresponds to case (ii) in Claim 5, drawn on its ribbon surface $\Sigma_{\wh \Gamma}$. On the right one sees the resulting graph $\Gamma\in RG_{4,1,0}$ obtained by removing $e$ and splitting its vertex, drawn on its ribbon surface. The dotted circles mark the new vertices which receive the output from $\fp_{1,2,0}$ in the corresponding term in $\ff_{4,1,0}  \circ \hat \fp_{1,2,0}$.}
\label{fig:case_v}
\end{figure}
\item If the edge $e$ is a loop at a vertex $v$ in $\wh \Gamma$ such
  that removing $v$ from $\wh \Gamma \setminus e$ disconnects the
  remaining graph, then $\Gamma = \Gamma_1 \sqcup \Gamma_2$ is
  disconnected, and the contribution to $\ff_{\wh \Gamma}^\id$ will
  correspond to a term in $(\ff_{k_1,\ell_1,g_1} \odot
  \ff_{k_2,\ell_2,g_2}) \circ \hat\fp_{1,2,0}$. See Figure~\ref{fig:case_vi}. 
%
% FIG
%
\begin{figure}[h]
\labellist
 \pinlabel $e$ [b] at 65 226
\endlabellist
\centering
\includegraphics{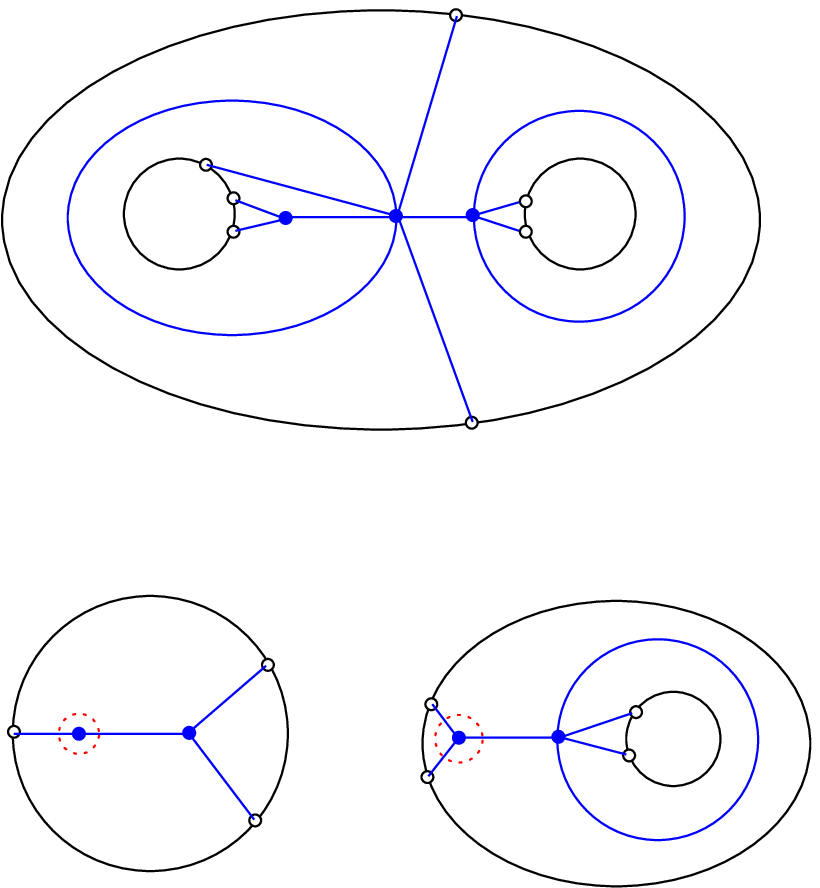}
\caption{On the top we have a graph $\wh \Gamma \in RG_{3,3,0}$ with a
  marked edge $e$ corresponding to case (iii) in Case 5, both drawn on
  the ribbon surface $\Sigma_{\wh \Gamma}$. On the bottom one sees the
  resulting graphs $\Gamma_1\in RG_{2,1,0}$ and $\Gamma_2\in
  RG_{2,2,0}$ obtained by removing $e$ and splitting its vertex, drawn
  on their ribbon surfaces. The dotted circles mark the new vertices
  which receive the output from $\fp_{1,2,0}$ in the corresponding
  term in $(\ff_{2,1,0} \odot \ff_{2,2,0}) \circ \hat \fp_{1,2,0}$.} 
\label{fig:case_vi}
\end{figure}
\end{enumerate}
As the discussion of combinatorial factors and signs follows the lines
of argument used in Claim 4, we leave the remaining details to the
reader. 
\end{proof}

This concludes the proof of Claims 1--5, and thus of
Theorem~\ref{thm:homotopyequiv}. 
\end{proof}

%%%%%%%%%%%%%%%% end proof %%%%%%%%%%%%

{\bf The filtration on the dual cyclic bar complex. }
Consider a cyclic cochain complex $(A,\langle\,,\,\rangle,d)$ as in
Section~\ref{sec:Cyclic-cochain}. The space 
$$
   B^{\text{\rm cyc}*}A = \bigoplus_{k\geq 1} B^{\text{\rm cyc}*}_kA
$$
carries a natural filtration by the subspaces
$$
   \FF^\lambda B^{\text{\rm cyc}*}A :=
   \bigoplus_{k\geq\lambda}B^{\text{\rm cyc}*}_kA. 
$$
Its completion with respect to this filtration,
$$
   \wh{B^{\text{\rm cyc}*}A} = \prod_{k\geq 1} B^{\text{\rm cyc}*}_kA
   = Hom(B^{\text{\rm cyc}}A,\R),
$$
inherits a filtration by the subspaces
$$
   \FF^\lambda\wh{B^{\text{\rm cyc}*}A} := \{\varphi\in \wh{B^{\text{\rm
         cyc}*}A}\;\bigl|\; \varphi|_{B^{\text{\rm cyc}}_kA}=0 \text{ 
    for all }k<\lambda\}.  
$$
%Simply define the filtration degree of $c\in B^{\text{\rm cyc}*}_kA$ by $k$. 
Recall that the operations $\fp_{1,1,0}$, $\fp_{2,1,0}$, $\fp_{1,2,0}$
defining the \text{\rm dIBL} structure and the operations $\ff_{k,\ell,g}$ in
Theorem~\ref{thm:homotopyequiv} are defined by summation over 
certain ribbon graphs. Consider a ribbon graph $\Gamma$ with $k$
interior vertices of degrees $d(1),\dots,d(k)$ and $s=s(1)+\dots+s(\ell)$
exterior vertices distributed on the $\ell$ boundary components of the
corresponding ribbon surface $\Sigma$. Then the number of exterior
edges equals $s$ and the number of interior edges equals $(d-s)/2$,
where $d=d(1)+\dots+d(k)$ (since each interior edge meets precisely two
interior vertices). It follows that
$$
   2-2g-\ell = \chi(\Sigma) = \chi(\Gamma) = |C^0_{\text{\rm int}}| +
   |C^0_{\text{\rm ext}}| - |C^1_{\text{\rm int}}| - |C^1_{\text{\rm ext}}| = k + s  -
   \frac{d-s}{2} - s, 
$$
so $\ff_{k,\ell,g}$ has filtration degree
$$
   \|\ff_{k,\ell,g}\| = s-d = 2(2-2g-k-\ell) = 2\chi_{k,\ell,g}. 
$$
Similarly, $\fp_{k,\ell,g}$ has filtration degree $\|\fp_{k,\ell,g}\| =
2\chi_{k,\ell,g}$ for $(k,\ell,g)$ equal to $(1,1,0)$, $(2,1,0)$ or
$(1,2,0)$. Thus the operations $\fp_{k,\ell,g}$ and $\ff_{k,\ell,g}$
are filtered in the sense of Definitions~\ref{def:IBL-fil}
and~\ref{def:morphifil} with $\gamma=2$. Note that they are
$\N_0$-gapped in the sense of Definition~\ref{def:ggapped} for the discrete submonoid
$\N_0=\{0,1,2,\dots\}\subset\R_{\geq 0}$, where all the higher terms 
$\fp_{k,\ell,g}^j$ and $\ff_{k,\ell,g}^j$ vanish for $j\geq 1$. 
Hence Proposition~\ref{prop:structureexists},
Theorem~\ref{thm:homotopyequiv} and
Proposition~\ref{prop:filt-inverse} imply
%\marginpar{\tiny Should we stress that $G=\N$ and that the gapped-ness is true for trivial reasons here, i.e. $\fp_{k,\ell,g}^j=0$ for $j>0$? (Janko, April 14)}

\begin{Corollary}\label{cor:structureexists}
The operations $\fp_{1,1,0},\fp_{1,2,0},\fp_{2,1,0}$ in
Proposition~$\ref{prop:structureexists}$ induce on 
$\BC=B^{\text{\rm cyc}*}A[2-n]$ a {\em strict $\N_0$-gapped filtered
\text{\rm dIBL}-structure of bidegree $(d,\gamma)=(n-3,2)$}. Moreover, the 
operations $\ff_{k,\ell,g}$ in Theorem~$\ref{thm:homotopyequiv}$
induce a strict $\N_0$-gapped filtered $\text{\rm IBL}_\infty$-homotopy equivalence
between $B^{\text{\rm cyc}*}A[2-n]$ and $B^{\text{\rm cyc}*}B[2-n]$. 
\qed
\end{Corollary}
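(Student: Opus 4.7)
The plan is to verify that the filtration degree computation immediately preceding the corollary supplies all the hypotheses needed to promote the statements of Proposition \ref{prop:structureexists} and Theorem \ref{thm:homotopyequiv} to the filtered, gapped setting, and then to read the homotopy equivalence off of Proposition \ref{prop:filt-inverse}.

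First I would confirm that for any ribbon graph $\Gamma$ of signature $(k,\ell,g)$ the associated operator $f_\Gamma$ (and in particular each of $\fp_{1,1,0}$, $\fp_{2,1,0}$, $\fp_{1,2,0}$, $\ff_{k,\ell,g}$) has filtration degree precisely $2\chi_{k,\ell,g}$, as already computed from the Euler characteristic identity $2-2g-\ell = k + (s-d)/2$. Summing over ribbon graphs of fixed signature preserves filtration degrees, so
$$
   \|\fp_{k,\ell,g}\| = 2\chi_{k,\ell,g} \quad \text{for } (k,\ell,g) \in \{(1,1,0),(2,1,0),(1,2,0)\},
   \qquad \|\ff_{k,\ell,g}\| = 2\chi_{k,\ell,g}
$$
for all admissible signatures. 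This verifies the filtration condition \eqref{eq:p-deg} of Definition \ref{def:IBL-fil} and the corresponding condition \eqref{eq:f-deg1} of Definition \ref{def:morphifil} with bidegree constant $\gamma = 2$, all inequalities being equalities. Since every nonvanishing operation satisfies $k,\ell \geq 1$, the structure and the morphism are strict, and condition~\eqref{eq:f-deg2} is vacuous.

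Next I would deduce the $\N_0$-gapped property in the sense of Definition \ref{def:ggapped}. Using the discrete submonoid $G = \N_0 = \{0,1,2,\dots\}$, I would simply declare
$$
   \fp_{k,\ell,g}^0 := \fp_{k,\ell,g}, \qquad \fp_{k,\ell,g}^j := 0 \text{ for } j \geq 1,
$$
and analogously for $\ff_{k,\ell,g}$. The filtration bound $\|\fp_{k,\ell,g}^0\| - \gamma \chi_{k,\ell,g} \geq 0 = \lambda_0$ holds with equality, and the triples in \eqref{eq:f-deg2} do not arise in our strict setting, so the required vanishing $\fp_{k,\ell,g}^0 = 0$ for those triples holds vacuously. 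Combined with Proposition \ref{prop:structureexists}, this already yields the first assertion of the corollary.

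Finally I would invoke Proposition \ref{prop:filt-inverse} to upgrade the homotopy equivalence. Its only substantive hypothesis is that $\ff_{1,1,0} : (B^{\text{cyc}*}A[2-n], \fp_{1,1,0}) \to (B^{\text{cyc}*}B[2-n], \fq_{1,1,0})$ be a chain homotopy equivalence. But by construction in the proof of Theorem \ref{thm:homotopyequiv}, $\ff_{1,1,0}$ is the sum over trees $T_r$ with a single interior vertex, which is precisely the map dual to the inclusion $i:B \hookrightarrow A$; since $i$ is a chain homotopy equivalence with explicit homotopy $G$ satisfying \eqref{eq:G}, so is its dual. Thus Proposition \ref{prop:filt-inverse} applies and promotes $\ff$ to a strict $\N_0$-gapped filtered $\IBL_\infty$-homotopy equivalence, completing the proof. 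The only place where one needs to be slightly careful is in checking that the chain homotopy inverse produced by the standard argument in the unfiltered case can be chosen to have filtration degree $0$, but this is automatic because the dual projection $p^* : B^{\text{cyc}*}B \to B^{\text{cyc}*}A$ preserves word length.
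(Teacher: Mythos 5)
Your proposal is correct and follows essentially the same route as the paper: the paper's entire argument is the filtration-degree computation $\|\ff_{k,\ell,g}\|=s-d=2\chi_{k,\ell,g}$ via the Euler characteristic of the ribbon surface, the observation that the structures are strict and trivially $\N_0$-gapped, and then the citation of Proposition~\ref{prop:structureexists}, Theorem~\ref{thm:homotopyequiv} and Proposition~\ref{prop:filt-inverse}. Your closing remark about the filtration degree of the homotopy inverse is a reasonable extra precaution but is already absorbed into the statement of Proposition~\ref{prop:filt-inverse}, which only requires $\ff_{1,1,0}$ to be a chain homotopy equivalence.
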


This corollary will be the basis for our discussion of Maurer-Cartan
elements in the following section.

%%%%%%%%%%%%%%%%%%%%%%%%%%%%%%%%%%%%%%%%%%%%%%%%%%%%%%%%%%%%%%%%%%%%%%%
\section{The dual cyclic bar complex of a cyclic
  DGA}\label{sec:Cyclic} 
%%%%%%%%%%%%%%%%%%%%%%%%%%%%%%%%%%%%%%%%%%%%%%%%%%%%%%%%%%%%%%%%%%%%%%%

In this section we show that for a cyclic DGA the $\text{\rm dIBL}$-structure on its
dual cyclic bar complex comes with a natural Maurer-Cartan
element. This gives rise to a twisted $\text{\rm dIBL}$-structure on the dual
cyclic bar complex, and thus to a twisted $\IBL_\infty$-structure on
its homology. In this way, we prove Proposition \ref{prop:existoncyc} and 
Theorem \ref{thm:existoncyc2} from the Introduction.
\par

We begin by considering cyclic $A_{\infty}$-structures.

\begin{Definition}\label{cycAinfty}
Let $(A,\langle\,,\,\rangle,d)$ be a cyclic cochain complex with
pairing of degree $-n$ and set $\frak
m_1:=d$. A series of operations
$$
   \frak m_k : A[1]^{\otimes k} \to A[1],\qquad k\geq 2
$$
of degree $1$ is said to define a {\em cyclic $A_{\infty}$ structure}
on $(A,\langle\,,\,\rangle,d)$ if for each $r\in\N$ the following holds:
\begin{equation}\label{eq:cyc1}
    \sum_{k+\ell=r+1\atop k,\ell\geq 1}\sum_{c=1}^{r+1-\ell} (-1)^*\frak
    m_k(x_1,\cdots,\frak m_\ell(x_c,\cdots,x_{c+\ell-1}),\cdots,x_{r}) = 0, 
\end{equation}
where $* = \deg x_1 +\cdots +\deg x_{c-1} + c-1$, and
\begin{equation}\label{eq:cyc2}
    \langle \frak m_k(x_1,\cdots,x_k),x_0\rangle = (-1)^{**} \langle \frak
    m_k(x_0,x_1,\cdots,x_{k-1}),x_k\rangle,
\end{equation}
where $** = (\deg x_0 +1)(\deg x_1 +\cdots +\deg x_k + k)$.
\end{Definition}

\begin{Remark}\label{ex:DGA}
We will refer to a cyclic $A_\infty$-algebra $(A,\langle\,,\,\rangle, \{\fm_k\})$ with $\fm_k=0$ for $k \ge 3$ as a {\em cyclic DGA}. Note that, as usual, to go from $\fm_2$ to a multiplication on $A$ which makes it a differential graded algebra in the usual sense involves adding signs; see~\eqref{intpairing} below for a possible convention. 
\end{Remark}

For operations $\frak m_k : A[1]^{\otimes k} \to A[1]$ we define $\frak m_k^+ :
A[1]^{\otimes(k+1)} \to \R$ by\footnote{The global sign $(-1)^{n-2}$
  is inserted to make Proposition~\ref{MCAinfty} below true. We do not
  have a conceptual explanation for this.} 
%\marginpar{\tiny Sign inserted (Kai)}
\begin{equation}
    \frak m^+_k(x_0,x_1,\cdots,x_k) := (-1)^{n-2}\langle \frak
    m_k(x_0,\cdots,x_{k-1}), x_k\rangle.
\end{equation}
Then $\frak m_k$ satisfies~\eqref{eq:cyc2} if and only if $\frak m^+_k
\in B^{\text{\rm cyc}*}_{k+1}A$. In this case we obtain an element 
\begin{equation}\label{defofm+}
    \frak m^+ := \sum_{k\geq 2} \frak m^+_k \in \wh\BC :=
    \wh{B^{\text{\rm cyc}*}A}[2-n]. 
\end{equation}
Note that $\fm^+$ is homogeneous of degree $n-3$ in $\wh{B^{\text{\rm cyc}*}A}$
and so it has degree $2(n-3)$ when viewed as an element of
$\wh{E_1}\BC=\wh{\BC}[1]=\wh{B^{\text{\rm cyc}*}A}[3-n]$. Moreover,
$\fm^+=\sum_{k\geq 2} \frak m^+_k$ has filtration degree at least $3$
with respect to the degree  $k$ in $B_k^{{\rm cyc}*}A$, 
so it satisfies the grading and filtration conditions for a
Maurer-Cartan element in the filtered \text{\rm dIBL}-algebra (of
bidegree $(n-3,2)$)
$$
    \bigl(\BC=B^{\text{\rm cyc}*}A[2-n],
      \fp_{1,1,0},\fp_{1,2,0},\fp_{2,1,0}\bigr)
$$
from Corollary~\ref{cor:structureexists}. 

\begin{Proposition}\label{MCAinfty}
Let $\{\frak m_k\}_{k\geq 2}$ satisfy~\eqref{eq:cyc2} of Definition
$\ref{cycAinfty}$. Then it satisfies~\eqref{eq:cyc1} if and only if
$$
    \fp_{1,1,0} \frak m^+ + \frac{1}{2}{\mathfrak p}_{2,1,0}(\frak m^+,\frak m^+) = 0 
$$
in $\wh{E_1}\BC$.
\end{Proposition}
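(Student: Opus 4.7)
The plan is to verify the stated equivalence by direct expansion of both sides of the equation in $\wh{E_1}\BC = \wh{B^{\text{\rm cyc}*}A}[3-n]$, using the explicit formulas for $\fp_{1,1,0} = \dd$ and $\fp_{2,1,0}$ from Section~\ref{sec:Cyclic-cochain}, and comparing the homogeneous component of bar-length $r+1$ with the $A_\infty$-relation \eqref{eq:cyc1} for the same $r$. Since the pairing is nondegenerate, it suffices to check equality after pairing with an arbitrary $x_r \in A[1]$; equivalently, to check that the $(r+1)$-linear functional on $A[1]^{\otimes(r+1)}$ obtained by evaluating the component in $B^{\text{\rm cyc}*}_{r+1}A$ on $(x_0,\dots,x_r)$ is, up to a global factor $(-1)^{n-2}$, the pairing of the left-hand side of \eqref{eq:cyc1} (applied to $x_0,\dots,x_{r-1}$) with $x_r$.

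First I would handle the linear term. The component of $\fp_{1,1,0}\fm^+$ in $B^{\text{\rm cyc}*}_{r+1}A$ is $\dd\fm_r^+$, whose value on $(x_0,\dots,x_r)$ is
\[
   \sum_{j=0}^{r}(-1)^{|x_0|+\dots+|x_{j-1}|}\fm_r^+(x_0,\dots,dx_j,\dots,x_r).
\]
Unfolding the definition $\fm_k^+(\cdot)=(-1)^{n-2}\la \fm_k(\cdot),\,\cdot\ra$ and applying the cyclic symmetry \eqref{eq:cyc2} to each summand produces precisely the contributions with $(k,\ell)=(r,1)$ and $(k,\ell)=(1,r)$ to the $A_\infty$-relation for $r$, paired with $x_r$.

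Second I would handle the quadratic term. The $(r+1)$-st component of $\fp_{2,1,0}(\fm^+,\fm^+)$ equals
\[
   \sum_{k+\ell=r+1,\;k,\ell\geq 2}\fp_{2,1,0}(\fm_k^+,\fm_\ell^+),
\]
and by \eqref{eq:p-mu} and \eqref{eq:mu} each summand is, up to an explicit sign,
\[
   \sum_{a,b}\sum_c (-1)^{\eta_a+\eta}\,g^{ab}\,\fm_k^+(e_a,x_c,\dots,x_{c+k-1})\,\fm_\ell^+(e_b,x_{c+k-1},\dots,x_{c-1}).
\]
The key algebraic identity is the ``contraction lemma'': for any $y \in A$,
\[
   \sum_{a,b} g^{ab}\la y,e_a\ra\,\psi(e_b,\dots) = (-1)^{*}\psi(y,\dots),
\]
with a sign determined by the degree of $y$ and the cyclic positions (this is just the defining property $\la e_a,e^b\ra=\delta_a^b$ together with shifting the placement of the contraction). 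Applying this to one of the two $\fm^\pm$ factors collapses the sum on $a,b$ into terms of the form $\fm_\ell^+(\dots,\fm_k(\dots),\dots)$, each rewritten via cyclicity so that $x_r$ sits in the last slot. Matching indices, this yields exactly the double sum in \eqref{eq:cyc1} restricted to $k,\ell\geq 2$. The combinatorial factor $\tfrac{1}{2}$ accounts for the fact that $\fp_{2,1,0}$ is symmetric in its two inputs while the cyclic sum over $c$ already ranges over all insertion positions.

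The main obstacle will be the careful sign bookkeeping, since contributions come from: (i) the sign $(-1)^{n-2}$ in the definition of $\fm_k^+$; (ii) the sign $(-1)^{(n-3)|\phi|}$ in the passage $\mu\mapsto\fp_{2,1,0}$ from \eqref{eq:p-mu}; (iii) the internal sign $\eta_a+\eta$ from \eqref{eq:mu}; (iv) the sign produced by repeated applications of the cyclic symmetry \eqref{eq:cyc2}; and (v) the natural sign $(-1)^{\deg x_1+\dots+\deg x_{c-1}+c-1}$ appearing in \eqref{eq:cyc1}. My strategy for organizing these is to fix $x_r$ in the last slot throughout, use \eqref{eq:cyc2} once to move each term into the canonical form $\la\fm_?(x_0,\dots),x_r\ra$, and then verify that the accumulated exponents agree modulo 2 with the right-hand side of \eqref{eq:cyc1}. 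The degree shift $|\fm_k^+|=n-3$ in $\wh{B^{\text{\rm cyc}*}A}$ and the identity $|x_0|+\dots+|x_r| \equiv n-2 \pmod{2}$ on nonzero evaluations will be used repeatedly to simplify. Once all signs are verified, the equivalence follows since each $r\geq 2$ produces an independent scalar equation for each tuple $(x_0,\dots,x_r)$.
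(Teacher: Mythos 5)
Your proposal follows essentially the same route as the paper's proof: expand the degree-$(r+1)$ component, use the contraction identity $\sum_{a,b}g^{ab}\la e_a,z\ra e_b=z$ together with the cyclic symmetry of $\fm_k^+$ to rewrite $\fp_{2,1,0}(\fm_k^+,\fm_\ell^+)$ as the nested terms of \eqref{eq:cyc1} with $k,\ell\geq 2$, recover the $k=1$ or $\ell=1$ terms from $\fp_{1,1,0}\fm_{r-1}^+$, and track the signs (the paper organizes this by first computing $\fp_{2,1,0}(\fm_k^+,\phi)$ for an arbitrary $\phi$, which is only a cosmetic difference). The approach is sound; what remains is exactly the sign verification you correctly identify as the main work.
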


\begin{proof}
Consider $k\geq 2$ and any element $\phi\in B_{\ell+1}^{{\rm cyc}*}A$,
and set $r:=k+\ell$. Since $|\fm^+|=n-3$, the relation~\eqref{eq:p-mu}
between $\fp_{2,1,0}$ and $\mu$ yields
$$
   \fp_{2,1,0}(\fm_k^+,\phi) = (-1)^{n-3}\mu(\fm_k^+,\phi).
$$
The formula~\eqref{eq:mu} for $\mu$ then yields
$$
   \fp_{2,1,0}(\fm_k^+,\phi)(x_1,\dots,x_r) =
   \sum_{c=1}^r(-1)^{\nu_c}b_k(x_c,\dots,x_{c-1}), 
$$
with
\begin{align*}
   &b_k(x_1,\dots,x_r) \cr
   &= \sum_{a,b}(-1)^{\eta_a+\eta_b(|x_1|+\cdots+|x_k|)+n-3}g^{ab}
   \fm_k^+(e_a,x_1,\dots,x_k)\phi(e_b,x_{k+1},\dots,x_r)
\end{align*}
and the sign
\begin{equation}\label{eq:nu}
   \nu_c = (|x_1|+\cdots+|x_{c-1}|)(|x_c|+\cdots+|x_r|). 
\end{equation}
Abbreviating $x:=(x_1,\dots,x_k)$ and using the symmetries of
$\fm_k^+$ and $\la\ ,\ \ra$, we compute 
\begin{align*}
   \fm_k^+(e_a,x)
   &= (-1)^{\eta_a|x|}\fm_k^+(x,e_a) \cr
   &= (-1)^{\eta_a|x|+n-2}\la\fm_k(x),e_a\ra \cr
   &= (-1)^{\eta_a|x|+n-2+\eta_a(|x|+1)+1}\la e_a,\fm_k(x)\ra \cr
   &= (-1)^{\eta_a+n-3}\la e_a,\fm_k(x)\ra.
\end{align*}
Using the relation
$$
   \sum_{a,b}g^{ab}\la e_a,z\ra e_b = z
$$
we obtain 
$$
   \sum_{a,b}(-1)^{\eta_a+n-3}g^{ab}\fm_k^+(e_a,x) e_b = \fm_k(x). 
$$
Next note that in the formula for $b_k$ we have the relations
$\eta_b=\eta_a+(n-2) = (|x|+n-3)+(n-2)=|x|+1$ (mod $2$), so the term
$\eta_b|x|$ is even and can be dropped from the sign
exponent. Inserting the previous formula, we obtain
$$
   b_k(x_1,\dots,x_r) =
   \phi\Bigl(\fm_k(x_1,\dots,x_k),x_{k+1},\dots,x_r\Bigr),
$$
and therefore
\begin{align}\label{eq:twisted}
   &\fp_{2,1,0}(\fm_k^+,\phi)(x_1,\dots,x_r) \cr
   &= \sum_{c=1}^r(-1)^{\nu_c}
   \phi\Bigl(\fm_k(x_c,\dots,x_{c+k-1}),x_{c+k},\dots,x_{c-1}\Bigr). 
\end{align}
Let us now insert $\phi=\fm_\ell^+$ with $\ell\geq 2$ in this formula
and consider a summand with $1\leq c\leq\ell$. Then
$x_r$ appears in the argument of $\fm_\ell^+$ and we can rewrite the
summand as
\begin{align*}
   &(-1)^{\nu_c}
  \fm_\ell^+\Bigl(\fm_k(x_c,\dots,x_{c+k-1}),x_{c+k},\dots,x_{c-1}\Bigr) \cr
%   &=(-1)^{\nu_c+(|x_1|+\cdots+|x_{c-1}|)(|x_c|+\cdots+|x_r|+1)}
%  \fm_\ell^+\Bigl(x_1,\dots,\fm_k(x_c,\dots,x_{c+k-1}),x_{c+k},\dots,x_{r}\Bigr) \cr
   &=(-1)^{*}
  \fm_\ell^+\Bigl(x_1,\dots,\fm_k(x_c,\dots,x_{c+k-1}),x_{c+k},\dots,x_{r}\Bigr) \cr
   &=(-1)^{*+n-2}
  \Bigl\la\fm_\ell\Bigl(x_1,\dots,\fm_k(x_c,\dots,x_{c+k-1}),x_{c+k},\dots,x_{r-1}\Bigr),x_r\Bigr\ra 
\end{align*}
with
$*=\nu_c+(|x_1|+\cdots+|x_{c-1}|)(|x_c|+\cdots+|x_r|+1)=|x_1|+\cdots+|x_{c-1}|$
as in Definition~\ref{cycAinfty}.  
For $\ell+1\leq c\leq k+\ell$ we obtain a similar
expression with the roles of $\fm_k$ and $\fm_\ell$ interchanged. It
follows that
\begin{align*}
    & \frac{1}{2}\fp_{2,1,0}(\fm^+,\fm^+)(x_1,\dots,x_r)
    = \sum_{k+\ell=r\atop k,\ell\geq 2}\frac{1}{2}
    \fp_{2,1,0}(\fm_k^+,\fm_\ell^+)(x_1,\dots,x_r) \cr
    &= (-1)^{n-2}\left\la \sum_{k+\ell=r \atop k,\ell\geq
      2}\sum_{c=1}^{r-\ell} (-1)^*\frak
    m_k\Bigl(x_1,\cdots,\frak
    m_\ell(x_c,\cdots,x_{c+\ell-1}),\cdots,x_{r-1}\Bigr),x_r\right\ra.
\end{align*}
Note that the last sum contains all terms appearing in~\eqref{eq:cyc1}
with $k,\ell\geq 2$. The missing terms appear in
\begin{align*}
    & (-1)^{n-2}\fp_{1,1,0}\fm_{r-1}^+(x_1,\dots,x_r) \cr
    &= (-1)^{n-2}\sum_{c=1}^{r}(-1)^{|x_1|+\dots+|x_{c-1}|}
    \fm_{r-1}^+(x_1,\dots,dx_c,\dots x_r) \cr
    &= \sum_{c=1}^{r-1}(-1)^{|x_1|+\dots+|x_{c-1}|}
    \la \fm_{r-1}(x_1,\dots,\fm_1(x_c),\dots x_{r-1}),x_r\ra \cr 
    &\phantom{=} + (-1)^{|x_1|+\dots+|x_{r-1}|} \la
    \fm_{r-1}(x_1,\dots,x_{r-1}),dx_r\ra \cr
    &= \left\la \fm_1(\fm_{r-1}(x_1,\dots,x_r)) + \sum_{c=1}^{r-1}(-1)^*
    \fm_{r-1}(x_1,\dots,\fm_1(x_c),\dots x_{r-1}), x_r  \right\ra. 
\end{align*}
Here in the last equation we have used the relation $\la dx,y\ra =
-(-1)^{|x|}\la x,dy\ra$. Combining the preceding equations, we obtain
\begin{align*}
    & (-1)^{n-2}\left(\fp_{1,1,0}\fm^+ +\frac{1}{2}\fp_{2,1,0}(\fm^+,\fm^+)\right)(x_1,\dots,x_r)
    \cr
    &= \left\la \sum_{k+\ell=r\atop k,\ell\geq
      1}\sum_{c=1}^{r-\ell} (-1)^*\frak
    m_k(x_1,\cdots,\frak
    m_\ell(x_c,\cdots,x_{c+\ell-1}),\cdots,x_{r-1}),x_r\right\ra
\end{align*}
and Proposition~\ref{MCAinfty} follows. 
\end{proof}

Let $(A,\langle\,,\,\rangle,\frak m)$ be a cyclic $A_{\infty}$-algebra. 
Proposition~\ref{MCAinfty} implies that the twisted differential
$\fp_{1,1,0}^{\fm^+}:\wh\BC\to\wh\BC$, 
$$
   \fp_{1,1,0}^{\fm^+}(\varphi) := \fp_{1,1,0}\varphi + \fp_{2,1,0}(\fm^+,\varphi)
$$
squares to zero. For $\phi\in
B_{\ell+1}^{{\rm cyc}*}A$ and $r=k+\ell$, $k\geq 1$, the component of
$\fp_{1,1,0}^{\fm^+}(\varphi)$ in $B_{r+1}^{{\rm cyc}*}A$ is given
explicitly by 
\begin{align}\label{eq:twisted-diff}
   &\fp_{1,1,0}^{\fm^+}(\varphi)(x_1,\dots,x_r) \cr
   &= \sum_{c=1}^r(-1)^{\nu_c}
   \phi\Bigl(\fm_k(x_c,\dots,x_{c+k-1}),x_{c+k},\dots,x_{c-1}\Bigr),
\end{align}
with the sign exponent $\nu_c$ defined in~\eqref{eq:nu}. (According
to~\eqref{eq:twisted} this formula holds for $k\geq 2$, and one easily
verifies that it also holds for $k=1$.) Note that the differential
$\fp_{1,1,0}^{\fm^+}$ depends only on the $A_\infty$-operations
$\fm_k$ and not on the pairing $\la\ ,\ \ra$. 

\begin{Remark}\label{rem:cyc-hom}
In the case of a cyclic DGA (i.e.~$\fm_k=0$ for $k\geq 3$),
formula~\eqref{eq:twisted-diff} shows that the twisted differential
$\fp_{1,1,0}^{\fm^+}$ is dual to the Hochschild differential on
Connes' cyclic complex (see~\cite{Lod}), so its homology equals
Connes' version of cyclic cohomology.  
%It is clear that the completed dual cyclic bar complex $\wh\BC$ with
%this twisted differential $\fp_{1,1,0}^{\fm^+}$ is quite closely
%related to the cyclic cohomology of the underlying
%$A_\infty$-algebra. 
The precise relation to the definitions of cyclic cohomology appearing
in the literature in the $A_\infty$-case (such
as~\cite{GJ90,KoSo02,Tra08}) will be discussed elsewhere.   
\end{Remark}

Let $(A,\langle\,,\,\rangle,\frak m)$ be a cyclic
$A_{\infty}$-algebra. As above, let
$$
    \left(\BC=B^{\text{\rm cyc}*}A[2-n],
      \fp_{1,1,0},\fp_{1,2,0},\fp_{2,1,0}\right)
$$
be the filtered \text{\rm IBL}-algebra of bidegree $(n-3,2)$ from
Corollary~\ref{cor:structureexists}.  
According to Proposition~\ref{MCAinfty}, the degree $2(n-3)$ element
$\fm^+\in \wh{E_1}\BC=\wh{\BC}[1]=\wh{B^{\text{\rm cyc}*}A}[3-n]$ 
satisfies the first part of the Maurer-Cartan equation~\eqref{eq:MC2}
in Lemma~\ref{lem:MC}. Let us consider the second part of the
Maurer-Cartan equation, $\fp_{1,2,0}(\fm^+)=0$. Using the relation 
$\sum_ag^{ab}e_b=e^a$, we compute for $k_1,k_2\geq 1$,
$k=k_1+k_2+1\geq 3$: 
\begin{align*}
    & \delta(\fm_k^+)(x_1\cdots x_{k_1}\otimes y_1\cdots y_{k_2}) \cr
    &= \sum_{a,b}\sum_{c=1}^{k_1}\sum_{c'=1}^{k_2} (-1)^{\eta_a+\eta}
    g^{ab}\fm_k^+(e_a,x_c,\dots,x_{c-1},e_b,y_{c'},\dots,y_{c'-1}) \cr
    &= \sum_{c=1}^{k_1}\sum_{c'=1}^{k_2} \sum_{a}(-1)^{\eta_a+\eta+n-2}
    \la e_a,\fm_k(x_c,\dots,x_{c-1},e^a,y_{c'},\dots,y_{c'-1})\ra, 
\end{align*}
where $\eta$ is given by~\eqref{eq:eta}. This expression does not
vanish for a general cyclic $A_\infty$-algebra, so
$\fp_{1,2,0}(\fm^+)=0$ does not hold in general. However,
$\fp_{1,2,0}(\fm_k^+)$ vanishes for degree reasons if $k\leq 2$, so 
in view of Lemma~\ref{lem:MC}, Proposition~\ref{prop:MC} and
Remark~\ref{rem:cyc-hom} we conclude:

\begin{Proposition}\label{propIBLI2}
If $(A,\langle\,,\rangle,\fm_1=d,\frak m_2)$ is a cyclic DGA, then
$\fm_2^+$ defines a Maurer-Cartan element in the filtered
\text{\rm dIBL}-algebra $\bigl(\BC=B^{\text{\rm
      cyc}*}A[2-n],\fp_{1,1,0}=d,\fp_{1,2,0},\fp_{2,1,0}\bigr)$.  
It induces a twisted filtered $\text{\rm dIBL}$-structure
$$
    \fp^{\fm_2^+} =
    \{\fp_{1,1,0}+\fp_{2,1,0}(\fm_2^+,\cdot),\fp_{1,2,0},\fp_{2,1,0}\}
$$
on $\wh\BC$ whose homology equals Connes' version of cyclic cohomology
of $(A,d,\fm_2)$. 
\end{Proposition}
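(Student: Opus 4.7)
The plan is to recognize that the hard work has already been done, so the proposition is essentially a bookkeeping assembly of earlier results. First I would apply Lemma \ref{lem:MC} with $\fm_{1,0} := \fm_2^+$ and $\fm_{\ell,g} := 0$ for all other $(\ell,g)$, which reduces the Maurer-Cartan equation \eqref{eq:MC} to the two conditions
\begin{equation*}
    d\fm_2^+ + \tfrac{1}{2}\fp_{2,1,0}(\fm_2^+, \fm_2^+) = 0, \qquad \fp_{1,2,0}(\fm_2^+) = 0.
\end{equation*}
The first is exactly the assertion of Proposition \ref{MCAinfty} specialized to the cyclic DGA $(A, \la\,,\,\ra, d, \fm_2)$: since $\fm_k = 0$ for $k \geq 3$, the sum \eqref{defofm+} collapses to $\fm^+ = \fm_2^+$, and Proposition \ref{MCAinfty} then yields the identity. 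The second condition is automatic for degree reasons, because the cobracket $\delta$ (and hence $\fp_{1,2,0}$) is defined only on $B_k^{\text{\rm cyc}*}A$ with $k \geq 4$, whereas $\fm_2^+ \in B_3^{\text{\rm cyc}*}A$.

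Next I would verify the grading and filtration conditions needed to regard $\fm_2^+$ as a Maurer-Cartan element. The element $\fm_2^+$ is homogeneous of degree $n-3$ in $B^{\text{\rm cyc}*}A$, which corresponds to degree $2(n-3) = 2d$ in $\wh E_1 \BC$, matching the required $|\fm_{1,0}| = -2d(0-1) = 2d$. Its filtration degree with respect to the grading by $k$ (the filtration used in Corollary \ref{cor:structureexists}) equals $3$, and the required strict inequality $\|\fm_{1,0}\| > \gamma \cdot \chi_{0,1,0} = 2$ holds since $3 > 2$. Proposition \ref{prop:MC} then produces the twisted filtered $\IBL_\infty$-structure $\fp^{\fm_2^+}$. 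Inspecting the explicit formula \eqref{eq:filt5} for $(k,\ell,g) \in \{(1,1,0),(2,1,0),(1,2,0)\}$ shows that the only new term appears in $\fp_{1,1,0}^{\fm_2^+}$ and equals $\fp_{2,1,0}(\fm_2^+,\cdot)$, while the constraints force $r=0$ in the remaining two cases and reproduce $\fp_{2,1,0}$ and $\fp_{1,2,0}$ untouched. For any other signature $(k,\ell,g)$, a short case analysis using the relation $\ell + g = \ell^- + g^-$ derived from \eqref{eq:filt5}, together with the fact that only the three structure operations $\fp_{1,1,0},\fp_{2,1,0},\fp_{1,2,0}$ are nonzero, shows that no nonzero terms arise; hence $\fp^{\fm_2^+}$ is again a dIBL-structure of the displayed form.

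The homology identification then follows by inserting $\fm_1 = d$, $\fm_2$, $\fm_k = 0$ ($k \geq 3$) into formula \eqref{eq:twisted-diff}. The resulting expression for $\fp_{1,1,0}^{\fm_2^+}$ is precisely the transpose of the Hochschild boundary $b$ on the cyclic complex $B^{\text{\rm cyc}}A$ as in \cite{Lod}, so the homology of $(\wh\BC, \fp_{1,1,0}^{\fm_2^+})$ equals Connes' cyclic cohomology of $(A,d,\fm_2)$, as already anticipated in Remark \ref{rem:cyc-hom}. The one potentially subtle point is the sign matching in the first Maurer-Cartan condition, but this has been handled once and for all inside the proof of Proposition \ref{MCAinfty} via the careful computation culminating in \eqref{eq:twisted}; consequently no new sign analysis is needed, and this is the single step I would double-check most carefully when writing the proof out in full.
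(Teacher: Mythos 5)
Your proposal is correct and follows essentially the same route as the paper: reduce via Lemma~\ref{lem:MC} to the two equations, obtain the first from Proposition~\ref{MCAinfty}, observe that $\fp_{1,2,0}(\fm_2^+)=0$ because $\delta$ vanishes on $B_3^{\text{\rm cyc}*}A$, check the grading/filtration conditions, and conclude with Proposition~\ref{prop:MC} and Remark~\ref{rem:cyc-hom}. Your extra verification via~\eqref{eq:filt5} that the twisted structure retains the displayed dIBL form is a correct filling-in of a detail the paper leaves implicit.
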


%\begin{rem}
The special case of a DGA in Proposition~\ref{propIBLI2} will
suffice for the purposes of this paper. More generally, the preceding
computation shows that $\fp_{1,2,0}(\fm^+)=0$ holds if the $\fm_k$ are
``traceless'' in the sense that
\begin{equation*}%\label{trace}
    \sum_{a}\la
    e_a,\fm_k(x_1,\dots,x_{k_1},e^a,x_{k_1+1},\dots,x_{k_1+k_2})\ra = 0 
\end{equation*}
for all $x_i$ and all $k_1,k_2\geq 1$, $k=k_1+k_2+1\geq 3$. 
%\end{rem}
The actual condition we need is weaker:
%and is a vanishing of (\ref{trace}) and its `higher analogue'.
%Namely we define:

\begin{Definition}\label{Hderham}
A {\em solution of the genus zero master equation} for a  
cyclic $A_{\infty}$ algebra $(A,\langle\,,\rangle,\{\frak
m_k\}_{k=1}^{\infty})$ 
%is said to have {\em Hodge to de Rham degeneration 
%property} if there exist 
is a sequence of elements $\frak m_{(\ell)}^+ \in \wh{E_\ell}(B^{\text{\rm
    cyc}*}A)[2-n]$ for $\ell=1,2,\dots$ such that
\begin{itemize}
\item $\frak m_{(1)}^+$ coincides with $\frak m^+$ defined
  by~\eqref{defofm+}, and
%induced by the $A_{\infty}$ structure.
\item
the following {\em Batalin-Vilkovisky master equation} is satisfied:
\begin{equation}\label{MEq}
\fp_{1,1,0}\frak m_{(\ell)}^+ + \,\,\widehat{\fp}_{1,2,0}(\frak m_{(\ell-1)}^+) +
\frac{1}{2}\sum_{i=1}^\ell \widehat{\fp}_{2,1,0}(\frak m_{(i)}^+,\frak m_{(\ell-i+1)}^+) = 0.
\end{equation}
\end{itemize}
%$\{\frak m_{(n)}^+\}_{n=2}^{\infty}$ is called a
\end{Definition}

\begin{Remark}\label{Remark127}
\begin{enumerate}[\rm (1)]
\item 
See Barannikov-Kontsevich \cite{BaKo}, Baranikov \cite{Bar1,Ba15}, 
Costello \cite{Cos}, Kontsevich-Soibelman \cite{KoSo02}
for related results. The
authors thank B. Vallette for a remark which is closely related to this point.
\item 
In the context of symplectic field theory, equation $(\ref{MEq})$
corresponds to equation $(16)$ in~\cite{CL2}
and to equation $(44)$ in \cite{EGH00}. It also coincides with
the genus zero case of equation $(5.5)$ in \cite{Ba15}.
\item
The relation between the ``traceless'' property and 
equation (\ref{MEq}) has appeared in \cite[Theorem 5]{Bar2}.
\item 
As will be discussed elsewhere, the second term $\frak m_{(2)}^+$ of the 
solution of the genus zero master equation is 
related to the Hodge to de Rham degeneration in Lagrangian 
Floer theory; see~\cite{KoSo02}.
The role of the solution of the genus zero master equation in Lagrangian
Floer theory is discussed at the end of the Introduction; see also
Remark~$\ref{filteredcase}$.  
\end{enumerate}
%\marginpar{I dropped the rest of the remark (Kai)}
%the opeartors such as 
%$\frak m_{(n)}^+ : E_nB(\Omega(L)) \to \Lambda_0$ can be defined 
%in the following way.
%Note $\frak m^+=\frak m_{(1)}^+$ that define filtered $A_{\infty}$
%structure on $H(L)$ in 
%\cite{FOOO06}  is obtained by using the moduli space of 
%pseudo-holomorphic discs which bounds the given Lagrangian submanifold $L$.
%The Hodge to de Rham degeneration data $\frak m_{(n)}^+$ 
%then is obtained in a similar way 
%by using the moduli space of pseudo-holomorphic maps from 
%genus $0$ Riemann surfaces with $n$ boundary components, 
%so that the boundaries are mapped our Lagrangian submanifold 
%$L$. 
%The equation $(\ref{MEq})$ will be obtained 
%by using the similar master equation that is 
%satisfied by the moduli space of 
%such Riemann surfaces (without maps).
%See for example \cite{Cos} Theorem $1$ for such equation.
%Using the moduli space from bordered Riemann surface with higher genus we can enhance such 
%solutions (so that it include the terms with positive $\tau$ exponent). 
%This is the way the result of this paper will be applied 
%in Lagrangian Floer theory, near future.
\end{Remark}

It is straightforward to check that equation (\ref{MEq}) is equivalent to the 
condition that $\fm^+=\{\frak m_{(\ell)}^+\}_{\ell=1}^{\infty}$ is a 
Maurer-Cartan element of the ${\rm dIBL}$-algebra $\bigl(\BC=B^{{\text{\rm cyc}}*}A[2-n],
      \fp_{1,1,0},\fp_{1,2,0},\fp_{2,1,0}\bigr)$, 
so we have the following generalization of Proposition \ref{propIBLI2}:

\begin{Proposition}\label{propIBLI3}
Let $(A,\langle\,,\,\rangle,\{\frak m_k\}_{k=1}^{\infty})$ 
be a cyclic $A_{\infty}$-algebra with a solution of the 
genus zero master equation $\{\frak m_{(\ell)}^+\}_{\ell=1}^{\infty}$. Then $\fm^+=\{\frak
m_{(\ell)}^+\}_{\ell=1}^{\infty}$ defines a Maurer-Cartan element of
$\bigl(\BC=B^{\text{\rm cyc}*}A[2-n], \fp_{1,1,0},\fp_{1,2,0},\fp_{2,1,0}\bigr)$.
\end{Proposition}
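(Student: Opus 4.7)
The plan is to compute the Maurer-Cartan equation $\wh{\fp}(e^{\fm^+})=0$ directly, extending the argument of Proposition \ref{MCAinfty}. View $\fm^+=\sum_{\ell\ge 1}\fm^+_{(\ell)}\hbar^{-1}$ as (the data of) a filtered $\IBL_\infty$-morphism from the trivial $\IBL_\infty$-algebra $\mathbf{0}$ to $\BC$ whose only nonvanishing components are the $(0,\ell,0)$-terms $\fm^+_{(\ell)}$. Lemma \ref{lem:filt1} then gives an explicit expression for each signature $(0,L,G)$-component of $\wh{\fp}(e^{\fm^+})$ as a sum over connected gluings of $r$ input surfaces of signatures $(0,\ell_i,0)$ to a single instance of some $\fp_{k^-,\ell^-,g^-}\in\{\fp_{1,1,0},\fp_{1,2,0},\fp_{2,1,0}\}$.

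First I would enumerate the admissible gluings. The constraints $\sum s_i=k^-$, $\ell^-+\sum(\ell_i-s_i)=L$, and additivity of the Euler characteristic (with all genera zero on the input side and in $\fp_{k^-,\ell^-,g^-}$) together force the relation $r=k^--G$; combined with $r\ge 1$, only $r=1$ or $r=2$ occur. Solving these in each case, one finds that the $(0,L,0)$-component receives precisely three types of contribution:
\begin{enumerate}[\rm(i)]
\item $\hat{\fp}_{1,1,0}\fm^+_{(L)}$ from $(k^-,\ell^-,g^-)=(1,1,0)$ with $r=1$;
\item $\hat{\fp}_{1,2,0}\fm^+_{(L-1)}$ from $(1,2,0)$ with $r=1$;
\item $\tfrac12\sum_{\ell_1+\ell_2=L+1}\hat{\fp}_{2,1,0}\circ_{1,1}\bigl(\fm^+_{(\ell_1)}\odot\fm^+_{(\ell_2)}\bigr)$ from $(2,1,0)$ with $r=2$, $s_1=s_2=1$.
\end{enumerate}

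Next I would match this against (\ref{MEq}) with $\ell=L$. Under the convention that $\hat{\fp}_{2,1,0}(\fm^+_{(i)},\fm^+_{(\ell-i+1)})$ denotes the cross composition $\hat{\fp}_{2,1,0}\circ_{1,1}(\fm^+_{(i)}\odot\fm^+_{(\ell-i+1)})$, the three terms (i)--(iii) are exactly those appearing in (\ref{MEq}); the factor $\tfrac12$ in (iii) comes from the $1/r!=1/2!$ in formula \eqref{eq:filt1}, and the reindexing $i\leftrightarrow L+1-i$ turns the unordered sum over $\{\ell_1,\ell_2\}$ into the sum $\tfrac12\sum_{i=1}^{L}$ of (\ref{MEq}). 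This yields the equivalence between the vanishing of the $(0,L,0)$-component of $\wh{\fp}(e^{\fm^+})$ for every $L\ge 1$ and the full collection of genus-zero master equations.

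The main obstacle is the sign verification in step (iii): one must check that the Koszul signs introduced by symmetric-group reordering in the definition \eqref{eq:wedge} of $\odot$, together with the signs built into $\fp_{2,1,0}$ via \eqref{eq:p-mu}, combine to reproduce the signs in (\ref{MEq}). However, this is a straightforward adaptation of the sign computation performed after \eqref{eq:twisted} in the proof of Proposition \ref{MCAinfty}, using only the cyclic symmetry properties recorded in Lemma \ref{lem:mu-delta}, and introduces no new conceptual difficulty.
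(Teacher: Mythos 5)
Your reduction of the Maurer--Cartan equation to its connected components via Lemma \ref{lem:filt1} is the right framework, and your identification of the three genus-zero contributions (i)--(iii), including the combinatorial factor $\tfrac12$ from $1/r!$ and the reindexing against \eqref{MEq}, correctly matches the $(0,L,0)$-component of $\wh{\fp}(e^{\fm^+})$ with the genus zero master equation. Since the paper disposes of this proposition with the single phrase ``it is straightforward to check'', your elaboration is the natural one and there is no competing argument in the text to compare against.

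There is, however, a gap at the final step. Your own constraint $r=k^--G$ admits, for $(k^-,\ell^-,g^-)=(2,1,0)$, the solution $r=1$, $s_1=2$, $G=1$: both inputs of $\fp_{2,1,0}$ are fed from the \emph{same} factor $\fm^+_{(L+1)}\in\wh E_{L+1}\BC$ (possible as soon as $L+1\ge 2$), producing the self-contraction $\hat\fp_{2,1,0}\circ_2\fm^+_{(L+1)}$ as the $(0,L,1)$-component of $\wh{\fp}(e^{\fm^+})$ (the $\wh E_L$-part of the coefficient of $\hbar^{0}$ rather than $\hbar^{-1}$). Note that this term has grading degree $-1$ while the genus-zero terms have degree $2(n-3)-1$, so no cancellation between the two families can occur. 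The Maurer--Cartan equation $\wh{\fp}(e^{\fm^+})=0$ requires \emph{all} components to vanish, and the vanishing of these genus-one components is neither one of the equations \eqref{MEq} nor a formal consequence of them. What you have actually proved is that \eqref{MEq} is equivalent to the vanishing of the genus-zero part of $\wh{\fp}(e^{\fm^+})$; to conclude the proposition you must in addition establish $\hat\fp_{2,1,0}\circ_2\fm^+_{(\ell)}=0$ for all $\ell\ge 2$, or make explicit a reading of the master equation under which these self-gluing terms are accounted for. At minimum this point needs to be flagged rather than passed over, since your closing sentence asserts an equivalence with the full Maurer--Cartan condition that the computation as written does not deliver.
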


Finally, we consider the induced structure on cohomology. 
Let $(A,\langle \,,\,\rangle,d,\frak m_2)$ be a cyclic DGA
and $H = H(A,d)$ its cohomology. The inner product descends to
cohomology, so by Corollary~\ref{cor:structureexists} it induces a
$G$-gapped filtered \text{\rm IBL}-structure
$\{\fq_{1,1,0}=0,\fq_{1,2,0},\fq_{2,1,0}\}$ on $B^{{\text{\rm
        cyc}}*}H[2-n]$. Moreover, due to Lemma~\ref{lem:B} and
Corollary~\ref{cor:structureexists} (with $B\cong H$), 
there exists a filtered $\IBL_{\infty}$-homotopy equivalence 
$$
    \frak f : (B^{{\text{\rm cyc}}*}A)[2-n] \to (B^{{\text{\rm cyc}}*}H)[2-n]
$$
such that $\frak f_{1,1,0} : \wh{B^{{\text{\rm cyc}}*}A}[2-n] \to \wh{B^{{\text{\rm cyc}}*}H}[2-n]$ 
is the map induced by the dual of the inclusion $i:H\cong B \to
A$ from Lemma~\ref{lem:B}. According to Lemma~\ref{lem:pushforward},
the Maurer-Cartan element $\fm_2^+$ on $B^{{\text{\rm cyc}}*}A[2-n]$ from
Proposition~\ref{propIBLI2} can be pushed forward via $\ff$ to a
Maurer-Cartan element $\ff_*\fm_2^+$ on $B^{{\text{\rm cyc}}*}H[2-n]$. By
Proposition~\ref{prop:MC}, this induces a twisted filtered
$\IBL_\infty$-structure 
$$
    \fq^{\ff_*\fm_2^+} =
    \{\fq^{\ff_*\fm_2^+}_{1,\ell,g},\fq_{2,1,0}\}_{\ell\geq 1,g\geq 0}
$$
on $B^{\text{\rm cyc}*}H[2-n]$. By Proposition~\ref{prop:MC-homotopy}, 
this structure is homotopy equivalent to the twisted filtered
$\text{\rm dIBL}$-structure
$\fp^{\fm_2^+}=\{d+\fp_{2,1,0}(\fm_2^+,\cdot), \fp_{1,2,0},\fp_{2,1,0}\}$ 
on $B^{\text{\rm cyc}*}A[2-n]$ in Proposition~\ref{propIBLI2}.
The situation of Proposition \ref{propIBLI3} can be 
handled in the same way.

Thus we have proved the following theorem, whose first part corresponds to 
Theorem~\ref{thm:existoncyc2} in the Introduction.

\begin{Theorem}\label{homologyBLI}
\begin{enumerate}[\rm(a)]
\item Let $(A,\langle\,,\,\rangle,d,\frak m_2)$ be a cyclic DGA with cohomology
$H=H(A,d)$. Then $B^{\text{\rm cyc}*}H[2-n]$ carries a $G$-gapped
  filtered $\IBL_\infty$-structure 
which is homotopy equivalent to the twisted filtered $\text{\rm dIBL}$-structure
$\fp^{\fm_2^+}$ on $B^{\text{\rm cyc}*}A[2-n]$ in Proposition~$\ref{propIBLI2}$. 
In particular, its homology equals Connes' version of cyclic cohomology of
$(A,d,\fm_2)$. 

\item 
More generally, let $(A,\langle\,,\,\rangle,\{\frak m_k\}_{k=1}^{\infty})$ 
be a cyclic $A_{\infty}$-algebra with a solution of the genus zero master equation 
$\{\frak m_{(\ell)}^+\}_{\ell=2}^{\infty}$ and cohomology
$H=H(A,\frak m_1)$. Then $B^{\text{\rm cyc}*}H[2-n]$ carries a filtered 
$\IBL_\infty$-structure which is homotopy equivalent to
the twisted filtered $\IBL_\infty$-structure 
$\fp^{\fm^+}$ on $B^{\text{\rm cyc}*}A[2-n]$.
%In particular, its homology equals the cyclic cohomology of
%$(A,\{\frak m_k\}_{k=1}^{\infty})$. 
\end{enumerate}
\end{Theorem}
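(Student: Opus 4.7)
The plan is to assemble Theorem~\ref{homologyBLI} directly from the machinery developed in the preceding sections: the filtered homotopy equivalence between $B^{\text{cyc}*}A[2-n]$ and $B^{\text{cyc}*}H[2-n]$ from Corollary~\ref{cor:structureexists}, the interpretation of $\fm^+$ as a Maurer-Cartan element from Propositions~\ref{propIBLI2} and~\ref{propIBLI3}, and the pushforward/twisting formalism of Section~\ref{sec:MC}.

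Concretely, first I would invoke Lemma~\ref{lem:B} to choose a subcomplex $B\subset\ker d\subset A$ with $\ker d=\im d\oplus B$ on which $\langle\,,\,\rangle$ is nondegenerate and for which the projection $\Pi:A\to A$ and homotopy $G$ with $dG+Gd=\Pi-\id$ satisfy the symmetry conditions~\eqref{eq:Pi},~\eqref{eq:G}. Identifying $B$ with $H=H(A,d)$, Corollary~\ref{cor:structureexists} then supplies the strict $\N_0$-gapped filtered $\text{dIBL}$-structures $\{\fp_{1,1,0},\fp_{1,2,0},\fp_{2,1,0}\}$ on $B^{\text{cyc}*}A[2-n]$ and $\{\fq_{1,1,0}=0,\fq_{1,2,0},\fq_{2,1,0}\}$ on $B^{\text{cyc}*}H[2-n]$, together with a strict $\N_0$-gapped filtered $\IBL_\infty$-homotopy equivalence $\ff:B^{\text{cyc}*}A[2-n]\to B^{\text{cyc}*}H[2-n]$ whose leading term $\ff_{1,1,0}$ is induced by the dual of the inclusion.

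Next, in case (a), Proposition~\ref{propIBLI2} identifies $\fm_2^+$ as a Maurer-Cartan element of the filtered $\text{dIBL}$-algebra $(B^{\text{cyc}*}A[2-n],\fp)$; in case (b), Proposition~\ref{propIBLI3} identifies $\fm^+=\{\fm_{(\ell)}^+\}_{\ell\geq 1}$ as such. The filtration condition is satisfied because each $\fm_{(\ell)}^+$ lives in sufficiently high cyclic word length (at least $3$ for $\ell=1$), giving the strict inequalities required in Definitions~\ref{def:IBL-fil} and~\ref{def:morphifil}. Lemma~\ref{lem:pushforward} then produces a pushed-forward Maurer-Cartan element $\ff_*\fm^+$ on $B^{\text{cyc}*}H[2-n]$, and Proposition~\ref{prop:MC} twists the $\text{dIBL}$-structure there to produce a filtered $\IBL_\infty$-structure $\{\fq^{\ff_*\fm^+}_{k,\ell,g}\}$ (here $k=2$ terms come only from $\fq_{2,1,0}$, while all $k=1$ terms involve the Maurer-Cartan element). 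Proposition~\ref{prop:MC-homotopy} then yields the twisted morphism $\ff^{\fm^+}:(B^{\text{cyc}*}A[2-n],\fp^{\fm^+})\to(B^{\text{cyc}*}H[2-n],\fq^{\ff_*\fm^+})$, and since $\ff$ is a strict gapped filtered $\IBL_\infty$-homotopy equivalence (in particular $\ff_{1,1,0}$ is a chain homotopy equivalence), the last assertion of Proposition~\ref{prop:MC-homotopy} upgrades $\ff^{\fm^+}$ to a gapped filtered homotopy equivalence between the twisted structures.

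Finally, for the identification of the homology in case (a), I would appeal to Remark~\ref{rem:cyc-hom}: formula~\eqref{eq:twisted-diff} shows that the twisted differential $\fp_{1,1,0}^{\fm_2^+}$ on $B^{\text{cyc}*}A[2-n]$ is precisely the dual of the Hochschild differential on Connes' cyclic complex, so its cohomology is Connes' cyclic cohomology of $(A,d,\fm_2)$; since $\ff^{\fm_2^+}_{1,1,0}$ is a chain homotopy equivalence between $(B^{\text{cyc}*}A[2-n],\fp_{1,1,0}^{\fm_2^+})$ and $(B^{\text{cyc}*}H[2-n],\fq^{\ff_*\fm_2^+}_{1,1,0})$, the homology of the latter agrees with Connes' cyclic cohomology of $A$. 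The main technical point to verify carefully is the bookkeeping of the filtration and gappedness under pushforward and twisting: one needs that the conditions $\|\ff_*\fm_{\ell,g}\|\geq\gamma\chi_{0,\ell,g}$ (with strict inequality on the excluded triples) follow from the corresponding conditions on $\fm^+$ and the filtration estimate $\|\ff_{k,\ell,g}\|\geq 2\chi_{k,\ell,g}$, which is exactly the content of Lemma~\ref{lem:filt1} applied in this setting; the gapped structure is inherited because all operations and all components of $\fm^+$ have integer word-length defects, keeping everything within the discrete submonoid $G\subset\R_{\geq 0}$.
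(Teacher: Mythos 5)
Your proposal is correct and follows essentially the same route as the paper: Lemma~\ref{lem:B} plus Corollary~\ref{cor:structureexists} for the homotopy equivalence $\ff$, Propositions~\ref{propIBLI2}/\ref{propIBLI3} for the Maurer--Cartan element, Lemma~\ref{lem:pushforward} and Proposition~\ref{prop:MC} for the pushforward and twisting, Proposition~\ref{prop:MC-homotopy} for the homotopy equivalence of the twisted structures, and Remark~\ref{rem:cyc-hom} for the identification with Connes' cyclic cohomology. The only cosmetic quibble is that the convergence of the pushed-forward Maurer--Cartan element is handled via Lemma~\ref{lem:filtered-comp} (through Lemma~\ref{lem:pushforward}) rather than Lemma~\ref{lem:filt1}, but this does not affect the argument.
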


\begin{rem}\label{rem:MC}
The construction in Section~$\ref{sec:Cyclic-cochain}$ gives the
following explicit description of the Maurer-Cartan element
$\ff_*\fm_2^+$ on $B^{\text{\rm cyc}*}H[2-n]$, where $H$ is the
cohomology of a cyclic DGA. Denote by $RG^3_{k,\ell,g}$ the
set of isomorphism classes of ribbon graphs of signature $(k,\ell,g)$
all of whose interior vertices are {\em trivalent}. Then 
$$
   (\ff_*\fm_2^+)_{\ell,g} = \sum_{k=1}^\infty\sum_{\Gamma\in
     RG^3_{k,\ell,g}}\fn_\Gamma\in \wh{E_\ell} (B^{\text{\rm cyc}*}H)[2-n],
$$
where the numbers 
$$
   \fn_\Gamma(x^1_1\cdots x^1_{s_1}\otimes\dots\otimes x^\ell_1\cdots
x^\ell_{s_\ell}) 
$$
for $x^b_j\in H$ are defined as in Section~$\ref{sec:Cyclic-cochain}$
using the following assignments (compare Figure~\ref{fig:MCelement}):

%
% FIG
%
\begin{figure}[h]
 \labellist
%  \small\hair 2pt
  \pinlabel $x_1^1$ [b] at 169 126
  \pinlabel $x_3^1$ [b] at 205 151
  \pinlabel $x_2^1$ [b] at 174 171
  \pinlabel $x_1^2$ [b] at 383 153
  \pinlabel $x_3^2$ [b] at 436 153
  \pinlabel $x_2^2$ [b] at 415 171
  \pinlabel $x_1^3$ [b] at 599 170
  \pinlabel $x_2^3$ [b] at 425 306
  \pinlabel $x_3^3$ [b] at 193 315
  \pinlabel $x_4^3$ [b] at 76 274
  \pinlabel $x_5^3$ [t] at 96 32
  \pinlabel $x_6^3$ [t] at 271 1
  \pinlabel $\frak m_2^+$ [l] at 169 252
  \pinlabel $\frak m_2^+$ [b] at 278 170
  \pinlabel $\frak m_2^+$ [t] at 322 158
  \pinlabel $\frak m_2^+$ [l] at 426 260
  \pinlabel $\frak m_2^+$ [r] at 128 83
  \pinlabel $\frak m_2^+$ [l] at 240 80
 \endlabellist
  \centering
  \includegraphics[scale=.53]{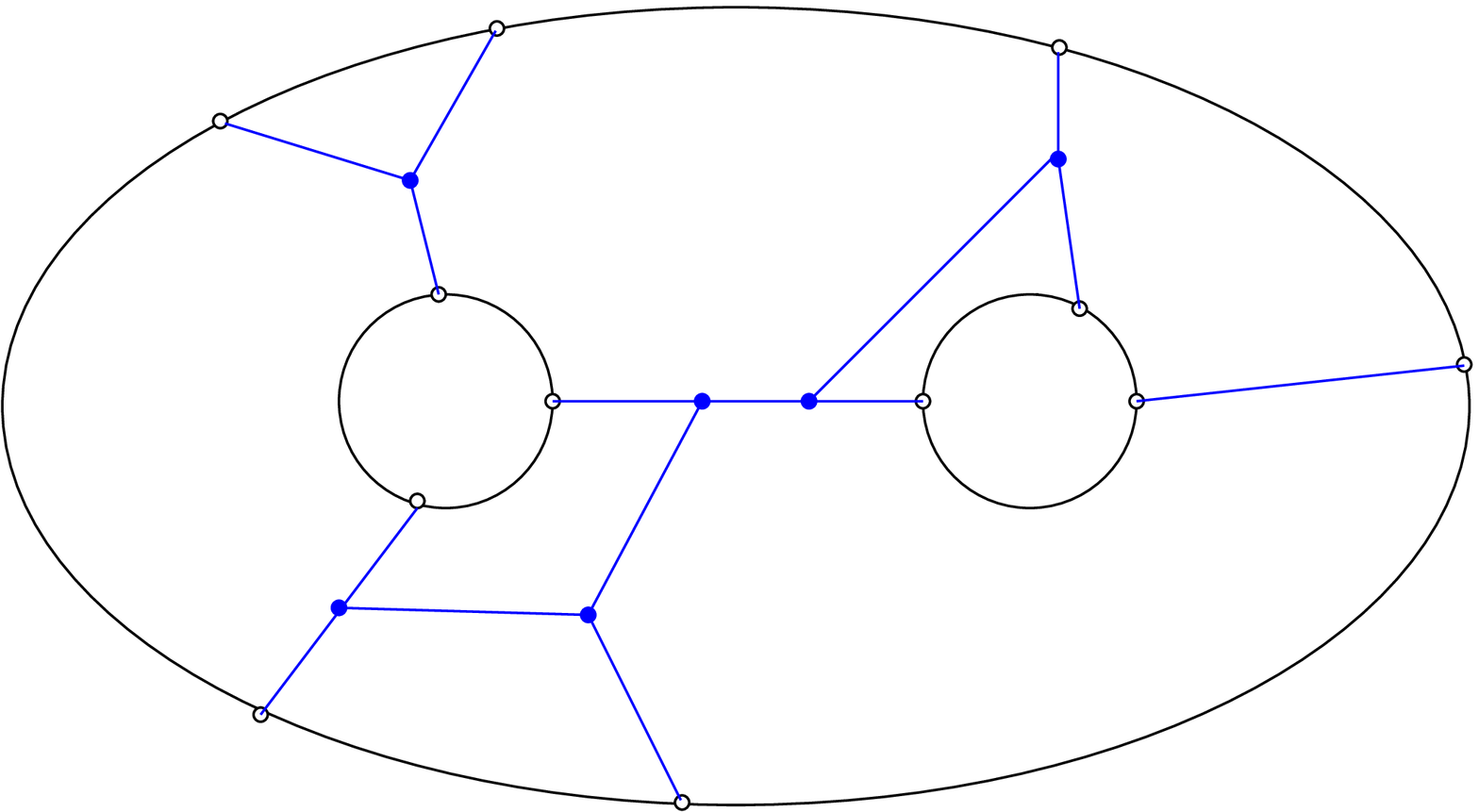}
 \caption{An example of a configuration contributing to $(\ff_*\fm_2^+)_{3,0}$.}
 \label{fig:MCelement}
\end{figure}

\begin{itemize}
\item To the $j$-th exterior vertex on $\p_b\Sigma$ we assign
   $x^b_j$. 
\item To each interior vertex of $\Gamma$ we assign $\fm_2^+$.  
\item To each interior edge we assign the element dual to the map
  $x\otimes y\mapsto \la Gx,y\ra$. 
\end{itemize}
There is a similar description in the case of a cyclic $A_{\infty}$ 
algebra with a solution of the genus zero master equation. 
\end{rem}
\begin{Remark}
The existence of a filtered $\IBL_\infty$ -structure on $B^{\rm{cyc} *} 
H[2-n]$
in Theorem \ref{homologyBLI} (a) also follows from \cite[Theorem 2]{Bar2}
by using Remark \ref{Remark127} and Proposition \ref{prop:MC}.
\end{Remark}

\begin{Remark}\label{filteredcase}
Let $\frak m_k$ be a cyclic {\em filtered} $A_{\infty}$ algebra $A$
defined over the universal Novikov ring $\Lambda_0$ with ground field
$\K$ (see \cite[Introduction]{Fuk10} for its definition). 
We decompose the differential as 
$$
\frak m_1 = \overline{\frak m}_1 + \frak m_{+,1}
$$
where $\frak m_{+,1} \equiv 0 \mod \Lambda_+$ 
and $\overline{\frak m}_1$ is induced from 
a $\K$-linear map $\overline C \to \overline C$.
Here $\overline C$ is a $\K$-vector space and 
$C = \overline C \,\widehat{\otimes}_\K\, \Lambda_0$.
We do not assume $\frak m_0 = 0$, but we do assume
$\frak m_0 \equiv 0 \mod \Lambda_+$ (see \cite{FOOO06,Fuk10}). 

We define $\frak m^+$ by using $\frak m_{k}$, $k> 1$, and 
$\frak m_{1,+}$ in the same way as in $(\ref{defofm+})$.
Let $\frak m^+_{(\ell)}$ be a solution of the genus zero master equation.
We assume $\frak m^+_{(\ell)} \equiv 0 \mod  \Lambda_+$
for $\ell\ge 2$.
It is easy to see that $\frak m_{(1)}^+ = \frak m^+$
together with $\frak m^+_{(\ell)}$, $\ell\ge 2$, defines 
a Maurer-Cartan element of the filtered ${\rm dIBL}$-algebra obtained from 
$(\overline C,\langle\,,\,\rangle,\overline{\frak m}_1)
 \,\widehat{\otimes}_\K\,  \Lambda_0$.

We consider $H := H(C,\overline{\frak m}_1)$.
(Note that $H$ is in general different from the homology of $(C,{\frak m}_1)$, 
even in the case $\frak m_0 = 0$.)
Now we apply Theorem  $\ref{thm:homotopyequiv}$ to obtain 
a twisted filtered $\IBL_{\infty}$-structure on 
$B^{\text{\rm cyc}}H$.
\end{Remark}

%%%%%%%%%%%%%%%%%%%%%%%%%%%%%%%%%%%%%%%%%%%%%%%%%%%%%%%%%%%%%%%%
\section{The dual cyclic bar complex of the de Rham complex}
\label{de Rham}
%%%%%%%%%%%%%%%%%%%%%%%%%%%%%%%%%%%%%%%%%%%%%%%%%%%%%%%%%%%%%%%%

In this section we prove Proposition~\ref{prop:cycIBLde-intro} and
Theorem~\ref{homotopyequiv-dR-intro} from the Introduction and discuss
Conjecture~\ref{deRhamconj-intro}. Throughout this section, we work
over the ring $R=\R$. 
\par
{\bf Fr\'echet ${\rm IBL}_\infty$-algebras. }
We start by recalling some basic facts about Fr\'echet spaces, see
e.g.~\cite{Rud73}. A {\em Fr\'echet space} $X$ is a topological vector
space whose topology is defined by a countable family of semi-norms
$\|\cdot\|_i$, $i\in\N$, such that $X$ is complete with respect to the
metric 
$$
   {\rm dist}(x,y) = \sum_{i=1}^\infty2^{-i}\frac{\|x-y\|_i}{1+\|x-y\|_i}.
$$ 
The basic example is the space $C^\infty(M,\R)$ of smooth functions on
a closed manifold $M$ with the semi-norms $\|f\|_k=\max_M|D^kf|$,
where $D^kf$ is the total $k$-th covariant derivative with respect to
some connection. In the same way the space $\Om(M)$ of smooth
differential forms also becomes a Fr\'echet space. 

According to~\cite{Sch53}, $\Om(M)$ belongs to a special class 
of  Fr\'echet spaces called nuclear spaces and 
two Fr\'echet spaces $X,Y$ have a natural tensor product 
$$   
   X\,\widehat\otimes \,Y
$$  
as a Fr\'echet space in case they are nuclear spaces. It is defined
as a suitable completion of the algebraic tensor product $X\otimes Y$
and characterized by the usual universal property. 
%\marginpar{\tiny I am not sure whether Grotendieck of Schwartz are
%  best reference. Kenji Dec 25.} 
For two closed manifolds $M,N$ the canonical
inclusion $\Om(M)\otimes\Om(N)\subset\Om(M\times
N)$ (via the wedge product of differential forms) induces an isomorphism
$$
   \Om(M)\,\widehat\otimes\,\Om(N)\cong\Om(M\times N). 
$$
%\marginpar{\tiny reference for this?}
Consider now a graded Fr\'echet space $C=\bigoplus_{k\in\Z}C^k$ and
define the degree shifted space $C[1]$ as usual.  
The action of the symmetric group permuting the factors of the
algebraic tensor product $C[1]\otimes\cdots\otimes C[1]$ extends to
the completion, so we can define the {\em completed symmetric product} 
$$
   \wh E_kC\subset C[1]\wh\otimes\cdots\wh\otimes C[1]
$$
as the (closed) subspace invariant under the action of the symmetric
group with the usual signs (cf.~Remark~\ref{rem:tensor}). We set
$$
   \wh EC := \bigoplus_{k\geq 1}\wh E_kC. 
$$
%\marginpar{Use different notation? (Kai)}
Note that the meaning of $\wh E_kC$ and $\wh EC$ in this section
differs from that in Section~\ref{sec:filter}. 
Consider a series of continuous linear maps 
$$
    \frak p_{k,\ell,g} : \wh E_k C \to \wh E_{\ell} C,\qquad k,\ell \ge 1,\
    g\geq 0
$$ 
of degree 
$$
    |\frak p_{k,\ell,g}| = -2d(k+g-1)-1
$$ 
for some fixed integer $d$ and define the operator
$$
    \hat\fp := \sum_{k,\ell=1}^\infty\sum_{g=0}^\infty
    \hat\fp_{k,\ell,g} \hbar^{k+g-1}\tau^{k+\ell+2g-2}:
    \wh EC\{\hbar,\tau\}\to \wh EC\{\hbar,\tau\}
$$
as before. 

\begin{defn}\label{def:Frechet-IBL}
We say that $(C,\{\frak p_{k,\ell,g}\}_{k,\ell \geq 1,g\geq 0})$ is a 
{\em Fr\'echet $\IBL_\infty$-algebra of degree $d$} if 
\begin{equation*}
    \hat\fp\circ \hat\fp=0.
\end{equation*}
\end{defn}

The notions of Fr\'echet $\IBL_\infty$-morphisms and homotopies are
defined in the obvious way, requiring all maps to be continuous linear
maps between completed symmetric products. 
%\marginpar{\tiny Check whether this is true!}
All the results from Sections~\ref{sec:def} to~\ref{sec:MC} carry over
to the Fr\'echet case. 

\begin{Remark}
If $C$ is finite dimensional (or more generally, filtered with finite
dimensional quotient spaces), then a Fr\'echet
$\IBL_\infty$-structure on $C$ is just an
$\IBL_\infty$-structure in the sense of
Definition~$\ref{def:IBL}$. 
\end{Remark}

{\bf The de Rham complex. }
Let $M$ be a closed oriented manifold of dimension $n$ and
$(\Omega(M),d,\wedge)$ its 
de Rham complex. Here $d$ is the exterior differential and $\wedge$
the wedge product on differential forms. Together with integration
over $M$ this is {\em almost} a cyclic DGA in the sense of Remark~\ref{ex:DGA}, 
since we can define  a ``cyclic $A_\infty$-structure'' with inner product of
degree $-n$ and with $\fm_k=0$ for $k\geq 3$ by setting  
\begin{equation}\label{intpairing}
\begin{aligned}
   \frak m_1(u) &:= du, \cr
   \frak m_2(u,v) &:= (-1)^{\deg u}u \wedge v, \cr
   \langle u,v\rangle &:= (-1)^{\deg u}\int_{M} u \wedge v. 
\end{aligned}
\end{equation}
%\begin{align}\label{intpairing}
%   \frak m_1(u) &:= (-1)^{\deg u}du, \cr
%   \frak m_2(u,v) &:= (-1)^{\deg u(\deg v +1)}u \wedge v, \cr
%   \langle u,v\rangle &:= (-1)^{\deg u(\deg v +1)}\int_{M} u \wedge
%   v. 
%\end{align}
%becomes a cyclic cochain complex in the sense of Definition~\ref{cyccochaincmx}.
%\marginpar{\tiny $\frak m_1(u) = (-1)^{\deg u}du$ 
%is inconsistent with Definition 10.1.
%This sign is a convention of FOOO book. I believe this is correct one.}
%
%\begin{Remark}
%Note that in Definition $\ref{cycAinfty}$ we had $\frak m_1 =d$. 
%In this section, $d$ will denote de Rham differential, which {\it differs 
%from $\frak m_1$ by a sign}. All formulas in in
%Sections~\ref{sec:Cyclic-cochain} and~\ref{sec:Cyclic} should hence be
%read with $\frak m_1$ in place of $d$.  
%\end{Remark}
We want to apply the arguments of Sections~\ref{sec:Cyclic-cochain}
and~\ref{sec:subcomplex} to this situation. 
However, the de Rham complex is infinite dimensional 
and the pairing in (\ref{intpairing}) is not perfect.
In fact, the dual to the space of smooth forms is 
the space of currents. In the following we will explain a method
to overcome this difficulty.
%The discussion is not yet complete and we hope to work it out in the future. 
In the rest of this section, we will omit some sign computations. 

\begin{Definition}
A homomorphism 
$$
   \varphi : \underbrace{\Om(M)[1] \otimes \cdots \otimes
   \Om(M)[1]}_{\text{$k$ times}} \to \R
$$
is said to have {\em smooth kernel} if there exists a 
smooth differential form $\frak K_{\varphi}$ on
$M^k=M\times\dots\times M$ such that for $u_i \in \Omega(M)$ we have
\begin{equation}\label{eq:kernel}
   \varphi(u_1 \otimes \cdots \otimes u_k) 
   = \int_{M^k}  (u_1 \times \cdots \times u_k)
   \wedge \frak K_{\varphi}
\end{equation}
where $\times : \Omega(M) \otimes \Omega(M) \to \Omega(M^2)$ is the 
exterior wedge product.
We call $\frak K_{\varphi}$ the kernel of $\varphi$.
We write $B_k^*\Omega(M)_{\infty}$ for the subspace of such $\varphi$.
We set
$$
   B^{\text{\rm cyc}*}_k\Omega(M)_{\infty} := B_k^*\Omega(M)_{\infty}
   \cap B^{\text{\rm cyc}*}_k\Omega(M) 
$$
and
$$
   B^{\text{\rm cyc}*}\Omega(M)_{\infty} :=
   \bigoplus_{k\geq 1}B^{\text{\rm cyc}*}_k\Omega(M)_{\infty}.
$$
\end{Definition}

Note that the condition for an element of $B_k^*\Omega(M)_{\infty}$ to
belong to $B^{\text{cyc}*}_k\Omega(M)_{\infty}$ is equivalent to an
appropriate symmetry of the kernel with respect to cyclic permutation of
variables. 

\begin{Lemma}
The differential $\fp_{1,1,0}:B^{\text{\rm cyc}*}_k\Omega(M)\to
B^{\text{\rm cyc}*}_k\Omega(M)$ induced by $\fm_1=d$ preserves the
subspace $B^{\text{\rm cyc}*}\Omega(M)_{\infty}\subset
B^{\text{\rm cyc}*}_k\Omega(M)$.  
\end{Lemma}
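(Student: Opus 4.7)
The plan is to show that for $\varphi \in B^{\text{cyc}*}_k\Omega(M)_\infty$ with smooth kernel $\frak K_\varphi \in \Omega(M^k)$, the form $\fp_{1,1,0}\varphi = \dd\varphi$ again has a smooth kernel, obtained essentially by moving the exterior differential from the inputs $u_j$ onto $\frak K_\varphi$ via Stokes' theorem.

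First, I would recall the explicit formula
$$
(\dd\varphi)(u_1,\dots,u_k) = \sum_{j=1}^{k} (-1)^{|u_1|+\dots+|u_{j-1}|}\,
\varphi(u_1,\dots,u_{j-1},du_j,u_{j+1},\dots,u_k),
$$
insert the kernel representation \eqref{eq:kernel}, and rewrite each summand as an integral over $M^k$ of $(u_1\times\cdots\times du_j\times\cdots\times u_k)\wedge\frak K_\varphi$. The key identity is
$$
d(u_1\times\cdots\times u_k) = \sum_{j=1}^{k} (-1)^{\deg u_1+\dots+\deg u_{j-1}}\, u_1\times\cdots\times du_j\times\cdots\times u_k
$$
on $M^k$. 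Combined with the Leibniz rule
$$
d\bigl((u_1\times\cdots\times u_k)\wedge\frak K_\varphi\bigr)
= d(u_1\times\cdots\times u_k)\wedge\frak K_\varphi + (-1)^{\deg(u_1\times\cdots\times u_k)}\,(u_1\times\cdots\times u_k)\wedge d\frak K_\varphi,
$$
and Stokes' theorem on the closed manifold $M^k$ (for which $\int_{M^k} d(\cdot)=0$), this lets me rewrite $\sum_j \int_{M^k}(u_1\times\cdots\times du_j\times\cdots\times u_k)\wedge \frak K_\varphi$ as $\pm\int_{M^k}(u_1\times\cdots\times u_k)\wedge d\frak K_\varphi$ up to an overall sign $\epsilon$ depending only on the degrees involved.

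The second step is a sign bookkeeping check: I would verify that the sign from Stokes/Leibniz matches the sign $(-1)^{|u_1|+\dots+|u_{j-1}|}$ appearing in the definition of $\dd\varphi$, so that we end up with an identity of the form
$$
(\dd\varphi)(u_1,\dots,u_k) = \int_{M^k} (u_1\times\cdots\times u_k)\wedge\bigl(\epsilon\, d\frak K_\varphi\bigr).
$$
This identifies $\frak K_{\dd\varphi} = \epsilon\, d\frak K_\varphi$, which lies in $\Omega(M^k)$; hence $\dd\varphi \in B^{*}_k\Omega(M)_\infty$. The sign computation is the only nontrivial piece, but it is mechanical and only needs to be carried out once.

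Finally, I would verify cyclic symmetry: since $\varphi$ is cyclic, its kernel $\frak K_\varphi$ is invariant (up to the standard sign) under the $\Z_k$-action cyclically permuting the factors of $M^k$, and this symmetry is preserved by $d$ on $M^k$. Therefore $\frak K_{\dd\varphi}$ has the same symmetry, so $\dd\varphi \in B^{\text{cyc}*}_k\Omega(M)_\infty$, completing the argument. The main (and only) obstacle is keeping track of the signs; no analytic difficulty arises because $M$ is closed and everything in sight is smooth.
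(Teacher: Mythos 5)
Your proposal is correct and follows essentially the same route as the paper: insert the kernel representation, combine the sum over $j$ into $d(u_1\times\cdots\times u_k)\wedge\frak K_\varphi$, and use Stokes on the closed manifold $M^k$ to conclude that $\pm d\frak K_\varphi$ is a kernel for $\dd\varphi$. The paper is even terser (it leaves the signs as $\pm$ and does not spell out the cyclic-symmetry check), so your extra bookkeeping is only a more careful version of the same argument.
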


\begin{proof}
For $\varphi\in B^{\text{\rm cyc}*}_k\Omega(M)$ with kernel $\frak
K_{\varphi}$ we compute
\begin{align*}
   \fp_{1,1,0}(\varphi) (u_1 \otimes \cdots \otimes u_k) 
   &= \sum_i \pm \varphi(u_1 \otimes \cdots du_i\otimes\cdots \otimes u_k)
   \cr 
   & = \int_{M^k}  \sum_i \pm (u_1 \times \cdots du_i\times\cdots \times u_k)
   \wedge \frak K_{\varphi} \cr
   & = \pm\int_{M^k}  (u_1 \times \cdots \times u_k)
   \wedge d\frak K_{\varphi},
\end{align*}
so $\pm d\frak K_{\varphi}$ is a kernel for $\fp_{1,1,0}(\varphi)$.  
%For $i \le k-1$, we define
%$
%I_{k,i} : M^{k-1} \to M^k
%$
%by
%$$
%I_{k,i}(x_1,\cdots,x_{k-1}) = 
%(x_1,\cdots,x_i,x_i,\cdots,x_{k-1}).
%$$
%Let 
%$d_i : \Omega(L^k) \to \Omega(L^k)$ be the exterior differentiation 
%with respect to the $k$-th component. It is easy to see that if 
%$\frak K_{\varphi}$ is a kernel of $\varphi \in B^{\text{cyc}*}_k\Omega(M)_{\infty}$
%then 
%$$
%\sum \epsilon d_i\frak K_{\varphi} + \sum\epsilon' I_{k,i}^*\frak K_{\varphi}
%$$
%is a kernel of $\fp_{1,1,0}(\varphi)$ for an appropriate choice of the sign
%$\epsilon$, $\epsilon'$. Hence the lemma.
\end{proof}

We next define a completion of 
the algebraic tensor products $B^{\text{cyc}*}_k\Omega(M)_{\infty} \,\otimes \,
B^{\text{cyc}*}_{\ell}\Omega(M)_{\infty}$. Note that the assignment
$\varphi\mapsto\frak K_{\varphi}$ defines a canonical inclusion
\begin{equation}\label{Binfintensor1}
   B^{\text{cyc}*}_k\Omega(M)_{\infty} \subset \Omega(M^k).
\end{equation}
This induces an inclusion of the algebraic tensor product
%$$
%B^{\text{cyc}*}_k\Omega(M)_{\infty} \otimes 
%B^{\text{cyc}*}_{\ell}\Omega(M)_{\infty}.
%$$
%By 
%(\ref{Binfintensor1}) we have an inclusion
\begin{equation}\label{Binfintensor2}
   B^{\text{cyc}*}_k\Omega(M)_{\infty} \otimes
   B^{\text{cyc}*}_{\ell}\Omega(M)_{\infty} \subset \Omega(M^{k+\ell}).
\end{equation}
By the discussion above, the {\em completed tensor product}
$$
B^{\text{cyc}*}_k\Omega(M)_{\infty} \,\widehat\otimes \,
B^{\text{cyc}*}_{\ell}\Omega(M)_{\infty}
$$
as Fr\'echet spaces equals the closure of the image of the inclusion
(\ref{Binfintensor2}) 
with respect to the $C^{\infty}$ topology.
We define the completed tensor product among 
three or more $B^{\text{cyc}*}_k\Omega(M)_{\infty}$'s in the same way.

%The map $B^{\text{cyc}*}_k\Omega(M)_{\infty} \otimes 
%B^{\text{cyc}*}_{\ell}\Omega(M)_{\infty}
%\to B^{\text{cyc}*}_{\ell}\Omega(M)_{\infty} \otimes 
%B^{\text{cyc}*}_{k}\Omega(M)_{\infty}
%$
%defined by 
%$
%\sum_i x_i \otimes y_i \mapsto \sum_i  \pm y_i \otimes x_i
%$
%extends to the completion.
As above, we introduce the {\em completed symmetric product}
%\marginpar{\tiny Does quotient make sense?}
\begin{align*}
   &\widehat E_{m}\Bigl(B^{\text{cyc}*}\Omega(M)_{\infty}[2-n]\Bigr)
   \cr 
   &:= \Bigl(\bigoplus_{k_1,k_2,\dots,k_m\geq 1}
   B^{\text{cyc}*}_{k_1}\Omega(M)_{\infty}[3-n] 
   \widehat\otimes \dots \widehat\otimes
   B^{\text{cyc}*}_{k_m}\Omega(M)_{\infty}[3-n] \Bigr) \Bigl/\sim.
\end{align*}
By Remark~\ref{rem:tensor}, we can identify this quotient with the
subspace of elements in $B^{\text{cyc}*}_{k_1}\Omega(M)_{\infty} \widehat\otimes 
\dots \widehat\otimes B^{\text{cyc}*}_{k_m}\Omega(M)_{\infty}$
that are invariant under the action of the symmetric group on $m$
elements (with appropriate signs). In this way it is canonically
embedded into $\Omega(M^{k_1+\dots+ k_m})$. 
The following result corresponds to
Proposition~\ref{prop:cycIBLde-intro} from the introduction. 

%\begin{Remark}
%The space $B^{\text{\rm cyc}*}_k\Omega(M)_{\infty}\subset \Omega(M^k)$
%inherits from the space of smooth differential forms the structure of
%a Fr\'echet space.
%% since the space of smooth differential forms has one.
%According to~\cite{Sch53}, there exists a natural tensor product 
%$$
%B^{\text{\rm cyc}*}_k\Omega(M)_{\infty} \,\widehat\otimes \,
%B^{\text{\rm cyc}*}_{\ell}\Omega(M)_{\infty}
%$$
%as a Fr\'echet space which coincides with the closure of the
%algebraic tensor product in $\Omega(M^{k+\ell})$ as defined above.
%\end{Remark}
%
%In the the definition of dIBL algebra we replace 
%(algebraic) tensor product $\otimes$ by $\widehat\otimes$, 
%and then obtain the definition of {\em Fr\'echet dIBL algebra}.

\begin{prop}\label{prop:cycIBLde}
$B^{\text{\rm cyc}*}_k\Omega(M)_{\infty}[2-n]$ carries the structure of 
a Fr\'echet ${\rm dIBL}$-algebra of degree $n-3$. 
\end{prop}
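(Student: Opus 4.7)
The plan is to mimic the finite-dimensional construction of Proposition~\ref{prop:structureexists}, replacing the pairing matrix $g^{ab}$ and its inverse (which make no direct sense here because the dual of $\Om(M)$ consists of currents rather than smooth forms) by geometric operations on smooth kernels that preserve smoothness. The restriction to elements with smooth kernels is precisely what allows this: smoothness is preserved by both fiber integration and restriction to a diagonal, which will be the two basic operations we use.

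First I would define $\fp_{1,1,0}$: the lemma preceding the proposition already shows that the de Rham differential $d$ descends to $B^{\text{\rm cyc}*}\Om(M)_\infty$, and its kernel is (up to sign) the exterior derivative of the original kernel, so $\fp_{1,1,0}$ is continuous in the Fr\'echet topology. Next, define $\fp_{2,1,0}$ via fiber integration: if $\varphi^i\in B^{\text{\rm cyc}*}_{k_i+1}\Om(M)_\infty$ has kernel $\frak K_i\in \Om(M^{k_i+1})$, one forms the product kernel on $M^{(k_1+1)+(k_2+1)}$, pulls back along the map $M^{k_1+k_2+1}\to M^{k_1+k_2+2}$ which duplicates one factor into the two slots being paired, and integrates over that factor. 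Summing over the cyclic positions gives the kernel of $\fp_{2,1,0}(\varphi^1,\varphi^2)$, which is smooth because fiber integration over a closed manifold preserves smoothness. Similarly, for $\fp_{1,2,0}$: given $\varphi\in B^{\text{\rm cyc}*}_{k+2}\Om(M)_\infty$ with smooth kernel $\frak K\in\Om(M^{k+2})$, we restrict $\frak K$ to the diagonal where two of the $M$-factors coincide, producing a smooth form on $M^{k+1}$; interpreting this (with appropriate signs and sums over cyclic positions) as a kernel on two cyclic tensor factors gives $\fp_{1,2,0}(\varphi)$. Both operations are continuous maps of Fr\'echet spaces, since fiber integration and pullback to a smooth submanifold are continuous in the $C^\infty$ topology.

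To verify the dIBL relations I would run the same graphical calculus of Section~\ref{sec:Cyclic-cochain}. Each $\fp$ is attached to a ribbon graph with one interior edge (as in Figures~\ref{fig:p210} and~\ref{fig:p120}); the assignment of $T^{ab}=(-1)^{\eta_a}g^{ab}$ to an edge is replaced by the geometric construction above, with an edge encoding either a fiber integration (for $\fp_{2,1,0}$) or a diagonal restriction (for $\fp_{1,2,0}$). The compatibility of $\fp_{1,1,0}$ with $\fp_{2,1,0}$ and $\fp_{1,2,0}$ then reduces to Stokes' theorem together with the commutation of $d$ with both fiber integration and restriction to a smooth submanifold; the role previously played by Lemma~\ref{lem:edge1} is now played by these standard calculus identities. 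The quadratic identities (Jacobi, co-Jacobi, Drinfeld, involutivity) correspond to the same cut-and-paste arguments on the graphs of Figures~\ref{fig:jacobi_cutting}, \ref{fig:drinfeld_cancelling} and~\ref{fig:involutive}: each composition is a kernel on some $M^N$ obtained by combining fiber integrations and diagonal restrictions, and the two ways of producing a given target kernel (cf.\ the two cuts of the graph in Figure~\ref{fig:jacobi_cutting}) are equal by Fubini's theorem, since the order in which one fiber-integrates independent factors and restricts to transverse diagonals does not matter. The signs are identical to those computed in the proof of Proposition~\ref{prop:structureexists}, since the graphical conventions, degree shifts, and symmetry properties (1)-(6) on page~\pageref{symmetrylist} translate directly to the kernel picture.

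The main obstacle is purely bookkeeping: checking that in every graphical identity the relevant submanifolds along which we restrict are smooth (so that restriction of a smooth form makes sense) and transverse to the fibers over which we integrate (so that Fubini applies). In our situation all the submanifolds are diagonals in powers of $M$ and all fibers are copies of $M$, so transversality is automatic and the smoothness is preserved throughout; this is exactly what the definition of $B^{\text{\rm cyc}*}\Om(M)_\infty$ as the space of cochains with smooth kernel is designed to guarantee. The only remaining check is continuity of each composition in the Fr\'echet topology, which reduces to the standard fact that fiber integration $\Om(M\times N)\to \Om(N)$ and pullback under a smooth map are continuous for the $C^\infty$ topology.
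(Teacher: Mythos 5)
Your construction of the operations --- smooth kernels, pullback to partial diagonals, fiber integration over closed fibers --- is exactly the paper's (compare (\ref{defp21}) and (\ref{defp12})), and your description of $\fp_{2,1,0}$ is correct. But your $\fp_{1,2,0}$ as written has the wrong arity: restricting $\frak K_\varphi\in\Om(M^{k+2})$ to a diagonal yields a form on $M^{k+1}$, whereas the output of $\fp_{1,2,0}$ on $B^{\text{\rm cyc}*}_{k+2}\Om(M)_\infty$ lies in $\bigoplus_{k_1+k_2=k}B^{\text{\rm cyc}*}_{k_1}\Om(M)_\infty\,\wh\otimes\,B^{\text{\rm cyc}*}_{k_2}\Om(M)_\infty$, whose kernels are forms on $M^{k}$. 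You still have to integrate out the diagonal variable: in the finite-dimensional model the cobracket inserts $e_a$, $e_b$ into two slots and contracts with $g^{ab}$, and $\sum_{a,b}(-1)^{\eta_a}g^{ab}\,e_a\otimes e_b$ is the (diagonal-supported) kernel of the identity, so its smooth-kernel analogue is ``pull back along $I_{c,c'}$ and then apply $\text{Pr}_{\hat 1\,!}$'', exactly as in (\ref{defp12}). Thus both $\fp_{2,1,0}$ and $\fp_{1,2,0}$ have the form ``partial diagonal pullback followed by fiber integration''; they differ only in whether the two identified slots belong to two different kernels or to the same one. Note also that the resulting kernel for $\fp_{1,2,0}$ is in general not a finite sum of exterior products, which is why the target must be the completed tensor product.

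With that correction your plan for verifying the relations (graphical calculus, Stokes, Fubini) is viable, but the paper takes a shortcut worth knowing: it introduces the finite-dimensional subcomplexes $\Omega_E(M)$ spanned by Laplace eigenforms of eigenvalue $<E$, on which the pairing is nondegenerate so Proposition~\ref{prop:structureexists} applies verbatim; it checks (Lemma~\ref{restpisp}) that the kernel operations restrict to the finite-dimensional ones on each $\text{\bf C}_E$, \emph{defines} the signs in (\ref{p21derham}) and (\ref{p12def}) by this requirement, and then obtains the dIBL relations on all of $B^{\text{\rm cyc}*}\Om(M)_\infty$ by continuity from the dense union $\bigcup_E\text{\bf C}_E$. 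This avoids re-deriving the analogue of Lemma~\ref{lem:edge1} and redoing the sign bookkeeping in the continuum setting, which your direct route would require.
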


\begin{proof}
We will give here the proof modulo signs, and postpone the discussion
of signs to Remark~\ref{rem:signs} below. 

Consider $\varphi \in B^{\text{cyc}*}_{k_1+1}\Omega(M)_{\infty}$, 
$\psi \in B^{\text{cyc}*}_{k_2+1}\Omega(M)_{\infty}$ with kernels 
$\frak K_{\varphi}$, $\frak K_{\psi}$. Let $k:=k_1+k_2$ and consider
$M^{k+1}$ with coordinates $(t,x_1,\dots,x_k)$. We define
$\fp_{2,1,0}(\varphi,\psi)\in B^{\text{cyc}*}_{k}\Omega(M)_{\infty}$
by the analogue of equation~\eqref{eq:mu}:
\begin{equation}\label{p21derham}
\aligned
   & \fp_{2,1,0}(\varphi,\psi)(u_1\cdots u_k) \cr
   &= \sum_{c=1}^k\pm \int_{M^{k+1}} \frak
   K_{\varphi}(t,x_1,\dots,x_{k_1})\wedge \frak
   K_{\psi}(t,x_{k_1+1},\dots,x_k)
   \\
   &\qquad\qquad\qquad\qquad\qquad\qquad\qquad
   \wedge
   u_{c+1}(x_1)\wedge\dots\wedge u_c(x_k) \cr
   &= \sum_{c=1}^k\pm \int_{M^{k+1}} \frak
   K_{\varphi}(t,x_{c+1},\dots,x_{c+k_1})\wedge \frak
   K_{\psi}(t,x_{c+k_1+1},\dots,x_c)\\
    &\qquad\qquad\qquad\qquad\qquad\qquad\qquad
    \wedge 
   u_{1}(x_1)\wedge\dots\wedge u_k(x_k)
\endaligned
\end{equation} 
For $1\leq c\leq k$ we define $I_c:M^{k+1}\to M^{k+2}$ by
$$
   I_c(t,x_1,\dots,x_k) :=
   (t,x_{c+1},\dots,x_{c+k_1},t,x_{c+k_1+1},\dots,x_c). 
$$
Let 
$$
   \text{Pr}_{\hat 1\,!} : \Omega(M^{k+}) \to \Omega(M^{k})
$$
be integration along the fiber associated to the projection
$\text{Pr}_{\hat 1}:M^{k+1} \to M^{k}$ forgetting the first
component. Then the preceding computation shows that
$\fp_{2,1,0}(\varphi,\psi)$ has kernel
\begin{equation}\label{defp21}
   \sum_{c=1}^k\pm \text{Pr}_{\hat 1\,!} I_{c}^*(\frak K_{\varphi} 
   \times \frak K_{\psi}). 
\end{equation}
We have thus defined 
$$
   \frak p_{2,1,0} : B^{\text{cyc}*}\Omega(M)_{\infty} \otimes 
   B^{\text{cyc}*}\Omega(M)_{\infty} \to B^{\text{cyc}*}_k\Omega(M)_{\infty}.
$$
By (\ref{defp21}) this operator extends to the completion 
$B^{\text{cyc}*}\Omega(M)_{\infty} \,\widehat\otimes \, 
B^{\text{cyc}*}\Omega(M)_{\infty}$.
\par

Next consider $\varphi \in B^{\text{cyc}*}_{k}\Omega(M)_{\infty}$ with
kernel $\frak K_{\varphi}$. For $k_1+k_2=k-2$ we consider
$M^{k-1}$ with coordinates
$(t,x_1,\dots,x_{k_1},y_1,\dots,y_{k_2})$. We define 
$$
   \fp_{1,2,0}(\varphi)\in \bigoplus_{k_1+k_2=k-2}
   B^{\text{cyc}*}_{k_1}\Omega(M)_{\infty} \,\widehat\otimes \,  
   B^{\text{cyc}*}_{k_2}\Omega(M)_{\infty}
$$
by the analogue of equation~\eqref{eq:delta}: 
\begin{equation}\label{p12def}
\aligned
   & \fp_{1,2,0}(\varphi)(u_1\cdots u_{k_1}\otimes v_1\cdots v_{k_2}) \cr
   &:= \sum_{c=1}^{k_1}\sum_{c'=1}^{k_2}\pm \int_{M^{k-1}} \frak
   K_{\varphi}(t,x_{c+1},\dots,x_{c},t,y_{c'+1},\dots,y_{c'})\wedge \cr
   & \ \ \ \ \ \ \  \ \wedge u_{1}(x_1)\wedge\dots\wedge u_{k_1}(x_{k_1}) \wedge
   v_{1}(y_1)\wedge\dots\wedge v_{k_2}(y_{k_2}). 
   \endaligned
\end{equation} 
For $1\leq c\leq k_1$ and $1\leq c'\leq k_2$ we define
$I_{c,c';k_1,k_2}:M^{k-1}\to M^{k}$ by 
$$
   I_{c,c'}(t,x_1,\dots,x_{k_1},y_1,\dots,y_{k_2}) :=
   (t,x_{c+1},\dots,x_{c},t,y_{c'+1},\dots,y_{c'}). 
$$
Then the preceding computation shows that
$\fp_{1,2,0}(\varphi)$ has kernel
\begin{equation}\label{defp12}
   \sum_{k_1+k_2=k-2}\sum_{c=1}^{k_1}\sum_{c'=1}^{k_2}\pm \text{Pr}_{\hat 1\,!}
   I_{c,c';k_1,k_2}^*\frak K_{\varphi}.  
\end{equation}
It is straightforward to check that this is indeed the kernel of an
element of  
$B^{\text{cyc}*}\Omega(M)_{\infty} \,\widehat\otimes \, 
B^{\text{cyc}*}\Omega(M)_{\infty}$.
Note that we need to take the completion here because the kernel
defined by~\eqref{defp12} is not a finite sum of exterior
products. 

The proof of the ${\rm dIBL}$-relations is now analogous to the proof of
Proposition~\ref{prop:structureexists} and is omitted. 
\end{proof}

%\begin{Remark}
%We conjecture that the Fr\'echet {\rm dIBL} algebra of Theorem $\ref{cycIBLde}$ is 
%homotopy equivalent to one on 
%$S^1$ equivariant cohomology of loop space in String topology 
%via iterated integral, in the simply connected case. The 
%main reason we are unable to prove it at this stage is 
%that the {\rm IBL}$_{\infty}$ algebra structure on  
%$S^1$ equivariant cohomology of loop space is not yet 
%rigorously defined.
%\end{Remark}

The inner product in (\ref{intpairing}) induces one on de Rham cohomology 
$H_{dR}(M)$. The de Rham cohomology thus becomes a cyclic cochain complex 
with trivial differential. Since $H_{dR}(M)$ is finite dimensional, 
we can apply Proposition~\ref{prop:structureexists} 
to obtain an $\IBL$-structure on $B^{\text{\rm
    cyc}*}H_{dR}(M)[2-n]$. Note that this structure does not use the
cup product or higher products on $H_{dR}(M)$. 

For the remainder of this section, let us now fix a Riemannian metric
on $M$. We identify $H_{dR}(M)$ with the
space $\mathcal{H}(M)$ of harmonic forms, and denote by
$i:\mathcal{H}(M)\hookrightarrow\Om(M)$ 
the inclusion. Then we have the following analogue of
Theorem~\ref{thm:homotopyequiv} for the de Rham complex,
%(we state it
%as a Conjecture because we only sketch its proof).  
which corresponds to
Theorem~\ref{homotopyequiv-dR-intro} from the introduction.

\begin{thm}\label{homotopyequiv-dR}
There exists a Fr\'echet $\IBL_{\infty}$-morphism 
$$
    \frak f : B^{\text{\rm cyc}*}\Om(M)_{\infty}[2-n] \to B^{\text{\rm cyc}*}H_{dR}(M)[2-n]
$$
such that $\frak f_{1,1,0} : B^{\text{\rm cyc}*}\Om(M)_{\infty}[2-n] \to
B^{\text{\rm cyc}*}H_{dR}(M)[2-n]$ is the map induced by the dual of
the inclusion $i:H_{dR}(M)\cong \mathcal{H}(M) \hookrightarrow \Om(M)$. 
\end{thm}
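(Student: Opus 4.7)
The plan is to adapt to the Fr\'echet setting the first (``short'') proof of Theorem \ref{thm:homotopyequiv}. Let $i:\mathcal{H}(M)\hookrightarrow\Omega(M)$ be the inclusion of the space of harmonic forms and $\Pi:\Omega(M)\to\mathcal{H}(M)$ the associated harmonic projection, so that $i\Pi-\id = -(dG+Gd)$ for the Green operator $G=-d^*\Delta^{-1}$. Using the Hodge decomposition $\Omega(M)=\mathcal{H}(M)\oplus d\,\Omega(M)\oplus d^*\Omega(M)$ together with the identities $\int_M(d\alpha)\wedge y=0$ and $\int_M(d^*\beta)\wedge y=0$ for harmonic $y$ (the second following from the fact that $*y$ is closed when $y$ is co-closed), one verifies that $\Pi$ is orthogonal with respect to the Poincar\'e pairing, i.e.\ $\langle\Pi x,y\rangle_{\mathcal{H}}=\langle x,iy\rangle_\Omega$ for $x\in\Omega(M)$ and $y\in\mathcal{H}(M)$.

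First I would define a linear Fr\'echet $\IBL_\infty$-morphism
\[
   \frak g:(B^{\text{\rm cyc}*}H_{\rm dR}(M))[2-n]\to(B^{\text{\rm cyc}*}\Omega(M)_\infty)[2-n]
\]
by setting $\frak g_{1,1,0}:=\Pi^*$ and $\frak g_{k,\ell,g}:=0$ for $(k,\ell,g)\neq(1,1,0)$. The image of $\Pi^*$ lies in $B^{\text{\rm cyc}*}\Omega(M)_\infty$ because the kernel of $\Pi^*\varphi_0$ is the finite sum $\sum\pm\varphi_0(h_{a_1},\dots,h_{a_k})\,h^{a_1}\times\cdots\times h^{a_k}$ of exterior products of smooth harmonic forms, where $\{h_a\}$ is any basis of $\mathcal{H}(M)$ and $\{h^a\}$ its Poincar\'e-dual basis. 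Exactly as in the short proof of Theorem \ref{thm:homotopyequiv}, the orthogonality of $\Pi$ together with the identity $\Pi\circ d=0$ shows that $\frak g$ is a linear Fr\'echet $\IBL_\infty$-morphism; this may be checked directly by substituting the kernel of $\Pi^*\varphi_0$ into \eqref{p21derham} and \eqref{p12def}, performing the integration over the gluing variable $t$, and recognizing the resulting contractions as Poincar\'e pairings on harmonic forms. Since $\Pi$ is a chain homotopy equivalence, $\frak g_{1,1,0}=\Pi^*$ induces an isomorphism on the homology of the dual cyclic bar complex, and I would then invoke the Fr\'echet analogue of Theorem \ref{Whi} (i.e.\ of the inductive construction in Proposition \ref{prop:inverse2}) to produce a Fr\'echet $\IBL_\infty$-homotopy inverse $\frak f$ of $\frak g$. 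By taking $\frak f_{1,1,0}=i^*$ in the first step of that inductive construction, one obtains the required morphism: $i^*\Pi^*=\id$ on the nose, and $\Pi^*i^*\simeq\id$ via a chain homotopy built from $G$.

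The main obstacle is to verify that the whole inductive machinery of Sections \ref{sec:tech}--\ref{sec:whi} carries over to the Fr\'echet setting, with every auxiliary operator preserving the subspace of cochains with smooth kernels. The key input is the chain homotopy on $B^{\text{\rm cyc}*}\Omega(M)_\infty$ between $\id$ and $\Pi^*i^*$: it is the dual of the telescoping identity
\[
   \id^{\otimes k}-\Pi^{\otimes k} = -\sum_{j=1}^{k}\Pi^{\otimes(j-1)}\otimes(dG+Gd)\otimes\id^{\otimes(k-j)},
\]
so it acts by applying $G$ to a single tensor slot at a time. Although the Schwartz kernel of $G$ on $M\times M$ is singular along the diagonal, $G$ is a pseudodifferential operator of order $-2$ and therefore maps $\Omega(M)\to\Omega(M)$ continuously; convolving its kernel in a single variable against a smooth form on $M^k$ yields another smooth form on $M^k$. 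The auxiliary operators $H$, $S$, $T$ arising from iterated use of Lemma \ref{lem:homotopy} inside the proof of Proposition \ref{prop:inverse2} are all obtained from such single-slot applications of $G$ composed with previously constructed smooth-kernel maps and with $\fp_{1,1,0}$, $\fp_{2,1,0}$, $\fp_{1,2,0}$, $\Pi^*$, and $i^*$. At no stage is one required to evaluate two Green kernels at a common point, so smoothness of kernels propagates through the induction and the argument closes within the Fr\'echet category, producing the desired Fr\'echet $\IBL_\infty$-homotopy equivalence $\frak f$.
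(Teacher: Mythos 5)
Your argument is essentially sound, but it takes a genuinely different route from the paper's. You adapt the \emph{first} (short) proof of Theorem~\ref{thm:homotopyequiv}: the harmonic projection $\Pi$ plays the role of the orthogonal left inverse $j$, the linear morphism $\Pi^*$ lands in the smooth-kernel subcomplex (its kernel being a finite sum of exterior products of harmonic forms), and $\frak f$ is then produced by the obstruction-theoretic inversion of Proposition~\ref{prop:inverse2}. The paper instead adapts the \emph{second} (explicit) proof: it defines each $\frak f_{k,\ell,g}$ directly by the ribbon-graph state sum \eqref{fdefstateint}, with the Green kernel $G(x,y)$ as propagator on interior edges and harmonic forms on exterior edges, and then verifies the morphism equations neither by Stokes' theorem nor by abstract homotopy theory but by finite-dimensional approximation: it restricts to the spectral truncations $\Omega_E(M)$, identifies the restricted structure maps and the restricted $f_\Gamma$ with the finite-dimensional objects of Sections~\ref{sec:Cyclic-cochain}--\ref{sec:subcomplex} (Lemmas~\ref{restpisp} and~\ref{GammacoincidenceE}), and extends by continuity from the union of the $\text{\bf C}_E$ to its closure $\text{\bf C}_\infty$. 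What the paper's route buys is (i) an explicit formula for $\frak f$, which is what is actually used afterwards (push-forward of the Maurer--Cartan element, the Chern--Simons-type integrals behind Conjecture~\ref{deRhamconj-intro}), and (ii) a complete bypass of the question whether the obstruction theory of Sections~\ref{sec:tech}--\ref{sec:whi} operates inside the Fr\'echet category. Your route is shorter but leans on exactly that question: Theorem~\ref{Whi} cannot be quoted as a black box here, since in the Fr\'echet setting a quasi-isomorphism need not be a chain homotopy equivalence, and every auxiliary map produced by Lemma~\ref{lem:homotopy} must be continuous and preserve smooth kernels. You correctly isolate the one analytic input that makes this work --- the slot-wise Green operator is continuous on smooth forms, so the telescoping contraction stays in $B^{\text{\rm cyc}*}\Omega(M)_{\infty}$ --- and since all homotopies in the induction are built from it, $i^*$ and $\Pi^*$ by composition and single-slot integration of smooth kernels, your argument does close; the price is that the resulting $\frak f$ is not explicit beyond $\frak f_{1,1,0}=i^*$.
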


\begin{proof}%[Sketch of proof. ]
%We next want consider an analogue of Theorem \ref{cycIBLde} in our case 
%of de Rham {\it cohomology} using a similar 
%argument as Theorem \ref{thm:homotopyequiv}.
%The argument for it is expected to be similar to one of the last 
%section and is also similar to the perturbative Chern-Simons 
%gauge theory \cite{AxSi91II}.
%There is a trouble related to the divergence at the diagonal to 
%make this argument work rigorously. 
%This trouble might be resolved in the same way as 
%perturbative Chern-Simons gauge theory.
%\marginpar{I dropped the appendix (KC).}
%\par
We use the same notation as in the proof of Theorem
\ref{thm:homotopyequiv}. Consider a ribbon graph $\Gamma \in
RG_{k,\ell,g}$. We want to define 
\begin{equation}\label{constructionfde}
\aligned
\mathfrak f_\Gamma 
: &B^{\text{\rm cyc}*}_{d(1)}\Omega(M)_{\infty} \widehat\otimes \cdots
\widehat\otimes B^{\text{\rm cyc}*}_{d(k)}\Omega(M)_{\infty} 
\\ 
&\to B^{\text{\rm cyc}*}_{s_1}H_{dR}(M) \otimes \cdots \otimes B^{\text{\rm cyc}*}_{s_\ell}H_{dR}(M).
\endaligned
\end{equation}
The idea is to replace $G$ in Section \ref{sec:subcomplex} by the Green kernel 
%(propagator) 
and summation over the basis by integration.
We then define (\ref{constructionfde}) 
by a formula similar to (\ref{def:fGamma}) as follows.

%We fix a Riemannian metric on $M$.
Let $G(x,y) \in \Omega^{n-1}(M\times M)$ be the kernel of the Green
operator $G:\Om(M)\to\Om(M)$,
$$
   G(u)(x) = \int_{y\in M} G(x,y) \wedge u(y)
$$
(with respect to the fixed metric) satisfying
$$
   d \circ G +G \circ d = \Pi - \id
$$
where $\Pi:\Om(M)\to\mathcal{H}(M)$ is the orthogonal projection onto
the harmonic forms. 
($G$ is called the propagator in \cite{AxSi91II}.)

Now let $\varphi^v \in B^{\text{\rm cyc}*}_{d(v)}\Omega(M)_{\infty}$,
$v=1,\dots, k$ be given with kernels $\frak K^v(x_1,\dots,x_{d(v)})
\in \Omega(M^{d(v)})$. These kernels will play a similar role as
$\varphi^v_{i_1,\dots,i_{d(v)}}$ in Sections \ref{sec:Cyclic-cochain} and \ref{sec:subcomplex}.  
Let $\alpha^b_1\cdots \alpha^b_{s(b)}\in B^{\text{\rm
    cyc}}_{s(b)}H_{dR}(M)$, $b=1,\dots,\ell$ be given in terms of 
harmonic forms $\alpha^b_i$ associated to the exterior edges of
$\Gamma$.  

%\marginpar{\tiny Notation changed below here (Kai)}
We denote by ${\rm Flag}(\Gamma)$ the set of {\em flags (or
half-edges)} of $\Gamma$, where a flag is a pair $f=(v,l)$ consisting of
an interior vertex and an (interior or exterior) edge with $v\in
l$. Suppose that we are given a labelling of $\Gamma$ and an ordering
and orientations of the interior edges in the sense of
Definitions~\ref{def:labelling} and~\ref{def:edge-ordering}. Given
these data, we can unambiguously associate flags
\begin{itemize}
\item $f(t,1)$ and $f(t,2)$ to the initial and end point of each
  interior edge $t$;
\item $f(v,1),\dots,f(v,d(v))$ to the ordered half-edges around each
  interior vertex $v$;
\item $f(b,1),\dots,f(b,s(b))$ to the ordered exterior edges ending on
  each boundary component $b$. 
\end{itemize}
To each flag $f$ we associate a variable $x_f$ which runs over
$M$. Then we set
\begin{equation}\label{fdefstateint}
\begin{aligned}
    &\mathfrak f_\Gamma(\varphi^1 \otimes \cdots \otimes \varphi^k)
   (\alpha^1_1\cdots \alpha^1_{s(1)}\otimes
   \cdots \otimes \alpha^\ell_1\cdots \alpha^\ell_{s(\ell)}) \cr
   &:= \int_{(x_f)\in M^{{\rm Flag}(\Gamma)}}
(-1)^\eta  
\prod_{t\in
   C^1_\inn(\Gamma)} G(x_{f(t,1)},x_{f(t,2)})
   \\ 
   &\ \ \ \prod_{v \in C^0_{\text{int}}(\Gamma)} \frak K^v(x_{f(v,1)},\dots, x_{f(v,d(v))}) 
    \prod_{b=1}^\ell\prod_{i=1}^{s(b)}
    \alpha^b_i(x_{f(b,i)}), 
\end{aligned}
\end{equation}
with appropriate signs $(-1)^\eta$. 

At this point, we could proceed to prove that (\ref{fdefstateint}) defines 
an $\IBL_{\infty}$-morphism in a way similar to the 
proof of Theorem~\ref{thm:homotopyequiv}, using Stokes' theorem.
Here we take a shortcut using finite dimensional approximations 
and reduce the proof to Theorem \ref{thm:homotopyequiv} itself, as we
will now explain.

The fixed Riemannian metric on $M$ induces the Laplace operator 
$\Delta$ and the Hodge star operator $*$ on $\Omega(M)$.
For a positive number $E$, we denote by $\Omega_E(M)$ 
the finite dimensional subspace of $\Omega(M)$ that is generated by 
eigenforms of $\Delta$ with eigenvalue $< E$.
The differential $d$ and the Hodge star operator preserve
$\Omega_E(M)$, 
so the pairing $\langle \,,\,\rangle$
is nondegenerate on $\Omega_E(M)$. 
Therefore, by Proposition \ref{prop:structureexists} we obtain 
the structure of a ${\rm dIBL}$-algebra on 
$$
   \text{\bf C}_E := B^{\text{\rm cyc}*}\Omega_E(M)[2-n].
$$
We denote it by $\frak p_{1,1,0;E}$, $\frak p_{2,1,0;E}$, $\frak p_{1,2,0;E}$.
To compare this structure to the Fr\'echet ${\rm dIBL}$-structure on
$$
   \text{\bf C}_{\infty} = B^{\text{\rm cyc}*}\Omega(M)_{\infty}[2-n],
$$
we need the following

\begin{lem}\label{lem:ext}
There exist canonical restriction and extension maps
$$
   B^{\text{\rm cyc}*}\Omega_E(M) \stackrel{\rm ext}\longrightarrow
   B^{\text{\rm cyc}*}\Omega(M)_{\infty} \stackrel{\rm rest}\longrightarrow 
   B^{\text{\rm cyc}*}\Omega_E(M) 
$$
satisfying ${\rm rest}\circ{\rm ext}=\id$. 
\end{lem}

\begin{proof}
The restriction map is just dual to the inclusion
$\Om_E(M)\hookrightarrow\Om(M)$. To define the extension map, we fix a
basis $\{e_i\}$ of $\Omega_E(M)$ of pure degree. As in
Section~\ref{sec:Cyclic-cochain}, we denote by $\{e^i\}$ be the dual
basis with respect to the pairing $\la\ ,\ \ra$, i.e. $\la e_i,e^j\ra
= \delta_i^j$, and we set $g^{ij} := \la e^i,e^j\ra$. 
Then to $\varphi\in B_k^{\text{\rm cyc}*}\Omega_E(M)$ we associate the
collection of real numbers
$$
   \varphi_{i_1\cdots i_k} := \varphi(e_{i_1},\dots,e_{i_k})
$$
and the smooth kernel 
$$
   {\frak K}_{\varphi}(x_1,\dots,x_k) := \sum_{i_1,\dots,i_k}
   {\varphi}_{i_1\cdots i_k} e^{i_1}(x_1)\cdots e^{i_k}(x_k).
$$
By formula~\eqref{eq:kernel}, this defines an extension of
$\varphi$ to $B_k^{\text{\rm cyc}*}\Omega(M)_{\infty}$. To check the
identity ${\rm rest}\circ{\rm ext}=\id$, we compute for
$u_1,\dots,u_k\in\Om_E$:
\begin{align*}
   &\int_{M^k}  (u_1 \times \cdots \times u_k)
   \wedge \frak K_{\varphi} \cr
   &= \sum_{i_1,\dots,i_k}{\varphi}_{i_1\cdots i_k} \int_{M^k}u_1(x_1)\cdots
   u_k(x_k) e^{i_1}(x_1)\cdots e^{i_k}(x_k) \cr
   &= \sum_{i_1,\dots,i_k} \pm {\varphi}_{i_1\cdots i_k} \la
   u_1,e^{i_1}\ra\cdots \la u_k,e^{i_k}\ra \cr 
   &= \pm \varphi\Bigl(\sum_{i_1}\la u_1,e^{i_1}\ra e_{i_1}\,,\dots,\,
   \sum_{i_k}\la u_k,e^{i_k}\ra e_{i_k}\Bigr) \cr
   &= \pm \varphi(u_1,\dots,u_k),
\end{align*}
where in the last equation we have used $\sum_j\la u_i,e^j\ra
e_j=u_i$. 
\end{proof}

Using this lemma, we associate to a map $f:E_k{\bf C}_\infty \to
E_\ell{\bf C}_\infty$ its {\em restriction} $f_E$ as the composition 
$$
   f_E : E_k{\bf C}_E \stackrel{\rm ext}\longrightarrow 
   E_k{\bf C}_\infty \stackrel{f}\longrightarrow E_\ell{\bf C}_\infty 
   \stackrel{\rm rest}\longrightarrow E_\ell{\bf C}_E. 
$$

\begin{lem}\label{restpisp}
The operators $\frak p_{1,1,0;E}$, $\frak p_{2,1,0;E}$, $\frak
p_{1,2,0;E}$ on ${\bf C}_E$ are the restrictions of the operators 
$\frak p_{1,1,0}$, $\frak p_{2,1,0}$, $\frak
p_{1,2,0}$ on ${\bf C}_\infty$. 
%$(\ref{p21derham})$, $(\ref{p12def})$ 
\end{lem}

\begin{proof}
The operator $\frak p_{1,1,0;E}$ is clearly the restriction of $\frak
p_{1,1,0}$ because both are induced by $\frak m_1$. Let us check that 
$\frak p_{2,1,0;E}$ is the restriction of $\frak p_{2,1,0}$ (the case
of $\frak p_{1,2,0;E}$ and $\frak p_{1,2,0}$ is analogous). Consider 
$\varphi\in B_{k_1+1}^{\text{\rm cyc}*}\Omega_E(M)$ and $\psi\in
B_{k_2+1}^{\text{\rm cyc}*}\Omega_E(M)$. We define their kernels as
above and denote their extensions by the same letters. Then for
$k=k_1+k_1$ and $u_1,\dots,u_k\in\Om_E$ we compute, using the
definitions~\eqref{p21derham} and~\eqref{eq:mu}:
\begin{align*}
   & \fp_{2,1,0}(\varphi,\psi)(u_1\cdots u_k) \cr
   &= \sum_{c=1}^k\pm \int_{M^{k+1}} \frak
   K_{\varphi}(t,x_1,\dots,x_{k_1})\wedge \frak
   K_{\psi}(t,x_{k_1+1},\dots,x_k)
   \\
   &\qquad\qquad\qquad\qquad\qquad\qquad\qquad
   \wedge
   u_{c+1}(x_1)\wedge\cdots\wedge u_c(x_k) \cr
   &= \sum_{c=1}^k\sum_{a,b,i_1,\dots,i_k} \pm {\varphi}_{a\,i_1\cdots i_{k_1}}
   \psi_{b\,i_{k_1+1}\cdots i_k} \int_{M^{k+1}} u_{c+1}(x_1)\cdots
   u_c(x_k) \cr
   &\qquad\qquad\qquad\qquad \wedge
   e^a(t)e^{i_1}(x_1)\cdots e^{i_{k_1}}(x_{k_1})
   e^b(t)e^{i_{k_1+1}}(x_{k_1+1})\cdots e^{i_{k}}(x_{k}) \cr 
   &= \sum_{c=1}^k\sum_{a,b,i_1,\dots,i_k} \pm {\varphi}_{a\,i_1\cdots i_{k_1}}
   \psi_{b\,i_{k_1+1}\cdots i_k} g^{ab}\la u_{c+1},e^{i_1}\ra \cdots
   \la u_c,e^{i_k}\ra \cr  
   &= \sum_{c=1}^k\sum_{a,b} \pm
   g^{ab}{\varphi}(e_a,u_{c+1},\dots,u_{c+k_1}) 
   \psi(e_b,u_{c+k_1+1},\dots,u_c) \cr  
   &= \pm\fp_{2,1,0;E}(\varphi,\psi)(u_1\cdots u_k). 
\end{align*}
%%%%%%%%%%%%%%%%%%% end comment
This proves Lemma~\ref{restpisp} up to signs. Now we remark that the
signs in (\ref{p21derham}) and (\ref{p12def}) are  
not yet defined. So we {\it define} the signs there so that
Lemma~\ref{restpisp} holds {\it with signs}.
%The argument for (\ref{p12def}) is the same.
\end{proof}

\begin{rem}\label{rem:signs}
Before completing the proof of Theorem \ref{homotopyequiv-dR}, 
let us complete the sign part of the proof of Proposition
\ref{prop:cycIBLde}. 
It is easy to see from the definition that $\text{\bf C}_E \subset
\text{\bf C}_{E'}$ for $E < E'$ and the restrictions of $\frak
p_{2,1,0;E'}$, $\frak p_{1,2,0;E'}$ to $\text{\bf C}_E$ coincide with
$\frak p_{2,1,0;E}$, $\frak p_{1,2,0;E}$. Therefore, they define
operators on the union $\bigcup_E\text{\bf C}_E$. 
Then formulas (\ref{p21derham}), (\ref{p12def}) imply 
that they further extend continuously to the closure 
$\text{\bf C}_{\infty}$ of $\bigcup_E\text{\bf C}_E$
with respect to the $C^{\infty}$ topology. 
These are the operators in Proposition \ref{prop:cycIBLde}.
Now the signs work out because they do on ${\bf C}_E$ for each finite
$E$. 
%\begin{rem}
%We remark that we do not need to calculate 
%the sign in $(\ref{p21derham})$  explicitely, since
%we only need to show that the operators $\frak p_{2,1,0;E}$, $\frak p_{1,2,0;E}$
%extend continuously to the $C^{\infty}$ closure.
The same remark applies to the rest of the proof of 
Theorem $\ref{homotopyequiv-dR}$.
\end{rem}

Now we return to the proof of Theorem \ref{homotopyequiv-dR}. 
Denote by $\Om_+(M)$ the direct sum of the eigenspaces of $\Delta$ to
positive eigenvalues. By the Hodge theorem, $\Om_+(M)=\im d\oplus\im
d^*$ and $d:\im d^*\to\im d$, $d^*:\im d\to\im d^*$ are
isomorphisms. So we can choose a basis $\{e_i, f_i\}$ of $\Om_+(M)$
satisfying 
$$
   \Delta e_i = \lambda_i e_i,\quad \Delta f_i = \lambda_i f_i, \quad 
%are orthonormal with respect to the $L^2$ norm
   d f_i = e_i,\quad de_i = 0 
$$
%($f_i$ is defined only when $\lambda_i \ne 0$).
by picking a basis $\{e_i\}$ for $\im d$ and setting
\begin{equation}\label{defoff}
   f_i := \frac{1}{\lambda_i} d^* e_i.
\end{equation}
Here $d^*= - * d *$.
%\marginpar{Is the sign right? (Kai)}
%Then we have
%$$
%h_{ij} = \int_M e_i \wedge e_j = \int_M \pm \langle e_i ,
%* e_j\rangle \Omega_m. 
%$$
%Namely
%$$
%* e_i = \sum_j \pm h_{ij}e_j.
%$$
%$h_{ij}=0$ unless $\lambda_i = \lambda_j$.
Then we can choose the Green operator $G$ so that
\begin{equation}\label{defG}
   G|_{\mathcal{H}(M)}=0,\quad G(e_i) = f_i,\quad 
   G(f_i) = e_i.
\end{equation}
On the other hand, we have
\begin{equation}\label{enormal}
\langle e_i,e_j\rangle = 
\int_M df_i \wedge e_j 
= \pm \int_M f_i \wedge de_j  = 0.
\end{equation}
Similarly, (\ref{defoff}) implies $\langle f_i,f_j\rangle = 0$ and 
$$
\langle e_i,f_j\rangle
= 
\int_M df_i \wedge f_j 
= 
\pm \frac{1}{\lambda_j }\int_M f_i \wedge dd^*e_i 
= \pm \langle f_i,e_j\rangle
$$
if $\lambda_i = \lambda_j$, and $\langle e_i,f_j\rangle = 0$
otherwise. We set 
\begin{equation}\label{hijinnner}
   \frak h_{ij} := \langle e_i,f_j\rangle
   = \pm \langle f_i,e_j\rangle.
\end{equation}
Let $(\frak h^{ij})$ be the inverse matrix of $(\frak h_{ij})$. 
Then the propagator is given by
%\marginpar{I believe this is the correct formula (Kai).}
\begin{equation}\label{propagatorGcalc}
   G(x,y) = \sum_{i,j} \pm \frak h^{ij} f_i(x) \wedge f_j(y), 
%+ \sum_{i,j} \pm \frak h^{ij} f_i(x) \wedge e_j(y).
\end{equation}
where the sign depends only on the degrees of $f_i, f_j$.

We restrict $G$ in (\ref{defG}) to $\Omega_E(M)$ and obtain an
operator $G_E$. We use it in the proof of Theorem
\ref{thm:homotopyequiv} to obtain an $\IBL_{\infty}$-morphism 
$\{\frak f_{\Gamma,E}\}$ from $\text{\bf C}_E$ to 
$B^{\text{\rm cyc}*}\mathcal{H}(M)$.

\begin{lem}\label{GammacoincidenceE}
The restriction of the operator $f_\Gamma$ defined by
$(\ref{fdefstateint})$ to $\text{\bf C}_E$ coincides with 
$\frak f_{\Gamma,E}$.
\end{lem}

\begin{proof}
Using (\ref{defG}), (\ref{enormal}), (\ref{hijinnner}) 
and (\ref{propagatorGcalc}) it is easy to see that 
the operators coincide up to sign. Now we define the sign 
in (\ref{fdefstateint}) so that they coincide with sign.
\end{proof}

It is easy to see that the maps $\frak f_{\Gamma,E}$ are compatible
with the inclusions $\text{\bf C}_E \subset \text{\bf C}_{E'}$.
So they define an $\IBL_{\infty}$-morphism on the union
$\bigcup_{E}\text{\bf C}_E$. 
Now (\ref{fdefstateint}) and Lemma \ref{GammacoincidenceE}
imply that this morphism extends to the closure $\text{\bf
  C}_{\infty}$ with respect to the $C^{\infty}$ topology, and the
proof of Theorem \ref{homotopyequiv-dR} is complete.
\end{proof}

{\bf String topology and Conjecture~\ref{deRhamconj-intro}. } 
Now we would like to proceed as in Section~\ref{sec:Cyclic}: twist the
Fr\'echet ${\rm dIBL}$-structure on $B^{\text{\rm
    cyc}*}\Omega(M)_{\infty}[2-n]$ by the Maurer-Cartan element
$\fm_2^+$ arising from the product $\fm_2$ in~\eqref{intpairing}, and
use Theorem~\ref{homotopyequiv-dR} to push the twisted
$\IBL_\infty$-structure onto $B^{\text{\rm
    cyc}*}H_{dR}(M)[2-n]$.
%We want to use (\ref{constructionfde}) in place of Theorem
%\ref{thm:homotopyequiv} in the proof of Theorem  
%\ref{homologyBLI} in the case when $(A,\langle \,,\,\rangle,d,\frak m_2)$
%is the de Rham complex. 
However, there is one difficulty in doing so. As in
Section~\ref{sec:Cyclic}, the product $\frak m_2(u,v) = (-1)^{\deg u}
u \wedge v$ gives rise to an element 
$$
   \frak m^+_2 \in B^{\text{\rm cyc}*}_3\Omega(M). 
$$
defined by
$$
   \frak m^+_2(u,v,w) :=  (-1)^{n-2}\la\fm_2(u,v),w\ra =
   (-1)^{n-2+\deg v} \int_M u \wedge v\wedge w.
$$
%and a formula similar to Proposition \ref{MCAinfty}  and 
%$\delta(\frak m^+_2) = 0$ are satisfied.
%\par
However, $\frak m^+_2$ does {\it not} have a smooth kernel.
In fact, its Schwartz kernel is the current represented by the triple  
diagonal 
$$
   \Delta_M = \{ (x,x,x) \mid x \in M \} \subset M^3.
$$
So we cannot use $\fm_2^+$ directly to twist the Fr\'echet
$\text{\rm dIBL}$-structure on the space $B^{\text{\rm cyc}*}\Omega(M)_{\infty}[2-n]$. On
the other hand, we may try to formally apply the map in
Theorem~\ref{homotopyequiv-dR} to $e^{\fm_2^+}$ and then show that
this defines a twisted $\IBL_\infty$-structure on $B^{\text{\rm
    cyc}*}H_{dR}(M)_{\infty}[2-n]$. 
%In order to use the proof of Theorem \ref{thm:homotopyequiv}
%using (\ref{constructionfde}) to obtain an $\text{\rm IBL}_{\infty}$ structure 
%on $(B^{\text{\rm cyc}}H(M))^{*}$, 
For this, we need to consider the integrals (\ref{fdefstateint}) for
trees $\Gamma$ all of whose interior vertices are trivalent and with
inputs $\varphi^v = \frak m^+_2$ for all $v=1,\dots,k$ (see
Remark~\ref{rem:MC}).  
Then the $\frak K^{v}$ appearing in the formula must be the current
$\Delta_M$. 
%We need to consider the case when the interior vertices of $\Gamma$ are 
%all trivalent only. 
Hence in place of the second product of the right hand side of
(\ref{fdefstateint}) (involving the kernels $\frak K^{v}$) we restrict
to the submanifold  
$$
   \{x_{f(v,1)} = x_{f(v,2)} = x_{f(v,3)} \text{ for all }v\in
   C^0_{\rm int}(\Gamma)\}
$$
and perform the integration of the other differential forms (the Green
kernels and the harmonic forms) over this submanifold. This integral
is very similar to those appearing in  
perturbative Chern-Simons gauge theory~\cite{Wit95,BarNat95}.  
The difficulty with this integral comes from the singularity of the
propagator $G(x,y)$ at the diagonal $x=y$.  
We believe that one can resolve this problem by using a real version
of the Fulton-MacPherson compactification~\cite{FM} in a similar way 
as in~\cite{AxSi91II}. As a result, 
%after clarifying the relation of the twisted dual cyclic bar complex
%to cyclic cohomology 
one should obtain Conjecture~\ref{deRhamconj-intro} from the
Introduction: {\it There exists an $\IBL_\infty$-structure on 
$B^{\text{\rm cyc}*}H_{dR}(M)[2-n]$ whose homology equals Connes'
version of cyclic cohomology of the de Rham complex of $M$.}
%\begin{Remark}
%Let us consider the case when $M$ is a torus $T^n$.
%Using the obvious $T^n$ action on $M$ we consider the set of
%$T^n$ equivariant forms on $M$. 
%This set can be identified with the cohomology group.
%In this case Conjecture ?1.6? is rather obvious. 
%In fact it is easy to see that $\frak p_{1,2}$ and $\frak p_{2,1}$
%on de Rham complex restricts to one on de Rham cohomology.
%This is because the wedge product of the $T^n$ equivariant forms
%are again $T^n$ equivariant.
%\par
%One might prove it also in a similar way 
%in the case when $M$ is a homogeneous space.
%\end{Remark}
%\marginpar{\tiny I would rather like to remove the remark which was 
%put here. Kenji}

{\bf Lagrangian Floer theory and Conjecture~\ref{Lagconj-intro}. }
We expect that the ideas of this paper can be applied to study
Lagrangian Floer theory of arbitrary genus and with an arbitrary number of
boundary components in the following way. Let $L$ be an $n$-dimensional closed
Lagrangian submanifold of a symplectic manifold $(X,\omega)$, and $J$ be
an almost complex structure on $X$ compatible with $\omega$. 
For fixed integers $g\ge 0$, $\ell \ge 1$ and $s_1,\dots,s_{\ell}\ge
1$ and a relative homology class $\beta \in H_2(X,L;\Z)$
we consider $(\Sigma,\vec{\vec z},u)$ such that
\begin{itemize}
\item
$\Sigma$ is a compact oriented Riemann surface of genus $g$ with
$\ell$ boundary components;
\item $\vec{\vec z} = (\vec z_1,\dots,\vec z_{\ell})$, where 
$\vec z_b = (z_{b,1},\dots,z_{b,s_b})$ is a vector of $s_b$ distinct
points in counterclockwise order on the $b$-th boundary component
$\p_b\Sigma\cong S^1$ of $\Sigma$;
\item
$u : (\Sigma,\partial \Sigma) \to (X,L)$ is a $J$-holomorphic map
representing the relative homology class $\beta$.
\end{itemize}
We denote the compactified moduli space of such $(\Sigma,\vec{\vec z},u)$
by $\mathcal M_{g;(s_1,\dots,s_{\ell})}(\beta)$.
There is a natural evaluation map
$$
{\rm ev} = ({\rm ev}_1,\dots,{\rm ev}_{\ell}):
\mathcal M_{g;(s_1,\dots,s_{\ell})}(\beta)
\to L^{s_1} \times \dots \times L^{s_{\ell}}.
$$
Integrating the pullback of differential forms under the evaluation
map over the moduli space (using an appropriate version of the
virtual fundamental chain technique) should give rise to an element
$$
\frak m_{g;(s_1,\dots,s_{\ell})}(\beta)
\in B^{{\rm cyc}*}_{s_1}S(L)[2-n] \otimes \dots \otimes B^{{\rm
    cyc}*}_{s_{\ell}}S(L)[2-n],
$$
where $S(L)$ is a suitable cochain complex realizing the cohomology of $L$.
% Here $\Om(L)$ is the de Rham complex of $L$.
% and $B^{\rm cyc*}\Om(L)_\infty[2-n]$ is the Fr\'echet ${\rm
% dIBL}$-algebra from Proposition~\ref{prop:cycIBLde}. 
Let $\Lambda_0$ be the universal Novikov ring of formal power series
in the variable $T$ introduced in Section~\ref{sec:filter}. 
The elements $\frak m_{g;(s_1,\dots,s_{\ell})}(\beta)$ should then combine
to elements 
$$
   \frak m_{g,\ell} := \sum_{s_1,\dots,s_\ell}\sum_\beta \frak
   m_{g;(s_1,\dots,s_{\ell})}(\beta) T^{\omega(\beta)} 
   \in \wh E_{\ell}\Bigl(B^{{\rm cyc}*}S(L)[2-n]\otimes
   \Lambda_{0}\Bigr),
$$
where $\wh E_\ell$ denotes the completed symmetric product 
%of nuclear spaces, which is in addition completed 
with respect to the two natural filtrations on $B^{{\rm
    cyc}*}S(L)[2-n]\otimes\Lambda_{0}$. 
The boundary degenerations of the moduli spaces $\mathcal
M_{g;(s_1,\dots,s_{\ell})}(\beta)$ suggest that
$$
   \frak m := \sum_{g,\ell} \frak m_{g,\ell}\hbar^{g-1}
   \in \frac{1}{\hbar}\wh E\Bigl(B^{{\rm cyc}*}S(L)[2-n]\otimes
   \Lambda_{0}\Bigr)\{\hbar\}
$$
satisfies the Maurer-Cartan equation
$$
   \widehat{\frak p}\left( e^{\frak m} \right) = 0
$$
for an appropriate ${\rm IBL}_{\infty}$-structure $\widehat{\frak p}$
on $B^{{\rm cyc}*}S(L)$. 
\par
We hope to work this out by taking for $S(L)$ de Rham complex $\Om(L)$
and using the results of this section as follows. 
Recall from Proposition~\ref{prop:cycIBLde} that the subcomplex
$B^{\rm cyc*}\Om(L)_\infty[2-n]\subset B^{\rm cyc*}\Om(L)[2-n]$ of
homomorphisms with smooth kernel carries a natural Fr\'echet ${\rm
  dIBL}$-structure. However, the elements $\fm_{g,\ell}$ do not have
smooth kernel and thus cannot be viewed as a Maurer-Cartan element on
this Fr\'echet ${\rm dIBL}$-algebra. The idea is now to use the
construction in Conjecture~\ref{deRhamconj-intro} to push forward the
$\fm_{g,\ell}$ to a Maurer-Cartan element on $B^{\rm cyc*}H_{\rm
  dR}(L)[2-n]\otimes\Lambda_0$. It should give rise to a twisted
filtered ${\rm IBL}_\infty$-structure on $B^{\rm cyc*}H_{\rm
  dR}(L)[2-n]\wh\otimes\Lambda_0$
whose homology equals the cyclic cohomology of the cyclic
$A_\infty$-structure on $H_{\rm dR}(L)$ constructed in~\cite{FOOO06,
  Fuk10}. Moreover, its reduction at $T=0$ should equal the filtered 
${\rm IBL}_\infty$-structure on $B^{\rm cyc*}H_{\rm
  dR}(L)[2-n]$ in Conjecture~\ref{deRhamconj-intro}, and
Conjecture~\ref{Lagconj-intro} from the Introduction should follow.

%%%%%%%%%%%%%%%%%%%%%%%%%%%%%%%%%%
\appendix
%%%%%%%%%%%%%%%%%%%%%%%%%%%%%%%%%%

\section{Orientations on the homology of surfaces}\label{sec:or}

In this appendix, we present a procedure to order and orient the
interior edges of a labelled ribbon graph in terms of orientations on
the singular chain complex of the associated surface. Relating this to
the procedure using spanning trees in
Definition~\ref{def:edge-ordering}, we prove
Lemma~\ref{lem:edge-convention} from Section~\ref{sec:Cyclic-cochain}. 

Consider a ribbon graph $\Gamma$ with $k$ interior vertices
$v_1,\dots,v_k$ and $m$ interior edges $e_1,\dots,e_m$. 
% and $\ell$ boundary components. 
We denote by $\Sigma$ the surface with boundary
associated to $\Gamma$, and by $\wh\Sigma$ the closed connected
oriented surface (of genus $g$) obtained by gluing disks to the $\ell$
boundary components of $\Sigma$. Note that
$$
   2-2g = \chi(\wh\Sigma) = \chi(\Sigma)+\ell = \chi(\Gamma)+\ell =
   k-m+\ell. 
$$
We view $\Gamma$ as a graph on $\wh\Sigma$. The ribbon condition
implies that $\wh\Sigma\setminus\Gamma$ is the union of $\ell$ disks
whose closures we denote by $f_1,\dots,f_\ell$. 
So we have a cell complex
\begin{equation}\label{eq:or1}
   C_2=\la f_1,\dots,f_\ell\ra\stackrel{\p_2}{\longrightarrow} 
   C_1=\la e_1,\dots,e_m\ra\stackrel{\p_1}{\longrightarrow} 
   C_0=\la v_k,\dots v_1\ra
\end{equation}
(say, with $\Q$-coefficients) computing the homology 
%$H=H_0\oplus H_1\oplus H_2$ 
of $\wh\Sigma$. We pick complements $V_i$ of $\ker\p_i$ in $C_i$ and
$H_i$ of $\im\p_{i+1}$ in $\ker\p_i$, so that the complex becomes
\begin{equation}\label{eq:or2}
   C_2=V_2\oplus H_2\stackrel{\p_2}{\longrightarrow} 
   C_1=V_1\oplus H_1\oplus\im\p_2\stackrel{\p_1}{\longrightarrow} 
   C_0=\im\p_1\oplus H_0. 
\end{equation}
Note that $\p_i:V_i\to\im\p_i$ are isomorphisms and the $H_i$ are
isomorphic to the homology groups of $\wh\Sigma$. Now
equation~\eqref{eq:or1} determines an orientation of $C=C_0\oplus
C_1\oplus C_2$ once we choose a labelling of $\Gamma$ in the sense of
Definition~\ref{def:labelling}, i.e., 
\begin{enumerate}[(i)]
\item an ordering $v_1,\dots,v_k$ of the interior vertices,
\item and ordering and orientations of the interior edges,
\item an ordering of the boundary components. 
\end{enumerate}
More precisely, given these choices we orient the chain groups as
follows: 
\begin{enumerate}[(i)']
\item an oriented basis $v_k,\dots,v_1$ of $C_0$ is given by the
  interior vertices in {\em reverse} order;
\item an oriented basis $e_1,\dots,e_m$ of $C_1$ is given by the
  oriented interior edges in their given order; 
\item an oriented basis $f_1,\dots,f_\ell$ of $C_2$ is given by the
  $2$-cells, oriented according to the orientation of $\wh\Sigma$ and
  ordered according to the ordering of the boundary components. 
\end{enumerate}
We arbitrarily orient $V_1,V_2$ and equip $\im\p_1,\im\p_2$ with the
induced orientations via the isomorphisms $\p_i:V_i\to\im\p_i$. These
orientations together with the orientation of $C$ induce
via~\eqref{eq:or2} an orientation on $H=H_0\oplus H_1\oplus H_2$. This
orientation does not depend on the chosen orientations on $V_1,V_2$,
but it does depend on the order of the direct summands
in~\eqref{eq:or2}. 
%We have fixed a particular order, but other orders are possible.  

On the other hand, the homology $H$ is canonically oriented: We set 
$$
   H_0:=\la v_1+\cdots+v_k\ra,\qquad H_2:=\la f_1+\cdots+f_\ell\ra
$$
and give $H_1\cong\Q^{2g}$ the symplectic orientation induced by the
intersection form. We define the sign exponent $\eta_3(\Gamma)$ (mod $2$) as $0$ if
the two orientations of $H$ coincide, and $1$ if not. Here we always
consider the labelling data (i)-(iii) as part of $\Gamma$.

\begin{rem}
To a ribbon graph $\Gamma$ we can associate its dual graph
$\bar\Gamma$ lying on the same closed surface $\wh\Sigma$ by
exchanging vertices and faces. The edges of $\bar\Gamma$ are
canonically oriented by requiring that the intersection number of each
edge of $\Gamma$ with the corresponding edge of $\bar\Gamma$ is
$+1$. The dual graph to $\bar\Gamma$ is $\Gamma$ with the orientation
of all edges reversed, which has the same sign as $\Gamma$ iff the
number of edges is even. Thus the sign of $\bar\Gamma$ cannot always
be equal to the sign of $\Gamma$. Is there a simple criterion to
decide when the signs agree. Is this duality good for anything?
\end{rem}

{\bf Orientation via spanning trees. }
An orientation on the chain group $C=C_0\oplus C_1\oplus C_2$
associated to a labelled ribbon graph $\Gamma$ can be specified via
the construction in Definition~\ref{def:edge-ordering}, which we first
recall. 

Choose a maximal tree $T \subset \Gamma_{\rm int}$. It will have $k-1$
edges, which we orient away from vertex 1 and label in {\em decreasing
  order} such that the $i^{\rm th}$ edge ends at vertex $k+1-i$. Next
choose a maximal tree $T^*\subset \Gamma_{\rm int}^*$ disjoint from
$T$. It will have $\ell-1$ edges, which we orient away from the
first boundary component and label in {\em increasing
  order} such that the edge $e^*_{k+s-2}$ points to the boundary
component $s$. The oriented edges 
$e_{k},\dots,e_{k+\ell-2}$ are obtained as the dual edges to the
$e^*_i$, oriented so that the pair $\{e^*_i,e_i\}$ defines the
orientation of the surface $\Sigma_\Gamma$. The remaining $2g$ edges
of $\Gamma_{\rm int}$ edges determine a basis for
$H_1(\widehat{\Sigma}_\Gamma)$ and we choose their order and
orientation compatible with the symplectic structure on
$H_1(\widehat{\Sigma}_\Gamma)$ corresponding to the intersection
pairing. 

It follows from these conventions that for $s=2,\dots,\ell$ the edge
$e_{k+s-2}$ occurs with a minus sign in $\p f_s$, and for
$i=1,\dots,k-1$ the vertex $v_{i+1}$ occurs with a plus sign in $\p
e_i$. So they specify compatible bases of $V_i$ and $\im \p_i$ given
by 
\begin{align*}
   C_2 = V_2\oplus H_2 &= \la -f_2,\dots,-f_\ell\ra\oplus \la f_1\ra, \cr
   C_1 = V_1\oplus H_1\oplus \im\p_2 &= \la e_{k-1},\dots,e_1\ra\oplus
   H_1\oplus \la e_k,\dots,e_{k+\ell-2}\ra, \cr
   C_0 = \im\p_1\oplus H_0 &= \la v_k,\dots,v_2\ra\oplus \la v_1\ra. 
\end{align*} 
Since the resulting orientations $\la -f_2,\dots,-f_\ell,f_1\ra = \la
f_1,\dots,f_\ell\ra$ of $C_2$ and $\la v_k,\dots,v_2,v_1\ra$ of $C_0$ agree
with the orientation conventions above, the corresponding sign is $\eta_3=0$.

Lemma~\ref{lem:edge-convention} from Section~\ref{sec:Cyclic-cochain}
immediately follows from this because the operations
\begin{itemize}
\item changing the orientation of an interior edge, 
\item interchanging the order of two adjacent interior edges, 
\item interchanging the order of two adjacent interior vertices, 
\item interchanging the order of two adjacent boundary components 
\end{itemize}
all change $\eta_3$ by $1$. 

%%%%%%%%%%%%%%%%%%%%%%%% referenc.tex %%%%%%%%%%%%%%%%%%%%%%%%%%%%%%
% sample references
% "engineering"
%
% Use this file as a template for your own input.
%
%%%%%%%%%%%%%%%%%%%%%%%% Springer-Verlag %%%%%%%%%%%%%%%%%%%%%%%%%%

%
% BibTeX users please use
% \bibliographystyle{}
% \bibliography{}
%
% Non-BibTeX users please use

%\printindex
\end{document}